\theoremstyle{plain}
\newtheorem{thm}{Theorem}[section]
\newtheorem*{THM}{Theorem}
\newtheorem{prop}[thm]{Proposition}
\newtheorem{lemma}[thm]{Lemma}
\newtheorem{cor}[thm]{Corollary}
\newtheorem{conjecture}{Conjecture}
\theoremstyle{definition}
\newtheorem{problem}{Problem}
\newtheorem{example}[thm]{Example}
\theoremstyle{remark}
\newtheorem{remark}[thm]{Remark}
\newcommand{\eq}[1][r]
   {\ar@<-3pt>@{-}[#1]
    \ar@<-1pt>@{}[#1]|<{}="gauche"
    \ar@<+0pt>@{}[#1]|-{}="milieu"
    \ar@<+1pt>@{}[#1]|>{}="droite"
    \ar@/^2pt/@{-}"gauche";"milieu"
    \ar@/_2pt/@{-}"milieu";"droite"}
\def\dd{\begin{center} $\S$ \end{center}}
\def\hot{{h.o.t.}}
\def\defi[#1]{{\bf{#1}}}
\def\A2W{\mathcal A_2(\mathcal W)}
\def \l  #1 {{ {\bf L}{\mbox{i}}_#1}}
\def\ro[#1]{{\textcolor{red}{#1}}}
\definecolor{pipo}{rgb}{0,0.6,0}
\definecolor{blo}{rgb}{0,0,1}
\definecolor{mag}{cmyk}{0.1,1,0.1,0.1}
\definecolor{ro}{rgb}{0,0.6,0}
\date{}
\begin{document}

\title{\bf AN INVITATION TO WEB GEOMETRY \\ \vskip 0.65cm  From Abel's addition Theorem to  the \\  algebraization of codimension one webs}
\vskip1.7cm 

\author{\large Jorge Vit\'{o}rio Pereira and Luc Pirio}

\maketitle

\pagenumbering{roman}
\setcounter{page}{5}

\thispagestyle{empty}

~

\vskip 11.8cm

{\flushright{   Para  Dayse e Jorge, \\ os meus Pastores. \\ J.V.P. \\}}

\bigskip \bigskip

{\flushright{  Pour Min' \\ L.P. \\}}


\chapter*{Preface}
\addcontentsline{toc}{chapter}{Preface}
\thispagestyle{empty}

The first purpose of this text was  to serve as  supporting material
for  a  mini-course on web geometry delivered at the 27th Brazilian Mathematical Colloquium which
 took place at IMPA in the last week of July 2009. But, in its almost 210 pages there is much
more than one can possibly cover in five lectures of one hour each. The abundance of material is due to
the second purpose of this text:  convey  some of the beauty of web geometry
  and to provide an account, as self-contained as possible, of some of the exciting advancements
the field has  witnessed in the last few years.
\smallskip

We have tried to write a text which is  not very demanding in terms of pre-requisites. It is true that at some points
familiarity with the basic language of algebraic/complex  geometry is welcome but, except at very few passages, not
much more is needed. An effort has been made to explain, even if sometimes superficially, every single {\it unusual}
concept appearing in the text.
\smallskip

At an early stage of this project we decided to use  the third instead of the first person. In retrospect, it is
hard to understand why two authors, none of them particularly comfortable with the English language,
took this decision. Today, the only explanation that comes to mind is a subconscious attempt to expire the sins
of two bad writers. We apologize for the awkwardness of the prose and hope that those more familiar with English
than us will find some amusement with the clumsiness of it.
\smallskip

This text would take much longer to come to light without the invitation of M\'{a}rcio Gomes Soares to
submit a mini-course proposal
to the 27th Brazilian Mathematical Colloquium. Besides Soares, we would like to thank
Hernan Falla Luza and Paulo Sad, whom catched a number of misprints and  mistakes appearing in preliminary versions.
We are also indebted to Annie Bruter  for her help in translating to English a draft of the introduction
originally written in French. Even more importantly, Jorge wants to thank Dayse and Luc wants to thank Mina for all the patience and
unconditional support gracefully given  during the writing of this book.
\medskip


\vfill

\begin{minipage}[b]{0.4\linewidth}
\begin{center}
Jorge Vit\'{o}rio Pereira \\
IMPA  \\
  \verb"jvp@impa.br"
\end{center}
\end{minipage}
\hspace{0.5cm}
\begin{minipage}[b]{0.5\linewidth}
\begin{center}
   Luc Pirio     \\
  IRMAR -- UMR 6625 du CNRS  \\
   \verb"luc.pirio@univ-rennes1.fr"
\end{center}
\end{minipage}

\tableofcontents

\clearpage

\newpage
\pagenumbering{arabic}


\chapter*{Introduction}
\markboth{\MakeUppercase{Introduction}}{\MakeUppercase{Introduction}}

\addcontentsline{toc}{chapter}{Introduction}

It seems impossible to grasp the ins and outs of a mathematical field without
setting it back in its historical context. An attempt, certainly incomplete and biased,  is made  in the next
few pages. 

At the end of the Introduction, one finds a description of the contents of this
text, and suggestions on how to use it.

\section*{Historical Notes}

If the {\it birth} of web geometry can be ascribed to the middle of the 1930s in Hambourg (see below),
some precedents can be found as early as the middle of the XIXth century.
The concepts and problems of web geometry springs from  two different fields of the XIXth century mathematics :
 projective differential geometry and  nomography.
\medskip

It is mainly from the first that web geometry comes from. At that time,
projective differential geometry mainly consisted of the study of
 projective properties of curves and surfaces in $\mathbb R^3$, that is
  of their differential properties that are invariant up to homographies.
\medskip

\thispagestyle{empty}

Gaussian geometry, which had appeared before, studied the properties of (curves and) surfaces in ordinary euclidian space that are invariant up to isometric transformations. Gauss and other mathematicians  pointed out how useful the first and second fundamental forms are
 for the study of surfaces. They also brought to light the relevance of derived concepts, such as
the  principal, asymptotic and  conjugated  directions.
When considering the integral curves of these  tangent direction fields, the mathematicians of the time were considering what they called
  2-nets of ``lines'' on surfaces, that is the data of 2 families of curves, or in more modern terms, 2-webs.
It is when they endeavored to generalize these constructions
to the projective differential geometry  that some {\it 3-nets}
projectively attached to surfaces in $\mathbb R^3$
quite naturally made their appearance (for instance, Darboux  introduced a 3-web called after him in \cite{darboux80}; see also Section  \ref{S:Webs attached to projective surfaces} in this book). \medskip

These webs   were useful back then because they encoded  properties of the surfaces  under study.
Thomsen's paper \cite{thomsen27}
is a good illustration of this fact. In this article, Thomsen shows that a surface area in $\mathbb R^3$ is isothermally asymptotic\footnote{\,
Geometers of the XIXth century  had established   a very rich ``bestiary''  of surfaces in
  $\mathbb R^3$. The {\it  isothermally asymptotic surfaces}  (or ``{\it F-surfaces}'') formed one of the classes in their classification
 (see \cite{ferapontov} for a  modern definition.)}
if and only if its Darboux 3-web is  hexagonal\footnote{\, Thomsen's result applies to real surfaces in $\mathbb R^3$ thus his statement is different  as one takes place at a neighborhood of an elliptic point or a hyperbolic point of the considered surface.}.
At that time, the study of 3-webs on surfaces from the point of view of projective differential geometry was on the agenda. \medskip

Thomsen's result has this particular feature of  characterizing  the geometric-differential property of being isothermally asymptotic by a closedness property of more topological nature that is (or not) verified by a configuration traced on the surface itself. It is this feature which struck some mathematicians and led to the study of webs  at the beginning of the 1930s. 

\dd

The second source of web  geometry is nomography. This discipline, nowadays practically extinct, belonged to the field of applied mathematics in the 1900s. It was established as an autonomous mathematical discipline by M. d'Ocagne. It consisted in a method of ``graphical calculus''  which allowed engineers to calculate rather fast. To explain  its principle (which to-day appears rather na\"ive), let  $F(a_1,a_2,a_3)=0$
be  a law linking three physical variables. Is there a quick and accurate way to determine one variable say $a_i$ from the other two: $a_j$ and $a_k$?  To solve this problem, people used nomograms. A nomogram  is a graphic which represents  curves  according to  values of the variables $a_1,a_2$ and $a_3$.  For instance, to find the value of $a_1$ in function of values $\alpha_2$ and $\alpha_3$ of the variables $a_2$ and $a_3$ (respectively), one has to find the intersection point of
the curves  $a_2=\alpha_2$ and $a_3=\alpha_3$.
 Through (or near) this point goes a curve $ a_1=\alpha_1$, and $\alpha_1$  is the sought value.

\begin{figure}[H]
\begin{center}
{\fbox{\scalebox{0.45}{{\includegraphics*[60,235][575,560]{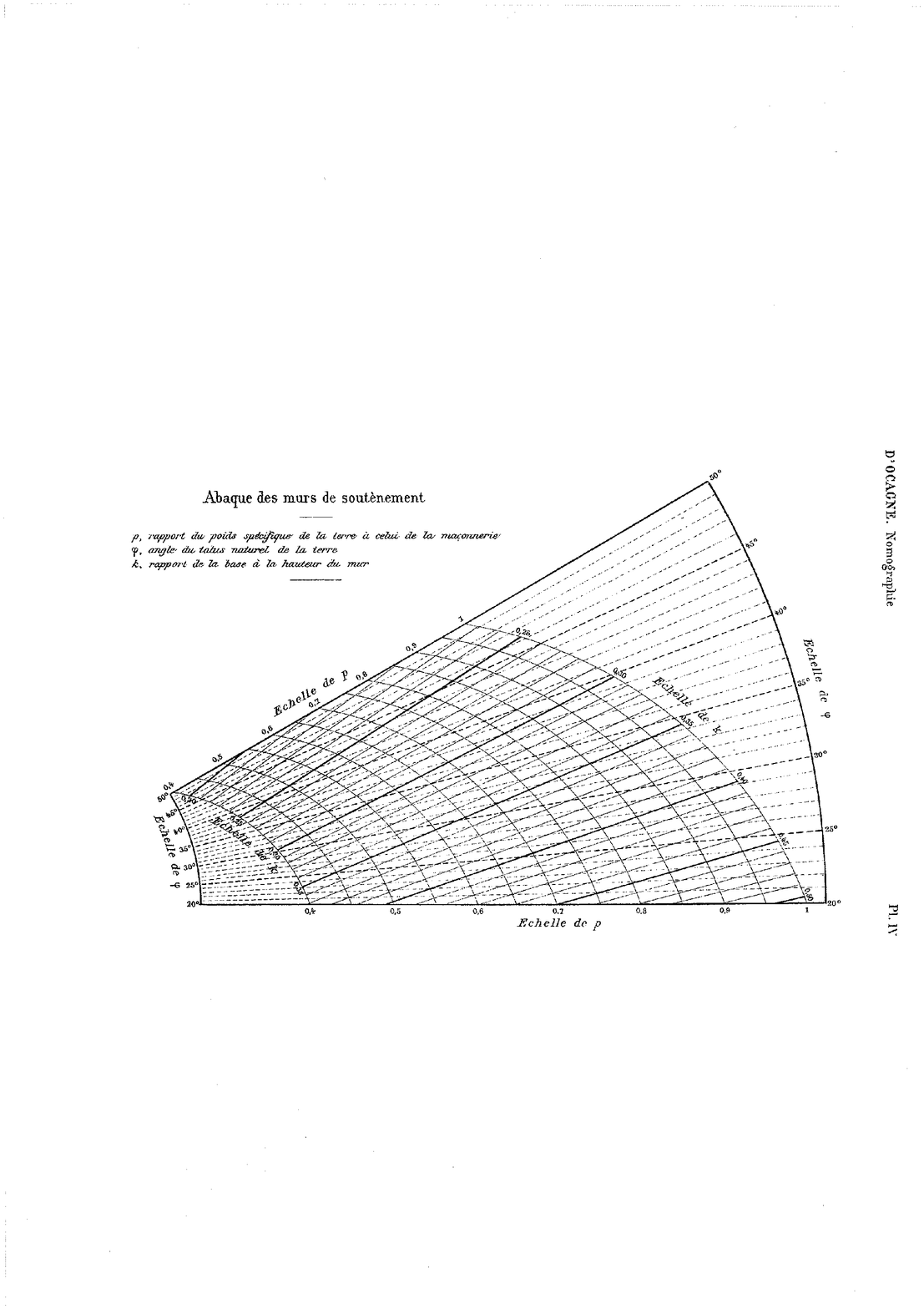}}}}}
\caption{A nomogram from a book by M. D'Ocagne.}
\end{center}
\end{figure}


What nowadays seems to be far from actual mathematics was once an important part of the  mathematical culture.
It was probably after considering some results of nomography that Hilbert formulated the thirteenth of the famous 23 problems that he
stated at the International  Congress of Mathematics  of 1900.\medskip

The main disadvantage of nomography was the problem of its readability. Of course, the nomograms where the curves coincided with
 (pieces of) lines   were easier to use. Hence the problem to know whether it is possible to linearize the curves of a given nomogram.
 Or equivalently, whether it is possible to linearize a 3-web of curves on the plane.
 For more precisions on the links between nomography and web geometry, the interested reader can consult \cite{Aczel}.

\subsection*{Birth of web geometry: Spring of 1927 in Naples}

Thomsen's paper \cite{thomsen27} is considered as the  birth of web geometry. According to Blaschke (see the beginning of the foreword in \cite{BB}) this paper is the result of their Spring walks on Posillipo  hill, at the vicinity of Naples, in 1927. Even if it concerns the study of some surfaces in $\mathbb R^3$, it shows clearly that a plane configuration  made of three families of curves  ({\it i.e.} a 3-web) admits local analytic invariants. It seems that the  equivalence between the vanishing of the curvature of a 3-web (which is a condition of analytic nature) and the hexagonality condition (which is a property seemingly of topological nature)\footnote{See  Theorem  \ref{T:hexagonal} farther in this book.}
struck these two mathematicians and led them (with others) to study the matter.

\begin{figure}[H]
\begin{center}\includegraphics[height=7cm,width=7.8cm]{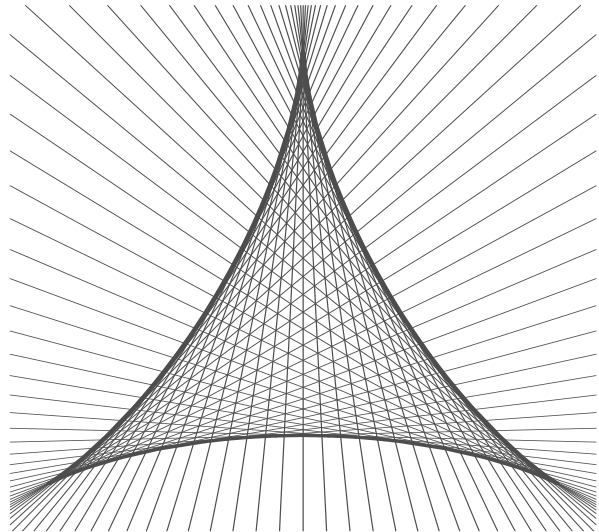}\end{center}
\caption{A $3$-web with vanishing curvature.}
\end{figure}

\bigskip

\subsection*{Early developments: Hamburg school (1927-1938)}

A short time after Thomsen's paper \cite{thomsen27} was published,  a group led by Blaschke was set up in Hamburg to do research on webs. 
 Blaschke and his coworkers\footnote{Bol, Chern, Mayrhoffer, Podehl, Walberer
were active members of this group. K\"ahler, Zariski, Reidemester
and others  also worked on this subject but in a occasional way.}
found many results which established web geometry as a discipline. It is a remarkable fact that a
series of more than 60 papers were  published in a variety of  journals between 1927 and 1938, mainly by members of the Hamburg school of web geometry,
under the common label of ``Topologie Fragen der Differentialgeometrie''\begin{footnote}{In English, ``Topological questions of differential geometry''.}\end{footnote}.

Their work focused on three main directions:
\begin{enumerate}
\item[--] The study of webs from the   differential geometry viewpoint, through the analytical invariants  which can be associated to them;
\item[--] The study of the relations between webs and abstract geometric configurations linked to the algebraic theory of (quasi-)groups;
\item[--] The interpretation of web geometry as a relative of projective algebraic  geometry, notably via the notion of abelian relation.
\end{enumerate}

\medskip

This book focus on the latter direction of study, and  will not expand on the two former ones, due to  lack of space
and of competence as well. It deals with the links between webs and algebraic geometry, which have their
 origins in results obtained by Blaschke, Bol and Howe.
At the beginning, these results were mainly about planar webs.  Firstly, Blaschke came up with an  interpretation of a theorem by Graf and Sauer \cite{grafsauer24}
in the framework of web geometry.  This theorem says that a linear 3-web carrying an abelian relation is constituted by the tangents to a plane algebraic  curve of class 3. Later, as soon as 1932, Blaschke and Howe \cite{blaschkehowe32} generalized this theorem to the case of linear $k$-webs carrying at least one complete abelian relation, thus bringing to light the usefulness of the notion of abelian relation. Bol's result giving the explicit bound  $\frac{1}{2}(k-1)(k-2)$ on the dimension of the space of abelian relations of a planar $k$-web appeared shortly afterwards in \cite{bol32} and allowed to define the rank of a web. Using this formalism, Howe noticed that Lie's result about the surfaces of double translation can be understood in the framework of web geometry as the striking fact that a planar 4-web of rank 3 is algebraizable. The relationship between the planar webs of maximal rank and Abel's Theorem was reported the following year by Blaschke in \cite{blaschke332}, which brought up the final definition of the notion of algebraic web. In the same paper, Blaschke expounded  the generalization of Lie's Theorem to the 5-webs of maximal rank, a result which was later proved by Bol to be incorrect. Surprisingly, he also exhibited   Bol's 5-web $\mathcal B_5$  as an example of non algebraizable 5-web of rank 5, while it is of maximal rank 6 (see below).

\medskip

In 1933, Blaschke set about studying webs in dimension three. In \cite{blaschke33}, he established a bound
$\pi(3,k)$ on the rank of a $k$-web
of hypersurfaces in  $\mathbb C^3$. One year later, Bol gave in
\cite{bol34}  one of the most important results obtained at the time: for $k\geq 6$,  a $k$-web of hypersurfaces
on  $\mathbb C^3$ of maximal rank
$\pi(3,k)$ is algebraizable.  This success certainly played a role in Blaschke's attempt in \cite{blaschke332,blaschke333} to obtain algebraization results for  planar webs of maximal rank. Only in 1936 it was made clear  that the result he was looking for were unattainable. In \cite{bol36}, Bol realized that ${\mathcal B_5}$  carries one more abelian relation, related to Abel's five terms equation for the dilogarithm; hence it is an instance of a 5-web of rank 6, which is not algebraizable.

\medskip

In the year of 1936,  Chern defended his PhD dissertation on webs, written under Blaschke's direction. He then published two papers. The 60th issue of the ``Topologische Fragen der Differentialgeometrie'' series \cite{chern1935} is of special interest here. Generalizing Blaschke's result, he obtains a
bound  on the rank of a web of codimension one in arbitrary dimension\footnote{See Theorem \ref{T:cota} in this book.} which  now bears his name.
\medskip

 Thus, in 1936, most of  the notions studied in this book
 had been brought to light. A general survey of the state the art then  can be found in the third part of the book \cite{BB}, to which the reader is asked to refer. \medskip

Finally, this very year is when Blaschke shifted his interest from  web geometry to  integral geometry. Few members of the Hamburg school
 worked again on webs, with the notable exceptions of Blaschke and Chern, but this time in a different way (see below).

\bigskip

\subsection*{Web~geometry~in~mid~XXth century~(1938-1960)}
Blaschke strongly supported exchanges between mathematicians. From 1927 to 1960 he travelled a lot and had the opportunity to give lectures about web geometry in numerous countries (for instance in Romania, Greece, Spain, Italy, the United States, India, Japan), thus inspiring people with of a  variety of nationalities and backgrounds to do research on web geometry.\footnote{For instance, it is in attending  to some conferences given by Blaschke at Pekin in 1933  that Chern became interested in web geometry and decided to go to study at Hamburg.}

\medskip

It seems that Blaschke went to Italy many times during this period. As a by-product,  an Italian school of web geometry developed at that time. Bompiani, Terracini and Buzano were its most prominent contributors. Their work was chiefly about the links between geometry of planar webs and the projective differential geometry of surfaces.
In the 1950s and 1960s a second Italian school of web geometry appeared, probably thanks to Bompiani's influence. He, Vaona and Villa (among others) published papers on the projective  deformation  of planar 3-webs, but with no major outcome.

\medskip

It also must be mentioned the work of the Romanian mathematicians Pantazi and Mih\u{a}ileanu. During the 1930s and 1940s, they obtained
 interesting results on how to determine the rank of planar webs. These results were published as short notes in Romanian journals (see \cite{Pantazi,Mihaileanu}) and were then forgotten.

 \medskip

The war and later Blaschke's political stance during the war (see \cite[p. 423]{segal}) put an end to his influence for some time. When things went back to normal, he gave lectures again, on webs among other things. Although he didn't obtain new results, these lectures induced new researches once more, for instance  Dou in Barcelona \cite{dou1,dou2,dou3}  and Ozk\"{a}n in Turkey \cite{ozkan}.

\medskip

\subsection*{Russian school (from 1965 onwards)}

More than at the Hamburg school, it is at the Moscow school of
differential geometry that the Russian school of web geometry,  led first by Akivis, and then by Akivis and Goldberg, seems to have its origin.
Under the influence of the work of \'Elie Cartan, a Russian school of differential geometry  developed in USSR at the instigation of Finikov from the 1940s onwards. Projective differential geometry was studied in full generality  and involved the study of some nets (which could be called webs  but only in a weak sense) projectively attached to (analytic) projective subvarieties. It is probably this fact which led to the study of webs for their own sake   in arbitray dimension and/or  codimension, from the 1960s onwards. Akivis was joined by Goldberg quite early. They explored several directions in web geometry, published many papers and had many students.

\medskip

The work of this school led  chiefly dealt with
the differential geometry of webs and with the interactions between webs and the theory of quasigroups. The links
with algebraic geometry were not their major concern. Their results had little influence in the West for two main reasons: (1) their papers were in Russian, hence they were not distributed in the West; (2) the method they used was the {\it Cartan-Laptev method}\footnote{Cartan-Laptev method is  a reinterpretation/generalization of the methods of the mobile frame and of equivalence of \'Elie Cartan, by the Russian geometer G. Laptev.}, which  non specialists do not understand easily.

\medskip

The reader who wishes to get an outline of the methods and results of this school may consult the books \cite{Akivisbook} and \cite{Goldbergbook}.

\medskip

\subsection*{Chern's and Griffith's work (1977-1980)}

Throughout his professional life Chern kept being interested in webs, particularly in the notion of web of maximal rank, as shown in
\cite{Chern},
\cite{ChernB2} and \cite{Chern92}. This point can be illustrated  by quoting the last lines of \cite{Chern92}:
\begin{quote}
 {\it
Due to my background I like algebraic
  manipulation, as Griffiths once observed. Local differential geometry calls for such
  works. But good local theorems are difficult to come by. The problem
  on maximal rank webs discussed above\footnote{He is referring to the classification  of webs of maximal rank.}  is clearly an important
  problem, and will receive my attention. \vspace{0.15cm}\\
${}^{}$\hspace{0.15cm}My mathematical education
  goes on.}
\end{quote}

In 1978, he resumed working on webs of maximal rank jointly with Griffiths. In the long paper \cite{Jbr}, they set about demonstrating that a $k$-web of codimension one and of maximal rank $\pi(n,k)$
is algebraizable when $n>2$ and $k\geq 2n$. Their proof is not complete (cf. \cite{Jbr2}) and it is necessary to make an extra non-natural  assumption  to ensure the validity of the result. They also got a sharp bound for the rank of webs of codimension two in \cite{CGpisa}.
 Griffiths's interest on the subject probably came from the links between web geometry and algebraization results like the converse of Abel's Theorem discussed
 in Chapter \ref{Chapter:4}. Although  he  published no other paper on the subject, he kept being interested in webs since he discussed them in the opening lecture he gave for the bicentennial of Abel's birth in Oslo (transcribed in \cite{ABEL}). \medskip

Although it contains a non-trivial mistake, the paper \cite{Jbr} has been quite influential in web geometry. It has popularized the subject  and led the Russian school to pay attention to the notions of abelian relations and rank. It is probably from  \cite{Jbr}
that Tr\'{e}preau has taken up Bol's method to obtain a proof of the result originally aimed at by Chern and Griffiths. The present book would not exist if  \cite{Jbr} had not been
written. The readers should read it,  as it contains a masterfully written introductory part putting  things in perspective, and offers  different
proofs of many of the results included here.

\bigskip

\subsection*{Recent developments (since 1980)}

A number of new results in web geometry have been obtained in the last twenty years.
Here only, and certainly not all, results related to rank, abelian relations and maximal rank webs will be mentioned.  \medskip

The abelian relations of Bol's web all come (after analytic prolongation) from its dilogarithmic abelian relation,
which thus appears more fundamental than the other relations. In 1982, in \cite{gelfandmacpherson}, Gelfand and MacPherson found a geometric interpretation of this relation. In it
 Bol's web appears as  defined on the space of projective configurations of 5 points of $\mathbb R\mathbb P^2$. In \cite{damiano}, Damiano considers, for $n\geq 2$  a curvilinear $(n+3)$-web $D_n$
naturally defined on the space of projective configurations of $n+3$  points in $\mathbb R\mathbb P^n$.  He shows that this web is of maximal rank and gives a geometric interpretation of the ``main abelian relation'' of $D_n$, thus obtaining a family of exceptional webs which generalizes Bol's web.

\medskip

From 1980 to 2000, Goldberg studied the webs of codimension strictly bigger than one from the point of view of their rank. He obtained many  results, most of which are expounded in \cite{Goldbergbook}. More recently, he started studying planar webs under the same viewpoint  in collaboration with Lychagin.

\medskip

At the beginning of the 1990s, H\'{e}naut started studying webs in the complex analytical realm. He published about 15 papers on the matter. His research is
mainly about  rank and abelian relations, and  is concerned with  webs of arbitrary codimension  as well as  planar webs.\footnote{For an outline of the results  he obtained before 2000, see  \cite{Henaut0}.} The papers
\cite{Henaut1993,Henaut1994,Henaut2004} have to be mentioned,  as related with the topic of this book.
At the time when he started working, the field attracted  little attention. Without any doubt his tenacity played a major role to popularize web geometry in France, and in other countries as well. With Nakai he co-organized the conference {\it G\'{e}om\'{e}trie des tissus et \'{e}quations diff\'{e}rentielles}, held at the CIRM in 2003, which was attended by researchers from all over the world and was for some mathematicians (particularly for  the first author of this book) an opportunity to have
their first contact with  web geometry.

\smallskip

In 2001, the second author \cite{PTese,Piriopoly} and Robert \cite{Robert}
 independently showed that the Spence-Kummer 9-web associate to the trilogarithm  is an example of exceptional  web. Within a short time were published several papers \cite{crasluc,PT,MPP,CDQL} bringing to light a myriad  of exceptional webs.  \medskip

\smallskip

In 2005,   Tr\'{e}preau provided a proof of the result which Chern and Griffiths aimed at in \cite{Jbr}, {\it i.e.} the algebraization  of maximal rank $k$-webs on  $(\mathbb C^n,0)$, when  $k\geq 2n$ and $n>2$.

\smallskip

It seems to us that nowadays the study of webs is undergoing a revival, as testifies  the Bourbaki seminar \cite{Bou} devoted to the results hitherto mentioned. Mathematicians with the most diverse backgrounds now publish papers on the matter. A few recent articles, but not all of them, are mentioned in this book. The
readers are invited to consult the  literature in order to get a better acquaintance with the advancement of the researches.

\newpage

\section*{Contents of the chapters}

The table of contents tells rather precisely what the book is about. The following descriptions give additional information.

\bigskip

\defi[Chapter \ref{Chapter:intro}] is  introductory, and describes the basic notions of web geometry.
The content of this chapter is for the main part quite  well known, except for the notion of
duality for global webs on projective spaces $\mathbb P^n$, which appears to be new when $n>2$.
A short survey of this notion is presented in Section \ref{subsection:Projective duality}. The first
 two sections, more specifically Section \ref{S:basicdef} and Section \ref{S:3webs},
are of rather elementary nature and might  be read by an undergraduate student.

\medskip

\defi[Chapter \ref{Chapter:AR}] is about the notions of abelian relation and rank. It offers an outline of Abel's method to determine the abelian relations of a given planar web. It also gives a description of the abelian relations of planar webs admitting an infinitesimal symmetry. The most important results in this chapter are Chern's bound on the rank (Theorem \ref{T:cota})  and the normal form for the  conormals of a web of maximal rank (Proposition \ref{P:normal}). This last result is demonstrated through a geometric approach based on classical concepts and results  from projective algebraic  geometry  which are described in detail.

\medskip

\defi[Chapter \ref{Chapter:3}] is devoted to Abel's notorious addition Theorem. It first deals with the case of smooth projective curves, then tackles the general case after introducing the notion of abelian differentials.
Section \ref{S:CC} gives a rather precise description of the Castelnuovo curves, hence of some algebraic webs of maximal rank.
Section \ref{S:beyond} expounds new results: an (easy) variant of Abel's Theorem (Proposition \ref{P:wabel}), which is combined with Chern's bound on the rank so as to obtain  bounds on the genus of curves included in abelian varieties
({\it cf.} Theorem \ref{T:abelabel}).

 \medskip

\defi[Chapter~\ref{Chapter:4}] is where the converse  to Abel's Theorem is demonstrated. Its proof is given through
a reduction to the plane case which is then treated  using
 a classical argument that can be traced back to Darboux.  Then  a presentation of some algebraization results
 follows. Important concepts as Poincar\'{e}'s and  canonical maps for webs are discussed in this chapter.
 Our only contribution is of formal nature and is situated in Section \ref{S:AIdual*},
 where we endeavor to work as intrinsically as possible.

 \medskip

\defi[Chapter \ref{Chapter:Trepreau}] is entirely devoted to
Tr\'{e}preau's algebraization result.   The proof that is  presented is essentially the same as the original one \cite{Trepreau}.
The only ``novelty'' in this chapter is Section
\ref{S:CC5}, where  a geometric interpretation of the proof is given.
As in the preceding chapter, an effort was made to formulate some of the results and theirs proofs as intrinsically as possible.

\medskip

\defi[Chapter \ref{Chapter:6}]
takes up the case of planar webs of maximal rank,  more specifically of exceptional planar webs.
Classical criteria which characterize linearizable webs on the one hand, and maximal rank webs on the other hand are explained.
Then the existence of exceptional planar $k$-webs,  for arbitrary $k \ge 5$, is established through the study of webs admitting infitesimal
automorphisms. The classification of the so called CDQL webs on compact complex surfaces obtained  recently by the authors is  also reviewed.
The chapter  ends with a brief discussion about all the  examples of  planar exceptional webs we are aware of.


\bigskip

\section*{How to use this book}
The logical organization of this book is rather simple: the readers with enough time to  spare can read it from cover
to cover. 
\smallskip 

Those mostly interest in  Bol-Tr\'{e}preau's algebraization Theorem,  may find useful the  graph below
which suggests  a minimal route towards it.
\medskip

\begin{tabular}{c}
\qquad 
\setcounter{diagheight}{50}
\begin{chart}
\reqhalfcourse 25,45:{Chapter 1}{Basic definitions}{Section 1.1}
\reqhalfcourse 25,35:{Chapter 2}{Abelian relations}{}
   \prereq 25,45,25,35:
\reqhalfcourse 15,25:{Chapter 3}{Abel's  theorem}{Sections 3.1, 3.2 }
   \prereq 25,35,15,25:
\reqhalfcourse 35,25:{Chapter 3}{Algebraic webs of \\ maximal rank}{Section 3.3}
   \prereq 25,35,35,25:
\reqhalfcourse 15,15:{Chapter 4}{A converse of \\ Abel's theorem}{Sections 4.1, 4.2}
   \prereq 15,25,15,15:
\reqhalfcourse 40,15:{Chapter 4}{Algebraization of \\ smooth $2n$-webs}{Section 4.3}
   \prereq 15,25,15,15:
   \prereq 15,15,40,15:
\reqhalfcourse 25,05:{Chapter 5}{Algebraization of \\ codimension one webs}{}
   \prereq 15,15,25,05:
   \prereq 40,15,25,05:
\prereq 35,25,25,05:
\end{chart}
\end{tabular}
\newpage

Those  anxious to learn  more about  exceptional webs
might prefer to use instead  the following graph  as a reading guide.

\bigskip

\begin{tabular}{c}
\qquad 
\setcounter{diagheight}{50}
\begin{chart}
\reqhalfcourse 15,45:{Chapter 1}{Basic Definitions}{Section 1.1}
\reqhalfcourse 35,45:{Chapter 1}{Planar $3$-webs}{Section 1.2}
\prereq 15,45,35,45:
\prereq 35,45,25,05:
\reqhalfcourse 10,35:{Chapter 2}{Determining the \\ abelian relations}{Section 2.1}
\prereq 15,45,10,35:
\reqhalfcourse 40,35:{Chapter 2}{Bounds for \\ the rank}{Section 2.2}
   \prereq 15,45,40,35:
\reqhalfcourse 25,25:{Chapter 3}{Abel's  theorem}{Sections 3.1, 3.2 }
\reqhalfcourse 12,15:{Chapter 4}{A converse of \\ Abel's theorem}{Sections 4.1, 4.2}
   \prereq 25,25,12,15:
\reqhalfcourse 38,15:{Chapter 4}{Algebraization of \\ smooth $2n$-webs}{Section 4.3}
   \prereq 25,25,38,15:
   \prereq 12,15,38,15:
\reqhalfcourse 25,05:{Chapter 6}{Exceptional webs}{}
   \prereq 25,25,25,05:
   \prereq 40,15,25,05:
\prereq 40,35,25,05:
\prereq 10,35,25,05:
\end{chart}
\end{tabular}

\renewcommand{\thefootnote}{\fnsymbol{footnote}}

\chapter*{Conventions}
\markboth{\MakeUppercase{Conventions}}{\MakeUppercase{Conventions}}
\addcontentsline{toc}{chapter}{Conventions}
\thispagestyle{empty}

All the \defi[definitions]\index{Definition}, including this one,   are presented in bold case and have a corresponding entry
at the remissive index.

\medskip

Unless stated otherwise all the geometric entities like curves, surfaces,  varieties and manifolds considered
in this text are reduced and complex holomorphic. Curves, surfaces and varieties may be singular, and may have
several irreducible components. The manifolds are smooth connected  varieties.

Web geometry lies on the interface of local differential geometry and projective algebraic geometry.
Throughout the text, the reader will be confronted with both local non-algebraic subvarieties of
the projective space as well as with global, and hence algebraic and compact, projective subvarieties. A
 projective curve, surface, variety, or manifold will mean a compact curve, surface, variety, or manifold contained in some  projective space. Beware
that some authors use the term projective to qualify any subvariety, compact or not, algebraic or not, of a given projective space.

\medskip

Throughout there will be references to points $x \in (\mathbb C^n,0)$ and properties
of germs at the point $x$. The point $x$ has to be understood as a point at a sufficiently small neighborhood of the origin  and the
property as a property of some representative of the germ defined in this very same sufficiently small neighborhood.

\medskip

If $n$ is a positive integer, $\underline n$ will stand for the set $\{ 1, \ldots, n\}$.
For any $q\in \mathbb N$,  $\mathbb C_q[x_1, \ldots, x_n]$ will stand for the vector space of degree $q$ homogeneous polynomials in $x_1, \ldots, x_n$.
The span of a subset $S$ of a projective space or of a vector space will be denoted by $\langle S \rangle$.


\chapter{Local and global webs}\label{Chapter:intro}

In its classical form web geometry studies  local configurations of finitely many smooth foliations in
general position. In Section \ref{S:basicdef}  the basic definitions of our subject are laid down and the  algebraic webs
are introduced. These are among the most important examples  of the whole theory.

\medskip

Germs of webs defined by few foliations in  general position are far from being  interesting. Basic results
from  differential calculus imply that the theory is locally trivial.
As soon as the number of foliations surpasses the dimension of the ambient manifold
this is no longer true. The discovery in the last years of the 1920 decade of the
curvature  for $3$-webs on surfaces is considered as the birth of web geometry. In Section \ref{S:3webs}
 this curvature form is discussed and an early emblematic result of theory that characterizes
its vanishing is presented.

\medskip

Although the emphasis of the theory is local the most emblematic examples are indeed globally
defined on  projective manifolds. In Section \ref{S:fancydef}   the  basic definitions are extended to encompass both germs of singular
as well as global webs. Certainly more demanding  than the previous sections,  Section \ref{S:fancydef} should be
read in parallel with Section \ref{S:examples} where the algebraic webs are revisited from a global viewpoint and
 is discussed how one can associated webs to linear systems on surfaces.

\section{Basic definitions}\label{S:basicdef}

\subsection{Germs of smooth webs}\label{S:gsw}

A \defi[germ of smooth codimension one  $k$-web]\index{Web!smooth} $$\mathcal
W=\mathcal F_1\boxtimes
\cdots \boxtimes \mathcal F_k$$ on $(\mathbb
C^n,0)$ 
is a collection of $k$ germs of smooth codimension one
holomorphic foliations such that their tangent spaces at the origin
are in \defi[general position], that is,  for any number $m$ of these foliations, $m\le
n$, the corresponding  tangent spaces at the origin have  intersection of codimension $m$.

Usually  the foliations $\mathcal F_i$ are presented by germs of holomorphic $1$-forms
$\omega_i \in \Omega^1{(\mathbb C^n,0)}$, non-zero
at $0 \in \mathbb C^n$ and satisfying Frobenius integrability condition $\omega_i \wedge d \omega_i=0$.
To present a germ of smooth web and keep track of  its defining $1$-forms two alternative notations will
be used: $\mathcal W=\mathcal W(\omega_1 \cdot \omega_2 \cdot \cdots \cdot \omega_k)$ or $\mathcal W=\mathcal W(\omega_1, \ldots, \omega_k)$.
While the latter is self-explanatory the former presents $\mathcal W$ as
an object defined by an element of $\mathrm{Sym}^k \Omega^1{(\mathbb C^n,0)}$. Notice that the general position assumption
translates into
\[
\big(\omega_{i_1} \wedge \cdots \wedge \omega_{i_m}\big) (0) \neq 0
\]
where $\{ i_1, \ldots, i_m \}$ is any subset of $\underline k$ of cardinality ${m\le \min\{k,n\}}$.

Since the foliations $\mathcal F_i$ are smooth they can be defined by level sets of submersions $u_i: (\mathbb C^n,0) \to \mathbb C$.
When profitable to present the web in terms of its defining  submersions   $\mathcal W=\mathcal W(u_1, \ldots, u_k)$
will be used.

\smallskip

The germs of \defi[quasi-smooth] \index{Web!quasi-smooth} webs on $(\mathbb C^n,0)$ are defined by
replacing the general position hypothesis on the tangent
spaces at zero by the weaker condition of pairwise tranversality. Explicitly, a germ of quasi-smooth $k$-web $\mathcal W=\mathcal F_1
\boxtimes \cdots \boxtimes \mathcal F_k$  on $(\mathbb C^n,0)$ is a collection of smooth foliations such that $T_0 \mathcal F_i \neq T_0 \mathcal
F_j$ whenever $i$ and $j$ are distinct elements of $\underline k$.

\medskip

There are similar definitions for webs of arbitrary ( and even
mixed ) codimensions.  Although extremely rich,  the  theory of webs of arbitrary codimension will not be discussed in
this book.

It is also interesting to study webs in different categories. For instance one can paraphrase the definitions
above to obtain differentiable, formal, algebraic, \ldots webs. This text, unless stated otherwise,  will stick
to the holomorphic category.

\subsection{Equivalence and first examples}\label{S:eq1st}
\index{Web!equivalence}
Local web geometry is ultimately interested in the classification of germs of smooth webs up to
the natural action of $\mathrm{Diff}(\mathbb C^n,0)$ -- the group of germs of biholomorphisms of $(\mathbb C^n,0)$.
If $\varphi \in \mathrm{Diff}(\mathbb C^n,0)$ is a germ of biholomorphism then the natural action just referred to is given by
\[
\varphi^*  \mathcal W(\omega_1 \cdots \omega_k)   = \mathcal W(\varphi^*( \omega_1  \cdots \omega_k ) ) \, .
\]

\smallskip

The germs of $k$-webs $\mathcal W(\omega_1 \cdots \omega_k) $ and $\mathcal W'(\omega_1' \cdots \omega_k')$ will be considered
\defi[biholomorphically
equivalent] if
\[
\varphi^* (\omega_1 \cdots \omega_k) = u \, \cdot \left( \omega_1' \cdots \omega_k' \right) \,
\]
for some  germ of biholomorphism $\varphi$ and some germ of invertible function $u \in \mathcal O_{(\mathbb C^n,0)}^*$.
In other words, there exists a permutation $\sigma \in \mathfrak{S}_k$ -- the symmetric group on $k$ elements -- \,  such that the germs of
2-forms $\varphi^* \omega_i \wedge \omega_{\sigma(i)}'$ are identically zero for
every $i \in \underline k$.

\begin{figure}[ht]
\begin{center}\includegraphics[height=3.0cm,width=4.5cm]{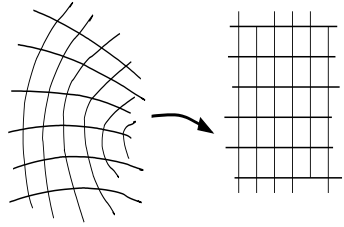}\end{center}
\caption{There is only one smooth $2$-web.}
\end{figure}

\smallskip

Clearly the bihomolorphic equivalence defines an equivalence relation on the set of smooth $k$-webs on $(\mathbb C^n,0)$.
When the dimension of the space is greater than or equal to the number of defining foliations, that is when $n \ge k$, there
is just one equivalence class. Indeed, if one considers a smooth $k$-web defined by $k$ submersions $u_i:(\mathbb C^n,0)  \to
\mathbb C$ then the map $U : (\mathbb C^n,0) \to \mathbb C^k$, $U = ( u_1, \ldots u_k)$ is a submersion thanks to the general
position hypothesis. The constant rank Theorem ensures the existence of  a biholomorphism $\varphi : (\mathbb C^n,0) \to (\mathbb C^n,0)$
taking  the function $u_i$ to the coordinate function $x_i$ for every $i \in \underline k$. Symbolically,  $\varphi^* u_i = x_i$.

\medskip

When the number of defining foliations exceeds the dimension of the space by at least two ($ k \ge n +2$)  then one can
see the existence of a multitude of equivalence class by the following considerations.

For a $k$-web $\mathcal W= \mathcal F_1 \boxtimes \cdots \boxtimes \mathcal F_k$,
the tangent spaces of the foliations $\mathcal F_i$  at the origin determine a collection of
$k$ unordered points in $\mathbb P T^*_0 (\mathbb C^n,0) = \mathbb P^{n-1}$. The set of isomorphism classes of
$k$ unordered points in general position in a projective space $\mathbb P^{n-1}$  is the quotient
of the open subset  $U$ of $(\mathbb P^{n-1})^k$  parametrizing  $k$ distinct points in general position by the action
\[
\big( (\sigma , g) , (x_1, \ldots,x_k )\big)  \mapsto  \left( g ( x_{\sigma(1)}) , \ldots, g( x_{\sigma(k)} ) \right)
\]
of the group $G=\mathfrak S_k \times \mathrm{PGL}(n, \mathbb C)$.

When $k \le n+1$ the action of $G$ on $U$ is transitive and
 there is exactly one isomorphism class. When $k \ge n+2$ the action is locally free (the stabilizer of
any point  in $U$ is finite)    and in particular
the  set of isomorphism classes of $k$ unordered points in $\mathbb P^{n-1}$ has  dimension $(k- n - 1)(n-1)$.

If $\mathcal W$ and $\mathcal W'= \varphi^* \mathcal W$ are two biholomorphically equivalent $k$-webs on $(\mathbb C^n,0)$ then
their tangent spaces at the origin  determine two sets of $k$ points on $\mathbb P^{n-1}$ which are
isomorphic through $[d\varphi(0)]$, the projective automorphism determined by the projectivization of the linear map $d\varphi(0)$.
It is then clear that for $k \ge n+2$ there are many non equivalent germs of smooth $k$-webs on $(\mathbb C^n,0)$.

\dd

It is tempting to  infer from the discussion  above that there is only one  equivalence class of smooth $(n+1)$-webs  on $(\mathbb C^n,0)$
using the following fallacious argument: (a) to a $(n+1)$-webs on $(\mathbb C^n,0)$ one can associate $n+1$ sections of $\mathbb  P T^* (\mathbb C^n,0)$;
(b) since there is only one isomorphism class of unordered $(n+1)$ points in general position in $\mathbb P^{n-1}$  these sections can be send, through
an biholomorphism of $\mathbb P T^* (\mathbb C^n,0)$, to the constant sections $[dx_1], \ldots, [dx_n], [dx_1 + \cdots + dx_n]$; (c) therefore (a) and (b)
implies that every smooth $(n+1)$-web is equivalent to the web $\mathcal W(dx_1,\ldots, dx_n, dx_1 + \cdots + dx_n)$.

While (a) and (b) are sound,  the conclusion (c)  is completely unjustified. The point is that the automorphism used in (b) is not necessarily
induced by a biholomorphism $\varphi \in \mathrm{Diff}(\mathbb C^n,0)$. To wit,  every biholomorphism
$\Phi: \mathbb P T^* (\mathbb C^n,0) \to \mathbb P T^* (\mathbb C^n,0)$ can be written in the form
\[
\Phi(x,v) = \big( \varphi(x), [A(x) \cdot v ] \big)
\]
where $\varphi \in \mathrm{Diff}(\mathbb C^n,0)$ and $A \in \mathrm{GL}(n, \mathcal O_{(\mathbb C^n,0)})$. But for
very few of them $[A(x) \cdot v] = [d \varphi (x) \cdot v]$.

It will be shown in  Section \ref{S:3webs} that not every $3$-web on $(\mathbb C^2,0)$ is equivalent to the parallel $3$-web
$\mathcal W(dx,dy , dx  + dy)$.

\subsection{Algebraic webs}\label{S:defalg}
\index{Web!algebraic|(}
Given a projective  curve $C\subset \mathbb P^n$ of degree $d$ and a hyperplane $H_0 \in \check{\mathbb P}^n$ intersecting
$C$ transversely, there is a natural germ of quasi-smooth $d$-web $\mathcal W_C(H_0)$ on $(\check{\mathbb P}^n, H_0)$ defined by the submersions
${p_1, \ldots, p_d : (\check{\mathbb P}^n,H_0) \to C}$ which describe the intersections of $H \in (\check{\mathbb P}^n, H_0)$ with $C$.
Explicitly, if one writes the restriction of $C$ to a sufficiently small neighborhood of $H_0 \subset \mathbb P^n$ as $C_1 \cup \cdots \cup C_d$,
where the curves $C_i$ are pairwise disjoint curves, then the functions $p_i$ are defined as $p_i(H) = H \cap C_i$. The corresponding
$d$-web is $\mathcal W_C(H_0) = \mathcal W(p_1, \ldots, p_d)$.
The $d$-webs of the form $\mathcal W_C(H_0)$ for some reduced projective curve $C$ and some transverse hyperplane
$H_0$ are classically called \defi[algebraic $d$-webs].

From the definition of $p_i$ it is clear that the inclusion
\[
p_i^{-1}(p_i(H)) \subset \{ H' \in \check{\mathbb P}^n \, \vert \, p_i(H) \in H'   \}
\]
holds true for every $H \in (\check{\mathbb P}^n,H_0)$ and every $i \in \underline d$. In other words the  fiber of $p_i$ through a point $H \in (\check{\mathbb P^n},H_0)$
is contained in the  set of hyperplanes
that contain the point $p_i(H) \in C_i \subset C \subset \mathbb P^n$. Consequently the fibers of the submersions $p_i$ are
(pieces of) hyperplanes.

It is clear from the definition of $\mathcal W_C ( H_0)$ that when $C$ is a reducible curve with irreducible components
$C_1, \ldots, C_m$  then
\[
\mathcal W_C( H_0)  = \mathcal W_{C_1}(H_0) \boxtimes \cdots \boxtimes \mathcal W_{C_m}(H_0) \, .
\]

\begin{figure}[H]
\begin{center}
\begin{tabular}{cc}
\includegraphics[width=4.3cm,height=4.3cm]{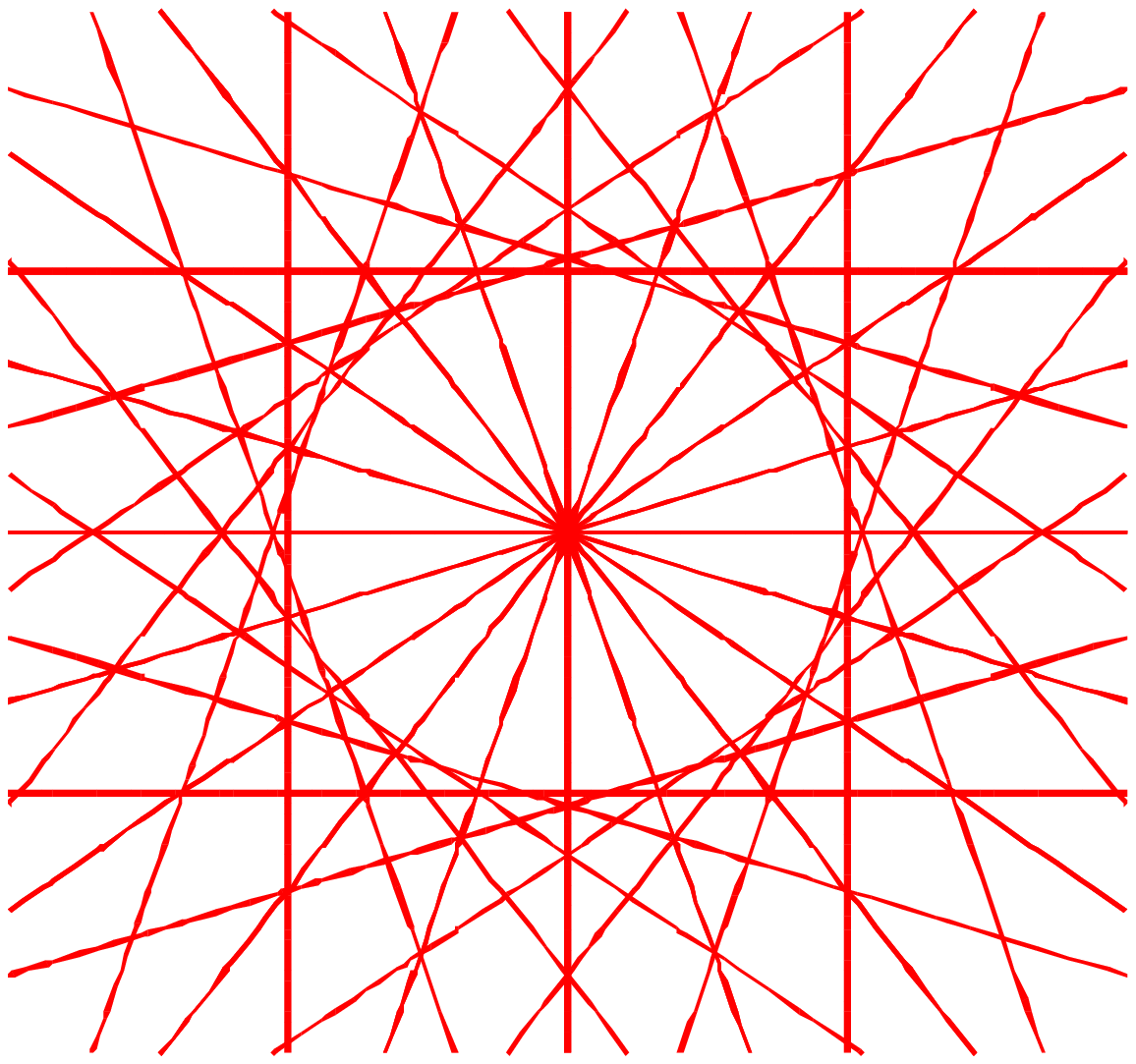}
\qquad &\qquad 
\includegraphics[width=4.3cm,height=4.3cm]{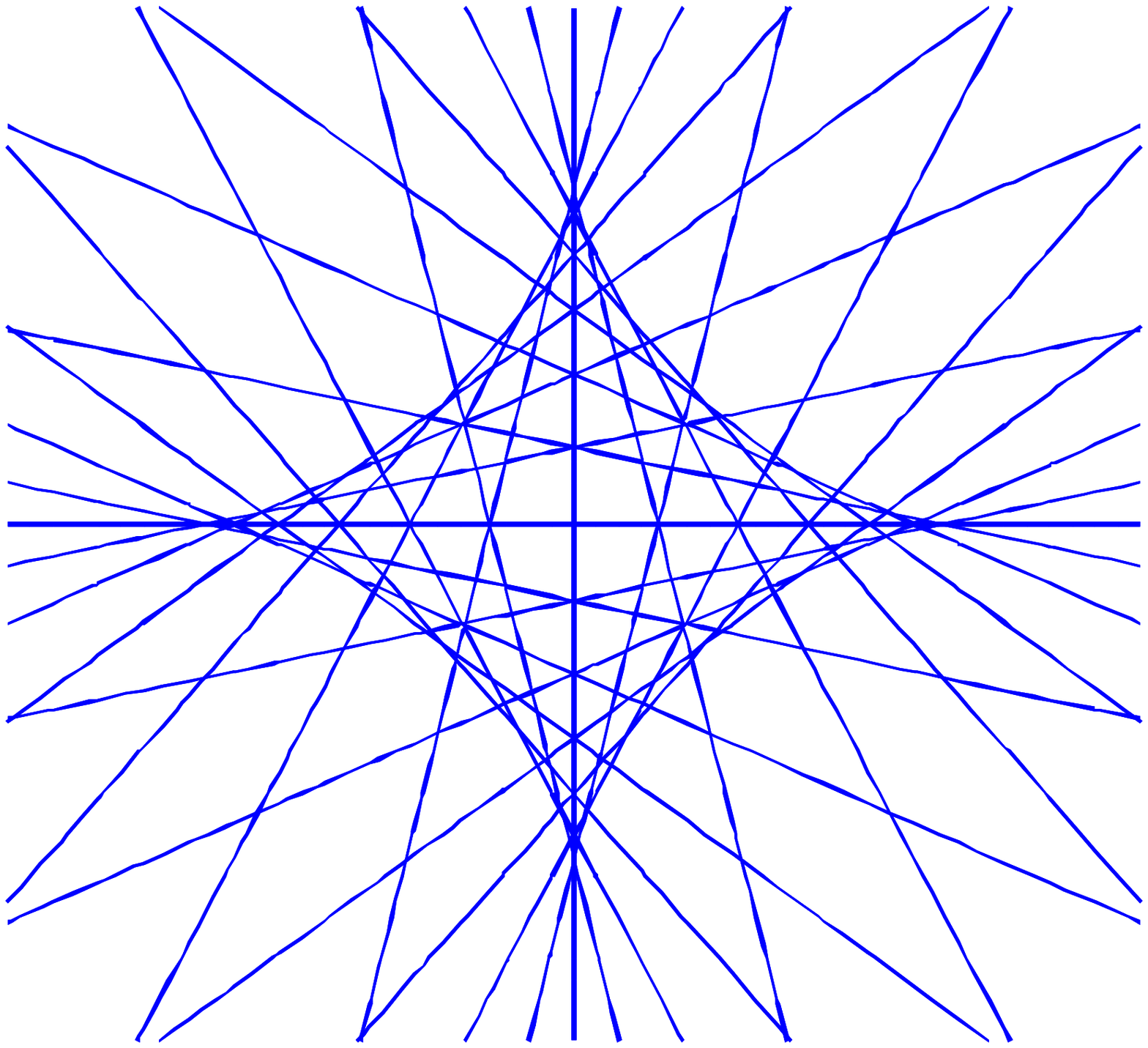}
\end{tabular}
\caption[Two algebraic webs] { On the left $\mathcal W_C$
is pictured for  a planar reduced cubic curve $C$ formed by a line and a
conic. On the right  $\mathcal W_C$ is drawn for a planar rational quartic
 $C$.}
 \end{center}
\end{figure}

Moreover, it has not  been really used that $C$ is a projective curve.
Indeed, if it is agreed to define \defi[linear webs] \index{Web!linear} as  the ones  for which all leaves are (pieces of) hyperplanes
then  the construction just presented
establishes an equivalence between linear quasi-smooth $k$-webs on $(\check{\mathbb P}^n,H_0)$,
 and $k$ germs
of curves in $\mathbb P^n$ intersecting $H_0$ transversely in $k$ distinct points.

\begin{figure}[ht]
\label{F:AbelInverseTheorem00}
\begin{center}
\psfrag{P}[][]{$ \mathbb P^2$}
\psfrag{Pd}[][]{$ \check{\mathbb P}^2$}
\psfrag{F1}[][]{\textcolor{pipo}{$ \mathcal F_1 $}}
\psfrag{CF1}[][]{
\textcolor{pipo}{$ C_{1} $}}
\psfrag{F2}[][]{\textcolor{mag}{$ \mathcal F_2 $}}
\psfrag{CF2}[][]{
\textcolor{mag}{$ C_{2} $}}
\psfrag{F3}[][]{\textcolor{blo}{$ \mathcal F_3 $}}
\psfrag{CF3}[][]{
\textcolor{blo}{$ C_{3} $}}
\psfrag{produ}[][]{\begin{tabular}{c}
 projective \\duality $\mathcal D$\end{tabular}}
\resizebox{2.9in}{1.5in}{
\includegraphics{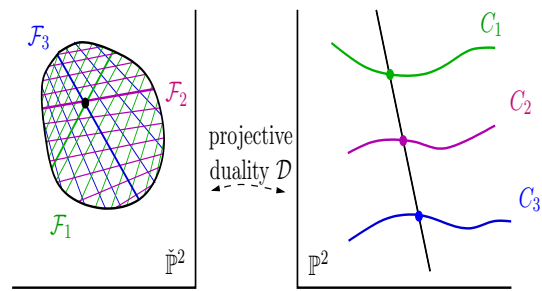} }
\end{center}
\caption[Projective duality]{Projective duality for a  linear 3-web  in dimension two.}
\end{figure}

Back to the case where $C$ is projective, if no irreducible component of $C$ is a line then
there is the following alternative description of  $\mathcal W_C(H_0)$.
Let $\check C$ be the dual hypersurface of $C$, that is, $\check C \subset \check{ \mathbb P^n}$ is the closure of the union
of hyperplanes $H \in \check{\mathbb P}^n$ containing a tangent line of $C$ at some smooth point $p \in C_{{sm}}$. Symbolically,
\[
\check C = \overline{\bigcup_{p \in C_{sm}} \bigcup_{\substack{   H \in \check{\mathbb P}^n     \\  T_p C \subset H }}\, H \,} \, .
\]
The leaves of $\mathcal W_C(H_0)$ through $H_0$ are the hyperplanes passing through it and tangent to $\check C$ at some point $p \in \check C$.

A similar interpretation holds true when $C$ does contain  lines among its irreducible components. The differences are: the dual of
a line is no longer a hypersurface but a $\mathbb P^{n-2}$ linearly embedded in $\check{ \mathbb P}^n$; and  the leaf of a $1$-web dual to
a line, through a point $H_0 \in \check{\mathbb P}^n$ is the hyperplane in $\check{ \mathbb P}^n$ containing both $H_0$ and the dual $\mathbb P^{n-2}$.
\index{Web!algebraic|)}

\section{Planar $3$-webs}\label{S:3webs}

This section  presents one of the founding stone of web geometry: the characterization
of hexagonal planar $3$-webs through  their holonomy and curvature.  The exposition here follows closely \cite{Nakai1}.
The hexagonality  of algebraic $3$-webs is also worked out in detail, see Section \ref{S:algcub}. For a more leisure  account the
reader can consult {\cite[Lecture 18]{taba}}.

\subsection{Holonomy and hexagonal webs}
\index{Web!holonomy|(}
Let $\mathcal W= \mathcal F_1 \boxtimes \mathcal F_2 \boxtimes \mathcal F_3$ be a germ of smooth
$3$-web on $(\mathbb C^2,0)$. Denote by $L_1, L_2, L_3$ the leaves through $0$ of $\mathcal F_1,\mathcal F_2,\mathcal F_3$
respectively.

\smallskip

If $x=x_1 \in L_1$ is a point sufficiently close to the origin then, thanks to the persistence of transversal intersections
under small deformations,  the leaf of $\mathcal F_3$ through it intersects $L_2$ in a unique  point $x_2$ close to the origin. Moreover
the map that associates to $x=x_1 \in L_1$ the point $x_2 \in L_2$ is a germ of biholomorphic  map $h_{12}:(L_1,0) \to (L_2,0)$.

Analogously there exists a bihomolorphism $h_{23}:(L_2,0)\to (L_3,0)$ that associates to $x_2\in L_2$
the point $x_3 \in L_3$ defined by the  intersection  of $L_3$ with the leaf of $\mathcal F_1$ through $x_2 \in L_2$.

Proceeding in this way one can construct a sequence of points $x_1 \in L_1$, $x_2 \in L_2$, $x_3 \in L_3$, $x_4 \in L_1$, $x_5 \in L_2$,
$x_6 \in L_3$, $x_7 \in L_1$. The function  that associates to the initial point $x=x_1$ the end
point $x_7$ is the germ of bihomolomorphism $h:(L_1,0) \to (L_1,0)$ given by the composition
\[
h_{31} \circ h_{23} \circ h_{12} \circ  h_{31} \circ h_{23} \circ h_{12} .
\]

The reader is invited to verify the following properties of the biholomorphism $h$.
\begin{enumerate}
\item[(a)] If one does the same construction but with the  roles of the foliations $\mathcal F_1, \mathcal F_2, \mathcal F_3$
replaced by the foliations $\mathcal F_{\sigma(1)},\mathcal F_{\sigma(2)},\mathcal F_{\sigma(3)}$ -- $\sigma$ being a permutation
of $\{1,2,3\}$ --   then the resulting biholomorphism is conjugated to $h^{\mathrm{sign}(\sigma)}$;
\item[(b)] If $\varphi:(\mathbb C^2,0) \to (\mathbb C^2,0)$ is a biholomorphism and $\overline{ \mathcal W}= \varphi^* \mathcal W$ then
the corresponding biholomorphism $\overline h:(\overline L_1,0)\to (\overline L_1,0)$ for the leaf $\overline{L}_1=\varphi^{-1}(L_1)$ of $\overline{ \mathcal F_1} =  \varphi^* \mathcal F_1$
is equal to $\varphi^{-1} \circ h \circ \varphi$.
\end{enumerate}

It follows from the two properties above that the conjugacy class in $\mathrm{Diff}(\mathbb C,0)$ of the
 group generated by $h$ is intrinsically attached to $\mathcal W$.
This class is by definition the \defi[holonomy of $\mathcal W$ at $0$]. It will  be convenient to say that
$h$ is the holonomy of $\mathcal W$ at  $0$ instead of repeatedly refereing to the conjugacy
 class of the group generated by it. Hopefully no confusion will arise from this abuse of terminology.

\begin{figure}[ht]
\begin{center}
\psfrag{L1}[][]{$ L_1 $}
\psfrag{L2}[][]{$ L_2 $}
\psfrag{L3}[][]{$ L_3 $}
\includegraphics[height=4.2cm,width=4.5cm]{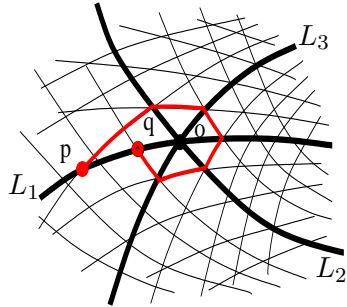}\end{center}
\caption{The holonomy of a planar $3$-web \label{F:hexagonal}}
\end{figure}

To get a better grasp of the definition of the holonomy of $\mathcal W$ and to prepare
the ground for what is to come, a family of examples parametrized by $\mathbb C$ is  presented below
in the form of a lemma.

\begin{lemma}\label{L:Wk}
If $k \in \mathbb C$ is a complex number and $\mathcal W_k = \mathcal W(x, y, x+y +xy(x-y)(k + \hot))$ then
 the holonomy of $\mathcal W_k$
is generated by a germ of biholomorphism $h_k : (\mathbb C,0) \to (\mathbb C,0)$
which  has as  first coefficients in its series expansion
\[
h_k(x) = x  + 4k x^3 + \hot \, .
\]
\end{lemma}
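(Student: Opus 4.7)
The plan is to compute the holonomy by direct calculation, tracking each of the six steps through power-series expansion.

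Let me set up the bookkeeping. The three relevant leaves through the origin are $L_1 = \{x=0\}$, $L_2 = \{y=0\}$ and $L_3 = \{u_3 = 0\}$, where
\[
u_3(x,y) = x + y + k\, xy(x-y) + \text{h.o.t.}
\]
with the higher order terms being of degree $\geq 5$. I parametrize $L_1$ by its $y$-coordinate, so that $x_1 = (0,t)$, and aim to express the $y$-coordinate of $x_7$ as a power series in $t$ modulo $t^4$.

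The first three steps are straightforward once one handles Step~2. Since $u_3(0,t)=t$ and $u_3(s,0)=s$, the leaf of $\mathcal{F}_3$ through $x_1=(0,t)$ meets $L_2$ at $x_2 = (t,0)$ (exactly, for any $k$). Next, $h_{23}$ requires intersecting $\{x=t\}$ with $\{u_3=0\}$; substituting $y = -t + \epsilon$ into
\[
u_3(t,y) = t + y + k t y (t-y) + \text{h.o.t.} = 0
\]
yields $\epsilon - 2kt^3 + O(t^2\epsilon) + \text{h.o.t.} = 0$, so by the implicit function theorem $\epsilon = 2kt^3 + \text{h.o.t.}$ and $x_3 = (t,\, -t + 2kt^3 + \text{h.o.t.})$. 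Finally the leaf of $\mathcal{F}_2$ through $x_3$ is horizontal, hence $x_4 = (0,\, -t + 2kt^3 + \text{h.o.t.})$. So after three steps the map $L_1 \to L_1$, in the $y$-parameter, is $\phi(t) = -t + 2kt^3 + \text{h.o.t.}$

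To reach $x_7$ one applies $\phi$ once more. Because $\phi(t)^3 = (-t)^3 + O(t^5) = -t^3 + \text{h.o.t.}$, one computes
\[
h_k(t) \;=\; \phi\bigl(\phi(t)\bigr) \;=\; -\phi(t) + 2k\,\phi(t)^3 + \text{h.o.t.} \;=\; t - 2kt^3 - 2kt^3 + \text{h.o.t.} \;=\; t - 4kt^3 + \text{h.o.t.},
\]
which yields the leading correction $\pm 4k t^3$ claimed in the statement (the sign depending only on which of the two natural orientations one uses to identify $(L_1,0)$ with $(\mathbb{C},0)$; passing to the inverse biholomorphism corresponds to traversing the hexagonal path in the opposite direction and flips the sign of the odd coefficient).

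The main obstacle is really only bookkeeping: one must choose the right order of expansion (degree~$3$ in the final answer requires solving the implicit equation to order~$3$, hence keeping only the degree-$4$ monomial $kxy(x-y)$ of $u_3$, which is why the $\text{h.o.t.}$ in the definition of $\mathcal{W}_k$ plays no role), and one must be attentive to the fact that the two halves of the computation are identical, so that the nonlinear correction $2kt^3$ gets added to itself under the second application of $\phi$ rather than cancelling. Everything else is routine.
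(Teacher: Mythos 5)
Your proof is correct and is essentially the paper's own argument: a direct order-three computation of the six intermediate points of the hexagonal return map, with the only nontrivial step being the implicit solution of $u_3(t,y)=0$ near $y=-t$, exactly as in the paper. Two small remarks. First, the discrepancy $t-4kt^3$ versus the stated $x+4kx^3$ is genuine but harmless, and your second explanation is the right one while your first is not: a reparametrization $t\mapsto \lambda t$ of $(L_1,0)$ multiplies the cubic coefficient of a germ tangent to the identity by $\lambda^2$, so choosing the other ``orientation'' $t\mapsto -t$ does \emph{not} flip the sign. What actually happens is that the paper's proof starts the hexagon at $(x,0)\in\{y=0\}$, i.e.\ runs the construction for the web with $\mathcal F_1$ and $\mathcal F_2$ transposed; by property (a) of the holonomy this yields a conjugate of $h^{-1}$, whose cubic coefficient is the negative of that of $h$. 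Since the holonomy is only the conjugacy class of the group generated by $h$, and $h^{-1}(t)=t+4kt^3+\hot$ is an equally valid generator, your computation does establish the lemma. Second, the higher order terms of $u_3$ have degree $\ge 4$ (not $\ge 5$), since $xy(x-y)$ already has degree $3$; this is immaterial, as a degree-$4$ term still only perturbs $\epsilon$ at order $t^4$.
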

\begin{proof}
Let $x_1 = (x,0) \in (L_1,0)$. To compute $x_2$ notice that $f_k(x,y) = x+y +  xy(x-y)(k+\hot)$ is equal
to $x$ when evaluated on both $x_1 = (x,0)$ and $(0,x)$. In other words the leaf of
$\mathcal F_3$, the foliation determined by $f_k$, through $x_1$ cuts the leaf of $\mathcal F_2$,
the foliation determined by $y$, in $x_2= (0,x)$.

From the definition of $x_3$ it is clear that  its second coordinate is equal to $x$.
To determine its first coordinate one has to solve the implicit equation $f_k(t,x)=0$. A straight forward computation
yields
$t = -x -2kx^3 + \hot
$ and consequently $x_3 = (-x-2kx^3,x)$ up to higher order terms.

Proceeding in this way one finds
\begin{align*}
x_4&=\,(-x-2kx^3,0)  &&x_5=(0,-x-2kx^3) \\
x_6&=\, (x+4kx^3,-x-2kx^3)&& x_7 =(x + 4kx^3 , 0)
\end{align*}
up to higher order terms. The details are left to the reader.
\end{proof}

\medskip

In what concerns their  holonomy the simplest smooth $3$-webs on $(\mathbb C^2,0)$ are the \defi[hexagonal webs]. \index{Web!hexagonal}
 By definition these
are the ones which can be represented in a neighborhood $U$ of the origin by three pairwise transversal smooth foliations whose
germification at any point $x \in U$ is a germ of $3$-web with trivial holonomy. Using the convention about germs
spelled out at the end of  Section \ref{S:basicdef} the hexagonal webs on $(\mathbb C^2,0)$ are the ones with trivial holonomy
at every point
$x \in (\mathbb C^2,0)$. The guiding example  is
$\mathcal W(x,y,x+y)$, see Figure \ref{F:proofhex} below for a proof of its hexagonality.

\begin{figure}[ht]
\begin{center}\includegraphics[height=4cm,width=4cm,angle=90]{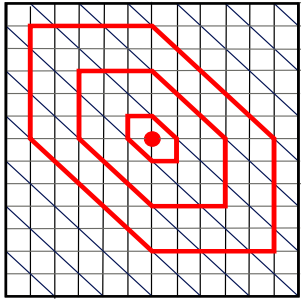}\end{center}
\caption{The web $\mathcal W(x,y,x+y)$ is hexagonal.} \label{F:proofhex}
\end{figure}

Beware that hexagonality  is much stronger  than asking the holonomy to be  trivial only at the origin. It is an
instructive exercise to produce an example of $3$-web having trivial holonomy at zero but non-trivial holonomy at a generic
$x  \in (\mathbb C^2,0)$.

\index{Web!holonomy|)}

\subsection{Curvature for planar $3$-webs}
\index{Web!curvature|(}
Suppose now that a $3$-web $\mathcal W$ on $(\mathbb C^2,0)$ is presented by its defining $1$-forms,
that is, $\mathcal W= \mathcal W(\omega_1, \omega_2, \omega_3)$.

\smallskip

There exist invertible functions $u_1, u_2, u_3  \in \mathcal O^*(\mathbb C^2,0)$ for which
\begin{equation}\label{E:normal11}
      u_1 \,\omega_1 + u_2 \,\omega_2 + u_3 \,\omega_3 = 0 \, .
\end{equation}
For instance, if $\delta_{ij}$ are the holomorphic functions defined by the
relation $\delta_{ij} dx \wedge dy = \omega_i \wedge \omega_j$ for $i,j = 1,2,3$ then
\[
\delta_{23}\, \omega_1 + \delta_{31}\, \omega_2 + \delta_{12}\, \omega_3 = 0 \, .
\]
Although the triple $(\delta_{23}, \delta_{31} , \delta_{12})$ is not the unique
solution to equation (\ref{E:normal11}), any other will differ from it by the multiplication
by an invertible  function. In other words, the most general solution of (\ref{E:normal11}) is
$(u_1,u_2,u_3) = u \cdot (\delta_{23}, \delta_{31} , \delta_{12})$ where $u \in \mathcal O^*(\mathbb C^2,0)$ is
 arbitrary.

\smallskip

\begin{lemma}\label{L:eta}
Let  $\alpha_1, \alpha_2, \alpha_3 \in \Omega^1(\mathbb C^2,0)$  be three $1$-forms with pairwise wedge product nowhere zero. If
$\alpha_1+ \alpha_2 + \alpha_3 =0$ then there exists a unique $1$-form $\eta$ such that
\[
d \alpha_i = \eta \wedge \alpha_i \, \quad \text{ for  every } i \in \{ 1, 2, 3 \} \, .
\]
\end{lemma}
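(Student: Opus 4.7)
The plan is to exploit that $(\mathbb{C}^2,0)$ is two-dimensional, so the hypothesis that $\alpha_1 \wedge \alpha_2$ is nowhere zero already turns $(\alpha_1,\alpha_2)$ into a local coframe. Any $1$-form $\eta \in \Omega^1(\mathbb{C}^2,0)$ will then admit a unique decomposition
\[
\eta = a\,\alpha_1 + b\,\alpha_2
\]
with $a,b \in \mathcal{O}(\mathbb{C}^2,0)$, and any $2$-form is a unique multiple of $\alpha_1\wedge\alpha_2$.

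First I would establish existence and uniqueness of $\eta$ from the two conditions $d\alpha_1 = \eta\wedge\alpha_1$ and $d\alpha_2 = \eta\wedge\alpha_2$ alone. Writing $d\alpha_i = c_i \,\alpha_1\wedge\alpha_2$ for suitable functions $c_i$, and plugging in the decomposition of $\eta$, the first equation becomes $c_1\,\alpha_1\wedge\alpha_2 = -b\,\alpha_1\wedge\alpha_2$, which forces $b = -c_1$; similarly $a = c_2$. So there is exactly one $\eta$ satisfying the first two relations, and uniqueness among all three relations follows a fortiori: if $\eta'$ is another candidate, then $(\eta-\eta')\wedge\alpha_i=0$ for $i=1,2$ means $\eta-\eta'$ is simultaneously a multiple of $\alpha_1$ and of $\alpha_2$, hence zero since $\alpha_1\wedge\alpha_2 \neq 0$.

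The remaining point is to verify that the $\eta$ just produced automatically satisfies $d\alpha_3 = \eta\wedge\alpha_3$. This is where the relation $\alpha_1+\alpha_2+\alpha_3=0$ enters. Differentiating gives $d\alpha_3 = -d\alpha_1 - d\alpha_2$, and therefore
\[
d\alpha_3 \;=\; -\eta\wedge\alpha_1 - \eta\wedge\alpha_2 \;=\; \eta \wedge (-\alpha_1-\alpha_2) \;=\; \eta\wedge\alpha_3,
\]
so the third equation is a formal consequence of the first two together with the normalization $\alpha_1+\alpha_2+\alpha_3=0$.

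I do not expect any genuine obstacle here: the proof is essentially linear algebra over $\mathcal{O}(\mathbb{C}^2,0)$, with the sole subtlety being the observation that having three prescribed relations for $\eta$ is not over-determined, precisely because the defining $1$-forms sum to zero. The only thing worth being careful about is the sign bookkeeping in the identification $\alpha_2\wedge\alpha_1 = -\alpha_1\wedge\alpha_2$ when reading off the coefficients $a$ and $b$.
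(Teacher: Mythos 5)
Your proof is correct and follows essentially the same route as the paper's: both reduce to linear algebra in the coframe $(\alpha_1,\alpha_2)$ afforded by $\alpha_1\wedge\alpha_2\neq 0$, derive the third relation from $\alpha_1+\alpha_2+\alpha_3=0$, and get uniqueness from $(\eta-\eta')\wedge\alpha_i=0$ for two indices. The only cosmetic difference is that you solve directly for the coefficients of $\eta$ in that coframe, whereas the paper first picks auxiliary $1$-forms $\gamma_i$ with $d\alpha_i=\gamma_i\wedge\alpha_i$ and then normalizes them to agree.
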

\begin{proof}
Because the ambient space has dimension two, and the $1$-forms $\alpha_i$
are nowhere zero, there exist $1$-forms $\gamma_i$ satisfying
\[
 d\alpha_i  = \gamma_i \wedge \alpha_i.
\]
Notice that the $1$-forms $\gamma_i$ can be replaced by $\gamma_i + a_i \alpha_i$, with $a_i \in \mathcal O(\mathbb C^2,0)$ arbitrary, without changing
the identity above.

The difference  $\gamma_1 - \gamma_2$ is again a  $1$-form. As such, it  can be written
as $a_1 \alpha_1 + a_2 \alpha_2$ with $a_1, a_2 \in \mathcal O{(\mathbb C^2,0)}$. Therefore
\[
 \gamma_1 - a_1 \alpha_1 = \gamma_2 - a_2 \alpha_2 \, .
\]
If $\eta = \gamma_1 - a_1 \alpha_1 = \gamma_2 - a_2 \alpha_2 $ then  it clearly satisfies $d\alpha_1 = \eta \wedge \alpha_1$  and $d\alpha_2 = \eta \wedge \alpha_2$.
Moreover, since $\alpha_3 = - \alpha_1 - \alpha_2$, it also satisfies $d\alpha_3 = \eta \wedge \alpha_3$.
This establishes the existence of $\eta$. For the uniqueness, notice that two distinct solutions $\eta$ and $\eta'$ would verify $(\eta - \eta') \wedge \alpha _i= 0$ for
$i=1,2,3$. Any  two of these identities are sufficient to ensure that  $\eta= \eta'$.
\end{proof}

The lemma above applied  to $( \delta_{23} \omega_1 , \delta_{31} \omega_2 , \delta_{12} \omega_3)$ yields the existence of $\eta \in \Omega^1(\mathbb C^2,0)$
for which $d ( \delta_{jk}\omega_i )= \eta \wedge ( \delta_{jk}\omega_i )$ for any cyclic   permutation  $(i, j, k )$ of  $(1,2,3)$. Presenting $\mathcal W$ through
three others $1$-forms -- say $\omega_1'=a_1 \omega_1, \omega_2'=a_2 \omega_1$, and  $\omega_3'=a_3 \omega_3$ -- one sees that the corresponding $\eta'$ relates to $\eta$
through the  equation
\[
\eta - \eta' = d \log( a_1 a_2 a_3) \, .
\]
In particular the $1$-form $\eta$ depends on the presentation of $\mathcal W$ but in such a way that its differential does not.
The $2$-form $d \eta$ is, by definition, the \defi[curvature of $\mathcal W$] and will be  denoted by $K(\mathcal W)$.

It seems appropriate to borrow  terminology from the XIX century theory of invariants and say that
 the $2$-form $K(\mathcal W)$  is a covariant of the web $\mathcal W$ since
\[
 K(\varphi^* \mathcal W) = \varphi ^* K ( \mathcal W) \,
\]
for every germ of biholomorphism $\varphi \in \mathrm{Diff}(\mathbb C^2,0)$.

\begin{lemma}\label{L:Wk2}
If $\mathcal W= \mathcal W ( x , y , f)$ where  $f \in \mathcal O(\mathbb C^2,0)$ then
\[
K(\mathcal W) = \frac{\partial ^2}{\partial x \partial y} \big( \log (f_x / f_y) \big) dx \wedge dy \, .
\]
In particular, if   $\mathcal W_k = \mathcal W(x, y, x+y +xy(x-y)(k + \hot))$
then
$$
K(\mathcal W_k) = 4k \big( dx \wedge dy\big)
$$ up to higher order terms.
\end{lemma}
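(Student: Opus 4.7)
The plan is to pick a convenient presentation of $\mathcal W = \mathcal W(x, y, f)$ satisfying the sum-to-zero normalization (\ref{E:normal11}), then identify the resulting $1$-form $\eta$ from Lemma~\ref{L:eta}, and finally compute its differential.

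First, since the three foliations are the level sets of $x$, $y$, and $f$, I set $\alpha_1 = -f_x\,dx$, $\alpha_2 = -f_y\,dy$, and $\alpha_3 = df$; using $df = f_x\,dx + f_y\,dy$ one checks that $\alpha_1 + \alpha_2 + \alpha_3 = 0$, so this is a valid choice for computing $K(\mathcal W)$.

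Second, I exploit that $\alpha_3 = df$ is exact: the condition $d\alpha_3 = \eta \wedge \alpha_3$ forces $\eta = g\,df$ for some holomorphic function $g$. Plugging this into $d\alpha_1 = \eta \wedge \alpha_1$ and using $d\alpha_1 = f_{xy}\,dx \wedge dy$ pins down $g = f_{xy}/(f_x f_y)$. By the uniqueness part of Lemma~\ref{L:eta} the resulting
\begin{equation*}
\eta \,=\, \frac{f_{xy}}{f_y}\,dx \,+\, \frac{f_{xy}}{f_x}\,dy
\end{equation*}
is the sought $1$-form. A direct computation then yields
\begin{equation*}
d\eta \,=\, \left[\frac{\partial}{\partial x}\!\left(\frac{f_{xy}}{f_x}\right) - \frac{\partial}{\partial y}\!\left(\frac{f_{xy}}{f_y}\right)\right] dx \wedge dy,
\end{equation*}
and, starting from $\partial_y \log(f_x/f_y) = f_{xy}/f_x - f_{yy}/f_y$ and differentiating in $x$, the identity $\partial_x(f_{yy}/f_y) = \partial_y(f_{xy}/f_y)$ (a consequence of the equality of mixed partials) shows that this bracket equals $\partial_x \partial_y \log(f_x/f_y)$, proving the formula for $K(\mathcal W)$.

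Finally, for $\mathcal W_k$, with $f = x + y + xy(x - y)(k + \hot)$, one expands
\begin{equation*}
f_x \,=\, 1 + k\,y(2x - y) + \hot, \qquad f_y \,=\, 1 + k\,x(x - 2y) + \hot,
\end{equation*}
so that $\log(f_x/f_y) = k(4xy - x^2 - y^2) + \hot$, and applying $\partial_x \partial_y$ produces $4k + \hot$, which yields the stated expression for $K(\mathcal W_k)$. The only real obstacle is bookkeeping; the key conceptual step is choosing a presentation in which one of the three $1$-forms is exact, reducing Lemma~\ref{L:eta} to a single scalar equation that determines $\eta$ at once.
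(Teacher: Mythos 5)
Your proof is correct and follows essentially the same route as the paper: both use the presentation $(-f_x\,dx,\,-f_y\,dy,\,df)$, arrive at the same $\eta=\frac{f_{xy}}{f_y}dx+\frac{f_{xy}}{f_x}dy$ (the paper writes it as $\partial_x(\log f_y)\,dx+\partial_y(\log f_x)\,dy$), and conclude by computing $d\eta$ and evaluating at the origin for $\mathcal W_k$. The only difference is cosmetic: the paper simply exhibits $\eta$ and verifies it, while you derive it by noting that $d\alpha_3=0$ forces $\eta$ to be proportional to $df$ — a tidy way to organize the same computation.
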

\begin{proof}
Because $(- f_x dx) + (- f_y dy) + (df) = 0$, the $1$-form $\eta$ is
\[ \partial_x (\log f_y)\,
 dx + {\partial_y (\log f_x)}\,
 dy \, .
\]
Hence $K(\mathcal W)$ is as claimed.

Specializing to  $\mathcal W_k= \mathcal W_k(x,y, x+y + kxy(x-y))$ it follows that
\[
K(\mathcal W_k) = \frac{\partial ^2}{\partial x \partial y} \log \left(\frac{1+(2xy-y^2)(k + \hot)}{1-(2xy-x^2)(k+ \hot)} \right) dx \wedge dy\, .
\]
The second claim follows from the evaluation of the above expression at zero.
\end{proof}

\subsubsection{Structure of planar hexagonal  $3$-webs}
The next result can be considered as the founding stone of web geometry.
It seems fair to say that it awakened the interest of Blaschke and his coworkers on the subject  in the early 1930's.
\index{Web!hexagonal|(}

\begin{thm}\label{T:hexagonal}
Let  $\mathcal W=\mathcal F_1\boxtimes \mathcal F_2 \boxtimes \mathcal F_3$ be a smooth $3$-web on $(\mathbb C^2,0)$. The following assertions are equivalent:
\begin{enumerate}
\item[{\bf (a).}] the web $\mathcal W$ is hexagonal;
\item[{\bf (b).}] the $2$-form $K(\mathcal W)$ vanishes identically;
\item[{\bf (c).}] there exists closed $1$-forms $\eta_i$ defining $\mathcal
F_i$,  $i=1,2,3$, such that
$
 \eta_1 + \eta_2 + \eta_3 = 0 \, ;
$
\item[{\bf (d).}] the web $\mathcal W$ is equivalent to $\mathcal W( x,y, x+y)$.
\end{enumerate}
\end{thm}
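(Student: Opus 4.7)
The plan is to run the equivalences roughly in the pattern $(d)\Rightarrow(a)\Rightarrow(b)\Rightarrow(c)\Rightarrow(d)$, treating $(a)\Rightarrow(b)$ as the single substantial direction and the others as routine consequences of the definitions plus the Poincar\'e lemma.

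First I would dispatch $(d)\Rightarrow(a)$: Figure~\ref{F:proofhex} exhibits triviality of the holonomy at the origin for the parallel web $\mathcal{W}(x,y,x+y)$, and by translation invariance of that web the same argument works at every point of $(\mathbb C^2,0)$; since hexagonality is by construction invariant under biholomorphisms (property (b) of the holonomy listed just before Lemma~\ref{L:Wk}), any web equivalent to $\mathcal{W}(x,y,x+y)$ is hexagonal. Next $(c)\Rightarrow(d)$: if $\mathcal W$ admits closed defining $1$-forms with $\eta_1+\eta_2+\eta_3=0$, apply the Poincar\'e lemma to write $\eta_i=df_i$ for submersions $f_i$, normalize $f_i(0)=0$, and observe that $f_1+f_2+f_3$ is then a constant which must be $0$; general position of the $\eta_i$ at the origin forces $df_1\wedge df_2(0)\neq 0$, so $\varphi=(f_1,f_2):(\mathbb C^2,0)\to(\mathbb C^2,0)$ is a biholomorphism pulling back $\mathcal W(x,y,x+y)$ onto $\mathcal W$. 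Finally $(c)\Rightarrow(b)$ is immediate: with the $\eta_i$ closed and summing to zero one may take $u_i=1$ in equation~(\ref{E:normal11}), so the uniqueness part of Lemma~\ref{L:eta} forces $\eta=0$ and hence $K(\mathcal W)=d\eta=0$.

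For $(b)\Rightarrow(c)$, start from any presentation $(\delta_{23}\omega_1,\delta_{31}\omega_2,\delta_{12}\omega_3)$ solving~(\ref{E:normal11}) with associated $\eta$ given by Lemma~\ref{L:eta}. The hypothesis $d\eta=0$ combined with the Poincar\'e lemma produces $g\in\mathcal O(\mathbb C^2,0)$ with $\eta=dg$. Setting $\tilde\eta_i:=e^{-g}\cdot u_i\omega_i$ for the appropriate $u_i$, one checks that $d\tilde\eta_i=-e^{-g}\,\eta\wedge u_i\omega_i+e^{-g}\,\eta\wedge u_i\omega_i=0$, and the relation $\tilde\eta_1+\tilde\eta_2+\tilde\eta_3=0$ is preserved. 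Each $\tilde\eta_i$ still defines $\mathcal F_i$, so (c) holds.

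The main obstacle is $(a)\Rightarrow(b)$, and my strategy is to exploit the two preceding lemmas so as to identify the leading coefficient of the holonomy with that of the curvature at every point. Fix an arbitrary point $p\in(\mathbb C^2,0)$; I would first use the general position hypothesis to choose local coordinates centred at $p$ so that $\mathcal F_1=\{dx=0\}$, $\mathcal F_2=\{dy=0\}$, and $\mathcal F_3=\{df=0\}$ for some submersion $f$ with $f(p)=0$. Reparametrizing the leaves of $\mathcal F_1$ and $\mathcal F_2$ by $x\mapsto f(x,0)$ and $y\mapsto f(0,y)$ allows me to further normalize $f(x,0)=x$ and $f(0,y)=y$, so that $f(x,y)=x+y+xy\,h(x,y)$ for some $h\in\mathcal O(\mathbb C^2,0)$; writing the antisymmetric part of $h$ on the diagonal $x=y$ gives the shape $f=x+y+xy(x-y)\,(k+\mathrm{h.o.t.})$ required to apply Lemma~\ref{L:Wk} at $p$ (any symmetric contribution to $h$ being absorbable into an admissible normalization, which is the technical point I expect to need most care). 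Lemma~\ref{L:Wk} then says that the holonomy at $p$ has the form $h_p(x)=x+4k\,x^3+\mathrm{h.o.t.}$, while Lemma~\ref{L:Wk2} identifies $K(\mathcal W)(p)=4k\,(dx\wedge dy)(p)$. Hypothesis~(a) means $h_p=\mathrm{id}$, hence $k=0$, so $K(\mathcal W)(p)=0$; since $p$ was arbitrary, $K(\mathcal W)\equiv 0$, completing the cycle.
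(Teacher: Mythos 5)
Your proposal is correct and follows essentially the same route as the paper: the cycle (a)$\Rightarrow$(b)$\Rightarrow$(c)$\Rightarrow$(d)$\Rightarrow$(a), with (a)$\Rightarrow$(b) reduced to a pointwise normal form for the third submersion followed by Lemmas~\ref{L:Wk} and~\ref{L:Wk2}, and (b)$\Rightarrow$(c) obtained by rescaling by $\exp(-\int\eta)$. The one step you flag as delicate --- absorbing the symmetric part of $h$ so that $f=x+y+xy(x-y)(k+\hot)$ --- is exactly the paper's Lemma~\ref{L:normal}, which imposes the third normalization $f(t,t)=2t$ by applying Poincar\'e's linearization theorem to the diagonal map $t\mapsto f(t,t)$.
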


Besides Lemma \ref{L:Wk} and Lemma \ref{L:Wk2}, the proof of Theorem \ref{T:hexagonal} will also
make use of the following.

\begin{lemma}\label{L:normal}
Every germ of smooth $3$-web $\mathcal W$ on $(\mathbb C^2,0)$ is equivalent to $\mathcal W(x,y,f)$, where $f \in \mathcal O(\mathbb C^2,0)$ is
of the form
\[
f(x,y) = x + y + xy(x-y)(k+  \hot)
\]
for a suitable $k \in \mathbb C$.
\end{lemma}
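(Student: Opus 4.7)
The plan is to exploit the three functional degrees of freedom available in the equivalence relation on $3$-webs -- the two coordinate changes $x\mapsto\phi(x)$ and $y\mapsto\psi(y)$, which preserve $\mathcal F_1$ and $\mathcal F_2$, together with the reparametrization $f\mapsto\gamma\circ f$, which preserves $\mathcal F_3$ -- in order to impose the three normalizing identities
\[
f(x,0)=x, \qquad f(0,y)=y, \qquad f(x,x)=2x.
\]
Once these hold, $f-(x+y)$ vanishes on the three coprime smooth irreducible germs $\{x=0\}$, $\{y=0\}$ and $\{x=y\}$, so unique factorization in $\mathcal O_{(\mathbb C^2,0)}$ yields $f-(x+y) = xy(x-y)\,g(x,y)$ for some holomorphic germ $g$; setting $k:=g(0,0)$ produces the desired form $f = x+y+xy(x-y)(k+\hot)$.

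First I would apply the constant rank theorem to a pair of submersions defining $\mathcal F_1$ and $\mathcal F_2$, exactly as in Section \ref{S:eq1st}, to straighten these two foliations and put the web in the form $\mathcal W(x,y,f)$ with $f_x(0),f_y(0)\neq 0$ by transversality. Replacing $f$ by $f(\cdot,0)^{-1}\circ f$ -- a reparametrization that leaves $\mathcal F_3$ unchanged -- arranges $f(x,0)=x$, and the subsequent coordinate change $y\mapsto f(0,\cdot)^{-1}(y)$, which preserves $\mathcal F_1$, $\mathcal F_2$ and the equation $f(x,0)=x$ (since it fixes $\{y=0\}$), then delivers $f(0,y)=y$.

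The main step is imposing the third identity while preserving the two already in place, and this is where I expect the real work to lie. A direct verification shows that the equivalences preserving $f(x,0)=x$ and $f(0,y)=y$ form a one-parameter family of ``diagonal'' transformations, induced by a single germ $\phi\in\mathrm{Diff}(\mathbb C,0)$ acting via $\tilde f(X,Y) = \phi\bigl(f(\phi^{-1}(X),\phi^{-1}(Y))\bigr)$. Writing $J(\xi):=f(\xi,\xi)$, the condition $\tilde f(X,X)=2X$ becomes $\phi\circ J\circ\phi^{-1} = 2\cdot\mathrm{id}$, equivalently $\phi\circ J = 2\,\phi$. Since the previous normalizations force $J(0)=0$ and $J'(0) = f_x(0)+f_y(0) = 2$, the germ $J$ is hyperbolic and this is precisely the linearization equation solved by Koenigs' classical theorem, which produces a unique convergent holomorphic solution $\phi$ with $\phi(0)=0$ and $\phi'(0)=1$. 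The genuine analytic content of the argument is the convergence of this $\phi$: a formal solution is immediate by order-by-order coefficient matching, but obtaining an honest germ of biholomorphism requires Koenigs.
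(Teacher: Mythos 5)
Your proof is correct and follows essentially the same route as the paper's: straighten $\mathcal F_1,\mathcal F_2$, normalize $f(x,0)=x$ and $f(0,y)=y$ by reparametrizations, then use Koenigs'/Poincar\'{e}'s linearization of the diagonal germ $t\mapsto f(t,t)$ (multiplier $2$) to force $f(x,x)=2x$. Your closing step --- deducing $xy(x-y)\mid f-(x+y)$ from unique factorization --- is a clean way of carrying out the coefficient analysis that the paper leaves to the reader.
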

\begin{proof}
As already pointed out  in Section \ref{S:eq1st} every smooth $2$-web on $(\mathbb C^2,0)$ is equivalent to $\mathcal W(x,y)$. Therefore
it can be assumed that $\mathcal W =\mathcal W(x,y,g)$ with
$g \in \mathcal O{(\mathbb C^2,0)}$ such that $g(0)=0$. The smoothness assumption on $\mathcal W$ translates into $dx \wedge dg (0) \neq 0 $ and
$dy \wedge dg (0) \neq 0$ or, equivalently, both $g_x(0)$ and $g_y(0)$ are non-zero complex numbers.

After pulling back $\mathcal W$ by $\varphi_1(x,y) = ( g_x(0) x , g_y(0) y ) $ one can  assume  that  $\mathcal W$ still takes the form
$\mathcal W(x,y,g)$ but now with the function $g$ having $x+y$ as its linear term.

Let $a(t) = g(t,0)$ and $b(t) = g(0,t)$. Clearly both $a$ and $b$ are germs of biholomorphisms of $(\mathbb C,0)$.
Let $\varphi(x,y) = (a^{-1}(x) , b^{-1}(y))$
and set $h(x,y) = \varphi^* g ( x,y) = g(a^{-1}(x), b^{-1}(y))$. Notice that $\varphi^* \mathcal W(x,y,g) = \mathcal W(x,y,h)$ and that $h$ still
has linear term equal to $x+y$. Moreover $h(0,t)= h(t,0) = h(t,t)/2 = t$ up to higher order terms.

Because the germ $\alpha(t) = h(t,t)$ has derivative at zero of modulus distinct from one it follows from Poincar\'{e} Linearization Theorem
\index{Poincar\'{e}!Linearization Theorem}\cite[Chapter 3, \S 25.B]{Arnold} the existence
of a germ of biholomorphism $\phi \in \mathrm{Diff}(\mathbb C,0)$ conjugating $\alpha$ to its linear part. More succinctly,
\[
\phi^{-1} \circ \alpha \circ \phi (t) = 2t \, .
\]

After setting $\varphi(x,y) = (\phi(x), \phi(y))$ and $f = \phi ^{-1} \circ h \circ \phi$ one promptly verifies the identities
\[
f(t,0) = f(0,t) = \frac{f(t,t)}{2} = t .
\]
To conclude the proof it suffices to analyze  the implications of the above identities
to   the series expansion  $f(x,y) = \sum a_{ij} x^i y^j$. The reader is invited to fill in the details.
\end{proof}

\subsubsection{Proof of Theorem \ref{T:hexagonal}}
To prove that  (a) implies (b) start by applying Lemma \ref{L:normal} to see that   $\mathcal W$, at any point $p \in (\mathbb C^2,0)$,
is equivalent to 
$$\mathcal W\big(x,y,x+y +  xy(x-y)(k + \hot)\big) \quad \text{ with } k \in \mathbb C.$$
Because $\mathcal W$ is hexagonal the holonomy  at an arbitrary $p \in (\mathbb C^2,0)$  is the identity.
Lemma \ref{L:Wk} implies  $k=0$. Lemma \ref{L:Wk2}, in its turn, allows one to deduce that
 $K(\mathcal W)$ is also zero at an arbitrary point of $(\mathbb C^2,0)$, thus proving that  (a) implies (b).

 \smallskip

Suppose now that (b) holds true, and assume   ${\mathcal W= \mathcal W(\omega_1, \omega_2, \omega_3)}$ with the $1$-forms
$\omega_i$ satisfying $\omega_1 + \omega_2 + \omega_3=0$.  Let  $\eta$ be the unique $1$-form given by Lemma \ref{L:eta}.
Because $K(\mathcal W)=0$, the $1$-form $\eta$ is closed. If
\[
\eta_i = \exp\left( - \int \eta \right) \omega_i
\]
then
\begin{align*}
d \eta_i = - \eta \wedge \exp\left( - \int \eta \right) \omega_i + \exp\left( - \int \eta \right)  d \omega_i = 0 \,
\end{align*}
because $d\omega_i = \eta \wedge \omega_i.$ Moreover
\[
\eta_1 + \eta_2 + \eta_3 = \exp\left( - \int \eta \right) ( \omega_1 + \omega_2 + \omega_3 ) = 0 \, .
\]
This proves that (b) implies (c).

\smallskip

Now assuming the validity of  (c), one can define
\[ f_i(x) = \int_0^x \eta_i \quad \text{ for } i \in \underline{3} \, .
\]  Notice that  $\varphi(x,y) = (f_1(x,y),f_2(x,y))$ is a biholomorphism, and clearly $\varphi^* \mathcal W( x,y,x+y) = \mathcal W(f_1, f_2, f_3)$  since
$\eta_1 + \eta_2 = - \eta_3$. Thus $\mathcal W$ is equivalent to $\mathcal W(x,y,x+y)$.

The missing implication,  (d) implies (a), has already been established  in Figure \ref{F:proofhex}.
\qed

\subsection{Germs of hexagonal webs on the plane}\label{S:bol}

Having Theorem \ref{T:hexagonal}  at hand it is natural to enquire about germs of smooth $k$-webs on $(\mathbb C^2,0)$, $k > 3$,
for which every $3$-subweb is hexagonal. The $k$-webs having this property will  also be called \defi[hexagonal].

\index{Web!hexagonal|)}
\smallskip

The simplest examples of  hexagonal $k$-webs are the  \defi[parallel] \index{Web!parallel} \mbox{$k$-webs}. These  webs $\mathcal W$ are the
 superposition of $k$ pencils of parallel lines. They all can be written
explicitly as $$\mathcal W( \lambda_1 x - \mu_1 y, \cdots, \lambda_k x - \mu_k y)$$ where the pairs $\lambda_i,\mu_i \in \mathbb C^2 \setminus \{ 0\}$
represent the slopes $(\mu_i : \lambda_i) \in \mathbb P( \mathbb C^2) = \mathbb P^1$ of the pencils.

More generally, if $\mathcal L_1, \ldots, \mathcal L_k$ are $k$ pairwise distinct pencils of lines on $\mathbb C^2$
such that no line joining two base points passes through the origin then $\mathcal W= \mathcal L_1 \boxtimes \cdots \boxtimes \mathcal L_k$,
seen as a germ at the origin, is also a  smooth hexagonal   $k$-web.

A less evident family of examples was found by Bol \index{Bol!$5$-web} and are the germs of  $5$-webs defined as follows. Let $\mathcal L_1, \ldots, \mathcal L_4$ be
four pencil of lines satisfying the same conditions as above,  plus the extra condition that no three among the four
base points of the pencils  are colinear. The $5$-web obtained from the superposition of $\mathcal L_1 \boxtimes \cdots \boxtimes \mathcal L_4$
with the pencil of conics through the four base points is a   smooth  hexagonal   $5$-web on $(\mathbb C^2,0)$. According to
the relative position of the base points with respect to the origin, one obtains in this a way a two-dimensional family of non-equivalent  germs
of smooth $5$-webs on $(\mathbb C^2,0)$. Any of these germs will be called \defi[Bol's $5$-web] $\mathcal B_5$.\index{Bol!$5$-web}
The abuse of terminology is justified by the fact they are all germifications of the very same global singular $5$-web (~a concept to
be introduced in Section \ref{S:global}~)  defined on $\mathbb P^2$.

\begin{figure}[H]
\begin{center}\includegraphics[height=4.2cm,width=4.5cm]{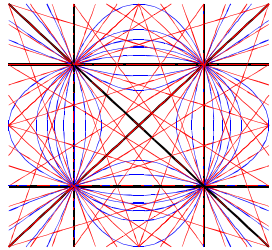}\end{center}\vspace{-1cm}
\caption{Bol's $5$-web} \label{F:B5}
\end{figure}

Anyone taking the endeavor of finding a  smooth hexagonal $k$-web  on $(\mathbb C^2,0)$ not equivalent to any of the previous
examples is doomed to failure. Indeed Bol proved the following

\begin{thm}
If $\mathcal W$ is a smooth hexagonal $k$-web, $k \ge 3$, then $\mathcal W$ is
 equivalent to the superposition of $k$ pencil of lines or $k=5$ and $\mathcal W$ is equivalent
to  $\mathcal B_5$.
\end{thm}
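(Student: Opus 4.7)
The strategy is to first analyze a hexagonal $4$-web in detail, then inductively build up the classification for $k\ge 4$. Throughout we exploit the three equivalent characterizations of Theorem \ref{T:hexagonal}, and in particular statement (c), which provides closed $1$-forms summing to zero for every $3$-subweb, yielding concrete first integrals amenable to functional-equation analysis.

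\textbf{Step 1: Normalization of three foliations.} Applying Theorem \ref{T:hexagonal}(d) to the $3$-subweb $\mathcal F_1\boxtimes\mathcal F_2\boxtimes\mathcal F_3$, we may assume after biholomorphism that $(\mathcal F_1,\mathcal F_2,\mathcal F_3)=\mathcal W(dx,dy,d(x+y))$. For each additional foliation $\mathcal F_i$ with $i\ge 4$, we apply property (c) to each of the three hexagonal $3$-subwebs $\mathcal F_a\boxtimes\mathcal F_b\boxtimes\mathcal F_i$ with $\{a,b\}\subset\{1,2,3\}$. Since closed $1$-forms annihilating $\mathcal F_1,\mathcal F_2,\mathcal F_3$ must take the forms $a(x)dx$, $b(y)dy$, $c(x+y)(dx+dy)$ respectively, we obtain three first integrals of $\mathcal F_i$ of the shapes $A(x)+B(y)$, $\widetilde A(x)+\widetilde C(x+y)$, and $\widehat B(y)+\widehat C(x+y)$. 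Because they all cut out the same foliation, the proportionality of their differentials yields (after elimination) a functional equation of the form $P(x)Q(y)-R(x)=S(x+y)$.

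\textbf{Step 2: Solving the functional equation.} Differentiating the relation $P(x)Q(y)-R(x)=S(x+y)$ twice (once in each variable, then a further time in $x$) separates variables and forces $P''/P'$ and $Q''/Q'$ to be equal constants. Consequently $P$ and $Q$ are either both affine or both exponential (with matching rate), and the classification splits into two cases. The affine case leads to first integrals of $\mathcal F_i$ that are M\"obius functions of $(x,y)$, i.e.\ $\mathcal F_i$ is a pencil of lines. The exponential case leads, after returning to projective coordinates in which the three centers at infinity of $\mathcal F_1,\mathcal F_2,\mathcal F_3$ are sent to non-collinear points $c_1,c_2,c_3\in\mathbb P^2$, to $\mathcal F_i$ being the pencil of conics through $c_1,c_2,c_3$ and a further point $c_i$. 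Thus each $\mathcal F_i$ is a pencil of lines through some point $c_i$, or the pencil of conics through $c_1,c_2,c_3,c_i$.

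\textbf{Step 3: Compatibility and conclusion.} If every $\mathcal F_i$ ($i\ge 4$) is a pencil of lines, we obtain case (i). Otherwise, suppose $\mathcal F_\ell$ is a pencil of conics through $c_1,c_2,c_3,c_\ell$ for some $\ell$. Applying Steps 1--2 with a different triple (say $\{1,2,\ell\}$) in place of $\{1,2,3\}$ shows that any other $\mathcal F_j$ either is a pencil of lines through a point that must be compatible with $c_\ell$, or is a pencil of conics through the same four base points $c_1,c_2,c_3,c_\ell$; the latter contradicts the presence of $\mathcal F_\ell$ unless it coincides with it. Enforcing these pairwise compatibilities simultaneously forces the centers of the line-pencils to be exactly $c_1,c_2,c_3,c_\ell$ (no additional one can be added without violating hexagonality of some $3$-subweb), so the number of line-pencils is at most $4$ and $k\le 5$. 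In the boundary case $k=5$ we recover precisely Bol's web $\mathcal B_5$.

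The main obstacle is the case analysis in Step 2: solving the functional equation cleanly, and in particular handling the degenerate sub-cases where $B'$ or $\widetilde C'$ vanishes, requires a careful argument. A secondary technical point is the projective compatibility argument in Step 3, which has to rule out having two distinct pencils of conics among the $\mathcal F_i$'s and to bound the number of line-pencils when one conic pencil is present.
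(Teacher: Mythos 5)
First, a point of comparison: the paper does not actually prove this theorem --- immediately after stating it, it says that a proof will not be presented and refers the reader to \cite{RobertBol} for a detailed sketch --- so your proposal cannot be measured against an in-paper argument. Your Steps 1 and 2 do reproduce, in outline, the classical analysis of a hexagonal $4$-web: normalize three foliations to $\mathcal W(x,y,x+y)$, convert hexagonality of the remaining $3$-subwebs through item (c) of Theorem \ref{T:hexagonal} into the separated first integrals $A(x)+B(y)$, $\widetilde A(x)+\widetilde C(x+y)$, $\widehat B(y)+\widehat C(x+y)$, and solve the resulting functional equation. The dichotomy you obtain is correct: the branch $P'\equiv 0$ yields the slope field $(y-y_0)/(x-x_0)$, hence a fourth pencil of lines in the same chart, while the exponential branch, after the substitution $(x,y)\mapsto(e^{\lambda x},e^{\lambda y})$ --- which is a local biholomorphism, \emph{not} a change of ``projective coordinates'' as you write, since no projectivity can make the three collinear centers of $x$, $y$, $x+y$ non-collinear --- gives the Bol-type configuration of three line pencils together with the conic pencil through their centers and a fourth point. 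The degenerate sub-cases you flag do need to be written out, but that part is routine.

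The genuine gap is Step 3, which is where Bol's theorem actually lives and which you dispatch in two sentences of assertions. Concretely: (i) the exponential normalization depends on $i$ --- different non-linear $\mathcal F_i$ may produce different constants $\lambda_i$ and hence different linearizing charts, and you never explain how the resulting descriptions of the various $4$-subwebs are to be compared, which is precisely the difficulty; (ii) the claim that ``no additional [line pencil] can be added without violating hexagonality of some $3$-subweb'' is the key rigidity lemma, not a consequence of anything you have proved: since any three pencils of lines form a hexagonal $3$-web, the only candidate non-hexagonal $3$-subwebs of $\mathcal B_5\boxtimes\mathcal L_q$ are those of the form $\{\text{conic pencil}\}\boxtimes\mathcal L_{c_j}\boxtimes\mathcal L_q$, and one must actually compute a holonomy or the curvature of Theorem \ref{T:hexagonal} to show that for every $q\notin\{c_1,\dots,c_4\}$ at least one of these fails to be hexagonal; (iii) the exclusion of two distinct conic pencils among the $\mathcal F_i$, and the case $k\ge 6$, are likewise only asserted. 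As it stands you have a correct proof for $k\le 4$ and a statement of the remaining lemmas for $k\ge 5$.
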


A proof will not be  presented here. For a recent exposition, with a fairly detailed sketch of proof, see \cite{RobertBol}.

\index{Web!curvature|)}
\subsection{Algebraic  planar $3$-webs are hexagonal}\label{S:algcub}
\index{Web!algebraic|(}
\index{Web!hexagonal|(}

\begin{prop}\label{P:cubic}
If  $C \subset \mathbb P^2$ is a reduced cubic and  $\ell_0 \subset \mathbb P^2$ is a line intersecting $C$
transversely then  the $3$-web $\mathcal W_C(\ell_0)$ is hexagonal.
\end{prop}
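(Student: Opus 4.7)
The plan is to verify condition (c) of Theorem \ref{T:hexagonal}: exhibit closed $1$-forms $\eta_1,\eta_2,\eta_3$ defining the three foliations of $\mathcal W_C(\ell_0)$ and satisfying $\eta_1+\eta_2+\eta_3=0$. The key fact to exploit is that a reduced plane cubic has arithmetic genus one, so it carries an essentially unique ``abelian'' differential, and that the divisor cut on $C$ by a moving line is a linearly equivalent family, so Abel's theorem applies.

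Assume first that $C$ is smooth. Since $g(C)=1$ there exists a nowhere-vanishing holomorphic $1$-form $\omega$ on $C$, unique up to a scalar. Using the notation of Section \ref{S:defalg}, one has the local submersions $p_i\colon (\check{\mathbb P}^2,\ell_0)\to C$ whose fibres are the leaves of the foliations $\mathcal F_i$ of $\mathcal W_C(\ell_0)$. Set $\eta_i=p_i^*\omega$. Each $\eta_i$ is closed, because $\omega$ is a holomorphic $1$-form on a curve, and it is nowhere zero near $\ell_0$, because $p_i$ is a submersion and $\omega$ has no zeros; in particular $\eta_i$ defines $\mathcal F_i$. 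To obtain $\eta_1+\eta_2+\eta_3=0$, observe that for each $H\in(\check{\mathbb P}^2,\ell_0)$ the divisor $p_1(H)+p_2(H)+p_3(H)$ is the scheme-theoretic intersection $H\cap C$, and all such divisors are linearly equivalent on $C$. By the classical Abel theorem for elliptic curves, the Abel--Jacobi map
\[
H\longmapsto \sum_{i=1}^3\int_{p_i(\ell_0)}^{p_i(H)}\omega \pmod{\mathrm{periods}}
\]
is therefore locally constant on $(\check{\mathbb P}^2,\ell_0)$. Differentiating yields $\eta_1+\eta_2+\eta_3=0$, and Theorem \ref{T:hexagonal}(c) gives hexagonality.

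There remains to handle reduced cubics that are singular or reducible. The cleanest route is a deformation argument: within the linear system of plane cubics, the smooth ones form a Zariski open dense subset, the transversality hypothesis with $\ell_0$ is also open, and the curvature $K(\mathcal W_C(\ell_0))$ depends holomorphically on $C$ in this family (it is computed at a fixed germ of line near $\ell_0$ from the defining submersions, which vary holomorphically with $C$). Since $K$ vanishes on the dense open smooth locus it vanishes identically, so condition (b) of Theorem \ref{T:hexagonal} applies to every reduced cubic. A more intrinsic alternative, which the main obstacle would be to make fully rigorous, is to repeat the argument above replacing $\omega$ by the generator of $H^0(C,\omega_C)$ for the dualizing sheaf: on a reduced cubic of arithmetic genus one this section is nowhere vanishing on $C_{sm}$ and has residues at the nodes cancelling in pairs, so the pulled-back $\eta_i$ remain holomorphic and closed near any $\ell_0$ transverse to $C$, and Abel's theorem for $\omega_C$ still forces them to sum to zero.
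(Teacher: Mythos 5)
Your proof is correct, but it takes a genuinely different route from the one the paper uses at this point. The paper proves Proposition \ref{P:cubic} synthetically: it first establishes Chasles' Theorem (if a cubic passes through eight of the nine intersection points of two cubics meeting transversely, it passes through the ninth), identifies the leaves $L_i$ of $\mathcal W_C(\ell_0)$ through $\ell_0$ with the components $C_j$ of the cubic, and shows that the hexagonal configuration $x_1,\dots,x_7$ closes up because the two triples of lines $\overline{x_1x_2}\cup\overline{x_3x_4}\cup\overline{x_5x_6}$ and $\overline{x_2x_3}\cup\overline{x_4x_5}\cup\overline{x_6x_7}$ are cubics sharing eight points with $C$; this verifies condition (a) of Theorem \ref{T:hexagonal} directly and treats smooth, singular and reducible cubics uniformly with no case distinction. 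You instead verify condition (c) by pulling back the (dualizing) differential and invoking Abel's addition Theorem --- which is precisely the second proof the paper announces it will give later (``Later on this text the hexagonality of the planar algebraic $3$-webs will be established again using Abel's addition Theorem''), and the paper itself remarks that the two arguments are intertwined, Chasles' Theorem being equivalent to the group law on the cubic. Your argument is sound: for a smooth cubic the holomorphic $1$-form is nowhere vanishing, the $\eta_i=p_i^*\omega$ are closed, define the $\mathcal F_i$, and sum to zero by Abel; for singular reduced cubics either the deformation of the curvature (which is indeed holomorphic in the coefficients of $C$ on the open set of cubics transverse to $\ell_0$, and vanishes on the dense set of smooth ones) or the generator of $H^0(C,\omega_C)\simeq H^0(C,\mathcal O_C)$, nowhere zero on $C_{sm}$, does the job. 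What the paper's route buys is self-containedness at that stage of the text (it needs only Bezout and Bertini, not the theory of abelian differentials on singular curves, which only appears two chapters later) and a single uniform argument; what yours buys is the stronger structural statement that the abelian relation of the hexagonal web is the trace relation of the canonical differential, which is the viewpoint that generalizes to curves of higher degree and to webs of maximal rank. The only loose point in your write-up is the phrase ``residues at the nodes cancelling in pairs'': the defining property of an abelian differential is the vanishing of $\mathrm{Res}_p(f\omega)$ for every local function $f$, but since $\ell_0$ meets $C$ transversely in three distinct smooth points this plays no role in the argument and the conclusion stands.
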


The simplest instance of the  proposition above is when $C$ is the union of three distinct concurrent lines. In this particular
case it can be promptly verified that $\mathcal W_C(\ell_0)$ is the $3$-web
$\mathcal W(x,y,x-y)$ in a suitable affine coordinate system $(x:y:1) \in \check{\mathbb P}^2$ without further ado.
\smallskip

In the next simplest instance, $C$ is still the union of three distinct lines but they are no longer concurrent.
Then, $\mathcal W_C(\ell_0)$ is the $3$-web $\mathcal W(x,y,(x-1)/(y-1) )$ in suitable affine coordinates.
The most straight forward way to verify the hexagonality of $\mathcal W(x,y, (x-1)/(y-1))$ consists in observing that
the closed differential forms $\eta_1 = - d \log(x-1), \eta_2= d \log(y-1) $ and $\eta_3 = d \log\big( (x-1)/ (y-1) \big)$
define the same foliations as  the submersions $x,y$ and $(x-1)/(y-1)$ and  satisfy $\eta_1+ \eta_2+ \eta_3=0$.
Therefore the $3$-web under scrutiny is hexagonal thanks to the equivalence between items (a) and (c) in Theorem \ref{T:hexagonal}.
\smallskip

To deal with the other cubics, one could still try to make explicit  three  submersions defining the web and work his way to determine
a relation between closed $1$-forms defining the very same foliations. Once the submersions are determined the second step is
rather straight-forward since the proof of Theorem \ref{T:hexagonal} gives an algorithmic way to perform it. Besides having many
particular cases to treat, the lack
of rational parametrizations for  smooth cubics would lead one
to compute with Weierstrass $\wp$-functions or similar transcendental  objects, adding a considerable amount of difficulty
to such task.  Perhaps the most elementary way to prove the hexagonality of algebraic planar $3$-webs relies on the following
Theorem of Chasles.

\begin{thm}
Let $X_1, X_2 \subset \mathbb P^2$ be  two plane cubics  meeting in exactly nine distinct points . If $X \subset \mathbb P^2$ is any cubic
containing at least eight of these nine points then  it automatically contains all the nine points.
\end{thm}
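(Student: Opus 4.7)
The plan is to reduce Chasles' theorem to the linear-algebraic claim that any eight of the nine intersection points impose independent conditions on cubics. Let $F_1, F_2$ be homogeneous cubic forms defining $X_1, X_2$, and consider the $10$-dimensional space $V$ of cubic forms on $\mathbb{P}^2$. Vanishing at $p_1, \ldots, p_8$ imposes eight linear conditions on $V$, cutting out a subspace $W$ of dimension at least $2$ that already contains the linearly independent pair $F_1, F_2$. If those eight conditions turn out to be independent, then $\dim W = 2$ and $W = \langle F_1, F_2 \rangle$; consequently any cubic $X$ through $p_1, \ldots, p_8$ has the form $\lambda F_1 + \mu F_2$ and automatically vanishes at $p_9$.

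Two B\'ezout-type observations will provide the geometric input. First, no four of the nine points are collinear, since such a line would be a common component of $X_1$ and $X_2$, contradicting the finiteness of their intersection. Second, no seven of them lie on a conic, as such a conic would be a common component of both cubics and force $X_1 = X_2$. Granting these two facts, independence reduces by symmetry to exhibiting, for a chosen $p_1$, a cubic through $p_2, \ldots, p_8$ that misses $p_1$. I would seek this cubic in the reducible form $L \cup Q$, with $L$ a line through two of the points of $\{p_2, \ldots, p_8\}$ and $Q$ the unique conic through the remaining five (uniqueness following from the no-four-collinear condition). The goal becomes choosing the splitting so that neither $L$ nor $Q$ contains $p_1$.

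The technical heart of the proof, and what I expect to be the main obstacle, is a counting argument showing that such a good splitting exists among the $\binom{7}{2} = 21$ possibilities. The splittings whose line passes through $p_1$ number at most three: by no-four-collinear, each line through $p_1$ contains at most two of the points $p_2, \ldots, p_8$, so if $a$ lines through $p_1$ contain exactly two and $b$ contain exactly one, then $2a + b = 7$, forcing $a \leq 3$. The splittings whose conic passes through $p_1$ also number at most three, by the following key observation: if two distinct such bad $5$-subsets shared four points, their conics would agree on five points ($p_1$ together with the four common ones) and therefore coincide (the reducible exception being killed by no-four-collinear), placing seven of the nine points on a single conic in violation of the no-seven-on-a-conic condition; hence the complementary bad pairs are pairwise disjoint and form a matching of size at most three on the seven-element set $\{p_2, \ldots, p_8\}$. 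With at most six bad splittings among the twenty-one, a good one must exist, completing the proof.
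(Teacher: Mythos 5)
Your argument is correct, but it takes a genuinely different route from the paper's. The paper argues by contradiction: assuming $X$ misses one of the nine points, it replaces $X_1$ by a generic --- hence, by Bertini's Theorem, smooth --- member of the pencil $\{\lambda F_1+\mu F_2=0\}$, and observes that $h=(G/F_2)|_{X_1}$ would then be a rational function on an elliptic curve with a single simple zero, i.e.\ an isomorphism onto $\mathbb P^1$, which is absurd since elliptic curves are not rational. Your proof is the classical elementary one: you show that the eight points impose independent conditions on cubics, so that the cubics through them form exactly the pencil $\langle F_1,F_2\rangle$, whose base locus contains the ninth point. Your position constraints (no four collinear, no seven on a conic), the reduction to exhibiting a reducible cubic $L\cup Q$ through seven of the points avoiding $p_1$, and the two counts --- at most three bad lines through $p_1$, and at most three bad conic-splittings because the complementary bad pairs must be pairwise disjoint --- are all sound, so six bad splittings out of twenty-one leaves a good one. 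What your approach buys is elementarity (no Bertini, no appeal to the genus of a smooth cubic) and a slightly stronger conclusion, namely that the space of cubics through the eight points is exactly two-dimensional; what the paper's approach buys is brevity given the machinery, and indeed the paper itself remarks that its proof ``cannot be qualified as elementary'' and refers to \cite{CayBac} for essentially the argument you have reconstructed. One small point to tighten: when excluding seven points on a conic, treat the reducible case explicitly --- a union of two lines containing seven of the points forces at least four of them onto one line, which your no-four-collinear step already rules out.
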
\index{Chasles' Theorem}
\begin{proof}
Aiming  at a contradiction, suppose that $X$ does not contain  $X_1 \cap X_2$.
Let $F_1$, $F_2$ be homogenous cubic polynomials defining $X_1$, $X_2$ respectively and $G$ be the one defining $X$.
Since there are nine points in the intersection of $X_1$ and $X_2$
then, according to Bezout's Theorem, the curves $X_1$ and $X_2$ must intersect transversely. In particular both curves
are smooth at the intersection points.  After replacing $X_1$ by the generic
member of the pencil $\{ \lambda F_1 + \mu F_2 =0\}$ one can assume, thanks to Bertini's Theorem \cite[page 137]{GH}, that $X_1$ is a smooth
cubic. Consequently $X_1$ is a smooth elliptic curve.

Consider now the rational function $h: X_1 \to \mathbb P^1$ defined
as
\[
h = \left(\frac{G}{F_2}\right)\Big|_{ X_1} \, .
\]
Since $X$ passes through $8$ points of $X_2 \cap X_1$ it follows that $h$ has only one zero: the unique point of $X\cap X_1$ that
does not belong to $X_1 \cap X_2$. Moreover, the transversality of $X_1$ and $X_2$ ensures that this zero is indeed a simple zero.
It follows that $h$ is an isomorphism. Since elliptic curves are not isomorphic to rational curves one arrives at a contradiction
that settles the Theorem.
\end{proof}

\medskip

The proof just presented cannot be qualified as elementary since it makes use
of Bertini's and Bezout's Theorems and some  basic facts of differential topology.
For an elementary proof and a comprehensive account on Chasles' Theorem including its distinguished lineage
and recent -- rather non-elementary -- developments the reader is urged to  consult \cite{CayBac}.

\subsubsection{Proof of Proposition \ref{P:cubic}} To deduce the hexagonality of $\mathcal W_C(\ell_0)$ from Chasles' Theorem start by observing that the leaf of the foliation $\mathcal F_i$
through $\ell_0 \in \check{\mathbb P}^2$, denoted by $L_i$, corresponds to  lines through the point $p_i=p_i(\ell_0)$. To choose a point $x_1 \in L_1$
is therefore the same as choosing a line through $p_1 \in C_1 \subset \check{\mathbb P}^2$. If such line is sufficiently close to $\ell_0$ then it
cuts $C_3$ in a unique point still denoted by $x_1$. In this way the leaf $L_1$ of $\mathcal F_1$ can be identified with the curve $C_3$.
It will also be useful to identify through the same process  $L_2$, the leaf of $\mathcal F_2$ through $\ell_0 \in \check{\mathbb P}^2$,
with $C_1$ and  $L_3$ with $C_2$.

\begin{figure}[ht]
\begin{center}\includegraphics[height=4.2cm,width=4.5cm]{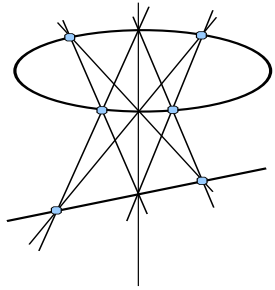}\end{center}\vspace{-0.7cm}
\caption{A cubic with two irreducible components.}
\end{figure}

Now, follow the leaf of $\mathcal F_3$ through $x_1 \in C_3$ until it meets $L_2$ corresponds
to consider the line $\overline{x_1 p_2}$ and  intersect it with $C_1$. The intersection point $x_2 = \overline{x_1 p_2} \cap C_1  \in C_1$ corresponds
to a point  in $L_2$.

Similarly the sequence of points $x_3, x_4, \ldots, x_7$ appearing in the definition of the holonomy of $\mathcal W_C(\ell_0)$
can be synthetically obtained as follows:
\begin{align*}
x_3 = \overline{x_2 p_3} \cap C_2 \in L_3 \simeq C_2 \, , \\
x_4 = \overline{x_3 p_1} \cap C_3 \in L_1 \simeq C_3 \, ,\\
x_5 = \overline{x_4 p_2} \cap C_1 \in L_2 \simeq C_1 \, ,\\
x_6 = \overline{x_5 p_3} \cap C_2 \in L_3 \simeq C_2 \, ,\\
x_7 = \overline{x_6 p_1} \cap C_3 \in L_1 \simeq C_3 \, .
\end{align*}
Of course, all the identifications $L_i \simeq C_j$ above, have to be understood as identifications of germs of curves.

\begin{figure}[ht]
\begin{center}
\psfrag{1}[][]{$ x_2 $}
\psfrag{2}[][]{$ x_3 $}
\psfrag{3}[][]{$ x_4 $}
\psfrag{4}[][]{$ x_5 $}
\psfrag{5}[][]{$ x_6 $}
\psfrag{6}[][]{$ x_7 $}
\psfrag{0}[][]{$ x_1 $}
\includegraphics[height=4.2cm,width=4.5cm]{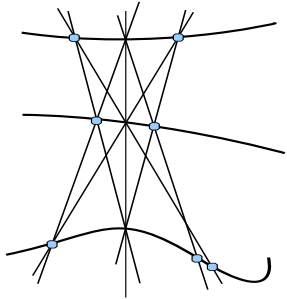}\end{center}
\caption{This is not a cubic.}
\end{figure}

Notice that the line $\overline{x_1 x_2}$ is the same as $\overline {x_1 p_2}$. Therefore it contains the three points $x_1,x_2,p_2$. The line
$\overline{x_3 x_4}$ in its turn contains the points $x_3, x_4, p_1$ and the line $\overline{x_5 x_6}$ contains the points $x_5,x_6, p_3$. Thus the
reduced cubic $X_1 = \overline{x_0 x_1} \cup \overline{x_2 x_3} \cup \overline{x_4 x_5}$ intersects the cubic $X_2 = C$ in exactly nine
distinct points namely $p_1,p_2, p_3, x_1,x_2,x_3,x_4,x_5,x_6$. The same reasoning shows that the reduced cubic
$X=\overline{x_2 x_3} \cup \overline{x_4 x_5} \cup \overline{x_6 x_7}$ intersects $X_2=C$ in the nine points $p_1,p_2, p_3, x_2,x_3,x_4,x_5,x_6,x_7$.
Thus $X_1 \cap X_2 \cap X$  contains at least eight points. Chasles' Theorem implies that this eight is indeed a nine and consequently
the points $x_1$ and $x_7$ must coincide. This is sufficient to prove that the holonomy of $\mathcal W_C(\ell_0)$ is the identity. \qed

\dd

Later on this text the hexagonality of the planar algebraic $3$-webs will be established again using Abel's addition Theorem.
Although apparently unrelated both approaches are intimately intertwined. Abel's addition Theorem can be read as a result
about the group structure of the Jacobian \index{Jacobian} of projective curves while Chasles' Theorem turns out to be equivalent to the existence
of an abelian group structure  for plane cubics where aligned points sum up to zero. \index{Chasles' Theorem}

\index{Web!algebraic|)}
\index{Web!hexagonal|)}

\section{Singular and global webs}\label{S:fancydef}

\subsection{Germs of singular webs I}\label{S:singwebs1}

\index{Foliation!singular holomorphic}

It is customary to say that a germ of singular holomorphic foliation is an equivalence
class $[\omega]$ of germs of  holomorphic $1$-forms in $\Omega^1{(\mathbb C^n,0)}$ modulo
multiplication by elements of $\mathcal O^*({\mathbb C^n},0)$ such that any representative $\omega$ is
integrable (~$\omega \wedge d \omega =0$~) and with \defi[singular  set]
$\mathrm{sing}(\omega)=\{ p \in (\mathbb C^n,0) \, ; \, \omega(p) =0 \}$ of codimension at
least two.

\smallskip

\index{Web!singular|(}

An analogous definition can be made for codimension one webs. A \defi[germ of singular codimension
one $k$-web] on $(\mathbb C^n,0)$ is an equivalence class $[\omega]$ of germs of  $k$-symmetric $1$-forms,
that is sections of $\mathrm{Sym}^k \Omega^1{(\mathbb C^n,0)}$,
modulo multiplication by $\mathcal O^*({\mathbb C^n},0)$ such that  a suitable  representative $\omega$
defined in a connected neighborhood $U$ of the origin satisfies the following conditions:
\begin{enumerate}
\item[{\bf (a).}] the zero set of $\omega$ has codimension at least two;
\item[{\bf (b).}] $\omega$, seen as a  homogeneous polynomial of
degree $k$ in the ring $\mathcal O({\mathbb C^n,0})[dx_1, \ldots, dx_n]$, is square-free;
\item[{\bf (c).}] {\bf (Brill's condition)} for a generic $p \in U$, $\omega(p)$ is a product of $k$
 linear forms; \index{Brill's condition|(}
\item[{\bf (d).}] {\bf (Frobenius' condition)}  for a generic $p \in U$, the germ of  $\omega$ at $p$ is the product of $k$
germs of integrable $1$-forms.
\end{enumerate}

Both conditions (c) and (d) are automatic for germs of webs on $(\mathbb C^2,0)$ and non-trivial for germs
on $(\mathbb C^n,0)$ when $n\ge 3$. Notice also that condition (d) implies condition (c). Nevertheless  the two  conditions
are stated independently  because  condition (c) is of purely algebraic nature (~depends only
on the value of $\omega$ at $p$~) while  condition (d) involves the exterior differential and therefore depends
not only on the value of  $\omega(p)$ but also on the local behavior of $\omega$ near $p$.

\smallskip

A germ of singular web will be called \defi[generically smooth]\index{Web!generically smooth} if the condition below is also satisfied:
\begin{enumerate}
\item[{\bf (e).}] {\bf (Generic position)} for a generic $p \in U$,  any $m\le n$ linear forms  $\alpha_1, \ldots, \alpha_m$
 dividing $\omega(p)$ satisfy
 \[
 \alpha_1 \wedge \cdots \wedge \alpha_m \neq 0 \, .
 \]
\end{enumerate}
One can rephrase condition $(e)$ by saying that for a generic $p \in U$ the germ of $\omega$ at $p$ defines a smooth web.

Notice that conditions (b) and (c) together imply that every germ of singular web is {\it generically quasi-smooth}.\index{Web!generically quasi-smooth}

\dd
\index{Foliation!singular holomorphic}
It is interesting to compare the above definition with the following: a germ of singular codimension
$q$ foliation on $(\mathbb C^n,0)$ is an equivalence class $\mathcal F= [\omega]$ of germs of $q$-forms modulo
multiplication by elements of $\mathcal O^*_{(\mathbb C^n,0)}$
satisfying (~as above $\omega$ is a representative of $\mathcal F$ defined on $U \subset \mathbb C^n$~)
\begin{enumerate}\index{Pl\"{u}cker's condition|(}
\item[{\bf (a).}] the zero set of $\omega$ has codimension at least two;
\item[{\bf (c).}] {\bf (Pl\"{u}cker's condition)} for a generic $p \in U$, $\omega(p)$ is a wedge product of $k$
 linear forms $\alpha_1, \ldots, \alpha_k$;
\item[{\bf (d).}] {\bf (Frobenius' condition)}\index{Frobenius' condition|(}  for a generic $p \in U$, the germ of  $\omega$ at $p$ is the product of $k$
germs of  $1$-forms $\alpha_1, \ldots, \alpha_k$ and each one of them satisfies  $d\alpha_i \wedge \omega =0$.
\end{enumerate}

Notice that the absence of condition (b) is due to the antisymmetric character of $\Omega^q{(\mathbb C^n,0)}$.
Although apparently similar  conditions (c),  (d) for codimension $q$ foliations and $k$-webs
have rather distinct features.

\smallskip

It is a classical result of Pl\"{u}cker that the $q$-form $\omega(p)$ satisfies condition (c) ( foliation version )
if only if{\begin{footnote}{Here, and through out,   $i_v $  denotes  interior product.}\end{footnote}}
\[
 \big( i_v \omega(p) \big) \wedge \omega(p) =0 \quad \text{ for every } \quad v \in \bigwedge^{q-1} T_p \mathbb C^n \, .
\]
 Moreover,  varying $v \in T_p \mathbb C^n$, the above formulas
are the  well known Pl\"{u}cker quadrics and generate the homogeneous ideal defining the locus of completely decomposable
$q$-forms  in $\bigwedge^q T^*_p\mathbb  C^n$, see for instance \cite[pages 209--211]{GH} or \cite[Chapter 3,Theorem 1.5]{GKZ}. \index{Pl\"{u}cker!quadrics}

Less well known are Brill's equations \index{Brill's equations}describing the locus of completely decomposable $q$-symmetric $1$-forms,
for a modern exposition see \cite[chapter 4, section 2]{GKZ}. They
differ from Pl\"{u}cker equations in a number of ways: they cannot be so easily described since their definition depends on
some concepts of representation theory; they are not quadratic equations, for $q$-symmetric $1$-forms they are of
degree $q+1$; the ideal generated by Brill's equation is not reduced in general, already for    $3$-symmetric $1$-forms on $\mathbb C^4$
Brill's equations do not generate the ideal of the locus of completely decomposable forms, see for instance \cite[proposition 2.5]{dalbec}.
\index{Brill's condition|)}\index{Pl\"{u}cker's condition|)}
\smallskip

For condition (d) the situation is even worse. While for alternate $q$-forms the
integrability condition can be written, see \cite[Propositions 1.2.1 and 1.2.2]{airton}, as \label{page:integral}
\[
 ( i_v d \omega ) \wedge \omega =0 \quad \text{ for every } \quad v \in \bigwedge^{q-1} T_p \mathbb C^n \, ,
\]
the integrability condition for $q$-symmetric $1$-forms   have not  been treated   in the
literature yet.
\index{Frobenius' condition|)}

\subsection{Germs of singular webs II}\label{S:singII}
\index{Web!singular}
There is an alternative definition for germs of singular webs that is in a certain sense more geometric. The idea
is to consider the (germ of) web as a meromorphic section of the projectivization of cotangent bundle.
This is a classical point of view in the theory of differential equations
which  has been recently explored in web geometry by Cavalier-Lehmann, see \cite{CL2}.
Beware that the  terminology here adopted does not always coincides with the one used in \cite{CL2}.

\subsubsection{The contact distribution}
\index{Contact distribution|(}
Let $\mathbb P = \mathbb PT^* (\mathbb C^n,0)$  be the projectivization of the cotangent bundle of $(\mathbb  C^n,0)$
and $\pi : \mathbb P \to (\mathbb C^n,0)$ the natural projection\begin{footnote}{The convention
adopted in this text is that over a point $p$ the fiber $\pi^{-1}(p)$ parametrizes the one-dimensional subspaces of $T^*_p (\mathbb C^n,0)$.
Beware
that some authors consider $\pi^{-1}(p)$ as a parametrization   of the one-dimensional quotients of $T^*_p (\mathbb C^n,0)$.}\end{footnote}.
On $\mathbb P$ there is a canonical codimension one distribution, the so called contact distribution $\mathcal D$. Its description in terms
of a system of  coordinates $x_1, \ldots, x_n$  of $(\mathbb C^n,0)$ goes as
follows:  if $y_i = \partial_i$ are interpreted as coordinates\begin{footnote}{In case of confusion, notice that the coordinate functions on a vector space $E$ can
be chosen  to be elements of $E^*$, that is,  linear forms on $E$.}\end{footnote}
of the total space of  $T^* (\mathbb C^n,0)$ then the lift from $\mathbb P$ to $T^* (\mathbb C^n,0)$ of the
contact distribution, is the kernel of the  $1$-form
\begin{equation}\label{E:cform}
  \alpha = \sum_{i=1}^n y_i dx_i.
\end{equation}

The usual way to define  $\mathcal D$ in more intrinsic terms goes as follows. Recall that the tautological line-bundle $\mathcal O_{\mathbb P}(-1)$
is the rank one sub-bundle of $\pi^* T^* (\mathbb C^n,0)$ determined over a point $p=(x,[y]) \in \mathbb P$ by the direction parametrized by it. Its dual
$\mathcal O_{\mathbb P}(1)$ is the therefore a quotient of $\pi^* T (\mathbb C^n,0)$. The distribution on $\mathbb P$  induced by the kernel of the composition
\[
\xymatrix{T  \mathbb P \ar[r]^-{ d \pi} &\pi^* T ( \mathbb C^n,0) \ar[r] & \mathcal O_{\mathbb P}(1)  } \,
\]
is nothing more than the contact distribution $\mathcal D$. Notice that the composition is given by the interior
product of local section of $T \mathbb P$ with a twisted $1$-form $\alpha \in H^0( \mathbb P, \Omega^1_{\mathbb P} \otimes \mathcal O_{\mathbb P}(1))$,
which in local coordinates coincides with the $1$-form  (\ref{E:cform}).
This $1$-form is the so called \defi[contact form] of $\mathbb P$. \index{Contact form}
\index{Contact distribution|)}

\subsubsection{Webs as  closures of meromorphic multi-sections}
Let now $W \subset \mathbb P$ be a subvariety not necessarily irreducible but of pure dimension $d$. Suppose also that
$W$ satisfies the following conditions
\begin{enumerate}
\item[(a)] the image under $\pi$ of every irreducible component of $W$ has dimension $n$;
\item[(b)] the generic fiber  of $\pi$ intersects $W$ in $k$ distinct smooth points, and at these
 the differential $d \pi_{|W} : T_p W \to T_{\pi(p)} (\mathbb C^n,0)$ is surjective; and
\item[(c)] the restriction of the contact form $\alpha$ to the smooth part of every irreducible component of $W$
is integrable.
\end{enumerate}

One can then define a germ of web as the subvarieties $W$ of $\mathbb P$ as above. This definition is equivalent to the one laid down
in Section \ref{S:singwebs1}. Indeed given a singular $k$-web  $[\omega]$ in the sense of \S \ref{S:singwebs1}
one can consider the closure of its ``graph"  in $\mathbb P$. More precisely, over a generic point $p \in (\mathbb C^n,0)$
the  ``graph" of $[\omega(p)]$ is formed by the points in $\mathbb P T^*_p (\mathbb C^n,0)$ corresponding to the
factors of $\omega(p)$.  In this way one defines a locally closed subvariety of $\mathbb P$ with closure satisfying
conditions (a), (b) and (c) above.

Reciprocally the restriction of the contact form $\alpha$ to a subvariety $W \subset \mathbb P$ satisfying (a), (b) and (c) above
induces a codimension one foliation $\mathcal F$ on the smooth part of $W$. Moreover, over regular values of $\pi$ the direct image of $\mathcal F$
can be identified with the superposition of $k$ foliations. Since the symmetric product of the $k$ distinct $1$-forms defining these foliations
is invariant under the monodromy of $\pi$, one ends up with a germ of section of $\mathrm{Sym}^k \Omega^1_{(\mathbb C^n,0)}$ inducing $\pi_* \mathcal F$.
After cleaning up eventual codimension one components of the zero set one obtains  a $k$-symmetric $1$-form $\omega$  satisfying the conditions
(a), (b), (c) and (d) of Section \ref{S:singwebs1}.

\index{Web!singular|)}

\subsection{Global webs}\label{S:global}\index{Web!global|(}

Although this text is ultimately interested in the classification of germs of smooth webs of
maximal rank, a concept to be introduced in Chapter \ref{Chapter:AR},  most of the relevant examples
are globally defined on projective manifolds. It is therefore natural to lay down the definitions
of a global web and related concepts.

A global $k$-web $\mathcal W$ on a  manifold $X$ is given by an open covering $\mathcal U= \{ U_i \}$ of $X$
and $k$-symmetric $1$-forms $\omega_i \in \mathrm{Sym}^k \Omega^1_X(U_i)$ subject to the conditions:
\begin{enumerate}
\item for each non-empty intersection $U_i\cap U_j$  of elements of $\mathcal U$ there exists a non-vanishing
function $g_{ij} \in \mathcal O^*(U_i\cap U_j)$ such that $\omega_i = g_{ij} \omega_j$;
\item for every $U_i \in \mathcal U$ and every $x \in U_i$ the germification of $\omega_i$ at $x$ satisfies the conditions
(a), (b), (c) and (d) of Section \ref{S:singwebs1}, in other words, the germ of $\omega_i$ at $x$ is a representative
of a germ of a singular web.
\end{enumerate}

The transition functions $g_{ij}$ determine a line-bundle $\mathcal N$ over $X$ and the $k$-symmetric $1$-forms
$\{\omega_i \}$ patch together to form a section of $\mathrm{Sym}^k \Omega^1_X \otimes \mathcal N$, that is,
$\omega=\{ \omega_i\}$ can be interpreted as an element of $H^0(X,\mathrm{Sym}^k\Omega^1_X \otimes \mathcal N)$.
The line-bundle $\mathcal N$ will be called the \defi[normal bundle] of $\mathcal W$. \index{Web!normal bundle}
Two global sections $\omega, \omega' \in H^0(X, \mathrm{Sym}^k\Omega^1_X \otimes \mathcal N)$ determine the same
web if and only if they differ by the multiplication by an element $g \in H^0(X, \mathcal O^*_X)$.

If  $X$ is compact, or more generally if the only global sections of $\mathcal O^*_X$ are the non-zero constants, then a global $k$-web is nothing
more than an element of ${\mathbb P H^0(X,\mathrm{Sym}^k\Omega^1_X \otimes \mathcal N)}$, for a suitable line-bundle $\mathcal N \in \mathrm{Pic}(X)$,
with germification of any representative at any point of $X$ satisfying  conditions (a), (b), (c) and (d) of Section \ref{S:singwebs1}.

When $X$ is a variety for which every line-bundle has  non-zero meromorphic sections one can alternatively
define global $k$-webs as equivalence classes $[\omega]$ of meromorphic $k$-symmetric $1$-forms modulo
multiplication by meromorphic functions such that at a generic point $x \in X$ the germification of any representative
$\omega$ satisfies the very same conditions refereed to above. The transition to the previous
definition is made by observing that a meromorphic $k$-symmetric $1$-form $\omega$ can be interpreted
as a global holomorphic section of $\mathrm{Sym}^k \Omega^1_X \otimes \mathcal O_X( (\omega)_{\infty}- (\omega)_0)$
where $(\omega)_0$, respectively $(\omega)_{\infty}$, stands for the zero divisor, respectively polar divisor, of $\omega$.
\index{Web!global|)}

\smallskip

{A $k$-web $\mathcal W\in {\mathbb P H^0(X,\mathrm{Sym}^{k}\Omega^1_X \otimes \mathcal N)}$
is \defi[decomposable]  if there are global webs $\mathcal W_1, \mathcal W_2$ on $X$
sharing no common subwebs  such that $\mathcal W$ is the superposition of $\mathcal W_1$ and $\mathcal W_2$, that is
 $\mathcal W=\mathcal W_1\boxtimes \mathcal W_2$. A $k$-web $\mathcal W$ will be called  \defi[completely  decomposable]
 if one can write ${\mathcal W=\mathcal F_1\boxtimes \cdots \boxtimes \mathcal F_k}$ for $k$  global
 foliations $\mathcal F_1,\ldots,\mathcal F_k$ on $X$. Remark
 that the restriction of a web  at a sufficiently small  neighborhood of a generic $x\in X$
 is completely decomposable.
\index{Web!decomposable}
\index{Web!completely decomposable}

\subsubsection{Monodromy}\index{Monodromy|(}

Thanks to condition \ref{S:singwebs1}.(b) the germ of a global $k$-web $\mathcal W$ at a generic point $x \in X$
is completely decomposable. Moreover the set of points $x \in X$ where $\mathcal W_x$ is not completely decomposable
is a closed analytic subset of $X$. If $U$ is the complement of this subset then for arbitrary $x_0 \in U$ it is
possible to write $\mathcal W_{x_0}$, the germ of $\mathcal W$ at $x_0$, as $\mathcal F_1 \boxtimes \cdots \boxtimes \mathcal F_k$.
Notice that $\mathcal W$ does not have to be quasi-smooth at $x_0 \in U$. It may happen that $T_{x_0} \mathcal F_i = T_{x_0} \mathcal F_j$ for some $i\neq j$.

Analytic continuation of this decomposition along paths $\gamma$ contained in $U$ determines an anti-homomorphism\begin{footnote}{As usual the fundamental group acts on the right and thus $\rho_{\mathcal W}$ is not a homomorphism but instead an anti-homomorphism, that is $$\rho_{\mathcal W}( \gamma_1 \cdot \gamma_2) = \rho_{\mathcal W}( \gamma_2) \cdot \rho_{\mathcal W}(\gamma_1)$$ for arbitrary $\gamma_1, \gamma_2 \in \pi_1(U,x_0)$ . }\end{footnote}
\[
\rho_{\mathcal W} : \pi_1(U, x_0) \longrightarrow \mathfrak S_k
\]
from the fundamental group of $U$ to the permutation group on $k$ letters $\mathfrak{S}_k$.
Because distinct choices of base points yield conjugated anti-homomorphisms, it is harmless
to identify all these anti-homomorphisms and call them
the monodromy (anti)-representation of $\mathcal W$. The image of $\rho_{\mathcal W} \subset \mathfrak S_k$ is,
by definition, the \defi[monodromy group] of $\mathcal W$. \index{Web!monodromy group}

The reader is invited to verify the validity of the following proposition.

\begin{prop}\label{P:rho}
The following assertions hold:
\begin{enumerate}
\item[(a)] If $\mathcal W$ is not completely decomposable then there exists $\gamma \in \pi_1(U,x_0)$ such that $\rho_{\mathcal W}(\gamma)$ is
a non-trivial permutation;
\item[(b)] Every irreducible component of the complement of $U$ has codimension one.
\end{enumerate}
\end{prop}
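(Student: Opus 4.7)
The plan is to adopt the geometric viewpoint of Section \ref{S:singII}: I represent $\mathcal{W}$ by the subvariety $W \subset \mathbb{P} = \mathbb{P}T^*X$ and let $\nu : \widetilde{W} \to W$ be its normalization. The finite morphism $\tilde\pi = \pi\circ\nu : \widetilde{W} \to X$ has generic degree $k$, and the first task is to identify the complement $X\setminus U$ with the branch locus $B\subset X$ of $\tilde\pi$. If $\tilde\pi$ is \'etale over $x$, then above a simply-connected neighborhood $N$ of $x$ it splits as $k$ disjoint sections; composing with $\nu$ one obtains $k$ local sections of $\pi|_W$, whose defining $1$-forms $\omega_1,\ldots,\omega_k$ provide a factorization $\omega = \omega_1\cdots\omega_k$ on $N$, so $x\in U$. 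Conversely, if $\mathcal{W}_x$ is completely decomposable with local factors $\omega_1,\ldots,\omega_k$, the sections $y\mapsto(y,[\omega_i(y)])$ of $\pi|_W$ factor through $\widetilde{W}$ by the universal property of normalization (the source being smooth, hence normal) and, being distinct at generic points, produce $k$ pairwise disjoint sections of $\tilde\pi$ above a neighborhood of $x$; hence $\tilde\pi$ is \'etale at $x$.

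Granting this identification, part (a) follows by contraposition. Assume $\rho_{\mathcal{W}}$ is trivial. The finite \'etale cover $\tilde\pi^{-1}(U)\to U$ then has trivial monodromy, and since $U$ is connected (its complement being a proper analytic subset of the connected manifold $X$) this cover splits as a disjoint union of $k$ sections $\sigma_i(U)$. Taking Zariski closures in $\widetilde{W}$ produces $k$ irreducible components $\widetilde{W}_i$, whose images $W_i=\nu(\widetilde{W}_i)$ give the irreducible decomposition $W=W_1\cup\cdots\cup W_k$. Each $W_i$ satisfies the conditions of Section \ref{S:singII} with $k=1$ and therefore defines a global foliation $\mathcal{F}_i$ on $X$, while the subvariety associated to $\mathcal{F}_1\boxtimes\cdots\boxtimes\mathcal{F}_k$ is exactly $W_1\cup\cdots\cup W_k=W$. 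Hence $\mathcal{W}=\mathcal{F}_1\boxtimes\cdots\boxtimes\mathcal{F}_k$, contradicting the hypothesis that $\mathcal{W}$ is not completely decomposable.

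For (b), the identification $X\setminus U = B$ reduces the claim to purity of the branch locus: since $\widetilde{W}$ is normal and $X$ is smooth, the Zariski--Nagata purity theorem asserts that $B$ is either empty or pure of codimension one in $X$, which is precisely the stated conclusion.

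The principal obstacle lies in the delicate direction of the identification $X\setminus U=B$. At a point where two of the local factors $\omega_i(x)$ happen to share a common direction, the subvariety $W$ itself is already ramified there even though $\mathcal{W}_x$ remains completely decomposable; passing to the normalization $\widetilde{W}$ is what separates such coincident sheets and allows one to read off \'etaleness from decomposability.
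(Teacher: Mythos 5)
The paper states this proposition without proof (it is left to the reader), so there is no official argument to compare against; judged on its own terms, your proof has a genuine gap at its central step, the identification $X\setminus U=B$. The direction you yourself flag as delicate --- ``$\mathcal W_x$ completely decomposable $\Rightarrow\tilde\pi$ \'etale over $x$'' --- is in fact false, and normalization does not repair it. The obstruction is not the tangency of two factors (which normalization does separate) but the \emph{vanishing} of a factor: if one of the local integrable $1$-forms $\omega_i$ vanishes at $x$, i.e.\ the local foliation $\mathcal F_i$ is singular there (this happens on a codimension-$\ge 2$ subset that lies inside $U$), then the section $y\mapsto(y,[\omega_i(y)])$ is undefined at $x$ and the fiber of $W$, hence of $\widetilde W$, over $x$ can be positive-dimensional. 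Already for the $2$-web on $\mathbb C^2$ given by $\omega=(x\,dx+y\,dy)\cdot dx$ one has $U=X$, yet $W$ contains the blow-up of the origin and $\tilde\pi^{-1}(0)$ contains a $\mathbb P^1$. So $\tilde\pi$ is not finite, the phrase ``branch locus'' and the Zariski--Nagata purity theorem do not apply as invoked, and only the easy inclusion $X\setminus U\subseteq B$ survives --- which yields nothing for (b), since a closed analytic subset of a pure codimension-one set need not itself be pure of codimension one. The same issue undermines the literal form of your argument for (a) (the cover over $U$ need not be \'etale, nor even finite), though (a) is easily repaired: run the splitting argument over the connected dense open set $X\setminus\Delta(\mathcal W)$, over which the projection genuinely is a covering and to which the monodromy restricts, and then take Zariski closures of the $k$ graphs exactly as you do.

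For (b) the purity must be proved for the locus $X\setminus U$ itself, by hand. Suppose $Z_0$ is an irreducible component of $X\setminus U$ of codimension $\ge 2$; pick $x\in Z_0$ away from the other components and a small ball $N$ with $N\cap(X\setminus U)=N\cap Z_0$ of codimension $\ge 2$. Then $N\setminus Z_0$ is connected and simply connected, so the local decompositions continue single-valuedly and produce $k$ foliations $\mathcal F_1,\dots,\mathcal F_k$ on $N\setminus Z_0$ whose superposition is $\mathcal W$ there. Taking the closures of their graphs in $\mathbb P T^*N$ (equivalently, extending suitably normalized defining $1$-forms across $Z_0$ by Hartogs) yields $k$ germs of foliations at $x$ decomposing $\mathcal W_x$, contradicting $x\notin U$. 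This local simple-connectivity-plus-extension argument is the actual content of (b); your reduction to the branch locus of $\tilde\pi$ skips over it.
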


Proposition \ref{P:rho} makes clear that $\rho_{\mathcal W}$ measures the obstruction to $\mathcal W$ be completely decomposable.

\medskip

Alternatively one can also define a global $k$-web on $X$ as a closed subvariety $W \subset \mathbb PTX$ satisfying the natural global analogues of
conditions (a), (b) and (c) of Section \ref{S:singII}. In this alternative take the monodromy is  nothing more than the usual monodromy
of the projection $\pi_{|W}:  W \to X$.\index{Monodromy|)}

\subsection{Discriminant}\index{Discriminant|(}

The discriminant locus $\Delta(\mathcal W)$ of a $k$-web $\mathcal W$ on a complex manifold $X$ is composed by the set of points
where the germ of $\mathcal W$ is not quasi-smooth. Thinking $\mathcal W$ as a subvariety $W \subset \mathbb P (TX)$, the discriminant is precisely the
image under the natural projection $\pi_{|W}: W \to X$ of the union of the singular points of $\mathcal W$ with the critical set of the restriction of $\pi_{|W}$ to the smooth locus of $\mathcal W$.

From its very definition it is clear that $\Delta(\mathcal W)$ is a closed analytic subset with complement contained in the subset $U$ used in the definition
of the monodromy representation. Therefore the monodromy representation can be thought as a anti-homomorphism from $\pi_1(X \setminus \Delta(\mathcal W))$ to
$\mathfrak{S}_k$.

For  webs $\mathcal W$ on  surfaces there are simple expressions for their discriminants inherited from the
classical invariant theory of binary forms.\index{Discriminant|)}

\subsubsection{The resultant and tangencies between webs on surfaces}\index{Resultant|(}

Recall that for two homogeneous polynomials in two variables, also known as binary forms, \index{Binary forms}
\[
P = \sum_{i=0}^m p_i x^i y^{m-i} \quad \text{ and } \quad Q = \sum_{i=0}^n q_i x^i y^{n-i}
\]
the resultant $R[P,Q]$ of $P$ and $Q$ is given by the determinant of the  Sylvester matrix \index{Sylvester matrix}
\[
\left[\begin{array}{cccccc}
p_{m} & \dots & \dots & p_{0} &  & \\
& \ddots &  & &\ddots & \\
&  & p_{m} & \dots & \dots & p_{0}  \\
q_{n} & \dots & q_{0} &  & &\\
& \ddots &  & \ddots & &\\
&  & q_{n} & \dots & q_{0}  &\\
\end{array}\right] \, .
\]
This is the  $(m+n) \times (m+n)$-matrix formed from the coefficients of $P$ and $Q$ as schematically presented above with
$n= \deg(Q)$ rows builded from the coefficients of $P$ and $m=\deg(P)$ rows coming from the coefficients of $Q$.

If $g(x,y) = (\alpha x + \beta y, \gamma x + \delta y)$  is a linear automorphism of $\mathbb C^2$
 and $\lambda, \mu \in \mathbb C^*$ then the resultant obeys the transformation rules
\begin{equation}\label{E:TransRes}
\begin{array}{lcl}
R[\lambda P, \mu Q] &=& \lambda^{\deg(Q)}\mu^{\deg(P)} R[P,Q] \, ,\\[0.2cm]
R[g^* P , g^* Q] &=& \det(Dg)^{\deg(P)\cdot \deg(Q)} R[P,Q] \, .
\end{array}
\end{equation}
Moreover $R[P,Q]$ vanishes if and only if $P$ and $Q$ share a common root.

If, for $\ell =1,2$,   $\mathcal W_\ell = [ \omega_{\ell} ] \in P H^0 (S, \mathrm{Sym}^{k_{\ell}} \Omega^1_S \otimes \mathcal N_{\ell})$ is a $k_{\ell}$-web
on a surface $S$ then the local defining $1$-forms $\omega_{\ell,i} \in \mathrm{Sym}^{k_{\ell}} \Omega^1_S(U_i)$ can be interpreted as binary forms
in the variables $dx,dy$ with coefficients in $\mathcal O_S(U_i)$. The resultant $R[\omega_{1,i},\omega_{2,i}]$ is then an element of $\mathcal O_S(U_i)$ with
zero locus coinciding with the tangencies between ${\mathcal W_1}_{|U_i}$  and ${\mathcal W_2}_{|U_i}$.
The transformation rules (\ref{E:TransRes}) imply that
the collection $\{R[\omega_{1,i}, \omega_{2,i}]\}$ patch together to form a global holomorphic section of the line-bundle $K_S^{\otimes k_1 \cdot k_2} \otimes \mathcal N_1^{\otimes k_2} \otimes \mathcal N_2^{\otimes k_1}.$ This section is different from the zero section if and only $\mathcal W_1$ and $\mathcal W_2$ do
not share a common subweb since the resultant vanishes only when its parameters share common roots.

If $\mathrm{tang}(\mathcal W_1, \mathcal W_2)$ is defined as the divisor locally given by the resultant of the defining $k_\ell$-symmetric $1$-forms
then the discussion just made can be summarized in the following proposition.

\begin{prop}
Let $\mathcal W_1$ be a $k_1$-web and $\mathcal W_2$ a $k_2$-web  with respective normal bundles $\mathcal N_1$ and $\mathcal N_2$,
both defined  on the same surface $S$. If they do not share a common subweb
then the identity
\[
\mathcal O_S( \mathrm{tang}(\mathcal W_1, \mathcal W_2) ) = K_S^{\otimes k_1 \cdot k_2} \otimes \mathcal N_1^{\otimes k_2} \otimes \mathcal N_2^{\otimes k_1}
\]
holds true in the Picard group of $S$.
\end{prop}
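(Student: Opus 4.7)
The plan is to verify, on a suitable common trivializing open cover, that the locally-defined resultants patch together into a global section of the asserted line bundle; this is essentially a bookkeeping exercise using the two transformation rules in (\ref{E:TransRes}).

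First, I would choose an open cover $\mathcal{U}=\{U_i\}$ which simultaneously trivializes $\mathcal{N}_1$, $\mathcal{N}_2$ and admits holomorphic coordinate charts $(x_i,y_i)$ on each $U_i$. On each $U_i$ pick local defining symmetric forms $\omega_{1,i}\in\mathrm{Sym}^{k_1}\Omega^1_S(U_i)$ and $\omega_{2,i}\in\mathrm{Sym}^{k_2}\Omega^1_S(U_i)$. Writing each $\omega_{\ell,i}$ as a binary form in the variables $dx_i,dy_i$ with coefficients in $\mathcal{O}_S(U_i)$, one forms the resultant $R_i:=R[\omega_{1,i},\omega_{2,i}]\in\mathcal{O}_S(U_i)$. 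The hypothesis that $\mathcal{W}_1$ and $\mathcal{W}_2$ share no common subweb ensures that at a generic point of $U_i$ the binary forms $\omega_{1,i}$ and $\omega_{2,i}$ have no common linear factor, so $R_i$ is not identically zero; its zero locus is, by the basic property of the resultant recalled in the excerpt, precisely the set of points where $\mathcal{W}_1$ and $\mathcal{W}_2$ have a common tangent direction, i.e.\ the divisor $\mathrm{tang}(\mathcal{W}_1,\mathcal{W}_2)_{|U_i}$.

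Next I would compute how $R_i$ transforms on overlaps $U_i\cap U_j$. Two things change: the scaling representative of each web, and the chart in which $dx,dy$ are expressed. On overlaps, the cocycles of $\mathcal{N}_1$ and $\mathcal{N}_2$ give $\omega_{\ell,i}=g^{(\ell)}_{ij}\,\omega_{\ell,j}$; the first rule in (\ref{E:TransRes}), applied with $\lambda=g^{(1)}_{ij}$ and $\mu=g^{(2)}_{ij}$ (and recalling $\deg\omega_{1,i}=k_1$, $\deg\omega_{2,i}=k_2$), contributes the factor $(g^{(1)}_{ij})^{k_2}(g^{(2)}_{ij})^{k_1}$. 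The coordinate change from $(x_j,y_j)$ to $(x_i,y_i)$ acts on the binary-form variables by the Jacobian matrix $D\varphi_{ij}$, whose determinant $J_{ij}=\det D\varphi_{ij}$ is the transition cocycle of $K_S$; the second rule in (\ref{E:TransRes}) then contributes the extra factor $J_{ij}^{k_1k_2}$. Combining,
\[
R_i = J_{ij}^{k_1k_2}\,(g^{(1)}_{ij})^{k_2}\,(g^{(2)}_{ij})^{k_1}\, R_j
\]
on $U_i\cap U_j$, which is exactly the cocycle of the line bundle $K_S^{\otimes k_1k_2}\otimes\mathcal{N}_1^{\otimes k_2}\otimes\mathcal{N}_2^{\otimes k_1}$.

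Consequently the collection $\{R_i\}$ defines a global holomorphic section of $K_S^{\otimes k_1k_2}\otimes\mathcal{N}_1^{\otimes k_2}\otimes\mathcal{N}_2^{\otimes k_1}$ whose divisor of zeros is $\mathrm{tang}(\mathcal{W}_1,\mathcal{W}_2)$, yielding the claimed identity in $\mathrm{Pic}(S)$. The main subtle point of the argument is really just the correct matching of exponents in the two transformation rules of (\ref{E:TransRes})—in particular getting the Jacobian raised to $k_1k_2$ rather than $k_1+k_2$, which reflects the bilinear (bidegree $(k_2,k_1)$) character of the resultant versus its $(\deg P)(\deg Q)$ behavior under linear changes of variable. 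Once this bookkeeping is in order, the proposition is immediate, and no further geometric input beyond the resultant's vanishing criterion (to identify the zero divisor as the tangency locus) and the no-common-subweb hypothesis (to rule out the zero section) is needed.
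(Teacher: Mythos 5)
Your proof is correct and follows essentially the same route as the paper: the paper also forms the local resultants $R[\omega_{1,i},\omega_{2,i}]$, invokes the two transformation rules (\ref{E:TransRes}) to see that they patch into a section of $K_S^{\otimes k_1 k_2}\otimes\mathcal N_1^{\otimes k_2}\otimes\mathcal N_2^{\otimes k_1}$, and uses the no-common-subweb hypothesis to guarantee the section is nonzero. Your exponent bookkeeping (Jacobian to the power $k_1k_2$, cocycles of $\mathcal N_1,\mathcal N_2$ to the powers $k_2,k_1$ respectively) matches the paper exactly.
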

\index{Resultant|)}
\subsubsection{The discriminant of webs on a surface}\index{Discriminant|(}

By definition, the \defi[discriminant] $\Delta(P)$ of binary form $P$ is
\[
\Delta(P) = \frac{R[ P, \partial_x P] }{n^n p_n } = \frac{R[ P, \partial_y P ] }{n^n p_0 }  \, .
\]
Notice that $\Delta(P)$ vanishes if and only if both
$P$ and $\partial_x P$ share a common root, that is, $P$ has a root with multiplicity greater than one.

The discriminant obeys rules analogous to the ones obeyed by the resultant. Namely
\begin{equation*}
\begin{array}{lcl}
\Delta(\lambda P) &=& \lambda^{2(\deg(P)-1)}\Delta(P) \, ,\\[0.2cm]
\Delta(g^* P) &=& \det(Dg)^{\deg(P){(\deg(P)-1)}} \Delta(P) \, .
\end{array}
\end{equation*}

If $\mathcal W$ is a $k$-web, $k \ge2$, on a surface $S$  then the discriminant divisor of $\mathcal W$ is,
by definition, the divisor locally defined by $\Delta(\omega_i)$ where as before $\omega_i \in \mathrm{Sym}^k \Omega^1_S(U_i)$ locally defines
$\mathcal W$. Notice that the support of the discriminant divisor coincides with the discriminant set of $\mathcal W$ previously defined.

The discussion about the tangency of two webs adapts verbatim to yield the proposition below.

\begin{prop}\label{P:degdisc}
If $\mathcal W$ is a $k$-web  with normal-bundle $\mathcal N$ defined on a surface $S$ then
\[
\mathcal O_S(\Delta(\mathcal W)) = K_S^{ \otimes k(k-1)} \otimes \mathcal N^{\otimes 2(k -1)} \, .
\]
\end{prop}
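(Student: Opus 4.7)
The plan is to mirror the argument given for the tangency proposition, replacing the resultant $R[\omega_{1,i},\omega_{2,i}]$ by the discriminant $\Delta(\omega_i)$ and exploiting the analogous transformation rules stated immediately before the proposition.

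First I would fix an open cover $\mathcal U = \{U_i\}$ trivializing $\mathcal N$ together with defining local $k$-symmetric $1$-forms $\omega_i \in \mathrm{Sym}^k \Omega^1_S(U_i)$, so that $\omega_i = g_{ij}\,\omega_j$ on each non-empty intersection $U_i \cap U_j$, where $\{g_{ij}\}$ is the transition cocycle of $\mathcal N$. Interpreting each $\omega_i$ as a binary form of degree $k$ in the variables $dx, dy$ with coefficients in $\mathcal O_S(U_i)$, I would form the local holomorphic function $\Delta(\omega_i) \in \mathcal O_S(U_i)$. Condition (b) of Section \ref{S:singwebs1} (generic square-freeness of any representative) guarantees that $\Delta(\omega_i)$ is not identically zero, and by the very definition of the discriminant its zero locus in $U_i$ coincides with the discriminant set of $\mathcal W$ restricted to $U_i$.

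The central step is to determine how the local functions $\Delta(\omega_i)$ glue globally. On overlaps, the rule $\Delta(\lambda P)=\lambda^{2(\deg P-1)}\Delta(P)$ applied with $\lambda=g_{ij}$ gives $\Delta(\omega_i)=g_{ij}^{2(k-1)}\Delta(\omega_j)$, producing a contribution $\mathcal N^{\otimes 2(k-1)}$. Under a change of local coordinates on $S$, the rule $\Delta(g^*P)=\det(Dg)^{\deg P(\deg P-1)}\Delta(P)$ shows that the discriminant is multiplied by $\det(J)^{k(k-1)}$, which is precisely the cocycle of $K_S^{\otimes k(k-1)}$. Putting the two cocycles together, the collection $\{\Delta(\omega_i)\}$ patches into a non-zero global holomorphic section of $K_S^{\otimes k(k-1)} \otimes \mathcal N^{\otimes 2(k-1)}$ whose divisor of zeros is exactly $\Delta(\mathcal W)$, which yields the asserted identity in $\mathrm{Pic}(S)$.

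The argument is essentially bookkeeping, and I do not expect any genuine obstacle. The one point deserving a moment of attention is the verification that the linear-substitution rule for $\Delta$, stated over $\mathbb{C}$ for constant $g\in \mathrm{GL}(2,\mathbb C)$, applies verbatim when the coefficients of the binary form take values in $\mathcal O_S(U_i)$. This is harmless since $\Delta$ is a universal polynomial in the coefficients of the binary form, so the equivariance identities propagate pointwise to identities of holomorphic functions on the overlaps.
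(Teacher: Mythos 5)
Your proof is correct and follows essentially the same route as the paper, which simply observes that the cocycle computation for the tangency divisor adapts verbatim: the rules $\Delta(\lambda P)=\lambda^{2(\deg P-1)}\Delta(P)$ and $\Delta(g^*P)=\det(Dg)^{\deg P(\deg P-1)}\Delta(P)$ make the local discriminants $\{\Delta(\omega_i)\}$ patch into a non-trivial holomorphic section of $K_S^{\otimes k(k-1)}\otimes\mathcal N^{\otimes 2(k-1)}$ cutting out the discriminant divisor. Your closing remark that these identities are universal polynomial identities in the coefficients, hence valid over $\mathcal O_S(U_i)$, is exactly the (implicit) justification the paper relies on.
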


\subsubsection{Discriminants of real webs}
\index{Web!real trace|(}

Due to obvious technical constraints all the pictures of planar webs are drawn over the real plane. In particular
the  webs portrayed  ought to be defined by  real analytic $k$-symmetric $1$-forms $\omega$  on some open subset
$U$ of $\mathbb R^2$. Most of the time  these $1$-forms will be polynomial $1$-forms and hence globally defined on $\mathbb R^2$.

The sign of the discriminant of $\omega$ at a given point $p \in U$ gives clues about the number of real leaves of $\mathcal W = [ \omega]$
through $p$. For a $2$-web $\mathcal W$  induced by $\omega = a dx^2 + b dx dy + c dy^2$ the sign of $\Delta= \Delta(\omega) = b^2 - 4ac$ tells all one
may want to know about the number of real leaves: when $\Delta(p)>0$ there are two real leaves through $p$, and when $\Delta(p)<0$ there are
no real leaves through $p$.

For $3$-webs the situation is as good as for $2$-webs. According to wheter the sign of $\Delta$ is positive or negative at a given point $p$
the $3$-web has one or three real leaves through $p$.

For $k$-webs with $k\ge 4$ the sign of $\Delta$ at $p$ does not determine the number of real leaves of $\mathcal W$ through it but does tell
that, see \cite{RWDN},
\begin{itemize}
\item when $k$ is odd, the number of real leaves through $p$ is congruent to $1$ or $3$ modulo $4$ according as $\Delta(p)>0$ or $\Delta(p)<0$, and;
\item when $k$ is even, the number of real leaves through $p$ is congruent to $0$ or $2$ modulo $4$ according as  $\Delta(p)<0$ or $\Delta(p)>0$.
\end{itemize}

It is tempting to claim that a planar $k$-web $\mathcal W$ on $\mathbb C^2$ defined
by a real $k$-symmetric $1$-form with only one leaf through each point of a given domain $U \subset \mathbb R^2$
is nothing more than an analytic foliation on $U$. Although trivially true if the discriminant of $\mathcal W$ does not
intersect $U$ this claim is far from being true in general.

Perhaps the simplest example comes from a variation on the  classical Tait-Kneser Theorem presented in \cite{Taba,GhysP}.

Let  $f \in \mathbb R[x]$ be a polynomial in one real variable of degree $k$. For  fixed $n < k$ and $t \in \mathbb R$ define the
$n$-th osculating polynomial $g_t$  of $f$ as the polynomial of degree at most $n$ whose graph osculates the graph of $f$ at $(t,f(t))$ up to order $n$.
From its definition follows that $g_t(x)$ is nothing than the truncation of the Taylor series of $f$ centered at $t$ at order $n+1$, that is
\[
g_t(x) = \sum_{i=0}^n \frac{f^{(i)}(t) } {i!}  (x-t)^i \, .
\]

Notice that for a fixed $ t \in \mathbb C$ for which $f^{(n)}(t) \neq 0$ the graph of $g_t$,  $G_t = \{ y = g_t(x) \}$, is an irreducible plane curve
of degree $n$. Moreover varying $t \in \mathbb C$ one obtains a family of degree $n$ curves which corresponds to a degree $k$ curve $\Gamma_f$ on the
space of degree $n$ curves. The degree $n$ curves through a generic point $p \in \mathbb C^2$ determine a hyperplane $H$ in the space of degree $n$ curves.
Because $H$ intersects $\Gamma_f$ in $k$ points, through a generic $p \in \mathbb C^2$ passes  $k$ distinct curves of the family $\{ G_t\}$.
Therefore this family of curves determines a $k$-web $\mathcal W_f$ on $\mathbb C^2$.

To obtain a polynomial $k$-symmetric $1$-form defining $\mathcal W_f$ it suffices to eliminate $t$ from the pair of equations
\[
 \begin{array}{lcl} y - g_t(x) &=& 0  \, ,\\ dy - \partial_x g_t(x) dx &=& 0 \, . \end{array}
\]
Such task can be performed by considering the resultant of $y- g_t(x)$ and $dy - \partial_x g_t(x) dx$ seen
as degree $k$ polynomials in the variable $t$ with coefficients in $\mathbb C[x,y,dx,dy]$.

To investigate the real trace of $k$-web $\mathcal W_f$ the following variant of the classical Tait-Kneser Theorem \cite{GhysP,Taba}
will be useful. \index{Tait-Kneser Theorem}

\begin{thm}\label{T:KN}
If $n$ is even and $f^{(n+1)}(t) \neq 0$ for every real number $t$ in an interval $(a,b)$  then
the curves $G_a$ and $G_b$ do not intersect in $\mathbb R^2$.
\end{thm}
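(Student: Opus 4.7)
The plan is to show directly that the graphs $G_a$ and $G_b$ are disjoint by proving that, as a function of $x\in\mathbb{R}$, the difference $g_b(x)-g_a(x)$ is nowhere zero. The key observation is that the dependence on the parameter $t$ has a remarkably simple derivative.

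First, I would compute $\partial_t g_t(x)$ from the explicit Taylor expression. Differentiating termwise gives
\[
\partial_t g_t(x) \;=\; \sum_{i=0}^{n}\frac{f^{(i+1)}(t)}{i!}(x-t)^i \;-\; \sum_{i=1}^{n}\frac{f^{(i)}(t)}{(i-1)!}(x-t)^{i-1},
\]
and after reindexing the second sum ($j=i-1$) all terms with $j=0,\dots,n-1$ cancel, leaving
\[
\partial_t g_t(x) \;=\; \frac{f^{(n+1)}(t)}{n!}\,(x-t)^n.
\]
This telescoping identity is the heart of the argument and the one step that requires a short but careful calculation; once it is in hand, everything else is immediate.

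Next, for any fixed $x \in \mathbb{R}$, I would integrate along $t$ from $a$ to $b$, obtaining
\[
g_b(x) - g_a(x) \;=\; \int_{a}^{b}\frac{f^{(n+1)}(t)}{n!}\,(x-t)^n\,dt.
\]
Because $n$ is even, $(x-t)^n \ge 0$ for all real $t$, with equality only at the isolated point $t=x$. By hypothesis $f^{(n+1)}$ does not vanish on $(a,b)$, and since $f$ is a polynomial $f^{(n+1)}$ is continuous, hence keeps a constant sign on the whole interval. Thus the integrand has constant sign and is strictly nonzero on $(a,b)\setminus\{x\}$, so the integral is nonzero. Consequently $g_a(x)\ne g_b(x)$ for every $x\in\mathbb{R}$, which means the real graphs $G_a$ and $G_b$ do not intersect.

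The main ``obstacle,'' if one can call it that, is the telescoping identity for $\partial_t g_t(x)$; the rest is a positivity argument that depends crucially on the parity hypothesis on $n$ (for odd $n$ the factor $(x-t)^n$ changes sign at $t=x$ and the argument collapses, which is precisely why the statement is restricted to even $n$).
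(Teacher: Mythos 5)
Your proof is correct and follows essentially the same route as the paper's: the same telescoping computation of $\partial_t g_t(x)=\frac{f^{(n+1)}(t)}{n!}(x-t)^n$, the same application of the fundamental theorem of calculus in $t$, and the same parity/positivity argument. The only difference is presentational — you prove directly that $g_b(x)-g_a(x)\neq 0$ for every $x$, whereas the paper argues by contradiction from a hypothetical intersection point $x_0$.
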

\begin{proof}
On the one hand if  $a<b$ are real numbers for which $G_a$ and $G_b$ intersects in $\mathbb R^2$ then there exists
a real number $x_0 \in \mathbb R$ such that $g_a(x_0)- g_b(x_0)=0$.

On the other hand the fundamental theorem of calculus implies
\begin{align*}
g_a(x_0) - g_b(x_0) =  & \int_a^b \frac{\partial g_t}{\partial t}(x_0) dt = \int_a^b \left(\sum_{i=0}^n \frac{f^{(i+1)}(t)}{i!} (x_0 -t)^i - \sum_{i=1}^n \frac{f^{(i)}(t)}{(i-1)!} (x_0-t)^{i-1} \right) dt \\  =  & \int_a^b \frac{f^{(n+1)}(t)}{n!}(x_0 -t)^n dt \neq 0 \, .
\end{align*}

This contradiction concludes the proof.
\end{proof}

\begin{figure}[ht]
\begin{center}\includegraphics[height=4.2cm,width=4.5cm]{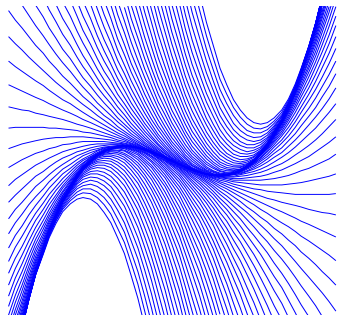}\end{center}
\caption[The real trace of $\mathcal W_f$]{The real trace of the $3$-web $\mathcal W_f$ for  $f=x(2x-1)(2x+1)$.}
\end{figure}

 If  $f \in \mathbb R[x]$ is a function of odd degree $k$ then the real trace of the $k$-web $\mathcal W_f$  defined
by the  graph of the  $(k-1)$-th osculating functions of $f$ is a continuous foliation in a neighborhood of $\Gamma_f$, the real graph
of $f$,  which is not differentiable since every point of  $\Gamma_f$ is tangent to some leaf of the foliation without being itself
a leaf. To summarize: the real trace of a holomorphic ( or even polynomial ) web can be a  non-differentiable, although continuous,  foliation.
\index{Foliation!non-differentiable}
\index{Web!real trace|)}
\index{Discriminant|)}

\section{Examples}\label{S:examples}
\subsection{Global webs on  projective spaces}

Let $\mathcal W=[\omega] \in \mathbb PH^0(\mathbb P^n, \mathrm{Sym}^k \Omega^1_{\mathbb P^n}\otimes \mathcal N)$ be a
$k$-web on $\mathbb P^n$.  The \defi[degree of $\mathcal W$] \index{Web!degree} is defined as the number of tangencies, counted with multiplicities,
 of $\mathcal W$ with a line not everywhere tangent to $\mathcal W$. More precisely, if $i: \mathbb P^1 \to \mathbb P^n$ is linear
 embedding of $\mathbb P^1$ into $\mathbb P^n$ then the points of tangency of the image line with $\mathcal W$ correspond to
 the zeros of $[i^* \omega] \in \mathbb P H^0(\mathbb P^1, \mathrm{Sym}^k \Omega^1_{\mathbb P^1} \otimes i^* \mathcal N )$. Notice that
 $i^* \omega$ vanishes identically if and only the image of $i$ is everywhere tangent to $\mathcal W$.

Recall that every line-bundle $\mathcal L$ on $\mathbb P^n$ is an integral multiple of $\mathcal O_{\mathbb P^n}(1)$ and consequently
one can write $\mathcal L = \mathcal O_{\mathbb P^n}(\deg(\mathcal L))$. Because the embedding $i$ is linear, the identity
 $i^* \mathcal O_{\mathbb P^n}(1) = \mathcal O_{\mathbb P^1}(1)$ holds true. Putting these two facts together
  with the identity $\mathrm{Sym}^k \Omega^1_{\mathbb P^1} = \mathcal O_{\mathbb P^1}(-2k)$ yields
 \[
 \deg( \mathcal W) = \deg(\mathcal N) - 2k \, .
 \]

\subsubsection{Characteristic numbers of projective webs}

Let $X \subset \mathbb P^n$ be an irreducible projective subvariety. The
projectivized conormal variety \index{Conormal variety} of $X$, \defi[conormal variety] of $X$ for short, is the
unique closed subvariety $\mathrm{Con}(X)$ of $\mathbb P T^* \mathbb P^n$ satisfying:
\begin{enumerate}
\item ${\pi(\mathrm{Con}(X)) = X}$, where $\pi: \mathbb P T^* \mathbb P^n \to \mathbb P^n$ is the natural projection;
\item the fiber $\pi^{-1}(x) \cap \mathrm{Con}(X)$ over any smooth point $x$ of $X$ is $\mathbb P T^*_x X \subset \mathbb P T^* \mathbb P^n$.
\end{enumerate}
The conormal variety of $X \subset \mathbb P^n$ can  succinctly be defined as
\[
\mathrm{Con}(X) = \overline{\mathbb P N^* X_{sm}} \, ,
\]
with $X_{sm}$ denoting the smooth part of $X$ and $N^* X_{sm}$ its conormal bundle.

For example, the conormal of a point $x \in \mathbb P^n$ is all the fiber $\pi^{-1}(x) = \mathbb P T^*_x \mathbb P^n$. More generally the
conormal variety of a linearly embedded $\mathbb P^i \subset \mathbb P^n$ is a trivial $\mathbb P^{n-i-1}$ bundle over $\mathbb P^i$.

If $W \subset \mathbb P T^* \mathbb P^n$ is the natural lift of $\mathcal W$ then the \defi[characteristic numbers] \index{Web!characteristic numbers} of  $\mathcal W$ on $\mathbb P^n$ are, by definition, the $n$ integers
\[
d_i ( \mathcal W ) =  W \cdot \mathrm{Con}(\mathbb P^i) \, ,
\]
with $i$ ranging from $0$ to $n-1$, and where    $A \mathbf{\cdot} B $   stands for the intersection product of $A$ and $B$.

Notice that $d_0(\mathcal W)$ counts the number of leaves of $\mathcal W$ through a generic point of $\mathbb P^n$, that is $\mathcal W$ is a $d_0(\mathcal W)$-web . The integer $d_1(\mathcal W)$ counts the number of points over a generic line $\ell$ where the web has a leaf with tangent space
containing $\ell$. Therefore $d_1(\mathcal W)$ is nothing more than the previously defined degree of $\mathcal W$.

\subsection{Algebraic webs revisited}\label{S:awr}

It is seems fair to say that the simplest $k$-webs on projective spaces are the ones of degree zero. Perhaps the best way to describe them
is through  projective duality.

Let $\check{\mathbb P}^n$ denote the projective space parametrizing hyperplanes in $\mathbb P^n$ and $\mathcal I \subset \mathbb P^n \times \check{\mathbb P}^n$
be the incidence variety, that is
\[
\mathcal I = \{ ( p , H ) \in \mathbb P^n \times \check{\mathbb P}^n \, | \, p \in H \} .
\]
The natural projections from $\mathcal I$ to $\mathbb P^n$ and $\check{\mathbb P}^n$ will be respectively denoted by $\pi$ and $\check \pi$.

\begin{prop}
The incidence variety $\mathcal I$ is naturally  isomorphic to $\mathbb P T^* \mathbb P^n$ and also to $\mathbb P T^* \check{\mathbb P}^n$. Moreover,
under these isomorphisms the natural projections $\pi$ and $\check \pi$ from $\mathcal I$ to $\mathbb P^n$ and $\check{\mathbb P}^n$,   coincide with
the projections from $\mathbb P T^* \mathbb P^n$ to  $\mathbb P^n$ and from $\mathbb P T^* \check{\mathbb P}^n$ to  $\check{\mathbb P^n}$ respectively.
\end{prop}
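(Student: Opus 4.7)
The idea is to construct an explicit biholomorphism $\Phi:\mathbb{P}T^*\mathbb{P}^n\to\mathcal{I}$ recording the one-to-one correspondence, at each point $p\in\mathbb{P}^n$, between hyperplanes of $\mathbb{P}^n$ through $p$ and lines in $T^*_p\mathbb{P}^n$. The second isomorphism will follow from biduality and the symmetry of $\mathcal{I}$ under the interchange of factors.

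First I would verify the pointwise bijection. Fix $p\in\mathbb{P}^n$. The fiber $\pi^{-1}(p)\subset\mathcal{I}$ consists of hyperplanes $H\in\check{\mathbb{P}}^n$ with $p\in H$; for any such $H$, the projective tangent space $T_pH$ is a codimension one subspace of $T_p\mathbb{P}^n$, and sending $H$ to the line in $T^*_p\mathbb{P}^n$ annihilating $T_pH$ gives a map from $\pi^{-1}(p)$ to $\mathbb{P}T^*_p\mathbb{P}^n$. Conversely, a line $[\alpha]\subset T^*_p\mathbb{P}^n$ defines a codimension one subspace $\ker\alpha\subset T_p\mathbb{P}^n$ which, via any affine chart centered at $p$, is the tangent space of a unique affine hyperplane through $p$ whose projective closure is the required $H$. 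This sets up the fiberwise correspondence underlying $\Phi$.

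Next I would globalize and verify holomorphicity by a direct coordinate computation. In the affine chart $\{x_0=1\}$ of $\mathbb{P}^n$, with cotangent coordinates $[y_1:\cdots:y_n]$ on the fibers of $\mathbb{P}T^*\mathbb{P}^n$ and dual coordinates $[\xi_0:\cdots:\xi_n]$ on $\check{\mathbb{P}}^n$, the map reads
$$\Phi\bigl((x_1,\ldots,x_n),[y_1{:}\cdots{:}y_n]\bigr)=\Bigl((1{:}x_1{:}\cdots{:}x_n),\ \bigl[-{\textstyle\sum_i x_iy_i}:y_1:\cdots:y_n\bigr]\Bigr),$$
which is manifestly holomorphic and injective, with a holomorphic inverse on the corresponding chart of $\mathcal{I}$ (the incidence relation $\sum x_i\xi_i=0$ lets one recover $\xi_0$ linearly from the remaining entries). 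Since source and target are smooth irreducible varieties of the same dimension $2n-1$, a bijective holomorphic map between them is a biholomorphism. The identity $\pi\circ\Phi=\pi_{\mathbb{P}T^*\mathbb{P}^n}$ is built into the construction.

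Finally, for the symmetric statement I would invoke projective biduality $\check{\check{\mathbb{P}}}{}^n=\mathbb{P}^n$: under the swap of factors together with biduality, $\mathcal{I}\subset\mathbb{P}^n\times\check{\mathbb{P}}^n$ is identified with the incidence variety attached to $\check{\mathbb{P}}^n$, so the construction above applied in the dual setting furnishes a biholomorphism $\check\Phi:\mathbb{P}T^*\check{\mathbb{P}}^n\to\mathcal{I}$ with $\check\pi\circ\check\Phi=\pi_{\mathbb{P}T^*\check{\mathbb{P}}^n}$. The only real, though elementary, point to check is that the swap of factors really identifies these two incidence varieties, which reduces to the symmetry of the pairing $\langle x,\xi\rangle=\sum_ix_i\xi_i$ in its arguments. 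That symmetry is also what ensures the tangent-space construction on the $\mathbb{P}^n$ side is consistent with its counterpart on $\check{\mathbb{P}}^n$, and is the main, but entirely straightforward, obstacle in the proof.
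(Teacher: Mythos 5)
Your proof is correct and follows essentially the same route as the paper's: both rest on identifying the fiber of $\mathcal I$ over a point $p$ with the projectivized annihilator of $p$, i.e.\ with $\mathbb P T_p^*\mathbb P^n$, and then obtaining the dual isomorphism from the symmetry of the incidence pairing (biduality $V\simeq V^{**}$). The paper merely states this identification and defers the details to Gelfand--Kapranov--Zelevinsky, whereas you carry out the chart computation explicitly; the content is the same.
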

\begin{proof}
If one identifies $\mathbb P^n \times \check {\mathbb P}^n$ with $\mathbb P(V) \times \mathbb P(V^*)$ where $V$ is  a vector space $V$ of dimension $n+1$ then
the incidence variety can be identified with the projectivization of the locus defined on $V \times V^*$ through the vanishing of the natural pairing.
Combining this with the natural isomorphism between $V$ and $V^{**}$ the proposition follows.
For details  see \cite[Chapter 1, Section 3.A]{GKZ}
\end{proof}

Using this identification of $\mathcal I$ with $\mathbb P T^* \check {\mathbb P}^n$ one defines for every projective
curve $C \subset {\mathbb P}^n$  its dual web $\mathcal W_C$ as the one defined by variety ${\pi}^{-1}(C) \subset \mathbb P T^* \check{\mathbb P^n}$ seen
as a multi-section of $\check{\pi}: \mathbb P T ^* \check{\mathbb P}^n \to \check{\mathbb P}^n$.
At once one verifies that the germification of this global web at a generic point $H_0 \in \check{\mathbb P}^n$
coincides with the germ of  web $\mathcal W_C(H_0)$ defined in Section \ref{S:defalg}.

\begin{prop}\label{P:lowtech}
If $C \subset {\mathbb P}^n$ is a  projective curve of degree $k$ then $\mathcal W_C$ is a $k$-web of degree zero on $\check{\mathbb P}^n$.
Reciprocally,
if $\mathcal W$ is a $k$-web of degree zero on $\check{\mathbb P}^n$ then
there exists $C \subset {\mathbb P}^n$, a projective  curve of degree $k$, such that
$\mathcal W = \mathcal W_C$.
\end{prop}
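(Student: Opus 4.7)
The proof has two directions. For the forward implication, I describe $\mathcal{W}_C$ by the subvariety $W = \pi^{-1}(C) \subset \mathcal{I}$. Since a generic hyperplane $H_0 \in \check{\mathbb{P}}^n$ meets $C$ in $\deg(C) = k$ distinct transverse points, the fibre $\check{\pi}^{-1}(H_0) \cap W$ has cardinality $k$, so $\mathcal{W}_C$ is a $k$-web. Computing the degree amounts to evaluating the intersection number $W \cdot \mathrm{Con}(\ell)$ for a generic line $\ell \subset \check{\mathbb{P}}^n$. Writing $\mathrm{Con}(\ell) = A \times \ell$ with $A \cong \mathbb{P}^{n-2}$ the axis of the pencil $\ell$, one finds $W \cap \mathrm{Con}(\ell) = (C \cap A) \times \ell$, which is empty for a generic axis $A$ by the dimension count $\dim C + \dim A - n = -1$. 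Hence $\deg(\mathcal{W}_C) = 0$.

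For the reverse direction, let $\mathcal{W}$ be a $k$-web on $\check{\mathbb{P}}^n$ of degree zero, with associated subvariety $W \subset \mathcal{I}$ of dimension $n$, and set $C := \pi(W) \subset \mathbb{P}^n$. The plan is to show $\dim C = 1$, whence $W = \pi^{-1}(C)$ follows by dimension count ($\dim \pi^{-1}(C) = 1 + (n-1) = n = \dim W$) together with a component-by-component comparison using the irreducibility of $\pi^{-1}(C_i)$ for each component $C_i$ of $C$. The identity $\mathcal{W} = \mathcal{W}_C$ is then immediate from the definition, and $\deg(C) = k$ follows from $|C \cap H| = k$ for a generic $H$, which comes in turn from the fibre of $\check{\pi}$ restricted to $W$ having cardinality $k$.

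The central and most delicate step is to prove $\dim C = 1$. The integrability of the contact form $\alpha$ on $W$ induces a codimension-one foliation $\mathcal{G}$ on $W$ whose $(n-1)$-dimensional leaves are Legendrian submanifolds of $\mathcal{I}$. Crucially, the Legendrian fibres $\pi^{-1}(p)$ of $\pi: \mathcal{I} \to \mathbb{P}^n$ are tangent to the contact distribution, so each fibre $W_p := W \cap \pi^{-1}(p)$ satisfies $TW_p \subset T\mathcal{G}$ and is therefore contained in a single leaf of $\mathcal{G}$; if $\dim C = 1$ then $\dim W_p = n-1$ forces $W_p$ to coincide with an entire leaf $\pi^{-1}(p)$, producing the algebraic structure. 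To rule out $\dim C \geq 2$, I would analyse the universal incidence variety
\[
\mathcal{B} = \bigl\{(p, H, A) \in W \times \mathrm{Gr}(n-1, n+1) : p \in A \subset H \bigr\}
\]
of dimension $2n-2$. Degree zero is equivalent to the natural projection $\mathcal{B} \to \mathrm{Gr}(n-1, n+1)$ (into the Grassmannian of codimension-two linear subspaces of $\mathbb{P}^n$, which also has dimension $2n-2$) being non-dominant. The main technical obstacle is precisely to combine this non-dominance with the Legendrian structure of the leaves of $\mathcal{G}$ to force $\dim \pi(W) = 1$: a naive dimension-counting argument is insufficient because $W$ and $\mathrm{Con}(A)$ already meet in expected dimension zero for any $n$-dimensional $W$, so the integrability of $W$ must be invoked to convert the degree hypothesis into the required bound on $\dim C$.
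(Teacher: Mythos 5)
Your forward direction is correct and is essentially the paper's ``alternative'' computation: you identify $\mathrm{Con}(\ell)$ with $A\times\ell$ for $A=\check\ell\cong\mathbb P^{n-2}$ and observe that $\pi^{-1}(C)\cap(A\times\ell)=(C\cap A)\times\ell$ is empty for a generic axis. (The paper's primary argument is even softer: since every leaf of $\mathcal W_C$ is a piece of a hyperplane, any line tangent to $\mathcal W_C$ at a quasi-smooth point lies entirely in the leaf, so no isolated tangencies can occur.)

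The converse, however, contains a genuine gap, and you flag it yourself: everything is reduced to showing $\dim\pi(W)=1$, and the step that would accomplish this is left as a ``technical obstacle'' to be resolved by somehow combining the non-dominance of $\mathcal B\to\mathrm{Gr}(n-1,n+1)$ with the Legendrian structure. That combination is exactly the content of the converse, so as written the proof is incomplete. The missing idea is to use degree zero to prove first that every leaf of $\mathcal W$ through a generic point $H_0$ is an open piece of a hyperplane. Concretely: let $L$ be a leaf of one of the local defining foliations $\mathcal F_i$ at $H_0$ and let $\ell$ be a generic line through $H_0$ contained in $T_{H_0}L$. Then $i^*\omega$ is a section of a line bundle of degree $\deg(\mathcal W)=0$ on $\ell\cong\mathbb P^1$ vanishing at $H_0$, hence $i^*\omega\equiv 0$; since near $H_0$ the tangent spaces of the $\mathcal F_j$ are pairwise distinct and $\ell\not\subset T_{H_0}\mathcal F_j$ for $j\neq i$ (for $\ell$ generic in $T_{H_0}L$), the line $\ell$ is tangent to $\mathcal F_i$ at every nearby point and therefore lies in $L$. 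Letting $\ell$ vary shows $L$ is an open subset of the hyperplane $T_{H_0}L$. Once linearity is established, the Gauss map of each $\mathcal F_i$ is constant along leaves, so under the identification $\mathbb P T^*\check{\mathbb P}^n\cong\mathcal I$ the lift to $W$ of each $(n-1)$-dimensional leaf is contracted by $\pi$ to a single point of $\mathbb P^n$; hence the generic fibre of $\pi|_W$ has dimension $\geq n-1$ and $\dim\pi(W)\leq 1$, while $\dim\pi(W)\geq 1$ because $\dim W=n>n-1=\dim\pi^{-1}(p)$. From there your dimension count and irreducibility of $\pi^{-1}(C_j)$ do finish the argument, and this is precisely the sense in which the paper calls the reciprocal ``similar'' to the direct implication.
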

\begin{proof}
If $C\subset {\mathbb P}^n$ is a  projective curve then all the leaves of $\mathcal W_C$ are hyperplanes. Therefore a line
tangent to $\mathcal W_C$  at a quasi-smooth  point is automatically contained in the leaf through that point. This is sufficient to
prove that $\mathcal W_C$ has degree $0$. Alternatively one can compute directly
\[
d_0(\mathcal W_C) = \pi^{-1} (C) \cdot \mathrm{Con}(\mathbb P^0) =0
\]
since $\pi^{-1} (C)$ does not intersect the conormal variety of any point $p= \mathbb P^0$ outside the dual variety $\check{C}$.

The proof of the reciprocal is similar and the reader is invited to work it out.
\end{proof}

\subsubsection{The discriminant of $\mathcal W_C$}

If $C$ is a smooth projective curve then the discriminant of $\mathcal W_C$  is nothing more than the set
of hyperplanes tangent to $C$ at some point. Succinctly,
\[ \Delta(\mathcal W_C) = \check C  \quad \text{when } C \text{ is smooth.}
\]
 For an arbitrary curve $C$ the discriminant of $\mathcal W_C$
will also contain the hyperplanes on $\check{\mathbb P}^n$ corresponding to singular points of $C$, and the
projective subspaces of codimension two dual to the lines contained in $C$.

Because for a plane curve of degree $k$, the normal bundle of the web  $\mathcal W_C$ is $\mathcal O_{\mathbb P^2}(2k)$, one has
\begin{align*}
\deg ( \Delta(\mathcal W_C) ) &= \deg( K_{\mathbb P^2}^{ k(k-1)} \otimes \mathcal O_{\mathbb P^2}(4k(k-1)))  \\ &= -3k(k-1) + 4k(k-1) = k (k-1)  \, ,
\end{align*}
according to Proposition \ref{P:degdisc}. In particular, one recovers the classical Pl\"{u}cker formula \index{Pl\"{u}cker formula} for the degree of
the dual of smooth curves:
\[
C \, \text{ smooth  and   }  \deg(C) = k \implies \deg(\check C) = k (k-1) \, .
\]

If $C$ has singularities then the lines dual to the singular points will also be part of $\Delta(\mathcal W_C)$. The multiplicity
with which this line appears  in the discriminant will vary according to the analytical type of the singularity. For instance
the lines dual to ordinary nodes will appear with multiplicity two, while the lines dual to ordinary cusps will appear with
multiplicity three. In particular for a degree $k$ curve with at $n$ ordinary nodes and $c$ ordinary cusps as singularities one obtains
another instance of Pl\"{u}cker formula
\[
\deg (\check C) = k(k-1) - 2n - 3 c \, .
\]

\subsubsection{The monodromy of $\mathcal W_C$}
Recall that a subgroup $G \subset \mathfrak S_k$  of the $k$-th symmetric group is $2$-transitive
if for any pair of pairs $(a,b), (c,d) \in \underline k^2$ there exists  $g \in G$ such that $g(a)=c$ and $g(b)=d$. To describe the monodromy of $\mathcal W_C$ for irreducible curves $C$
the simple lemma below will be useful.

\begin{lemma}
Let $G \subset \mathfrak S_k$ be a subgroup. If $G$ is $2$-transitive and contains a transposition then $G$ is the full
symmetric group.
\end{lemma}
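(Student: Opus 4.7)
The plan is to exploit the conjugation action: in any symmetric group, conjugation by $g$ sends the transposition $(a,b)$ to the transposition $(g(a),g(b))$. Thus $2$-transitivity is precisely the hypothesis one needs to move a single given transposition onto every other transposition.

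More concretely, I would argue as follows. Let $\tau = (a,b) \in G$ be the transposition whose existence is assumed. Given any pair $c \neq d$ in $\underline k$, the $2$-transitivity of $G$ provides some $g \in G$ with $g(a) = c$ and $g(b) = d$. Since $G$ is a subgroup, the conjugate $g \tau g^{-1}$ lies in $G$, and a direct computation gives $g \tau g^{-1} = (g(a),g(b)) = (c,d)$. Hence every transposition in $\mathfrak S_k$ belongs to $G$.

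To conclude, I would invoke the classical fact that $\mathfrak S_k$ is generated by its transpositions (for instance, any permutation can be written as a product of transpositions by induction on the number of non-fixed points, or by sorting via adjacent swaps). Since $G$ contains a generating set of $\mathfrak S_k$ and is itself a subgroup of $\mathfrak S_k$, one obtains $G = \mathfrak S_k$.

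There is no real obstacle here: the only ingredients are the conjugation formula $g(a,b)g^{-1} = (g(a),g(b))$, the definition of $2$-transitivity, and the generation of $\mathfrak S_k$ by transpositions. The entire proof fits in a few lines, and the role of the lemma in the subsequent discussion of the monodromy of $\mathcal W_C$ is simply to reduce the verification that $\rho_{\mathcal W_C}$ is surjective to producing one transposition together with $2$-transitivity.
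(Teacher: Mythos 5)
Your proof is correct and is essentially the same as the paper's: both use $2$-transitivity to conjugate the given transposition onto an arbitrary transposition $(c\,d)$ via the identity $g(a\,b)g^{-1}=(g(a)\,g(b))$, and then conclude since $\mathfrak S_k$ is generated by transpositions. The only cosmetic difference is that the paper normalizes the given transposition to $(1\,2)$ and conjugates arbitrary pairs onto $(1,2)$, whereas you conjugate in the opposite direction.
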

\begin{proof}
It is harmless to assume that $G$ contains the transposition $(1 \, 2)$. Since $G$ is $2$-transitive for every
pair $(a,b) \in \underline k$ there exists  $g \in G$ such that $g(a)=1$ and $g(b)=2$. Therefore the transposition
\[
(a \, b ) = g^{-1} (1 \, 2 ) g
\]
belongs to $G$. Consequently, every transposition in $\mathfrak S_k$ belongs to $G$. Since $\mathfrak S_k$ is generated by transpositions
the lemma is proved.
\end{proof}

\begin{prop}
If $C$ is an irreducible projective curve on $\mathbb P^n$ of degree $k$
then the monodromy group of $\mathcal W_C$ is the full symmetric group.
\end{prop}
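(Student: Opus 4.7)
The plan is to invoke the preceding lemma: it suffices to show that the monodromy group $G \subset \mathfrak{S}_k$ of $\mathcal{W}_C$ is (a) $2$-transitive on the generic fiber, and (b) contains a transposition. Let $U = \check{\mathbb{P}}^n \setminus \Delta(\mathcal{W}_C)$ and denote by $\tilde{U} \to U$ the degree-$k$ \'{e}tale cover whose fiber over $H \in U$ is $H \cap C$.

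For $2$-transitivity, I would introduce the incidence variety of ordered pairs
\[
\mathcal{J} \;=\; \bigl\{\,(H,p,q)\in U\times C\times C \;:\; p\ne q,\ p,q\in H\cap C\,\bigr\}
\]
and study it via the projection $\mathcal{J}\to C\times C\setminus\Delta_C$, $(H,p,q)\mapsto(p,q)$. The fiber over $(p,q)$ is a Zariski-open subset of the linear subspace of $\check{\mathbb{P}}^n$ consisting of hyperplanes through $p$ and $q$, which is a $\mathbb{P}^{n-2}$; in particular every fiber is irreducible (and nonempty for generic $(p,q)$). Since $C$ is irreducible, $C\times C\setminus\Delta_C$ is irreducible, so $\mathcal{J}$ is irreducible. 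But $\mathcal{J}$ is exactly the complement of the diagonal in $\tilde{U}\times_U\tilde{U}$, and the monodromy $G$ is $2$-transitive on the fiber of $\tilde U\to U$ if and only if this complement is connected. Hence $G$ is $2$-transitive.

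For the existence of a transposition, I would localize at a generic smooth point $H_0$ of the dual hypersurface $\check{C}\subset\check{\mathbb{P}}^n$, which is an irreducible component of the discriminant $\Delta(\mathcal{W}_C)$. For such $H_0$, the hyperplane is simply tangent to $C$ at exactly one smooth point $p_0\in C$ and meets $C$ transversely at the remaining $k-2$ points. In analytic coordinates adapted to $p_0$ and a transverse slice to $\check{C}$ at $H_0$, the cover $\tilde{U}\to U$ decomposes locally as the disjoint union of $k-2$ trivial sheets together with a standard fold $z\mapsto z^2$. A small loop around $\check{C}$ based at a nearby $H\in U$ therefore lifts to the transposition exchanging the two sheets that come together at $p_0$, while fixing the remaining $k-2$ sheets; this transposition belongs to $G$. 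Combining (a) and (b) with the lemma gives $G=\mathfrak{S}_k$.

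The main obstacle is the local analytic picture used in (b): one must verify that for a generic $H_0\in\check{C}$ the tangency of $H_0$ with $C$ is an ordinary simple tangency at a single smooth point of $C$, so that the cover is a genuine fold over a smooth point of $\check{C}$. This amounts to checking that the locus of hyperplanes with more complicated contact with $C$ (higher-order tangencies, bitangents, tangencies at singular points, or hyperplanes containing an entire line of $C$) is contained in the strictly smaller-dimensional singular locus of $\Delta(\mathcal{W}_C)$, which follows from a standard dimension count using that the Gauss map $C_{\mathrm{sm}}\to\check{C}$, $p\mapsto T_pC$, is birational onto an $(n-1)$-dimensional hypersurface.
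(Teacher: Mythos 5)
Your proof is correct in outline and rests on the same key lemma as the paper's ($2$-transitivity plus a transposition forces the full symmetric group), but your route to $2$-transitivity is genuinely different. The paper first reduces to $n>2$ by embedding $\mathbb P^2$ linearly in $\mathbb P^3$, gets $1$-transitivity from indecomposability, and then obtains $2$-transitivity inductively: it restricts $\mathcal W_C$ to the leaf $H_p$ dual to a generic point $p\in C$, identifies the restricted web with $\mathcal W_{C'}$ for $C'$ the projection of $C$ from $p$, and uses that projections of irreducible curves are irreducible to move a second point while fixing the first. You instead argue directly with the ordered-pair incidence variety $\mathcal J$ and its projection to $C\times C\setminus\Delta_C$; this is more self-contained (no dimension shift, no projection), and it is the same style of argument the paper itself deploys later for the uniform position principle. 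Your transposition step is essentially the paper's (a simple tangency produces a local fold $z\mapsto z^2$), except that the paper first cuts down to the planar case by restricting to an intersection of leaves, whereas you work with a generic smooth point of $\check C$ in $\check{\mathbb P}^n$ directly; both are fine.

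Two small points you should tighten. First, ``irreducible base and irreducible fibers'' does not by itself imply the total space is irreducible; the clean fix is to observe that $\{(H,p,q): p,q\in H,\ p\neq q\}$ over $C\times C\setminus\Delta_C$ is (an open subset of) a $\mathbb P^{n-2}$-bundle, hence irreducible, and $\mathcal J$ is a nonempty Zariski-open subset of it. Second, for $n>2$ the map $p\mapsto T_pC$ does not land in $\check C$ pointwise (each $p$ contributes a whole $\mathbb P^{n-2}$ of tangent hyperplanes); the dimension count for generic simple tangency should be phrased via the conormal variety $\mathrm{Con}(C)\to\check C$, whose birationality is the reflexivity theorem in characteristic zero.
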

\begin{proof}
It is harmless to assume that $n>2$. Indeed, if $C\subset \mathbb P^2$ then just embed $\mathbb P^2$ linearly in $\mathbb P^3$ to obtain
a projective curve $C'\subset \mathbb P^3$. Notice that $\mathcal W_{C'}$ is the pull-back of $\mathcal W_C$
under the linear projection dual to the embedding, and that  both webs $\mathcal W_C$ and $\mathcal W_{C'}$  have
isomorphic monodromy groups.

The irreducibility of $C$ implies that $\mathcal W_C$ is indecomposable and consequently its monodromy group is
$1$-transitive. Let $p \in C \subset \mathbb P^n$  be a generic point and consider the hyperplane $H_p$ in $\check{\mathbb P}^n$
determined by it. Since $n>2$,  the restriction of $\mathcal W_C$ at $H_p$ is still an algebraic web. If $C'$ is the curve in $\mathbb P^{n-1}$ image
of the projection from $\mathbb P^n$ to $\mathbb P^{n-1}$ centered at $p$ then $(\mathcal W_C)_{|H_p}$  is projectively equivalent to
$\mathcal W_{C'}$. Since  the projection of irreducible curves are irreducible it follows that the monodromy of $\mathcal W_{C'}$ is also
transitive. This suffices to show that the monodromy of $\mathcal W_C$ is $2$-transitive.
Indeed, given $a\in \underline k$ (~$\underline k$ is
now identified with the set of leaves of $\mathcal W_C$ through a generic point and $\check {\mathbb P}^n$ ) by the transitivity of the monodromy
group, one can send $a$ to an arbitrary $c \in \underline k$. If one now considers the restriction of $\mathcal W_C$ to the
leaf corresponding to $c$ then the monodromy group of the restricted web is again transitive,
but now on the set  $\underline k - \{ c \}$ of cardinality $k-1$,
and  for an arbitrary pair  $b,d \in \underline k - \{ c \} $ there exists an element  that sends $b$ to $d$ while fixing $c$.

It remains to show that there exists a transposition on the monodromy group of $\mathcal W_C$. To do that one it suffices to
consider the case of algebraic webs on $\mathbb P^2$ after restricting to a suitable intersection of leaves. Suppose now that
$C$ is an irreducible plane curve and let $\ell$ be a simple tangent line of $C$, that is, $\ell$ is a tangent line of $C$ at a
smooth, non-inflection,  point $p \in C$ and  $\ell$ intersects $C$ transversely on the complement of $p$. In affine coordinates $(x,y)$
where $\ell = \{ y=0\}$ and $p$ is the origin the curve $C$ can be expressed  as the zero locus of $y-x^2 + \hot$. The intersection of
$C$ with the line $y= \epsilon$ is therefore of the form $(\sqrt{\epsilon} + \hot , \epsilon)$. Notice that the intersections
are exchanged when $\epsilon$ gives a turn around $0$. In the dual plane this reads as the existence of a transposition for the dual web.
\end{proof}

\subsubsection{Smoothness of $\mathcal W_C$}

\begin{prop}\label{P:gp1}
Let $C$ be an irreducible non-degenerate projective curve in $\mathbb P^n$.
If $H \in \check{\mathbb P}^n$ is a generic hyperplane then $\mathcal W_C(H)$ is a germ of
smooth web.
\end{prop}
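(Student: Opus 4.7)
The plan is to recast the smoothness of $\mathcal{W}_C(H_0)$ as a classical linear-general-position condition on the points of $H_0 \cap C$, and then to deduce it for generic $H_0$ by combining the preceding proposition (full symmetric group monodromy of $\mathcal{W}_C$) with an inductive argument that exhibits one good hyperplane via projection. Concretely, write $H_0 = \{h_0 = 0\}$ for a linear form on $\mathbb{C}^{n+1}$ and identify $T_{H_0}\check{\mathbb{P}}^n$ with the space of linear forms modulo $\langle h_0\rangle$. The tangent space at $H_0$ of the foliation $\mathcal{F}_i$, which parametrizes hyperplanes passing through the intersection point $p_i\in H_0\cap C$, is the kernel of the evaluation $[h_1]\mapsto h_1(p_i)$. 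For any $m\leq n$, the intersection $\bigcap_{j=1}^m T_{H_0}\mathcal{F}_{i_j}$ has codimension $m$ precisely when the $m$ evaluation functionals are linearly independent, equivalently when $p_{i_1},\ldots,p_{i_m}$ are linearly independent in $\mathbb{P}^n$. Since linear independence of an $n$-tuple entails that of all its sub-tuples, smoothness of $\mathcal{W}_C(H_0)$ is equivalent to the statement that every $n$ of the $d=\deg(C)$ points of $H_0 \cap C$ span $H_0$.

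With this reformulation in hand I would proceed in two steps. First, let $U\subset \check{\mathbb{P}}^n$ be the open set of hyperplanes transverse to $C$ and let $\tilde V\to U$ be the étale cover parametrizing ordered $n$-tuples of pairwise distinct points of $H\cap C$. By the preceding proposition the monodromy group is $\mathfrak{S}_d$, so the cover $\tilde V$ is connected; the locus where the ordered $n$-tuple is linearly independent is Zariski open on $\tilde V$, hence if nonempty it is dense, and its image in $U$ is then a dense open set over which every $n$-tuple of intersection points is linearly independent. Second, I produce one pair consisting of a hyperplane $H_0\in U$ and a linearly independent $n$-tuple of intersection points by induction on $n$. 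For $n=2$ any transverse line suffices. For $n\geq 3$, pick a smooth point $p\in C$ and consider the projection $\pi_p\colon\mathbb{P}^n\dashrightarrow\mathbb{P}^{n-1}$ from $p$; its image $C':=\overline{\pi_p(C\setminus\{p\})}$ is irreducible and non-degenerate in $\mathbb{P}^{n-1}$, for otherwise $C'\subset M$ for some hyperplane $M\subset \mathbb{P}^{n-1}$ and then $C$ would be contained in the hyperplane $\overline{\pi_p^{-1}(M)}$ of $\mathbb{P}^n$, contradicting non-degeneracy. The induction hypothesis yields a hyperplane $H'\subset\mathbb{P}^{n-1}$ such that $H'\cap C'$ is in linearly general position; pulling back, $H_0:=\overline{\pi_p^{-1}(H')}$ is a hyperplane of $\mathbb{P}^n$ through $p$ and $H_0\cap C$ consists of $p$ and $d-1\geq n-1$ other points whose images under $\pi_p$ lie in $H'\cap C'$. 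Selecting $n-1$ of these whose projections are linearly independent in $H'\cong\mathbb{P}^{n-2}$ and adjoining $p$ gives the desired linearly independent $n$-tuple in $\mathbb{P}^n$, possibly after a small perturbation that preserves transversality with $C$.

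The main obstacle is the verification that the projection of an irreducible non-degenerate curve from one of its own smooth points is again non-degenerate in $\mathbb{P}^{n-1}$, and the careful passage from ``some $n$-tuple of intersection points is linearly independent for one $H_0$'' to ``every $n$-tuple is linearly independent for generic $H$''. This latter step is precisely where the previous proposition -- asserting that for irreducible $C$ the monodromy of $\mathcal{W}_C$ is the full symmetric group -- enters in an essential way; without it one could only conclude that some, but not necessarily every, $n$-tuple of intersection points is in linearly general position.
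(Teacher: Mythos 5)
Your proposal is, in substance, the paper's own proof: the paper also reduces the statement by duality to linear general position of the points of $H\cap C$ (its Proposition on generic hyperplane sections), forms the incidence variety $I\subset C^n\times U$ of ordered $n$-tuples of distinct intersection points, invokes the full-symmetric-group monodromy to get irreducibility of $I$, and uses non-degeneracy of $C$ to see that the ``bad'' locus $J$ (tuples lying in a $\mathbb P^{n-2}$) is a proper closed subset, hence of dimension $<n$, so that its image in $U$ is proper. Two remarks. First, one step of your argument is misstated: from ``the good locus $G\subset\tilde V$ is open, dense'' you cannot conclude that its \emph{image} in $U$ is a set over which \emph{every} $n$-tuple is independent — a hyperplane in the image of $G$ merely carries \emph{some} good tuple. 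The correct deduction (and the one the paper makes) is that the complement $B=\tilde V\setminus G$ is closed and proper in the irreducible $n$-dimensional $\tilde V$, hence $\dim B<n$, so the image of $B$ in $U$ is contained in a proper closed subset, outside of which all tuples are good. Second, your existence step (one good configuration) proceeds by induction on $n$ via projection from a smooth point of $C$; the paper gets this in one line by choosing, using non-degeneracy, $n$ distinct points of $C$ spanning a hyperplane. Your induction works, but the closing ``small perturbation'' should be justified by observing that transversality and linear independence are both open conditions and the intersection points vary continuously with $H$; note also that both your argument and the paper's silently require the spanning hyperplane so produced to be transverse to $C$, which again holds after a generic perturbation.
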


By duality, the proposition is clearly equivalent to the so called {\it uniform position principle} \index{Uniform position principle}  for curves.
The proof  presented here follows closely \cite[pages 109--113]{ACGH}.

\begin{prop}\label{P:gp2}
If $C\subset \mathbb P^n$, $n\ge 2$, is an irreducible non-degenerate projective curve of degree $d\ge n$ then a generic  hyperplane $H$
intersects $C$ at $d$ distinct points.  Moreover, any $n$ among these $d$ points  span $H$.
\end{prop}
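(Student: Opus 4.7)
The plan is to prove the two assertions in sequence, with the first being a standard application of Bertini/dual varieties and the second leveraging the monodromy result (uniform position principle) just proved for $\mathcal{W}_C$.

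For the first assertion, let $U \subset \check{\mathbb{P}}^n$ be the open subset of hyperplanes that (i) avoid the finite singular locus of $C$, and (ii) are not tangent to $C$ at any smooth point, i.e.\ $H \notin \check C$. Since $C$ is not a linear subspace, its dual variety $\check C$ is a proper closed subset of $\check{\mathbb{P}}^n$, and the singular locus of $C$ is a finite set of points (each imposing a hyperplane condition on $\check{\mathbb{P}}^n$). Hence $U$ is open and dense. For $H \in U$, B\'{e}zout's theorem together with the transversality at every intersection point yields $|H \cap C| = d$ distinct smooth points.

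For the second assertion, the key input is the preceding proposition, which establishes that the monodromy group of the \'{e}tale cover $I = \{(H,p) \in U \times C : p \in H\cap C\} \to U$ is the full symmetric group $\mathfrak{S}_d$. Form the associated \'{e}tale cover $I^{(n)} \to U$ whose fiber over $H$ is the set of unordered $n$-subsets of $H \cap C$; it has degree $\binom{d}{n}$. Since $\mathfrak{S}_d$ acts transitively on the set of $n$-subsets of $\underline d$, the cover $I^{(n)} \to U$ is connected; being \'{e}tale over the smooth irreducible $U$, the variety $I^{(n)}$ is smooth and irreducible, of dimension $n$. Let $\tilde B \subset I^{(n)}$ be the closed subset of pairs $(H, \{q_1, \ldots, q_n\})$ such that $\langle q_1, \ldots, q_n\rangle \subsetneq H$.

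The crux is to exhibit one point of $I^{(n)} \setminus \tilde B$. Using the non-degeneracy of $C$, inductively choose smooth points $q_1, \ldots, q_n \in C$ so that each $q_{k+1}$ lies outside $\langle q_1, \ldots, q_k\rangle$ (possible because $C$ is non-degenerate and $C\cap\langle q_1, \ldots, q_k\rangle$ is finite at each step), and set $H_0 = \langle q_1, \ldots, q_n\rangle$, a hyperplane. Arranging that $H_0$ is transverse to $C$ at each $q_i$ (an open condition), the differential at $(q_1, \ldots, q_n)$ of the rational map $\Psi: C^n \dashrightarrow \check{\mathbb{P}}^n$, $(r_1, \ldots, r_n) \mapsto \langle r_1, \ldots, r_n\rangle$ is injective: a tangent vector in $\ker d\Psi$ would have each component $v_i \in T_{q_i}C \cap T_{q_i} H_0$, which vanishes by transversality. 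Since source and target have the same dimension $n$, $\Psi$ is a local isomorphism at $(q_1, \ldots, q_n)$ and its image contains an open neighborhood of $H_0$, which must meet the open dense subset $U$. Pick $H_0' \in U$ in the image; then $(H_0', \{q_1', \ldots, q_n'\}) \in I^{(n)}\setminus \tilde B$.

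Since $I^{(n)}$ is irreducible of dimension $n$ and $\tilde B\subsetneq I^{(n)}$ is closed, $\dim \tilde B < n$. The projection $\pi: I^{(n)} \to U$ is finite, so $\pi(\tilde B)$ is a closed subset of $U$ of dimension strictly less than $n = \dim U$, hence a proper closed subset. For every $H$ in the non-empty open dense complement $U \setminus \pi(\tilde B)$, no $n$-subset of $H\cap C$ lies in a proper linear subspace of $H$, i.e.\ every $n$-subset spans $H$, which is the statement. The main obstacle is the existence step, where one must carefully use non-degeneracy to produce one hyperplane $H_0 \in U$ admitting a spanning $n$-subset; everything else is formal consequence of irreducibility of $I^{(n)}$ (guaranteed by the uniform position principle) and finiteness of $\pi$.
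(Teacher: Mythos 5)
Your proof follows essentially the same route as the paper's: both deduce the irreducibility of the incidence variety of $n$-point subsets of hyperplane sections from the fact that the monodromy group of $\mathcal W_C$ is the full symmetric group, and then conclude by noting that the locus of non-spanning $n$-subsets is a proper closed subset of an irreducible $n$-dimensional variety, hence has proper image in $U$. The only differences are cosmetic (unordered subsets versus ordered tuples in $C^n\times U$) plus the extra care you take in exhibiting one spanning configuration lying over a hyperplane of $U$ — a point the paper dispatches in a single line by invoking non-degeneracy.
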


The restriction on the degree of $C$ is not really a hypothesis. Every non-degenerate curve on $\mathbb P^n$ have
degree at least $n$ as will be shown in Proposition \ref{P:ratdeg} of Chapter \ref{Chapter:AR}.

\begin{proof}[Proof of Propositions \ref{P:gp1} and \ref{P:gp2}]
Let $U=\check{\mathbb P}^n - \Delta(\mathcal W_C)$ and
 ${I \subset C^{n}  \times U}$ be the locally closed variety
defined by the relation
\[
(p_1, \ldots, p_{n} , H ) \in I \Longleftrightarrow p_1, \ldots,p_n \text{ are distinct points in } H \cap C .
\]
Because the monodromy group of $\mathcal W_C$ is the full symmetric group the variety $I$ is irreducible and in particular
connected. Moreover the natural projection to $U$ is surjective and has finite fibers. Therefore $I$ has dimension $n=\dim U$.

Let now $J \subset I$ be the closed subset defined by
 \[
(p_1, \ldots, p_{n} , H ) \in J \Longleftrightarrow p_1, \ldots,p_n \text{ are contained in  a } \mathbb P^{n-2}  .
\]
Since $C$ is non-degenerated,  one can choose $n$ distinct points on it which span a $\mathbb P^{n-1}$. Thus
$J$ is a proper subset of $I$. The irreducibility of $I$ implies $\dim J < \dim I = n$. Therefore the image of the projection to $U$
is a proper subset, with complement parametrizing hyperplanes intersecting $C$ with the wanted property.
\end{proof}

\subsection{Projective duality}\label{subsection:Projective duality}
Given a global $k$-web $\mathcal W$ on $\mathbb P^n$,  and its natural lift $W$ to $\mathbb P T^* \mathbb P^n \simeq \mathcal I$ it
is natural to enquire which sort of object $W$ induces on $\check{\mathbb P}^n$ through the projection $\check \pi$.

\smallskip

To answer such question, assume for a moment that $W \subset \mathcal I$ is irreducible, or equivalently that the monodromy
of $\mathcal W$ is transitive.

\smallskip

If the map $\check{\pi}_{|W}: W \to \check{\mathbb P}^n$ is surjective, then there exists a web $\check{\mathcal W}$ on $\check {\mathbb P^n}$ with lift
to $\check{\mathcal I} = \mathcal I$ equal to $W$.
The order of $\check{\mathcal W}$ is precisely the degree of $\check{\pi}_{|W}$, that is
$
d_0(\check{\mathcal W}) = d_{n-1}(\mathcal W) \, .
$

\medskip

In the two dimensional
case the degree of $\check{\pi}_{|W}$ is nothing more than the degree of $\mathcal W$. But beware that this
is no longer true when the  dimension is at least three.
To determine the degree of $\check{\mathcal W}$ notice that $\check{\mathcal W}$ is tangent to a line $\ell$ at a point $p$ if and only if one of the
tangent spaces of $\check{\mathcal W}$ at $p$ contains the line $\ell$. Therefore the number of tangencies of $\check{\mathcal W}$
and $\ell$ is the intersection
of $W$ with the conormal variety of $\ell \subset \mathbb P T^* \check{\mathbb P}^n$. In other words
$d_1(\check{\mathcal W}) = d_{n-2}(\mathcal W)$.

Arguing similarly, it follows that for $i$ ranging from $0$ to $n-1$ the  identity $d_i(\check{\mathcal W}) = d_{n-i-1}(\mathcal W)$
holds true.

\medskip

In order to deal with the case where $\check{\pi}_{|W}: W \to \check{\mathbb P^n}$ is not surjective it is convenient to
extend the definition of characteristic numbers to pairs $(X, \mathcal W)$, where $X \subset \mathbb P^n$ is an irreducible
projective variety and $\mathcal W$ is an irreducible web\begin{footnote}{When $X$ is a singular
variety, a web on $X$ is a web on its    smooth locus  which extends to a global web on any of its desingularizations.}\end{footnote} of codimension one on $X$. To repeat the same definition
as before all that is needed is a definition of the lift of $(X,\mathcal W)$ to $\mathbb P T^* \mathbb P ^n$. Mimicking the
definition of conormal variety for subvarieties of $\mathbb P^n$, define $\mathrm{Con}(X, \mathcal W)$, the conormal variety of the pair $(X, \mathcal W)$,
as the  closed subvariety of $\mathbb P T^* \mathbb P ^n$ characterized by the following conditions:
\begin{enumerate}
\item[(a)] $\mathrm{Con}(X, \mathcal W)$ is irreducible;
\item[(b)] $\pi(\mathrm{Con}(X, \mathcal W)) = X$;
\item[(c)] For a generic $x \in X$ the fiber $\pi^{-1}(x) \cap\mathrm{Con}(X, \mathcal W)$ is a union of
linear subspaces corresponding to the projectivization of the conormal bundles in $\mathbb P^n$  of the leaves of $\mathcal W$
through $x$.
\end{enumerate}

\smallskip

For every pair  $(X,\mathcal W)$, there exists a unique pair $\mathcal D(X,\mathcal W)$ on $\check{\mathbb P}^n$ with conormal
variety in $\mathbb P T^* \check{\mathbb P^n}$ equal to the conormal variety of $(X, \mathcal W)$, if the following conventions are
adopted.
\begin{enumerate}
\item[--] an irreducible codimension one web $\mathcal W$ on $\mathbb P^n$ is identified with the pair
$(\mathbb P^n,\mathcal W)$;
\item[--]  on an irreducible projective curve $C$ there is only one irreducible web, the $1$-web
$\mathcal P$ which has  as leaves the points of $C$, and;
\item[--] a projective curve $C$ is identified with the pair $(C,\mathcal P)$.
\end{enumerate}

In this terminology, Proposition \ref{P:lowtech} reads as
\[
\mathcal D( C) = \mathcal W_C \quad \text{ and } \quad \mathcal D(\mathcal W_C) = C.
\]
Notice that the pairs $(X,\mathcal W)$ come with naturally attached characteristic numbers
\[
d_i ( X, \mathcal W) = \mathrm{Con}(X,\mathcal W) \cdot \mathrm{Con} ( \mathbb P^i )  \, ,
\]
and these generalize the characteristic numbers for a web $\mathcal W$ on $\mathbb P^n$ as
previously defined.

\medskip

\begin{example}
Let $X$ be an irreducible
subvariety of codimension $q \ge 1$ on $\mathbb P^n$ and $\mathcal W$ and irreducible $k$-web
on $X$. Since $X$ has codimension $q$, for $0\le i \le q-1$ a generic $\mathbb P^i$ linearly embedded  in $\mathbb P^n$
does not intersect $X$. Therefore $d_i(X,\mathcal W)= 0$ for $i= 0 , \ldots, q-1$. A generic $\mathbb P^q$ will intersect
$X$ in $\deg(X)$ smooth points and over each one of these  points $\mathrm{Con}(\mathbb P^q)$ will intersect  $\mathrm{Con}(X,\mathcal W)$
in $k$  points. Therefore
$
d_q(X,\mathcal W)  = \deg( X) \cdot k \, .
$
\end{example}

With these definitions at hand it is possible to prove the following Biduality Theorem. Details
will appear elsewhere.

\begin{thm}
For any pair $(X,\mathcal W)$, where $X \subset \mathbb P^n$ is an irreducible projective subvariety
and $\mathcal W$ is an irreducible codimension one web on $X$, the identity
\[
\mathcal D \mathcal D (X, \mathcal W) = (X, \mathcal W)
\]
holds true. Moreover, for $i = 0 ,\ldots , n-1$,  the characteristic numbers of $(X,\mathcal W)$ and $\mathcal D(X,\mathcal W)$
satisfy
\[
d_i( X, \mathcal W) = d_{n-1-i}(\mathcal D(X,\mathcal W)) \, .
\]
\end{thm}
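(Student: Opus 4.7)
\medskip

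\noindent\textbf{Proof proposal.}

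The plan is to derive both statements of the Biduality Theorem from a single underlying observation: under the natural identification
\[
\mathbb{P}T^*\mathbb{P}^n \;\cong\; \mathcal{I} \;\cong\; \mathbb{P}T^*\check{\mathbb{P}}^n
\]
already established, the conormal variety of a pair $(X,\mathcal{W})$ in $\mathbb{P}T^*\mathbb{P}^n$ coincides as a subset of $\mathcal{I}$ with the conormal variety of $\mathcal{D}(X,\mathcal{W})$ in $\mathbb{P}T^*\check{\mathbb{P}}^n$. Thus the construction $\mathcal D$ is, intrinsically, nothing more than switching which of the two projections $\pi,\check\pi\colon\mathcal{I}\to\mathbb{P}^n,\check{\mathbb{P}}^n$ one uses to read off a pair from a conormal subvariety of $\mathcal I$.

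First I would verify the existence and uniqueness of $\mathcal{D}(X,\mathcal{W})$. Given an irreducible pair $(X,\mathcal{W})$, its conormal variety $V=\mathrm{Con}(X,\mathcal{W})\subset\mathcal{I}$ is an irreducible subvariety whose general fibers of $\pi$ are finite unions of projectivized conormal spaces to leaves of $\mathcal{W}$, and the restriction of the canonical contact form of $\mathbb{P}T^*\mathbb{P}^n$ to each smooth sheet of $V$ vanishes identically. Because the contact form on $\mathcal{I}$ is symmetric in the two projections (up to sign, as follows from the pairing description of $\mathcal{I}\subset\mathbb{P}(V)\times\mathbb{P}(V^*)$), the same vanishing holds for the restriction of the contact form of $\mathbb{P}T^*\check{\mathbb{P}}^n$. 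Hence $\check X:=\check\pi(V)$ is an irreducible subvariety of $\check{\mathbb{P}}^n$ and $V$ defines on it an irreducible codimension-one web $\check{\mathcal W}$, following the recipe of Section~\ref{S:singII} applied to $\check\pi_{|V}$. Here one has to treat the degenerate cases $\dim\check X<n$, where one of the conventions of the statement applies (the case $\dim\check X=0$ never arises for irreducible $V$ of positive dimension; the case where $\check X$ is a curve produces the $1$-web $\mathcal P$).

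The biduality $\mathcal{D}\mathcal{D}(X,\mathcal{W})=(X,\mathcal{W})$ then follows tautologically: applying $\mathcal D$ a second time amounts to reading $V$ via $\pi$ instead of $\check\pi$, which by construction recovers $(X,\mathcal{W})$ itself. One needs only to check that the operation ``conormal variety $\mapsto$ pair'' is injective on the class of pairs considered, which is clear from the defining properties (a)-(c) of the conormal variety.

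For the characteristic numbers, the key computation is the identification
\[
\mathrm{Con}_{\mathbb{P}^n}(\mathbb{P}^i)\;=\;\mathrm{Con}_{\check{\mathbb{P}}^n}(\check{\mathbb{P}^i})\quad\text{inside }\mathcal{I},
\]
where $\check{\mathbb{P}^i}\cong\mathbb{P}^{n-i-1}\subset\check{\mathbb{P}}^n$ is the dual subspace. Indeed, both sides are described explicitly as
\[
\bigl\{(p,H)\in\mathcal{I}\;\big|\;p\in\mathbb{P}^i\text{ and }\mathbb{P}^i\subset H\bigr\},
\]
the left side because $\mathbb{P}^i$ is its own embedded tangent space at every point, the right side by a dual computation (a point $p\in\mathbb{P}^n$, seen as a hyperplane $\check p\subset\check{\mathbb{P}}^n$, contains $\check{\mathbb{P}^i}$ if and only if $p\in\mathbb{P}^i$). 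Once this is established, the desired formula drops out by intersection on $\mathcal{I}$:
\[
d_i(X,\mathcal{W})\;=\;\mathrm{Con}(X,\mathcal{W})\cdot\mathrm{Con}(\mathbb{P}^i)\;=\;\mathrm{Con}(\mathcal{D}(X,\mathcal{W}))\cdot\mathrm{Con}(\mathbb{P}^{n-i-1})\;=\;d_{n-i-1}(\mathcal{D}(X,\mathcal{W})).
\]

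The main obstacle I expect is not the biduality per se, but the bookkeeping in verifying that $\mathcal{D}(X,\mathcal{W})$ is always a bona fide pair of the class considered, in particular the transfer of the contact-integrability condition (Frobenius condition) from one side of the projection to the other. The symmetry of the contact form on $\mathcal I$ makes this essentially automatic, but a careful intrinsic formulation, promised by the authors as appearing elsewhere, is needed to handle uniformly the degenerate cases where the image drops dimension (so that one of the pairs becomes $(C,\mathcal P)$ for a curve $C$).
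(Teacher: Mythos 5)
The paper does not actually prove this theorem: the authors state it and explicitly defer the details to a future publication, so there is no argument in the text to measure yours against. Judged on its own terms, your strategy is the standard and correct one. Since $\mathcal D(X,\mathcal W)$ is \emph{defined} as the unique pair whose conormal variety, read in $\mathbb P T^*\check{\mathbb P}^n\cong\mathcal I$, equals $\mathrm{Con}(X,\mathcal W)$, biduality is indeed tautological once existence and uniqueness of the dual pair are established; and the characteristic-number formula does reduce to the identity $\mathrm{Con}_{\mathbb P^n}(\mathbb P^i)=\mathrm{Con}_{\check{\mathbb P}^n}(\check{\mathbb P^i})$ inside $\mathcal I$, which you verify correctly. (One small point there: $d_{n-1-i}$ of the dual pair is defined by intersecting with the conormal variety of a \emph{generic} $\mathbb P^{n-1-i}\subset\check{\mathbb P}^n$, while $\check{\mathbb P^i}$ is a particular one; this is harmless because any two linearly embedded subspaces of the same dimension have rationally equivalent conormal varieties, but it deserves a sentence.)

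The one genuine error is the claim that the contact form $\alpha$ of $\mathbb P T^*\mathbb P^n$ restricts to zero on the smooth sheets of $V=\mathrm{Con}(X,\mathcal W)$. It cannot: if $X$ has codimension $q$, the fiber of $V$ over a generic $x\in X$ is a finite union of $\mathbb P^q$'s (the projectivized conormal spaces of the leaves, which have codimension $q+1$ in $\mathbb P^n$), so $\dim V=(n-q)+q=n$ — as is also forced by requiring $V\cdot\mathrm{Con}(\mathbb P^i)$ to be a finite number in the $(2n-1)$-dimensional space $\mathbb P T^*\mathbb P^n$ — whereas any subvariety on which $\alpha$ vanishes identically is an integral variety of the contact distribution and hence has dimension at most $n-1$. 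The correct statement is that $\alpha|_V$ is integrable and its kernel foliates $V$ by the Legendrian $(n-1)$-folds $\mathrm{Con}(L)$, $L$ a leaf of $\mathcal W$, and it is on each of \emph{these} that $\alpha$ vanishes. The symmetry of the contact structure under $\mathcal I\cong\mathbb P T^*\mathbb P^n\cong\mathbb P T^*\check{\mathbb P}^n$ then says this foliation is intrinsic to $V\subset\mathcal I$; via $\check\pi$ each leaf $\mathrm{Con}(L)$ is the conormal variety of the dual variety $\check L\subset\check X$ (classical biduality applied leafwise), and the family $\{\check L\}$ sweeps out a codimension-one web on $\check X=\check\pi(V)$. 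This is the real content of the existence of $\mathcal D(X,\mathcal W)$ — the step you dismiss as ``essentially automatic'' — and it is precisely here that one must verify condition (c) of the definition, namely that the generic $\check\pi$-fiber of $V$ is a union of \emph{linear} subspaces arising as conormal spaces of leaves. With this correction your outline goes through.
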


Finally to deal with arbitrary pairs $(X,\mathcal W)$, where $X$ is not necessarily irreducible nor $\mathcal W$ has
necessarily transitive monodromy, one  writes $(X,\mathcal W)$ as the superposition of irreducible pairs and
applies $\mathcal D$ to each factor. Everything generalizes smoothly.

\subsection{Webs attached to projective surfaces}\label{S:Webs attached to projective surfaces}
One particularly rich source of examples of webs on surfaces is the classical projective differential
geometry widely practiced   until the early beginning of the $\mathrm{XX^{th}}$ century. The simplest
example is perhaps the asymptotic webs on surfaces on $\mathbb P^3$ that are now described.

\subsubsection{Asymptotic webs}\index{Web!asymptotic|(}

Let $S \subset \mathbb P^3$ be a germ of smooth surface. As such it admits a parametrization
$[\varphi] : ( \mathbb C^2,0) \to \mathbb P^3$, projectivization of a map
$\varphi: (\mathbb C^2,0) \to \mathbb C^4 \setminus \{ 0 \}$.

The tangent plane of $S$ at the point $[\varphi(p)]$ is the determined by the vector subspace of $T_{\varphi(p)} \mathbb C^4$
generated by
\[
\varphi(p),  \frac{\partial \varphi } {\partial x} (p), \frac{\partial \varphi } {\partial y}(p) \, .
\]
A germ of smooth curve $C$ on $S$ admits a parametrization of the form $\varphi\circ \gamma(t)$ where  $\gamma:(\mathbb C,0) \to (\mathbb C^2,0)$
is an immersion. Its osculating plane at $\varphi \circ \gamma (t)$ is determined by the vector space generated by $(\varphi \circ \gamma) (t),
(\varphi \circ \gamma)' (t),(\varphi \circ \gamma)'' (t)$.
Although the vectors $(\varphi\circ \gamma)'(t)$ and $(\varphi \circ \gamma)'' (t)$ do depend on the choice of the parametrization $\varphi \circ \gamma$ of $C$,
the same is not true for the vector space generated by them and $(\varphi \circ \gamma) (t)$.

A curve $C$ is an \defi[asymptotic curve] \index{Asymptotic curve} of $S$ if, at every point $p$ of $C$,
its osculating plane is contained in the tangent
space of $S$. Since $(\varphi \circ \gamma)'(t)$ always belong to the tangent space of $S$ at $(\varphi \circ \gamma) (t)$,
the determinant ( where each entry represents a distinct row )
\begin{equation}\label{E:detas}
\det \left( (\varphi \circ \gamma)'' (t), \varphi ( \gamma (t)), \frac{\partial \varphi } {\partial x} (\gamma(t)) ,
\frac{\partial \varphi } {\partial y} (\gamma(t))\right)
\end{equation}
vanishes identically when $\gamma$ parametrizes an asymptotic curve. But
\[
 (\varphi \circ \gamma) '' (t)  = D^2 \varphi(\gamma(t)) \cdot \gamma'(t) \cdot \gamma'(t) + D \varphi(\gamma(t)) \cdot \gamma''(t)
\]
and the image of $D\varphi(\gamma(t))$ is always contained in the vector space generated by the last three rows of the above matrix. Hence
the vanishing of (\ref{E:detas}) is equivalent to the vanishing of
$$
\det \left( D^2 \varphi(\gamma(t)) \cdot \gamma'(t) \cdot \gamma'(t), \varphi ( \gamma (t)), \frac{\partial \varphi } {\partial x} (\gamma(t)) , \frac{\partial \varphi } {\partial y} (\gamma(t))\right) \, .
$$
This last expression can be rewritten as
\[
\gamma^*  ( a dx^2 + 2 b dx dy + c dy^2 )
\]
where
\begin{align*}
 a = &\,\det \left(\varphi_{xx} , \varphi, \varphi_x, \varphi_y \right)   \\
b =&\, \det \left(\varphi_{xy} , \varphi, \varphi_x, \varphi_y \right) \\
 \mbox{and}
\quad
c = &\,\det \left(\varphi_{yy} , \varphi, \varphi_x, \varphi_y \right) .
\end{align*}

It may happen that $a,b,c$ are all identically zero. It is well-know that this
is the case if and only if $S$ is contained in a hyperplane of $\mathbb P^3$. It may also
happen that although non-zero the $2$-symmetric differential form $a dx^2 + 2b dx dy + c dy ^2$
is proportional to the square of a differential $1$-form. This is the case, if and only if, the surface $S$
is developable. Recall that a surface is \defi[developable] \index{Developable surface} if it is contained in a plane,  a cone or
  the tangent surface of a curve.

In general for non-developable surfaces what one gets is a $2$-symmetric differential form that induces an (eventually singular)
$2$-web on $S$: the \defi[asymptotic web] of $S$.

The simplest example  is the asymptotic web of a smooth quadric $Q$ on $\mathbb P^3$. Since it is isomorphic to $\mathbb P^1 \times \mathbb P^1$
and under these isomorphism the fibers of both natural projections to $\mathbb P^1$ are lines on $\mathbb P^3$, it is clear
that the asymptotic web of $Q$ is formed by these two families of lines.
\subsubsection{Asymptotic webs -- Alternative take}

When  $S \subset \mathbb P^3$ is a smooth projective surface  the definition of the asymptotic web of $S$
is amenable to a more intrinsic formulation. Suppose that $S$ is cutted out  by an irreducible homogenous
polynomial $F \in \mathbb C[x_0,  \ldots, x_3]$. The Hessian matrix of $F$,
\[
\mathrm{Hess}(F) = \left(  \frac{\partial^2 F}{ \partial x_i \partial x_j} \right) \, ,
\]
when evaluated at the tangent vectors of $S$ gives rise to a morphism
\[
\mathrm{Sym}^2 TS \longrightarrow NS
\]
where $NS \simeq \mathcal O_S(\deg(F))$ is the normal bundle of $S\subset \mathbb P^3$.
This morphism is  usually called the (projective) \defi[second fundamental form] \index{Second fundamental form} of $S$. Dualizing it,
and tensoring the result by $NS$ one obtains a holomorphic section of $\mathrm{Sym}^2 \Omega^1_S \otimes NS$.

When $S$ is not developable ( which under the smoothness and projectiveness assumption on $S$ is equivalent to
$S$ not being a plane) this section, after factoring eventual codimension one components of its zero set, defines
a singular $2$-web on $S$. Its discriminant coincides, set theoretically, with the locus on $S$ defined by the
vanishing of $\mathrm{Hess}(F)$.\index{Web!asymptotic|)}

\subsubsection{The general philosophy}

One can abstract from the definition of asymptotic web the following procedure:
\begin{enumerate}
\item Take a linear system\begin{footnote}{Recall that a linear system is the projectivization $|V|$ of a finite dimensional
vector subspace $V \subset H^0(S,\mathcal L)$, where $\mathcal L$ is a line-bundle on $S$.
In the case $S$ is a surface germ, a linear system is nothing more than the projectivization
of a finite dimensional vector space of germs of functions.}\end{footnote} $|V|$ on a surface $S$;
\item Consider the elements of $|V|$ with abnormal singularities at a generic point $p$ of $S$;
\item If there are only finitely many abnormal elements of $V$ for a given generic point $p$ consider
the web with tangents at $p$ determined by the tangent cone of these elements.
\end{enumerate}

This kind of construction abounds in classical projective differential geometry.

\subsubsection{Darboux $3$-web}
Let $S \subset \mathbb P^3$ be a surface and consider the restriction to $S$ of the linear
system of quadrics $|\mathcal O_{\mathbb P^3}(2)|$.

If $S$ is generic enough then at a generic point $p \in S$ there are exactly three quadrics whose
restriction at $S$ is a  curve with first non-zero jet at $p$ of the form
\[
\ell_i(x,y)^3 \, \quad i = 1,2,3
\]
where $(x,y)$ are local coordinates of $S$ centered at $p$ and the $\ell_i$ are linear forms. These
three quadrics are the \defi[quadrics of Darboux] \index{Darboux!quadrics} of $S$  at $p$. For more details see \cite[pages 141--144]{Lane}.

In this way one defines  a $3$-web with tangents at $p$ given by $\ell_1, \ell_2, \ell_3$. This is
the \defi[Darboux $3$-web] of $S$. \index{Darboux!$3$-web}

\subsubsection{Segre $5$-web}\index{Segre!$5$-web|(} \label{S:Segre}

Let now $S$ be a surface on $\mathbb P^5$ and consider the restriction to $S$ of the linear system of
hyperplanes $| \mathcal O_{\mathbb P^5}(1) |$.

For a generic point $p$ in a generic surface $S$ there are exactly five hyperplanes which intersect
$S$ along a curve which has a tacnode\begin{footnote}{An {\it ordinary} tacnode is a singularity of curve with exactly two branches, both of them smooth, having
an ordinary tangency. Here tacnode refer to a curve cut out by a  power series of the form
\[
\ell(x,y)^2  + \ell(x,y) P_2(x,y) + h.o.t.
\] where $\ell$ is a linear form and $P_d$ is a homogeneous form of degree $2$.     }\end{footnote} singularity at $p$. The
five directions determined by these tacnodes are \defi[Segre's principal directions]. \index{Segre!principal directions}
By definition \defi[Segre's $5$-web]  is defined as the $5$-web determined pointwise by Segre's principal directions in the case where they are
distinct at a generic point of $S$.

There are surfaces such that through every point there are infinitely many principal directions. For instance the developable surfaces
--- planes, cones and tangent of curves --- do have this property and so do the degenerated surfaces, that is surfaces contained in a proper hyperplane
of $\mathbb P^5$. A remarkable theorem of Corrado Segre says that besides these examples,  the only surfaces in $\mathbb P^5$ \index{Segre!characterization of Veronese surface}
 with infinitely many principal directions through a generic  point are the ones contained in the Veronese surface obtained through the embedding of $
 \mathbb P^2$ into $\mathbb P^5$ given by the linear system $|\mathcal O_{\mathbb P^2}(2)|$.

 If ${\varphi: (\mathbb C^2,0) \to \mathbb C^6}$ is a parametrization of the surface $S$, then  Segre's $5$-web of $S$ is induced by
  the \mbox{$5$-symmetric} differential form
\[
\omega_{\varphi}  = \det \left(
 \begin{array}{c}
 \varphi \\
 \varphi_x \\ \varphi_y\\
 \varphi_{xx} dx + \varphi_{xy} dy \\ \varphi_{xy} dx + \varphi_{yy} dy
\\
\varphi_{xxx} dx^3 + 3\, \varphi_{xxy} dx^2dy +
3\, \varphi_{xyy} dxdy^2
  \varphi_{yyy} dy^3
\end{array}
  \right) \, .
\]

 It can be verified that once the parametrization $\varphi$ is changed by one of the
 form
 \[
 \hat \varphi(x,y) = \lambda(x,y) \cdot \varphi ( \psi(x,y) )
 \]
where $\lambda$ in a unit in $\mathcal O_{(\mathbb C^2,0) } $ and $\psi \in \mathrm{Diff}(\mathbb C^2,0)$ is a
germ of biholomorphism, then one has
\[
\omega_{\hat \varphi} =  \lambda^6 \cdot \det(D\psi)^2 \cdot \psi^* \omega_{\varphi}.
\]
This identity implies that the collection $\omega_{\varphi}$, with $\varphi$ ranging over germs of parametrizations of
$S$, defines a holomorphic section of
\[
\mathrm{Sym}^5 \Omega^1_S \otimes \mathcal O_S(6) \otimes K_S^{ \otimes 2} \, .
\]

A nice example is given by the cubic surface $S$ obtained as  the
image of the rational map from  $\mathbb P^2$ to  $\mathbb P^5$ determined by the linear system of
cubics passing through four  points $p_1, p_2, p_3,p_4 \in \mathbb P^2$ in general position.
At a  generic point  $p \in \mathbb P^2$, the five cubics in the linear system with a tacnode at $p$ are:
the union of conic through $p_1,p_2,p_3,p_4$ and $p$ with its tangent line at $p$; and for every $i \in \underline 4$, the
union of the line  $\overline{p p_i}$ with the conic through $p$ and all the $p_j$ with $j \neq i$ which is moreover tangent to $\overline{p p_i}$
at $p$. This geometric description makes  evident the fact that Segre's $5$-web of $S$ is nothing more than Bol's $5$-web $\mathcal B_5$ presented in Section \ref{S:bol}.
\index{Bol!$5$-web}
\index{Segre!$5$-web|)}

\chapter{Abelian relations}\label{Chapter:AR}
\thispagestyle{empty}

\index{Abelian relation|(}A central concept in this text is the one of \defi[abelian relation] for a
germ of quasi-smooth  web $\mathcal W$. Roughly speaking, abelian relations are additive functional equations among the first integrals of the defining foliations of
$\mathcal W$. More precisely, if $\mathcal W=[\omega_1 \cdots \omega_k ]$ is a germ
of quasi-smooth $k$-web  on $(\mathbb C^n,0)$ then an abelian relation of $\mathcal W$
is a $k$-uple of germs of $1$-forms  $\eta_1, \ldots, \eta_k$ satisfying the following three
conditions:
\begin{enumerate}
\item[(a)] for every $i \in \underline k$, the $1$-form $\eta_i$ is closed, that is, $d\eta_i=0$;
\item[(b)] for every $i \in \underline k$, the $1$-form $\eta_i$ defines  $\mathcal F_i$,  that is  $ \omega_i \wedge \eta_i=0$;
\item[(c)] the $1$-forms $\eta_i$ sum up to zero, that is $
\sum_{i=1}^k \eta_i = 0 .
$
\end{enumerate}

Notice that a  primitive of $\eta_i$  exists, since  $\eta_i$ is closed. Such primitive  is a first integral
of $\mathcal F_i$, because $\omega_i \wedge \eta_i=0$. In particular, if $\mathcal F_i$ is defined through
a submersion $u_i:(\mathbb C^n,0) \to (\mathbb C,0)$ then
\[
\int_0^z \eta_i = g_i(u_i(z)) \, ,
\]
for some germs of holomorphic functions $g_i:(\mathbb C,0) \to (\mathbb C,0)$.
Condition (c)  translates into
\[
\sum_{i=1}^k g_i\circ u_i= 0
\]
which is the functional equation among the first integrals of $\mathcal W$ mentioned at the beginning of the discussion.
An abelian relation $\sum_{i=1}^k \eta_i=0$  is
  \defi[non-trivial] \index{Abelian relation!non-trivial} if at least one of the $\eta_i$  is not identically zero.
  If none of the $1$-forms $\eta_i$ is identically zero then the abelian relation is called \defi[complete]. \index{Abelian relation!complete}

\smallskip

With the concept of abelian relation at hand Theorem \ref{T:hexagonal} can be rephrased
as the  following equivalences for a germ of smooth $3$-web on $(\mathbb C^2,0)$.
\[
{\mathcal W \text{ is hexagonal}} \iff {K(\mathcal W)=0} \iff
\begin{tabular}{c}
  ${\mathcal W}$ has a
non-trivial\\ abelian relation.
\end{tabular}
\]
To some extent, the main results of this text can be thought as generalizations of  this equivalence to
arbitrary webs of codimension one.

\medskip

It is clear from the definition of abelian relation that for a given germ of
quasi-smooth $k$-web $\mathcal W$ the set of all abelian relations of $\mathcal W$
forms a $\mathbb C$-vector space, the \defi[space of abelian relations of $\mathcal W$], which will  be denote by $\mathcal A(\mathcal W)$.
If $\mathcal W= \mathcal W(\omega_1, \ldots, \omega_k)$ then, one can write
\[
\mathcal A(\mathcal W) = \left \{ ( \eta_1, \ldots, \eta_k ) \in \big( \Omega^1(\mathbb C^n,0) \big)^k
\left|
\begin{tabular}{c}
$d \eta_i = 0$ \\
$ \omega_i \wedge \eta_i =0 $\\
$\sum_{i=1}^k \eta_i =0$ \\
\end{tabular} \right. \right\} \, .
\]
 Notice  that a germ of  diffeomorphism $\varphi$ establishing an equivalence between two germs of  webs $\mathcal W$ and $\mathcal W'$, induces
a natural isomorphism between their spaces of abelian relations.

One of the  main goals  of this chapter is to prove that $\mathcal A(\mathcal W)$ is indeed
a finite dimensional vector space and that its dimension --  the \defi[rank of $\mathcal W$], denoted by $\mathrm{rank}(\mathcal W)$ --
is bounded by Castelnuovo's number
\[
\pi (n,k)  =
\sum_{j=1}^{\infty} \max (0 , k - j(n-1) - 1) \, ,
\]
when $\mathcal W$ is a germ of smooth $k$-web on $(\mathbb C^n,0)$.

\smallskip
This bound, by the way, was proved by Bol when $n=2$, and was generalized by S.-S.~Chern in his PhD thesis under the direction of Blaschke.
Before embarking in its  proof, carried out in Section \ref{S:BCbound},
the  determination of the space of abelian relations for planar webs in some particular
cases is discussed in Section \ref{S:geral}.

\smallskip

Of tantalizing importance for what is to come later in Chapter \ref{Chapter:Trepreau}, is the content of Section \ref{S:constraints}. There Castelnuovo's
results on  the   geometry of point sets   on projective space are proved, and from them are deduced constraints on the
geometry of webs attaining Chern's bound.

\dd

The space of abelian relations of a global web  is no longer a vector space but a local system defined
on an open subset containing the complement of the discriminant of the web. This can be inferred from
the results by Pantazi-H\'{e}naut expound  in Section \ref{S:Pantazi-Henaut} of Chapter \ref{Chapter:6}.
For an elementary and simple argument see \cite[Th\'{e}or\`{e}me 1.2.2]{PTese}. Note that both approaches mentioned  above
deal a priori with webs on surfaces, but there is no real difficult to deduce from them the general case.

\index{Abelian relation|)}

\section{Determining the abelian relations}\label{S:geral}\index{Abel's method|(}

If  $\mathcal W$ is a quasi-smooth $k$-web on $(\mathbb C^n,0)$ and
$\varphi:(\mathbb C^2,0)\rightarrow (\mathbb C^n,0)$  is a   generic holomorphic immersion then
$\varphi^* \mathcal W$ is a smooth $k$-web on $(\mathbb C^2,0)$, and $\varphi$ induces naturally an injection of $\mathcal A(\mathcal W)$ into $\mathcal A(\varphi^*W)$.
Thus,  the specialization to the two-dimensional case,  in vogue  up to the end of this section, is not  seriously restrictive.

\subsection{Abel's method} Before the awaking of web geometry,  Abel already studied functional
equations of the  form
\[
\sum_{i=1}^k g_i \circ u_i = 0
\]
for given  functions $u_i$ depending on two variables. In his first published paper \cite{Abel}, \index{Abel} he devised
a method to determine the functions $g_i$  satisfying  this functional equation.  Abel's method
will not be presented in its full generality but instead  the particular case where
all but one  of the functions $u_i$ are homogenous polynomials of degree one will be carefully scrutinized following \cite{PT}.
Under these additional assumptions, Abel's method is remarkably simplified but still leads to interesting examples of functional equations and
webs.  For a comprehensive account and modern exposition of Abel's method  in the context of web geometry, the reader can consult \cite{Piriopoly}.

\medskip

For $i \in \underline  k$,  let $u_i(x,y) = a_i x + b_i y $
where $a_i,b_i \in \mathbb C$ are complex numbers satisfying $a_i b_j - a_j b_i \neq 0$ whenever $i \neq j$.
These conditions imply the smoothness of the $k$-web  $\mathcal W( u_1,  \ldots, u_{k} )$.
It will be  convenient to consider  the vector fields $v_i = b_i \partial_x - a_i \partial_y$ which define the very same foliation
as the submersions $u_i$.

Let $u:(\mathbb C^2,0) \to (\mathbb C,0)$ be a germ of holomorphic  submersion  satisfying $v_i (u)\neq 0$ for
every $i \in \underline k$, and consider the smooth  $(k+1)$-web $\mathcal W = \mathcal W(u_1,\ldots, u_k, u)$ on $(\mathbb C^2,0)$.
To determine the rank of $\mathcal W$, it suffices  to look for holomorphic solutions $g, g_1, \ldots, g_k:(\mathbb C,0)\to(\mathbb C,0)$  of the equation
\[
g \circ u = \sum_{i=1}^k g_i \circ u_i \, .
\]
For that sake,
apply the derivation $v_1$ to both sides of the equation above
to obtain
\[
 g'(u) \cdot v_1(u) = \sum_{i=2}^k  {g_i}'(u_i) \cdot v_1(u_i) \, .
\]
Notice that $u_1$ no longer appears in the right hand-side.

Apply now the derivation $v_2$ to this new equation. Use the commutativity of $v_1$ and $v_2$, that is  $[v_1,v_2]=0$, to get
\[
g''(u) \cdot v_2(u) \cdot v_1(u) +g'(u) \cdot v_2(v_1(u)) = 
 \sum_{i=3}^k  {g_i}''(u_i) \cdot v_2(u_i) \cdot v_1(u_i) +  (g_i)'(u_i) \cdot v_2(v_1(u_i)) \, .
\]


Iterating this procedure one arrives at a equation of the form
\[
 \left( \prod_{i=1}^k v_i(u) \right)    g^{(k)}(u) + \cdots + v_k(v_{k-1}(\cdots v_1(u)) ) g'(u) = 0\,
\]
which after dividing by the coefficient of $g^{(k)}(u)$ can be written as
\[
g^{(k)} ( u)  = \sum_{i=1}^{k-1} h_i g^{(i)} (u)
\]
where the $h_i$ are germs of meromorphic functions.

Let  $v= u_y \partial_x -  u_x \partial_y$ be
the hamiltonian vector field of $u$. If for some $i$ the function $v(h_i)$ is not identically zero then
one can apply the derivation $v$ to the above equation in order to reduce the order of it. Otherwise the functions
$h_i$ are functions of $u$ only and not of $(x,y)$, that is $h_i = h_i (u)$.

 Eventually
one arrives at a linear  differential equation of the form
\begin{equation}\label{E:tt}
g^{(\ell)} ( u)  = \sum_{i=1}^{\ell-1} h_i(u) g^{(i)} (u)
\end{equation}
with $\ell \le k$. Thus the possibilities for $g$ are reduced to a finite dimensional vector space: the space of
solutions of (\ref{E:tt}).

After discarding the constant solutions of (\ref{E:tt})  one notices at this point that
\begin{equation*}
\mathrm{rank} \, \mathcal W \le \mathrm{rank} \, \mathcal W(u_1, \ldots, u_k)  + k-1 \,
\end{equation*}
when $u_1, \ldots, u_k$ are linear homogeneous polynomials. Beware that this is no longer true, if the linear polynomials
are replaced by arbitrary submersions. The point being that the hamiltonian vector fields $v_i$ no longer commute. One can still work his way out to deduce that an equation as (\ref{E:tt}) will still
hold true, as  done in  \cite{Piriopoly}, but it will be no longer true that $\ell$ is bounded by $k$.

\medskip

\begin{example}\label{E:llll}
Let $u_1, \ldots, u_{k} \in \mathbb C[x,y]$ be homogeneous linear polynomials. Suppose that they
are pairwise linearly independent and let
$\mathcal W = \mathcal W(u_1, \ldots, u_k)$ be the induced $k$-web.
Then
\[
\mathrm{rank} (\mathcal W ) = \frac { (k-1)(k-2) } {2}
\]
\end{example}
\begin{proof}
The proof goes by induction. For $k =2$  there is no abelian relation.  Suppose the result holds
for $k \ge 2$. That is every parallel $k$-web has rank $(k-1)(k-2)/2$.
Looking for solutions of
\[
g \circ u_{k+1} = \sum_{i=1}^k g_i \circ u_i
\]
following the strategy explained above one arrives at the equation
$
g^{(k)} ( u_{k+1} ) =  0 .
$
Thus $g$ must be a polynomial in $\mathbb C[t]$ of degree at most $(k-1)$. Imposing that $g(0)=0$ leaves a vector
space of dimension $k-2$ to chose $g$ from. Hence the rank of $\mathcal W_{k+1}= \mathcal W(u_1, \ldots, u_{k+1})$
is bounded by $(k-1)(k-2)/2 + (k-2)$.

But for every positive integer $ j \le  {k-1}$, a dimension count shows that
$({u_{k+1})^{j} = \sum_{i=1}^k \lambda_{i,j } \cdot (u_i)^{j}}$  for suitable $\lambda_{i,j} \in \mathbb C$.
It follows  that the rank of $\mathcal W_{k+1}$ is $k(k-1)/2$ as wanted.
\end{proof}

In the particular case under analysis one can make
use of the following lemma.

\begin{lemma}\label{L:ss}
Suppose as above that the functions $u_i$ are linear homogenous and, still as above, let
$v_i$ be the hamiltonian vector field of $u_i$.
The following assertions are equivalent:
\begin{enumerate}
\item[(a)] the function $g(u)$ is of the form $\sum_{i=1}^k g_i (u_i)$;
\item[(b)] the identity $v_1v_2\cdots v_k(g(u))=0$ holds true.
\end{enumerate}
\end{lemma}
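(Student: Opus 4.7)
The direction (b) $\Rightarrow$ (a) is the substantive one; (a) $\Rightarrow$ (b) is essentially free. Since each $u_i$ is linear homogeneous, each Hamiltonian $v_i = b_i\partial_x - a_i\partial_y$ has constant coefficients, so the $v_i$ pairwise commute. Moreover $v_i(u_i) = 0$ by definition. Therefore, given $g(u) = \sum_{j=1}^k g_j(u_j)$, the commutativity allows one to move $v_j$ next to the $j$-th summand in the product $v_1\cdots v_k$, and $v_j(g_j(u_j)) = g_j'(u_j)\,v_j(u_j) = 0$ annihilates that summand. This proves (a) $\Rightarrow$ (b).

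For (b) $\Rightarrow$ (a), my plan is to prove by induction on $k$ the slightly more general statement: any germ of holomorphic function $f$ on $(\mathbb C^2,0)$ satisfying $v_1 v_2\cdots v_k(f)=0$ is of the form $f = \sum_{i=1}^k g_i(u_i)$ for suitable germs $g_i$. The base case $k=1$ is immediate: $v_1(f)=0$ says $f$ is constant along the leaves of the foliation $\{u_1=\mathrm{cst}\}$, so $f = g_1(u_1)$.

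For the inductive step, set $h = v_k(f)$. By the commutativity of the $v_i$, the hypothesis becomes $v_1\cdots v_{k-1}(h) = 0$, and the inductive hypothesis yields a decomposition $h = \sum_{i=1}^{k-1} h_i(u_i)$. I then change to linear coordinates $(s,t)$ adapted to the $k$-th factor: since $v_k$ is a non-zero constant vector field with first integral $u_k$, one can take $v_k = \partial/\partial s$ and $u_k = t$. For $i<k$, the assumption $a_ib_k - a_kb_i\neq 0$ means $u_i$ is not proportional to $u_k$, so $u_i = \alpha_i s + \beta_i t$ with $\alpha_i\neq 0$. The equation $\partial f/\partial s = \sum_{i<k} h_i(\alpha_i s + \beta_i t)$ can then be integrated explicitly in $s$: picking a primitive $H_i$ of $h_i$, one finds $f(s,t) = \sum_{i<k} \alpha_i^{-1} H_i(\alpha_i s + \beta_i t) + \varphi(t)$, which is precisely of the form $\sum_{i=1}^{k-1} g_i(u_i) + g_k(u_k)$.

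The whole argument is very clean and I do not anticipate a genuine obstacle; the only subtle point is to remember that the general-position hypothesis $a_ib_j - a_jb_i\neq 0$ is what makes the coefficient $\alpha_i$ non-zero, so that integrating the primitive of $h_i$ against $s$ still produces a function of the linear form $u_i$ alone. Applying this general statement to $f=g(u)$ gives Lemma \ref{L:ss}.
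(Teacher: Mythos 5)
Your proof is correct and follows essentially the same route as the paper: induction on $k$, applying the inductive hypothesis to $v_k(g(u))$ and then absorbing each term $h_i(u_i)$ by subtracting $v_k(u_i)^{-1}H_i(u_i)$, where $H_i$ is a primitive of $h_i$ and $v_k(u_i)$ is a non-zero constant by the general-position hypothesis. Your explicit integration in coordinates adapted to $v_k$ is just a spelled-out version of that same subtraction, and your formulation of the induction for an arbitrary germ $f$ (rather than only for $g(u)$) makes precise what the paper's induction implicitly uses.
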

\begin{proof}
Clearly (a) implies (b). The converse will be proved by induction. For $k=1$ the result is evident. By induction hypothesis,
\[
v_k(g(u)) =  \sum_{i=1}^{k-1} h_i(u_i)\,.
\]
If $H_i$ is a primitive of $h_i$ then $v_k(H_i(u_i)) = h_i(u_i) \cdot v_k(u_i)$. Because $ v_k(u_i)$ is a non-zero constant when $i < k$,
one can write
\[
v_k\big( g(u ) - \sum_{i=1}^{k-1} v_k(u_i)^{-1}H_i(u_i) \big) =0 \, .
\]
To conclude it suffices to apply the basis of the induction.
\end{proof}

\begin{prop}\label{P:TTTT}
Let $\mathcal W$ be as in Example \ref{E:llll}.
Let also  ${u:(\mathbb C^2,0) \to (\mathbb C,0)}$ be a submersion and $\mathcal F$  be the induced foliation.
If the $(k+1)$-web $\mathcal W \boxtimes \mathcal F$ is smooth then
\[
\mathrm{rank} (\mathcal W  \boxtimes \mathcal F)  \le \frac{ k(k-1) } { 2 }  \, .
\]
Moreover equality holds if and only if $\ell = k$ in equation (\ref{E:tt}).
\end{prop}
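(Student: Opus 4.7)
The strategy is to decompose the space of abelian relations of $\mathcal W\boxtimes \mathcal F$ into the abelian relations of $\mathcal W$ (which have dimension $(k-1)(k-2)/2$ by Example~\ref{E:llll}) and the ``new'' relations parametrized by a function $g$ on $(\mathbb C,0)$ with $g(u)=\sum_{i=1}^k g_i(u_i)$ for some germs $g_i$. The task will reduce to bounding the dimension of the space of such admissible $g$'s modulo constants.

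By Lemma~\ref{L:ss}, admissibility of $g$ is equivalent to $v_1\cdots v_k(g(u))=0$. Expanding (the $v_i$ being constant vector fields) gives
$$\Big(\prod_{i=1}^k v_i(u)\Big) g^{(k)}(u) + \sum_{j=1}^{k-1} Q_j(x,y) g^{(j)}(u) = 0,$$
whose leading coefficient is nowhere vanishing because $\mathcal W\boxtimes\mathcal F$ is smooth. I will divide by $\prod_i v_i(u)$ and then apply the reduction process described in the text---repeatedly hitting the equation with the hamiltonian $v$ of $u$ (which annihilates any function of $u$) in order to kill coefficients that genuinely depend on $(x,y)$---until arriving at the ODE (\ref{E:tt}) of some order $\ell\le k$ with coefficients in $\mathcal O(u)$. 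Every admissible $g$ automatically satisfies (\ref{E:tt}), so the space of admissible $g$ has dimension at most $\ell$. Discarding the constant solutions (already accounted for inside $\mathcal A(\mathcal W)$) leaves at most $\ell-1\le k-1$ new abelian relations, whence
$$\mathrm{rank}(\mathcal W\boxtimes\mathcal F)\le \frac{(k-1)(k-2)}{2} + (k-1) = \frac{k(k-1)}{2}.$$

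For the equality clause, the key observation is that $\ell=k$ exactly when the reduction step is not needed, i.e., when the ratios $Q_j/\prod_i v_i(u)$ already belong to $\mathcal O(u)$. In this case the identity $v_1\cdots v_k(g(u))=0$ is itself of the form (\ref{E:tt}), so the $k$-dimensional solution space of (\ref{E:tt}) coincides with the space of admissible $g$ and the bound $\ell-1=k-1$ is attained: each non-constant solution of (\ref{E:tt}) extends to a genuine abelian relation via the recursive construction in the proof of Lemma~\ref{L:ss}. Conversely, when $\ell<k$ the same Lemma forces admissible $g$ to solve (\ref{E:tt}), so the space of non-constant admissible $g$ has dimension at most $\ell-1<k-1$, yielding a strict inequality.

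The main subtlety, and what makes the equality characterization non-trivial, is that the reduction from $v_1\cdots v_k(g(u))=0$ to (\ref{E:tt}) is in general not reversible: applying $v$ can enlarge the solution set. The upper bound $\ell-1$ on the dimension of new relations is therefore tight precisely when no $v$-reduction is needed, which is exactly the case $\ell=k$.
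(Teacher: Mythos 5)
Your proof is correct and follows essentially the same route as the paper's: both reduce via Lemma \ref{L:ss} to the observation that every admissible $g$ satisfies equation (\ref{E:tt}), bound the new abelian relations by $\ell-1\le k-1$, and characterize equality by noting that the hamiltonian reduction strictly lowers the order, so $\ell=k$ precisely when (\ref{E:tt}) coincides with $v_1\cdots v_k(g(u))=0$ and the bound is attained. Your explicit remark on the irreversibility of the $v$-reduction is implicit in the paper but is exactly the right point to isolate.
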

\begin{proof}
First notice that equation (\ref{E:tt}) has been derived by first developing formally
\[
v_1 v_2 \cdots v_k \big( g( u ) \big) = \sum_{i=1}^k f_i(x,y) g^{(i)}(u) \, ,
\]
then  dividing by $f_k$, setting $h_i = f_i / f_k$ and  deriving repeatedly with respect to the hamiltonian vector field of $u$
 until arriving at an  equation depending only on $u$. Therefore, Lemma \ref{L:ss} implies that any
 solution $g$ of
 \[
 g(u) = \sum_{i=1}^k g_i (u_i)
 \]
will  also be a solution of (\ref{E:tt}). But if $\mathrm{rank} (\mathcal W  \boxtimes \mathcal F)  = \frac{ k(k-1) } { 2 }$
then equation (\ref{E:tt}) has to have at least $k-1$ non-constant solutions vanishing at zero. Consequently $k=\ell$.

Reciprocally if $k=\ell$ then there will be $k-1$  non-constant solutions vanishing at zero for $v_1 v_2 \cdots v_k ( g( u ) )=0$.
Lemma \ref{L:ss} implies that $\mathrm{rank} (\mathcal W  \boxtimes \mathcal F)  = \mathrm{rank}(\mathcal W) + (k-1)$.
\end{proof}

Webs as $\mathcal W \boxtimes \mathcal F$ of the proposition above obtained from the superposition of a parallel web and one non-linear foliation  will be called \defi[quasi-parallel] webs.  \index{Web!quasi-parallel}

\begin{example}\label{E:exabel}\rm
let $u : (\mathbb C^2,0) \to \mathbb C$ be a submersion of the form $u(x,y) = a(x) + b(y)$.
Assume that the quasi-parallel  $5$-web  ${\mathcal W(x,y,x-y,x+y, u(x,y) )}$ is smooth, that is
$a_xb_y(a_x^2-b_y^2)(0) \neq 0$. A straightforward computation shows that
\[
v_1v_2v_3v_4 \big( g(u) \big) = g''''(u)a_x b_y(a_x^2-b_y^2) +  
3g'''(u) a_x b_y(a_{xx}-b_{yy}) + g''(u)(b_ya_{xxx}-a_x b_{yyy}).
\]

Proposition \ref{P:TTTT} implies that $\mathcal W$ has rank equal to $6$ if and only if
\begin{align*}
  v_u \left(\frac{a_{xx}-b_{yy}}{a_x^2-b_y^2}\right)=0 \qquad \mbox{and} \qquad
  v_u \left(\frac{a_y b_{xxx}-a_x b_{yyy}}{a_x b_y(a_x^2-b_y^2)}\right)=0.
\end{align*}

The simplest functions $u(x,y)=a(x) + b(y)$ satisfying  this system of partial differential equations are
 $ x^2 + y^2$,   $x^2 - y^2$,   $ \exp{x} + \exp{y}$, $ \log( \sin x \sin y)$ and
  $ \log(\tanh x \tanh y)$. But these are not all. There is a  continuous family of solutions that can be written
  with the help of theta functions of elliptic curves.

  The $5$-webs of the form ${\mathcal W(x,y,x-y,x+y, a(x) + b(y) )}$
  with rank equal to $6$, have been completely classified  in \cite{PT}  through  a careful analysis of the above
  system of PDEs. \end{example}

If nothing else, this example shows  how involved can be the search for  webs of high rank, even in considerably simple cases.

\index{Abel's method|)}
\subsection{Webs with infinitesimal automorphisms}\label{S:infinitesimalggg}

Let $\mathcal F$ be a germ of smooth  foliation on $(\mathbb C^2,0)$
induced by a germ of $1$-form $\omega$. A germ of  vector field $v$ is an \defi[infinitesimal
automorphism  of $\mathcal F$] \index{Foliation!infinitesimal automorphism}if the foliation $\mathcal F$
is preserved by the local flow of $v$. In algebraic terms:
$ L_v \omega \wedge \omega = 0$ where $L_v = i_vd + di_v$ is the \defi[Lie derivative] \index{Lie derivative} with respect
to the vector field $v$. Those not familiar with the Lie derivative can
 find its basic algebraic properties in \cite[Chapter IV]{Godbillon}.

When the infinitesimal automorphism $v$ is transverse to $\mathcal F$, that is  $\omega(v)\neq0$,
then an elementary  computation (see  \cite[Corollary 2]{Percy} or \cite[Chapter III Section 2]{CerveauMattei} ) shows that the
$1$-form
\[
   \alpha = \frac{\omega}{i_v \omega}
\]
 is   closed and satisfies $L_v \alpha =0$. By definition, the integral
\[
u(z) = \int^z_0 \alpha
\]
is the \defi[canonical first integral] \index{Canonical first integral} of ${\mathcal F}$ with respect to $v$.
Clearly $u(0)=0$ and $L_v(u)= 1$. In particular the latter equality implies that $u$ is a germ of submersion.

The canonical first integral admits a nice {\it physical} interpretation: its value at $z$
measures the time which the local flow of $v$ takes to move the leaf through zero to the leaf through $z$.

Now let $\mathcal W= \mathcal W( \omega_1, \ldots, \omega_k)$ be a germ of smooth $k$-web on $(\mathbb C^2,0)$
 and let
 $v$ be an  \index{Web!infitesimal automorphism} \defi[infinitesimal
automorphism of $\mathcal W$], in the sense that $v$ is an infinitesimal automorphism of all the foliations defining $\mathcal W$.

By hypothesis, one has   $L_v\, \omega_i \wedge \omega_i=0$ for every $i \in \underline k$. Because $L_v$  commutes with
$d$, it induces a linear map
\begin{eqnarray}
\label{themap!}
L_v : \mathcal A(\mathcal W) & \to & \mathcal A(\mathcal W) \\
(\eta_1,\ldots ,\eta_k) & \mapsto& (L_v\eta_1 ,\ldots, L_v \eta_k)
\, .  \nonumber
\end{eqnarray}
A simple analysis of
the $L_v$-invariants subspaces of $ {\mathcal A}(\mathcal W)$ will provide valuable information
about the abelian relations of webs admitting infinitesimal automorphisms.


\subsubsection{Description of ${\mathcal A}({\mathcal W})$ }
Suppose that ${\mathcal W}=\mathcal F_1 \boxtimes \cdots \boxtimes \mathcal F_k $
is a smooth  $k$-web on $({\mathbb C}^2,0)$ which admits
an infinitesimal automorphism $v$, regular and transverse to all the foliations ${\mathcal F}_i$.

Let $i\in \underline k$  be fixed.
Set  ${\mathcal A}_i({\mathcal W})$
as the vector subspace of $\Omega^1(\mathbb C^2,0)$ spanned
by the $i$-th components $\eta_i$ of
abelian relations ${(\eta_1,\ldots,\eta_k)\in {\mathcal A}({\mathcal W})}$. \index{Abelian relation!$i$-th component}
In other words, if $p_i: \Omega^1(\mathbb C^2,0)^k \to \Omega^1(\mathbb C^2,0)$ is the projection to the $i$-th factor
then
\[
\mathcal A_i(\mathcal W) = p_i(\mathcal A(\mathcal W)) \, .
\]

If $u_i=\int \alpha_i$ is the canonical first integral of ${\mathcal F}_i$
with respect to $v$, then for $\eta_i \in {\mathcal A}_i({\mathcal W})$,
there exists a  germ $f_i \in \mathbb C\{t\}$
for which $\eta_i=f_i(u_i)\,du_i$.

Assume now that ${\mathcal A}_i({\mathcal W})$ is not the zero vector space  and
let \[
\big\{\eta_i^\nu=f_\nu(u_i)\,du_i \, | \, \nu \in \underline {n_i} \}
\]
be a basis of it, consequently  $n_i = \dim {\mathcal A}_i({\mathcal W})$.
Since $L_v :  \mathcal A_i(\mathcal W) \to  \mathcal A_i(\mathcal W)$
is a linear map, there exist complex constants $c_{\nu \mu}$ such that
\begin{equation}
\label{baz}\quad \quad
L_v(\eta_{i}^\nu)=\sum_{\mu=1}^{n_i}
c_{\nu\mu}\, \eta_{i}^\mu
\, , \quad \quad \, \nu \in \underline{n_i} \, .
\end{equation}

But for any $\nu \in \underline{n_i}$, the  identity below holds true,
\begin{align*}
L_v(\eta_i^\nu)&=L_v\big(f_\nu(u_i)\,du_i\big) \\
&= v\big(
f_\nu(u_i)\big)du_i+f_\nu(u_i)\,L_v\big(du_i\big)=f_\nu'(u_i)\,{du_i} \, .
\end{align*}
Thus the relations (\ref{baz}) are equivalent to the following
\begin{eqnarray}
\label{diffeq}
f_\nu'
=\sum_{\mu=1}^{n_i} c_{\nu \mu}\, {f_\mu} \, , \quad \quad \, \nu \in \underline{n_i} \, .
\end{eqnarray}

Now let $\lambda_1,\ldots,\lambda_\tau \in {\mathbb C}$ be the
eigenvalues of the map $L_v$ acting on ${\mathcal A}({\mathcal W})$
corresponding to  Jordan blocks of respective dimensions
 $\sigma_1,\ldots,\sigma_\tau $. The system of linear differential equations
 (\ref{diffeq}) provides the following description of $ {\mathcal A}({\mathcal W})$.

\begin{prop} \label{description}
The abelian relations of $ {\mathcal W}$ are of the form
 $$ P_1(u_1)\,e^{\lambda_i\,u_1}\,du_1+\cdots+ P_k(u_k)\,e^{\lambda_i\,u_k}\,du_{k}=0
$$
where $P_1, \ldots, P_k$ are  polynomials of degree less   or equal to $ \sigma_i$.
\end{prop}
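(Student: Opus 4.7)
The plan is to reduce the statement to an exercise in linear algebra by translating the $L_v$-action on abelian relations into the action of $d/dt$ on the functions $f_i$ appearing in the normal form $\eta_i = f_i(u_i)\,du_i$.

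First I would verify the key identity $L_v(f(u_i)\,du_i) = f'(u_i)\,du_i$. Since $u_i$ is the canonical first integral of $\mathcal{F}_i$ with respect to $v$, one has $du_i = \alpha_i = \omega_i/(i_v\omega_i)$ and $i_v\,du_i = 1$; because $L_v$ commutes with $d$, this gives $L_v(du_i) = d(L_v u_i) = d(1) = 0$. Using the derivation property of $L_v$ one obtains $L_v(f(u_i)\,du_i) = v(f(u_i))\,du_i = f'(u_i)\,du_i$. Thus, under the isomorphism $\mathcal{A}_i(\mathcal{W}) \simeq \{f \in \mathbb{C}\{t\} \mid f(t)\,dt \in p_i(\mathcal{A}(\mathcal{W}))\}$ induced by pulling back along $u_i$, the endomorphism $L_v$ corresponds to ordinary differentiation.

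Next, since $\mathcal{A}(\mathcal{W})$ is finite dimensional (by Chern's bound, to be proved in Section~\ref{S:BCbound}; here one uses only that each $\mathcal{A}_i$ is finite dimensional, which follows by the same token), the endomorphism $L_v : \mathcal{A}(\mathcal{W}) \to \mathcal{A}(\mathcal{W})$ admits a Jordan decomposition. Let $\eta = (\eta_1,\ldots,\eta_k) \in \mathcal{A}(\mathcal{W})$ belong to a generalized eigenspace of $L_v$ with eigenvalue $\lambda_i$ and Jordan block of size at most $\sigma_i$, so that $(L_v - \lambda_i \,\mathrm{Id})^{\sigma_i}(\eta) = 0$. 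Projecting component-wise, which is compatible with $L_v$ by its very definition \eqref{themap!}, one gets $(L_v - \lambda_i)^{\sigma_i}(\eta_j) = 0$ for each $j \in \underline k$. Writing $\eta_j = f_j(u_j)\,du_j$ (which is possible since $\eta_j$ is closed and defines $\mathcal{F}_j$), the identity from the first step translates this into the ordinary differential equation
\[
\left(\frac{d}{dt} - \lambda_i\right)^{\sigma_i} f_j(t) = 0 \, .
\]
The general solution is $f_j(t) = P_j(t)\,e^{\lambda_i t}$ with $P_j \in \mathbb{C}[t]$ of degree $\leq \sigma_i$, which yields exactly the form claimed in the proposition.

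Finally, since an arbitrary abelian relation decomposes as a sum of generalized eigenvectors of $L_v$, every element of $\mathcal{A}(\mathcal{W})$ is a linear combination of such ``exponential--polynomial'' abelian relations, one for each Jordan block. The main conceptual step is the first one, namely the computation that $L_v$ acts as $d/dt$ on the component functions $f_j$; once this is in place the proof is essentially formal. The only delicate point is to make sure that the Jordan block size of $L_v$ acting on the component $\mathcal{A}_j(\mathcal{W})$ is indeed bounded by that of $L_v$ acting on $\mathcal{A}(\mathcal{W})$, which is automatic because $p_j$ is $L_v$-equivariant and surjective onto $\mathcal{A}_j(\mathcal{W})$.
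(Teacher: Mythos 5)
Your proposal is correct and follows essentially the same route as the paper: the heart of both arguments is the computation $L_v(f(u_i)\,du_i)=f'(u_i)\,du_i$, after which the paper converts the $L_v$-action on a basis of each $\mathcal A_i(\mathcal W)$ into the constant-coefficient first-order system $f_\nu'=\sum_\mu c_{\nu\mu}f_\mu$, while you equivalently pass to generalized eigenspaces and solve $(d/dt-\lambda_i)^{\sigma_i}f_j=0$. The only cosmetic difference is this Jordan-form packaging (which, if anything, makes the sharper bound $\deg P_j\le\sigma_i-1$ visible); both versions rely on the finite dimensionality of $\mathcal A(\mathcal W)$, which you rightly flag.
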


Proposition \ref{description} suggests an approach  to effectively determine $\mathcal{A}(\mathcal{W})$.
Once  the possible non-zero eigenvalues of the map (\ref{themap!}) are restricted to a finite set then
 the abelian relations can be found by simple linear algebra.

To restrict the possible eigenvalues first notice that  $0$ is an eigenvalue of (\ref{themap!})
if  and only if  for every germ of vector field $w$ the Wronskian determinant \index{Wronskian determinant}
\begin{equation}
\label{wronskian0}
\det\left(\begin{array}{ccc}
u_{1} & \cdots & u_{k} \\
w(u_{1}) & \cdots & w(u_{k}) \\
\vdots & \ddots & \vdots \\
w^{k-1}(u_{1}) & \cdots & w^{k-1}(u_{k})
\end{array}\right)
\end{equation}
is identically zero.
In fact, if this is the case
then there are two possibilities: the functions $u_{1},\ldots,u_{k}$ are $\mathbb{C}$-linearly dependent or all the orbits of $w$
are cutted out by some element of the linear system
generated by $u_{1},\ldots,u_{k}$, see \cite[Theorem~4]{Pereira}.
In particular if $w$ is a vector field of the form
$w=\mu x\frac{\partial}{\partial x}+y\frac{\partial}{\partial y}$, with $\mu\in\mathbb{C}\setminus\mathbb{Q}$, then the leaves of $w$
accumulate at $0$ and are  cutted out by no holomorphic function.
Therefore the vanishing of (\ref{wronskian0}) implies the existence of an abelian relation of the form
$$\sum c_{i}u_{i}=0,$$
where the $c_{i}$'s are complex constants.

To determine the possible  complex numbers $\lambda$ which are eigenvalues of the map (\ref{themap!}) first notice that these
corresponds to a functional equation of the form
$ c_1\,e^{\lambda\,u_1}+\cdots+ c_k\,e^{\lambda\,u_k}={cst.}$
where, as before, the $c_i$'s are complex constants.
In the same spirit of what has just been made for the zero eigenvalue case consider the holomorphic function given by
\begin{equation}
\label{wronskian-lambda}
\det\left(\begin{array}{ccc}
\exp(\lambda u_{1}) & \cdots & \exp(\lambda u_{k}) \\
w(\exp(\lambda u_{1})) & \cdots & w(\exp(\lambda u_{k})) \\
\vdots & \ddots & \vdots \\
w^{k-1}(\exp(\lambda u_{1})) & \cdots & w^{k-1}(\exp(\lambda u_{k}))
\end{array}\right)
\end{equation}
for an arbitrary germ of vector field $w$.

The Wronskian determinant \index{Wronskian determinant} (\ref{wronskian-lambda}) is of the form
$$\exp(\lambda(u_{1}+\cdots+u_{k}))\lambda^{k-1}P_{w}(\lambda)\, ,$$
where $P_{w}$ is a polynomial in $\lambda$, of degree at most
$\frac{(k-1)(k-2)}{2}$, with germs of holomorphic functions as coefficients. The common constant roots of these polynomials, when $w$ varies, are exactly the eigenvalues of the map (\ref{themap!}).

\begin{example}
The $k$-web $\mathcal{W}$ induced by the functions
$f_{i}(x,y)=y+x^{i}$, $i=1,\ldots,k$, has no abelian relations.
\end{example}
\begin{proof}
Notice that the vector field $v=\frac{\partial}{\partial y}$ is an infinitesimal automorphism of $\mathcal{W}$
and $v(df_{i})=1$, for every $i \in \underline k$. It follows that $u_{i}=f_{i}$ are the canonical first integrals of $\mathcal{W}$.
On the other hand  for the vector field $w=\frac{\partial}{\partial x}$,
$P_{w}(\lambda)|_{x=y=0}=(-1)^{k-1}\prod_{n=1}^{k-1}n!$. Consequently, the only candidate for an eigenvalue of the map (\ref{themap!}) is $\lambda=0$. Because  the functions $f_{i}$ are linearly independent over $\mathbb{C}$ the web $\mathcal W$ carries no
abelian relations at all.
\end{proof}

The next example determines the abelian relations of one of the $5$-webs of rank $6$
discussed in Example \ref{E:exabel}.

\begin{example}
The radial vector field $R=x\frac{\partial}{\partial x}+y\frac{\partial}{\partial y}$ is an infinitesimal automorphism of
the $5$-web $\mathcal W= \mathcal W ( x,y,x+y,x-y,x^2+y^2)$.
 The canonical first integrals are $u_{1}=\log x$, $u_{2}= \log y$, $u_{3}= \log (x+y)$, $u_{4}= \log(x-y)$ and  $u_{5}=\frac{1}{2}\log(x^2 + y^2)$.

If $w=x\frac{\partial}{\partial x} - y\frac{\partial}{\partial y}$ then  $P_{w}$ is a complex multiple of
\[
x^7y^7\lambda(\lambda-1)^{2}(\lambda-2)^2(\lambda-4)(\lambda-6) .
\]
According to Proposition \ref{description}, it suffices to look for abelian relations of the form
$\sum_{i=1}^{5}P_{i\lambda}(\log f_{i})f_{i}^{\lambda}\frac{df_{i}}{f_{i}}=0$, for $\lambda=0,1,2,4,6$, where $P_{i\lambda}$ are polynomials and
$f_{i}=\exp(u_{i})$.

Looking first for abelian relations where the polynomials $P_{i \lambda}$ are constant polynomials one has to
find linear dependencies between the linear polynomials $f_{1},\ldots,f_{4}$ and the degree $\lambda$ polynomials $f_{1}^{\lambda},\ldots,f_{5}^{\lambda}$ for $\lambda=2,4,6$.

For $\lambda=1$ there are two linearly independent abelian relations
$$f_{1}+f_{2}-f_{3}=0,\qquad f_{1}-f_{2}-f_{4}=0.$$
For $\lambda=2$ there are another  two:
$$f_{1}^{2}+f_{2}^{2}-f_{5}=0,\qquad 2f_{1}^{2}+2f_{2}^{2}-f_{3}^{2}-f_{4}^{2}=0.$$
Finally, there is one abelian relation for each  $\lambda \in \{ 4,6 \}$:
$$5f_{1}^{4}+5f_{2}^{4}+f_{3}^{4}+f_{4}^{4}-6f_{5}^{4}=0,\qquad 8f_{1}^{6}+8f_{2}^{6}+f_{3}^{6}+f_{4}^{6}-10f_{5}^{6}=0.$$
According to Proposition \ref{P:TTTT}, $\mathrm{rank}(\mathcal W)\le 6$. Hence
the abelian relations above generate $\mathcal A(\mathcal W)$.
\end{example}
%


\section{Bounds for the rank}\label{S:BCbound}
\index{Rank!bound|(}
Let $\mathcal W = \mathcal F_1 \boxtimes \cdots \boxtimes \mathcal F_k= \mathcal W( \omega_1 \cdots \omega_k )$ be a germ of quasi-smooth $k$-web on $(\mathbb C^n,0)$.

For every  positive integer $j$, define $\mathcal L^j(\mathcal W)$ as the vector subspace of the $\mathbb C$-vector space
$\mathrm{Sym}^j ( \Omega^1_0(\mathbb C^n,0) )$ generated by $\{ \omega_i^j(0) ; i \in \underline k \}$,
the $j$-th symmetric powers of the differential forms $\omega_i(0)$.
Set $${\ell^j(\mathcal W) = \dim \mathcal L^j(\mathcal W)}.$$

Equivalently, one can  define $\ell^j(\mathcal W)$ in terms of the linear parts of the submersions $u_i:(\mathbb C^n,0) \to (\mathbb C,0)$ defining $\mathcal W$.
If   $h_i$ is the linear part at the origin of $h_i$  then
\[
\ell^j(\mathcal W) = \dim  \left( \mathbb C h_1^j + \cdots + \mathbb C h_k^j \right) \, .
\]

\subsection{Bounds for  $\ell^j(\mathcal W)$}

The integers  $\ell^j(\mathcal W)$ are  bounded from above by the dimension of the space of degree $j$ homogeneous
polynomials in $n$ variables, that is
\begin{equation}\label{E:UB}
\ell^j ( \mathcal W) \le \min \left(k,\binom{n+j -1}{n-1} \right)\, .
\end{equation}
A good lower bound  is more delicate to obtain.
For smooth webs there is the following proposition.

\begin{prop}\label{P:cast}
If $\mathcal W$ is a germ of smooth  $k$-web on $(\mathbb C^n,0)$ then
\[
\ell^j ( \mathcal W) \ge \min( k , j(n-1) + 1 ) \, .
\]
\end{prop}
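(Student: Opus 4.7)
The strategy is to reformulate the bound as a classical statement about points imposing independent conditions on hypersurfaces, and then to prove it by an explicit product-of-hyperplanes construction. First I would reduce to the case $k \le j(n-1)+1$: for $k > j(n-1)+1$ it suffices to restrict to any $(j(n-1)+1)$-subfamily of the $h_i$'s, which still satisfies the general position hypothesis and whose $j$-th powers already span a subspace of dimension $j(n-1)+1$. In the remaining case, the bound $\ell^j(\mathcal{W}) \ge k$ amounts to the linear independence of $h_1^j, \ldots, h_k^j$ in $\mathrm{Sym}^j(\mathbb{C}^n)^*$.

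To establish this linear independence I would translate the problem projectively. Let $p_i = [h_i] \in \check{\mathbb{P}}^{n-1}$. Under the natural perfect pairing between $\mathrm{Sym}^j \mathbb{C}^n$ and $\mathrm{Sym}^j(\mathbb{C}^n)^*$, elements of the former are interpreted as homogeneous polynomials of degree $j$ on $\check{\mathbb{P}}^{n-1}$ and the pairing sends $F$ to $F(h)$ when paired with $h^j$. Consequently, the $h_i^j$ are linearly independent if and only if the $k$ points $p_1, \ldots, p_k$ impose independent conditions on the complete linear system $|\mathcal{O}_{\check{\mathbb{P}}^{n-1}}(j)|$ of degree $j$ hypersurfaces; equivalently, for every index $i_0 \in \underline{k}$ there must exist a degree $j$ hypersurface of $\check{\mathbb{P}}^{n-1}$ containing $p_i$ for all $i \ne i_0$ while avoiding $p_{i_0}$.

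To produce such a separating hypersurface, I would partition the set $\{p_i : i \ne i_0\}$, of cardinality $k - 1 \le j(n-1)$, into $j$ groups $G_1, \ldots, G_j$ each of cardinality at most $n-1$. For each group $G_\ell$, the general position assumption guarantees that $G_\ell \cup \{p_{i_0}\}$ is a set of at most $n$ points in linearly general position in $\check{\mathbb{P}}^{n-1}$, so $p_{i_0}$ does not lie in the linear span of $G_\ell$; hence there exists a hyperplane $L_\ell \subset \check{\mathbb{P}}^{n-1}$ containing $G_\ell$ but missing $p_{i_0}$. The product $L_1 \cdots L_j$ is a degree $j$ polynomial defining a hypersurface with the desired separation property.

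The only potentially delicate ingredient is the projective reformulation via the Veronese pairing, which is standard and formal; the combinatorics of the partition and the general position verification are elementary. I therefore foresee no serious obstacle in turning this plan into a complete proof.
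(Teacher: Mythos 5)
Your proposal is correct and follows essentially the same route as the paper: the paper's Lemma \ref{L:translate} performs exactly your Veronese reformulation, and its proof of Proposition \ref{P:cast} then uses the same partition of the remaining $\le j(n-1)$ points into $j$ groups of at most $n-1$ points, each swallowed by a hyperplane avoiding the distinguished point, with the product of these hyperplanes serving as the separating degree $j$ hypersurface. The only (immaterial) difference is that you truncate to $j(n-1)+1$ points before translating, whereas the paper translates first and then restricts to a subset of cardinality $\min(k, j(n-1)+1)$.
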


The key point is next lemma which translates questions about the dimension of vector spaces generated by powers of linear forms
to questions about the codimension of space of hypersurfaces containing finite sets of points.

\begin{lemma}\label{L:translate}
Let $h_1, \ldots, h_k \in \mathbb C_1[x_1, \ldots, x_n]$ be linear forms
and let $\mathcal P= \{ [h_1], \ldots, [h_k] \} $ be the corresponding set of  points of $\mathbb P^{n-1} = \mathbb P \mathbb C_1 [x_1, \ldots, x_n]$.
If $V(j) \subset | \mathcal O _{\mathbb P^{n-1}} (j) |$ is the linear system of degree $j$ hypersurfaces through $\mathcal P$ then
\[
 \dim ( \mathbb C h_1^j + \cdots + \mathbb C h_k ^j ) = \dim | \mathcal O_{\mathbb P^{n-1}}(j) | - \dim  V( j)  \, .
\]
\end{lemma}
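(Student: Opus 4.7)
The plan is to translate the statement into pure linear algebra by invoking the apolarity pairing between symmetric powers of $W := \mathbb{C}_1[x_1,\ldots,x_n]$ and its dual. First I would identify $H^0(\mathbb{P} W, \mathcal{O}_{\mathbb{P} W}(j))$ with $\mathrm{Sym}^j W^* \cong (\mathrm{Sym}^j W)^*$, so that a degree $j$ hypersurface on $\mathbb{P}^{n-1} = \mathbb{P} W$ is nothing but the zero locus of some nonzero $F \in (\mathrm{Sym}^j W)^*$. The key point is that for any linear form $h \in W$ and any such $F$, the natural non-degenerate pairing $\mathrm{Sym}^j W \times (\mathrm{Sym}^j W)^* \to \mathbb{C}$ satisfies $F(h) = \langle h^j, F \rangle$ (up to a universal scalar, such as $j!$, which is immaterial for the vanishing condition). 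In particular, $F$ vanishes at the point $[h] \in \mathbb{P} W$ if and only if $F$ annihilates $h^j$.

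Next I would apply this observation simultaneously at every point of $\mathcal{P}$. Setting $E := \mathbb{C} h_1^j + \cdots + \mathbb{C} h_k^j \subset \mathrm{Sym}^j W$, the affine cone over $V(j)$ is then precisely the annihilator
\[
E^{\perp} = \bigl\{ F \in (\mathrm{Sym}^j W)^* \,\big|\, \langle h_i^j , F \rangle = 0 \text{ for all } i \in \underline{k}\bigr\} \subset (\mathrm{Sym}^j W)^*.
\]
Standard linear algebra gives $\dim E^{\perp} = \dim (\mathrm{Sym}^j W)^* - \dim E$. Passing to projective dimensions, both sides drop by one, and since $\dim |\mathcal{O}_{\mathbb{P}^{n-1}}(j)| = \dim (\mathrm{Sym}^j W)^* - 1$ while $\dim V(j) = \dim E^{\perp} - 1$, subtracting yields
\[
\dim |\mathcal{O}_{\mathbb{P}^{n-1}}(j)| - \dim V(j) = \dim (\mathrm{Sym}^j W)^* - \dim E^{\perp} = \dim E,
\]
which is exactly the claimed identity.

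I do not expect any real obstacle here: the argument is a two-line application of the apolarity pairing together with the basic fact that the annihilator of a subspace has codimension equal to the dimension of the subspace. The only point meriting a brief word is the degenerate case in which $E$ fills $\mathrm{Sym}^j W$ (so no degree $j$ hypersurface contains every point of $\mathcal{P}$), and this is handled by the standard convention $\dim \emptyset = -1$, which keeps the displayed equality valid in all cases.
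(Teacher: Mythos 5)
Your proof is correct and is essentially the paper's argument in dual linear-algebraic clothing: the paper phrases the same computation via the $j$-th Veronese embedding $[h]\mapsto[h^j]$, identifying hyperplanes containing the image of $\mathcal P$ with degree $j$ hypersurfaces through $\mathcal P$, which is precisely your observation that $F$ vanishes at $[h]$ if and only if $\langle h^j, F\rangle=0$. The dimension count at the end is identical in both versions.
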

\begin{proof}
Set $n_j$ equal to $h^0 ( \mathbb P^{n-1}, \mathcal O_{\mathbb P^{n-1}}(j)) -1 $, and consider the $j$-th Veronese embedding \index{Veronese embedding}
\begin{align*}
\nu_ j :\;   \mathbb P^{n-1} & \longrightarrow  \mathbb P^{n_j} \\
 [h]\; & \longmapsto  [h^j] \, .
\end{align*}
On the one hand the projective dimension of $\mathbb C h_1^j + \cdots + \mathbb C h_k ^j$ is equal to the dimension
of the linear span of the image  of $\mathcal P$. On the other hand the codimension of this linear span is equal to
the dimension  of the linear system of hyperplanes containing  it. But the pull-back under $\nu_j$ of these  hyperplanes are
exactly the elements of $|V(j)|$, the  degree $j$ hypersurfaces in $\mathbb P^{n-1}$ containing $\mathcal P$. The lemma
follows.
\end{proof}

\begin{proof}[Proof of Proposition \ref{P:cast}]
Let, as above, $h_i$ be the linear terms of the submersions defining $\mathcal W$ and let  $\mathcal P \subset \mathbb P^{n-1}$ be the
set of  $k$ points in general position determined by the
linear forms $h_1, \ldots, h_k$. According to the above lemma all that is needed to prove is that $\mathcal P$ imposes
$m = \min( k , j(n-1) + 1 )$ independent conditions on the space of degree $j$ hypersurfaces in $\mathbb P^{n-1}$. For
that sake, it suffices to show that for a subset $\mathcal Q \subset \mathcal P$ of cardinality $m$ one can
construct for each $q \in \mathcal Q$
a degree $j$ hypersurface that passes through  all the points of $\mathcal Q$ but $q$.

The set
  $\mathcal Q - \{ q \}$ can be written as a disjoint union of $j$ subsets of cardinality at most $(n-1)$. Any of these subsets
can be supposed  contained in a hyperplane that does not contain $q$.
Thus there exists a union of $j$ hyperplanes that contains $\mathcal Q - \{ q \}$
and avoids $q$.
\end{proof}

\begin{remark}\label{R:ind}
\rm Notice that the proof of Proposition \ref{P:cast} shows that when $k \le j(n-1) + 1$ any $k$ points in
general position impose $k$ independent conditions on the linear system of degree $j$ hypersurfaces on $\mathbb P^{n-1}$.
\end{remark}

For an essentially equivalent proof of Proposition \ref{P:cast},
but with a more analytic  flavor,  see \cite[Lemme 2.1]{Trepreau}.

\begin{cor}\label{C:cast}
If $\mathcal W$ is a germ of smooth  $k$-web on $(\mathbb C^2,0)$ then
\[
\ell^j ( \mathcal W) = \min( k , j + 1)  \, .
\]
\end{cor}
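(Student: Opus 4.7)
The plan is to observe that the corollary follows at once by squeezing $\ell^j(\mathcal{W})$ between the general upper bound \eqref{E:UB} and the lower bound from Proposition \ref{P:cast}, both specialized to $n=2$. First I would write down the upper bound: equation \eqref{E:UB} with $n=2$ gives
\[
\ell^j(\mathcal{W}) \le \min\!\left(k,\binom{j+1}{1}\right) = \min(k, j+1),
\]
since $\binom{n+j-1}{n-1} = \binom{j+1}{1} = j+1$ when $n=2$.

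Next I would invoke Proposition \ref{P:cast} with $n=2$ to obtain the matching lower bound
\[
\ell^j(\mathcal{W}) \ge \min(k,\, j(n-1)+1) = \min(k, j+1).
\]
Combining these two inequalities yields equality, which is precisely the statement of the corollary.

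There is no real obstacle here: the content of the corollary is entirely contained in the two bounds already established, and the miracle of dimension two is that the crude upper bound from counting degree-$j$ monomials in $n=2$ variables happens to coincide with Castelnuovo-type lower bound $j(n-1)+1$. I would close with a one-line remark that, via Lemma \ref{L:translate}, this can also be read as saying that any finite set of distinct points on $\mathbb{P}^1$ imposes independent conditions on $|\mathcal{O}_{\mathbb{P}^1}(j)|$ up to cardinality $j+1$, which is the classical fact that a non-zero polynomial of degree $j$ in one variable has at most $j$ roots.
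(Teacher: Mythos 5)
Your proof is correct and is exactly the argument the paper gives: squeeze $\ell^j(\mathcal W)$ between the upper bound $\binom{n+j-1}{n-1}=j+1$ for $n=2$ and the lower bound $\min(k,j(n-1)+1)$ of Proposition \ref{P:cast}. The closing remark about points on $\mathbb P^1$ is a pleasant gloss but not needed.
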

\begin{proof}
One has just to observe that the space of homogenous polynomials of degree $j$ in two variables
has dimension $j+1$ and use  Proposition \ref{P:cast}.
\end{proof}

For arbitrary webs, without any  further restriction on the relative position of the tangent spaces at the origin besides pairwise transversality,
it is not possible to improve the bound beyond the specialization of the above to $n=2$. That is for  an arbitrary quasi-smooth $k$-web
\begin{equation}\label{E:qsg}
\ell^j ( \mathcal W) \ge \min( k , j + 1 ) \, .
\end{equation}

\medskip

\begin{remark}\rm
 Recently Cavalier and Lehmann have drawn  special attention  to $k$-webs on $(\mathbb C^n,0)$
for which the upper bounds (\ref{E:UB})  are sharp, see \cite{CL1}.
These have been labeled by them \defi[ordinary webs]. \index{Web!ordinary}
\end{remark}

\subsection{Bounds for the rank}
There is a \defi[natural decreasing  filtration] $F^\bullet \mathcal A(\mathcal W)$ \index{Abelian relation!filtration} on the vector space $\mathcal A(\mathcal W)$.
The first term is, of course, $F^0 \mathcal A(\mathcal W) = \mathcal A(\mathcal W)$ and for $j\ge 0$ the $j$-th
piece of the filtration is defined as
\[
F^j \mathcal A(\mathcal W) = \ker  \left\{ \mathcal A(\mathcal W)\longrightarrow
 \left( \frac{ \Omega^1(\mathbb C^n ,0)}{\mathfrak m^j \cdot \Omega^1(\mathbb C^n
 ,0)} \right)^k \right\}\, ,
\]
with $\mathfrak m$ being the maximal ideal of $\mathcal O(\mathbb C^n,0)$.

\begin{lemma}\label{L:besta}
If $\mathcal W$ is a germ of  quasi-smooth $k$-web on $(\mathbb C^n,0)$ then
\[
\dim \frac{F^j \mathcal A(\mathcal W)}{F^{j+1} \mathcal A(\mathcal W)} \le \max(0,k - \ell^{j+1}(\mathcal W))\, .
\]
\end{lemma}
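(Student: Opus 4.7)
The plan is to extract from each element of $F^j\mathcal{A}(\mathcal{W})$ a tuple of leading-order coefficients in $\mathbb{C}^k$ and show that these tuples are constrained to lie in a linear subspace of dimension $k - \ell^{j+1}(\mathcal{W})$. Quasi-smoothness of $\mathcal{W}$ guarantees that each $\omega_i$ is non-vanishing at the origin, so for any abelian relation $(\eta_1,\ldots,\eta_k)$ the condition $\omega_i \wedge \eta_i = 0$ lets me write $\eta_i = f_i\omega_i$ for a uniquely determined germ $f_i$, and the filtration condition $\eta_i \in \mathfrak{m}^j\Omega^1$ becomes $f_i \in \mathfrak{m}^j$.

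Writing $\ell_i$ for the linear form on $\mathbb{C}^n$ with $d\ell_i = \omega_i(0)$ and decomposing $f_i = f_i^{(j)} + r_i$ with $f_i^{(j)}$ homogeneous of degree $j$ and $r_i \in \mathfrak{m}^{j+1}$, the key computation is that the degree-$(j-1)$ component of $d\eta_i = df_i \wedge \omega_i + f_i\, d\omega_i$ is exactly $df_i^{(j)} \wedge d\ell_i$, since every other term has strictly higher order. Hence $d\eta_i = 0$ forces $df_i^{(j)} \wedge d\ell_i = 0$; being a homogeneous polynomial of degree $j$ whose differential is annihilated by wedging with $d\ell_i$, the polynomial $f_i^{(j)}$ must depend only on $\ell_i$ and therefore equal $c_i\ell_i^j$ for some $c_i \in \mathbb{C}$. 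I then define the linear map
\[
\Phi : F^j\mathcal{A}(\mathcal{W}) \longrightarrow \mathbb{C}^k, \qquad (\eta_1,\ldots,\eta_k) \longmapsto (c_1,\ldots,c_k).
\]
By construction, $\ker\Phi$ consists of relations with $f_i^{(j)} = 0$ for all $i$, that is with $f_i \in \mathfrak{m}^{j+1}$; thus $\ker\Phi = F^{j+1}\mathcal{A}(\mathcal{W})$, and $\Phi$ descends to an injection of $F^j\mathcal{A}(\mathcal{W})/F^{j+1}\mathcal{A}(\mathcal{W})$ into $\mathrm{Im}\,\Phi$.

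To locate $\mathrm{Im}\,\Phi$, I would exploit the closing relation $\sum_i \eta_i = 0$ at leading order $j$: it reads $\sum_i c_i \ell_i^j\, d\ell_i = 0$, equivalently $d\bigl(\sum_i \tfrac{c_i}{j+1} \ell_i^{j+1}\bigr) = 0$, and injectivity of $d$ on homogeneous polynomials of degree $j+1 \geq 1$ gives $\sum_i c_i \ell_i^{j+1} = 0$. Hence $\mathrm{Im}\,\Phi$ lies inside the kernel of the evaluation map $\mathbb{C}^k \to \mathrm{Sym}^{j+1}T_0^*\mathbb{C}^n$, $(c_i) \mapsto \sum_i c_i \ell_i^{j+1}$, whose image is by definition $\mathcal{L}^{j+1}(\mathcal{W})$; rank-nullity yields $\dim\ker = k - \ell^{j+1}(\mathcal{W}) \geq 0$, giving the claimed inequality. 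The one delicate point is the leading-order computation: it is only the interplay of closedness with the filtration that compresses the $j$-jet of $f_i$ from an arbitrary degree-$j$ polynomial down to a scalar multiple of $\ell_i^j$, and this compression is what makes the sharp constraint involve $\ell^{j+1}$ rather than some coarser invariant.
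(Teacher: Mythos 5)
Your proof is correct and follows essentially the same route as the paper's: both extract the leading coefficients $\mu_i$ with $\eta_i \equiv \mu_i h_i^{j}\,dh_i$ modulo $F^{j+1}\mathcal A(\mathcal W)$ and inject the quotient into the kernel of $(c_1,\ldots,c_k)\mapsto\sum_i c_i h_i^{j+1}$, whose image is $\mathcal L^{j+1}(\mathcal W)$. The only difference is that you justify, via the closedness of $\eta_i$ and the order-$(j-1)$ term of $d\eta_i$, the normal form $f_i^{(j)}=c_i\ell_i^{j}$ of the leading jet, a step the paper states without proof.
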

\begin{proof}
Let, as above,  $h_1, \ldots, h_k$ be the linear terms at the origin of the submersions defining $\mathcal W$.
Consider the linear map
\begin{align*}
 \varphi :\quad  \mathbb C^k \quad  &\longrightarrow \mathbb C_{j+1}[x_1, \ldots, x_n] \\
(c_1, \ldots, c_k) &\longmapsto \sum c_i (h_i)^{j+1} \, .
\end{align*}
From the definition of the space $\mathcal L^j(\mathcal W)$, it is clear that the image of
$\varphi$ coincides with it. In particular,
\[
\dim \ker \varphi = \max(0,k - \ell^{j+1}(\mathcal W)) \, .
\]

If $(\eta_1, \ldots, \eta_k)$ is an abelian relation in $F^j\mathcal A(\mathcal W)$ then for
suitable complex numbers $\mu_1,\ldots, \mu_k $, the following identity holds true
\[
  (\eta_1,\ldots, \eta_k)  = (\mu_1 (h_1)^{j} dh_1, \ldots, \mu_k ({h_k})^j dh_k)  \mod F^{j+1}\mathcal A(\mathcal W) \, .
\]
Consider the linear map taking $(\eta_1, \ldots, \eta_k) \in F^j\mathcal A(\mathcal W)$ to the $k$-uple of complex numbers $(\mu_1, \ldots, \mu_k) \in \mathbb C^k$.
Since  $\sum \eta_i = 0$ it follows that this map induces an injection of $F^j\mathcal A(\mathcal W) / F^{j+1} \mathcal A (\mathcal W)$
into the kernel of $\varphi$. The lemma follows.
\end{proof}

\begin{cor}
If $\mathcal W$ is a quasi-smooth $k$-web then for $j\ge k-2$
\[
F^{j} \mathcal A (\mathcal W) =  0 \, .
\]
\end{cor}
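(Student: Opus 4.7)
The plan is to combine Lemma \ref{L:besta} with the lower bound (\ref{E:qsg}) on $\ell^{j+1}(\mathcal W)$ valid for quasi-smooth webs, and then appeal to the separatedness of the filtration.

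First, observe that for $j \ge k-2$ one has $j+2 \ge k$, so the bound (\ref{E:qsg}) yields
\[
\ell^{j+1}(\mathcal W) \ge \min(k, j+2) = k \, .
\]
Since trivially $\ell^{j+1}(\mathcal W) \le k$, equality holds. Lemma \ref{L:besta} then gives
\[
\dim \frac{F^j \mathcal A(\mathcal W)}{F^{j+1} \mathcal A(\mathcal W)} \le \max(0, k - k) = 0 \, ,
\]
so that $F^j \mathcal A(\mathcal W) = F^{j+1} \mathcal A(\mathcal W)$ for every $j \ge k-2$. In particular
\[
F^{k-2} \mathcal A(\mathcal W) = \bigcap_{j \ge k-2} F^j \mathcal A(\mathcal W) \, .
\]

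It remains to show that this intersection is zero. If $(\eta_1,\ldots,\eta_k)$ belongs to $F^j \mathcal A(\mathcal W)$ for every $j$, then by the very definition of the filtration each $\eta_i$ lies in $\mathfrak m^j \cdot \Omega^1(\mathbb C^n,0)$ for all $j$. By Krull's intersection theorem applied to the Noetherian local ring $\mathcal O(\mathbb C^n,0)$ (equivalently, because a germ of holomorphic $1$-form is determined by its Taylor expansion at the origin), this forces $\eta_i = 0$ for every $i \in \underline k$. Hence $\bigcap_{j} F^j \mathcal A(\mathcal W) = 0$ and consequently $F^{k-2}\mathcal A(\mathcal W)=0$, as required. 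The only real content is the combination of the two inequalities; no step presents a genuine obstacle.
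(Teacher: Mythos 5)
Your proof is correct and follows exactly the paper's argument: use the quasi-smooth lower bound to get $\ell^{j+1}(\mathcal W)=k$ for $j\ge k-2$, conclude from Lemma \ref{L:besta} that the filtration stabilizes, and observe that an element of $F^{k-2}\mathcal A(\mathcal W)$ therefore vanishes to infinite order and must be zero. The only difference is that you spell out the trivial upper bound $\ell^{j+1}(\mathcal W)\le k$ and invoke Krull's intersection theorem explicitly where the paper simply appeals to holomorphy; this is the same argument.
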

\begin{proof}
For  $j\ge k-2$,  equation (\ref{E:qsg}) reads as  $\ell^{j+1} (\mathcal W) = k$. Therefore Lemma \ref{L:besta}
implies
\[
F^{j} \mathcal A(\mathcal W ) = F^{j+1} \mathcal A(\mathcal W) \, .
\]
Thus an element of $F^{k-2} \mathcal A(\mathcal W)$ has a zero of infinite order at the origin.
Since it is a $k$-uple of holomorphic $1$-forms it has to be identically zero.
\end{proof}

With what have been done so far,  Bol's, respectively  Chern's, bound for the rank of smooth $k$-webs on
$(\mathbb C^2,0)$, respectively $(\mathbb C^n,0)$, can be easily proved. \index{Bol!bound} \index{Chern bound}

\begin{thm}\label{T:cota}
If $\mathcal W$ is a germ of  quasi-smooth $k$-web on $(\mathbb C^n,0)$  then
\[
\mathrm{rank}(\mathcal  W )\le \sum_{j=0}^{k-3} \max( 0 , k - \ell^{j+1}(\mathcal W) )\, .
\]
Moreover, if $\mathcal W$ is smooth then
\[
\mathrm{rank}(\mathcal  W )\le \pi(n,k) = \sum_{j=0}^{k-3} \max( 0 , k - (j+1)(n-1) - 1 )\, .
\]
\end{thm}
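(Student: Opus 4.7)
The plan is to combine directly the filtration machinery built up in Section \ref{S:BCbound} with the estimates on $\ell^j(\mathcal W)$ already established. Since $F^{k-2}\mathcal A(\mathcal W)=0$ by the corollary preceding the theorem, one has the telescoping identity
\[
\mathrm{rank}(\mathcal W)=\dim \mathcal A(\mathcal W) = \sum_{j=0}^{k-3}\dim\!\left(\frac{F^j \mathcal A(\mathcal W)}{F^{j+1}\mathcal A(\mathcal W)}\right),
\]
so bounding $\mathrm{rank}(\mathcal W)$ reduces to bounding each graded piece.

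For the first inequality, I would simply plug Lemma \ref{L:besta} into this sum: each quotient has dimension at most $\max(0,k-\ell^{j+1}(\mathcal W))$, hence
\[
\mathrm{rank}(\mathcal W)\le \sum_{j=0}^{k-3}\max\!\big(0,\,k-\ell^{j+1}(\mathcal W)\big),
\]
which is precisely the claimed bound for a quasi-smooth $k$-web. No further work is needed here, since Lemma \ref{L:besta} has already been proved in the excerpt.

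For the second inequality I would specialize to the smooth case and invoke Proposition \ref{P:cast}, which gives $\ell^{j+1}(\mathcal W)\ge \min(k,(j+1)(n-1)+1)$. Inserting this lower bound into the previous estimate yields
\[
\mathrm{rank}(\mathcal W)\le \sum_{j=0}^{k-3}\max\!\big(0,\,k-(j+1)(n-1)-1\big),
\]
and one checks that the terms with index $j\ge k-3$ that would be needed to recover the definition $\pi(n,k)=\sum_{j\ge 1}\max(0,k-j(n-1)-1)$ contribute zero (because for $j\ge k-2$ one has $k-(j+1)(n-1)-1\le 0$ as soon as $n\ge 2$), so the truncation to $j\le k-3$ gives exactly Castelnuovo's number $\pi(n,k)$.

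The proof contains no genuine obstacle, as the two main ingredients—Lemma \ref{L:besta} and Proposition \ref{P:cast}—have already been established. The only slightly delicate point is the bookkeeping that identifies the truncated sum with $\pi(n,k)$; this is just the observation that extending the sum to all $j\ge 0$ only adds vanishing terms, which I would verify by a one-line comparison of indices.
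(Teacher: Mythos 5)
Your proposal is correct and follows exactly the paper's argument: decompose $\mathcal A(\mathcal W)$ via the filtration using the corollary that $F^{k-2}\mathcal A(\mathcal W)=0$, bound each graded piece by Lemma \ref{L:besta} for the quasi-smooth case, and then insert Proposition \ref{P:cast} for the smooth case. The index bookkeeping identifying the truncated sum with $\pi(n,k)$ is also right, since the terms with $j\ge k-2$ indeed vanish.
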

\begin{proof}
It follows from  the corollary above that $\mathcal A (\mathcal W)$ is isomorphic as a vector space
to
\[
 \bigoplus_{j=0}^{k-3} \frac{ F^j \mathcal A (\mathcal W)}{F^{j+1}\mathcal A (\mathcal W)} \, .
\]
If $\mathcal W$ is quasi-smooth the result follows promptly from Lemma \ref{L:besta}. If moreover
$\mathcal W$ is smooth one can invoke Proposition \ref{P:cast} to conclude.
\end{proof}
\index{Rank!bound|)}

The number $\pi(n,k)$ appearing in the bound for the rank of  smooth webs is
\defi[Castelnuovo number]. \index{Castelnuovo number} It is the  bound for the arithmetical genus of non-degenerate
irreducible curves in $\mathbb P^{n}$ according to a  classical result by Castelnuovo. In Chapter \ref{Chapter:3}
Castelnuovo result will be recovered from Theorem \ref{T:cota} combined with Abel's addition Theorem.

\begin{remark}\label{R:closed}\rm
Following \cite{EHCAMG}, let   $m = \left\lfloor\frac{k-1}{n-1}\right\rfloor$ and $\epsilon$ be the remainder of the division of
$k-1$ by $n-1$. Thus  $k -1 = m (n-1) + \epsilon$ with $0\le \epsilon \le n-2$.
Using this notation Castelnuovo's numbers can be expressed as
\[
\pi(n,k) = \binom{m}{2} ( n- 1 ) + m \epsilon \, .
\]
In this way one obtains a family of closed formulas for the bound of the rank of a $k$-web on $(\mathbb C^n,0)$
according to  the residue $\epsilon$ of $k-1$  modulo $n-1$.
\end{remark}

\begin{remark}\label{R:closed2}\rm
Alternatively, one can set  $\rho = \left\lfloor\frac{k-n-1}{n-1}\right\rfloor$
and $\epsilon$ equal to the remainder of the division of $k-n-1$ by $n-1$.
Hence  $k -n-1 = \rho (n-1) + \epsilon$ with $0\le \epsilon \le n-2$.
Castelnuovo's numbers admit the following alternative presentation
\[
\pi(n,k) = (\epsilon+1)\binom{\rho+2}{2} +(n-2-\epsilon)
\binom{\rho+1}{2}
 \, .
\]
The two distinct presentations are given here because the former is the usual one found in the literature, while the latter seems
to be better adapted to some constructions that will be carried out in Section \ref{S:PMPW} of Chapter \ref{Chapter:4}.
\end{remark}

Notice that for smooth webs the  bound for the rank  is attained  if and only if the partial bounds
provided by the combination of Proposition \ref{P:cast} with Lemma \ref{L:besta} are also attained.
For further use, this remark is stated below as a corollary.

\begin{cor}\label{C:bbb}
Let  $\mathcal W$ be a germ of  smooth $k$-web on $(\mathbb C^n,0)$. If $\mathrm{rank}(\mathcal W) = \pi(n,k)$ then
\[
\dim \frac{F^j \mathcal A(\mathcal W)}{F^{j+1} \mathcal A(\mathcal W)} = \max\big( 0, k - (j+1)(n-1) - 1 \big)
\]
for every $j \ge 0$.
\end{cor}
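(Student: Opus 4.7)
The plan is to observe that Corollary \ref{C:bbb} is essentially extracted from the chain of inequalities used to establish Theorem \ref{T:cota}: once the sum of upper bounds equals $\pi(n,k)$, each individual upper bound must be attained.

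More precisely, I would first recall that the corollary stated just after Lemma \ref{L:besta} shows that $F^j\mathcal A(\mathcal W)=0$ for $j \geq k-2$, so the filtration $F^\bullet \mathcal A(\mathcal W)$ gives a vector space decomposition
\[
\mathrm{rank}(\mathcal W) = \dim \mathcal A(\mathcal W) = \sum_{j=0}^{k-3} \dim \frac{F^j \mathcal A(\mathcal W)}{F^{j+1}\mathcal A(\mathcal W)}\, .
\]
Lemma \ref{L:besta} bounds each summand by $\max\bigl(0, k - \ell^{j+1}(\mathcal W)\bigr)$, and for a smooth web Proposition \ref{P:cast} gives $\ell^{j+1}(\mathcal W) \geq \min\bigl(k, (j+1)(n-1)+1\bigr)$, so
\[
\dim \frac{F^j \mathcal A(\mathcal W)}{F^{j+1}\mathcal A(\mathcal W)} \leq \max\bigl(0, k - (j+1)(n-1) - 1\bigr) \, .
\]

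Summing over $j$ from $0$ to $k-3$ yields $\mathrm{rank}(\mathcal W) \leq \pi(n,k)$, as in the proof of Theorem \ref{T:cota}. The assumption $\mathrm{rank}(\mathcal W)=\pi(n,k)$ therefore forces equality in each of the term-by-term inequalities above (a non-negative sum of non-negative real numbers equals the sum of their upper bounds only if every term meets its bound). This is precisely the assertion of the corollary.

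There is no genuine obstacle here beyond the bookkeeping already carried out in Lemma \ref{L:besta}, Proposition \ref{P:cast}, and Theorem \ref{T:cota}; the corollary is extracted by reading those estimates in the equality case. The only point that deserves a brief word is that for indices $j$ with $(j+1)(n-1)+1 \geq k$ the upper bound is $0$, and equality is automatic since dimensions are non-negative; this is why the statement uses $\max(0, \cdot)$ rather than just $k-(j+1)(n-1)-1$.
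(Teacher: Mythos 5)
Your argument is correct and is exactly the paper's own reasoning: the paper states this corollary as an immediate consequence of the remark that the bound $\mathrm{rank}(\mathcal W)\le\pi(n,k)$ from Theorem \ref{T:cota} is attained if and only if each of the term-by-term bounds coming from Lemma \ref{L:besta} and Proposition \ref{P:cast} is attained. Your write-up simply makes that equality-case bookkeeping explicit, including the harmless observation about the indices where the bound is $0$.
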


\subsection{Webs of maximal rank}

One of the central problems in web geometry, and the central  theme  of this text, is the characterization
of germs of smooth $k$-webs on $(\mathbb C^n,0)$  for which $\mathrm{rank}(\mathcal W) = \pi(n,k)$. They are
called   \defi[webs of maximal rank]. \index{Web!maximal rank}

\smallskip

Theorem \ref{T:cota} recovers, and generalizes to  arbitrary planar webs, the bound provided by Proposition \ref{P:TTTT} for germs of planar quasi-parallel webs. In particular the planar parallel webs are examples of webs of maximal rank, see Example \ref{E:llll}. The $5$-webs mentioned in Example  \ref{E:exabel} are also of maximal rank.

\smallskip

In dimension greater than two webs of maximal rank are harder to come by. In contrast with the planar case, not every parallel web
is of maximal rank. In the next Section the parallel webs of maximal rank will be characterized in Proposition \ref{P:exmax} and constraints on the distribution of
conormals of maximal rank webs will be established.

\section{Conormals of    webs of maximal rank}\label{S:constraints}

\index{Web!maximal rank|(}

If $\mathcal W$ is a germ of smooth $k$-web of maximal rank then the lower bounds
for $\ell^j(\mathcal W)$ given by Proposition \ref{P:cast} are attained, that is,
\[
\ell^j(\mathcal W) = \min ( k , j(n-1) +1)
\]
holds true for every positive integer $j$.

\medskip

When the ambient space has dimension two ( $n=2$ ) these equalities do not impose any restriction on the
web as Corollary \ref{C:cast} testifies. When $n$ is at least three then the equalities above impose rather
strong restrictions of the distributions of conormals \index{Web!conormal} of the web $\mathcal W$. Indeed, in the next few pages
the corresponding equality for $j=2$ -- that is,  $\ell^2(\mathcal W) = \min (k , 2(n-1) +1 )$ -- will be exploited and
the following proposition will be proved.

\begin{prop}\label{P:Cast}
Let $\mathcal W$ be a germ of smooth $k$-web on $(\mathbb C^n,0)$. Suppose that $n \ge 3$ and $k \ge  2n + 1 $.
If  $\ell^2(\mathcal W) = 2n -1$ then there exists a non-degenerate rational normal
curve $\Gamma$ in $\mathbb P(T_0^* (\mathbb C^n,0))$ containing the conormals of the web $\mathcal W$
at the origin.
\end{prop}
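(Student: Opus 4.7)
The plan is to translate the numerical hypothesis on $\ell^2(\mathcal W)$ into a statement about the Hilbert function of a finite point set in $\mathbb P^{n-1}$, and then invoke the classical Castelnuovo Lemma from projective geometry.

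First, apply Lemma \ref{L:translate} with $j=2$: by that lemma, $\ell^2(\mathcal W)$ equals the number of independent conditions that the set $\mathcal P = \{[h_1],\dots,[h_k]\} \subset \mathbb P^{n-1} = \mathbb P T_0^*(\mathbb C^n,0)$ of conormals imposes on the linear system $|\mathcal O_{\mathbb P^{n-1}}(2)|$ of quadric hypersurfaces. The hypothesis $\ell^2(\mathcal W) = 2n-1$ therefore says that $\mathcal P$ imposes exactly $2n-1$ conditions on quadrics, so that the space $V(2)$ of quadrics through $\mathcal P$ has vector space dimension $\binom{n+1}{2} - (2n-1) = \binom{n-1}{2}$. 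Moreover, the smoothness assumption on $\mathcal W$ (Section \ref{S:gsw}) translates directly into the fact that $\mathcal P$ is in linearly general position, meaning that any $n$ of the points $[h_i]$ span $\mathbb P^{n-1}$.

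This is precisely the setup of the classical Castelnuovo Lemma in $\mathbb P^r$ with $r=n-1$ and $d=k$: a set of $d \geq 2r+3$ points in linearly general position that imposes only $2r+1$ conditions on quadrics must lie on a unique irreducible rational normal curve of degree $r$. Granting this lemma yields the desired curve $\Gamma \subset \mathbb P^{n-1}$ through the conormals. Non-degeneracy is then automatic, since a rational normal curve of degree $n-1$ in $\mathbb P^{n-1}$ is by definition not contained in any hyperplane.

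The main obstacle is thus Castelnuovo's Lemma itself. A standard approach is induction on $n$. The base case $n=3$ is easy: $k \geq 7$ points of $\mathbb P^2$ in linearly general position (no three collinear) that impose only five conditions on conics must lie on a common conic, and general position forces that conic to be irreducible and smooth, hence a rational normal curve of degree two. For the inductive step one fixes a point $p=[h_1] \in \mathcal P$ and projects from $p$ onto $\mathbb P^{n-2}$: the projections $\pi_p([h_2]),\dots,\pi_p([h_k])$ remain in linearly general position, and the correspondence between quadrics of $\mathbb P^{n-1}$ that are singular at $p$ and quadrics of $\mathbb P^{n-2}$ allows one to show that these $k-1 \geq 2(n-1)+1$ projected points impose only $2n-3$ conditions on quadrics of $\mathbb P^{n-2}$. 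By induction they lie on a rational normal curve $\Gamma' \subset \mathbb P^{n-2}$, and one then reconstructs $\Gamma \subset \mathbb P^{n-1}$ as a residual component of the intersection of the cone over $\Gamma'$ with vertex $p$ with a suitably chosen quadric in $V(2)$. Uniqueness follows from the fact that any $n+2 \leq k$ points in linearly general position determine at most one rational normal curve in $\mathbb P^{n-1}$. The delicate points in this strategy are the exact book-keeping of the Hilbert function under projection and the control of the base locus of $V(2)$, which must be shown to be a genuine curve of degree $n-1$ rather than a higher-dimensional or reducible subscheme.
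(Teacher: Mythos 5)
Your reduction is exactly the paper's: via Lemma \ref{L:translate} the hypothesis $\ell^2(\mathcal W)=2n-1$ becomes the statement that the $k\ge 2n+1$ conormal points, which are in linearly general position by smoothness of $\mathcal W$, impose only $2n-1$ conditions on quadrics of $\mathbb P^{n-1}$, and the proposition is thereby equivalent to Castelnuovo's Lemma (the paper performs a cosmetic dimension shift so as to state that lemma in $\mathbb P^n$ for $k\ge 2n+3$ points imposing $2n+1$ conditions, but it is the same statement with $r=n-1$). Where you diverge is in the proof of Castelnuovo's Lemma itself. The paper follows Eisenbud--Harris: it produces $n-1$ independent quadrics containing $\mathcal P$ together with the $(n-2)$-plane $\Lambda=\langle p_1,\dots,p_{n-1}\rangle$, writes each as a determinant $F_0G_i-G_0F_i$, and identifies the rank-one locus of the resulting $2\times n$ matrix of linear forms with a rational normal curve via Steiner's construction; the full strength of $k\ge 2n+1$ is used only at the very end, to force the first $n-1$ points onto that curve. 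You propose instead an induction on $n$ by projecting from a point $p\in\mathcal P$.

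The gap is in the inductive step, at the one place where something must actually be proved. Write $V$ for the space of quadrics of $\mathbb P^{n-1}$ through $\mathcal P$, so $\dim V=\binom{n+1}{2}-(2n-1)=\binom{n-1}{2}$. Quadrics of $\mathbb P^{n-2}$ through the projected points correspond to elements of $V$ that are cones with vertex $p$, and your claim that the projected points impose only $2n-3=2(n-2)+1$ conditions is equivalent to $\dim\bigl(V\cap\{\text{cones with vertex }p\}\bigr)=\binom{n-2}{2}$. But singularity at $p$ imposes at most $n-1$ further linear conditions on $V$ (Euler's relation makes $Q(p)=0$, which already holds on $V$, a consequence of $dQ(p)=0$), so the parameter count only yields $\dim\ge\binom{n-1}{2}-(n-1)=\binom{n-2}{2}-1$: one dimension short. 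The whole content of the inductive step is therefore to exhibit one more quadric cone with vertex $p$ through $\mathcal P$ than the count guarantees --- equivalently, to show that those $n-1$ conditions are dependent on $V$ --- and nothing in your sketch supplies this; the paper's determinantal quadrics are precisely the device that replaces this missing step. The residual-intersection reconstruction of $\Gamma$ from the cone over $\Gamma'$ and the uniqueness assertion are comparatively routine, but as written the proposal establishes the proposition only modulo Castelnuovo's Lemma, whose proof is where essentially all the work of this section of the paper lies.
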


\subsubsection{A particular case }

For the sake of clarity the case $n=3$ of Proposition \ref{P:Cast} will be here presented.
It is harmless to assume that $k= 2n = 6$ even if the hypothesis for $n=3$ reads $k\ge 7$.

Let $h_1, \ldots, h_6 \in \mathbb C_1[x_1,x_2,x_3]$
be six linear forms in general position and let $\mathcal L^2$ be the vector space contained in $\mathbb C_2[x_1,x_2,x_3]$
generated by theirs squares.

If $\dim \mathcal L^2 = 5$, since $\mathbb C_2[x_1,x_2,x_3]$ has dimension six, there is a hyperplane $H$ through $0 \in \mathbb C_2[x_1,x_2,x_3]$
containing $\mathcal L^2$. If one now interprets the linear forms $h_i$ as points in $\mathbb P ^2 = \mathbb P \mathbb C_1[x_1,x_2,x_3]$
and consider the Veronese embedding of this $\mathbb P^2$ into $\mathbb P^5 = \mathbb P \mathbb C_2[x_1,x_2,x_3]$, as in the proof of Lemma \ref{L:translate}, then the
pull-back of $[H]$ to $\mathbb P^2$ is a conic containing $[h_1], \ldots, [h_6]$. This is the sought rational normal curve.

\subsubsection{Dimension shift and reduction to Castelnuovo Lemma}

As suggested by its statement all the action in the proof of Proposition \ref{P:Cast} will take place in $\mathbb P T_0^* (\mathbb C^n,0) = \mathbb P^{n-1}$.  To avoid carrying over a $-1$ throughout instead of working with a $k$-web on $(\mathbb C^n,0)$ it is convenient to consider
a $k$-web on $(\mathbb C^{n+1},0)$. Of course with this shift on the dimension  the hypotheses of
Proposition \ref{P:Cast} now read as
\[
 k \ge 2n +3  \quad \text{ and }  \quad \ell^2(\mathcal W) = 2n + 1.
\]

If $h_1, \ldots, h_k \in \mathbb C[x_0, \ldots, x_n]$ are the linear forms defining the tangent space of the leaves of $\mathcal W$ through the origin then
according to Lemma \ref{L:translate} the number of conditions imposed on quadrics of $\mathbb P^n$ by the corresponding set of points $\mathcal P= \{ [h_1] , \ldots, [h_k] \} \subset \mathbb P^n$ is exactly $\ell^2(\mathcal W)$. Therefore Proposition \ref{P:Cast} is equivalent to the famous Castelnuovo Lemma.

\begin{prop}[{\bf Castelnuovo Lemma}]\label{P:Castvero}\index{Castelnuovo Lemma}
Let  $\mathcal P \subset \mathbb P^n$  be a set of $k$ points in general position. Suppose that $n \ge 2$ and $k \ge 2n + 3$.
If $\mathcal P$ imposes only  $2n + 1$ conditions on the linear system of quadrics $|\mathcal O_{\mathbb P^n}(2)|$ then $\mathcal P$ is contained in
a rational normal curve $\Gamma$ of degree $n$.
\end{prop}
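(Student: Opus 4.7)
By Lemma \ref{L:translate}, the hypothesis translates into the dimension count
\[
\dim H^{0}(\mathbb P^n,\mathcal{I}_{\mathcal P}(2)) \;=\; \binom{n+2}{2} - (2n+1) \;=\; \binom{n}{2}.
\]
This is precisely $\dim H^{0}(\mathbb P^n,\mathcal{I}_{\Gamma}(2))$ for a rational normal curve $\Gamma\subset\mathbb P^n$ of degree $n$, and such quadrics scheme-theoretically cut out $\Gamma$. The strategy is therefore to show that the base locus of the linear system of quadrics through $\mathcal P$ is a rational normal curve of degree $n$ containing $\mathcal P$. The proof proceeds by induction on $n$.

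\medskip

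\textbf{Base case $n=2$.} Here $k\geq 7$ and $\mathcal P$ imposes only $5$ conditions on conics. Any five points of $\mathcal P$, being in general position, determine a unique conic $C$. For any other point $p\in\mathcal P$, the six-point set $\{p_{i_1},\dots,p_{i_5},p\}$ already imposes at most $5$ conditions, so $p$ must lie on $C$. Thus $\mathcal P\subset C$, which is the rational normal curve of degree $2$ in $\mathbb P^2$.

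\medskip

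\textbf{Inductive step.} Assuming the result for $n-1$, pick a point $p\in\mathcal P$ and project from $p$ to obtain a set $\mathcal P'$ of $k-1\geq 2(n-1)+3$ points in $\mathbb P^{n-1}$. General position of $\mathcal P'$ follows from general position of $\mathcal P$: any $n$ of the projected points lying in a $\mathbb P^{n-2}$ would mean the corresponding $n$ original points, together with $p$, span only a hyperplane of $\mathbb P^n$. The crucial verification is that $\mathcal P'$ imposes at most $2n-1$ conditions on quadrics in $\mathbb P^{n-1}$. Quadrics in $\mathbb P^{n-1}$ through $\mathcal P'$ correspond bijectively to quadric cones in $\mathbb P^n$ with vertex $p$ passing through $\mathcal P$, i.e., to elements of $W\cap V_{2,p}$, where $W\subset H^0(\mathbb P^n,\mathcal O(2))$ is the $\binom{n}{2}$-dimensional space of quadrics through $\mathcal P$ and $V_{2,p}\subset H^0(\mathbb P^n,\mathcal O(2))$ is the codimension-$(n+1)$ subspace of quadrics singular at $p$. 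Via the evaluation-of-gradient exact sequence, the desired inequality reduces to showing that the restriction $W\to\mathbb C^{n+1}$ of the gradient-at-$p$ map has rank at most $n-1$; one extracts this from general position combined with the tightness of the assumed count, exploiting Remark \ref{R:ind} to ensure that any $2n+1$ points of $\mathcal P$ already exhaust the $2n+1$ conditions. By induction, $\mathcal P'$ lies on a rational normal curve $\Gamma_p\subset\mathbb P^{n-1}$ of degree $n-1$, and the cone $C_p$ over $\Gamma_p$ with vertex $p$ is a surface of degree $n-1$ in $\mathbb P^n$ containing $\mathcal P$.

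\medskip

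\textbf{Gluing.} Repeating the construction with a second point $q\in\mathcal P$ produces another surface $C_q$ of degree $n-1$ through $\mathcal P$. The intersection $C_p\cap C_q$ has total degree $(n-1)^2$; a direct analysis shows that it decomposes as the line $\overline{pq}$ (with appropriate multiplicity) plus a residual curve of degree $n$ which is irreducible and rational, hence a rational normal curve containing $\mathcal P$. The main obstacle in the whole argument is the rank bound in the inductive step: the linear-algebraic bookkeeping needed to guarantee that sufficiently many quadrics through $\mathcal P$ become singular at $p$ is the genuinely delicate point, and it is here that the assumption that $\mathcal P$ imposes \emph{exactly} $2n+1$ conditions — and not more — is essential.
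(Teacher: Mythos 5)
Your strategy --- induction on $n$ by projecting from a point of $\mathcal P$ --- is genuinely different from the paper's, which follows Eisenbud--Harris: it produces $n-1$ quadrics through $\mathcal P$ containing a fixed $\mathbb P^{n-2}$, writes them as $2\times 2$ minors $F_0G_i-G_0F_i$, and identifies the rank-one locus of the resulting $2\times n$ matrix with a rational normal curve via the Steiner construction (Proposition \ref{P:Steiner}), with the hypothesis $k\ge 2n+3$ entering only at the very end to place $p_1,\dots,p_n$ on that curve. Your base case $n=2$ is correct, but the two steps carrying the weight of your induction are not proved. First, the inductive hypothesis is only usable if $\mathcal P'=\pi_p(\mathcal P\setminus\{p\})$ imposes at most $2(n-1)+1=2n-1$ conditions on quadrics of $\mathbb P^{n-1}$, which, as you note, amounts to the gradient-at-$p$ map $W\to\mathbb C^{n+1}$ having rank at most $n-1$. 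Euler's relation only gives rank at most $n$ (the gradients lie in the hyperplane dual to $p$); the further drop by one is exactly the nontrivial content, and ``one extracts this from general position combined with the tightness of the assumed count'' is an assertion, not an argument. Remark \ref{R:ind} does tell you that every quadric through any $2n+1$ points of $\mathcal P$ already contains all of $\mathcal P$, but you never show how this forces an extra linear relation among the gradients $\nabla Q(p)$ for $Q\in W$. A posteriori the claim is true (because $\mathcal P$ lies on a rational normal curve and so does its projection), but proving it directly is essentially as hard as the lemma itself; this is the missing idea.

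Second, the gluing step fails dimensionally for $n\ge 4$. The cones $C_p$ and $C_q$ are surfaces in $\mathbb P^n$, not hypersurfaces; two surfaces in $\mathbb P^n$ have expected intersection dimension $4-n$, so for $n\ge 4$ the fact that they meet along a curve at all is an excess-intersection phenomenon and ``total degree $(n-1)^2$'' is not a meaningful B\'ezout count. Even for $n=3$, where $C_p\cap C_q$ is an honest quartic curve containing the common ruling $\overline{pq}$, you still owe an argument that the residual cubic is irreducible and nondegenerate (hence a twisted cubic) rather than, say, a conic plus a line or three lines, and that it passes through $p$ and $q$ themselves. To salvage the conclusion one would have to work instead with the simultaneous base locus of all the quadrics through $\mathcal P$, or intersect the cones attached to $n-1$ different points at once --- which is in effect what the paper's determinantal presentation \eqref{E:quadrics} accomplishes in one stroke.
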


Before dealing with the proof of Proposition \ref{P:Castvero} itself, which will follow  \cite[Chapter III]{EHCAMG}, some  basic properties of
rational normal curves will be reviewed.

\subsection{Rational normal curves}\index{Rational normal curves|(}

The  rational normal curves on a projective space $\mathbb P^n$ are the ones
that admit a parametrization of the form
\begin{align*}
 \varphi : \quad \mathbb P^1\;  &\longrightarrow\quad  \mathbb P^n \\
 [s:t] &\longmapsto [ a_0(s:t): \cdots : a_n(s:t) ] \, .
\end{align*}
where $a_0, \ldots, a_n$ form a basis of the space $\mathbb C_n[s,t]$ of  binary forms of degree $n$.
In other words, a  rational normal curve $\Gamma$ on $\mathbb P^n$ is the image of  an embedding of $\mathbb P^1$
into $\mathbb P^n$ given by the linear system  $| \mathcal O_{\mathbb P^1} (n)|$.

Since the hyperplanes in $\mathbb P^n$ are in one to one correspondence with the non-zero elements of $\mathbb C_n[s,t]$ modulo multiplication by $\mathbb C^*$,
  the intersection of $\Gamma$ with an hyperplane $H$ consists of at most $n$ points. If the hyperplane is generic then the
intersection has exactly $n$ points, that is $\Gamma$ has degree $n$. It turns out that this  is the minimal degree among the non-degenerated
curves in $\mathbb P^n$. Moreover  the rational normal curve is the unique irreducible non-degenerated curve of degree $n$, see Proposition \ref{P:ratdeg} below.

Notice that  any $k$ distinct points on a rational normal curve $\Gamma \subset \mathbb P^n$
are automatically in general position with respect to the linear system of hyperplanes. Indeed, if a subset of cardinality $a \le n$
is contained in a $\mathbb P^{a-2}$, then by choosing other $n-a+1$ points and considering a hyperplane containing all these $n+1$ points
one arrives at a contradiction since a rational normal curve $\Gamma$ intersects every hyperplane in at most $\deg \Gamma= n$ points.

Parallel webs \index{Web!parallel} defined by points on a rational normal curve are the simplest examples
of webs of maximal rank on $(\mathbb C^n,0)$, with $n \ge 3$.   More precisely,

\begin{prop}\label{P:exmax}
Let $n\ge 2$ and $k \ge 2n+3 $ be integers. Let also $h_1, \ldots, h_k \in \mathbb C_1[x_0,x_1, \ldots, x_n]$ be pairwise distinct linear forms
and $\mathcal W=\mathcal W(h_1, \ldots, h_k)$ be the corresponding  parallel $k$-web on $(\mathbb C^{n+1},0)$.
Then  $\mathcal P= \{ [h_1], \ldots, [h_k] \} $, the corresponding set of  points of $\mathbb P^{n} $, lies in a
rational normal curve $\Gamma$ if  and only if   $\mathcal W$ is smooth and of maximal rank.
\end{prop}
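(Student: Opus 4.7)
The plan is to reduce everything to the explicit structure of the space of abelian relations of a parallel web, which should admit a very concrete description. An abelian relation of $\mathcal W$ takes the form $\sum_{i=1}^k g_i(h_i)\,dh_i=0$; expanding each $g_i(t)=\sum_{j\ge 0} a_{i,j}t^j$ and noting that $h_i^{j}\,dh_i=\frac{1}{j+1}d(h_i^{j+1})$, the relation splits by degree into a system of identities
$$\sum_{i=1}^k a_{i,j}\,h_i^{\,j+1}=0,\qquad j\ge 0.$$
Hence the map sending an abelian relation to the tuple of jets of the $g_i$ induces an isomorphism $\mathcal A(\mathcal W)\simeq \bigoplus_{j\ge 1}\ker\bigl(\mathbb C^k\to\mathcal L^j(\mathcal W)\bigr)$, where the map sends $(a_i)$ to $\sum_i a_i h_i^j$. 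Consequently, $\dim F^{j-1}\mathcal A(\mathcal W)/F^{j}\mathcal A(\mathcal W)=k-\ell^j(\mathcal W)$ for every $j\ge 1$, and
$$\mathrm{rank}(\mathcal W)=\sum_{j\ge 1}\bigl(k-\ell^j(\mathcal W)\bigr)_+.$$

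For the implication $(\Rightarrow)$, assume $\mathcal P\subset\Gamma\subset\mathbb P^n$ for a rational normal curve $\Gamma$ of degree $n$. Smoothness of $\mathcal W$ is exactly the general position statement recalled in the excerpt just above the proposition. For the rank computation, I would invoke projective normality of $\Gamma$: the restriction map $H^0(\mathbb P^n,\mathcal O(j))\to H^0(\Gamma,\mathcal O_\Gamma(j))=H^0(\mathbb P^1,\mathcal O(jn))$ is surjective. Therefore the conditions $\mathcal P$ imposes on $|\mathcal O_{\mathbb P^n}(j)|$ coincide with the conditions it imposes on $|\mathcal O_{\mathbb P^1}(jn)|$, namely $\min(k,jn+1)$ since $\mathcal P$ is a set of distinct points on $\mathbb P^1$. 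By Lemma \ref{L:translate} this yields $\ell^j(\mathcal W)=\min(k,jn+1)$, and plugging into the formula above gives $\mathrm{rank}(\mathcal W)=\sum_{j\ge 1}\max(0,k-jn-1)=\pi(n+1,k)$.

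For the converse $(\Leftarrow)$, assume $\mathcal W$ is smooth and of maximal rank. Corollary \ref{C:bbb} applied in ambient dimension $n+1$ gives $\dim F^j\mathcal A(\mathcal W)/F^{j+1}\mathcal A(\mathcal W)=\max(0,k-(j+1)n-1)$ for all $j\ge 0$. Combined with the explicit computation of $\dim F^j/F^{j+1}=k-\ell^{j+1}(\mathcal W)$ for parallel webs obtained in the first paragraph, this forces $\ell^j(\mathcal W)=\min(k,jn+1)$ for every $j\ge 1$. Specializing to $j=2$ and using $k\ge 2n+3$ gives $\ell^2(\mathcal W)=2n+1$. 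Since smoothness of $\mathcal W$ puts $\mathcal P$ in general position in $\mathbb P^n$, Castelnuovo's Lemma (Proposition \ref{P:Castvero}) applies and yields a rational normal curve of degree $n$ containing $\mathcal P$.

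The only real point of care is bookkeeping of the dimension shift (the web lives on $(\mathbb C^{n+1},0)$ while $\mathcal P$ sits in $\mathbb P^n$, so the relevant bound from Proposition \ref{P:cast} reads $\ell^j\ge\min(k,jn+1)$). All genuine geometric content is either already stated in the excerpt (Castelnuovo's Lemma, general position of points on a rational normal curve, Corollary \ref{C:bbb}) or is the single classical fact that rational normal curves are projectively normal, which I would cite or prove in one line by exhibiting an explicit surjective restriction using the parametrization $[s:t]\mapsto[s^n:s^{n-1}t:\cdots:t^n]$.
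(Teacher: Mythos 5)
Your proof is correct and follows essentially the same route as the paper: the explicit identification $F^{j-1}\mathcal A(\mathcal W)/F^{j}\mathcal A(\mathcal W)\simeq\ker(\mathbb C^k\to\mathcal L^j(\mathcal W))$ for parallel webs, the restriction to $\Gamma$ (the paper phrases your projective-normality step as a codimension bound on the kernel of $H^0(\mathbb P^n,\mathcal O(j))\to H^0(\Gamma,\mathcal O_\Gamma(j))$, combined with the lower bound of Proposition \ref{P:cast}), and Castelnuovo's Lemma via $\ell^2(\mathcal W)=2n+1$ for the converse. The only difference is presentational: the paper cites Proposition \ref{P:Cast} directly where you unwind it through Corollary \ref{C:bbb} and Proposition \ref{P:Castvero}, which is the same content.
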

\begin{proof}
If $\mathcal W$ is smooth and of maximal rank then Proposition \ref{P:Cast} implies the result.

Reciprocally, if $\mathcal P$ is contained in a rational normal curve then $\mathcal W$ is smooth because the points $[h_i]$ are in general position, see discussion
preceding the statement of the Proposition.
To prove that $\mathcal W$ is of maximal rank notice that the kernel of the restriction map
\[
H^0(\mathbb P^{n}, \mathcal O_{\mathbb P^{n}}(j)) \longrightarrow H^0(\Gamma, \mathcal O_{\Gamma}(j)) \simeq H^0 ( \mathbb P^1, \mathcal O_{\mathbb P^1}( j n ))
\]
has codimension at least $h^0 ( \mathbb P^1, \mathcal O_{\mathbb P^1}( j n ) ) = jn + 1$. Therefore, by Lemma \ref{L:translate},
\[
\ell^j( \mathcal W)  = \dim ( \mathbb C h_1^j + \cdots + \mathbb C h_k ^j ) \le jn + 1 \, .
\]
Hence Proposition \ref{P:cast}  implies  $\ell^j(\mathcal W) = \min(k, jn +1)$.

Because  $\mathcal W$ is a parallel web, $F^j  \mathcal A(\mathcal W) / F^{j+1} \mathcal A(\mathcal W)$ not just
embeds into  the kernel of the map $\mathbb C^k \to \mathcal L ^j( \mathcal W)$, but is indeed isomorphic to it. Therefore
\[
\mathrm{rank}(\mathcal  W ) =  \sum_{j=0}^{k-3} \max( 0 , k - (j+1)n - 1 )
\]
as wanted.
\end{proof}

\subsubsection{Steiner's synthetic construction}
\index{Rational normal curve!Steiner construction}
\index{Steiner construction}

The rational normal curves admit a nice geometric description: the so called \defi[Steiner construction].
Let $p_1, \ldots, p_{n+3} \in \mathbb P^n$ be $n + 3$  points in general position.
For each $i$ ranging from $1$ to $n$, let $\Pi_i$ be the $\mathbb P^{n-2}$
spanned by $ p_1, \ldots, \hat{p_i} , \ldots , p_n$. The hyperplanes
containing $\Pi_i$ form a family $H_i(s:t)$ with $(s:t) \in \mathbb P^1$.
One can choose the parametrizations in order to have
\[
p_{n+1} = \bigcap_{i=1}^n H_i(0:1), \,   p_{n+2} = \bigcap_{i=1}^n H_i(1:0) \,  \text{ and } \,  p_{n+3} = \bigcap_{i=1}^n H_i(1:1) \, .
\]

\begin{prop}\label{P:Steiner}
The set
\[
 \Gamma = \bigcup_{[s:t] \in \mathbb P^1} \left( \bigcap_{i=1}^n H_i(s:t) \right)
\]
is the unique rational normal curve through the points $p_1, \ldots, p_{n+3}$.
\end{prop}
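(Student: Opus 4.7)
\medskip

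\textbf{Proof plan for Proposition \ref{P:Steiner}.}

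The plan is to construct an explicit parametrization $\varphi:\mathbb P^1\to\mathbb P^n$ whose image is $\Gamma$, show directly from the construction that its coordinates are polynomials of degree at most $n$, and then promote ``degree $\le n$'' to ``degree $= n$'' by producing enough points on the curve. Concretely, pick linear forms $A_i,B_i\in\mathbb C_1[x_0,\ldots,x_n]$ vanishing on $\Pi_i$ and chosen so that $H_i(s:t)=\{sA_i+tB_i=0\}$ is the pencil in the statement. Then $\varphi(s:t):=\bigcap_{i=1}^n H_i(s:t)$ is, by Cramer's rule, a point of $\mathbb P^n$ whose homogeneous coordinates are the $(n+1)$ maximal minors of the $n\times(n+1)$ matrix whose $i$-th row has entries $sA_i+tB_i$; each such minor is homogeneous of degree $n$ in $(s:t)$. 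Hence $\varphi$ is a morphism to $\mathbb P^n$ given by a linear system of degree at most $n$, so its image has degree $\le n$.

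Next I would verify that all $n+3$ points lie on $\Gamma=\varphi(\mathbb P^1)$. The normalization in the statement gives $\varphi(0:1)=p_{n+1}$, $\varphi(1:0)=p_{n+2}$, $\varphi(1:1)=p_{n+3}$ for free. For $j\in\{1,\ldots,n\}$ note that $p_j\in\Pi_i$ whenever $i\ne j$, so $p_j\in H_i(s:t)$ for every $(s:t)$ and every $i\ne j$; it remains to find the unique $(s_j:t_j)\in\mathbb P^1$ for which $p_j\in H_j(s_j:t_j)$, and then $\varphi(s_j:t_j)=p_j$. General position of the $n+3$ points ensures the parameters $(s_j:t_j)$ obtained in this way, together with $(0:1),(1:0),(1:1)$, are pairwise distinct. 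Having $n+3$ points of $\varphi(\mathbb P^1)$ in general position rules out the image being contained in a hyperplane, so the map $\varphi$ is non-degenerate. But a non-degenerate morphism $\mathbb P^1\to\mathbb P^n$ must have degree at least $n$ (this is exactly the inequality $\deg C\ge n$ for non-degenerate curves, cf.\ Proposition~\ref{P:ratdeg} referred to in the excerpt). Combined with the upper bound above, $\deg\varphi=n$, so $\Gamma$ is a rational normal curve of degree $n$ containing $p_1,\ldots,p_{n+3}$.

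For uniqueness, let $\Gamma'$ be any rational normal curve through $p_1,\ldots,p_{n+3}$. Since $\deg\Gamma'=n$ and $\Gamma'$ is non-degenerate, each hyperplane meets $\Gamma'$ in $n$ points (with multiplicity). Fix $i\in\{1,\ldots,n\}$: a hyperplane $H$ through $\Pi_i$ already contains the $n-1$ points $p_1,\ldots,\widehat{p_i},\ldots,p_n$ of $\Gamma'$, so it meets $\Gamma'$ in one further residual point $r_i(H)\in\Gamma'$. The assignment $H\mapsto r_i(H)$ is an isomorphism from the pencil $\{H_i(s:t)\}_{(s:t)\in\mathbb P^1}$ to $\Gamma'\cong\mathbb P^1$, and conversely any point $q\in\Gamma'$ determines, for each $i$, the unique hyperplane $H_i(\sigma_i(q))$ through $\Pi_i$ and $q$. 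The resulting maps $\sigma_i:\Gamma'\to\mathbb P^1$ are three automorphisms of $\mathbb P^1$ which, by the normalization, all send $p_{n+1},p_{n+2},p_{n+3}$ to $(0:1),(1:0),(1:1)$ respectively; since an automorphism of $\mathbb P^1$ is determined by its values at three points, all $\sigma_i$ coincide with a single $\sigma:\Gamma'\to\mathbb P^1$. Thus for every $q\in\Gamma'$, $q\in H_i(\sigma(q))$ for all $i$, whence $q\in\bigcap_i H_i(\sigma(q))=\varphi(\sigma(q))\subset\Gamma$. Therefore $\Gamma'\subset\Gamma$, and since both are irreducible curves of the same degree $n$, $\Gamma'=\Gamma$.

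The main technical point is the uniqueness step: one must argue carefully that the three parametrizations $\sigma_i$ really do agree, which boils down to the $3$-transitivity of $\mathrm{PGL}(2,\mathbb C)$ on $\mathbb P^1$ together with the chosen normalization at $p_{n+1},p_{n+2},p_{n+3}$. Everything else is either a dimension count (coordinates of $\varphi$ are degree-$n$ minors) or an application of the well-known fact that a non-degenerate curve in $\mathbb P^n$ has degree at least $n$.
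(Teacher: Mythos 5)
Your proof is correct, but it follows a genuinely different route from the paper's. For the existence half, the paper normalizes coordinates so that $p_1,\ldots,p_{n+1}$ become the standard reference points, writes down the parametrization of $\Gamma$ explicitly in terms of the coordinates of $p_{n+2}$ and $p_{n+3}$, clears denominators to exhibit $n+1$ binary forms of degree $n$, and concludes that these span $\mathbb C_n[s,t]$ because the points span $\mathbb P^n$; you instead get the bound $\deg\Gamma\le n$ abstractly from Cramer's rule (the coordinates of $\bigcap_i H_i(s:t)$ are maximal minors of a matrix with entries linear in $(s,t)$) and then squeeze against the lower bound of Proposition~\ref{P:ratdeg}. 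Your version avoids the coordinate computation at the cost of invoking that proposition, which the paper happens to prove only later in the same section but by independent means, so there is no circularity. The more substantive difference is uniqueness: the paper's written proof establishes only that $\Gamma$ is a rational normal curve through the $n+3$ points and does not argue uniqueness at all, whereas you supply a complete argument via the residual-point maps $\sigma_i:\Gamma'\to\mathbb P^1$ and the three-point rigidity of automorphisms of $\mathbb P^1$ — this is a real addition. One small imprecision there: the $\sigma_i$ are $n$ isomorphisms $\Gamma'\to\mathbb P^1$, not ``three automorphisms of $\mathbb P^1$''; the rigidity is applied to the compositions $\sigma_i\circ\sigma_j^{-1}$, which fix three points and are therefore the identity. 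With that rewording the argument is clean.
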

\begin{proof}
Because the points are  in general position  the expression under
parentheses defines for each $(s:t) \in \mathbb P^1$  a unique point of $\mathbb P^n$.
Consequently $\Gamma$ is a curve parametrized by $\mathbb P^1$.

Clearly it contains $p_{n+1}, p_{n+2}$ and $p_{n+3}$. To see that it contains $p_1, \ldots, p_n$ notice that
$p_i \in H_j(s:t)$ for every $[s:t] \in \mathbb P^1$ when $j \neq i$ and there exists an $(s_i: t_i)$ such that
$p_i \in H_i(s_i:t_i)$.
It remains to show that the  linear system defining $\Gamma$ is $|\mathcal O_{\mathbb P^1}(n)|$.

Using an  automorphism of  $\mathbb P^n$ the points  $p_1, \ldots, p_{n+1}$
can normalized as \[p_i = [0: \ldots : 0:\underbrace{1}_{{ \text{\tiny{i-th entry}}}}:0: \ldots : 0]  \, \quad i \in \underline{n+1} .\]
 If $p_{n+2} = [a_0: a_1: \ldots : a_n ] $ and $p_{n+3} = [b_0: b_1: \ldots : b_n ] $ then it is  a simple matter to verify that
 \[
 \begin{array}{lcl}
 \mathbb P^1 &\longrightarrow& \mathbb P^n \\
 (s:t) &\mapsto& \big[ ({a_1^{-1} s - b_1^{-1}t })^{-1}: \ldots : ({a_{n+1}^{-1} s - b_{n+1}^{-1} t })^{-1}    \big] \,
 \end{array}
 \]
 is a parametrization of $\Gamma$ in the normalization above.
Multiplying all the entries by $\prod_{i} ({a_i^{-1} s - b_i^{-1}t })$ one ends up with $n+1$ binary forms  of degree $n$. Since
$p_1, \ldots, p_{n+3}$ are not contained in any hyperplane these must generate the space of binary forms  of degree $n$.
\end{proof}\index{Rational normal curves|)}

\subsection{Proof of Castelnuovo Lemma}\label{S:Castproof}\index{Castelnuovo Lemma|(}
A variant of the synthetic construction presented above allows to
construct rank three quadrics -- these are quadrics which in a suitable system of coordinates are
cut out by a polynomial of the form $x_0^2 + x_1^2 + x_2^2$ -- containing $\Gamma$. This construction will
be the key to prove Proposition \ref{P:Castvero}.

Let $\mathcal P=\{ p_1, \ldots, p_{2n +2} , p_{2n+3} \} $  be a set of $2n + 3$ distinct points of  $\mathbb P^n$ in general position,
and  $\Lambda \simeq \mathbb P^{n-2}$ be the linear span of $p_1, \ldots, p_{n-1}$.

\begin{lemma}
If $\mathcal P$ imposes at most $2n+1$ independent conditions on the space of quadrics then
there are at least $n-1$ linearly independent quadrics containing $\Lambda \cup \mathcal P$.
\end{lemma}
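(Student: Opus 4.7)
The plan is to obtain the lemma by a pure dimension count applied to the restriction-to-$\Lambda$ map on the linear system of quadrics through $\mathcal P$.

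First I would fix the basic numerics. The full space of quadrics on $\mathbb P^n$ has (affine) dimension $\binom{n+2}{2}$, and by hypothesis $\mathcal P$ imposes at most $2n+1$ conditions, so the vector space $V \subset H^0(\mathbb P^n,\mathcal O(2))$ of quadrics containing $\mathcal P$ satisfies
\[
\dim V \;\ge\; \binom{n+2}{2} - (2n+1) \;=\; \binom{n}{2}.
\]
Next I would introduce the restriction map
\[
r : V \longrightarrow H^0(\Lambda,\mathcal O_\Lambda(2)),\qquad Q \longmapsto Q|_\Lambda,
\]
whose kernel is precisely the subspace $V' \subset V$ of quadrics containing $\Lambda$, i.e. the quadrics through $\Lambda \cup \mathcal P$ whose dimension we want to bound from below.

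The heart of the proof is then to bound $\dim(\mathrm{image}\,r)$ from above. Since every $Q \in V$ already vanishes on the $n-1$ points $p_1,\dots,p_{n-1}\in\Lambda$, the image of $r$ is contained in the subspace of $H^0(\Lambda,\mathcal O_\Lambda(2))\cong H^0(\mathbb P^{n-2},\mathcal O(2))$ consisting of quadrics passing through these $n-1$ points. Now $H^0(\Lambda,\mathcal O_\Lambda(2))$ has dimension $\binom{n}{2}$, and because the points $p_1,\dots,p_{n-1}$ lie in general position in $\Lambda\simeq\mathbb P^{n-2}$ with $n-1 \le 2(n-2)+1$, Remark \ref{R:ind} guarantees that they impose $n-1$ independent conditions on quadrics in $\Lambda$. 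Hence
\[
\dim(\mathrm{image}\,r) \;\le\; \binom{n}{2} - (n-1).
\]

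Combining the two estimates gives the desired conclusion:
\[
\dim V' \;=\; \dim V - \dim(\mathrm{image}\,r) \;\ge\; \binom{n}{2} - \Bigl(\binom{n}{2} - (n-1)\Bigr) \;=\; n-1,
\]
so there are at least $n-1$ linearly independent quadrics containing $\Lambda\cup\mathcal P$. The only mildly delicate step is the independence claim for $p_1,\dots,p_{n-1}$ on $\Lambda$, but this is exactly the $d=2$, $r=n-2$ instance of the construction in the proof of Proposition \ref{P:cast}: group the points into singletons lying on hyperplanes of $\Lambda$ and use products of two hyperplanes to separate them. Everything else is straightforward linear algebra.
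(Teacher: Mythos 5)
Your proof is correct, but it organizes the dimension count differently from the paper. You start from the space $V$ of quadrics through $\mathcal P$, use the hypothesis at the outset to get $\dim V \ge \binom{n}{2}$, and then bound the image of the restriction map $V \to H^0(\Lambda,\mathcal O_\Lambda(2))$ from above by observing that every restricted quadric must still vanish at the $n-1$ points $p_1,\dots,p_{n-1}$, which impose independent conditions on quadrics of $\Lambda\simeq\mathbb P^{n-2}$ by Remark \ref{R:ind}. The paper proceeds in the opposite order: it first shows that the space of quadrics of $\mathbb P^n$ containing $\Lambda$ has dimension exactly $2n+1$, by writing every such quadric as $F_0G-G_0F$ for linear forms $F,G$ (the pair $(F,G)$ being well defined modulo multiples of $(G_0,F_0)$); it then imposes the $n+2$ point conditions coming from $p_n,\dots,p_{2n+1}$ to get at least $2n+1-(n+2)=n-1$ independent quadrics through $\Lambda\cup\{p_1,\dots,p_{2n+1}\}$, and only at the end invokes the hypothesis (together with Remark \ref{R:ind} applied to the $2n+1$ general-position points $p_1,\dots,p_{2n+1}$) to conclude that these quadrics automatically contain all of $\mathcal P$. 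Your route is shorter and self-contained for the lemma as stated; the paper's route has the side benefit of introducing the presentation $F_0G-G_0F$ of quadrics containing $\Lambda$, which is exactly the determinantal description on which the rest of the proof of Castelnuovo Lemma (the quadrics $Q_i=F_0G_i-G_0F_i$ and the rank-one locus $X$) is built. Your one delicate step --- the independence of the conditions imposed by $p_1,\dots,p_{n-1}$ on quadrics of $\Lambda$ --- is handled correctly, since these points are linearly independent and $n-1\le 2(n-2)+1$ for $n\ge 2$.
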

\begin{proof}
Let $F_0$ be a linear form ( unique up to multiplication by $\mathbb C^*$ ) vanishing at the span of $p_1, \ldots, p_n$ and $G_0$ be the one vanishing on $p_1, \ldots,p_{n-1},p_{n+1}$.
Any quadric containing $\Lambda$ can be written in the form $F_0 G - G_0 F$ for suitable linear forms $F , G \in \mathbb C_1[x_0, \ldots, x_n]$.
Such pair $(F,G)$  is not unique, but is well defined modulo the addition of a multiple of $(G_0,F_0)$. Hence the vector space of quadrics containing $\Lambda$ has dimension $2n+1$.

Further imposing that the quadrics  contain the $n+2$ points $p_n, p_{n+1}, \ldots, p_{2n+1}$ one sees that there are
at least $2n+1 - (n+2)$, that is  $n-1$, linearly independent quadrics containing $\Lambda \cup \{p_1, \ldots , p_{2n+1} \}$. By hypothesis the space
of quadrics containing $\{p_1, \ldots , p_{2n+1} \}$ coincides with the space of quadrics containing $\mathcal P$.
\end{proof}

Keeping the notation from the lemma above one can write down the $n-1$ linearly independent quadrics $Q_1, \ldots, Q_{n-1}$ containing $\Lambda \cup \mathcal P$ in the form
\[
Q_i = \det \left( \begin{array}{cc}F_0 & F_i \\ G_0 & G_i \end{array} \right) = F_0 G_i - G_0 F_i
\]
for suitable linear forms $F_i, G_i \in \mathbb C_1[x_0, \ldots, x_n]$, $i \in \underline{n-1}$.

It is possible to recover a rational normal curve $\Gamma$ from the quadrics just constructed. It will turn out that the variety $X$ defined through the
determinantal formula below
\begin{equation}\label{E:quadrics}
X=  \left\{ p \in \mathbb P^n \, \Big| \; \mathrm{rank} \left( \begin{array}{ccc} F_0(p) & \ldots & F_{n-1}(p) \\ G_0(p) & \ldots & G_{n-1}(p) \end{array} \right) \le 1 \right\} \;
\end{equation}
is a rational normal curve.
To prove it, a couple of preliminary results  is needed.

\begin{lemma}\label{L:claimB}
For any pair $(\lambda, \mu ) \in \mathbb C^2$  distinct from $(0,0)$ the linear forms ${\{ \lambda F_i + \mu G_i  \, ; \, i = 0 , \ldots, n-1\}}$ are linearly independent.
\end{lemma}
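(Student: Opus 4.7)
The plan is to argue by contradiction. Suppose there exist scalars $c_0,\dots,c_{n-1}$, not all zero, with
\[
\sum_{i=0}^{n-1} c_i(\lambda F_i + \mu G_i)=0.
\]
Setting $F := \sum_i c_i F_i$ and $G := \sum_i c_i G_i$, this relation is equivalent to $\lambda F + \mu G = 0$. The core of the proof is to examine the auxiliary quadric
\[
Q := F_0 G - G_0 F \,=\, \sum_{i=1}^{n-1} c_i\, Q_i,
\]
the second equality holding because the $i=0$ term $F_0G_0-G_0F_0$ vanishes identically.

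First I would dispose of the degenerate case $F=0=G$. In this situation, one reads off $\sum_{i=1}^{n-1} c_i Q_i = 0$; the linear independence of $Q_1,\dots,Q_{n-1}$ forces $c_1=\dots=c_{n-1}=0$, so the identity $F=0$ reduces to $c_0 F_0 = 0$, and since $F_0 \neq 0$, also $c_0=0$—contradicting the assumption.

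In the remaining case at least one of $F,G$ is non-zero. Using $\lambda F + \mu G = 0$ one substitutes into $Q$ and obtains, up to a non-zero scalar,
\[
Q \;=\; (\lambda F_0 + \mu G_0)\cdot L,
\]
where $L$ is the non-zero one among $F,G$ (if $\mu\neq 0$, write $G=-\tfrac{\lambda}{\mu}F$, and if $\mu=0$ then necessarily $F=0$ and one plugs in $G$). The crucial observation here is that $\lambda F_0 + \mu G_0\neq 0$: this is because $F_0$ cuts out $\langle p_1,\dots,p_n\rangle$ while $G_0$ cuts out $\langle p_1,\dots,p_{n-1},p_{n+1}\rangle$, two distinct hyperplanes, so $F_0$ and $G_0$ are linearly independent in $\mathbb{C}_1[x_0,\dots,x_n]$.

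To close the argument I split on whether $Q$ vanishes identically. If $Q\equiv 0$, the linear independence of $Q_1,\dots,Q_{n-1}$ again gives $c_1=\dots=c_{n-1}=0$, and the original relation collapses to $c_0(\lambda F_0+\mu G_0)=0$, whence $c_0=0$, contradiction. If $Q\not\equiv 0$, then $Q$ is a reducible quadric, the union of two (possibly coincident) hyperplanes. Since $Q$ contains $\mathcal P$ by construction and $\mathcal P$ is a set of $2n+3$ points in general position, each hyperplane contains at most $n$ points of $\mathcal P$, so $Q$ contains at most $2n<2n+3=|\mathcal P|$ points of $\mathcal P$—the desired contradiction. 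The only genuinely delicate step is the factorization of $Q$ together with the verification that $\lambda F_0 + \mu G_0$ does not vanish, but once these are in hand the finishing blow comes cleanly from the general-position hypothesis.
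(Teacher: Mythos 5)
Your proof is correct and follows essentially the same route as the paper's: form the quadric $Q=\sum_i c_i Q_i$, use the dependence relation to exhibit it as a product of two linear forms, and contradict the fact that $2n+3$ points in general position cannot lie on a union of two hyperplanes. You are in fact slightly more careful than the paper, which passes over the degenerate case where only the $i=0$ coefficient is non-zero (handled by your observation that $F_0$ and $G_0$ are linearly independent, so $\lambda F_0+\mu G_0\neq 0$).
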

\begin{proof}
Because the quadrics $Q_1, \ldots, Q_{n-1}$ are linearly independent for any ${\alpha = (\alpha_1, \ldots, \alpha_{n-1})}$ distinct from $(0,\ldots ,0)$
 the quadric $Q_{\alpha} = \sum_{i = 1}^{n-1} \alpha_i Q_i $ cut out by
 \begin{equation}\label{E:mat}
 \displaystyle{  \det \left( \begin{array}{cc}F_0 & \displaystyle{\sum_{i=1}^{n-1} \alpha_i F_i} \\ G_0 & \displaystyle{\sum_{i=1}^{n-1} \alpha_i G_i} \end{array} \right)}
 \end{equation}
is non-zero and still contains $\mathcal P$. If the linear forms $\lambda F_i + \mu G_i$, $i = 0 , \ldots, n-1$ are linearly dependent, then there exists
$(\alpha_0, \ldots, \alpha_n) \in \mathbb C^{n+1} \setminus \{ 0\}$ such that
\[
\alpha_0 ( \lambda F_0 + \mu G_0 ) = \sum_{i=1}^{n-1} \alpha_i \left( \lambda  F_i + \mu G_i \right) \, .
\]
Consequently the  matrix $Q_{\alpha}$ appearing in Equation (\ref{E:mat}) has rank one. Thus the quadric $Q_{\alpha}$ has rank at most two. Since $\mathcal P$
is not contained in the union of two hyperplanes, the lemma follows.
 \end{proof}

\begin{lemma}
The restriction of any linear combination of the linear forms $F_1, \ldots, F_{n-1}$ to $\Lambda$ is non-zero.
Consequently, it can be assumed that for every $i=1,\ldots,n-1$, the linear form $F_i$ satisfies
\[
p_1, \ldots, p_{i-1}, \widehat{p_i}, p_{i+1}, \ldots, p_{n-1} \in \{ F_i = 0 \}  \, .
\]
 \end{lemma}
\begin{proof}
Since $F_0(p_n) = 0$, $G_0(p_n)\neq 0$  and the quadrics $Q_i$ contain $p_n$, the linear form $F_i$ must vanish on $p_n$ for every $i\ge1$.
If some linear combination of $F_1, \ldots, F_{n-1}$ vanishes on $\Lambda$ then it would have to be a complex multiple of $F_0$ because
the span of $\Lambda$ and $p_n$ is the hyperplane cut out by $F_0$. This contradicts the linear independence of $F_0, \ldots, F_{n-1}$ established
in the previous lemma and proves the first claim. The second claim  follows immediately from plain linear algebra.
\end{proof}

Castelnuovo Lemma follows from the following proposition.

\begin{prop}
The variety $X$ is the unique  rational normal curve through $p_1, \ldots, p_{k}$.
\end{prop}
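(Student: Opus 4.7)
The plan is to realize $X$ as the image of a morphism $\varphi : \mathbb{P}^1 \to \mathbb{P}^n$ built from Lemma \ref{L:claimB}. For each $[\lambda : \mu] \in \mathbb{P}^1$, the $n$ linear forms $\lambda F_i + \mu G_i$ for $i = 0, \ldots, n-1$ are linearly independent by Lemma \ref{L:claimB}, so their common zero locus is a single point $\varphi([\lambda : \mu]) \in \mathbb{P}^n$. Cramer's rule expresses the coordinates of $\varphi([\lambda:\mu])$ as $n+1$ binary forms of degree $n$ in $(\lambda,\mu)$; hence $\varphi$ is a morphism and $\Gamma := \varphi(\mathbb{P}^1)$ is a rational curve of degree at most $n$. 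The inclusion $\Gamma \subseteq X$ is immediate: at $p = \varphi([\lambda:\mu])$, each column $(F_i(p), G_i(p))$ of the matrix appearing in (\ref{E:quadrics}) is proportional to $(\mu, -\lambda)$. Conversely, if $p \in X$ has a nonzero column in this matrix, the proportionality direction determines a parameter $[\lambda:\mu]$ with $p = \varphi([\lambda:\mu])$; and a point where all entries $F_i(p), G_i(p)$ vanish would have to lie in $\{F_0 = 0\} \cap \{G_0 = 0\} = \Lambda$, but the restrictions of $F_1, \ldots, F_{n-1}$ to $\Lambda \simeq \mathbb{P}^{n-2}$ form $n-1$ linear forms with empty common zero. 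Hence $X = \Gamma$ set-theoretically.

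The bulk of the proof is to verify $\mathcal{P} \subseteq \Gamma$. For $p_n$: since $F_0(p_n) = 0$ and $G_0(p_n) \neq 0$, the relations $Q_i(p_n) = 0$ force $F_i(p_n) = 0$ for every $i \geq 1$, so $p_n = \varphi([1:0])$; symmetrically $p_{n+1} = \varphi([0:1])$. For $n+2 \leq j \leq 2n+3$, general position gives $F_0(p_j) G_0(p_j) \neq 0$, and $Q_i(p_j) = 0$ rewrites as $F_0(p_j) G_i(p_j) = G_0(p_j) F_i(p_j)$; hence all columns of $M(p_j)$ are proportional to the first, and $p_j = \varphi([-G_0(p_j):F_0(p_j)])$. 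At this point $\Gamma$ already contains the $n+4$ points $p_n, p_{n+1}, \ldots, p_{2n+3}$. Since $n+3$ points in general position cannot lie in a hyperplane, $\Gamma$ is non-degenerate; combined with $\deg \Gamma \leq n$ and the lower bound of Proposition \ref{P:ratdeg}, $\Gamma$ must be a rational normal curve of degree exactly $n$.

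For the remaining points $p_1, \ldots, p_{n-1} \in \Lambda$, the direct argument collapses because $F_0(p_j) = G_0(p_j) = 0$ renders the equations $Q_i(p_j) = 0$ tautological, so I would invoke a symmetry argument. The Castelnuovo hypothesis (general position and imposing only $2n+1$ conditions on quadrics) is symmetric in the $2n+3$ points, so the whole construction can be carried out with any $n-1$ points of $\mathcal{P}$ spanning the initial subspace. Fixing $j \in \{1, \ldots, n-1\}$ and replacing $p_j$ by $p_n$ in the spanning set of $\Lambda$, the analogous construction yields a rational normal curve $\Gamma_j'$ containing $p_j$ (playing the role the original $p_n$ did) together with $p_{n+1}, p_{n+2}, \ldots, p_{2n+3}$. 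Since $\Gamma$ and $\Gamma_j'$ are both rational normal curves sharing the $n+3$ points $p_{n+1}, \ldots, p_{2n+3}$ in general position, Proposition \ref{P:Steiner} forces $\Gamma = \Gamma_j'$, whence $p_j \in \Gamma$. Varying $j$ yields $\mathcal{P} \subseteq \Gamma$, and uniqueness of $X = \Gamma$ among rational normal curves through $\mathcal{P}$ follows again from Proposition \ref{P:Steiner}. The principal obstacle is precisely this last step: the quadric-based verification is powerless for points of $\Lambda$, and one is forced to leverage the Steiner uniqueness across variant setups to bring these points onto the curve.
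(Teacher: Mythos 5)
Your proof is correct, and its first half --- realizing $X$ as the image of the morphism $[\lambda:\mu]\mapsto \bigcap_{i}\{\lambda F_i+\mu G_i=0\}$, placing $p_n,p_{n+1},\ldots,p_{2n+3}$ on it by inspecting the columns of the matrix in (\ref{E:quadrics}), and concluding from non-degeneracy together with Proposition \ref{P:ratdeg} that the image is a rational normal curve --- is essentially the paper's appeal to Steiner's construction, made a bit more explicit. Where you genuinely diverge is in handling $p_1,\ldots,p_{n-1}\in\Lambda$, which is indeed the crux. The paper stays inside the single determinantal presentation: it notes that the quadric $Q_{ij}=F_iG_j-F_jG_i$ already vanishes on the $k-2\ge 2n+1$ points of $\mathcal P\setminus\{p_i,p_j\}$, invokes Remark \ref{R:ind} to see that these impose at least $2n+1$ conditions on quadrics --- exactly as many as all of $\mathcal P$ does by hypothesis --- and concludes that $Q_{ij}$ must also vanish at $p_i$ and $p_j$; hence every $2\times 2$ minor vanishes on all of $\mathcal P$, and the points of $\Lambda$ satisfy the equations of $X$ directly. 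You instead rerun the entire construction with $p_j$ swapped out of the spanning set of $\Lambda$, produce a second rational normal curve through $p_j$ and through $p_{n+1},\ldots,p_{2n+3}$, and identify it with $\Gamma$ via the uniqueness clause of Proposition \ref{P:Steiner} applied to those $k-n\ge n+3$ shared points. Both routes consume the hypothesis $k\ge 2n+3$, but differently: the paper uses it to close the conditions count, you use it to guarantee enough common points to pin down the curve. The paper's version is more economical and self-contained (one presentation, one counting lemma); yours trades that for a symmetry argument resting on Steiner uniqueness --- a fact you need anyway for the final uniqueness assertion, and which the paper states in Proposition \ref{P:Steiner} though it only really proves existence there, so you are leaning slightly harder on that statement than the paper does.
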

\begin{proof}
If, for $i = 0, \ldots, n-1$, $H_{n-i}(s:t)$ is the pencil of hyperplanes  $\{ s F_i + t G_i = 0\}$ then
 $X$ can be described as below
\[
X = \bigcup_{(s:t) \in \mathbb P^1} \, \left(  \bigcap_{i=1}^{n} H_i(s:t)   \right) \, .
\]
But this has exactly the same form as the presentation of a rational normal curve through Steiner's construction, see Propositon \ref{P:Steiner}.
Consequently, $X$ is a rational normal curve.

By construction, when $l>n$,  $Q_{\alpha}(p_l) =0 $ but $(F_0(p_l),G_0(p_l)) \neq 0$. Therefore
\[
\det \left( \begin{array}{ccc} F_{i}(p_l) & \ldots & F_{j}(p_l) \\ G_{i}(p_l) & \ldots & G_{j}(p_l) \end{array} \right) = 0
\]
for every pair $i,j $ and every $l > n$.  Thus  $X$ contains the points $p_{n+1}, \ldots, p_{k}$.

The careful reader probably noticed that  the inequality  $k \ge 2n +3$ have not been used so far, only the weaker $k \ge 2n+1$ played a role.
To prove that   $p_1, \ldots, p_n$ belong to $X$ the stronger inequality enters the stage. Observe  that the quadric $Q_{ij} = F_iG_j - F_jG_i$
contains the $k-2 \ge 2n+1$ points  $\mathcal P - \{ p_i,  p_j \}$. Remark \ref{R:ind} implies that these points impose at least $2n+1$ conditions
on the space quadrics. But, by hypothesis, the same holds true for $\mathcal P$. Thus $Q_{ij}$ also contains  $p_i$ and $p_j$.
\end{proof}
\index{Castelnuovo Lemma|)}
\subsection{Normal forms for webs of maximal rank}

For a quasi-smooth $k$-web $\mathcal W= [ \omega_1, \ldots, \omega_k ] $ on $(\mathbb C^n,0)$ it is natural to consider $\ell^j(\mathcal W)$ not just as
an integer but as a germ of integer-valued function defined on $(\mathbb C^n,0)$. The value at $x$ is given
by the dimension of the span of $\{ \omega_i(x)^j \, ; \, i \in \underline k \}$ in $\mathrm{Sym}^j \Omega^1_x (\mathbb C^n,0)$.

A priori this function does not need to be continuous but just lower-semicontinuous.
Nevertheless, as the reader can easily verify, when $\mathcal W$ is smooth and $F^{j-1} \mathcal A(\mathcal W)/ F^j\mathcal A(\mathcal W)$ has maximal dimension then $\ell^j(\mathcal W)$ is constant.  To easer further reference this fact is stated below as a lemma.

\begin{lemma}\label{L:constante}
Let $\mathcal W$ be a smooth $k$-web. If
\[
 \dim \frac{F^{j-1} \mathcal A(\mathcal W)}{ F^j\mathcal A(\mathcal W)} = \min\big(0 , k - j(n-1) - 1 \big)
\]
then  the integer-valued function $\ell^j(\mathcal W) : (\mathbb C^n,0) \to \mathbb N $ is constant and equal to ${j(n-1) + 1}$.
\end{lemma}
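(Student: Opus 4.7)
The plan is to prove Lemma \ref{L:constante} by pinning down $\ell^j(\mathcal W)$ at the origin from the hypothesis, and then propagating the value to a neighborhood via a semi-continuity argument on the family of evaluation maps associated with the moving point-configuration of conormals. Correcting what appears to be a typographical slip, I read the hypothesis as $\dim F^{j-1}\mathcal A(\mathcal W)/F^j\mathcal A(\mathcal W) = \max\big(0, k-j(n-1)-1\big)$; the conclusion is clearly vacuous (or trivial) when $k\le j(n-1)+1$, so I focus on $k\ge j(n-1)+2$.

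First I would apply, at the origin, the chain of inequalities
\[
k - j(n-1)-1 \;=\; \dim\frac{F^{j-1}\mathcal A(\mathcal W)}{F^j\mathcal A(\mathcal W)} \;\le\; k - \ell^j(\mathcal W)(0) \;\le\; k - \big(j(n-1)+1\big),
\]
where the first inequality is Lemma \ref{L:besta} and the second is Proposition \ref{P:cast}. Equality throughout forces $\ell^j(\mathcal W)(0) = j(n-1)+1$. Since smoothness of $\mathcal W$ is an open condition on the linear parts of the defining submersions, after possibly shrinking around the origin $\mathcal W$ is smooth at every point $x$ of some neighborhood $U$; consequently Proposition \ref{P:cast} applies pointwise and gives $\ell^j(\mathcal W)(x) \ge j(n-1)+1$ for all $x\in U$, which is also visible from the lower semicontinuity of the rank of the holomorphic family of evaluation maps $\mathrm{ev}_x : H^0(\mathbb P^{n-1}, \mathcal O(j)) \to \mathbb C^k$ at the configuration $\{[du_1(x)],\ldots,[du_k(x)]\}$ (using Lemma \ref{L:translate}, whose statement identifies $\ell^j(\mathcal W)(x)$ with $\mathrm{rank}(\mathrm{ev}_x)$).

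The main obstacle is the reverse inequality $\ell^j(\mathcal W)(x)\le j(n-1)+1$ throughout $U$. My plan is to produce, at each nearby $x$, at least $k - j(n-1)-1$ linearly independent elements of $\ker(\mathrm{ev}_x)$ (equivalently, degree $j$ hypersurfaces in $\mathbb P^{n-1}$ passing through the configuration at $x$), varying holomorphically in $x$. The equality case of Lemma \ref{L:besta} at the origin provides a basis $\eta^{(1)},\dots,\eta^{(d)}$ of $F^{j-1}\mathcal A(\mathcal W)/F^j\mathcal A(\mathcal W)$, $d=k-j(n-1)-1$, whose $(j-1)$-jets at the origin realize a basis of $\ker(\mathrm{ev}_0)$ via the argument in the proof of Lemma \ref{L:besta}. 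Each $\eta^{(l)}$ is an \emph{abelian relation} defined on all of $U$, so the coefficients $c_i^{(l)}(x) = f_i^{(l)\,(j-1)}(u_i(x))/(j-1)!$ extend holomorphically in $x$, and the argument that identifies the leading jet of an abelian relation with an element of $\ker(\mathrm{ev}_x)$ (reading the proof of Lemma \ref{L:besta} at $x$ rather than at $0$, after first killing lower-order jets by subtracting a polynomial abelian relation of degree $\le j-2$) produces holomorphically varying hypersurfaces through the moving configuration. By lower semi-continuity of the rank of this holomorphic family, $\dim\ker(\mathrm{ev}_x)\ge k-j(n-1)-1$ on $U$, whence $\ell^j(\mathcal W)(x)\le j(n-1)+1$ everywhere on $U$. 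Combined with the previous paragraph, $\ell^j(\mathcal W)$ is constant equal to $j(n-1)+1$ on $U$, which is what we needed.
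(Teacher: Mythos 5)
The paper gives no proof of this lemma (it is dismissed with ``as the reader can easily verify''), so your attempt has to stand on its own. Your first two steps are fine: reading the $\min$ as a $\max$, the chain $k-j(n-1)-1=\dim F^{j-1}\mathcal A/F^{j}\mathcal A\le k-\ell^j(\mathcal W)(0)\le k-(j(n-1)+1)$ pins down $\ell^j(\mathcal W)(0)$, and since general position is an open condition, Proposition \ref{P:cast} applied to the germ at each nearby $x$ gives the lower bound $\ell^j(\mathcal W)(x)\ge j(n-1)+1$. The gap is in the upper bound at nearby points: the vectors $c^{(l)}(x)=\bigl((f_i^{(l)})^{(j-1)}(u_i(x))/(j-1)!\bigr)_i$ do \emph{not} lie in $\ker(\mathrm{ev}_x)$ for $x\neq 0$. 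The mechanism of Lemma \ref{L:besta} that converts an abelian relation into a relation $\sum_i c_i(h_i^x)^j=0$ uses essentially that the relation belongs to $F^{j-1}_x\mathcal A(\mathcal W)$, i.e.\ that \emph{all} lower-order Taylor coefficients of the $f_i^{(l)}$ at $u_i(x)$ vanish. When they do not, differentiating $\sum_i f_i(u_i)\,du_i=0$ leaves extra terms: already for $j=2$, applying vector fields $v,w$ yields
\[
\sum_i f_i'(u_i)\,du_i(v)\,du_i(w)=-\sum_i f_i(u_i)\,w\bigl(v(u_i)\bigr),
\]
whose right-hand side has no reason to vanish away from the base point (for the relation $\frac{dx}{x}-\frac{dy}{y}-\frac{d(x/y)}{x/y}=0$ one computes $\sum_i f_i'(u_i)(du_i)^2=2\frac{dy^2}{y^2}-2\frac{dx\,dy}{xy}\neq 0$). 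Your proposed repair --- subtracting a ``polynomial abelian relation of degree $\le j-2$'' matching the lower jets at $x$ --- begs the question: nothing guarantees the finite-dimensional space $\mathcal A(\mathcal W)$ contains an element whose $i$-th component matches the $(j-2)$-jet of $f_i^{(l)}$ at $u_i(x)$; if it did, you would already know $\dim F^{j-1}_x\mathcal A/F^{j}_x\mathcal A\ge k-j(n-1)-1$, which is exactly what is at stake. Nor does semicontinuity rescue this: $\dim F^{j-1}_x\mathcal A/F^{j}_x\mathcal A$ is a difference of two upper semicontinuous functions of $x$ and is not semicontinuous in the direction you need.

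The quantity that \emph{is} lower semicontinuous is the cumulative one, $x\mapsto\dim\mathcal A(\mathcal W)/F^{j}_x\mathcal A(\mathcal W)$, being the rank of the holomorphic family of jet-evaluation maps $\mathcal A(\mathcal W)\to\bigl(\Omega^1/\mathfrak m_x^{j}\cdot\Omega^1\bigr)^k$. Combining its lower semicontinuity with the pointwise bound $\dim\mathcal A/F^{j}_x\mathcal A=\sum_{m=0}^{j-1}\dim F^{m}_x\mathcal A/F^{m+1}_x\mathcal A\le\sum_{m=1}^{j}\max\bigl(0,k-m(n-1)-1\bigr)$ closes the argument --- but only if \emph{all} graded pieces up to order $j-1$ are maximal at the origin, so that $\dim\mathcal A/F^{j}_0\mathcal A$ already attains the right-hand side. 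That stronger hypothesis is what is actually available wherever the paper invokes the lemma (Corollary \ref{C:bbb} for webs of maximal rank; the hypothesis $\dim\mathcal A/F^2\mathcal A=2k-3n+1$ of Theorem \ref{T:trepreau} forces both $F^0/F^1$ and $F^1/F^2$ to be maximal). With only the single graded piece assumed maximal, as in the statement and in your proof, I do not see how to obtain the upper bound at nearby points, and your argument does not obtain it.
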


Combined with Castelnuovo Lemma, or rather with  Proposition \ref{P:Cast}, the Lemma above yields the following normal forms for
webs of maximal rank up to second order.

\begin{prop}\label{P:normal}
Let $\mathcal W = \mathcal F_1 \boxtimes \cdots \boxtimes \mathcal F_k$ be a germ of smooth $k$-web on $(\mathbb C^n,0)$. Suppose that $n \ge 3$ and $k \ge  2n + 1$.
If
\[
 \dim   \frac{F^1 \mathcal A(\mathcal W)}{F^2 \mathcal A(\mathcal W)} = k- 2n + 1 
\]
then there exist  a coframe $\varpi =(\varpi_0,\ldots,\varpi_{n-1})$ on $(\mathbb C^n,0)$
 and $k$ germs of holomorphic functions
$\theta_1, \ldots, \theta_k$ such that for every $i\in \underline{k}$
\[
 \mathcal F_i = \bigg[ \sum_{q=0}^{n-1} (\theta_i)^q \varpi_q \bigg]  .
\]
\end{prop}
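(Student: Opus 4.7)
\medskip

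\noindent\textbf{Proof plan.} The hypothesis combined with Lemma \ref{L:besta} gives $\ell^2(\mathcal W)(0) \le 2n - 1$, which together with Proposition \ref{P:cast} forces the equality $\ell^2(\mathcal W)(0) = 2n - 1$. Lemma \ref{L:constante} then propagates this to a full neighborhood of the origin. Applying Proposition \ref{P:Cast} pointwise yields, for each such $x$, a rational normal curve $\Gamma_x \subset \mathbb P T^*_x(\mathbb C^n,0)$ passing through all $k$ conormals $[\omega_i(x)]$. This curve is unique: since $k \ge 2n + 1 \ge n+2$ and the $[\omega_i(x)]$ are in general position (by smoothness of $\mathcal W$), any $n+2$ of them determine the curve via the Steiner construction of Proposition \ref{P:Steiner} transposed from $\mathbb P^n$ to $\mathbb P^{n-1}$.

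\medskip

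The crux is to promote the pointwise picture to a holomorphic family together with a holomorphic parametrization. Fix the first $n+2$ foliations; their conormals $[\omega_1(x)], \ldots, [\omega_{n+2}(x)]$ vary holomorphically with $x$ and remain in general position. Steiner's construction being purely algebraic (intersections of pencils of hyperplanes spanned by subsets of the input points, with a fixed normalization of three of them) is manifestly holomorphic in its inputs, and therefore yields a holomorphic map
\[
\Phi : \mathbb P^1 \times (\mathbb C^n,0) \longrightarrow \mathbb P T^*(\mathbb C^n,0),\qquad \Phi_x(\mathbb P^1) = \Gamma_x.
\]
Intrinsically, $\Phi_x$ is the projectivization of the $(n-1)$-th Veronese embedding post-composed with a holomorphic family of linear isomorphisms $\iota_x : \mathbb C_{n-1}[s,t] \to T^*_x(\mathbb C^n,0)$; explicitly, $\Phi_x([1:\theta]) = [\iota_x((s + \theta t)^{n-1})]$.

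\medskip

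After post-composing $\Phi_x$ with a fixed Möbius transformation of $\mathbb P^1$, one may assume that no $[\omega_i(0)]$ equals $\Phi_0([0:1])$; by shrinking the neighborhood of $0$, the holomorphic functions
\[
\theta_i(x) := \Phi_x^{-1}\bigl([\omega_i(x)]\bigr) \in \mathbb C, \qquad i\in\underline k,
\]
are well defined. Setting
\[
\varpi_q(x) := \binom{n-1}{q}\, \iota_x\bigl(s^{n-1-q} t^q\bigr), \qquad q = 0,\ldots, n-1,
\]
the binomial expansion of $(s + \theta t)^{n-1}$ gives $\iota_x\bigl((s+\theta t)^{n-1}\bigr) = \sum_{q=0}^{n-1} \theta^q\, \varpi_q(x)$. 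Since $\Phi_0$ parametrizes a non-degenerate rational normal curve in $\mathbb P T^*_0(\mathbb C^n,0)$, the map $\iota_0$ is an isomorphism, so $(\varpi_0, \ldots, \varpi_{n-1})$ is a coframe on a neighborhood of the origin. From
\[
[\omega_i(x)] = \Phi_x\bigl([1:\theta_i(x)]\bigr) = \Bigl[\,\sum_{q=0}^{n-1} \theta_i(x)^q\, \varpi_q(x)\Bigr]
\]
one concludes $\mathcal F_i = \bigl[\sum_{q=0}^{n-1} \theta_i^q \varpi_q\bigr]$, as required. The main obstacle is the upgrade from the pointwise existence of $\Gamma_x$ (Proposition \ref{P:Cast}) to a holomorphic family equipped with a holomorphic parametrization; committing to the synthetic Steiner construction resolves this cleanly, after which assembling the coframe is only a matter of identifying the Veronese map with the stated polynomial normal form.
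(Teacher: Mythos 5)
Your proof is correct and follows the same route as the paper: constancy of $\ell^2(\mathcal W)$ via Lemma \ref{L:constante}, pointwise application of Proposition \ref{P:Cast}, and then a holomorphic choice of parametrizations of the resulting rational normal curves normalized so that no conormal sits at $t=\infty$. The only difference is that you justify the holomorphic dependence of the family of curves and of their parametrizations (via uniqueness through $n+2$ points and the algebraicity of Steiner's construction), a step the paper's proof asserts without argument; this is a welcome filling-in rather than a different approach.
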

\begin{proof}
According to Lemma \ref{L:constante} the  function $\ell^2(\mathcal W)$ is constant and equal to $2(n-1) + 1$.
Proposition \ref{P:Cast} implies the existence, for every $x \in (\mathbb C^n,0)$, of a rational normal curve in $\mathbb P T^*_x (\mathbb C^n,0)$
containing the conormals of the defining foliations of the web. Therefore it is possible to choose holomorphic $1$-forms
$\varpi_0, \ldots, \varpi_{n-1} \in \Omega^1(\mathbb C^n,0)$ such that at every $x \in (\mathbb C^n,0)$ the rational normal curve
given by Proposition \ref{P:Cast} is parameterized by
\[
t \longmapsto \sum_{q=0}^{n-1} t^q \varpi_q(x) \, .
\]
This parametrization can be chosen  in such a way that none of the foliations $\mathcal F_i$ have  conormal  corresponding
to $t= \infty$.
Thus, for every $i \in \underline k$, the foliation $\mathcal F_i$  will be induced by  $\sum_{q=0}^{n-1} (\theta_i)^q \varpi_q$ for a suitable
germ  of holomorphic function $\theta_i$.
\end{proof}

\subsection{A generalization of Castelnuovo Lemma}

\index{Rational normal curves|(}Proposition \ref{P:normal} can be seen as the starting of the proof of the algebraization of webs
of maximal rank to be presented in Chapter \ref{Chapter:Trepreau}. As it has been made clear above, Proposition \ref{P:normal}
 is an easy consequence of Castelnuovo Lemma. Loosely phrased Castelnuovo Lemma says that if
sufficiently many points in general position impose  the minimal number of conditions on the space of quadrics
then they must lie on particularly simple curves: the rational normal curves. A testimony of the {\it
simplicity} of rational normal curves is the following proposition.

\begin{prop}\label{P:ratdeg}
If $C$ is a non-degenerate irreducible projective curve in $\mathbb P^n$ then $\deg C \ge n$. Moreover, if the equality holds then
$C$ is a rational normal curve.
\end{prop}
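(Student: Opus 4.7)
The plan is to prove the degree bound by a short non-degeneracy argument and then to derive the characterization of minimal-degree curves by induction on $n$, projecting $C$ from a general point to reduce the ambient dimension.

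For the lower bound, I would inductively choose points $p_1,\ldots,p_n\in C$ in linearly general position: having selected $p_1,\ldots,p_k$ with $k<n$, their span is a proper linear subspace of $\mathbb P^n$ and hence lies in a hyperplane, so non-degeneracy of $C$ provides some $p_{k+1}\in C$ outside it. The $n$ points so obtained span a hyperplane $H$ that does not contain $C$; the elementary B\'ezout bound then gives $|H\cap C|\le\deg C$, and since $\{p_1,\ldots,p_n\}\subset H\cap C$ one concludes $\deg C\ge n$.

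For the characterization, the induction starts with $n=2$: a non-degenerate irreducible conic in $\mathbb P^2$ admits a parametrization by $\mathbb P^1$ via the complete linear system $|\mathcal O_{\mathbb P^1}(2)|$ and is thus a rational normal curve. For the inductive step, let $C\subset \mathbb P^n$ be non-degenerate irreducible of degree $n$, fix a general smooth point $p\in C$, and let $C'\subset\mathbb P^{n-1}$ be the closure of the image of $C\setminus\{p\}$ under the linear projection $\pi_p$ away from $p$. The curve $C'$ is irreducible as the image of an irreducible variety, and it is non-degenerate since any hyperplane $H'\subset\mathbb P^{n-1}$ containing $C'$ would have as cone from $p$ a hyperplane of $\mathbb P^n$ containing $C$. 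The key identity is the projection formula
\[
\deg(\pi_p|_C)\cdot\deg C' \;=\; \deg C - 1 \;=\; n-1,
\]
obtained by pulling back a generic hyperplane $H'\subset\mathbb P^{n-1}$ to a generic hyperplane $H\ni p$ of $\mathbb P^n$: the set $H\cap C$ consists of $p$, with multiplicity one, and $n-1$ further points, all projecting onto $H'\cap C'$. The inductive first assertion applied to $C'$ yields $\deg C'\ge n-1$, so both factors on the left attain their extremal values: $\pi_p|_C$ is birational and $\deg C'=n-1$. The inductive second assertion then identifies $C'$ with a rational normal curve in $\mathbb P^{n-1}$, so $C'\cong\mathbb P^1$, and consequently the normalization of $C$ is $\mathbb P^1$.

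To close the argument, observe that the resulting map $\mathbb P^1\to C\hookrightarrow\mathbb P^n$ is given by $n+1$ sections of a degree-$n$ line bundle on $\mathbb P^1$, that is of $\mathcal O_{\mathbb P^1}(n)$; non-degeneracy of $C$ forces these sections to be linearly independent, and since $h^0(\mathbb P^1,\mathcal O_{\mathbb P^1}(n))=n+1$ they form a basis. The map is therefore, up to a projective transformation of $\mathbb P^n$, the Veronese embedding of degree $n$; its image is the standard rational normal curve and is smooth, so the normalization is an isomorphism and $C$ itself is a rational normal curve. The main obstacle in this plan is the projection formula and the verification that for a \emph{general} $p\in C$ a general hyperplane through $p$ meets $C$ transversely away from $p$ with multiplicity one at $p$; granted this, the inductive lower bound on $\deg C'$ forces birationality and what remains is linear algebra in $H^0(\mathbb P^1,\mathcal O_{\mathbb P^1}(n))$.
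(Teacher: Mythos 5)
Your proof is correct. The first half (degree bound via $n$ points in general position spanning a hyperplane) is essentially identical to the paper's argument. For the characterization of minimal-degree curves, however, you take a genuinely different route. The paper argues in one stroke: it fixes $n-1$ general points of $C$, lets $\Sigma\cong\mathbb P^{n-2}$ be their span, and observes that each generic hyperplane through $\Sigma$ meets $C$ in exactly one residual point; this gives an injection from the pencil $\mathbb P^1$ of such hyperplanes into $C$, so $C$ is rational, and the argument then closes exactly as yours does, with the observation that $n+1$ binary forms of degree $n$ parametrizing a non-degenerate curve must span $\mathbb C_n[s,t]$. Your induction via projection from a general point is longer but buys something: the multiplicativity $\deg(\pi_p|_C)\cdot\deg C'=n-1$ combined with the inductive lower bound \emph{forces} the projection to be birational, so you get rationality and control of the normalization simultaneously, and the base case is trivial. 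The paper's pencil argument is really a disguised version of the same projection idea (projecting from $\Sigma$ rather than from a single point), but it reaches rationality without induction. One small remark on your write-up: the transversality facts you flag as "the main obstacle" (a general hyperplane through a general smooth $p\in C$ meets $C$ simply at $p$ and transversely elsewhere) are standard and unproblematic in characteristic zero, so the plan is complete as stated.
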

\begin{proof}
If $C$ is non-degenerate then there exists $n$ points in $C$ that are in general position, otherwise $C$ would
be contained in a hyperplane. Intersecting $C$ with the hyperplane $H$ determined by $n$ of these points shows that
the degree of $C$ is at least $n$.

To prove the second part let $p_1, \ldots, p_{n-1}$ be $n-1$ general points of $C$  and let $\Sigma$ be the $\mathbb P^{n-2}$
determined by them. By hypothesis each generic hyperplane containing $\Sigma$ intersects $C$ in exactly one point away from $\Sigma$.
Therefore there is an injective map from the set of hyperplanes containing $\Sigma$, nothing else than a $\mathbb P^1$, to $C$. Thus $C$
is rational. Therefore $C$ is parametrized by $n+1$ homogenous binary forms of degree equal to $\deg C=n$. Since $C$ is non-degenerated
these $n+1$ binary forms must generate the space of degree $n$ binary forms. In other words, $C$ is a rational normal curve.
\end{proof}
\index{Rational normal curves|)}

It is  natural to enquire what can be said about sufficiently many points imposing a number of conditions on the
space of quadrics close to minimal. For instance one can ask if they   lie on {\it simple} varieties.

Of course to be more precise the meaning of {\it simple varieties} must be spelled out. One possibility  is to look for
non-degenerate irreducible varieties of minimal degree. For that sake it is important to generalize Proposition \ref{P:ratdeg}
for irreducible non-degenerated varieties  of $\mathbb P^n$ of arbitrary dimension. The first part of the statement
generalizes promptly as shown below.

\begin{prop}
If $X$ is a non-degenerate irreducible subvariety of  $\mathbb P^n$ then $\deg X \ge \mathrm{codim} \,  X  + 1$.
\end{prop}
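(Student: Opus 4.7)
The plan is to argue by induction on $d = \dim X$. The base case $d = 1$ gives $c = n - 1$ and the desired bound $\deg X \ge n$ is precisely Proposition \ref{P:ratdeg}.

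For the inductive step with $d \ge 2$, I would slice $X$ by a general hyperplane $H \subset \mathbb{P}^n$ and set $Y = X \cap H \subset H \cong \mathbb{P}^{n-1}$. One reads off $\dim Y = d - 1$, $\deg Y = \deg X$ and $\mathrm{codim}_H Y = c$, while Bertini's irreducibility theorem, applicable since $\dim X \ge 2$, ensures that $Y$ is irreducible. Granting that $Y$ is non-degenerate in $H$, the inductive hypothesis applied to $Y$ immediately yields $\deg X = \deg Y \ge c + 1$.

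The one delicate point---the main obstacle---is therefore the non-degeneracy of $Y$ in $H$. The plan is to argue it cohomologically. Twisting the ideal sheaf sequence $0 \to \mathcal{O}_X(-1) \to \mathcal{O}_X \to \mathcal{O}_Y \to 0$ by $\mathcal{O}(1)$ yields
$$ 0 \longrightarrow \mathcal{O}_X \longrightarrow \mathcal{O}_X(1) \longrightarrow \mathcal{O}_Y(1) \longrightarrow 0 \, , $$
and since $X$ is projective and irreducible one has $h^0(X, \mathcal{O}_X) = 1$. Consider then the composition of restriction maps
$$ H^0(\mathbb{P}^n, \mathcal{O}(1)) \xrightarrow{\;\rho_X\;} H^0(X, \mathcal{O}_X(1)) \xrightarrow{\;\rho_Y\;} H^0(Y, \mathcal{O}_Y(1)) \, . $$
Non-degeneracy of $X$ says $\ker \rho_X = 0$, while the long exact sequence identifies $\ker \rho_Y$ with the line spanned by $\ell|_X$, where $\ell$ is the linear form defining $H$. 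Hence the composition has one-dimensional kernel $\mathbb{C} \ell$, so its image has dimension $n$ inside $H^0(Y, \mathcal{O}_Y(1))$.

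Finally, this composition factors through $H^0(H, \mathcal{O}_H(1)) = H^0(\mathbb{P}^n, \mathcal{O}(1))/\mathbb{C}\ell$, a space of dimension $n$. The induced restriction $H^0(H, \mathcal{O}_H(1)) \to H^0(Y, \mathcal{O}_Y(1))$ therefore has $n$-dimensional source mapping onto an $n$-dimensional image, so it is injective. But injectivity of this last map is precisely the assertion that no hyperplane of $H$ contains $Y$, i.e., $Y$ is non-degenerate in $H$, and the induction closes.
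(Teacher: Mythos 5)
Your proof is correct, but it takes a genuinely different route from the one in the text. You induct on $\dim X$, slicing by a general hyperplane: Bertini keeps the section irreducible (this is where $\dim X\ge 2$ enters), the degree and the codimension are preserved, and the cohomological argument with $0\to \mathcal O_X\to\mathcal O_X(1)\to\mathcal O_{X\cap H}(1)\to 0$ together with $h^0(X,\mathcal O_X)=1$ correctly identifies the kernel of $H^0(\mathbb P^n,\mathcal O(1))\to H^0(X\cap H,\mathcal O(1))$ with $\mathbb C\ell$, so the section stays non-degenerate and the induction terminates at Proposition \ref{P:ratdeg}. The paper instead argues directly, with no induction and no cohomology: it picks $m+1=\mathrm{codim}(X)+1$ generic points of $X$, notes that non-degeneracy forces them to span a $\mathbb P^m$ meeting $X$ in at least $m+1$ points, and then runs a dimension count (sweeping out $X$ by the $\mathbb P^{m-k+1}$'s through the span of $m-k+1$ generic points) to show the generic $\mathbb P^m$ meets $X$ in finitely many points. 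The paper's route is more elementary and self-contained, whereas yours is the standard Castelnuovo-theoretic argument (as in Eisenbud--Harris) and produces a reusable lemma -- that a general hyperplane section of a non-degenerate irreducible variety is again non-degenerate -- at the cost of invoking Bertini's irreducibility theorem and, implicitly, the reducedness of the general hyperplane section (harmless in characteristic zero, but worth flagging since the exact sequence you write is for the scheme-theoretic intersection). Both arguments are sound.
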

\begin{proof}
Take $m+1=\mathrm{codim}(X)+ 1$ generic points on $X$. Because $X$ is non-degenerate they span a
$\mathbb P^m$ intersecting $X$ in at least $m + 1$ points.  To conclude, it remains to verify that
for a generic choice of $m+1$ points there are no positive dimensional component in the corresponding intersection
$\mathbb P^m \cap X$.

For that sake let $k$ be the dimension of the intersection of $X$ with a generic $\mathbb P^m$.
If $p_1, \ldots, p_{m-k+1} \in X$ are  $m-k+1$ generic points  then
their linear span $\Sigma \simeq \mathbb P^{m-k}$ intersects $X$ in a finite number of points.
If  $\Lambda$ is  the set of all the projective spaces $\mathbb P^{m-k+1}$ contained in $\mathbb P^n$
and containing $\Sigma$ then $\Lambda \simeq \mathbb P^{n-m + k}$.

On  the one hand $\dim X = n-m$, while on the other hand
\[
X - \Sigma = \bigcup_{\mathbb P^{n-m + k} \in \Lambda} ( \mathbb P^{m-k+1} - \Sigma) \cap X \, .
\]
implies that $\dim X = n-m + k$. Thus $k=0$, that is, $X$ intersects a generic $\mathbb P^m$ in a
finite number of points.
\end{proof}

The second part also does generalize but the generalization, which can be traced back at least to  Bertini,  is by no means evident.
\index{Varieties of minimal degree|(}
\begin{thm}\label{T:VMD}
If $V$ is an  irreducible non-degenerated projective subvariety of $\mathbb P^n$  with  $\deg V = \mathrm{codim} V  + 1$
then
\begin{enumerate}
\item $V$ is $\mathbb P^n$, or;
\item $V$ is a rational normal scroll, or;
\item $V$ is a cone over the Veronese surface $\nu_2(\mathbb P^2) \subset \mathbb P^5$, or ;
\item $V$ is a  hyperquadric.
\end{enumerate}
\end{thm}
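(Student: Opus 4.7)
The plan is to proceed by induction on $d = \dim V$, with the base case $d=1$ already handled by Proposition \ref{P:ratdeg}, which identifies curves of minimal degree as the rational normal curves. The inductive mechanism rests on the observation that a general hyperplane section preserves the minimal degree property. More precisely, if $H \subset \mathbb{P}^n$ is a general hyperplane and $V' = V \cap H \subset H \cong \mathbb{P}^{n-1}$, then Bertini's theorem ensures that $V'$ is irreducible and non-degenerate in $H$; since $\deg V' = \deg V$, $\dim V' = d-1$, and $\mathrm{codim}_H V' = \mathrm{codim}_{\mathbb{P}^n} V$, the equality $\deg V' = \mathrm{codim} V' + 1$ persists. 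So the inductive hypothesis applies to $V'$.

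Before carrying out the induction one handles two immediate cases. If $\mathrm{codim}\, V = 0$ then $V = \mathbb{P}^n$. If $\deg V = 2$, that is $\mathrm{codim}\, V = 1$, then $V$ is an irreducible non-degenerate hypersurface of degree two, hence a hyperquadric. So one may assume $\deg V \geq 3$ and $d \geq 2$.

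The delicate step is the case $d = 2$ (del Pezzo's theorem for surfaces of minimal degree). Here $V \subset \mathbb{P}^n$ is a surface with $\deg V = n - 1$ whose general hyperplane section is a rational normal curve of degree $n-1$ in $\mathbb{P}^{n-1}$. I would fix a general section $C = V \cap H_0$ and, for each $\mathbb{P}^{n-3} \subset H_0$ spanned by $n-2$ points of $C$, consider the pencil of hyperplanes through it; residually on $V$ this pencil traces a one-dimensional family of curves of degree one, producing a ruling of $V$ by lines. A careful analysis of this ruling, together with the rationality of $C$, shows that either $V$ is a rational normal scroll (when the lines sweep out a $\mathbb{P}^1$-bundle) or $V$ is the Veronese surface $\nu_2(\mathbb{P}^2) \subset \mathbb{P}^5$ (the exceptional case where no such ruling exists, corresponding to $n=5$ and $\deg V = 4$). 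The special status of the Veronese surface comes from a direct check that the linear system of conics on $\mathbb{P}^2$ is the unique linear system producing a surface of degree $4$ in $\mathbb{P}^5$ with no line ruling.

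For the inductive step $d \geq 3$, the key dichotomy is whether $V$ is a cone or not. If $V$ is a cone with vertex $p$, then projection from $p$ gives an irreducible non-degenerate variety $W \subset \mathbb{P}^{n-1}$ with $\dim W = d - 1$ and $\deg W = \deg V = \mathrm{codim}\, V + 1 = \mathrm{codim}\, W + 1$, so by induction $W$ is on the list; one then checks that a cone over a rational normal scroll is again a rational normal scroll, a cone over a cone over the Veronese surface is still such a cone, a cone over a hyperquadric is a hyperquadric in one more variable, and a cone over $\mathbb{P}^{n-1}$ is $\mathbb{P}^n$. If $V$ is not a cone, one uses the rulings extracted from hyperplane sections: by induction $V \cap H$ for general $H$ is (typically) a rational normal scroll, whose ruling by lines extends to a family of lines on $V$ producing the scroll structure on $V$ itself. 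The main obstacle, and the heart of the proof, will be this last step: showing that the one-parameter family of linear spaces obtained from the hyperplane section ruling genuinely fits together to realize $V$ as a scroll when $V$ is not a cone, which requires ruling out that distinct rulings on distinct hyperplane sections could fail to align into a global $\mathbb{P}^1$-bundle structure.
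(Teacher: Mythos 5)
The paper does not prove Theorem \ref{T:VMD} at all: it states explicitly that the proof ``would take the exposition too far afield'' and refers the reader to the Eisenbud--Harris account \cite{EH100}. So there is no in-paper argument to compare yours against; what you have written is, in outline, exactly the classical route that \cite{EH100} follows (reduction by general hyperplane sections via Bertini, del Pezzo's theorem in dimension two, then induction separating the cone and non-cone cases).

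That said, as it stands your text is a roadmap rather than a proof, because the two steps that carry essentially all of the content are only gestured at. First, the surface case: the ``careful analysis of this ruling'' that is supposed to yield either a scroll or the Veronese surface is precisely del Pezzo's theorem, and it is not a routine verification --- one must show that the residual curves of the pencils through an $(n-3)$-plane spanned by points of a hyperplane section really are lines, that these lines fit into an algebraic one-parameter family sweeping out $V$, and that the unique failure mode is $\nu_2(\mathbb P^2)$; none of this is supplied. Second, in the inductive step for $d\ge 3$ with $V$ not a cone, you write that the general hyperplane section is ``(typically)'' a scroll, but the induction hypothesis also allows it to be a (cone over the) Veronese surface, and that branch must be treated: one has to show that if the general hyperplane section is of Veronese type then $V$ itself is forced to be a cone over the Veronese, which is one of the delicate points of the Eisenbud--Harris argument. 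Finally, you yourself flag the ``main obstacle'' of gluing the rulings of the various hyperplane sections into a single $\mathbb P^1$-family of linear spaces on $V$; acknowledging an obstacle is not the same as overcoming it. The easy reductions (codimension $0$, degree $2$, the cone bookkeeping using the convention $a_i=0$ for scrolls) are all correct, but until the three points above are filled in, the argument is incomplete. Given that the paper deliberately outsources this theorem, the honest options are either to carry out those steps in full or to do as the paper does and cite \cite{EH100}.
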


The proof of this theorem would take the exposition  to far afield, and therefore will not be presented here. For a
modern exposition see for instance \cite{EH100}.

\index{Rational normal scroll|(}
A \defi[rational normal scroll] of dimension $m$ in $\mathbb P^n$  is characterized, up to an automorphism of $\mathbb P^n$, by
$m$ positive integers $a_1, \ldots, a_m$ summing up to $n-m+1$ and can be described as follows. Decompose
$\mathbb C^{n+1}$ as $\oplus \mathbb C^{a_i +1}$ and consider parametrizations $\varphi_i: \mathbb P^1 \to \mathbb P^{a_i} \subset \mathbb P^n$
of rational normal curves on the corresponding projective subspaces\footnote{
When $a_i=0$, the linear subspace $\mathbb P^{a_i}$ is nothing but  a point $p_i\in \mathbb P^n$. In this case, the following convention is adopted: the {\it rational normal curve} in $\mathbb P^{a_i}$ is not a curve, but the point  $p_i$.}. If $\Sigma(p)$ is the $\mathbb P^{m-1}$
spanned by $\varphi_1(p), \ldots, \varphi_m(p)$ then  the associated rational normal scroll is
\[
S_{a_1, \ldots, a_m} = \bigcup_{p \in \mathbb P^1}  \Sigma(p) \, .
\]
Notice that the rational normal curves are rational normal scrolls, with $m=1$ according to the definition above.

It is natural to consider the rational normal scroll as higher-dimensional analogues of rational normal curves.
The analogies between rational normal curves and scrolls do not reduce to similar definitions,  and to both being varieties of minimal degree.
They encompass  many other aspects. For instance, the rational normal scrolls of
dimension $m$ in $\mathbb P^n$ admit determinantal presentations, similar to (\ref{E:quadrics}) used for rational normal curves.
More precisely, if $F_0, \ldots, F_{n-m}, G_0, \ldots, G_{n-m}$ are linear forms such that for any $(\lambda, \mu) \neq (0,0)$, the linear forms
 ${\{ \lambda F_i + \mu G_i\}_{i=0,\ldots,n-m}}$ are linearly independent (compare with Lemma \ref{L:claimB}) then
\[
X=  \left\{ \mathrm{rank} \left( \begin{array}{ccc} F_0 & \ldots & F_{n-m} \\ G_0 & \ldots & G_{n-m} \end{array} \right) \le 1 \right\}
\]
is a rational normal scroll of dimension $m$. Moreover, any rational normal scroll can be presented in this way. \index{Rational normal scroll|)}

\medskip

Another testimony of the similarity between rational normal curves and scrolls, is the following generalization of  Castelnuovo Lemma.

\index{Castelnuovo Lemma!generalized}
\begin{prop}[{\bf Generalized Castelnuovo Lemma}]
Let  $\mathcal P \subset \mathbb P^n$  be a set of $k$ points in general position. Suppose that $n \ge 2$ and $k \ge 2n + 1 + 2m$.
If $\mathcal P$ imposes   $2n + m$ conditions on the linear system of quadrics then $\mathcal P$ is contained in
a rational normal scroll of dimension $m$.
\end{prop}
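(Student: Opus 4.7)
The plan is to mimic the proof of the classical Castelnuovo Lemma given in Section \ref{S:Castproof}, replacing the $2 \times n$ matrix of linear forms used there by a $2 \times (n-m+1)$ matrix whose rank-one locus will be the sought rational normal scroll of dimension $m$. I first set up linear forms $F_0, G_0$ vanishing on the hyperplanes spanned by $p_1, \ldots, p_n$ and by $p_1, \ldots, p_{n-1}, p_{n+1}$ respectively, so that $\Lambda = \{F_0 = G_0 = 0\} \simeq \mathbb{P}^{n-2}$ contains exactly $p_1, \ldots, p_{n-1}$ among the points of $\mathcal P$. Using that $\mathcal P$ imposes $2n+m$ conditions on quadrics, I then pick (after relabelling) a subset $\mathcal{P}' = \{p_1, \ldots, p_{2n+m}\} \subset \mathcal{P}$ containing $p_1, \ldots, p_{n-1}$ and imposing the same $2n+m$ conditions, so that the spaces of quadrics through $\mathcal P$ and through $\mathcal P'$ coincide; this is possible since $k \ge 2n+1+2m$.

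A dimension count analogous to that of the classical proof then yields at least $n-m$ linearly independent quadrics $Q_1, \ldots, Q_{n-m}$ containing $\Lambda \cup \mathcal{P}$: the space of quadrics through $\Lambda$ has dimension $2n+1$, and further imposing vanishing at the $n+m+1$ additional points $p_n, \ldots, p_{2n+m}$ cuts this down by at most $n+m+1$. Writing each $Q_i$ as $Q_i = F_0 G_i - G_0 F_i$ for suitable linear forms $F_i, G_i$, I consider the determinantal variety
\[
X = \Big\{\, p \in \mathbb{P}^n \,\Big|\, \mathrm{rank} \begin{pmatrix} F_0 & F_1 & \cdots & F_{n-m} \\ G_0 & G_1 & \cdots & G_{n-m} \end{pmatrix}(p) \le 1 \,\Big\}.
\]
The analogue of Lemma \ref{L:claimB} — that for every $(\lambda, \mu) \neq (0,0)$ the $n-m+1$ linear forms $\lambda F_i + \mu G_i$ are linearly independent — goes through essentially verbatim: any dependence would give a non-zero quadric in $\langle Q_1, \ldots, Q_{n-m} \rangle$ that factors as the product of two linear forms, and since it vanishes on $\mathcal{P}$ the $k \ge 2n+1+2m > 2n$ points of $\mathcal{P}$ would lie in a union of two hyperplanes, contradicting general position. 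By the determinantal presentation of rational normal scrolls recalled before the statement, this independence identifies $X$ as a rational normal scroll of dimension $m$.

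The remaining step is to verify $\mathcal{P} \subset X$. For every $p_l \notin \Lambda$, the Plücker-type identity $F_0 \, Q_{ij} = F_i\, Q_j - F_j\, Q_i$ (together with its $G_0$-counterpart $G_0\, Q_{ij} = G_i\, Q_j - G_j \, Q_i$) forces every cross-minor $Q_{ij} = F_iG_j - F_jG_i$ to vanish at $p_l$, hence $p_l \in X$; this handles at once $p_n, p_{n+1}, \ldots, p_k$. For the points $p_1, \ldots, p_{n-1} \in \Lambda$, one argues as in the classical proof: the restrictions $F_1|_\Lambda, \ldots, F_{n-m}|_\Lambda$ are linearly independent — a consequence of the analogue of Lemma \ref{L:claimB} — so after a suitable change of basis one may assume $F_i(p_l) = 0$ for $l \in \{1, \ldots, n-1\} \setminus \{i\}$; then each cross-minor $Q_{ij}$ vanishes on $\mathcal{P} \setminus \{p_i, p_j\}$, and one concludes by showing that $\mathcal{P} \setminus \{p_i, p_j\}$ imposes the same $2n+m$ conditions on quadrics as $\mathcal{P}$.

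This last equality of conditions is the main obstacle. In the classical case $m=1$, Remark \ref{R:ind} guarantees at once that any $2n+1$ of the points impose $2n+1$ conditions on quadrics, matching those imposed by $\mathcal{P}$. For $m \ge 2$, however, Remark \ref{R:ind} only yields $2n+1$ conditions — strictly less than the $2n+m$ needed — so a more refined argument is required. The natural way to close this gap is either to strengthen the general-position hypothesis on $\mathcal{P}$ to a uniform-position type assumption guaranteeing that any $2n+m$ of its points impose $2n+m$ independent conditions on quadrics (the conventional reading of \emph{general position} in this context), or else to proceed by induction on $m$, using the lemma at level $m-1$ together with the classification of varieties of minimal degree (Theorem \ref{T:VMD}, to rule out the non-scroll cases) to analyze the configurations where $\mathcal{P} \setminus \{p_i, p_j\}$ fails to impose $2n+m$ conditions. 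This inductive step is where the proof genuinely departs from the classical one.
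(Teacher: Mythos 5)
A point of reference first: the paper does not actually prove this proposition. It only remarks that the $m=1$ argument of Section \ref{S:Castproof} is the specialization of the Eisenbud--Harris proof, and refers to \cite[pages 103-106]{EHCAMG} for the general case. Your route --- generalizing the determinantal construction --- is therefore exactly the one the paper points to, and it is sound up to and including the identification of $X$ as an $m$-dimensional rational normal scroll containing $p_n,\ldots,p_k$: the count $2n+1-(n+m+1)=n-m$ of independent quadrics through $\Lambda\cup\mathcal P$, the analogue of Lemma \ref{L:claimB}, and the identity $F_0\,Q_{ij}=F_i\,Q_j-F_j\,Q_i$ all work as you say.

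The proof is nevertheless incomplete, at the step you flag and also at one you do not. First, the normalization ``$F_i(p_l)=0$ for $l\in\{1,\ldots,n-1\}\setminus\{i\}$'' cannot be achieved when $m\ge 2$: the restrictions $F_1|_\Lambda,\ldots,F_{n-m}|_\Lambda$ span only an $(n-m)$-dimensional subspace of the $(n-1)$-dimensional space of linear forms on $\Lambda\simeq\mathbb P^{n-2}$, whereas the form vanishing at all of $p_1,\ldots,p_{n-1}$ except $p_i$ is unique up to scale, and there is no reason for any of these $n-1$ distinguished forms to lie in that subspace (for $m=1$ the subspace is everything, which is why the classical normalization is available). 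Hence you cannot yet assert that $Q_{ij}$ vanishes on $\mathcal P\setminus\{p_i,p_j\}$. Second, even granting this, the claim that $\mathcal P\setminus\{p_i,p_j\}$ imposes $2n+m$ conditions on quadrics does not follow from linearly general position when $m\ge2$: deleting two points may a priori lower the count by two, and Remark \ref{R:ind} only guarantees $2n+1$ of the required $2n+m$. Of your two proposed repairs, the first (every $(2n+m)$-point subset imposes independent conditions) replaces the stated hypothesis --- ``general position'' in this paper means linearly general position throughout --- and so proves a different statement; the second is not carried out. As written, membership of $p_1,\ldots,p_{n-1}$ in $X$ is unproved, and closing the gap requires an ingredient genuinely absent from the $m=1$ template (for instance, repeating the construction for other choices of the $n-1$ points spanning $\Lambda$ and showing the resulting scrolls coincide).
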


The proof of Castelnuovo Lemma presented in Section \ref{S:Castproof} is the specialization to $m=1$ of the Eisenbud-Harris proof of
the generalized Castelnuovo Lemma. Having at hand the determinantal presentation of a rational normal scroll given above the
reader should not have difficulties to recover the original  proof as found in \cite[pages 103-106]{EHCAMG}.
\index{Varieties of minimal degree|)}

\medskip

While Castelnuovo Lemma is essential in the proof of Tr\'{e}preau's algebraization theorem,
 to be carry out in Chapter \ref{Chapter:Trepreau}, the implications of the generalized Castelnuovo Lemma  to web geometry, if any, remain to be unfolded.

\index{Web!maximal rank|)} 

\chapter{Abel's addition Theorem}\label{Chapter:3}
\thispagestyle{empty}

So far,  not many examples of abelian relations for webs  appeared in this text. Besides
the abelian relations for  hexagonal $3$-webs,   the polynomial abelian relations for parallel webs (~see Example \ref{E:llll}~), and  the abelian relations for the planar quasi-parallel webs discussed in  \ref{E:exabel},
which are by the way also polynomial, no other example was studied.

\smallskip

The main result of this chapter, Abel's addition Theorem,  repairs this unpleasant state of affairs. It implies, and is
essentially equivalent to,    an
injection of the space of global  abelian differentials -- also known as Rosenlicht's or regular differentials -- of a reduced
projective curve $C$ into the space of abelian relations of the dual web $\mathcal W_C$.

\medskip

The exposition that follows renounces conciseness in favor of clarity. First the result is proved for smooth
projective curves avoiding the technical difficulties inherent to the singular case.
Only then the  case of an arbitrary reduced projective curve is dealt with.

\medskip

The readers familiar with Castelnuvo's bound for the arithmetical genus of irreducible  non-degenerate projective
curves will promptly realize that  Castelnuovo curves (~briefly described in Section \ref{S:CC}~) give rise to webs of maximal rank. Those that are not,
will get acquainted with Castelnuovo's bound since it can be seen as a joint corollary of the bound for the rank
proved in Chapter \ref{Chapter:AR}, and  Abel's addition Theorem. While most of the arguments laid down in Chapter
\ref{Chapter:AR} to bound $\mathcal A(\mathcal W)$ can be found in the modern textbooks dealing with Castelnuovo
Theory, the same cannot be said about the use of Abel's addition Theorem. These days, the textbook proof of Castelnuovo's bound
makes use of some basic results about the cohomology of projective varieties besides Castelnuovo Lemma.

\smallskip

Not deprived of weaknesses when compared to the modern approach, the path to Castelnuovo's bound through
Abel's addition Theorem  has its own strong points. For instance, it  is more or less simple to obtain bounds
for reduced curves on other projective varieties as explained in Section \ref{S:beyond}.

\section{Abel's Theorem I: smooth curves}

\subsection{Trace  under  ramified coverings}\label{S:Traceunderramifiedcoverings}
\index{Trace!under ramified coverings|(}

Let $X$ and $Y$ be two smooth  connected  complex curves. A holomorphic map  $f:X \to Y$ is a  \defi[finite ramified covering] \index{Finite ramified covering}
if it is surjective and proper. The \defi[degree] of a finite ramified covering  \index{Finite ramified covering!degree} is defined as
the cardinality of the pre-image of any of its regular values.

\medskip

For $X$ and $Y$ as above, let $f:X \to Y$ be a finite ramified covering of degree $k$.
For a regular value  $q \in Y$   of $f$  and a meromorphic $1$-form $\omega$ defined at a neighborhood of
$f^{-1}(y)$,   define the \defi[trace of $\omega$ at $q$]
as the germ of  meromorphic $1$-form
\[
\mathrm{tr}_{f,q}(\omega)=\sum_{i=1}^k g_i^*(\omega) \, \in\,  \Omega^1(Y,q) \, ,
\]
where   $g_1,\ldots,g_k:(Y,q) \to X$ are the local inverses of $f$ at $q$.

\begin{prop}\label{P:pp}
Let $X$ and $Y$ be two smooth, compact and  connected  complex curves. If $f:X \to Y$ is a finite ramified covering,
$\omega$ is a meromorphic $1$-form globally defined on $X$, and $q$ is a regular value of $f$  then
$\mathrm{tr}_{f,q}(\omega)$ extends to a unique meromorphic $1$-form $\mathrm{tr}_{f}(\omega)$, which does not depend on $q$, and is
 globally defined on $Y$. Moreover, if $\omega$ is holomorphic then $\mathrm{tr}_f(\omega)$ is also
holomorphic.
\end{prop}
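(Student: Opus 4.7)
The plan is to construct $\mathrm{tr}_f(\omega)$ first as a meromorphic $1$-form on $Y^* := Y \setminus R$, where $R \subset Y$ denotes the finite set of critical values of $f$, and then to extend it meromorphically across each point of $R$. On $Y^*$ the germs $\mathrm{tr}_{f,q}(\omega)$ patch together into a single meromorphic $1$-form: if $q,q'$ lie in a common simply connected open set $U \subset Y^*$, then the $k$ local inverses of $f$ defined on $U$ give the same unordered collection of $k$ sections of $f$ over $U$, so the symmetric sum $\sum_{i=1}^k g_i^*\omega$ computed at $q$ and the one computed at $q'$ are restrictions of the same meromorphic $1$-form on $U$. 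Covering $Y^*$ by such open sets yields a well-defined meromorphic $1$-form $\mathrm{tr}_f(\omega)^*$ on $Y^*$, manifestly independent of the base point $q$.

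The key technical step is the meromorphic extension of $\mathrm{tr}_f(\omega)^*$ across each critical value $q_0 \in R$. Let $p_1,\ldots,p_r$ be the preimages of $q_0$ with ramification indices $e_1,\ldots,e_r$ (so $\sum_j e_j = k$). Choose local coordinates $w$ centered at $q_0$ and $z_j$ centered at $p_j$ in which $f$ has the form $w = z_j^{e_j}$, and on a neighborhood of $p_j$ write $\omega = \varphi_j(z_j)\,dz_j$ with $\varphi_j(z)=\sum_n a_n^{(j)}z^n$ a Laurent series. Over a punctured disc around $q_0$ the $e_j$ local inverses of $f$ landing near $p_j$ are the branches $g_{j,\ell}(w)=\zeta_{e_j}^{\,\ell}\,w^{1/e_j}$, $\ell = 0,\ldots,e_j-1$, where $\zeta_{e_j}$ is a primitive $e_j$-th root of unity. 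A direct computation gives
\[
\sum_{\ell=0}^{e_j-1} g_{j,\ell}^{\,*}\omega \;=\; \sum_{n} a_n^{(j)}\left( \sum_{\ell=0}^{e_j-1}\zeta_{e_j}^{\,\ell(n+1)}\right)\frac{w^{(n+1)/e_j-1}}{e_j}\,dw.
\]
By the orthogonality relation $\sum_{\ell=0}^{e_j-1}\zeta_{e_j}^{\,\ell m}= e_j$ if $e_j \mid m$ and $0$ otherwise, the inner sum forces $e_j\mid (n+1)$; hence all fractional powers of $w$ cancel and one is left with a Laurent series in $w$ alone, with only finitely many negative terms (since $\omega$ has a pole of finite order at $p_j$). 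Summing over $j$ shows that $\mathrm{tr}_f(\omega)^*$ extends to a meromorphic germ at $q_0$, and doing this at every $q_0\in R$ yields the desired global form $\mathrm{tr}_f(\omega)$.

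Uniqueness of the extension is immediate from the identity principle on the connected curve $Y$. For the final assertion, suppose $\omega$ is holomorphic on $X$. Then each $\varphi_j$ is a power series ($a_n^{(j)}=0$ for $n<0$), so the surviving exponents $(n+1)/e_j - 1$ in the display above are all $\ge 0$; the extension at $q_0$ is therefore holomorphic. Combined with the evident holomorphicity on $Y^*$, this proves that $\mathrm{tr}_f(\omega)$ is holomorphic on $Y$. The main obstacle is the extension step at critical values: one must verify that the Puiseux-type fractional exponents conspire to cancel, which is precisely what the roots-of-unity identity delivers.
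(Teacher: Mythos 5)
Your proof is correct and follows essentially the same route as the paper's: patch the local traces over the complement of the critical values, then extend across each critical value by reducing to the local model $w=z^{e}$ and observing that the roots-of-unity sum $\sum_{\ell}\zeta_{e}^{\,\ell(n+1)}$ kills all fractional powers of $w$, leaving a Laurent series with finitely many negative terms (and none at all when $\omega$ is holomorphic). The only difference is presentational — you carry the full Laurent expansion and sum over all preimages of the critical value explicitly, whereas the paper records the same computation monomial by monomial.
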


The meromorphic $1$-form $\mathrm{tr}_f(\omega)$ globally defined  on $Y$ is, by definition,  the \defi[trace of $\omega$ relative to $f$].

\begin{proof}[Proof of Proposition \ref{P:pp}]
For  $q$ varying among the regular values of $f$, the meromorphic $1$-forms $\mathrm{tr}_{f,q}$ patch together
to a  meromorphic $1$-form $\eta$ defined on the whole  complement of the
 critical values of $f$. Furthermore, if  $\omega$ is holomorphic then the same will
be  true for $\eta$.

Now, if $q \in Y$ is a critical value of $f$ then some point $p$  in the fiber $f^{-1}(q)$ is a
critical point. Although it is not possible to consider a local inverse $g:(Y,q) \to (X,p)$, the
map $f:(X,p) \to (Y,q)$ is, in  suitable coordinates, the monomial map $f(x) = x^n = y$
for some positive integer $n$.  Because $X$ is compact, the set of critical values of $f$ is discrete.
Therefore it suffices to consider the trace of the monomial maps from the disc $\mathbb D$ to itself to
prove that $\eta$ extends through the critical set of $f$.

For a point distinct from the origin, there are exactly $n$ local inverses for $f(x) = x^n$: the  distinct branches of $\sqrt[n]{x}$.
One passes   from one to another, via multiplication by powers of  $\xi_n$, a primitive $n$-th root of the unity. Hence
\[
\mathrm{tr}_f( x^m dx ) = \sum \xi_n^{m+1} x^m dx \, .
\]
Consequently,
\begin{equation}\label{E:tracef}
\mathrm{tr}_f( x^m dx ) =  \begin{cases}
y^{ \frac{m+1}{n}-1} dy   \;    \; \mbox{ if }   m+1= 0 \; {\rm mod}\; n \, ,
 \\
\quad \;  0             \hspace{1.1cm}            \mbox{  otherwise}     \, .  \end{cases}
\end{equation}
Therefore the trace of $x^m dx$ is meromorphic  at the origin. It follows that $\eta$ extends to the whole $Y$.
Moreover, when the $1$-form $\omega$ is holomorphic,  $m\ge0$ and, according to Equation (\ref{E:tracef}),  the trace of $x^m dx$ is also holomorphic.
\end{proof}

Beware that there are meromorphic $1$-forms with holomorphic trace as one can promptly infer from (\ref{E:tracef}).

\begin{remark}\rm
The algebraically inclined reader familiar with K\"{a}hler differentials and field extensions, might prefer
to define the trace as follows.
If $X$ and $Y$ are algebraic curves and $f : X \to Y $ is a finite ramified covering, then
there is an   induced  finite  field extension  $f^*:\mathbb C(Y)\rightarrow \mathbb C(X)$ of the corresponding
function fields.
In this case, a rational function  $\phi\in \mathbb C(X)$ has trace $\mathrm{tr}_f(\phi)\in \mathbb C(Y)$
equal to  the trace of the  endomorphism $\psi\mapsto \phi\psi$ of
the finite dimensional \mbox{$\mathbb C(Y)$-vector} space $\mathbb C(X)$.
Let $t\in \mathbb C(Y)$ be such that $dt$ generates   $\Omega_{\mathbb C(Y)}$ as a ${\mathbb C(Y)}$-module. Therefore  $ f^*(dt)=df^*(t)$ generates the ${\mathbb C(X)}$-module
of K\"ahler differentials on $X$. Hence,  $\omega= \varphi \,df^*(t)$ with $\varphi$ meromorphic on $X$. The trace of $\omega$ relative to $f$  is
algebraically defined as $\mathrm{tr}_f(\varphi) dt$.
\end{remark}
\index{Trace!under ramified coverings|)}
\subsection{Trace  relative to the family of hyperplanes}
\index{Trace!relative to hyperplanes|(}
Let now $C \subset \mathbb P^n$ be a smooth and irreducible projective curve of degree $k$, $H_0$ a
hyperplane intersecting $C$ transversely, and $\omega$ be a meromorphic $1$-form defined on a neighborhood
$V \subset C$ of $H_0 \cap C$.
Consider the germs of  holomorphic
maps ${p_1,\ldots,p_k: (\check{\mathbb P}^n,H_0) \rightarrow V \subset C}$ verifying
$$ H\cdot C=
 p_1(H) +\cdots +p_k(H) \,
$$
for every $H\in (\check{\mathbb P}^n,H_0)$.

The \defi[trace of  $\omega$ at $H_0$ relative to the family of hyperplanes], denoted by $\mathrm{Tr}_{H_0}(\omega)$, is defined through the
formula
\begin{equation*}
 \mathrm{Tr}_{H_0}(\omega) =\sum_{i=1}^k p_i^*(\omega)\,.
\end{equation*}
It is clearly a germ of meromorphic differential 1-form.  As  the trace relative to a ramified covering, it extends
meromorphically to the whole projective space $\check{\mathbb P}^n$ as  proved in the next section. This is essentially the content
of Abel's addition Theorem.

\index{Trace!relative to hyperplanes|)}

\subsection{Abel's Theorem for smooth curves}
\index{Abel's Theorem!smooth curves|(}

The next result is the version for smooth curves
of what is called by  web geometers Abel's addition Theorem, or just Abel's Theorem.
The readers are warned that authors with other
backgrounds might call   a  different, but essentially equivalent, statement by the same name.
For a thorough discussion about the original version(s)  of Abel's  theorem
see \cite{kleiman}.

\begin{thm}[Abel's addition Theorem for smooth curves]\label{T:abel2}
If $\omega$ is a meromorphic $1$-form on a smooth projective curve $C\subset \mathbb P^n$, then the germ $\mathrm{Tr}_{H_0} (\omega)$ extends to
a unique meromorphic $1$-form $\mathrm{Tr}(\omega)$
globally defined on $\check{\mathbb P}^n$ which does not depend on $H_0$. Moreover, $\omega$ is a  holomorphic $1$-form on $C$  if and only if  $\mathrm{Tr}(\omega)=0$.
\end{thm}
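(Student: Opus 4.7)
\emph{Plan.} The strategy proceeds in three steps: (i) define $\mathrm{Tr}(\omega)$ as a meromorphic $1$-form on the Zariski-open set $\check{\mathbb P}^n\setminus \check C$; (ii) extend it meromorphically across $\check C$ by reducing to the one-dimensional case already handled by Proposition~\ref{P:pp}; and (iii) analyse when the resulting form is identically zero.

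First I would work on the complement of the dual hypersurface $\check C \subset \check{\mathbb P}^n$, the locus of hyperplanes tangent to $C$. For any $H_0 \notin \check C$, the intersection $H_0 \cap C$ is transverse, so the $k$ local branches $p_1,\ldots,p_k:(\check{\mathbb P}^n,H_0) \to C$ described before the statement are holomorphic. The sum $\sum p_i^*(\omega)$ is invariant under relabelling of the branches, hence glues to a meromorphic $1$-form $\mathrm{Tr}(\omega)$ on $\check{\mathbb P}^n \setminus \check C$. If $\omega$ is holomorphic on $C$ then each $p_i^*\omega$ is holomorphic near $H_0$, so $\mathrm{Tr}(\omega)$ is holomorphic on this open set.

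Next I would reduce the extension problem to Proposition~\ref{P:pp}. Given a line $\ell \subset \check{\mathbb P}^n$, the associated pencil of hyperplanes has base locus a $\mathbb P^{n-2}$; for generic $\ell$, this base locus meets $C$ transversely in $k$ points disjoint from the poles of $\omega$, and the assignment $p \mapsto $ (the unique $H \in \ell$ with $p \in H$) defines a finite ramified covering $f_\ell : C \to \ell \simeq \mathbb P^1$ of degree $k$. Proposition~\ref{P:pp} then produces a meromorphic $1$-form $\mathrm{tr}_{f_\ell}(\omega)$ globally defined on $\ell$, which agrees with the restriction of $\mathrm{Tr}(\omega)$ to $\ell \setminus (\ell \cap \check C)$. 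The local model (\ref{E:tracef}) in the proof of Proposition~\ref{P:pp} shows moreover that the order of the pole along $\ell \cap \check C$ is bounded in terms of the ramification of $f_\ell$ and the orders of poles of $\omega$, uniformly as $\ell$ varies in a neighborhood of a given $H_0 \in \check C$. Since $\check{\mathbb P}^n$ is covered by such generic lines, a Levi--Hartogs extension argument across the hypersurface $\check C$ yields a meromorphic $1$-form $\mathrm{Tr}(\omega)$ on all of $\check{\mathbb P}^n$; uniqueness, and independence from $H_0$, follow from the density of $\check{\mathbb P}^n \setminus \check C$. I expect this extension step to be the main obstacle: one must control the pole orders along the entire dual hypersurface $\check C$, which is where the elementary expansion in Proposition~\ref{P:pp} is indispensable.

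For the final equivalence, suppose $\omega$ is holomorphic on $C$. Then by step (i) the globally extended $\mathrm{Tr}(\omega)$ is a holomorphic $1$-form on $\check{\mathbb P}^n$, hence zero since $H^0(\check{\mathbb P}^n,\Omega^1_{\check{\mathbb P}^n})=0$; equivalently one may restrict to any line $\ell$, apply Proposition~\ref{P:pp} to obtain a holomorphic $1$-form on $\mathbb P^1 = \ell$, and conclude it vanishes. Conversely, if $\omega$ has a pole at some $p \in C$, I would pick a hyperplane $H_0$ passing through $p$, transverse to $C$, and avoiding all other poles of $\omega$; the branch $p_1$ with $p_1(H_0)=p$ is a local biholomorphism, so $p_1^*\omega$ has a nontrivial polar part at $H_0$ while the other $p_i^*\omega$ are holomorphic there. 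Consequently $\mathrm{Tr}_{H_0}(\omega)$ has a pole, so $\mathrm{Tr}(\omega) \not\equiv 0$.
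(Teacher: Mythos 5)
Your proposal is correct and follows essentially the same route as the paper: define $\mathrm{Tr}(\omega)$ off the discriminant, identify its restriction to a generic line $\ell$ with the trace under the ramified covering $C \to \ell \simeq \mathbb P^1$ (the paper's Lemma~\ref{L:tt}), extend across $\check C$ by invoking Proposition~\ref{P:pp}, and settle the equivalence via the absence of holomorphic $1$-forms on $\check{\mathbb P}^n$ together with the same pole-producing choice of hyperplane for the converse. The only slip is notational: for generic $\ell$ the base locus $\check\ell \simeq \mathbb P^{n-2}$ is \emph{disjoint} from $C$ (it is the members of the pencil that meet $C$ transversely in $k$ points), which is exactly what makes $f_\ell$ well defined.
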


To prove Theorem \ref{T:abel2}, let  $\check{U}_C\subset \check{\mathbb P}^n$ be   the Zariski open subset
formed by the hyperplanes $H\subset \mathbb P^n$ which intersect $C$ at $k=\deg C$ distinct points.
In other words, $\check{U}_C$ is the complement in $\check{\mathbb P}^n$ of the discriminant of
the dual web $\mathcal W_C$. The construction of $\mathrm{Tr}_{H_0}(\omega)$ made above can be done
for any hyperplane  $H \in \check{U}_C$. The results   patch together to define
a meromorphic 1-form $\mathrm{Tr}(\omega)$ on $\check{U}_C$.

To extend  $\mathrm{Tr}(\omega)$ through the discriminant of $\mathcal W_C$, it will be used  a relation between the trace under a
 ramified covering and the trace relative to the hyperplanes.
To draw this relation, let  $\ell$ be a line on $\check{\mathbb P}^n$.
It corresponds to a pencil of hyperplanes in $\mathbb P^n$ which has base locus equal to  $\Pi= \check \ell \subset \mathbb P^n$, the
$\mathbb P^{n-2}$ dual to $\ell$.
It will be convenient, although not strictly necessary,  to assume that $\Pi$ does not intersect $C$.
Define
\[
\pi_{\ell}: C \to \ell \simeq \mathbb P^1
\]
as the morphism that associates to a point $x \in C$ the hyperplane in $\ell$ containing it. Clearly it is
a ramified covering, thus  the trace $\mathrm{tr}_{\pi_{\ell}}(\omega)$ makes sense for any meromorphic $1$-form
on $C$.

\begin{lemma}\label{L:tt}
For every meromorphic $1$-form $\omega$  on $C$, the trace of $\omega$ under $\pi_{\ell}$ coincides with the
pull-back to $\ell$ of  the trace of $\omega$ relative to the family of hyperplanes, that is
\[
\mathrm{tr}_{\pi_\ell}(\omega) = i^* \mathrm{Tr}(\omega)\, ,
\]
where $i:\ell \to \check{\mathbb P}^n$ is the natural inclusion.
\end{lemma}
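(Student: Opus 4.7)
The plan is to verify the equality pointwise on a Zariski open subset of $\ell$ and then invoke analytic continuation. Both meromorphic $1$-forms on $\ell$ in question are defined at every $H \in \ell \cap \check{U}_C$, and by Proposition \ref{P:pp} the left-hand side extends meromorphically to all of $\ell$. Thus it suffices to establish the identity at an arbitrary fixed $H_0 \in \ell \cap \check{U}_C$.

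Fix such an $H_0$ and write $H_0 \cap C = \{x_1, \ldots, x_k\}$. By definition of $\mathrm{Tr}(\omega)$ one has
\[
\mathrm{Tr}(\omega) = \sum_{i=1}^k p_i^*(\omega)
\]
in a neighborhood of $H_0$ in $\check{\mathbb P}^n$, where the germs $p_i : (\check{\mathbb P}^n, H_0) \to C$ satisfy $p_i(H_0) = x_i$ and $p_i(H) \in H \cap C$ for $H$ near $H_0$. Restricting to the line $\ell$ produces $k$ germs $p_i \circ i : (\ell, H_0) \to C$ satisfying $(p_i \circ i)(H_0) = x_i$ and $(p_i \circ i)(H) \in H \cap C$ for $H \in (\ell, H_0)$.

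The key observation is that these germs coincide with the local inverses of $\pi_\ell$ at $H_0$. Indeed, $\pi_\ell(y)$ is by definition the unique hyperplane of the pencil $\ell$ containing $y$, so $\pi_\ell((p_i \circ i)(H)) = H$ for $H$ near $H_0$ since $(p_i \circ i)(H) \in H$. As $H_0$ is a regular value of $\pi_\ell$ and the points $x_1, \ldots, x_k$ are distinct, the local inverses $g_1, \ldots, g_k$ of $\pi_\ell$ at $H_0$ are uniquely characterized by the conditions $g_i(H_0) = x_i$ and $\pi_\ell \circ g_i = \mathrm{id}$. Hence, after relabeling if necessary, $g_i = p_i \circ i$ for every $i \in \underline{k}$.

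Consequently,
\[
i^* \mathrm{Tr}(\omega) = \sum_{i=1}^k i^* p_i^*(\omega) = \sum_{i=1}^k (p_i \circ i)^*(\omega) = \sum_{i=1}^k g_i^*(\omega) = \mathrm{tr}_{\pi_\ell}(\omega)
\]
in a neighborhood of $H_0$. Since $H_0$ was arbitrary in the dense open subset $\ell \cap \check{U}_C$ and both sides are meromorphic on $\ell$, the identity holds globally on $\ell$. No serious obstacle is expected: the lemma is essentially a bookkeeping exercise built on the tautological identification of the two families of local sections, and the only subtle point—handling the critical values of $\pi_\ell$ lying in the discriminant of $\mathcal W_C$—is absorbed by the meromorphic extension furnished by Proposition \ref{P:pp}.
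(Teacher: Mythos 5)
Your proof is correct and follows essentially the same route as the paper's: both identify the germs $p_i \circ i$ as the local inverses of $\pi_\ell$ over $\ell \cap \check{U}_C$ (because $(p_i\circ i)(H) \in H$ forces $\pi_\ell \circ (p_i\circ i) = \mathrm{id}$) and then compare the two trace formulas termwise at a generic point of $\ell$. Your explicit appeal to meromorphic continuation to cover the critical values is the same closing step the paper leaves implicit in "the lemma follows."
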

\begin{proof}
Consider the composition $\varphi = i \circ \pi_{\ell}: C\rightarrow \check{\mathbb P}^n$. The image of a point  $q \in C$ is the hyperplane $H$
containing both the point  $q$ and the linear space $\Pi= \check \ell$.

The hyperplane  $H$ intersects
$C$ at $q$, and  at  other $k-1$ points of $C$ with  multiplicities taken into account. All these other points  are
also mapped to $H$ by $\varphi$. Thus,  the  functions  $p_i \circ i : \ell \cap \check{U} \to C$
are local inverses of $\varphi$ at any $H \in \check{U}_C$. Hence
\[
\mathrm{tr}_{ \pi_\ell}(\omega) = \mathrm{tr}_{i \circ \pi_\ell}(\omega) = \sum_{i=1}^k (p_i \circ i)^* \omega = i^*  \mathrm{Tr}(\omega) \,
\]
at the generic point of $\ell$.
The lemma follows.
\end{proof}

Back to the proof of Abel's Theorem, recall that $\mathrm{Tr}(\omega)$ is defined all over the Zariski open set $\check{U}_C$. If  it does not
extend  meromorphically to the whole $\check{\mathbb P}^n$,  then its pull-back to a generic line ${\ell \subset \check{\mathbb P}^n}$ has an essential singularity at one of the
points of $\ell \cap \Delta(\mathcal W_C)$. Lemma \ref{L:tt} implies the existence of an essential singularity for $\mathrm{tr}_{\pi_{\ell}}(\omega)$.
But this cannot be the case according to Proposition \ref{P:pp}.

\smallskip

To prove that  $\omega$  holomorphic implies   $\mathrm{Tr}(\omega) =0 $, start by noticing
that  there are no non-zero holomorphic differential forms on $\check{\mathbb P}^n$.
If $\omega$ is holomorphic and $\mathrm{Tr}(\omega)$ is non-zero then $\mathrm{Tr}(\omega)$  has non-empty polar set.
Therefore $\mathrm{Tr}(\omega)$ pulls-back to a generic line ${\ell \subset \check{\mathbb P}^n}$ as a
 meromorphic, but not holomorphic, differential. As above, Lemma \ref{L:tt} and Proposition \ref{P:pp} lead to a contradiction.

\smallskip

It remains to establish the converse implication.  To
prove  the contrapositive, suppose    $\omega$ is not holomorphic. If  $x \in C$  is a pole of $\omega$ then the generic
hyperplane $H \subset \mathbb P^n$ through $x$ intersects $C$ transversely and avoids all the other poles
of $\omega$. Thus, in a neighborhood of $H$ in $\check{\mathbb P}^n$, the trace of $\omega$ is the sum
of the pull-back by a holomorphic map of a meromorphic, but not holomorphic,  $1$-form   with other $\deg(C)-1$ holomorphic $1$-forms.
Hence $\mathrm{Tr}(\omega)$ has non-empty polar set and, in particular, is not zero.
\qed
\index{Abel's Theorem!smooth curves|)}

\subsection{Abelian relations for algebraic webs}\label{S:ARAW}

Theorem \ref{T:abel2} can be interpreted in terms of webs/abelian relations
instead of projective curves/holomorphic $1$-forms.  More precisely,

\begin{thm}\label{T:curvesvswebs}
If  $C$ is a smooth projective curve of degree $k$ and $H_0$ is a hyperplane intersecting it transversely, then the space of holomorphic $1$-forms on $C$
injects into the space of abelian relations of the dual web $\mathcal W_C(H_0)$.
\end{thm}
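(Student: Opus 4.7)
The plan is to define an explicit linear map $\Phi : H^0(C, \Omega^1_C) \to \mathcal A(\mathcal W_C(H_0))$ by pulling back a holomorphic $1$-form $\omega$ on $C$ along the germ submersions $p_i$ defining the web, namely
\[
\Phi(\omega) = \bigl( p_1^*\omega,\, p_2^*\omega,\, \ldots,\, p_k^*\omega \bigr),
\]
and then to check that $\Phi$ takes values in the space of abelian relations, and finally that it is injective. Each of the three conditions defining an abelian relation will be verified separately, and the non-trivial one will be condition (c), which is precisely where Abel's addition Theorem enters.

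First I would check conditions (a) and (b). For (a), one has $d(p_i^*\omega) = p_i^*(d\omega)$; since $\omega$ is a $1$-form on the one-dimensional complex manifold $C$, $d\omega$ vanishes automatically, so every $p_i^*\omega$ is closed. For (b), the fibers of $p_i$ are, by the very construction of $\mathcal W_C(H_0)$ in Section \ref{S:defalg}, the leaves of the foliation $\mathcal F_i$; hence $p_i^*\omega$ annihilates all vectors tangent to $\mathcal F_i$, which is equivalent to $\omega_i \wedge p_i^*\omega = 0$ for any $1$-form $\omega_i$ defining $\mathcal F_i$.

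The heart of the argument is condition (c). By the very definition of the trace relative to the family of hyperplanes,
\[
\sum_{i=1}^k p_i^*\omega = \mathrm{Tr}_{H_0}(\omega),
\]
and Abel's addition Theorem (Theorem \ref{T:abel2}) asserts that $\mathrm{Tr}_{H_0}(\omega)$ is the germ at $H_0$ of a global meromorphic $1$-form $\mathrm{Tr}(\omega)$ on $\check{\mathbb P}^n$ which vanishes identically whenever $\omega$ is holomorphic on $C$. Consequently $\Phi(\omega)\in \mathcal A(\mathcal W_C(H_0))$.

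Finally, injectivity: suppose $\Phi(\omega) = 0$, so in particular $p_1^*\omega \equiv 0$ in a neighborhood of $H_0$. Since $p_1 : (\check{\mathbb P}^n, H_0) \to C$ is a submersion, its differential is surjective and $p_1^*$ is injective on cotangent spaces fiberwise, so $\omega$ must vanish on an open neighborhood of the point $p_1(H_0) \in C$. Because $C$ is a connected complex manifold and $\omega$ is holomorphic, analytic continuation forces $\omega \equiv 0$ on all of $C$. The main (and only serious) obstacle in this proof is condition (c), which is not a formal verification but an invocation of the global, non-trivial content of Abel's addition Theorem; the rest is a straightforward unwinding of definitions.
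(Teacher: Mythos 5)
Your proof is correct and follows essentially the same route as the paper: pull back $\omega$ along the submersions $p_i$, note that closedness and the tangency condition are formal, and invoke Abel's addition Theorem (Theorem \ref{T:abel2}) for the vanishing of the sum. The only cosmetic caveat is that under the paper's conventions $C$ need not be connected, so for injectivity you should observe that every irreducible component of $C$ meets $H_0$ in some point $p_i(H_0)$ and run your analytic-continuation argument component by component, using all the $p_i^*\omega$ rather than just $p_1^*\omega$.
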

\begin{proof}
Let  $H_0$  be a hyperplane intersecting  $C$ transversely in $k$ points, and
$p_i:(\check{\mathbb P}^n,H_0) \to C$ be germs of holomorphic functions
 such that $H\cdot C=\sum_i p_i(H)$ for all $H\in (\check{\mathbb P}^n,H_0)$.
 Recall from Chapter~\ref{Chapter:intro} that the $k$-web $\mathcal W_C(H_0)$ is  defined
 by the submersions $p_1, \ldots, p_k$. That is, $\mathcal W_C(H_0) = \mathcal W(p_1, \ldots, p_k)$.

\smallskip

If $\omega$ is a holomorphic $1$-form on $C$ then it is automatically closed, for  dimensional reasons. Since the
exterior differential commutes with pull-backs, the $1$-forms $p_i^* \omega$ are also closed. Moreover,
the \mbox{$1$-form} $p_i^* \omega$ defines the very same foliation as the submersion $p_i$. Abel's addition theorem, in its turn, implies  that
$$
 \mathrm{Tr}(\omega)  = p_1^*(\omega)+\cdots + p_k^*(\omega)=0  $$
holds identically on $(\check{\mathbb P}^n, H_0)$.
Therefore $(p_1^* \omega, \ldots, p_k^* \omega)$ is an abelian relation of $\mathcal W_C(H_0)$.
It follows that the injective linear map
\begin{align*}
 H^0(C,\Omega^1_C) &\longrightarrow \Omega^1(\check{\mathbb P}^n, H_0)^k \\
\omega \;\;&\longmapsto (p_1^* \omega, \ldots, p_k^*\omega)
\end{align*}
factors through $\mathcal A(\mathcal W_C)\subset \Omega^1(\mathbb P^n, H_0)^k$.
\end{proof}

\index{Genus}
Recall that $g(C)$ -- the \defi[genus] of $C$ -- coincides
with $h^0(C,\Omega^1_C)$, the dimension of the vector space of holomorphic $1$-forms on $C$.

\begin{cor}\label{C:BNC}
If $C$ is a smooth projective curve  then, for any hyperplane $H_0$   intersecting it transversely,
\[
 \mathrm{rank} (\mathcal W_C(H_0))  \ge g(C) \, .
\]
\end{cor}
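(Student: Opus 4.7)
The plan is extremely short: this corollary is essentially a rephrasing of Theorem~\ref{T:curvesvswebs} after unwinding the relevant definitions. I will simply chain together three facts.

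First, recall that by definition the rank of a web is the dimension of its space of abelian relations, so $\mathrm{rank}(\mathcal{W}_C(H_0)) = \dim_{\mathbb C}\mathcal{A}(\mathcal{W}_C(H_0))$. Likewise, the genus of a smooth projective curve is by definition $g(C) = h^0(C,\Omega^1_C) = \dim_{\mathbb C} H^0(C,\Omega^1_C)$, as recalled in the paragraph preceding the statement.

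Next, Theorem~\ref{T:curvesvswebs} (which has just been proved) provides an injective linear map
\[
H^0(C,\Omega^1_C) \longhookrightarrow \mathcal{A}(\mathcal{W}_C(H_0)), \qquad \omega \longmapsto (p_1^*\omega,\ldots,p_k^*\omega),
\]
where $p_1,\ldots,p_k$ are the local branches of $H\mapsto H\cdot C$ near $H_0$. Injectivity follows from the fact that each $p_i$ is a submersion onto $C$, so $p_i^*\omega=0$ forces $\omega=0$.

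Taking dimensions on both sides of this injection yields
\[
g(C) = \dim H^0(C,\Omega^1_C) \le \dim \mathcal{A}(\mathcal{W}_C(H_0)) = \mathrm{rank}(\mathcal{W}_C(H_0)),
\]
which is precisely the claimed inequality. There is no substantive obstacle: all the work has already been done in Theorem~\ref{T:abel2} (Abel's addition Theorem) and its web-theoretic reformulation in Theorem~\ref{T:curvesvswebs}, so the corollary is a one-line consequence.
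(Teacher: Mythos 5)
Your proof is correct and follows exactly the paper's route: Corollary \ref{C:BNC} is an immediate dimension count from the injection $H^0(C,\Omega^1_C)\hookrightarrow \mathcal A(\mathcal W_C(H_0))$ of Theorem \ref{T:curvesvswebs}, together with the identification $g(C)=h^0(C,\Omega^1_C)$. Nothing further is needed.
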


In Chapter \ref{Chapter:4}  it will be seen that this lower bound is in fact an equality.

\subsection{Castelnuovo's bound}
\index{Castelnuovo's bound|(}

Corollary \ref{C:BNC} read backwards yields the celebrated Castelnuovo's bound for the genus
of projective curves. More precisely,

\begin{thm}[Castelnuovo's bound]\label{T:castelnuovobound}
If  $C$ is a smooth connected non-degenerate  projective curve on $\mathbb P^n$ of degree $k$ then
\[
g(C) \le \pi(n,k)\, .
\]
\end{thm}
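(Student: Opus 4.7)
The plan is to simply chain together two inequalities proved earlier in the chapter: on the one hand, Corollary \ref{C:BNC} bounds the genus of $C$ from below by the rank of the dual web $\mathcal{W}_C(H_0)$ (this is what Abel's addition Theorem buys us through Theorem \ref{T:curvesvswebs}); on the other hand, Chern's bound (Theorem \ref{T:cota}) bounds the rank of a smooth $k$-web on an $n$-dimensional germ from above by $\pi(n,k)$. Sandwiching $g(C)$ between these two gives exactly the desired inequality. The only thing that needs care is producing a hyperplane $H_0$ for which $\mathcal{W}_C(H_0)$ is \emph{smooth}, since Chern's bound $\pi(n,k)$ only applies with its full strength to smooth webs and not merely to quasi-smooth ones.

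First, I would select the hyperplane $H_0$. By Proposition \ref{P:gp1}, a generic hyperplane $H_0 \in \check{\mathbb{P}}^n$ has the property that $\mathcal{W}_C(H_0)$ is a germ of smooth $k$-web on the germ $(\check{\mathbb{P}}^n, H_0) \simeq (\mathbb{C}^n, 0)$. This is where the non-degeneracy hypothesis on $C$ is essential: it is used in Proposition \ref{P:gp2} to guarantee not only that $H_0 \cdot C$ consists of $k$ distinct points, but also that any $n$ among these points span $H_0$, which is exactly the general-position condition on the conormals at the origin that distinguishes smooth webs from merely quasi-smooth ones.

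Next, I would invoke Corollary \ref{C:BNC} for this particular $H_0$ to get
\[
\mathrm{rank}\bigl(\mathcal{W}_C(H_0)\bigr) \;\ge\; g(C),
\]
and then apply Theorem \ref{T:cota} to the smooth $k$-web $\mathcal{W}_C(H_0)$ on $(\mathbb{C}^n,0)$ to get
\[
\mathrm{rank}\bigl(\mathcal{W}_C(H_0)\bigr) \;\le\; \pi(n,k).
\]
Combining the two yields $g(C) \le \pi(n,k)$ as claimed.

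The proof is essentially immediate from the results already assembled, so there is no hard computational step. The main (conceptual) obstacle has in fact already been handled earlier in the text: it is the verification that $\mathcal{W}_C(H_0)$ is smooth for generic $H_0$, i.e., the uniform position principle of Proposition \ref{P:gp2}, without which Chern's bound would only give the weaker estimate coming from $\mathrm{rank}(\mathcal{W}) \le \sum_{j=0}^{k-3}\max\bigl(0, k-\ell^{j+1}(\mathcal{W})\bigr)$ for the quasi-smooth case, insufficient to recover Castelnuovo's bound.
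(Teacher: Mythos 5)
Your proof is correct and follows exactly the paper's own argument: pick a generic $H_0$ so that $\mathcal W_C(H_0)$ is smooth (Proposition \ref{P:gp1}), then sandwich $g(C)$ between Corollary \ref{C:BNC} and Chern's bound (Theorem \ref{T:cota}). Your added remark on why smoothness (rather than quasi-smoothness) of the dual web is the essential point is a faithful elaboration of what the paper leaves implicit.
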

\begin{proof}
For a generic $H_0$, the web  $\mathcal W_C(H_0)$ is smooth according to Proposition \ref{P:gp1}.
 Chern's bound on the rank of smooth webs, see Theorem \ref{T:cota}, combined with Corollary \ref{C:BNC} implies the result.
\end{proof}

It is instructive to compare this proof of Castelnuovo's bound, with the usual textbook proof. The first step of
both proofs, relies on the bounds for the number of conditions imposed by points on the complete linear systems
of hypersurfaces on the relevant projective space. While the former proof  uses Abel's addition theorem to conclude, the latter
instead appeals to Riemann-Roch Theorem. For thorough discussion on this matter see \cite{Jbr}.

\index{Castelnuovo's bound|)}

\section{Abel's Theorem II: arbitrary curves}

When studying germs of smooth algebraic webs $\mathcal W_C(H_0)$, it is hard to tell whether the
curve $C$ is smooth or not.  At first  sight the web only exhibits  properties of $C$ valid at a neighborhood
of the  transversal hyperplane $H_0$. For  smooth curves,  it has just been explained how the holomorphic differentials
give rise to abelian relations for the dual web.
It is them natural to enquire:
\begin{enumerate}
\item[(a)] are there another abelian relations for algebraic webs  ?
\item[(b)] what kind of {\it differentials} on a singular curve give rise to abelian relations for the dual web?
\end{enumerate}

Question (a) will be treated in Chapter \ref{Chapter:4}, while question (b) will be the subject of the present section.
Before dwelling with it, some conventions about singular curves are settled below.

\subsubsection{Conventions}

Given a curve $X$,  the \defi[desingularization]  of $X$ will denoted by $\nu=\nu_X:\overline X \to X$.

A \defi[meromorphic $1$-form] on $X$ is nothing more than a meromorphic $1$-form $\omega$ on the smooth part of $X$
such that $\nu^* \omega$, its pull-back to $\overline X$, extends to the whole $\overline X$ as a meromorphic $1$-form.
The sheaf of meromorphic differentials on a curve $X$, singular or not, will be denoted by $\mathcal M_X$.

 For  $X$ an arbitrary curve and  $Y$ a smooth irreducible curve, a morphism  $f: X \to Y$ will
be called  a \defi[finite ramified covering], \index{Finite ramified covering!singular domain}
 if the restriction of $\overline f =  f \circ\nu_{X}$ to each of the irreducible
components of $\overline X$ is a finite ramified covering as defined in Section \ref{S:Traceunderramifiedcoverings}.

\subsection{Residues and traces}

\index{Residue|(}
Assume that $p$ is a smooth point of a curve $X$, and  $x$ is
a local holomorphic coordinate on $X$ centered at it.
If $\omega$ is a germ of meromorphic differential at $p$ then
\[
\omega=\sum_{i=i_0}^{\infty} a_i x^i dx \,
\]
for some $i_0 \in \mathbb Z$ and suitable complex numbers $a_i$.
The \defi[residue of $\omega$ at $p$] is the  complex number
\[
{\mathrm Res}_p \big( \omega\big) =\left\{  \begin{array}{cc}
                                    a_{-1}  & \text{ if } i_0 \le -1;\\
                                    0    & \text{ otherwise. }
                                  \end{array} \right.
\]
It is a simple matter to verify that this definition does not depend on the local coordinate $x$. One possibility is to notice that the residue
can  be  determined through the integral formula  $${\mathrm Res}_p ( \omega ) =\frac{1}{2i\pi}\int_{\gamma } \omega \, ,  $$
for any  sufficiently small  (positively oriented)  loop $\gamma$ around $p$.

\medskip

The following properties can be easily verified:
\begin{enumerate}
\item ${\mathrm Res}_p: \mathcal M_{X,p} \rightarrow \mathbb C$ is $\mathbb C$-linear;
\item ${\mathrm Res}_p (\omega)=0$ when $\omega$ is holomorphic at $p $ ;
\item  ${\mathrm Res}_p(f^n df)=0$ for all $f\in \mathcal O_{X,p}$ when  $n\neq -1$;
\item ${\mathrm Res}_p(f^{-1} df)=\nu_p(f)$ for all meromorphic germs $f$ at $p$ (where $\nu_p$ is the valuation associated to $p$).
 \end{enumerate}

\subsubsection{Residues at singular points}

Assume now that $X$ is singular and  let $\omega$ be a meromorphic differential $1$-form on it.
Let $\nu:\overline{X}\rightarrow X$  be the desingularization of $X$.  The residue  of $\omega$ at a  singular point
$p \in X_{sing}$  is defined as
\begin{equation}
 \label{E:defRESsing}
{\mathrm Res}_p(\omega)=\sum_{q\in\nu^{-1}(p)} {\mathrm Res}_{q}\big(  \nu^*(\omega)\big)\,.
\end{equation}
It is completely determined by the germ of $\omega$ at $p$.

Given a ramified covering $f:X \to Y$ between an eventually singular curve $X$ and
a smooth and irreducible curve $Y$  then
the trace  under $f$ of any meromorphic $1$-form $\omega$ on $X$  is defined by the relation
\[
 \mathrm{tr}_f ( \omega ) =  \, \mathrm{tr}_{\overline f} ( \nu_X^* \omega) \, .
\]

\begin{prop} \label{P:restra}
Let $X$ and $Y$  be curves with $Y$ smooth and irreducible. If  $f:X\rightarrow Y$ is  a  ramified covering then for every $\omega\in H^0(X,\mathcal M_X)$ and every $ p\in Y$,
\begin{equation*}
 \quad  {\mathrm Res}_p\big(  \mathrm{tr}_f(\omega) \big)=\sum_{q\in f^{-1}(p)}
 {\mathrm Res}_q\big(  \omega\big)
.
\end{equation*}
\end{prop}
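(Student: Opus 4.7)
The plan is to first reduce to the case where $X$ is smooth, then treat regular values by a direct biholomorphism argument, and finally handle critical values by a local calculation in the normal form of a ramified cover. The first two steps are essentially bookkeeping; the genuine content lies in the third.

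\medskip

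For the reduction, I would set $\overline{\omega} = \nu_X^* \omega$ and $\overline{f} = f \circ \nu_X : \overline{X} \to Y$, so that formula~(\ref{E:defRESsing}) gives
$$\sum_{q \in f^{-1}(p)} \mathrm{Res}_q(\omega) = \sum_{\overline{q} \in \overline{f}^{-1}(p)} \mathrm{Res}_{\overline{q}}(\overline{\omega}),$$
while the very definition of the trace in the singular setting yields $\mathrm{tr}_f(\omega) = \mathrm{tr}_{\overline{f}}(\overline{\omega})$. Hence it suffices to prove the proposition under the assumption that $X$ is smooth, which I adopt from now on.

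\medskip

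If $p$ is a regular value of $f$, the fiber $f^{-1}(p) = \{q_1, \ldots, q_k\}$ (with $k = \deg f$) consists of $k$ distinct points, and on a small disk $D$ around $p$ I can pick local inverses $g_i : (Y,p) \to (X,q_i)$ of $f$ with $\mathrm{tr}_f(\omega) = \sum_{i=1}^k g_i^* \omega$ on $D$. Since each $g_i$ is a biholomorphism onto its image and the residue is biholomorphically invariant, $\mathrm{Res}_p(g_i^* \omega) = \mathrm{Res}_{q_i}(\omega)$; summing over $i$ settles this case.

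\medskip

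At a critical value $p$, I would argue locally around each $q \in f^{-1}(p)$ separately, in coordinates where $f$ takes the normal form $f(z) = z^{e_q}$ with $p$ at the origin of $Y$. The $e_q$ local inverses of $f$ over a punctured disk around $p$ are $g_i(y) = \zeta^i y^{1/e_q}$ for $\zeta$ a primitive $e_q$-th root of unity, and writing $\omega = \sum_n a_n z^n\, dz$ near $q$, a short calculation based on the orthogonality of characters yields
$$\sum_{i=0}^{e_q-1} g_i^* \omega = \sum_{m \in \mathbb{Z}} a_{m e_q - 1}\, y^{m-1}\, dy.$$
The coefficient of $y^{-1}\, dy$ equals $a_{-1} = \mathrm{Res}_q(\omega)$, so the contribution of $q$ to $\mathrm{Res}_p(\mathrm{tr}_f(\omega))$ is exactly $\mathrm{Res}_q(\omega)$, and summation over $q \in f^{-1}(p)$ will conclude the proof. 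The main obstacle here is the careful bookkeeping of the ramification index $e_q$: the shift by $-1$ in the exponents and the factor $1/e_q$ coming from the change of variable must interact precisely so as to produce the claimed identification, with no spurious factors left over. An equivalent synthetic route would replace the character-orthogonality calculation by the integral characterization of the residue together with the observation that the preimage under $f$ of a small loop around $p$ decomposes as a disjoint union of small loops, each winding once around a point of $f^{-1}(p)$; this route bypasses the computation but demands equally careful attention to orientations and multiplicities.
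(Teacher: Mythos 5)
Your proof is correct and follows essentially the same route as the paper's: reduce to smooth $X$ via the normalization, decompose the trace as a sum of local traces over the fiber, and conclude from the local normal form $z\mapsto z^{e_q}$ that the trace of $a_{-1}z^{-1}dz$ contributes exactly $a_{-1}\,y^{-1}dy$. The only difference is that the paper simply cites its earlier formula (\ref{E:tracef}) from the proof of Proposition \ref{P:pp} for this local computation, whereas you rederive it by the roots-of-unity argument; the computation checks out.
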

\begin{proof}
Let $\nu_X : \overline{X} \to X$  be the normalizations of $X$.  Let
also  $\overline{f}:\overline{X} \rightarrow {Y}$ be the natural lifting of $f$, that is  $\overline f= f \circ  \nu_X$.

Since $\overline{f}^{-1}(p)=(f\circ \nu_X)^{-1}(p)$ and because of definition (\ref{E:defRESsing}),
one verifies  that the proposition holds for $f:X\rightarrow Y$ if it holds for $\overline{f}:\overline{X} \rightarrow Y$.
It is therefore harmless to assume smoothness for both $X$ and $Y$.

Let $q_1, \ldots, q_m$ be the pre-images of $p$  under $f$.
For $i \in \underline m$, let  $f_i:(X,q_i)\rightarrow (Y,p)$ be  the germ of analytic morphism induced by $f$,
and  $\omega_i\in \mathcal M_{X,x_i}$ be the germ of  $\omega$ at $p_i$. Clearly,  $\mathrm{tr}_f(\omega)
=\sum_{i=1}^m \mathrm{tr}_{f_i}(\omega_i)$ as germs at $p$.  Thus
$${\mathrm Res}_p \big(  \mathrm{tr}_f(\omega) \big)=\sum_{i=1}^m  {\mathrm Res}_p \big( \mathrm{tr}_{f_i}(\omega_i)\big) $$ by the additivity of the residue.  In suitable coordinates,  each of the functions $f_i$ can be written
as  $f_i(z_i) = z_i^{n_i}$, for suitable $n_i \in \mathbb Z$. A quick inspection of (\ref{E:tracef}) leads  to
 the identity ${{\mathrm Res}_p\big[\mathrm{tr}_{f_i}(\omega_i)\big]={\mathrm Res}_{q_i}\big[ \omega_i\big]}$ for every $i \in \underline m$.
The proposition follows.
\end{proof}
\index{Residue|)}

\subsection{Abelian differentials}
\index{Abelian differential|(}

Let $X$ be a curve and $\nu: \overline X \to X$ be its desingularization.
An \defi[abelian differential] $\omega$ on  $X$ is
a meromorphic $1$-form on $X$ which satisfies
\[
 {\mathrm Res}_p \big(  f\omega  \big)=
\sum_{q\in \nu^{-1}(p)} {\mathrm Res}_{q}
\big[  \nu^*(f\omega ) \big]=
0
\]
for every $p \in X$ and every $f \in \mathcal O_{ X, p}$.

\medskip

For any open subset $U\subset X$, let  $\omega_X(U)$ be
 the set of abelian differentials on $U$.
Of course $\omega_X(U)$ inherits from $\mathcal M_X(U)$    a structure of $\mathcal O_X(U)$-module.
Indeed even more is true, the subsheaf $\omega_X $ of $\mathcal M_X$ is coherent.
Later, a proof that $\omega_X$ is coherent will be presented under the assumption that
$X$ is contained in a smooth surface. The general case will not be treated in this text.
In the algebraic category  the result  goes back to Rosenlicht. For a
treatment of the analytic case  see  \cite{barlet}.

\begin{remark}
It is  possible
 to characterize abelian differentials  in terms of currents. Indeed, a meromorphic $1$-form $\omega$
 on  a curve $X$  is abelian,  if and only if the current $[\omega]$
 defined by it  is $\overline{\partial}$-closed. That is, if
$$\langle \overline{\partial}[\omega], \theta\rangle: =
\langle[\omega],  \overline{\partial}\theta\rangle= \int_X \omega \wedge \overline \partial \theta = 0 \, ,$$
for every smooth complex-valued function $\theta$ with compact support on $X$.
The  characterization of abelian  differentials in terms of  currents  generalizes promptly,
and allows to define a notion of abelian differential $k$-forms  in arbitrary  dimension.
 The interested reader can consult the references  \cite{barlet,henkinpassare}.
\end{remark}

For an arbitrary   projective  curve $X$, the coherence of $\omega_X$ implies that $H^0(X, \omega_X)$ is a finite dimensional vector space.
Its dimension is, by definition, the \index{Arithmetic genus} \index{Genus!arithmetic} \defi[arithmetic genus] $g_a(X)$ of $X$.
The \index{Geometric genus} \index{Genus!geometric} \defi[geometric genus] $g(X)$ of $X$, in its turn,  is
defined as the dimension of $H^0(\overline X, \Omega^1_{\overline X})$, where $\overline X$ is the  desingularization of $X$.

When  $X$ is smooth,  the sheaf $\omega_X$ is nothing more than
 $\Omega^1_X$, hence $g(X)$ coincides with $g_a(X)$. For singular curves, the equality between  $g(X)$ and $g_a(X)$ is the exception rather than the rule.

 \medskip

The sheaf  $\omega_X$ is also called the \defi[dualizing sheaf] of $X$.  \index{Dualizing sheaf}
The terminology steams from  Serre's duality for projective curves: \index{Serre's dualiaty} {\it for any coherent sheaf $\mathscr F$ on
a  projective curve $X$, there are  natural isomorphisms between
$H^i(X, \mathscr F)$ and $ H^{1-i}(X,{\mathscr F}^*\otimes  \omega_X)^*$ for $i=0,1$.}

\medskip

When $X$ is not just  projective but also connected,
Serre's duality for projective curves   is essentially equivalent to  \defi[Riemann-Roch Theorem]:
\index{Riemann-Roch Theorem} {\it for any line bundle $\mathcal L$ on $X$,
the identity\begin{footnote}{In the formula, $\chi(X,\mathcal L)$ stands for the Euler-characteristic
of the line-bundle $\mathcal L$ which, by definition, is $h^0(X,\mathcal L) - h^1(X,\mathcal L)$. }\end{footnote}  $\chi(X,\mathcal L)  =deg(\mathcal L)-g_a(X)+1$
holds true. }\smallskip

Applying Riemann-Roch Theorem to the dualizing sheaf itself, one obtains the \defi[genus formula] for irreducible projective curves
\begin{equation}\label{E:genusformula}
\deg(\omega_X)  = 2g_a(X) -2 \, .
\end{equation}

\medskip

Before proceeding toward  the proof of Abel's Theorem for arbitrary curves, a couple of examples will be considered
in order to clarify the concept of abelian differential.

\begin{example}
Let  $X=\{(x,y)\in \mathbb C^2\, , \, y^2 - x^3 =0 \}$. Clearly, it is a curve with the origin of $\mathbb C^2$ as its unique singular point.
The stalk  $\omega_{X_0}$  is a free $\mathcal O_{X,0}$-module  generated by
$ y^{-1}{dx}  \,.
$
Indeed, the normalization of $X$ is given by
\begin{align*}
 \nu :\;  \mathbb C &\longrightarrow X \\
 t &\longmapsto  (t^2,t ^3) \, .
\end{align*}
Therefore $
  \nu^* \mathcal O_{X,0} =   \nu^* \mathcal O_{\mathbb C^2,0} = \nu^* \mathbb C \{ x, y\}  = \mathbb C \{ t^2, t^3 \}  .
$
If $\omega $ is a meromorphic differential on $X$  then $\nu^* \omega = \sum_{i=-k}^{\infty} a_i t^i dt$.
Moreover, if  $\omega$ is abelian then not only
$a_{-1}= {\mathrm Res}_0 ( \nu^* \omega)$ must be zero but also $a_{ 2 n + 3 m -1} =  {\mathrm Res}_0 \big( \nu^* (x^n y^m \omega) \big)$ for any
pair of positive integers $(n,m)$. Hence every germ at $0$ of abelian differential on $X$ can be written as
\[
 \frac{dt}{t^2} \left( a_{-2} + a_0 t^2 + a_1 t^3 + \cdots \right) \in  \frac{dt}{t^2} \cdot \mathbb C \{ t^2,t^3\} \, .
\]
\end{example}

\smallskip

In a similar vein, a family of rational projective curves contained
in projective spaces of dimension $n \ge 2$ is considered below.

\smallskip

\begin{example}
\label{Ex:unicursalSINGULARcurve}
 Fix $n\geq 2$ and let $C$ be the rational curve of degree $2n$ in $\mathbb P^n$
 parametrized by
 \begin{align*}
 \nu: \mathbb P^1 & \longrightarrow \mathbb P^n  \,  \\
  (s:t) &\longmapsto [ s^{2n}: s^nt^n:s^{n-1}t^{n+1}:\cdots: s t^{2n-1}: t^{2n}].
\end{align*}
The curve $C$ is singular,   $p=[1:0:\cdots:0]$ is its unique singular point, and
the parametrization  $\nu$  is its desingularization.

 Any rational 1-form on $C$ writes $\omega=f(t)dt$  in the coordinate $(1:t)$, for a certain $f(t)\in \mathbb C(t)$.
 Assume that $\omega$ is abelian.
 Since $\omega_C$ coincides with the sheaf of holomorphic differentials on $C_{sm}=C\setminus \{p\}$, the differential
$\omega=f(t)dt$  must be holomorphic on $\mathbb C \setminus \{ 0 \}$  as well as at infinity. Therefore  $\omega=t^{-a}dt$ for a certain integer $a>1$.
It is  an instructive  exercise to show that
\begin{equation*}
H^0(C,\omega_C) \simeq \Big\langle
\frac{dt}{t^2} , \frac{dt}{t^3} , \ldots, \frac{dt}{t^n} ,\frac{dt}{t^{n+1}} ,\frac{dt}{t^{2n+2}}    \Big\rangle .
\end{equation*}
\end{example}

\subsubsection{Abelian differentials and traces}

The following proposition can be seen as a first evidence
 that  the concept of abelian differential
is the appropriate one  to extend Abel's addition Theorem
to singular projective curves. Note that, as for Proposition \ref{P:pp}, its  converse does not hold true.

\begin{prop}\label{P:aimpliesb}
Let $\omega$ be an abelian differential on a curve $X$.
If  $f:X\rightarrow Y$ is a ramified covering onto a smooth curve $Y$ then
$\mathrm{tr}_f(\omega)$ is  an abelian, that is a holomorphic, differential on $Y$.
\end{prop}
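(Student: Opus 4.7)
Since $Y$ is smooth, the abelian differentials on $Y$ coincide with the holomorphic $1$-forms on $Y$, so the statement reduces to proving that $\mathrm{tr}_f(\omega)$ has no poles. The plan is to argue by contradiction: suppose $\mathrm{tr}_f(\omega)$ has a pole of order $k\ge 1$ at some point $p\in Y$, then use Proposition \ref{P:restra} to reach a contradiction with the defining property of an abelian differential on $X$.

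First, I would note that $\mathrm{tr}_f(\omega)$ is a well-defined meromorphic $1$-form on $Y$: by definition it equals $\mathrm{tr}_{\overline f}(\nu_X^*\omega)$, and Proposition \ref{P:pp} applied to each irreducible component of $\overline X$ shows that this is globally meromorphic on $Y$. The point is therefore to rule out poles. Suppose $t$ is a local coordinate at $p$ and write $\mathrm{tr}_f(\omega)=\big(\sum_{j\ge -k} a_j t^j\big)dt$ with $a_{-k}\neq 0$. Then the meromorphic $1$-form $t^{k-1}\cdot \mathrm{tr}_f(\omega)$ has residue $a_{-k}\neq 0$ at $p$.

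Next, I would observe that the trace operation is $\mathcal O_Y$-linear. Indeed, at a regular value $q$ of $f$, if $g_1,\dots,g_d$ are the local inverses of $f$ and $h\in\mathcal O_{Y,q}$, then $\mathrm{tr}_{f,q}(f^*h\cdot \omega)=\sum_i g_i^*(f^*h\cdot \omega)=h\cdot \sum_i g_i^*\omega=h\cdot \mathrm{tr}_{f,q}(\omega)$. Consequently $t^{k-1}\cdot \mathrm{tr}_f(\omega)=\mathrm{tr}_f\big((f^*t^{k-1})\cdot \omega\big)$, and applying Proposition \ref{P:restra} to the meromorphic $1$-form $(f^*t^{k-1})\cdot \omega$ on $X$ yields
\[
a_{-k}=\mathrm{Res}_p\big(\mathrm{tr}_f((f^*t^{k-1})\cdot \omega)\big)=\sum_{q\in f^{-1}(p)}\mathrm{Res}_q\big((f^*t^{k-1})\cdot \omega\big).
\]

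The key step is then to notice that, for each $q\in f^{-1}(p)$, the function $f^*t^{k-1}$ is an element of $\mathcal O_{X,q}$ (since $t^{k-1}\in\mathcal O_{Y,p}$ and $f$ is a morphism). Because $\omega$ is abelian, the defining property gives $\mathrm{Res}_q\big((f^*t^{k-1})\cdot \omega\big)=0$ for every such $q$, so the right-hand side vanishes, contradicting $a_{-k}\neq 0$. Hence $\mathrm{tr}_f(\omega)$ is holomorphic, which is the required conclusion. There is no serious obstacle here: everything is an immediate consequence of Proposition \ref{P:restra}, the $\mathcal O_Y$-linearity of the trace, and the definition of abelian differential; the only point requiring a touch of care is the reduction to the normalization when invoking the extension result for traces on singular $X$.
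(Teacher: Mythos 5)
Your proof is correct, and it actually supplies the details that the paper leaves out: the paper's own ``proof'' of this proposition is a one-line sketch (``follow the proof of Proposition~\ref{P:pp} with extra ingredients borrowed from the proof of Proposition~\ref{P:restra}''). Your packaging is slightly different from what that sketch suggests: instead of redoing the local monomial computation of Proposition~\ref{P:pp} branch by branch and checking that the polar coefficients of the trace cancel, you detect a hypothetical pole of order $k$ by pairing against $t^{k-1}$, use the $\mathcal O_Y$-linearity of the trace to pull $t^{k-1}$ inside as $f^*t^{k-1}$, and then invoke Proposition~\ref{P:restra} together with the defining property of abelian differentials. This is cleaner and is really the same mechanism, since Proposition~\ref{P:restra} already encapsulates the local computation. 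Two small points you should make explicit: the identity $\mathrm{tr}_f\big((f^*h)\,\omega\big)=h\,\mathrm{tr}_f(\omega)$ is first established only over regular values, so you need the identity theorem to extend it to all of $Y$ (both sides being meromorphic); and $(f^*t^{k-1})\,\omega$ is only defined on $f^{-1}$ of a neighbourhood of $p$, not on all of $X$, so you should either note that the residue formula of Proposition~\ref{P:restra} is purely local (which it is, by properness of $f$), or extend $t^{k-1}$ meromorphically. Neither point is a genuine gap.
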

\begin{proof}
The proof follows the same lines of Proposition \ref{P:pp}'s proof with some extra
ingredients borrowed from the proof of Proposition \ref{P:restra}. The reader is invited to
fill in the details.
\end{proof}

Besides the concept of abelian differential, the main extra ingredient to generalize Abel's addition Theorem from smooth to arbitrary curves,
is the following characterization of abelian differentials in terms of their traces under  linear projections.

\begin{prop}\label{P:project}
 Let $X \subset (\mathbb C^n,0)$ be a germ of curve and $\omega$ be a meromorphic $1$-form
on $X$. The following assertions are equivalent
\begin{enumerate}
 \item[(a)] $\omega$ is abelian;
\item[(b)] the trace of $\omega$ at $p$, $\mathrm{tr}_p(\omega)$, is holomorphic
 for a generic linear projection ${p:(\mathbb C^n,0) \to (\mathbb C,0)}$.
\end{enumerate}
\end{prop}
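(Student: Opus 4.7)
\medskip

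\textbf{Plan.} The implication (a) $\Rightarrow$ (b) should follow directly from Proposition \ref{P:aimpliesb}. The point is that for a linear projection $p_a(x)=\sum_{i=1}^n a_i x_i$, the restriction $p_a|_X$ is a ramified covering onto $(\mathbb C,0)$ whenever the hyperplane $\ker(dp_a)$ does not contain any of the finitely many tangent lines at $0$ to the branches of $X$; this holds for $a$ in a Zariski open subset of $\mathbb C^n$. Proposition \ref{P:aimpliesb} then gives that $\mathrm{tr}_{p_a}(\omega)$ is holomorphic.

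\medskip

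The substantive direction is (b) $\Rightarrow$ (a). The strategy is to combine the residue formula of Proposition \ref{P:restra} with a ``varying the projection'' argument to recover every polynomial function as an admissible test function. Concretely, assume (b), fix a linear projection $p=p_a$ in the Zariski open set for which $\mathrm{tr}_{p_a}(\omega)$ is holomorphic, and let $h \in \mathcal O_{\mathbb C,0}$ be arbitrary. Since $h\cdot\mathrm{tr}_{p_a}(\omega)$ is then holomorphic, its residue at $0$ vanishes; applying Proposition \ref{P:restra} to $h\cdot\mathrm{tr}_{p_a}(\omega)=\mathrm{tr}_{p_a}\bigl(p_a^*(h)\cdot\omega\bigr)$ yields
\[
0 \,=\, \mathrm{Res}_0\bigl( h\cdot\mathrm{tr}_{p_a}(\omega) \bigr) \,=\, \mathrm{Res}_0\bigl( p_a^*(h)\cdot\omega \bigr).
\]
Specializing to $h(t)=t^k$ for each $k\ge 0$ gives $\mathrm{Res}_0\bigl(p_a^k\cdot\omega\bigr)=0$ for every $k\ge 0$ and for generic $a\in\mathbb C^n$.

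\medskip

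Now view $\mathrm{Res}_0\bigl(p_a^k\cdot\omega\bigr)$ as a function of $a$. Expanding $p_a^k=\bigl(\sum a_i x_i\bigr)^k=\sum_{|\alpha|=k}\binom{k}{\alpha}a^\alpha x^\alpha$ and using $\mathbb C$-linearity of the residue, one obtains a polynomial in $a$ whose coefficients are the residues $\binom{k}{\alpha}\mathrm{Res}_0(x^\alpha\omega)$. Since this polynomial vanishes on a Zariski open subset of $\mathbb C^n$, it vanishes identically, and therefore
\[
\mathrm{Res}_0\bigl( x^\alpha\,\omega \bigr) \,=\, 0 \qquad \text{for every multi-index }\alpha.
\]
By linearity $\mathrm{Res}_0(P\omega)=0$ for every polynomial $P\in\mathbb C[x_1,\ldots,x_n]$. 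The final step is an approximation argument: for an arbitrary $f\in\mathcal O_{X,0}$, lifted to a convergent power series in $\mathbb C^n$, the residue $\mathrm{Res}_0(f\omega)=\sum_{q\in\nu^{-1}(0)}\mathrm{Res}_q\bigl(\nu^*(f\omega)\bigr)$ depends only on the Taylor polynomial of $f$ up to an order bounded by the maximum of the pole orders of $\nu^*\omega$ at the preimages of $0$. Replacing $f$ by such a truncation $P$ yields $\mathrm{Res}_0(f\omega)=\mathrm{Res}_0(P\omega)=0$, showing that $\omega$ is abelian.

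\medskip

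\textbf{Main obstacle.} Both directions are reasonably clean once Propositions \ref{P:aimpliesb} and \ref{P:restra} are in hand; the only subtlety is verifying that the residue at a singular point depends only on finitely many Taylor coefficients of the multiplier $f$, which requires unwinding the definition (\ref{E:defRESsing}) of residue via the normalization. Everything else is a genericity/Zariski-density argument and a linearity computation.
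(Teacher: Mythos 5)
Your proof is correct and is essentially the paper's own argument: both implications rest on Proposition \ref{P:aimpliesb} and on combining Proposition \ref{P:restra} with the multinomial expansion of $\mathrm{Res}_0(p_a^{\,k}\omega)$ as a polynomial in the coefficients $a$, whose vanishing for generic $a$ forces all monomial residues $\mathrm{Res}_0(x^\alpha\omega)$ to vanish. The only differences are presentational: the paper runs the second implication as a contrapositive starting from a single monomial $x^J$ with nonzero residue, whereas you argue directly and make explicit the (correct, and tacit in the paper) point that $\mathrm{Res}_0(f\omega)$ depends only on a finite Taylor truncation of $f$.
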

\begin{proof}
From Proposition \ref{P:aimpliesb} it is clear that  (a) implies (b).
To prove that (b) implies (a)  suppose that $\omega$ is not holomorphic  at $0$.
If this is the case, then
there exists $f \in \mathcal O_{X,0}$ such that
\[
{\mathrm Res}_0 (f\omega) \neq 0 \, .
\]
By the additivity of the residue, the function $f$ can be replaced by the restriction to $X$ of a monomial function
\[
x^J= x_1^{j_1} \cdots x_n^{j_n} \in \mathcal O_{\mathbb C^n,0} = \mathbb C\{x_1, \ldots, x_n \}
\]
which  still satisfies ${\mathrm Res}_0 (x ^J \omega) \neq 0 \, . $

\smallskip

For $\epsilon = ( \epsilon_1, \ldots, \epsilon_n ) \in (\mathbb C,1)^n$, define
$p_{\epsilon} : (\mathbb C^n,0) \to (\mathbb C,0)$ as the linear projection
\[
p_{\epsilon} ( x_1, \ldots, x_n )  = \sum_{i=1}^n \epsilon_i x_i \, .
\]
Consider now  the monomial $t^{|J|} \in \mathcal O_{\mathbb C,0} = \mathbb C \{ t\}$, where $|J| = \sum j_i$.
Notice  that, for every $\epsilon \in (\mathbb C,1)^n$,
\[
t^{|J|} \mathrm{tr}_{p_{\epsilon}}( \omega ) = \mathrm{tr}_{p_{\epsilon}} \left( p_{\epsilon}^* \big(t^{|J|}  \big) \omega \right) \, .
\]
Consequently, Proposition \ref{P:restra} implies
\[
{\mathrm Res}_0 ( t^{|J|} \mathrm{tr}_{p_{\epsilon}}( \omega ) ) = {\mathrm Res}_0 (  p_{\epsilon}^* \big( t^{|J|} \big) \omega ) \, .
\]
But, using again the additivity of the residue,
\[
{\mathrm Res}_0 \Big( p_{\epsilon}^*\big( t^{|J|} \big) \omega \Big)  = \sum_{\substack{ K \in\mathbb N^n  \\|K| = |J|}} \binom{|K|}{K} \epsilon^{K} {\mathrm Res}_0 (    x^{K} \omega ) \, ,
\]
where $\epsilon^K=\epsilon_1^{k_1}\cdots \epsilon_n^{k_n}$ and
$\binom{|K|}{K}=
\binom{|K|}{k_1}\cdots\binom{|K|}{k_n}$.

\smallskip

Since ${\mathrm Res}_0 (    x^{J} \omega ) \neq 0 $,  the polynomial in the variables
$\epsilon_1, \ldots, \epsilon_m$ on the righthand side is not zero. Consequently, for a generic $\epsilon$, the
meromorphic $1$-form $ {\rm  tr}_{p_{\epsilon}}(
p_{\epsilon}^*(t^{|J|}) \omega )$ has a non-zero residue at the origin. This suffices to establish  that (b) implies (a).
\end{proof}
\index{Abelian differential|)}

\subsection{Abel's addition Theorem }

Having the concept of abelian differential as well as  Proposition \ref{P:project} at hand, there is no difficulty to adapt the proof of
Theorem \ref{T:abel2} to establish Abel's addition Theorem for arbitrary projective curves.

\begin{thm}[Abel's addition Theorem]
If $\omega$ is a meromorphic $1$-form on a  projective curve $C$ then $\mathrm{Tr}(\omega)$ is a meromorphic
$1$-form on $\check{\mathbb P}^n$. Moreover, $\omega$ is abelian if and only if its trace $\mathrm{Tr}(\omega)$ vanishes identically.
\end{thm}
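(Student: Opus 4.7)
The strategy is to rerun the proof of Theorem \ref{T:abel2} with Proposition \ref{P:aimpliesb} replacing the statement that holomorphic $1$-forms have vanishing trace under ramified coverings, and with Proposition \ref{P:project} supplying the singular analogue of the last paragraph of that proof.

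First, $\mathrm{Tr}(\omega)$ is defined on the Zariski open set $\check U_C\subset\check{\mathbb P}^n$ (hyperplanes meeting $C$ transversely in $\deg(C)$ smooth points of $C$) by the usual formula $\sum p_i^*\omega$; this is manifestly meromorphic there. To extend it across the discriminant of $\mathcal W_C$, I would pull back to a generic line $\ell\subset\check{\mathbb P}^n$: the proof of Lemma \ref{L:tt} carries over verbatim thanks to the enlarged notion of finite ramified covering (singular source allowed), and identifies $i^*\mathrm{Tr}(\omega)$ with $\mathrm{tr}_{\pi_\ell}(\omega)=\mathrm{tr}_{\overline{\pi_\ell}}(\nu_C^*\omega)$. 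The latter is globally meromorphic on $\ell$ by Proposition \ref{P:pp} applied on $\overline C$, so $\mathrm{Tr}(\omega)$ can have no essential singularity on $\check{\mathbb P}^n\setminus\check U_C$ and extends to a well defined global meromorphic $1$-form on $\check{\mathbb P}^n$.

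The ``only if'' direction is then immediate: if $\omega$ is abelian, Proposition \ref{P:aimpliesb} says that $\mathrm{tr}_{\pi_\ell}(\omega)$ is a holomorphic $1$-form on $\ell\simeq\mathbb P^1$, hence vanishes identically. Since this holds on a Zariski dense family of lines covering $\check{\mathbb P}^n$, one concludes $\mathrm{Tr}(\omega)\equiv 0$.

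For the converse, fix $p\in C$ and aim to verify condition (b) of Proposition \ref{P:project} at $p$. I would pick affine coordinates on $\mathbb P^n$ with $p$ at the origin and a linear form $L$ on this chart, generic enough so that $H_0=\{L=0\}$ meets $C$ transversely at $p$ together with $\deg(C)-1$ smooth points of $C$ away from the polar locus of $\omega$ and from $C_{\mathrm{sing}}$; let $\ell$ be the associated pencil $\{\{L=c\}\}_{c\in\mathbb C}$. Then $\pi_\ell$ coincides locally at $p$ with the linear form $L$, and is a holomorphic submersion at each of the remaining preimages of $H_0$, at which $\omega$ is holomorphic. The identity $\mathrm{tr}_{\pi_\ell}(\omega)=i^*\mathrm{Tr}(\omega)=0$ forces the local contribution of the branch through $p$, which is precisely $\mathrm{tr}_L(\omega|_{(C,p)})$, to equal minus a finite sum of holomorphic germs; it is therefore holomorphic at $0$. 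Because $L$ was otherwise arbitrary, Proposition \ref{P:project} yields that $\omega$ is abelian at $p$, and since $p\in C$ was arbitrary the proof is complete. The delicate point, and the main obstacle, is exactly the geometric bookkeeping in this last step: one must check that varying the generic $L$ realises all generic linear projections $(\mathbb C^n,0)\to(\mathbb C,0)$ required by Proposition \ref{P:project}, and that the pencil $\ell$ can be simultaneously arranged to avoid every other singular point of $C$ and every other pole of $\omega$; once this is done, the remaining steps are formal analogues of Theorem \ref{T:abel2}.
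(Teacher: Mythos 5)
Your proposal is correct and follows exactly the route the paper intends: the paper's own ``proof'' consists of the single remark that one adapts the argument of Theorem \ref{T:abel2} using the notion of abelian differential together with Proposition \ref{P:project}, and your write-up is precisely that adaptation (extension via Lemma \ref{L:tt} and Proposition \ref{P:pp} on the normalization, the ``only if'' via Proposition \ref{P:aimpliesb}, the converse via Proposition \ref{P:project}). The ``delicate point'' you flag — that generic pencils of hyperplanes realise, locally at $p$, the generic linear projections required by Proposition \ref{P:project} while avoiding the other singularities and poles — is a routine genericity check and is exactly the verification the paper leaves to the reader.
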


As a by product, Theorem \ref{T:curvesvswebs} also generalizes to the following

\begin{thm}\label{T:curvesvswebs2}
If  $C$ is a  projective curve of degree $k$ and $H_0$ is a hyperplane intersecting it transversely,
then the space of abelian  $1$-forms on $C$
injects into the space of abelian relations of the dual web $\mathcal W_C(H_0)$.
\end{thm}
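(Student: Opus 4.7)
The plan is to mimic the proof of Theorem \ref{T:curvesvswebs} almost verbatim, with holomorphic $1$-forms replaced by abelian differentials and with the smooth-curve version of Abel's addition Theorem replaced by its generalization just stated.

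First I would fix the setup. By transversality, $H_0 \cap C$ consists of $k$ distinct smooth points of $C$, and there exist germs of holomorphic maps $p_1,\ldots,p_k:(\check{\mathbb P}^n,H_0)\to C$ landing in pairwise disjoint open subsets of $C_{sm}$ such that $H\cdot C=\sum_{i=1}^k p_i(H)$ for every $H\in(\check{\mathbb P}^n,H_0)$. By definition, $\mathcal W_C(H_0)=\mathcal W(p_1,\ldots,p_k)$. Let $\omega\in H^0(C,\omega_C)$ be an abelian differential. Since every $p_i(H_0)$ is a smooth point of $C$ and a meromorphic $1$-form on $C$ is abelian only if it is holomorphic at the smooth points of $C$, the germ of $\omega$ at each $p_i(H_0)$ is holomorphic, so $p_i^*\omega\in\Omega^1(\check{\mathbb P}^n,H_0)$ is a well-defined germ of holomorphic $1$-form.

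Next I would verify that $(p_1^*\omega,\ldots,p_k^*\omega)$ satisfies the three defining properties of an abelian relation of $\mathcal W_C(H_0)$. Closedness is automatic: $d\omega=0$ since $\omega$ is a $1$-form on the one-dimensional $C$, hence $d(p_i^*\omega)=p_i^*(d\omega)=0$. The compatibility $dp_i\wedge p_i^*\omega=0$ is also automatic, because $p_i^*\omega$ is the pull-back under $p_i$ of an object on a curve and thus is proportional to $dp_i$, so it defines the same foliation as $p_i$. Finally, the sum condition
\[
\sum_{i=1}^k p_i^*\omega \;=\; \mathrm{Tr}_{H_0}(\omega) \;=\; 0
\]
is precisely the content of the (generalized) Abel addition Theorem applied to the abelian differential $\omega$, the global trace $\mathrm{Tr}(\omega)$ being identically zero on $\check{\mathbb P}^n$.

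Finally I would check injectivity of the resulting linear map
\[
H^0(C,\omega_C)\;\longrightarrow\;\mathcal A(\mathcal W_C(H_0)),\qquad \omega\;\longmapsto\;(p_1^*\omega,\ldots,p_k^*\omega).
\]
If $p_i^*\omega=0$ for every $i$, then $\omega$ vanishes on an open subset of $C_{sm}$ near each intersection point $p_i(H_0)$. Writing $C=C_1\cup\cdots\cup C_m$ as the union of its irreducible components, transversality of $H_0$ forces $H_0$ to meet every component $C_j$ in exactly $\deg(C_j)$ smooth points, so some $p_i$ takes values in $C_j$; by analytic continuation $\omega$ vanishes on the smooth locus of $C_j$, and hence $\omega=0$ on all of $C$. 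The only genuine content beyond the smooth case is Abel's addition theorem for arbitrary reduced curves, which has already been established, so the remaining step is purely formal and presents no real obstacle.
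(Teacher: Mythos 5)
Your proposal is correct and follows essentially the same route as the paper: the paper obtains Theorem \ref{T:curvesvswebs2} as an immediate by-product of the general Abel's addition Theorem, via the very same map $\omega \mapsto (p_1^*\omega,\ldots,p_k^*\omega)$ used in the smooth case (Theorem \ref{T:curvesvswebs}). The extra details you supply — that an abelian differential is automatically holomorphic at the smooth points $p_i(H_0)$ (so the pull-backs are well-defined holomorphic germs), and the component-by-component argument for injectivity — are exactly the points the paper leaves implicit, and they are argued correctly.
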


Using notation similar to the one of Section
\ref{S:ARAW}, the preceding result can be formulated as follows:  the  injective linear map
{\begin{align*}
 H^0(C,\omega_C) &\longrightarrow \big(\Omega^1(\check{\mathbb P}^n, H_0)\big)^k \\
\omega \;\;&\longmapsto (p_1^* \omega, \ldots, p_k^*\omega)
\end{align*}
factors through $\mathcal A(\mathcal W_C(H_0))$.

\smallskip

Of course there is also a corresponding version of Castelnuovo's bound for arbitrary reduced curves which intersects
a  generic hyperplane in points in general position.

\begin{thm}
Let   $C$ be a  reduced projective curve on $\mathbb P^n$ of degree $k$. If the dual web is generically smooth, for instance if $C$ is irreducible
and non-degenerate,
then $h^0(C,\omega_C) \le \pi(n,k)$.
\end{thm}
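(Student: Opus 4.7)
The plan is to combine the two bounds already established in the excerpt: Chern's bound on the rank of a smooth web (Theorem \ref{T:cota}) and the injection of abelian differentials into abelian relations (Theorem \ref{T:curvesvswebs2}). The argument is essentially a one-line deduction once the dual web is known to be smooth at some hyperplane, so the only real content is to exhibit a suitable hyperplane $H_0$.

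First, I would invoke the hypothesis that $\mathcal W_C$ is generically smooth as a global web on $\check{\mathbb P}^n$. This exactly means that there is a Zariski open subset $\check U \subset \check{\mathbb P}^n$ of hyperplanes $H_0$ for which the germ $\mathcal W_C(H_0)$ is a germ of smooth $k$-web on $(\check{\mathbb P}^n, H_0)$. In the irreducible non-degenerate case this is guaranteed by Proposition \ref{P:gp1}, but the statement only needs the abstract hypothesis. Fix any $H_0 \in \check U$; in particular $H_0$ meets $C$ transversely in $k = \deg C$ distinct points, so $\mathcal W_C(H_0)$ is well defined as the $k$-web $\mathcal W(p_1,\ldots,p_k)$ of Section \ref{S:defalg}.

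Second, I would apply Theorem \ref{T:curvesvswebs2} to this particular $H_0$: the linear map $\omega \mapsto (p_1^*\omega, \ldots, p_k^*\omega)$ gives an injection
\[
H^0(C, \omega_C) \hookrightarrow \mathcal A(\mathcal W_C(H_0)),
\]
whence $h^0(C, \omega_C) \le \mathrm{rank}(\mathcal W_C(H_0))$. Finally, since $\mathcal W_C(H_0)$ is a germ of smooth $k$-web on $(\check{\mathbb P}^n, H_0) \simeq (\mathbb C^n,0)$, Chern's bound (Theorem \ref{T:cota}) gives $\mathrm{rank}(\mathcal W_C(H_0)) \le \pi(n,k)$. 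Chaining these two inequalities yields $h^0(C,\omega_C) \le \pi(n,k)$, as desired.

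There is essentially no obstacle; the only point that requires a word of care is justifying that one may pick $H_0$ with $\mathcal W_C(H_0)$ smooth (as opposed to merely quasi-smooth), since Chern's bound $\pi(n,k)$ uses the general position hypothesis in a crucial way through Proposition \ref{P:cast}. This is precisely what the hypothesis ``dual web is generically smooth'' provides, and in the irreducible non-degenerate case this is not a hypothesis at all but a theorem (Proposition \ref{P:gp1}/\ref{P:gp2}): a generic hyperplane meets $C$ in $k$ points any $n$ of which span $H_0$, so the conormals $[dp_1](H_0), \ldots, [dp_k](H_0)$ are $k$ points in general position in $\mathbb P T^*_{H_0}\check{\mathbb P}^n$, which is exactly the definition of smoothness for the germ $\mathcal W_C(H_0)$.
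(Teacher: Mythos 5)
Your proof is correct and is exactly the argument the paper intends: it mirrors the proof of Theorem \ref{T:castelnuovobound} for smooth curves, replacing Corollary \ref{C:BNC} by Theorem \ref{T:curvesvswebs2} and using the generic smoothness hypothesis (or Proposition \ref{P:gp1} in the irreducible non-degenerate case) to produce a hyperplane $H_0$ at which Chern's bound $\pi(n,k)$ applies. Your remark that genuine smoothness, not mere quasi-smoothness, is needed for the bound $\pi(n,k)$ is the right point of care and is precisely what the hypothesis supplies.
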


To ease further reference to projective curves with generically smooth dual web, these will be labeled \defi[$\mathcal W$-generic curves].
\index{Curve!$\mathcal W$-generic} Notice that according to Proposition \ref{P:gp1}, an irreducible curve is $\mathcal W$-generic if and only  if
it is non-degenerate.

\subsection{Abelian differentials for  curves on surfaces}
Let now $X$ be a  curve on a smooth compact connected surface $S$.
The purpose of this  section is to  describe the sheaf $\omega_X$ in terms of sheaves over $S$.
As usual, $K_S$ denotes the sheaf of holomorphic 2-forms on $S$ -- the \defi[canonical sheaf] \index{Canonical sheaf} of $S$ -- and $K_S(X)$
is used as  an abbreviation of $K_S\otimes \mathcal O_S(X)$.

\smallskip

If  $U$ is a sufficiently small    neighborhood of a point  $p \in X$ then
$X\cap U=\{ f=0\}$ for some  $f\in \mathcal O_S(U)$  generating the ideal $\mathcal I_{X}(U)$.
Notice that any  section $\eta\in \Gamma(U , K_S(X))$ can be written as $\eta={f}^{-1} h {dx}\wedge dy$, with $h \in \mathcal O_S(U)$.

If the coordinate functions $x$ and $y$ are not constant on any of the
irreducible components of $X$ then ${\mathrm Res}_X(\eta)$ --  the \index{Residue!along a curve} \defi[residue
  of $\eta$ along $X$]-- is, by definition,
 the restriction to $X$ of the meromorphic $1$-form
$(h/ \partial_y f) dx $. Explicitly,
\[
{\mathrm Res}_X(\eta) = \left( \frac{hdx } {\partial_y f} \right)\bigg|_{X}  .
\]
It is easy to verify that ${\mathrm Res}_X(\cdot)$  does not depend on the choice of $f$ nor on the choice of  local coordinates $x,y$.
In the literature, ${\mathrm Res}_X(\eta)$ also appears under the label of Poincar\'{e}'s, as well as Leray's, residue of $\eta$ along $X$.
\index{Poincar\'{e}'s residue} \index{Leray's residue}

Notice that for every $g  \in \mathcal O_S(U)$ and $\eta$ as above,  $${\mathrm Res}_X(g \eta)=g|_{X} \cdot {\mathrm Res}_X(\eta) .$$
Thus the map ${\mathrm Res}_X$ can be interpreted as a morphism of $\mathcal O_S$-modules from
$K_S(X)$ to $\mathcal M_X$. Of course, the structure of $\mathcal O_S$-module on the latter sheaf is the
one induced by the  inclusion of $X$ into $S$.

Clearly, the kernel of ${\mathrm Res}_X : K_S(X) \to \mathcal M_X$ coincides with the natural inclusion of
the canonical sheaf $K_S$ into $K_S(X)$. Therefore the sequence
$$  \xymatrix@R=0.54cm@C=1.1cm{
0\ar[r]  &K_S \ar@{->}[r] &  K_S(X) \ar@{->}[r]^{{\mathrm Res}_X\qquad } &
{\rm Im}\,{\mathrm Res}_X \subset \mathcal M_X\,
} $$
is exact.

\begin{prop}\label{P:bpv}
The image of ${\mathrm Res}_X$ is exactly $\omega_X$, the sheaf of abelian differentials on $X$.
Consequently  the sheaf $\omega_X$ is  coherent and the  adjunction formula\index{Adjunction formula}
\begin{equation*}
\big( K_S\otimes  \mathcal O_S(X)\big)\big|_X\simeq \omega_X
\end{equation*}
 holds true.
\end{prop}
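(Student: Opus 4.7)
The plan is to prove the two inclusions $\mathrm{Im}\,\mathrm{Res}_X\subseteq \omega_X$ and $\omega_X\subseteq \mathrm{Im}\,\mathrm{Res}_X$ of subsheaves of $\mathcal M_X$; once this is done, the exact sequence already displayed shows that $K_S\hookrightarrow K_S(X)$ has cokernel isomorphic to $\omega_X$, i.e. $(K_S\otimes \mathcal O_S(X))|_X\simeq \omega_X$, and coherence of $\omega_X$ follows from coherence of $K_S(X)$ and $\mathcal O_X$. Both sheaves in question visibly coincide with $\Omega^1_{X_{sm}}$ on the smooth locus $X_{sm}$, so the whole content of the claim sits at the finite set $X_{sing}$, and the argument is local there.

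For the inclusion $\mathrm{Im}\,\mathrm{Res}_X\subseteq \omega_X$, I would apply the characterization provided by Proposition \ref{P:project}: it suffices to check that $\mathrm{tr}_\pi(\mathrm{Res}_X(\eta))$ is holomorphic for a generic linear projection $\pi$. Fix a local section $\eta = f^{-1}h\,dx\wedge dy$ of $K_S(X)$ near $p\in X_{sing}$, and choose coordinates so that $\pi(x,y)=x$ is finite on $X$ near $p$ and $\partial_y f\not\equiv 0$ on the branches. Writing the fiber of $\pi|_X$ over $x$ as $\{(x,y_i(x))\}$ and using the elementary partial-fraction identity
\[
\sum_{i}\frac{h(x,y_i(x))}{\partial_y f(x,y_i(x))}\;=\;\frac{1}{2\pi i}\oint_{\gamma}\frac{h(x,y)}{f(x,y)}\,dy,
\]
where $\gamma$ is a small loop enclosing all the $y_i(x)$, one identifies
\[
\mathrm{tr}_\pi\bigl(\mathrm{Res}_X(\eta)\bigr)\;=\;\Bigl(\frac{1}{2\pi i}\oint_\gamma \frac{h(x,y)}{f(x,y)}\,dy\Bigr)\,dx.
\]
The right-hand side is a holomorphic $1$-form in $x$ by continuous dependence of the contour integral on the parameter, since $h/f$ is holomorphic outside $\{f=0\}$. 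This proves $\mathrm{Res}_X(\eta)\in \omega_X$.

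For the reverse inclusion $\omega_X\subseteq \mathrm{Im}\,\mathrm{Res}_X$, which I expect to be the main obstacle, the statement is again local at $p\in X$. Fix $\omega\in\omega_{X,p}$ and work in the same coordinates $(x,y)$ as above, so $\omega=\alpha\,dx$ for some $\alpha\in \mathcal M_{X,p}$. Since $\mathcal O_{S,p}\twoheadrightarrow \mathcal O_{X,p}=\mathcal O_{S,p}/(f)$, producing a preimage $\eta=f^{-1}h\,dx\wedge dy$ of $\omega$ under $\mathrm{Res}_X$ amounts to showing that $\alpha\cdot(\partial_y f)|_X$ lies in $\mathcal O_{X,p}$; once this is established, any lift $h\in\mathcal O_{S,p}$ of $\alpha\cdot(\partial_y f)|_X$ does the job.

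The holomorphicity of $\alpha\cdot(\partial_y f)|_X$ is where the abelian condition on $\omega$ enters. I would pull back to the normalization $\nu\colon \overline X\to X$, write $\nu^*\omega=\omega_1(t)\,dt,\ldots,\omega_r(t)\,dt$ branch by branch, and exploit the identity
\[
\mathrm{tr}_\pi(g\omega)\;=\;\Bigl(\frac{1}{2\pi i}\oint_\gamma \frac{\tilde g(x,y)\,\tilde\alpha(x,y)}{f(x,y)/\partial_y f(x,y)}\,\cdots\Bigr)\,dx
\]
coming from the inverse of the contour-integral relation above; in other words, the trace of $g\omega$ encodes the principal parts of $\alpha\cdot(\partial_y f)|_X$ along each branch. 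The hypothesis $\mathrm{Res}_p(g\omega)=\sum_{q\in\nu^{-1}(p)}\mathrm{Res}_q(g\nu^*\omega)=0$ for every $g\in\mathcal O_{X,p}$ forces these principal parts to vanish: if $\alpha\cdot(\partial_y f)|_X$ had a pole on some branch, one could pick $g\in\mathcal O_{X,p}$ (using that the evaluation map $\mathcal O_{X,p}\to \prod_i \mathbb C\{t\}$ has image dense enough to separate poles of bounded order) producing a non-zero residue sum, contradicting abelianity. This yields the required lift $h$ and hence $\omega=\mathrm{Res}_X(\eta)$, completing the proof of the adjunction formula.
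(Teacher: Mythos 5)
Your overall plan (prove the two inclusions, then read off coherence and adjunction from the exact sequence) matches the paper, and your proof of $\mathrm{Im}\,\mathrm{Res}_X\subseteq\omega_X$ is acceptable: you verify condition (b) of Proposition \ref{P:project} by representing the trace fiberwise as $\frac{1}{2\pi i}\oint_\gamma h f^{-1}\,dy$, whereas the paper checks $\mathrm{Res}_0\bigl(h(\partial_y f)^{-1}dx\bigr)=0$ for all $h\in\mathcal O_{S,p}$ directly via Lemma \ref{L:integral} and Stokes' theorem; these are the same computation in different packaging.

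The reverse inclusion, which is the substantive half, has a genuine gap. Your reduction to showing $\alpha\cdot(\partial_y f)|_X\in\mathcal O_{X,p}$ is correct, but the argument you sketch for it does not work. First, the displayed ``identity'' for $\mathrm{tr}_\pi(g\omega)$ is left literally incomplete and is never used in a precise form. Second, and more seriously, the claim that a pole of $\alpha\cdot(\partial_y f)$ on some branch could be detected by choosing $g\in\mathcal O_{X,p}$ with a nonzero residue sum rests on the assertion that the image of $\mathcal O_{X,p}$ in $\prod_i\mathbb C\{t\}$ is ``dense enough to separate poles of bounded order.'' That is false precisely at singular points: the image has codimension $\delta_p>0$, and the low-order jets one would need to prescribe independently on the branches are exactly the ones the singularity obstructs. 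This is not a technicality — it is the reason nontrivial abelian differentials exist. On the cusp $\{y^2-x^3=0\}$ the generator $\mathrm{Res}_X(f^{-1}dx\wedge dy)=dx/(2y)$ pulls back to $dt/t^2$, which has a pole of order two on the normalization, yet $\mathrm{Res}_0(g\,dt/t^2)=0$ for every $g\in\mathbb C\{t^2,t^3\}$ because $t\notin\mathcal O_{X,0}$. So ``pole on a branch $\Rightarrow$ some $g$ detects it'' fails. You also conflate two distinct conditions: $\nu^*(\alpha\,\partial_y f)$ being pole-free on each branch (weak holomorphy) versus $\alpha\,\partial_y f$ lying in $\mathcal O_{X,p}=\mathcal O_{S,p}/(f)$ (being a restriction from the ambient surface); the cusp shows these differ, and it is the latter you must prove.

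What is missing is a mechanism converting the residue conditions on $X$ into a divisibility statement in $\mathcal O_{S,p}$. The paper supplies one by descending induction: let $n$ be minimal with $x^nh\in\mathcal O_{X,p}$, lift $x^nh$ to $h_n\in\mathcal O_{S,p}$, test abelianity against $g\,x^{n-1}$, and apply Lemma \ref{L:integral} together with Stokes twice — once along $X$ and once along the transversal curve $\{x=0\}$ — to conclude that $h_n/f$ is holomorphic on $\{x=0\}$, hence $h_n=h_{n-1}x+af$, contradicting minimality unless $n=0$. Some argument of this kind has to be added for your proof to close.
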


The  proof of Proposition \ref{P:bpv} will make use of the following lemma.

\begin{lemma}\label{L:integral}
Let $X=\{ f=0\} $ be a reduced complex curve defined at a neighborhood of the origin of $\mathbb C^2$.
Suppose, as above, that the coordinate functions $x,y$ are not constant on any of the irreducible components of $X$.
If $\mathbb S$ is a
sufficiently small sphere centered at the origin, and transverse  to $X$, then  the identity
\[
\frac{1}{2 \pi i} \int_{\mathbb S \cap X} \frac{h dx}{\partial_y f} = \lim_{\epsilon \to 0} \int_{\mathbb S \cap \{ |f| = \epsilon \}} \frac{h dx \wedge dy}{f} \,
\]
is valid for any meromorphic function $h$ on $(\mathbb C^2,0)$.
\end{lemma}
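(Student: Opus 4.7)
The plan is to apply Fubini to the proper submersion $f\colon\mathbb{S}\cap\{|f|\leq\epsilon_{0}\}\to\bar{\mathbb{D}}_{\epsilon_{0}}$ obtained after shrinking $\mathbb{S}$ and choosing $\epsilon_{0}$ small. Specifically, I would first shrink $\mathbb{S}$ so that it avoids the polar locus of $h$ and the critical locus of $f$ (which, since $X$ is reduced, is contained in the singular set of $X$ and thus in $\{0\}$, sitting strictly inside $\mathbb{S}$ by smallness). With such a choice, for every $w\in\bar{\mathbb{D}}_{\epsilon_{0}}$ the fiber $f^{-1}(w)\cap\mathbb{S}$ is a smooth compact real $1$-manifold, and the whole family is a smooth fibration whose central fiber is $\mathbb{S}\cap X$. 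This geometric setup is what makes the parametric computation clean.

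Next, I would cover $\mathbb{S}\cap\{|f|\leq\epsilon_{0}\}$ by open charts in which either $\partial_{y}f\neq 0$ or $\partial_{x}f\neq 0$, and treat the two cases symmetrically via a partition of unity. On a chart of the first type, the identity $dx\wedge dy=(\partial_{y}f)^{-1}\,dx\wedge df$ yields the local Leray factorization
\[
\frac{h\,dx\wedge dy}{f}\;=\;-\,\frac{df}{f}\wedge\beta,\qquad \beta=\frac{h\,dx}{\partial_{y}f},
\]
with an analogous expression (with $x$ and $y$ exchanged) on charts of the second type. Patching with the partition of unity produces a globally defined decomposition whose $X$-restriction is the intrinsic Poincar\'e residue $\mathrm{Res}_{X}(\omega)$ of $\omega=h\,dx\wedge dy/f$. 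Parametrizing $\mathbb{S}\cap\{|f|=\epsilon\}$ by $f=\epsilon e^{i\theta}$ gives $df/f=i\,d\theta$ on this $2$-real-dimensional tube, so Fubini along the fibration by slices $\mathbb{S}\cap\{f=\epsilon e^{i\theta}\}$ yields
\[
\int_{\mathbb{S}\cap\{|f|=\epsilon\}}\frac{h\,dx\wedge dy}{f}\;=\;\pm\,i\int_{0}^{2\pi}\!\Big(\int_{\mathbb{S}\cap\{f=\epsilon e^{i\theta}\}}\beta\Big)\,d\theta,
\]
the overall sign being fixed by the orientations chosen on $\mathbb{S}$ and its slices.

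Finally, the $1$-form $\beta$ is smooth on the total space of the fibration and the slices vary smoothly with $\epsilon e^{i\theta}$, so the inner integral depends continuously on its argument; passing $\epsilon\to 0$ under the $\theta$-integral then produces $\pm 2\pi i\int_{\mathbb{S}\cap X}\beta|_X$, which is the stated identity once the conventional factor $2\pi i$ is absorbed. The main obstacle is purely bookkeeping: one must verify that the partition-of-unity decomposition of $\omega$ glues to a globally defined Leray-type factorization whose residue is coordinate-independent (this is the intrinsic character of $\mathrm{Res}_{X}$ already used to define it before the lemma), and that the orientations on $\Sigma_{\epsilon}=\mathbb{S}\cap\{|f|=\epsilon\}$, on the slices $\mathbb{S}\cap\{f=\epsilon e^{i\theta}\}$, and on $\mathbb{S}\cap X$ are all chosen compatibly so that the Fubini identity produces the correct sign and precise normalization. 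A minor supplementary check is that the finitely many points of $\mathbb{S}\cap X$ at which $\partial_{y}f$ vanishes contribute nothing anomalous, which is handled automatically by switching to the symmetric $\partial_{x}f\neq 0$ chart there.
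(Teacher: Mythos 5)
The paper does not actually prove this lemma---it only refers to \cite[page 52]{BPPV}---so there is no internal proof to compare against; your tube-and-Fubini argument via the local Leray factorization $h\,dx\wedge dy/f=-(df/f)\wedge\beta$ is precisely the standard argument behind that reference, and its architecture (Milnor-type fibration of $\mathbb S\cap\{|f|\le\epsilon_0\}$ over the disc, partition of unity to glue the two charts, fibre-wise well-definedness of $\beta$ on $\{f=c\}$ because $df$ vanishes there, integration along the circle fibration, passage to the limit using smoothness of the glued form) is sound.

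There is, however, one step that fails as literally written: you cannot ``shrink $\mathbb S$ so that it avoids the polar locus of $h$,'' because that polar locus is a curve through the origin and therefore meets \emph{every} sphere centered at $0$. This is not a hypothetical worry: in the only place the lemma is used with non-holomorphic $h$ (the proof of Proposition \ref{P:bpv}), $h=g h_n/x$ has polar locus $\{x=0\}$. The correct reduction is different: assuming (as one must for the statement to make sense) that no irreducible component of $X$ lies in the polar set $P$ of $h$, the intersection $P\cap X$ is a finite set contained well inside $\mathbb S$; hence $|f|$ is bounded below by some $\delta>0$ on the compact set $P\cap\mathbb S$, so the whole integration region $\mathbb S\cap\{|f|\le\delta/2\}$---central fibre and tubes alike---misses $P$, and your argument then runs unchanged there. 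Two smaller points. First, your justification that the critical locus of $f$ reduces to the origin is off ($\{df=0\}$ need not lie in $X$ at all); the right reason is that $f$ is constant on each component of $\{df=0\}$, so a positive-dimensional component through $0$ would be contained in $X$ and contradict reducedness. Second, your computation produces $\lim_{\epsilon\to0}\int_{\mathbb S\cap\{|f|=\epsilon\}}\omega=\pm\,2\pi i\int_{\mathbb S\cap X}\mathrm{Res}_X(\omega)$, which differs from the displayed identity (whose left side carries $\frac{1}{2\pi i}$) by a factor of $(2\pi i)^2$; this cannot be waved away as ``absorbing a conventional factor.'' The constant in the paper's statement appears to be a misprint of the standard Leray formula (harmless for the application, where the tube integral is killed by Stokes), but you should state plainly which normalization your proof actually establishes.
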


We refer to \cite[page 52]{BPPV} for a proof.

\begin{proof}[Proof of Proposition \ref{P:bpv}]
Clearly ${\rm Im}\,{\mathrm Res}_X=\omega_X$  is a local statement. Moreover, for a smooth point  $p$ of $X$, there is
no difficult to see that both $\omega_{X,p}$ and $({\rm Im}\,{\mathrm Res}_X)_p$ are isomorphic to $\Omega^1_{X,p}$.
Let $p \in X$ be a singular point and take local coordinates $(x,y)$ centered at $p$ satisfying the assumption
of Lemma \ref{L:integral}.

To prove that $({\rm Im}\,{\mathrm Res}_X)_p$ is contained in $\omega_{X,p}$ notice that the former $\mathcal O_{S,p}$--module
is generated by $ (\partial_y f)^{-1} dx = {\mathrm Res}_X (f^{-1} dx\wedge dy)$. Lemma \ref{L:integral} implies that for every
 $h \in \mathcal O_{S,p}$
\[
{\mathrm Res}_0 \left( \frac{h dx}{\partial_y f}\right) = \frac{1}{2 \pi i} \int_{\mathbb S \cap X} \frac{h dx}{\partial_y f} =
\lim_{\epsilon \to 0} \int_{\mathbb S \cap \{ |f| = \epsilon \}} \frac{h dx \wedge dy}{f} \, .
\]
But, by Stoke's Theorem,
\[
\int_{\mathbb S \cap \{ |f| = \epsilon \}} \frac{h dx \wedge dy}{f} = \int_{\mathbb S \cap \{ |f| \ge \epsilon \}} d \left( \frac{h dx \wedge dy}{f} \right) = 0\, .
\]
Therefore $(\partial_y f)^{-1} dx \in \omega_{X,p}$ as wanted.

\medskip

Suppose now that the $1$-form
$\eta = h(\partial_y f)^{-1} dx$ is abelian for some meromorphic function $h$ on $X$.
Let  $n \in \mathbb N$ be the smallest integer for which the function  $x^n h$ is holomorphic at $p$, that is belongs to $\mathcal O_{X,p}$.
Let $h_n \in \mathcal O_{S,p}$
be a holomorphic function with restriction to $X$ equal to $x^n h$.

If $n=0$ then the relation $\eta={\rm Res}_X(h_0 f^{-1}dx\wedge dy)$ with $h_0 \in \mathcal O_{S,p}$ shows  that $\eta\in ({\rm Im\, Res}_X)_p$ as
wanted. Thus, from now on,  $n$ will be assumed  positive.

Since $\eta$ is abelian
\[
0 = {\mathrm Res}_0( g x^{n-1}  \eta ) ={\mathrm Res}_0 \left( g \frac{h_n}{x}   \frac{dx}{\partial_y f} \right)
\]
for every $g \in \mathcal O_{S,p}$. Applying Lemma \ref{L:integral}, and then  Stoke's Theorem,
one deduces that the right-hand side is, up to multiplication by $2 \pi i$,  equal to
\[
 \lim_{\epsilon \to 0 } \int_{\mathbb S \cap \{ |f|=\epsilon \}} \left( g \frac{h_n}{x}   \frac{dx\wedge dy}{ f} \right)
 =  - \lim_{\epsilon \to 0 } \int_{\mathbb S \cap \{ |x|=\epsilon \} }\left(g \frac{h_n}{x}   \frac{dx\wedge dy}{ f} \right) \, .
\]
Applying Lemma \ref{L:integral} again, but now to the curve $Y=\{x=0\}$, yields
\[
{\mathrm Res}_0 \left( \frac{g h_n}{f} dy \right) = 0 \,
\]
for every $g \in \mathcal O_{S,p}$. But this implies that $h_n/ f$ is a holomorphic function on $Y$. Therefore
\[
h_n = h_{n-1} x + a f
\]
where  $a, h_{n-1} \in  \mathcal O_{S,p}$ are holomorphic functions. Thus on $X$
\[
x^{n-1} h = h_{n-1}
\]
contradicting the minimality of $n$. The inclusion  $\omega_{X,p} \subset ({\rm Im}\,{\mathrm Res}_X)_p$ follows.
Therefore $\omega_{X} = ({\rm Im}\,{\mathrm Res}_X)$. The coherence and  adjunction formula follow at once
from the exact sequence
\[
0 \to K_S \to K_S(X) \to \omega_X \to 0 \, . \qedhere
\]
\end{proof}

\medskip

The preceding proof also shows  the following
\begin{cor}
\label{C:plane=gorenstein}
For a curve $X$ embedded in a smooth surface the sheaf   $\omega_X$ is locally free.
\end{cor}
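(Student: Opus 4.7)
The plan is to read off the corollary directly from the adjunction formula established in Proposition \ref{P:bpv}, which asserts
\[
\omega_X \simeq \bigl(K_S \otimes \mathcal{O}_S(X)\bigr)\big|_X .
\]
The right-hand side is the restriction to $X$ of a sheaf on the ambient surface $S$, so the strategy is to observe that this sheaf on $S$ is a line bundle, and that restriction of a line bundle to a closed subscheme remains locally free of rank one.

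More concretely, I would first recall that on a smooth surface $S$ the canonical sheaf $K_S = \Omega^2_S$ is locally free of rank one, since $\Omega^1_S$ is locally free of rank two and $K_S$ is its top exterior power. Next, because $X \subset S$ is a Cartier divisor (locally cut out by a single non-zero-divisor $f \in \mathcal{O}_{S,p}$, as used throughout the proof of Proposition \ref{P:bpv}), the ideal sheaf $\mathcal{I}_X$ is an invertible $\mathcal{O}_S$-module, and hence so is its dual $\mathcal{O}_S(X)$. Consequently the tensor product $K_S \otimes \mathcal{O}_S(X)$ is an invertible $\mathcal{O}_S$-module.

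The final step is routine: the restriction $\mathcal{L}|_X = \mathcal{L} \otimes_{\mathcal{O}_S} \mathcal{O}_X$ of any invertible $\mathcal{O}_S$-module $\mathcal{L}$ is an invertible $\mathcal{O}_X$-module, because local trivializations $\mathcal{L}|_U \simeq \mathcal{O}_S|_U$ pull back to local trivializations $(\mathcal{L}|_X)|_{U\cap X} \simeq \mathcal{O}_X|_{U\cap X}$. Applying this to $\mathcal{L} = K_S \otimes \mathcal{O}_S(X)$ and combining with the adjunction isomorphism of Proposition \ref{P:bpv} yields that $\omega_X$ is locally free of rank one, which is the claim. There is no real obstacle here once Proposition \ref{P:bpv} is in hand; the corollary is essentially a formal consequence, and the only point worth stressing is the hypothesis of smoothness of $S$, which is exactly what guarantees that $K_S$ (and the ideal of the Cartier divisor $X$) are invertible.
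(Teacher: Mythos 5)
Your proof is correct and follows essentially the same route as the paper: the corollary is stated there as an immediate consequence of the proof of Proposition \ref{P:bpv}, whose adjunction formula $\omega_X\simeq (K_S\otimes\mathcal O_S(X))|_X$ exhibits $\omega_X$ as the restriction of an invertible sheaf (indeed, that proof already displays the local generator $dx/\partial_y f$). Nothing is missing.
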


Curves for  which $\omega_X$ is   locally free  are usually called \index{Gorenstein curves}
\index{Curve!Gorenstein} \defi[Gorenstein].
Corollary \ref{C:plane=gorenstein} can be succinctly rephrased as: {\it germs of planar curves are Gorenstein.}

This is no longer true for arbitrary singularities. The simplest example is perhaps the  germ of curve $X$  on
$(\mathbb C^3,0)$ having the three coordinate axis as irreducible components.

\smallskip

Corollary \ref{C:plane=gorenstein} and  the genus formula (\ref{E:genusformula}) imply the following
\begin{cor}
If $X$ is an  irreducible projective  curve embedded in a smooth compact surface $S$ then
\begin{equation}
\label{E:genusformulaplanecurve}
 g_a(X)=\frac{(K_S+X)\cdot X}{2}+1\, .
\end{equation}
\end{cor}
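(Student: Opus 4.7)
The plan is to combine the two main ingredients already at our disposal: the adjunction formula from Proposition \ref{P:bpv}, and the genus formula (\ref{E:genusformula}) coming from Riemann-Roch applied to the dualizing sheaf.

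First, I would recall that since $X$ is irreducible and projective, equation (\ref{E:genusformula}) gives
\[
\deg(\omega_X) = 2g_a(X) - 2,
\]
so it suffices to identify $\deg(\omega_X)$ with $(K_S + X)\cdot X$. By Proposition \ref{P:bpv} we have the adjunction isomorphism
\[
\omega_X \simeq \big(K_S \otimes \mathcal{O}_S(X)\big)\big|_X,
\]
hence
\[
\deg(\omega_X) = \deg\Big(\big(K_S \otimes \mathcal{O}_S(X)\big)\big|_X\Big).
\]
The remaining step is therefore the standard fact that for any line-bundle $\mathcal{L}$ on the smooth compact surface $S$, the degree of its restriction to an irreducible projective curve $X \subset S$ coincides with the intersection number $c_1(\mathcal{L}) \cdot X$ computed on $S$. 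Applying this to $\mathcal{L} = K_S \otimes \mathcal{O}_S(X)$ yields
\[
\deg(\omega_X) = \big(K_S + X\big)\cdot X,
\]
from which the claimed formula
\[
g_a(X) = \frac{(K_S + X)\cdot X}{2} + 1
\]
follows at once.

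The main, and really only, obstacle is the computation of $\deg(\mathcal{L}|_X)$ as an intersection number on $S$. If $X$ is smooth this is textbook material: both sides equal the number of zeros minus the number of poles of a generic meromorphic section of $\mathcal{L}$ restricted to $X$, which by moving the section to general position equals the algebraic intersection $c_1(\mathcal{L})\cdot X$. For singular $X$ one can either pull back via the normalization $\nu : \overline{X} \to X$ (noting that $\deg(\mathcal{L}|_X) = \deg(\nu^*\mathcal{L}|_X)$ by definition of degree on a singular curve as the degree on its normalization, together with the projection formula $\deg(\nu^*\mathcal{L}) = c_1(\mathcal{L})\cdot \nu_*[\overline{X}] = c_1(\mathcal{L})\cdot X$), or invoke the general fact that the first Chern class of $\mathcal{L}|_X$ evaluated on the fundamental class of $X$ equals $c_1(\mathcal{L})\cdot [X]$ in the Chow ring of $S$. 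Either route is essentially formal and requires no new input beyond what has been developed in the text.
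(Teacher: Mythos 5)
Your proposal is correct and follows exactly the route the paper intends: the paper derives this corollary by combining the adjunction isomorphism $\omega_X\simeq (K_S\otimes\mathcal O_S(X))|_X$ of Proposition \ref{P:bpv} (together with the local freeness from Corollary \ref{C:plane=gorenstein}) with the genus formula $\deg(\omega_X)=2g_a(X)-2$, just as you do. The only step you elaborate beyond the paper's one-line attribution is the identification $\deg(\mathcal L|_X)=c_1(\mathcal L)\cdot X$, which is handled correctly.
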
\bigskip

\subsubsection{Abelian differentials on planar curves}

The results just presented for arbitrary smooth compact surfaces $S$ will be now specialized to the
 projective plane $\mathbb P^2$. Let $C\subset  \mathbb P^2$
be a reduced algebraic curve of degree $k$, and $\mathcal I_C$ its ideal sheaf.

\smallskip

Since $K_{\mathbb P^2}  = \mathcal O_{\mathbb P^2}(-3)$ and $\mathcal O_{\mathbb P^2}(C)= \mathcal O_{\mathbb P^2}(k)$,
the adjunction formula reads as
$ \omega_C= \mathcal O_{\mathbb P^2}(k-3)|_C \,. $

\smallskip

Because $\mathcal I_C(k-3)$ is isomorphic to $\mathcal O_{\mathbb P^2}(-3)$, both cohomology groups $H^0(\mathbb P^2, \mathcal I_C(k-3))$ and
$H^1(\mathbb P^2, \mathcal I_C(k-3))$ are trivial. Therefore, the restriction map
$$H^0(\mathbb P^2, \mathcal O_{\mathbb P^2}(k-3)) \longrightarrow H^0(C, \mathcal O_{ C}(k-3))$$
is an isomorphism. Combined with the adjunction formula, this yields
\[
H^0\big(\mathbb P^2,
K_{\mathbb P^2}(C)
\big) \simeq H^0\big(\mathbb P^2,
\mathcal O_{\mathbb P^2}(k-3)
\big) \simeq H^0(C,\omega_C) \, .
\]
The isomorphism from the leftmost  to the rightmost group is induced by ${\mathrm Res}_C$.

\medskip

The discussion above  is summarized, and made more explicit, in the following

\begin{cor}
\label{C:omegaPlaneCurve}
 Let $\{f(x,y)=0\}$ be a reduced equation for  $C$ in generic affine
  coordinates $x,y$ on $\mathbb P^2$.  Then
$$
H^0(C,\omega_C) \simeq
\bigg\langle
\frac{p(x,y)}{\partial_y
 f }
\,dx \, \Big|\,
\begin{tabular}{l}
$ p\in \mathbb C[x,y]$\\
$\deg p\leq k-3 $
\end{tabular}
\bigg\rangle \, .
$$
Consequently  $ g_a(C) = \frac{
(k-1)(k-2)}{2}$.
 \end{cor}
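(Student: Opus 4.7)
The plan is to make the chain of isomorphisms
$$H^0(C,\omega_C)\;\simeq\; H^0(\mathbb P^2,K_{\mathbb P^2}(C))\;\simeq\; H^0(\mathbb P^2,\mathcal O_{\mathbb P^2}(k-3))$$
completely explicit in the affine chart, the first isomorphism being induced by the Poincar\'e residue ${\mathrm Res}_C$ as already observed in the text. First I would describe the middle space concretely: a global section of $K_{\mathbb P^2}(C)$ is a meromorphic $2$-form on $\mathbb P^2$ with polar divisor bounded by $C$, and in affine coordinates $(x,y)$ where $C=\{f=0\}$, any such form has the shape
$$\eta\;=\;\frac{g(x,y)}{f(x,y)}\,dx\wedge dy$$
for some $g\in\mathbb C[x,y]$. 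Passing to homogeneous coordinates and recalling that $dx\wedge dy$ has a pole of order three along the line at infinity, holomorphicity at infinity translates into the bound $\deg g\le k-3$. Hence $H^0(\mathbb P^2,K_{\mathbb P^2}(C))$ is canonically identified with the space of polynomials of degree at most $k-3$ in two variables.

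Second I would apply the description of the Poincar\'e residue preceding Proposition \ref{P:bpv}. Since the affine coordinates are generic, neither $x$ nor $y$ is constant on any irreducible component of $C$, and in particular $\partial_y f$ is a non-zero divisor on each component. The formula recalled there gives
$${\mathrm Res}_C\!\left(\frac{g}{f}\,dx\wedge dy\right)\;=\;\frac{g(x,y)}{\partial_y f(x,y)}\,dx\,\bigg|_C,$$
and composing with the previous identification yields the stated isomorphism between $H^0(C,\omega_C)$ and the vector space generated by the differentials $p(x,y)(\partial_y f)^{-1}dx$ with $\deg p\le k-3$.

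For the dimension count, the space $\mathbb C_{\le k-3}[x,y]$ of polynomials of degree at most $k-3$ in two variables has dimension
$$\binom{k-1}{2}\;=\;\frac{(k-1)(k-2)}{2},$$
which gives the arithmetic genus formula $g_a(C)=h^0(C,\omega_C)=\frac{(k-1)(k-2)}{2}$.

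The only delicate point, and the one that requires care rather than calculation, is the precise meaning of \emph{generic} in the statement: one needs the affine chart to be chosen so that the line at infinity $\ell_\infty$ meets $C$ transversely in $k$ distinct smooth points of $C$ (so that $f$ has degree exactly $k$ and the order-of-pole computation at infinity is clean) and so that $\partial_y f$ is not identically zero on any component of $C$ (so that the residue formula applies component-wise). Both conditions hold on a Zariski-dense set of affine coordinate choices on $\mathbb P^2$, and once they are secured the argument above gives the isomorphism and the dimension without further work.
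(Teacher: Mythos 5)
Your argument is correct and follows essentially the same route as the paper: both identify $H^0(C,\omega_C)$ with $H^0(\mathbb P^2, K_{\mathbb P^2}(C))$ via the residue exact sequence of Proposition \ref{P:bpv} together with the vanishing of $H^0$ and $H^1$ of $K_{\mathbb P^2}=\mathcal O_{\mathbb P^2}(-3)$, and then read off the sections as $\frac{p}{\partial_y f}\,dx$ with $\deg p\le k-3$. The only cosmetic difference is that you compute the pole order of $dx\wedge dy$ along the line at infinity directly, where the paper packages the same fact as $\omega_C\simeq\mathcal O_{\mathbb P^2}(k-3)|_C$ plus the surjectivity of the restriction map coming from $\mathcal I_C(k-3)\simeq\mathcal O_{\mathbb P^2}(-3)$.
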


\section{Algebraic webs of maximal rank}\label{S:CC}

In view of Theorem \ref{T:curvesvswebs2}, it suffices to
consider  $\mathcal W$-generic curves $C \subset \mathbb P^n$    with $h^0(C,\omega_C)$
attaining Castelnuovo's bound $\pi(n,\deg (C) )$ in order to have
examples of webs of maximal rank. Classically, a degree $k$  irreducible non-degenerate curve $C \subset \mathbb P^n$ such that $g_a(C)=\pi(n,k)>0$ is called
a \defi[Castelnuovo curve].  \index{Castelnuovo curve}
Notice that the definition implies that a Castelnuovo curve has necessarily degree $k\ge n+1$, since otherwise $g_a(C)=0$ according to
Theorem \ref{T:castelnuovobound}.

The simplest examples of Castelnuovo curves are the irreducible planar curves of degree at least three. Since
the arithmetic genus of a reduced planar curve $C$ is $\pi(2,\deg(C))$,
such curves are certainly Castelnuovo.

In sharp contrast, when the dimension is at least three,
the Castelnuovo curves are the exception, rather than the rule.
Indeed, the analysis carried out in Section \ref{S:constraints} to control the geometry of the conormals of maximal rank webs was
originally developed by Castelnuovo to control the geometry of hyperplane sections of Castelnuovo curves.
Reformulating Proposition \ref{P:normal} in terms of curves, instead of webs, provides the following

\begin{prop}\label{P:n-1}
If $C \subset \mathbb P^n$ is a Castelnuovo curve of degree $k\ge 2n +1$ then
a generic hyperplane section of $C$ is contained in a rational normal curve.
\end{prop}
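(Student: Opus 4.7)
The plan is to transport the hypothesis $g_a(C) = \pi(n,k)$ to a statement on the rank of the dual web $\mathcal W_C(H_0)$, apply the web-theoretic Castelnuovo Lemma (Proposition~\ref{P:Cast}), and finally re-interpret the conclusion on $H_0 \cap C$ through projective duality. The case $n = 2$ is vacuous, since a hyperplane section of a plane curve is a finite set of points on a line, which is itself a rational normal curve of degree one; from now on I assume $n \geq 3$.

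For a generic hyperplane $H_0 \in \check{\mathbb P}^n$, Proposition~\ref{P:gp1} ensures that $\mathcal W_C(H_0)$ is a smooth $k$-web on $(\check{\mathbb P}^n, H_0)$. By Theorem~\ref{T:curvesvswebs2} there is an injection $H^0(C, \omega_C) \hookrightarrow \mathcal A(\mathcal W_C(H_0))$, hence
\[
\mathrm{rank}(\mathcal W_C(H_0)) \geq g_a(C) = \pi(n, k).
\]
Chern's bound (Theorem~\ref{T:cota}) provides the reverse inequality, so $\mathcal W_C(H_0)$ is of maximal rank. Corollary~\ref{C:bbb} with $j = 1$ then yields $\dim F^1 \mathcal A / F^2 \mathcal A = k - 2n + 1$, which is positive since $k \geq 2n + 1$; combining this with Lemma~\ref{L:besta} and the general lower bound of Proposition~\ref{P:cast} forces $\ell^2(\mathcal W_C(H_0)) = 2n - 1$. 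The hypotheses of Proposition~\ref{P:Cast} are then satisfied, so the $k$ conormals of $\mathcal W_C(H_0)$ at $H_0$ lie on a non-degenerate rational normal curve $\Gamma \subset \mathbb P T^*_{H_0} \check{\mathbb P}^n$.

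To conclude I would exploit the natural identification $\mathbb P T^*_{H_0} \check{\mathbb P}^n \simeq H_0$ coming from the isomorphism between the incidence variety $\mathcal I$ and $\mathbb P T^* \check{\mathbb P}^n$ recalled in Section~\ref{S:awr}: the fiber $\check{\pi}^{-1}(H_0)$ is precisely $H_0$ as a projective space, and the $k$ points of $\pi^{-1}(C) \cap \check{\pi}^{-1}(H_0)$ correspond simultaneously to the $k$ conormals of $\mathcal W_C(H_0)$ at $H_0$ and to the $k$ intersection points $H_0 \cap C$. Under this dictionary the curve $\Gamma$ becomes a non-degenerate rational normal curve in $H_0 \simeq \mathbb P^{n-1}$ containing $H_0 \cap C$, which is the desired conclusion. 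The only genuine subtlety in the argument is this last translation through projective duality; the rest is a direct chaining of results already established in Chapters~\ref{Chapter:AR} and~\ref{Chapter:3}, so I do not expect a real obstacle beyond careful bookkeeping.
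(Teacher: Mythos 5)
Your proof is correct and follows essentially the same route the paper intends: the paper disposes of this proposition in one line by ``reformulating Proposition~\ref{P:normal} in terms of curves instead of webs,'' and your chain --- Abel's theorem plus Chern's bound force $\mathcal W_C(H_0)$ to have maximal rank, Corollary~\ref{C:bbb} and Lemma~\ref{L:besta} then pin down $\ell^2 = 2n-1$, Proposition~\ref{P:Cast} produces the rational normal curve through the conormals, and the incidence-variety identification of $\mathbb P T^*_{H_0}\check{\mathbb P}^n$ with $H_0$ translates this into the statement about $H_0\cap C$ --- is exactly that reformulation made explicit. No gaps.
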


Indeed, Castelnuovo's Theory goes further and says that a Castelnuovo curve
$C \subset \mathbb P^n$ is contained in a surface $S$, which is cut out by $| I_C(2)|$  the linear system of quadrics
containing $C$,   which has  rational normal curves
as generic hyperplane sections. Because the degree of $S$ is the same as the one of a generic hyperplane
section, and  rational normal curves in $\mathbb P^{n-1}$ have degree $n-1$, the surface
$S$ is of minimal degree. Theorem \ref{T:VMD} applies, and  implies that
$S$ is either a plane, a Veronese surface in $\mathbb P^5$, or a rational normal scroll $S_{a,b}$ with $a+b=n-1$.

The proof of these results will not be presented here\begin{footnote}{The interested reader may consult,
for instance, \cite{harris}, \cite{ACGH}, or \cite{EHCAMG}.}\end{footnote}, but the rather easier determination of Castelnuovo curves in surfaces
of the above list will be sketched below.

\subsection{Curves on the Veronese surface}
\index{Veronese surface}

Let $S \subset \mathbb P^5$ be the Veronese surface. Recall that $S$ is the embedding
of $\mathbb P^2$ into $\mathbb P^5$ induced by the complete linear system
$|\mathcal O_{\mathbb P^2}(2)|$.  If $C$ is an irreducible curve contained in $S$ then,
its degree $k$ as a curve in $\mathbb P^5$ is twice its degree $d$ as a curve in $\mathbb P^2$.

On the one hand,
\[
h^0(C,\omega_C) = \frac{(d-1)(d-2)}{2} =  \frac{(k-2)(k-4)}{8}\, .
\]
On the other hand, Castelnuovo's bound predicts
\[
h^0(C,\omega_C) \le \left\{
\begin{array} {lcl}
 (e-1) ( 2e -1 ) & \text{ if } & k = 4 e  \\
 e( 2e -1 ) & \text{ if } & k = 4 e + 2   \,.
\end{array} \right.
\]
Therefore, both cases lead to Castelnuovo curves.

\subsection{Curves on rational normal scrolls }

Let $S$ be a rational normal scroll $S_{a,b}$ with $0 \le a \le b$ and $a+b=n-1$.
If $S$ is singular, that is  $a=0$, replace $S$ by its desingularization
 $\mathbb P \big( \mathcal O_{\mathbb P^1}\oplus \mathcal O_{\mathbb P^1}(b) \big)$ which will still
be denoted by $S$.

The Picard group of $S$ is the free rank two $\mathbb Z$-module generated  by $H$, the class of a hyperplane section,   and $L$,  the class
of a line of the ruling. Of course,
\[
H^2 =  n-1, \quad H\cdot L = 1\, \quad \text{and} \quad L^2 =0 \, .
\]

If one writes $K_S = \alpha H + \beta L$ then the coefficients $\alpha$ and $\beta$ can be
determined using the genus formula  (\ref{E:genusformulaplanecurve}).  Since both $H$ and $L$ are smooth rational curves,
one has
\begin{align*}
-2  &= 2g(H) -2 = {H^2 + K_S \cdot H } = (\alpha + 1)(n-1) + \beta \, , \\
\mbox{and} \quad -2  &= 2g(L) -2 = {L^2 + K_S \cdot L} =  \alpha \, .
\end{align*}
Therefore $\alpha = -2$ and $\beta = n-3$, that is, $K_S = -2 H + (n-3) L$.

\medskip

Let $\alpha$ and $\beta$ be new constants distinct from the ones above. Let also $C$
 be an irreducible curve  contained in $S$, numerically equivalent to  $\alpha H + \beta L$.
  Notice that $\deg(C) = C\cdot H = \alpha(n-1) + \beta$. The genus formula  (\ref{E:genusformulaplanecurve})  implies
\[
g_a(C) = \frac{C^2 + K_S\cdot C}{2} + 1 = \frac{1}{2} (\alpha-1)\Big(
(n-1)\alpha+2(\beta-1)
\Big).
\]

Suppose now that the degree of $C$ is equal to $k \ge n+1$ and write
\[
 k-1 = m(n-1) + \epsilon \, ,
\]
as in Remark \ref{R:closed}. If $C$ is Castelnuvo then
\[
g_a(C) = \binom{m}{2} ( n- 1 ) + m \epsilon \, .
\]
Thus the Castelnuovo's curves on $S$  provide solutions to
the following system of equations
\begin{align*}
\deg(C) &= m(n-1) + \epsilon + 1 = \alpha(n-1) + \beta \, , \\
g_a(C) &= \binom{m}{2} ( n- 1 ) + m \epsilon = \frac{1}{2} (\alpha-1)\big(
(n-1)\alpha+2(\beta-1)
\big) \, ,
\end{align*}
subject to the arithmetical constraints $m,n \ge0$, and $0 \le \epsilon \le n-2$; and the geometrical
constraint $\alpha = C \cdot L > 0$.

It can be shown that the only possible  solutions are
\[
(\alpha, \beta) = ( m +1 , - (n-1 -\epsilon) ) \,
\]
or, only when $\epsilon =0$, $(\alpha, \beta) = ( m  , 1 ).$

\smallskip

If $\mathcal L =\mathcal O_S(\alpha H + \beta L )$, with $(\alpha,\beta)$ being one of the solutions
above, then  Riemann-Roch's
Theorem for surfaces, see for instance \cite[Theorem 1.6, Chapter V]{hartshorne},\index{Riemann-Roch Theorem}  implies
\begin{align*}
\chi(\mathcal L) &= \frac{1}{2} (\alpha H + \beta L)(  (\alpha -2) H+ (\beta +n -3)L) + \chi(S) \\
&=  \binom{\alpha+1}{2} (n-1) + (\alpha+1)(\beta +1)\, .
\end{align*}

Notice that $K_S \otimes  \mathcal L^* = \mathcal O_S( (-2-\alpha) H +(n-3-\beta)L )$. Because $\alpha > 0$, the following inequality holds true
\[
 ( K_S \otimes \mathcal L^*  ) \cdot L < 0 \, .
\]
Thus $h^0(S,K_S \otimes \mathcal L^* )=0$. Serre's duality for surfaces implies the same for $h^2(S, \mathcal L)$,
that is ${h^2(S,\mathcal L) = 0}$.  Consequently $h^0(S,\mathcal L)\ge \chi(\mathcal L)$.
Moreover, if $\alpha>2$ then $h^0(\mathcal L) \ge \chi(\mathcal L) >0$.
\smallskip

Since $\alpha$ is $\lfloor\frac{k-1 }{n-1} \rfloor$ or $\lfloor\frac{k-1 }{n-1} \rfloor +1$,
the linear system $|\mathcal L|$ is non-empty when  $k \ge 2n$.

One can push this analysis further and show that  the linear system $| \mathcal L|$ is base-point free  whenever
$k\ge 2n$. As a consequence,  the generic element of $|\mathcal L|$
is an irreducible smooth curve according to Bertini's Theorem.  See \cite{harris} for details.

\medskip

The discussion above is summarized in the following statement.

\begin{prop}
For any integer $n \ge 3$, any pair $(a,b)$ of non-negative integers
summing up to $n-1$, and any $k \ge 2n$, there exist
Castelnuovo curves of degree  $k$ contained in $S_{a,b} \subset \mathbb P^n$.
\end{prop}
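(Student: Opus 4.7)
The plan is to realize the sought Castelnuovo curve as a general member of a complete linear system on $S=S_{a,b}$ — or, when $a=0$, on the Hirzebruch desingularization $\widetilde S \simeq \mathbb F_{n-1}$ of the cone $S$. As already determined in the preceding discussion, the only numerical classes $\alpha H + \beta L$ whose associated degree-$k$ curves can have arithmetic genus equal to $\pi(n,k)$ are $(\alpha,\beta) = (m+1,-(n-1-\epsilon))$ and, exclusively when $\epsilon = 0$, $(\alpha,\beta) = (m,1)$, where $k-1=m(n-1)+\epsilon$ with $0\le\epsilon\le n-2$. Under the standing hypothesis $k\ge 2n$ one has $m\ge 2$, and hence $\alpha\ge 2$ in either case. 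I set $\mathcal L = \mathcal O_S(\alpha H + \beta L)$ for one of these choices.

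First I would establish non-emptiness of $|\mathcal L|$. The vanishing $h^2(S,\mathcal L) = h^0(S, K_S\otimes\mathcal L^*)=0$ is already recorded in the excerpt, deduced from $(K_S\otimes\mathcal L^*)\cdot L = -(\alpha+2)<0$; combined with the Riemann–Roch identity
\[
\chi(\mathcal L) = \binom{\alpha+1}{2}(n-1) + (\alpha+1)(\beta+1),
\]
this gives $h^0(S,\mathcal L)\ge \chi(\mathcal L)$. A direct substitution of the admissible pairs $(\alpha,\beta)$ together with $\alpha\ge 2$ and $\epsilon \le n-2$ shows that $\chi(\mathcal L)>0$ in both cases, so $|\mathcal L|\ne\emptyset$.

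The principal obstacle is to prove that $|\mathcal L|$ is base-point free — the step for which the excerpt itself defers to \cite{harris}. I would proceed by restricting to an arbitrary ruling line $L_0\subset S$: since $L_0\cdot(\alpha H+\beta L)=\alpha\ge 2$, the restriction $\mathcal L|_{L_0}$ has positive degree on $L_0\simeq\mathbb P^1$ and is globally generated. It is then enough to prove surjectivity of the restriction map $H^0(S,\mathcal L)\to H^0(L_0,\mathcal L|_{L_0})$, which follows from the vanishing $H^1(S,\mathcal L(-L_0))=0$. I would establish this vanishing by a direct cohomology computation exploiting the $\mathbb P^1$-bundle structure $S\to\mathbb P^1$ and the Leray spectral sequence (equivalently, by an induction on $\alpha$ using the standard short exact sequence associated to a ruling line). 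The presence of a negative coefficient $\beta<0$ in one of the two cases is harmless, since $\mathcal L\cdot L=\alpha>0$ forbids any ruling line from occurring as a fixed component of $|\mathcal L|$.

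Once base-point freeness is in hand, Bertini's Theorem produces a smooth irreducible curve $C\in|\mathcal L|$. Its degree $C\cdot H = \alpha(n-1)+\beta = k$ and its arithmetic genus $\tfrac{1}{2}(\alpha-1)\bigl((n-1)\alpha + 2(\beta-1)\bigr) = \pi(n,k)$ are the values already computed, and non-degeneracy of $C$ in $\mathbb P^n$ follows because a hyperplane section of $S$ is an irreducible rational normal curve of degree $n-1<k$, hence cannot contain $C$. In the cone case $a=0$, the curve $C$ is constructed on $\widetilde S$ and then pushed down along the contraction of the exceptional section; the image retains the same degree and the same arithmetic genus (the dualizing sheaf being intrinsic to the abstract curve), so it is again a Castelnuovo curve in $S_{0,n-1}\subset\mathbb P^n$, completing the proof.
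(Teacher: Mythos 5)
Your proposal follows the paper's route step for step: the same two candidate classes $\alpha H+\beta L$, non-emptiness of $|\mathcal L|$ via Riemann--Roch together with $h^2(S,\mathcal L)=0$, and Bertini at the end. The one step the paper defers to the literature --- base-point freeness of $|\mathcal L|$ --- is exactly where your argument breaks down. Write $S_{a,b}$ (with $0\le a\le b$) as the Hirzebruch surface $\mathbb F_e$, $e=b-a$, with minimal section $C_0$ ($C_0^2=-e$) and fibre $f=L$, so that $H=C_0+bf$ and $\mathcal L=\mathcal O(\alpha C_0+(\alpha b+\beta)f)$. Pushing $\mathcal L(-L_0)$ down to $\mathbb P^1$ gives $\bigoplus_{i=0}^{\alpha}\mathcal O_{\mathbb P^1}(\alpha b+\beta-1-ie)$, so your vanishing $H^1(S,\mathcal L(-L_0))=0$ holds if and only if $\alpha a+\beta\ge 0$, that is, if and only if $\mathcal L\cdot C_0\ge 0$. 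Since $\beta<0$ in the main case, the dangerous fixed component is not a ruling line (your closing remark only excludes those) but the directrix $C_0$, and $\mathcal L\cdot C_0$ is indeed negative on sufficiently unbalanced scrolls. Concretely, on $S_{1,5}\subset\mathbb P^7$ with $k=14$, the irreducible degree-$14$ classes are $\alpha C_0+(14-\alpha)f$ with $14-\alpha\ge 4\alpha$, hence $\alpha\le 2$, and adjunction caps the arithmetic genus of such a curve at $7<8=\pi(7,14)$. So not only does the $H^1$ fail to vanish: no Castelnuovo curve of degree $14$ lies on $S_{1,5}$ at all, and the proposition as stated needs a hypothesis tying $(a,b)$ to $k$ (it does hold, for instance, on the balanced scroll, where $\mathcal L\cdot C_0\ge 0$ is automatic for $k\ge 2n$).

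The cone case has a second, independent problem. For $a=0$ the extremal class meets $C_0$ negatively for most residues $\epsilon$, so there is no irreducible member upstairs to push down; and your parenthetical claim that the pushdown preserves the arithmetic genus because ``the dualizing sheaf is intrinsic to the abstract curve'' is false as soon as the curve meets the contracted section: if $\widetilde C$ is smooth and meets $C_0$ transversally in $\mu\ge 2$ points, its image acquires an ordinary $\mu$-fold point at the vertex and $p_a$ jumps by $\binom{\mu}{2}$. That jump is in fact how extremal curves on cones reach the Castelnuovo bound: on $S_{0,6}\subset\mathbb P^7$ a general smooth member of $|2C_0+14f|$ has genus $7$ and meets $C_0$ twice, and its image is a degree-$14$ curve with a node at the vertex and $p_a=8=\pi(7,14)$. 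So the cone case requires a construction through singular curves at the vertex, not the one you describe.
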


\subsection{Webs of maximal rank}

From all that have been said in the previous sections, the following result  follows
promptly.

\begin{prop} For any integers $n \ge 2$ and  $k \ge 2n$, there exist smooth $k$-webs
of maximal rank on $(\mathbb C^n,0)$. These are the algebraic webs of the form
 $\mathcal W_C(H_0)$, where $C$ is a degree $k$ Castelnuovo curve in $\mathbb P^n$ and $H_0$ a hyperplane intersecting it transversely.
\end{prop}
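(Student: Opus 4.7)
The plan is to assemble this proposition directly from the machinery built up in the preceding sections: the injection of abelian differentials into abelian relations (Theorem \ref{T:curvesvswebs2}), Chern's upper bound $\pi(n,k)$ on the rank of smooth webs (Theorem \ref{T:cota}), the genericity of smoothness for $\mathcal{W}_C(H_0)$ when $C$ is irreducible and non-degenerate (Proposition \ref{P:gp1}), and the existence results for Castelnuovo curves outlined in Section \ref{S:CC}.

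First I would argue existence of the relevant Castelnuovo curve $C \subset \mathbb{P}^n$ of degree $k$. For $n=2$ the condition $k \ge 2n = 4$ is comfortably covered: any irreducible reduced planar curve of degree $k$ is automatically a Castelnuovo curve since $g_a(C)=\binom{k-1}{2}=\pi(2,k)$ by Corollary \ref{C:omegaPlaneCurve}. For $n \ge 3$ and $k \ge 2n$, existence is precisely the content of the discussion on curves in rational normal scrolls $S_{a,b}\subset\mathbb{P}^n$ just carried out, where the numerical class $\mathcal{L}=\mathcal{O}_S(\alpha H+\beta L)$ with $(\alpha,\beta)=(m+1,-(n-1-\epsilon))$ (or $(m,1)$ when $\epsilon=0$) was shown to contain an irreducible smooth Castelnuovo curve of degree $k$ via Bertini.

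Next I would choose $H_0 \in \check{\mathbb{P}}^n$ generic enough so that simultaneously $H_0$ meets $C$ transversely in $k$ distinct points and the germ $\mathcal{W}_C(H_0)$ is smooth in the sense of Section \ref{S:basicdef}. Since $C$ is irreducible and non-degenerate, Proposition \ref{P:gp1} guarantees that such $H_0$ form a Zariski-open dense subset of $\check{\mathbb{P}}^n$, so the choice is possible. Thus $\mathcal{W}_C(H_0)$ is a germ of smooth $k$-web on $(\check{\mathbb{P}}^n,H_0)\simeq (\mathbb{C}^n,0)$.

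Finally I would combine the two bounds. By Theorem \ref{T:curvesvswebs2} the linear map $\omega \mapsto (p_1^*\omega,\ldots,p_k^*\omega)$ injects $H^0(C,\omega_C)$ into $\mathcal{A}(\mathcal{W}_C(H_0))$, so
\[
\mathrm{rank}(\mathcal{W}_C(H_0)) \;\ge\; h^0(C,\omega_C) \;=\; g_a(C) \;=\; \pi(n,k),
\]
the last equality being the definition of a Castelnuovo curve. On the other hand, since $\mathcal{W}_C(H_0)$ is smooth, Chern's bound (Theorem \ref{T:cota}) gives $\mathrm{rank}(\mathcal{W}_C(H_0))\le \pi(n,k)$. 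Equality holds, and $\mathcal{W}_C(H_0)$ is a smooth $k$-web of maximal rank on $(\mathbb{C}^n,0)$. There is no real obstacle here beyond what has already been proved; the single step requiring substantial input from outside this chapter is the existence of the Castelnuovo curve in $\mathbb{P}^n$ for $n\ge 3$, which is why the hypothesis is stated as $k\ge 2n$ rather than the weaker $k\ge n+1$ that Castelnuovo's bound alone would allow.
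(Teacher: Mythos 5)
Your proposal is correct and follows essentially the same route the paper intends: the paper states that the proposition "follows promptly" from the preceding sections, namely the existence of Castelnuovo curves of degree $k\ge 2n$ (planar curves for $n=2$, curves on scrolls or the Veronese surface for $n\ge 3$), Proposition \ref{P:gp1} for smoothness of $\mathcal W_C(H_0)$ at a generic $H_0$, the injection $H^0(C,\omega_C)\hookrightarrow\mathcal A(\mathcal W_C(H_0))$ of Theorem \ref{T:curvesvswebs2}, and Chern's bound. You have simply written out the assembly the paper leaves implicit, with no gaps.
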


It remains to discuss smooth $k$-webs of maximal rank on $(\mathbb C^n,0)$ with $k < 2n$.

\smallskip

For $k \le n$ there is not much to say inasmuch
smooth $k$-webs on $(\mathbb C^n,0)$ have always rank zero and are equivalent to $\mathcal W(x_1, \ldots, x_k)$.

\smallskip

For  $k=n+1$, a web of maximal rank carries exactly one non-zero abelian relation because  $\pi(n,n+1)=1$.
Thus, if  $$u_1,\ldots,u_{n+1}:(\mathbb C^n,0)\rightarrow \mathbb C$$ are submersions defining  $\mathcal W$ then  its unique abelian relation takes
the form
$
f_1(u_1)+\cdots + f_{n+1}(u_{n+1})=0
$
for suitable holomorphic germs ${f_i:(\mathbb C,0) \to (\mathbb C,0)}$. It follows that $\mathcal W$ is equivalent to
the parallel $(n+1)$-web $\mathcal W(x_1, \ldots, x_n, x_1 +  \ldots + x_n).$

\medskip

For $k \in \{ n+2,\ldots,2n-1\}$, it is fairly simple to construct
smooth $k$-webs of maximal rank $\pi(n,k)=k-n$. It suffices to consider submersions $u_1,\ldots,u_k: (\mathbb C^n,0) \to  (\mathbb C,0)$ of the form:
\begin{align*}
\label{RA:d<2n}
u_i(x_1, \ldots, x_n) = & \, x_i  &&\text{ for } i \in \underline n  \\
\text{ and} \quad u_i(x_1, \ldots, x_n) =  & \,\sum_{j=1}^n u_i^{(j)}(x_j)    &&\text{ for } i \in \{ n+1, \ldots, k \} \, . \,
\end{align*}
Here   $u_i^{(j)}: (\mathbb C,0) \to (\mathbb C,0)$ are germs of submersions in one variable. It is clear
that for a generic choice of the submersions $u_i^{(j)}$, the $k$-web $\mathcal W= \mathcal W(u_1, \ldots, u_k)$
is smooth. Moreover, the definition of $u_i$ when $i > n$  can be interpreted as an abelian relation of $\mathcal W$. Therefore
  $\mathrm{rank}(\mathcal W)\geq k-n$. But $\pi(n,k)=k-n$,  Therefore $\mathcal W$ is of maximal rank.

\dd

The examples of $k$-webs of maximal rank with $k= n+1$ or $k \ge 2n$  are of different
nature than the ones presented for ${k \in \{ n+2, \ldots, 2n-1\}}$.
While the equivalence classes of the former examples belong to finite dimensional families, the ones of the latter
belong to infinite dimensional families, as can be easily verified. Although logically this could
be just a coincidence, the main result of this book says that this is not case at least when  $n\ge 3$ and $k \ge 2n$.
A precise statement will be given in Chapter \ref{Chapter:Trepreau}.

\begin{remark} If $C\subset \mathbb P^n$ is  a non-degenerate, but not $\mathcal W$-generic, curve of degree $k$ then it may  happen
that $h^0(\omega_C)>\pi(n,k)$.
According to Proposition \ref{P:gp2}, such a curve $C$ cannot be irreducible.
 There are works  (see \cite{ballico,hartshorne',tedeschi}) showing the existence of a function $\tilde \pi(n,k)$
 which bounds  the arithmetical genus of non-degenerate projective curves in $\mathbb P^n$ of degree $k$. Of course, $\tilde \pi(n,k)$  is greater
  than Castelnuovo's number ${\pi}(n,k)$.  Moreover,
the curves $C$ attaining this bound have been classified. They all have a plane curve among their irreducible components.
\end{remark}

\section{Webs and families of hypersurfaces}\label{S:beyond}

The construction of the dual web $\mathcal W_C$ of a projective curve $C$, as well as the definition  of the trace relative to the family of
hyperplanes, make use of the incidence variety $\mathcal I \subset \mathbb P^n \times \hat{\mathbb P}^n$. The interpretation of  $\mathcal I$
as the family of hyperplanes in $\mathbb P^n$, suggests the extension of both constructions to other families of hypersurfaces.

In this section, one such extension is described, and used in combination with Chern's bound for the rank of
smooth webs  to obtain bounds for the geometric genus of curves on abelian varieties.
The exposition is deliberately sketchy. A more detailed account will appear elsewhere.

\subsection{Dual webs with respect to a family}

Let $X$ and $T$ be  projective manifolds,  and $\pi_T: X \times T \to T$, $\pi_X : X \times T \to X$
be the natural projections. Consider $\mathscr X$, a \defi[family of hypersurfaces]  in $X$
parametrized by  $T$. By definition, $\mathscr X$ is an irreducible
subvariety of $X\times T$ for which $\mathscr X_H = \pi_T^{-1}(H)\cap \mathscr X$ is a hypersurface of $X\times \{ H \}$ for
every $H \in T$ generic enough. It will be convenient, as has been done with the family of hyperplanes on $\mathbb P^n$, to
think of $H$ as point of $T$ ($H \in T$),   as well as a hypersurface  in $X$ ($H \subset X$).

Let  $C\subset X$ be a reduced curve and $H_0\subset X$ be a hypersurface which belongs to the family $\mathscr X$ -- that is, $H_0\subset X$ is
equal to $\pi_X(\mathscr X_{H_0})$ with $H_0 \in T$ being the corresponding point.

If $H_0\subset X$ intersects $C$ transversely in $k$ distinct points then, as for the family of hyperplanes in $\mathbb P^n$, there are holomorphic maps
\[
p_i: (T,H_0)\to C \, ,  \quad \text{for } i \in \underline k \, ,
\]
implicitly defined by
\[
C \cap \pi_X(\mathscr X_{H}) = \sum_{i=1}^k p_i(H) \, .
\]
It may happen  that one of the points $p_i(H_0)$ is common to all hypersurfaces in the family. It also
may happen that for some pair $i,j \in \underline k $, the functions $p_i$ and $p_j$ define the same foliation.
But, if   the maps $p_i$ are non-constant and define  pairwise distinct foliations then
there is a naturally defined germ of  (eventually singular) $k$-web on $(T,H_0)$:   $\mathcal W(p_1,\ldots, p_k)$.
It will called the \defi[$\mathscr X$-dual web of $C$ at $H_0\in T$], and denoted by $\mathcal W_C^{\mathscr X}(H_0)$.

If it is possible to define the germ of $k$-web $\mathcal W_C^{\mathscr X}(H_0)$ at any generic $H_0\in T$
then there is no obstruction to define the global $k$-web $\mathcal W_C^{\mathscr X}$, the \defi[$\mathscr X$-dual web of $C$].

\subsection{Trace relative to families of hypersurfaces}

For a curve $C \subset X$ and a meromorphic $1$-form $\omega$ on $C$, it is possible to define
the \defi[$\mathscr X$-trace of $\omega$ relative to the family $\mathscr X$] succinctly by the formula
\[
\mathrm{Tr}_{\mathscr X} (\omega) = ({\pi_T})_* \, (\pi_X)^*(\omega) \, .
\]
To give a sense to this expression, it is necessary to assume that a generic hypersurface in the family intersects $C$ in
at most a finite number of points. In other words, no irreducible component of $C$ is contained in the
  generic hypersurface of $\mathscr X$. If this is the case, consider then $\Sigma_0$, one of  the irreducible components of  $\pi_X^{-1}(C)\cap \mathscr X$
 with dominant projection to $T$. Because a generic $H \in T$ intersects $C$ in finitely many points, $\Sigma_0$ has the same dimension
 as $T$.

 Let $\Sigma \to \Sigma_0$ be a resolution of singularities, and still denote by $\pi_X$, $\pi_T$ the compositions
 of the natural projections with the resolution morphism. Then $\pi_X^*\omega$ is a meromorphic $1$-form on $\Sigma$, and for a generic
 $H \in T$  there are local inverses for $\pi_T:\Sigma \to T$. Proceeding exactly as for the family of hyperplanes in $\mathbb P^n$,
one can define a meromorphic $1$-form $\eta_{\Sigma_0}$ at the complement of the critical values of $\pi_T:\Sigma \to T$.

 To extend $\eta_{\Sigma_0}$ through the critical value set -- or discriminant -- of $\pi_T$, observe that outside a codimension
 two subset, the discriminant is smooth;  the fibers over points in it are finite; and locally, at each of its pre-images,
 $\pi_T$ is conjugated to a map of the form $(x_1, \ldots, x_n) \mapsto (x_1^r,x_2,\ldots,x_n)$ for some  suitable positive integer $r$. Hence, as for
 the family of hyperplanes, $\eta_{\Sigma_0}$ can be extended through this set. But a meromorphic $1$-form defined
at the complement of a codimension two subset extends to the whole ambient space according to Hartog's extension theorem.

The $\mathscr X$-trace $\mathrm{Tr}_{\mathscr X} (\omega)$ is then defined as  the sum of  the meromorphic $1$-forms $\eta_{\Sigma_0}$ for $\Sigma_0$
ranging over  all the irreducible components of  $\pi_X^{-1}(C)\cap \mathscr X$ dominating  $T$.

\medskip

It is not hard to prove the  following weak analogue of  Abel's addition Theorem.

\begin{prop}\label{P:wabel}
Let $\mathscr X$ be family of hypersurfaces of $X$ over a smooth projective variety $T$.
If $C \subset X$ is a curve intersecting a generic hypersurface of the family at finitely many points, and if $\omega$ is a meromorphic $1$-form on $C$,  then
the $\mathscr X$-trace of $\omega$ is a meromorphic $1$-form on $T$.
Moreover, if the pull-back of $\omega$ to a desingularization of $C$ is holomorphic
then    its $\mathscr X$-trace is a holomorphic $1$-form on $T$.
\end{prop}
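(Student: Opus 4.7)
The plan is to extend the strategy of Theorem \ref{T:abel2} to the family setting, using Proposition \ref{P:pp} applied to restrictions of the family to generic one-parameter curves in $T$.

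Let $\nu : \tilde C \to C$ be the desingularization of $C$. Let $\Sigma_0, \Sigma_1, \ldots$ denote the irreducible components of $\pi_X^{-1}(C) \cap \mathscr X$ which dominate $T$; each has the same dimension as $T$ since, by hypothesis, $C$ meets a generic hypersurface of the family in finitely many points. Let $\Sigma$ be a resolution of singularities of the union of these components, chosen so that the morphism $\pi_X : \Sigma \to C$ factors through $\tilde C$ via some morphism $\alpha : \Sigma \to \tilde C$. I will still write $\pi_T : \Sigma \to T$ for the composition with the resolution. Over the complement $U$ of the critical value set $\Delta \subset T$ of $\pi_T|_\Sigma$, the map $\pi_T$ is a finite unramified covering of some degree $k$, and one sets
\[
\mathrm{Tr}_{\mathscr X}(\omega)_{|U} = \sum_{j=1}^{k} g_j^{*}\bigl(\alpha^{*}\omega\bigr),
\]
where $g_1, \ldots, g_k$ are local holomorphic sections of $\pi_T$. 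This defines a meromorphic $1$-form on $U$, holomorphic whenever $\alpha^{*}\omega$ is.

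To extend $\mathrm{Tr}_{\mathscr X}(\omega)$ meromorphically through $\Delta$, I would adapt Lemma \ref{L:tt}. For a generic smooth complete curve $\gamma \subset T$, the restriction of $\pi_T$ to $\Sigma_\gamma := \pi_T^{-1}(\gamma) \cap \Sigma$ is a finite ramified covering of smooth curves in the sense of \S \ref{S:Traceunderramifiedcoverings}, and the pullback of $\mathrm{Tr}_{\mathscr X}(\omega)$ to $\gamma$ coincides with the trace of $\alpha^{*}\omega|_{\Sigma_\gamma}$ under this covering -- the verification is identical to that of Lemma \ref{L:tt}. Proposition \ref{P:pp} then implies that the restriction of $\mathrm{Tr}_{\mathscr X}(\omega)$ to $\gamma$ extends meromorphically across $\gamma \cap \Delta$. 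Since through a generic point of $\Delta$ one can pass such a curve $\gamma$, the $1$-form $\mathrm{Tr}_{\mathscr X}(\omega)$ extends meromorphically across the smooth codimension-one locus of $\Delta$, and Hartogs' theorem handles the remaining codimension at least two set.

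The holomorphicity assertion follows by the same device. If $\nu^{*}\omega$ is holomorphic on $\tilde C$ then $\alpha^{*}\omega$ is holomorphic on $\Sigma$, so $\mathrm{Tr}_{\mathscr X}(\omega)_{|U}$ is holomorphic on $U$. For a generic $\gamma$, the second part of Proposition \ref{P:pp} gives that the trace under the ramified covering $\Sigma_\gamma \to \gamma$ of the holomorphic $1$-form $\alpha^{*}\omega|_{\Sigma_\gamma}$ is again holomorphic. Hence $\mathrm{Tr}_{\mathscr X}(\omega)$ is holomorphic along a generic curve meeting $\Delta$, and therefore holomorphic across the smooth codimension-one stratum of $\Delta$; Hartogs concludes.

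The main obstacle I anticipate is a technical one: arranging the resolution $\Sigma \to \bigcup_i \Sigma_i$ so that $\pi_X|_\Sigma$ factors through $\tilde C$ and so that for a sufficiently general curve $\gamma \subset T$ the restricted covering $\Sigma_\gamma \to \gamma$ is genuinely a finite ramified covering of smooth projective curves. This amounts to choosing the resolution fine enough and $\gamma$ sufficiently transversal. Alternatively, and perhaps more cleanly, one may work away from a codimension-two subset of $T$ where the geometry is transparent, and use Hartogs' theorem to fill in the remaining small locus, bypassing altogether the need for a universally good global resolution.
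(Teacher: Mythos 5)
Your proof is correct in outline, but it is worth noting that it transplants the slicing strategy of Theorem \ref{T:abel2} (generic curves in the parameter space, Lemma \ref{L:tt}, Proposition \ref{P:pp}), whereas the paper's own treatment is more direct. The paper observes that, away from a codimension-two subset of $T$, the discriminant of $\pi_T : \Sigma \to T$ is smooth, the fibres over it are finite, and at each preimage the map $\pi_T$ is locally conjugate to $(x_1,\ldots,x_n)\mapsto (x_1^r,x_2,\ldots,x_n)$. The trace therefore reduces, fibre by fibre and in the first coordinate only, to the explicit one-variable monomial computation already carried out in the proof of Proposition \ref{P:pp}; Hartogs then fills in the remaining codimension-two set exactly as in your last step. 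What the paper's route buys is that meromorphy (and holomorphy, in the second assertion) is read off directly from the local formula for the trace of $x^m\,dx$ under $x\mapsto x^r$, with no need to pass through one-dimensional sections of $T$.

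The one step of your argument that is not a formal implication is the passage from ``the restriction of $\mathrm{Tr}_{\mathscr X}(\omega)$ to a generic curve $\gamma$ extends meromorphically across $\gamma\cap\Delta$'' to ``$\mathrm{Tr}_{\mathscr X}(\omega)$ extends meromorphically across the smooth part of $\Delta$''. A form defined off a divisor can a priori have an essential singularity along it even though its restrictions to many curves behave well, so this deduction needs an argument: either the counting device used in the paper's Remmert-type globalization lemma (meromorphy on an uncountable family of transversal slices forces a uniform bound on the pole order of the Laurent coefficients along $\Delta$), or, more simply, the local normal form of $\pi_T$ above, which makes the extension visible without slicing at all. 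With that point supplied, your proof is complete, including the holomorphicity assertion, where the slicing argument is unproblematic once global meromorphy is known (a nonempty polar divisor would be detected by a generic complete curve).
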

\smallskip

If $\mathscr X$ is the family of all hypersurfaces of degree $d$ of $\mathbb P^n$ then a strong analogue of Abel's Theorem is  valid:
the $\mathscr X$-trace of $\omega$ is holomorphic if and only $\omega$ is abelian.  For arbitrary families, the equivalence between
holomorphic $\mathscr X$-traces and abelian differentials is too much to be hoped for.

\subsection{Bounds for  rank and genus}

From Proposition \ref{P:wabel}, one can obtain lower bounds for the
rank of  $\mathscr X$-dual webs for curves in $X$.

\begin{prop}\label{P:Xdual}
Let $C \subset X$ be a curve with $\mathscr X$-degree $k$. If $\mathcal W_C^{\mathscr X}$ is
a $k$-web then
\[
\mathrm{rank}(\mathcal W_C^{\mathscr X}) \ge g(C) - h^0(T,\Omega^1_T) \, .
\]
\end{prop}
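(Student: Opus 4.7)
\textbf{Proof proposal for Proposition \ref{P:Xdual}.}

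The plan is to mimic the proof of Theorem \ref{T:curvesvswebs} (and its singular refinement Theorem \ref{T:curvesvswebs2}), using Proposition \ref{P:wabel} in place of Abel's addition Theorem. The key point is that, for the family of hyperplanes on $\mathbb{P}^n$, the trace of a holomorphic differential on $C$ vanishes identically, and this is what forces the pulled-back tuple to be an abelian relation. For a general family $\mathscr{X}$, Proposition \ref{P:wabel} only guarantees that the trace is \emph{holomorphic} on $T$, not zero. The loss $h^0(T,\Omega^1_T)$ appearing in the statement is exactly the codimension one has to pay because traces need no longer vanish.

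More precisely, fix a generic $H_0 \in T$ at which $\mathcal{W}_C^{\mathscr{X}}(H_0)$ is defined, let $\nu : \overline{C} \to C$ be the desingularization, and let $p_1, \ldots, p_k : (T, H_0) \to C$ be the germs of maps defining $\mathcal{W}_C^{\mathscr{X}}(H_0)$. After shrinking around $H_0$, one may assume that each $p_i$ factors through $\nu$, since by hypothesis $H_0$ intersects $C$ in $k$ distinct (hence smooth) points. Given $\omega \in H^0(\overline{C}, \Omega^1_{\overline{C}})$, the tuple $(p_1^*\omega, \ldots, p_k^*\omega)$ consists of closed germs of $1$-forms on $(T, H_0)$, each one defining the corresponding foliation of $\mathcal{W}_C^{\mathscr{X}}(H_0)$. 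Their sum is by construction the germ at $H_0$ of the $\mathscr{X}$-trace $\mathrm{Tr}_{\mathscr{X}}(\omega)$, which by Proposition \ref{P:wabel} extends to a globally defined holomorphic $1$-form on $T$.

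I would then consider the resulting linear map
\[
 \Phi : H^0(\overline{C}, \Omega^1_{\overline{C}}) \longrightarrow H^0(T, \Omega^1_T), \qquad \omega \longmapsto \mathrm{Tr}_{\mathscr{X}}(\omega).
\]
Any $\omega$ in $\ker \Phi$ satisfies $\sum_{i=1}^k p_i^*\omega = 0$, so $(p_1^*\omega, \ldots, p_k^*\omega)$ is an abelian relation of $\mathcal{W}_C^{\mathscr{X}}(H_0)$. The assignment $\omega \mapsto (p_1^*\omega, \ldots, p_k^*\omega)$ is injective on $H^0(\overline{C}, \Omega^1_{\overline{C}})$: each $p_i$ is a submersion onto a neighborhood of a smooth point of $\overline{C}$, so $p_i^*\omega = 0$ already forces $\omega = 0$. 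Consequently $\ker \Phi$ injects into $\mathcal{A}(\mathcal{W}_C^{\mathscr{X}})$, and elementary linear algebra gives
\[
 \mathrm{rank}(\mathcal{W}_C^{\mathscr{X}}) \;\geq\; \dim \ker \Phi \;\geq\; h^0(\overline{C}, \Omega^1_{\overline{C}}) - h^0(T, \Omega^1_T) \;=\; g(C) - h^0(T, \Omega^1_T),
\]
as desired.

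The only step requiring real care is the first one: checking that the germs $p_i^*\omega$ are well defined and that their sum really coincides with the germ at $H_0$ of the globally defined trace $\mathrm{Tr}_{\mathscr{X}}(\omega)$. This amounts to identifying the local inverses of $\pi_T : \Sigma \to T$ used in the construction of $\mathrm{Tr}_{\mathscr{X}}$ with (the lifts of) the maps $p_i$ in a neighborhood of $H_0$, which is a direct consequence of the hypothesis that $H_0$ meets $C$ transversely in $k$ smooth points and that $\mathcal{W}_C^{\mathscr{X}}(H_0)$ has been arranged to be a genuine $k$-web. Once this identification is made, the remainder of the argument is formal.
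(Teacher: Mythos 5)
Your proof is correct and follows essentially the same route as the paper: both consider the linear map $\omega \mapsto \mathrm{Tr}_{\mathscr X}(\omega)$ from $H^0(\overline{C},\Omega^1_{\overline{C}})$ to $H^0(T,\Omega^1_T)$ furnished by Proposition \ref{P:wabel}, identify its kernel with a subspace of $\mathcal A(\mathcal W_C^{\mathscr X}(H_0))$, and conclude by counting dimensions. The paper's proof is terser and leaves to the reader the verifications you spell out (the identification of the local inverses of $\pi_T$ with the maps $p_i$ near $H_0$, and the injectivity of $\omega \mapsto (p_1^*\omega,\ldots,p_k^*\omega)$), but there is no difference in substance.
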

\begin{proof}
If $\overline C$ is a desingularization of $C$ then,
according to Proposition \ref{P:wabel},  the $\mathscr X$-trace of any holomorphic $1$-form
in $H^0(\overline C, \Omega^1_C)$ is a holomorphic $1$-form on $T$. Moreover, the map
$
\omega \longmapsto \mathrm{Tr}_{\mathscr X} ( \omega )
$
is a linear map from $H^0(\overline C, \Omega^1_{\overline C})$ to $H^0(T, \Omega^1_T)$ whose kernel can be identified with a linear subspace of $\mathcal A(\mathcal W_C^{\mathscr X}(H_0))$ for a generic $H_0 \in T$. Since, by definition  $g(C)=h^0(\Omega^1_{\overline{C}})$, the proposition follows.
\end{proof}

In analogy with  the standard case of families of hyperplanes,  Proposition \ref{P:Xdual} read
backwards provides a bound for the genus of curves $C\subset X$ as soon as the $k$-web $\mathcal W_C^{\mathscr X}$
is a generically smooth.

\begin{prop}\label{P:boundfamilies}
Let $C \subset X$ be a curve with $\mathscr X$-degree $k$. If $\mathcal W_C^{\mathscr X}$ is
a generically smooth $k$-web    then
\[
g(C) \le \pi(\dim T, k) + h^0(T, \Omega^1_T) \, .
\]
\end{prop}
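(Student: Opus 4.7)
The plan is to simply combine the two results that have already been set up: Proposition~\ref{P:Xdual}, which gives a lower bound for the rank of the $\mathscr X$-dual web in terms of the geometric genus of $C$, and Chern's bound (Theorem~\ref{T:cota}), which provides an upper bound for the rank of any germ of smooth web on $(\mathbb C^n,0)$. Since the statement is essentially a corollary, there is no real obstacle — the only point requiring care is checking that Chern's bound is applicable.

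More concretely, I would first germify $\mathcal W_C^{\mathscr X}$ at a generic point $H_0 \in T$. By the generic smoothness hypothesis, one may choose $H_0$ in the complement of the discriminant $\Delta(\mathcal W_C^{\mathscr X})$, so that the germ of $\mathcal W_C^{\mathscr X}$ at $H_0$ is a germ of smooth $k$-web on $(T,H_0) \simeq (\mathbb C^{\dim T},0)$. Theorem~\ref{T:cota} then applies and yields
\[
\mathrm{rank}\bigl(\mathcal W_C^{\mathscr X}(H_0)\bigr) \le \pi(\dim T, k).
\]
Next, I would invoke Proposition~\ref{P:Xdual}, which provides
\[
\mathrm{rank}\bigl(\mathcal W_C^{\mathscr X}\bigr) \ge g(C) - h^0(T,\Omega^1_T).
\]
Adding the two inequalities (and using that the rank of the global web coincides with the dimension of the space of abelian relations of its germ at a generic point $H_0$, as noted in the discussion preceding Section~\ref{S:geral}) gives
\[
g(C) - h^0(T,\Omega^1_T) \le \pi(\dim T, k),
\]
which is exactly the desired inequality.

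The only subtle point worth double-checking is the passage from the global rank to Chern's local bound: one needs that $\mathcal A(\mathcal W_C^{\mathscr X})$, as defined globally for webs in the sense of Section~\ref{S:global}, is isomorphic to $\mathcal A(\mathcal W_C^{\mathscr X}(H_0))$ for a generic $H_0$. This is precisely the assertion made in the final paragraph of the introduction to Chapter~\ref{Chapter:AR} (which refers to \cite[Th\'eor\`eme~1.2.2]{PTese}): the space of abelian relations extends to a local system on the complement of the discriminant. In particular the dimension at a generic point equals the global rank, so Chern's bound on germs transfers without loss to the global rank. Once this is observed, the proof is complete in one line.
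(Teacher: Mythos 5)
Your proof is correct and is essentially the paper's own argument: the proposition is stated there as an immediate consequence of reading Proposition~\ref{P:Xdual} backwards and applying Chern's bound (Theorem~\ref{T:cota}) to the germ of $\mathcal W_C^{\mathscr X}$ at a generic point where it is smooth. Your extra remark on identifying the global rank with the rank of the germ at a generic $H_0$ is a reasonable precaution, though note that the rank appearing in Proposition~\ref{P:Xdual} is already computed via $\mathcal A(\mathcal W_C^{\mathscr X}(H_0))$ for generic $H_0$, so no further passage is needed.
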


\medskip

\subsection{Families of theta translates}

Let $A$ be an abelian variety of dimension $n$, and  $H_0 \subset A$ be an irreducible divisor in $A$.
Recall that $A$ acts on itself by translations, and  assume that  $H_0$ has finite isotropy group under this action.

Consider a second copy of $A$ and denote it by $\check A$.
Let  $\mathscr X$ be the family of translates of $H_0 \subset A$ by points of $\check{A}$, that is
\[
\mathscr X = \{ (x,y) \in A \times \check{A} \, | \, x-y \in H_0 \} \, .
\]
The natural projections from $\mathscr X$ to $A$ and $\check A$ will be denoted by $\pi$ and $\check \pi$ respectively.

\begin{lemma}\label{L:naodeg}
Let $A$,  $H_0$ and $\mathscr X$ be as above.
If  $C\subset A$ is an irreducible  curve  not contained in a translate of $H_0$  then
$\mathcal W_C^{\mathscr X}$ is generically smooth.
\end{lemma}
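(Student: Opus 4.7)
The plan is to reinterpret the conormals of $\mathcal W_C^{\mathscr X}$ via the Gauss map of $H_0$, and then to establish generic smoothness through a codimension count in a suitable incidence variety, the essential input being the finiteness of the stabilizer of $H_0$.

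First, using the canonical trivialization $TA \simeq A \times \mathfrak a$ with $\mathfrak a := T_0 A$, I would compute $dp_i$ at a point $y_0 \in \check A$ where $y_0 + H_0$ meets $C$ transversely in $k$ distinct smooth points $p_1(y_0),\ldots,p_k(y_0)$. Writing $H_0$ locally as $\{f=0\}$ near $\sigma_i(y_0) := p_i(y_0) - y_0$ and differentiating the identity $f(p_i(y)-y) \equiv 0$, a short calculation yields $\ker dp_i(y_0) = T_{\sigma_i(y_0)} H_0 \subset \mathfrak a$. Consequently, the conormal of the foliation $\mathcal F_i$ at $y_0$ is exactly the Gauss image $\gamma_{H_0}(\sigma_i(y_0)) \in \mathbb P(\mathfrak a^*)$, where $\gamma_{H_0} : H_0 \to \mathbb P(\mathfrak a^*)$ is the Gauss map. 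Hence $\mathcal W_C^{\mathscr X}(y_0)$ is smooth exactly when the $k$ Gauss images $\gamma_{H_0}(\sigma_i(y_0))$ are in general position in $\mathbb P(\mathfrak a^*)$; equivalently, when no nonzero $v \in \mathfrak a$ is tangent to $H_0$ at $n$ or more of the points of $H_0 \cap (C - y_0)$.

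Next, the finite stabilizer hypothesis on $H_0$ implies, via a classical theorem on subvarieties of abelian varieties, that $\gamma_{H_0}$ is generically finite onto its image; since source and target have the common dimension $n-1$, it is automatically dominant. In particular, for a generic $[v] \in \mathbb P(\mathfrak a)$ the subvariety $H_0^v := \gamma_{H_0}^{-1}(v^\perp) \subset H_0$ has dimension exactly $n-2$. The non-smoothness locus in $\check A$ is then the image, under the first projection, of the incidence variety
\[
\mathcal I = \bigl\{(y_0, [v], h_1, \ldots, h_n) \in \check A \times \mathbb P(\mathfrak a) \times H_0^n \;:\; h_j \in H_0^v \ \text{distinct}, \ y_0 + h_j \in C\bigr\}.
\]
A codimension count in the ambient, of total dimension $n^2 + n - 1$, assigns codimension $n$ to the $n$ tangency conditions and codimension $n(n-1)$ to the $n$ membership conditions $y_0 + h_j \in C$ (as $C$ has codimension $n-1$ in $A$), yielding expected codimension $n^2$ and hence expected dimension $n-1$ for $\mathcal I$. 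Projecting to $\check A$ gives a bad locus of dimension at most $n-1 < n$, proving generic smoothness.

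The main technical obstacle is verifying that these expected codimensions are actually attained. This reduces to showing that the $n$ conditions $y_0 + h_j \in C$ cut $\check A \times (H_0^v)^n$ in the correct codimension $n(n-1)$, or equivalently that the translates $C - h_1, \ldots, C - h_n$ meet generically in finitely many points. Such transversality can only fail if $C$ is invariant under a positive-dimensional translation subgroup of $A$, in which case $C$ is itself an elliptic subcurve and by inspection the submersions $p_i$ coincide so that no honest $k$-web is defined to begin with, or if $C$ is pathologically aligned with $H_0^v$ for many $[v]$. The hypothesis that $C$ is not contained in any translate of $H_0$, combined with the generic finiteness of $\gamma_{H_0}$, is exactly what rules out both alignments; making this rigorous, for instance by exploiting the irreducibility of the restricted incidence $\mathscr X_C \to \check A$ and a uniform-position-style argument in the spirit of Propositions \ref{P:gp1}--\ref{P:gp2}, is where the actual work of the proof lies.
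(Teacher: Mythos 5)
Your identification of the conormals is correct and is a nice reformulation: the level set of $p_i$ through $y_0$ is the translate $p_i(y_0)-H_0$, so the conormal of the $i$-th foliation at $y_0$ is indeed the Gauss image $\gamma_{H_0}(p_i(y_0)-y_0)$, and generic smoothness amounts to saying that for generic $y_0$ no $n$ of the Gauss images of the points of $H_0\cap(C-y_0)$ lie in a common hyperplane. The genuine gap is in the step you call a ``technical obstacle'', and it is not merely technical: the expected-codimension count for $\mathcal I$ cannot, even in principle, yield the conclusion. Cutting a variety by equations bounds the dimension of every component from \emph{below} by the expected dimension, whereas the lemma requires an \emph{upper} bound on the dimension of the image of $\mathcal I$ in $\check A$. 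Concretely, over a generic $y_0$ the conditions $y_0+h_j\in C$ already confine $(h_1,\dots,h_n)$ to a finite set, and the residual condition --- that the $n$ corresponding Gauss images lie in a hyperplane --- is a single determinantal equation in $y_0$; showing that this one equation is not identically zero \emph{is} the lemma, so the incidence set-up only restates the problem. Generic finiteness of $\gamma_{H_0}$ does not close this, and your final paragraph, which is where the hypotheses on $C$ would actually have to be used, remains a gesture.

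For comparison, the paper's proof uses the hypotheses in a different place. It first shows that the full evaluation map $P=(p_1,\dots,p_k):(\check A,H_0)\to C^k$ has finite fibers: a positive-dimensional fiber would produce a positive-dimensional family $Z\subset\check A$ of translates of $H_0$ all containing the $k$ points of $H_0\cap C$; finiteness of the isotropy group forces these translates to sweep out all of $A$, so some $H_q$ in the family passes through an arbitrarily chosen further point $q\in C$, whence $H_q\cdot C>k$, contradicting $H_q\cdot C=H_0\cdot C=k$ (the translates being algebraically equivalent). This is also where ``$C$ not contained in a translate of $H_0$'' enters, to guarantee $C\not\subset H_q$. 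Irreducibility of $C$ then upgrades this to finiteness of the fibers of each $n$-fold map $P_I=(p_{i_1},\dots,p_{i_n})$, and a finite-fibered map from an $n$-dimensional germ to an $n$-dimensional target has generically invertible differential, i.e.\ the corresponding $n$ conormals are generically independent. To salvage your route you would need to import an intersection-theoretic argument of exactly this kind to show the determinantal equation is not identically zero; the dimension count alone will not do it.
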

\begin{proof}
Let $H_0$ be a generic hypersurface in the family, intersecting $C$ transversely in $k$ points. At a neighborhood of
$H_0$, write ${C = C_1 \cup \cdots \cup C_k}$. For simplicity, assume that $k= H_0 \cdot C \ge n$.

To prove that $\mathcal W_C^{\mathscr X}$ is generically smooth, first notice that
\begin{align*}
P = (p_{1}, p_{2}, \ldots, p_{k}): (A,H_0) &\longrightarrow C^k  \\
H & \longmapsto (H \cap C_{1}, \ldots, H \cap C_{k} )
\end{align*}
has finite fibers. If not then the fiber over $P(H_0)$ has positive dimension. Consequently one of the
irreducible components of its
Zariski closure is an analytic subset $Z \subset \check{A}$ of positive dimension. Clearly for
every $H \in Z \subset \check{A}$ the intersection   $H \cap C \subset A$ contains $H_0 \cap C$.
Since $H_0$ has finite isotropy group $\pi(\check{\pi}^{-1}( Z))$ is equal to $A$. Therefore given an arbitrary point $q \in C$, there
exists $H_q \in Z$ containing $q$. If $C$ is not contained in $H_q$ then $H_q \cdot C > k$. But $H_q$ and $H_0$ are algebraically
equivalent, thus $k > H_q \cdot C= H_0 \cdot C = k$. This  contradiction shows that $P$ has finite fibers.

Using the  irreducibility of $C$, one can  show
that  the map $P_I = ( p_{i_1}, \ldots, p_{i_n} ) : (A, H_0) \rightarrow C^n$ also
has finite fibers for any subset ${I= \{ i_1, \ldots, i_n\}}$ of $\underline k$ having  cardinality $n$.
The reader is invited to fill in the details.

Since $P_I$ has finite fibers and $\mathcal W(p_{i_1}, \ldots, p_{i_n})$ is
an arbitrary \mbox{$n$-subweb} of $\mathcal W_C^{\mathscr X}$ at $H_0$, it follows that
the web $\mathcal W_C^{\mathscr X}$ is generically smooth.
\end{proof}

\medskip

This lemma together with Proposition \ref{P:boundfamilies} promptly implies the following

\begin{thm}\label{T:abelabel}
Let  $C \subset A$ be a  curve with $\mathscr X$-degree $k$. If $C$ is irreducible and is not contained in
any translate of $H_0$ then
\begin{equation}\label{E:bound1}
g(C) \le \pi(n, k) + n \, .
\end{equation}
\end{thm}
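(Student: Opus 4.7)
The plan is to derive Theorem \ref{T:abelabel} as an immediate consequence of the two results that precede it: Lemma \ref{L:naodeg} provides the generic smoothness of the $\mathscr{X}$-dual web, and Proposition \ref{P:boundfamilies} converts the rank bound (Theorem \ref{T:cota}) into a genus bound. The only ingredient beyond these two is the computation of $h^0(\check A,\Omega^1_{\check A})$, which for an abelian variety of dimension $n$ equals $n$, since the cotangent bundle of $\check A$ is trivial of rank $n$.

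First I would verify that the hypotheses of Proposition \ref{P:boundfamilies} are satisfied by the pair $(C,\mathscr{X})$. Since $C$ is irreducible and not contained in any translate of $H_0$, a generic translate intersects $C$ in a finite set, so the $\mathscr{X}$-trace is well-defined for meromorphic $1$-forms on $C$. Moreover Lemma \ref{L:naodeg} applies verbatim and ensures that $\mathcal{W}_C^{\mathscr{X}}$ is a generically smooth $k$-web on $\check A$, where $k$ is the $\mathscr{X}$-degree of $C$. Proposition \ref{P:boundfamilies} then yields
\[
g(C)\le \pi(\dim \check A,k)+h^0(\check A,\Omega^1_{\check A}).
\]

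To conclude, I would use the fact that $\check A$ is an abelian variety of dimension $n$, hence $\Omega^1_{\check A}\simeq \mathcal{O}_{\check A}^{\oplus n}$ and $h^0(\check A,\Omega^1_{\check A})=n$. Substituting $\dim\check A=n$ and this value for $h^0(\check A,\Omega^1_{\check A})$ in the displayed inequality gives precisely $g(C)\le \pi(n,k)+n$, as desired.

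The main obstacle, such as it is, lies not in the present theorem but in its preparation: namely checking that Proposition \ref{P:wabel} (which underlies Proposition \ref{P:Xdual} and hence Proposition \ref{P:boundfamilies}) applies to the family of theta translates. Once that machinery is in place, the only subtlety specific to the abelian-variety setting is the verification carried out in Lemma \ref{L:naodeg} that the map $P=(p_1,\dots,p_k)\colon (\check A,H_0)\to C^k$ has finite fibers, which relies essentially on the finiteness of the isotropy group of $H_0$ under translations. With that lemma in hand, the present theorem is a one-line corollary.
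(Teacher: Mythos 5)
Your proposal is correct and follows exactly the paper's own route: the theorem is deduced by combining Lemma \ref{L:naodeg} (generic smoothness of $\mathcal W_C^{\mathscr X}$) with Proposition \ref{P:boundfamilies}, using $\dim\check A=n$ and $h^0(\check A,\Omega^1_{\check A})=n$ from the triviality of the cotangent bundle of an abelian variety. Nothing further is needed.
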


\medskip

The most natural example of a pair $(A,H_0)$ satisfying the above conditions
is  an irreducible principally polarized abelian variety $(A,\Theta)$.
There is a version of Castelnuovo's Theory in this context,
developed by Pareschi and Popa in  \cite{Popa}. In particular, they
obtain the following bound for the genus of  curves in $A$.

\begin{thm}\label{T:popa}
Let $(A,\Theta)$ be an irreducible principally polarized abelian variety of dimension $n$.
Let $C \subset A$ be a non-degenerate\begin{footnote}{Here, non-degenerate means that the curve is not contained in any abelian subvariety.
Although this differs  from our assumption on $C$, one of the first steps in the proof of this
result is to establish that the maps $P_I$ used in the proof of Lemma \ref{L:naodeg} contain
open subsets of $A^n$ in their images. Consequently, if $C$ is non-degenerate then the $\mathscr X$-dual web
is generically smooth.
}
\end{footnote}
irreducible of degree $k = C \cdot \Theta$ in $A$.
Let $m = \lfloor \frac{k-1}{n}\rfloor$, so that $k - 1 = mn + \epsilon$, with $0 \le \epsilon  < n$. Then
\begin{equation}\label{E:bound2}
g(C) \le \binom{m+1}{2}n + (m+1) \epsilon + 1.
\end{equation}
Moreover, the inequality is strict for $n \ge 3$ and $k \ge n + 2$.
\end{thm}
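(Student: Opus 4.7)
The plan is to combine the web-geometric framework from Section \ref{S:beyond} with additional cohomological input coming from the structure of an abelian variety, following (in spirit) Pareschi--Popa's original Fourier--Mukai approach.

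First, I would set up the problem exactly as in the proof of Theorem \ref{T:abelabel}: take $\mathscr{X}$ to be the family of translates of $\Theta$ in $A$, parametrized by $\check{A}\simeq A$, and form the $\mathscr{X}$-dual $k$-web $\mathcal{W}_C^{\mathscr{X}}$ on $\check{A}$. By Lemma \ref{L:naodeg}, non-degeneracy of $C$ ensures that this web is generically smooth, and Proposition \ref{P:Xdual} together with $h^0(\check{A},\Omega^1_{\check{A}})=n$ yields the lower bound $\mathrm{rank}(\mathcal{W}_C^{\mathscr{X}})\ge g(C)-n$. The problem thus reduces to bounding $\mathrm{rank}(\mathcal{W}_C^{\mathscr{X}})$ from above by $\binom{m+1}{2}n+(m+1)\epsilon-n+1$.

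Second, I would analyze the geometry of the conormals of $\mathcal{W}_C^{\mathscr{X}}$. As translation differentiates identity on $T_eA$, a direct calculation shows that the conormal at a parameter $H_0 + t$ in the direction determined by $p_i(H_0+t)\in C$ coincides with the tangent hyperplane $T_{p_i(H_0+t)-t}\Theta\subset T_eA$, i.e.\ with the image under the Gauss map $G_\Theta:\Theta\to \mathbb{P}^{n-1}$ of the corresponding point on $\Theta$. Consequently the numbers $\ell^j(\mathcal{W}_C^{\mathscr{X}})$ measure how many conditions the Gauss images of $k$ generic points of $C\cap(\Theta+t)$ impose on $\mathrm{Sym}^j T^*_eA$. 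For a generic ppav the Gauss map is dominant, so one expects the same Castelnuovo-type lower bounds as in general position; Theorem \ref{T:cota} alone will then recover $\pi(n,k)+n$, i.e.\ Theorem \ref{T:abelabel}, but not Pareschi--Popa's bound.

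Third, to refine the estimate into the stated form $\binom{m+1}{2}n+(m+1)\epsilon+1$, I would invoke the machinery of $\Theta$-regularity: one shows, using the generic vanishing theorem, that the twisted ideal sheaves $\mathcal{I}_{C/A}(j\Theta)$ satisfy the vanishings $H^i(A,\mathcal{I}_{C/A}(j\Theta)\otimes \alpha)=0$ for $i\ge 1$ and generic $\alpha\in\mathrm{Pic}^0(A)$, provided $j$ is large enough depending on the non-degeneracy of $C$. Translating these vanishings into statements about the Hilbert function of $C$ with respect to the polarization $\Theta$ gives, via a Castelnuovo-style telescoping sum, the required inequality. Equivalently, these vanishings provide additional abelian relations that refine the filtration $F^{\bullet}\mathcal{A}(\mathcal{W}_C^{\mathscr{X}})$ used in Theorem \ref{T:cota}.

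The main obstacle is the third step: establishing $\Theta$-regularity of $\mathcal{I}_{C/A}(j\Theta)$ at the right threshold requires a careful use of non-degeneracy together with the Fourier--Mukai transform $R\hat{S}$ on $A$, and does not follow formally from web theory alone. Once this is in hand, the strict inequality when $n\ge 3$ and $k\ge n+2$ should be extracted by examining the equality case: equality in the generic vanishing bound would force the Gauss images of the $k$ points of $C\cap \Theta$ to lie on a rational normal curve of degree $n$ (Proposition \ref{P:Castvero}), and lifting this constraint back to $A$ via $G_\Theta$ would show that $C$ lies in a proper abelian subvariety, contradicting non-degeneracy.
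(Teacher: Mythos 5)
The first thing to note is that the paper does not prove this statement at all: Theorem \ref{T:popa} is quoted from Pareschi--Popa \cite{Popa} as a point of comparison for the web-theoretic bound of Theorem \ref{T:abelabel}, so there is no proof in the text to measure your proposal against. What can be assessed is whether your sketch would constitute a proof, and it would not, for two reasons beyond the fact that your third step simply names the actual content of \cite{Popa} (generic vanishing, $\Theta$-regularity, the Fourier--Mukai transform) without executing it.

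First, your ``reduction'' in step one is a strengthening, not a reduction. From Proposition \ref{P:Xdual} one only gets $g(C) - n \le \mathrm{rank}(\mathcal W_C^{\mathscr X})$, so to conclude $g(C)\le B$ with $B=\binom{m+1}{2}n+(m+1)\epsilon+1$ you would need $\mathrm{rank}(\mathcal W_C^{\mathscr X})\le B-n$. But the only general upper bound available is Chern's $\pi(n,k)$, which for large $k$ is asymptotically $k^2/(2(n-1))$ while $B$ is asymptotically $k^2/(2n)$; the paper itself observes that the two bounds are incomparable, with Theorem \ref{T:abelabel} sharper precisely for $n+2\le k\le 2n^2-3n$ and weaker beyond. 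Nothing in your proposal shows that the rank of this particular web is bounded by $B-n$, and there is no a priori reason it should be, since the rank can strictly exceed $g(C)-n$ (abelian relations need not all come from holomorphic $1$-forms on $C$). Second, the mechanism you invoke is pointed the wrong way: you say the vanishings ``provide additional abelian relations that refine the filtration,'' but additional abelian relations \emph{increase} the rank; an upper bound requires showing that the graded pieces $F^j\mathcal A/F^{j+1}\mathcal A$ are \emph{smaller} than Chern's estimate, i.e.\ that the conormals impose \emph{more} conditions on $\mathrm{Sym}^j$ than general position predicts, and you give no such argument. Relatedly, your strictness argument via Proposition \ref{P:Castvero} does not connect: Castelnuovo's Lemma characterizes the equality case of the bound $\pi(n,k)$, not of $B$, so equality in \eqref{E:bound2} does not force the conormals onto a rational normal curve. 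If you want this theorem, you should prove it as Pareschi and Popa do, cohomologically on $A$, rather than through the web.
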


For a fixed  $n$,  the bounds (\ref{E:bound1}) and  (\ref{E:bound2})
are asymptotically equal to $\frac{k^2}{2(n-1)}$ and $\frac{k^2}{2n}$ respectively.
The bound provided by Theorem \ref{T:abelabel} is asymptotically worse than the one provided by Theorem \ref{T:popa}.
But when  $n\ge 3$ and for comparably small values of $k$, the former  is sharper  than the latter.
Indeed, it can be verified that the bound of  Theorem \ref{T:abelabel} is sharper than the one of Theorem \ref{T:popa},
if and only if  $k$ is between  $n+2$ and $2n^2- 3n$.


\chapter{The ubiquitous algebraization tool}\label{Chapter:4}
\thispagestyle{empty}

The main result of this chapter is a converse of Abel's addition Theorem
 stated in Section \ref{S:AIthmGEN}. It  assures the
algebraicity of local datum satisfying Abel's addition Theorem. Its  first version was
established by Sophus Lie in the  context of  double translation surfaces.
 Lie's arguments consisted in
a tour-de-force analysis of an overdetermined  system of PDEs.
Later Poincar\'{e} introduced geometrical methods to handle the problem solved analytically by Lie. Poincar\'{e}'s approach was later revisited, and
 made more precise by Darboux, to whom the  approach presented in Section \ref{S:AIdim2}
can be traced back. By the way, those willing to take for granted the validity of the converse of Abel's  Theorem can
safely skip Section \ref{S:AIdim2}.

Blaschke's school reinterpreted  the converse of Abel's  Theorem in the language of web geometry to obtain
the algebraicity of germs of linear webs  admitting  complete abelian relations. This result turned out
to be an  ubiquitous tool  for the algebraization problem of germs of webs. This dual version
also provides a complete description of the space of abelian relations  of  algebraic webs.  This will be treated in Section \ref{S:AIdual}.

\medskip

Web geometry also owes   Poincar\'{e}
a method to algebraize  smooth, non necessarily linear,
$2n$-webs on $(\mathbb C^n,0)$ of maximal rank.
The strategy is based on the study  of
certain natural map from $(\mathbb C^2,0)$ to $\mathbb P (\mathcal A(\mathcal W))$,
here called  Poincar\'{e}'s map of $\mathcal W$. Closely related are the  canonical maps
of the web. Their definition  mimics
the one of canonical maps for projective curves.  All this will be made precise in Section \ref{S:AIdual*}

\medskip

Poincar\'{e}'s original motivation had not much to do with web geometry, but instead focused on
the relations between  double-translation hypersurfaces and
Theta divisors on Jacobian varieties of projective  curves. In Section \ref{S:doubletrans}
these relations will be reviewed. Modern proofs of some of
the theorems by Lie, Poincar\'{e} and Wirtinger on the subject are also
laid out.

\section{The converse of Abel's Theorem}\label{S:AIthmGEN}

\subsection{Statement}\label{S:AIStatement}

Let $H_0 \subset \mathbb P^n$, $n \ge 2$, be a hyperplane, and  $k\ge 3$ be an  integer.
For $i \in \underline k$, let  $C_i$  be a	
germ of  complex  curve in $\mathbb P^n$ that intersects $H_0$ transversely
at the point $p_i(H_0)$.
Assume  that the points $p_i(H_0)$ are pairwise distinct.
Let also $p_i: (\check{\mathbb P}^n,H_0)  \rightarrow C_i$ be the germs of holomorphic maps
characterized by  $H\cap C_i=\{p_i(H)\}$ for every  $i \in \underline k $ and every $H\in (\check{\mathbb P}^n,H_0)$.
For a picture, see Figure \ref{F:AbelInverseTheorem0} below.

\begin{figure}[ht]
\begin{center}
\psfrag{C1}[][]{$ C_1 $}
\psfrag{C2}[][]{$ C_2 $}
\psfrag{C3}[][]{$ C_3 $}
\psfrag{C4}[][]{$ C_4 $}
\psfrag{P}[][]{$ \mathbb P^{n} $}
\psfrag{H0}[][]{$  H_0 $}
\psfrag{p1}[][]{$p_1(H_0) $}
\psfrag{p2}[][]{$p_2(H_0) $}
\psfrag{p3}[][]{$p_3(H_0) $}
\psfrag{p4}[][]{$p_4(H_0) $}
\resizebox{2.3in}{2.3in}{\includegraphics{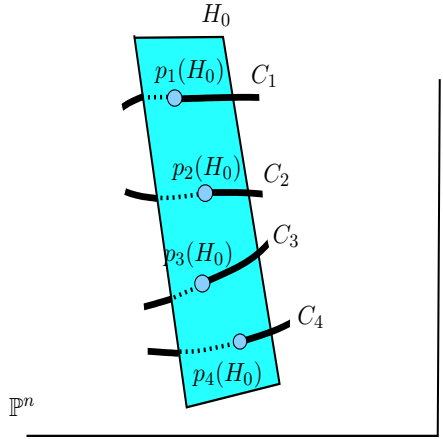} }
\end{center}
\caption[Germs of analytic curves]{Germs of analytic curves \ldots \label{F:AbelInverseTheorem0}}
\end{figure}

\smallskip

Using  the notation settled above, the converse of Abel's  Theorem can be phrased as follows.

\begin{thm}\label{T:AI}
For $i \in \underline k$, let $\omega_i$ be a germ of non-zero  holomorphic 1-form on $C_i$. If
\begin{equation}
 \label{E:RAlin}
p_1^*(\omega_1)+\cdots + p_d^*(\omega_d) \equiv 0
\end{equation}
as a germ of $1$-form at $(\check{\mathbb P}^n, H_0)$ then there exist an
 algebraic curve $C\subset  \mathbb P^n$  of degree $k$,
 and an abelian differential $\omega\in H^0(C,\omega_C)$
 such that $C_i\subset C$ and $\omega|_{C_i} = \omega_i$ for all~$i \in \underline k$.
\end{thm}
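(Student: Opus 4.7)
The strategy is to reduce Theorem \ref{T:AI} to its planar version ($n=2$) and then invoke a classical argument due to Darboux. For the reduction, I would fix a generic codimension-$3$ linear subspace $\Lambda \subset \mathbb{P}^n$ disjoint from every germ $C_i$ and project from $\Lambda$ onto $\mathbb{P}^2$. Dually, the hyperplanes of $\mathbb{P}^n$ containing $\Lambda$ form a $\mathbb{P}^2 \subset \check{\mathbb{P}}^n$ passing through $H_0$, and the restriction of the abelian relation (\ref{E:RAlin}) to this $\mathbb{P}^2$ is an abelian relation on the planar configuration $\pi_\Lambda(C_i)$, $\pi_{\Lambda *}\omega_i$. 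Granting the planar result, each choice of $\Lambda$ produces an algebraic curve of degree $k$ in the image $\mathbb{P}^2$ containing each $\pi_\Lambda(C_i)$. Letting $\Lambda$ vary, the coherence of these algebraic projections (classical uniqueness of the curve traced by a convergent power series with algebraic projections) patches together into a single algebraic curve $C \subset \mathbb{P}^n$ of degree $k$ containing every $C_i$.

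\textbf{The planar case.} Work in affine coordinates where $H_0 = L_0 = \{y = 0\}$, parametrize nearby lines by $L_{u,v} = \{y = ux+v\}$, and represent each $C_i$ as the graph $y = \phi_i(x)$ near $p_i(L_0)$. Let $x_i(u,v)$ be the $x$-coordinate of the intersection $C_i \cap L_{u,v}$, determined implicitly by $\phi_i(x_i) = ux_i + v$. Implicit differentiation yields the crucial identity
$$\partial_u x_i = x_i\, \partial_v x_i \,.$$
Writing $\omega_i = f_i(x)\, dx$ in the coordinate $x$, and setting $g_i := f_i(x_i)\, \partial_v x_i$, the relation (\ref{E:RAlin}) decomposes as the pair
$$\sum_{i=1}^k g_i = 0, \qquad \sum_{i=1}^k g_i\, x_i = 0.$$
Differentiating these identities in $v$ and repeatedly invoking $\partial_u x_i = x_i\, \partial_v x_i$, one obtains inductively a cascade of relations $\sum_i g_i x_i^j = 0$ for $0 \le j \le k-2$.

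\textbf{Extracting the algebraic curve.} At order $j = k-1$ the cascade can no longer be trivially propagated, and produces a non-trivial identity of the form $\sum_i g_i x_i^{k-1} = P(u,v)$, where $P$ is controlled by the geometry. The $k \times k$ coefficient matrix of this linear system in $g_1,\dots,g_k$ is of Vandermonde type in the pairwise distinct quantities $x_i$, hence invertible near $H_0$. Inverting it expresses the $g_i$ in terms of the $x_i$ and $P$, and compatibility of the resulting expressions as $(u,v)$ varies forces the elementary symmetric functions $\sigma_\ell(x_1,\dots,x_k)$ to be polynomial in $(u,v)$ of degree at most $k$. Granting this, the polynomial
$$F(x,u,v) = \prod_{i=1}^k \big(x - x_i(u,v)\big) = x^k + \sum_{\ell=1}^k (-1)^\ell \sigma_\ell(u,v)\, x^{k-\ell}$$
is globally polynomial, and its zero locus (suitably projectivized) is the sought algebraic curve $C \subset \mathbb{P}^2$ of degree $k$, containing every branch $C_i$. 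The principal obstacle of the whole proof is precisely establishing the polynomial nature and the degree bound of the $\sigma_\ell$; this is the technical heart of Darboux's argument.

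\textbf{From meromorphic to abelian.} Once $C$ is constructed, the $\omega_i$ coherently define a meromorphic $1$-form on the branches of $C$ meeting $H_0$. Analytic continuation along $C$ (or its normalization) extends this datum to a global meromorphic $1$-form $\omega$ on $C$ with $\omega|_{C_i} = \omega_i$. The hypothesis $\sum p_i^* \omega_i \equiv 0$ is exactly the vanishing of $\mathrm{Tr}(\omega)$ at $H_0$, hence (being a meromorphic form on $\check{\mathbb{P}}^n$) it vanishes identically by Abel's addition Theorem. The converse direction of that same theorem for arbitrary reduced curves then forces $\omega$ to be an abelian differential, i.e., $\omega \in H^0(C, \omega_C)$, completing the proof.
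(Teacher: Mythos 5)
Your overall architecture (reduce to $n=2$, then run a Darboux-type argument in the plane) is the same as the paper's, but the planar argument---which is the heart of the matter---contains a genuine error at the cascade step. The claimed relations $\sum_i g_i x_i^j=0$ for all $0\le j\le k-2$ are false for a general abelian differential. Writing $C=\{f=0\}$ and $\omega=p\,dx/\partial_y f$ with $\deg p\le k-3$, one computes $g_i=-p(x_i,ux_i+v)/\big(c(u)\,P'(x_i)\big)$, where $R(x)=f(x,ux+v)=c(u)P(x)$ and $P=\prod_j(x-x_j)$ is monic; the Lagrange identity $\sum_i q(x_i)/P'(x_i)=0$ holds only for $\deg q\le k-2$, and here $q(x)=x^jp(x,ux+v)$ has degree $j+\deg p$, so the relation already fails for $j=2$ when $\deg p=k-3$ (e.g.\ for a smooth quartic). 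Nor can the extra relations be generated by differentiating $\sum_i g_i=0$ and $\sum_i g_ix_i=0$ and using $\partial_u x_i=x_i\partial_v x_i$: since $\partial_u g_i=x_i\partial_v g_i+g_i\partial_v x_i$, the $u$-derivative of the first relation is identically the $v$-derivative of the second, and one never isolates $\sum_i g_ix_i^2$. Everything downstream (the Vandermonde inversion, the polynomiality and degree bound of the $\sigma_\ell$) rests on this false foundation, and you yourself defer ``the technical heart'' without proof.

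For comparison, the paper's planar proof takes a different route: it forms the meromorphic $2$-form $\Psi=\sum_i \eta_i\wedge dy/(y-y_i)$ on the incidence variety, uses exactly your shock identities to prove that $\Psi$ descends to a $2$-form $\Omega=f\,dx\wedge dy$ on a neighbourhood of $\ell_0$ in $\mathbb P^2$ whose polar set is $\bigcup_i C_i$ and whose Poincar\'{e} residue along $C_i$ is $\omega_i$, and then globalizes $\Omega$ to a rational $2$-form by a Remmert/Hartogs-type extension lemma; the curve is the polar divisor and the abelian differential is $\mathrm{Res}_C\Omega$. This construction also fills two further holes in your write-up: your ``analytic continuation'' of the $\omega_i$ to a single global meromorphic $1$-form on $C$ is not justified (a germ on one branch need not continue to a single-valued rational form), and your reduction to the plane---projecting from a codimension-three centre and ``patching'' the resulting plane curves as $\Lambda$ varies---is asserted rather than proved, whereas the paper inducts on the dimension, projecting from a point of $H_0$ and exhibiting $C$ inside the intersection of two cones.
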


\begin{figure}[ht]
\begin{center}
\psfrag{C}[][]{$ C $}
\psfrag{P}[][]{$ \mathbb P^{n} $}
\psfrag{H0}[][]{$  H_0 $}
\psfrag{p1}[][]{$p_1(H_0) $}
\psfrag{p2}[][]{$p_2(H_0) $}
\psfrag{p3}[][]{$p_3(H_0) $}
\psfrag{p4}[][]{$p_4(H_0) $}
\resizebox{2.3in}{2.3in}{\includegraphics{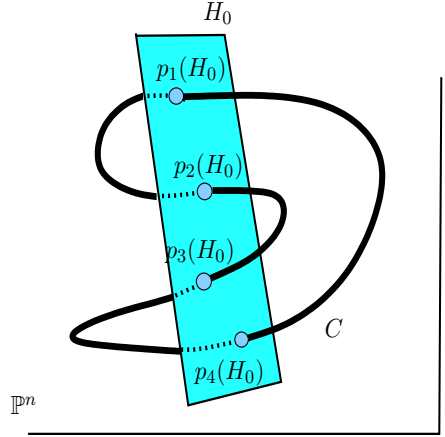} }
\end{center}
{\small \caption[\ldots globalize in the presence of an abelian relation]{\ldots globalize in the presence of a complete abelian relation.}}
\end{figure}

Starting from middle 1970's,  a number of generalizations of this remarkable result appeared in print.
It has been generalized from curves to higher dimensional varieties  in \cite{griffiths};
it has been shown in \cite{henkinpassare} that it suffices to have rational trace (\ref{E:RAlin}) to assure
the algebraicity of the data; and it has been considered in \cite{fabrethesis,Weimann} more general traces
in the place of the one  with respect to hyperplanes.

\medskip

The proof of Theorem \ref{T:AI} is postponed to Section \ref{S:AIdim2}. Assuming its validity,
a dual formulation in terms of webs is given and proved below.

\subsection{Dual formulation}\label{S:AIdual}

Let $H_0$ be a hyperplane in $\mathbb P^n$, $n\ge 2$.
In Section \ref{S:defalg} of Chapter \ref{Chapter:intro} it was shown the existence of
an equivalence between linear quasi-smooth $k$-webs on $(\check{\mathbb P}^n,H_0)$,
and $k$ germs of curves in $\mathbb P^n$ intersecting $H_0$ transversely in $k$ distinct points.
This equivalence implies the following variant of the converse of Abel's Theorem.

\begin{thm}
\label{T:ALGquasismoothW}
Let $\mathcal W$ be a linear quasi-smooth web on $(\check{\mathbb P}^n,H_0)$.
If it admits a complete abelian relation then it is algebraic. Accordingly, there exists a projective curve $C\subset \mathbb P^n$ such
that $\mathcal W$ coincides with the restriction of $\mathcal W_C(H_0)$.
Furthermore, the space of abelian relations of $\mathcal W$ is  naturally isomorphic to  $H^0(C,\omega_C)$.
\end{thm}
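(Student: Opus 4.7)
The plan is to deduce Theorem \ref{T:ALGquasismoothW} from Theorem \ref{T:AI} by dictionary-translating the statement through the equivalence between linear quasi-smooth $k$-webs on $(\check{\mathbb P}^n,H_0)$ and $k$-uples of germs of curves in $\mathbb P^n$ meeting $H_0$ transversely at distinct points, established in Section \ref{S:defalg}. Accordingly, the first step is to fix germs of curves $C_1,\ldots,C_k \subset \mathbb P^n$ dual to $\mathcal W$, write the defining submersions $p_i:(\check{\mathbb P}^n,H_0)\to C_i$, and present $\mathcal W = \mathcal W(p_1,\ldots,p_k)$.

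Next I would exploit the hypothesis. A complete abelian relation of $\mathcal W$ is a $k$-uple of nowhere zero closed $1$-forms $(\eta_1,\ldots,\eta_k)$ with $\eta_i$ tangent to $\mathcal F_i$ and $\sum_i \eta_i = 0$. Closedness together with $\omega_i\wedge\eta_i=0$ (where $\omega_i$ defines $\mathcal F_i$) implies that $\eta_i = p_i^* \alpha_i$ for a uniquely determined non-zero germ of holomorphic $1$-form $\alpha_i$ on $C_i$ at $p_i(H_0)$. The relation $\sum_i p_i^* \alpha_i \equiv 0$ is precisely the hypothesis \eqref{E:RAlin} of Theorem \ref{T:AI}, so that theorem provides a projective curve $C \subset \mathbb P^n$ of degree $k$ and an abelian differential $\alpha \in H^0(C,\omega_C)$ with $C_i \subset C$ and $\alpha|_{C_i}=\alpha_i$ for every $i\in\underline k$. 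Because $C$ has degree $k$ and its branches along $H_0$ are exactly the germs $C_1,\ldots,C_k$, the germification of $\mathcal W_C(H_0)$ at $H_0$ coincides with $\mathcal W(p_1,\ldots,p_k)=\mathcal W$, proving algebraicity. Note also that this argument determines $C$ uniquely as the algebraic closure of $C_1 \cup \cdots \cup C_k$.

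For the isomorphism $\mathcal A(\mathcal W) \simeq H^0(C,\omega_C)$, the injective linear map
\[
\Phi:H^0(C,\omega_C) \longrightarrow \mathcal A(\mathcal W),\qquad \beta \longmapsto (p_1^*\beta,\ldots,p_k^*\beta),
\]
produced by Theorem \ref{T:curvesvswebs2} is already in hand. To establish surjectivity, let $(\eta_1',\ldots,\eta_k')$ be any abelian relation of $\mathcal W$. As above, one writes $\eta_i' = p_i^* \alpha_i'$ for germs $\alpha_i'$ of holomorphic $1$-forms on $C_i$, which may now be zero. For a generic $t \in \mathbb{C}^*$, the tuple $(\eta_i + t\eta_i')_i$ is a complete abelian relation of $\mathcal W$. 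Applying Theorem \ref{T:AI} to it yields a projective curve of degree $k$ containing the germs $C_i$, hence equal to $C$ by the uniqueness noted above, together with an abelian differential $\alpha^{(t)}\in H^0(C,\omega_C)$ satisfying $\alpha^{(t)}|_{C_i}=\alpha_i + t\alpha_i'$. Then $(\alpha^{(t)}-\alpha)/t \in H^0(C,\omega_C)$ is a preimage of $(\eta_1',\ldots,\eta_k')$ under $\Phi$, completing the surjectivity argument.

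The main obstacle is not in the algebraization step itself, which is an immediate rewriting of Theorem \ref{T:AI}, but in handling the surjectivity: the space $\mathcal A(\mathcal W)$ contains abelian relations with possibly vanishing components, which are not directly in the range of Theorem \ref{T:AI}. The perturbation trick using a fixed complete abelian relation neatly sidesteps this difficulty, provided one knows the ambient curve $C$ is uniquely determined by the germs $C_i$ and therefore does not depend on $t$.
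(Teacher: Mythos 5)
Your proposal is correct, and for the algebraization step it follows exactly the paper's route: identify each foliation's defining submersion with the Gauss map onto a dual germ of curve $C_i\subset\mathbb P^n$, pull the components of the complete abelian relation back from $1$-forms on the $C_i$, and feed the resulting identity $\sum_i p_i^*\alpha_i=0$ into Theorem \ref{T:AI}. Where you genuinely go beyond the paper is the \emph{Furthermore} clause: the paper's proof stops after concluding that $\mathcal W$ is the restriction of $\mathcal W_C$ at $H_0$, leaving the surjectivity of $H^0(C,\omega_C)\to\mathcal A(\mathcal W)$ unargued (injectivity being Theorem \ref{T:curvesvswebs2}). Your perturbation trick supplies this missing half cleanly: for all but finitely many $t$ the relation $\eta+t\eta'$ is complete (each component is $(f_i+tg_i)(u_i)\,du_i$ with $f_i\not\equiv 0$, so at most one $t$ kills it), and the degree-$k$ curve produced by Theorem \ref{T:AI} is forced to equal $C=\bigcup_i\overline{C_i}$ because $C\cdot H_0=k$ is achieved at the $k$ distinct transverse points $p_i(H_0)$, so the branches of any such curve along $H_0$ are exactly the germs $C_i$ and the curve is the union of their Zariski closures. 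The quotient $(\alpha^{(t)}-\alpha)/t$ then restricts to $\alpha_i'$ on each $C_i$ and hence maps to $\eta'$. This is a legitimate and rather elegant way to handle abelian relations with vanishing components, which the direct application of Theorem \ref{T:AI} cannot see; the only alternative would be to apply the converse of Abel's theorem to the subweb supported on the nonvanishing components and then extend the resulting abelian differential by zero to the remaining irreducible components of $C$, which requires a separate (and more delicate) check of the residue condition at the intersection points. Your route avoids that entirely.
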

\begin{proof} For  reader's convenience, the proof starts by detailing the equivalence between germs of linear webs and  germs of curves.
Let $\mathcal W= \mathcal F_1 \boxtimes \cdots \boxtimes \mathcal F_k$
be a quasi-smooth linear $k$-web on $(\check{\mathbb P}^n,H_0)$. For  each of the foliations
$\mathcal F_i$, consider its Gauss map
\begin{align*}
\mathscr G_i : (\check{\mathbb P}^n, H_0) &\longrightarrow \;  \mathbb P^n \, \\
H\;  &\longmapsto  T_H \mathcal F_i \, .
\end{align*}

Since the foliation $\mathcal F_i$ has linear leaves,  the map $\mathscr G_i$ is constant along them. Notice
also that the restriction of $\mathscr G_i$ to a line transversal to $H_0$ is injective. These two facts together imply that $\mathscr G_i$ is a submersion defining $\mathcal F_i$ and
its image is a germ of smooth curve $C_{\mathcal F_i} \subset \mathbb P^n$ intersecting $H_0$ transversely.

Notice that $\mathscr G_i$ associates to a hyperplane $H \in (\check{\mathbb P}^n,H_0)$ the intersection of
$H \subset \mathbb P^n$ with $C_{\mathcal F_i}$. In other words $\mathscr G_i = p_i$ in the notation used in the converse of Abel's Theorem.

If $\eta_i$ is a germ of  closed $1$-form defining $\mathcal F_i$  then there exists a germ
of holomorphic $1$-form  $\omega_i$ in $C_{\mathcal F_i}$ such that $\eta_i = {\mathscr G_i}^* \omega_i$.
Therefore, if $\eta= (\eta_1, \ldots, \eta_k) \in \mathcal A(\mathcal W) $   then
there exist $1$-forms $\omega_i \in \Omega^1(C_{\mathcal F_i}, \mathscr G_i(H_0))$ satisfying
\[
\sum_{i=1}^k {\mathscr G_i}^*(\omega_i) = 0 \, .
\]

When $\eta$ is complete,  none of the $1$-forms $\omega_i$ vanishes identically. Thus the converse of Abel's Theorem
ensures the existence of a projective curve $C \subset \mathbb P^n$ containing all the curves $C_{\mathcal F_i}$, and of
an abelian differential $\omega \in H^0(C, \omega _C)$ which pull-backs through $\mathscr G_i$ to the $i$-th component
of the abelian relation $\eta$. It is then clear that $\mathcal W$ is the restriction  of $\mathcal W_C$ at $H_0$.
\end{proof}

\begin{cor}
\label{C:ALGsmoothW}
Let $\mathcal W$ be a linear smooth $k$-web on $(\mathbb C^n,0)$. If $\mathcal W$ has maximal rank then it  is algebraic.
\end{cor}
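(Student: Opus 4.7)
The plan is to deduce the corollary from Theorem \ref{T:ALGquasismoothW}: since smoothness implies quasi-smoothness, it suffices to exhibit a \emph{complete} abelian relation for $\mathcal W$ and then appeal directly to that theorem. I shall carry this out in the nontrivial range $k\ge n+1$, where $\pi(n,k)\ge 1$; when $k\le n$ one has $\pi(n,k)=0$, so maximal rank forces $\mathcal A(\mathcal W)=0$ and the statement has to be interpreted separately.

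For each index $i\in\underline k$, introduce the linear subspace
$$
Z_i=\bigl\{\,\eta=(\eta_1,\dots,\eta_k)\in\mathcal A(\mathcal W)\;\big|\;\eta_i=0\,\bigr\}\subset\mathcal A(\mathcal W).
$$
Deleting the $i$-th (zero) component identifies $Z_i$ with $\mathcal A(\mathcal W^{(i)})$, where $\mathcal W^{(i)}=\mathcal F_1\boxtimes\cdots\widehat{\mathcal F_i}\cdots\boxtimes\mathcal F_k$ is the $(k-1)$-subweb obtained by removing $\mathcal F_i$. This subweb is still smooth, since the general position condition for $n$-tuples of conormals is inherited, and therefore Chern's bound (Theorem \ref{T:cota}) gives $\dim Z_i\le\pi(n,k-1)$. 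An inspection of the defining series shows that the difference $\pi(n,k)-\pi(n,k-1)$ equals the number of positive integers $j$ satisfying $j(n-1)+1<k$, which is at least~$1$ whenever $k\ge n+1$. Consequently, for a web of maximal rank,
$$
\dim Z_i\le \pi(n,k-1)<\pi(n,k)=\dim\mathcal A(\mathcal W),
$$
so each $Z_i$ is a \emph{proper} subspace of $\mathcal A(\mathcal W)$.

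A complex vector space cannot be written as a finite union of proper linear subspaces; hence the complement $\mathcal A(\mathcal W)\setminus\bigcup_{i=1}^{k}Z_i$ is nonempty. Any element $\eta$ of this complement is an abelian relation with every component $\eta_i$ nonzero, i.e.\ a complete abelian relation of $\mathcal W$. Being linear and quasi-smooth, $\mathcal W$ now falls under the hypotheses of Theorem \ref{T:ALGquasismoothW}, which produces a reduced projective curve $C\subset\mathbb P^n$ such that $\mathcal W$ coincides with the restriction of $\mathcal W_C(H_0)$; this is the asserted algebraicity.

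The only substantive point in this plan is verifying the strict inequality $\pi(n,k-1)<\pi(n,k)$ for $k\ge n+1$, which is a direct computation from the definition of the Castelnuovo number. Everything else is a routine use of the bound on the rank of a subweb combined with the standard ``finite union of proper subspaces'' lemma, so I do not anticipate any genuine obstacle.
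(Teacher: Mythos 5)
Your proof is correct and follows essentially the same route as the paper: both reduce the corollary to producing a complete abelian relation and then invoke Theorem~\ref{T:ALGquasismoothW}, with the existence of such a relation coming from the strict drop of Chern's bound (Theorem~\ref{T:cota}) on proper subwebs. Your version merely makes explicit two points the paper leaves implicit---the ``a vector space is not a finite union of proper subspaces'' lemma and the verification that $\pi(n,k-1)<\pi(n,k)$ precisely when $k\ge n+1$---which is a harmless, indeed welcome, elaboration.
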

\begin{proof}
It suffices to show the existence of a complete  abelian  relation. It there is none then
there exists a $k'$-subweb $\mathcal W'$ of $\mathcal W$ with $k'< k$, and
 $ \mathrm{rank}(\mathcal W')= \mathrm{rank}(\mathcal W) = \pi(n,k)$. But $\mathrm{rank}(\mathcal W') \le \pi(n,k') < \pi(n,k)$.
 This contradiction proves the corollary.
\end{proof}

\begin{cor}
\label{C:ALGgeneralcase}
A smooth web of maximal rank  is algebraizable if and only if it is linearizable.
\end{cor}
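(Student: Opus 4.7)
The plan is to derive both implications directly from the definitions together with Corollary \ref{C:ALGsmoothW}, using the fact that biholomorphic equivalence preserves both smoothness and rank.

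First I would handle the easy direction: assume $\mathcal{W}$ is algebraizable. By definition, there exists a germ of biholomorphism $\varphi$ such that $\varphi^*\mathcal{W} = \mathcal{W}_C(H_0)$ for some reduced projective curve $C\subset\mathbb{P}^n$ and some hyperplane $H_0$ intersecting $C$ transversely. As recalled in Section \ref{S:defalg}, the leaves of $\mathcal{W}_C(H_0)$ are (pieces of) hyperplanes in $\check{\mathbb{P}}^n$, so $\mathcal{W}_C(H_0)$ is a linear web. Consequently $\mathcal{W}$ is equivalent to a linear web, that is, $\mathcal{W}$ is linearizable. Note that this direction makes no use of the maximal rank hypothesis.

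For the converse, assume $\mathcal{W}$ is smooth, of maximal rank $\pi(n,k)$, and linearizable. Choose a germ of biholomorphism $\psi\in\mathrm{Diff}(\mathbb{C}^n,0)$ such that $\psi^*\mathcal{W}$ is a linear web on $(\mathbb{C}^n,0)\subset(\check{\mathbb{P}}^n,H_0)$ for some $H_0$. Since $\psi$ induces an isomorphism $\mathcal{A}(\mathcal{W})\simeq\mathcal{A}(\psi^*\mathcal{W})$ and preserves smoothness, the web $\psi^*\mathcal{W}$ is still a smooth $k$-web of maximal rank $\pi(n,k)$. Now Corollary \ref{C:ALGsmoothW} applies: a smooth linear $k$-web of maximal rank is algebraic. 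Therefore there exists a projective curve $C\subset\mathbb{P}^n$ of degree $k$ such that $\psi^*\mathcal{W}$ coincides with the germ at $H_0$ of $\mathcal{W}_C$. Pulling back by $\psi^{-1}$ shows that $\mathcal{W}$ is biholomorphically equivalent to $\mathcal{W}_C(H_0)$, hence algebraizable.

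There is no genuine obstacle here: the statement is essentially a repackaging of Corollary \ref{C:ALGsmoothW} combined with the trivial observation that algebraic webs are linear. The only point to keep in mind is that the biholomorphism realizing the linearization of $\mathcal{W}$ transports the full web-theoretic data (smoothness, rank, abelian relations), so that the hypotheses of Corollary \ref{C:ALGsmoothW} are indeed satisfied by $\psi^*\mathcal{W}$.
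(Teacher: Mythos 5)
Your proof is correct and is exactly the argument the paper intends: the corollary is stated without proof precisely because it follows immediately from Corollary \ref{C:ALGsmoothW} together with the observation that algebraic webs are linear, and your write-up fills in those routine details (invariance of smoothness and rank under biholomorphism) correctly.
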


The latter corollary  indicates
the general strategy for  the problem of algebraization of webs:
{\it in order to prove that a web of maximal rank is algebraizable, it suffices to show that it is linearizable}.
In fact a similar  strategy also applies to webs of higher codimension.
Most of the known algebraization results in web geometry are proved in this way. The simplest instance
of this approach  will be the subject of Section \ref{S:AIdual*}. A considerably more involved instance
will occupy  the whole Chapter \ref{Chapter:Trepreau}.

\section{Proof}\label{S:AIdim2}

 The  notation introduced in Section \ref{S:AIStatement} is valid throughout this section.

\subsection{Reduction to dimension two}

This section is devoted to prove that the converse of Abel's Theorem in dimension $n$ follows from the case of dimension  $n-1$ when $n>2$.

\smallskip

Assume $n>2$ and consider a generic point $p \in H_0 \subset \mathbb P^n$.
The linear projection  $\pi: \mathbb P^n \dashrightarrow \mathbb P^{n-1}$ with center at $p$ when
restricted to the germs of curves $C_i$, induces germs of biholomorphisms onto their images, which are denoted  $D_i$. Moreover,  the dual inclusion
$\check{\pi}: \check{\mathbb P}^{n-1} \to \check{\mathbb P}^n$ fits into the following commutative diagram :
$$
\xymatrix{
 C_i  \ar@{^{(}->}[r]  & \mathbb P^n \ar@{-->}^{\pi}[r] & \mathbb P^{n-1} & \ar@{_{(}->}[l] D_i  \\
& (\check{\mathbb P}^n,H_0)  \ar^{p_i}[ul]   & \ar_{\check{\pi}\quad }@{_{(}->}[l] \,\big(\check{\mathbb P}^{n-1},\pi(H_0)\big). \ar_{q_i = \pi \circ p_i \circ \check{\pi} }[ur]
}
$$

Since the restriction of $\pi$ to $C_i$ is a biholomorphism  onto $D_i$, the $1$-forms $\omega_i$ can
be thought as a $1$-form on $D_i$. Under this identification, it is clear that
\[
\sum_{i=1}^k q_i^* \omega_i = \check{\pi}^* \left( \sum_{i=1}^k p_i^* \omega_i \right) = 0 .
\]

The converse of Abel's Theorem in dimension  $n-1$ implies the existence of  an algebraic curve $D \subset \mathbb P^{n-1}$ containing all the curves $D_i$, and
of an abelian,  thus rational, $1$-form $\omega$ on $D$ such that $\omega|_{D_i} = \omega_i$.

Notice that $S = \overline{\pi^{-1}(D)}$ is the cone over $D$ with vertex at $p$. Notice also that it contains  the curves
$C_i$ and  has dimension two.

Let $p' \in \mathbb P^n$ be another generic point of $H_0$. The same argument as above implies the existence of another
surface $S'$ containing the curves $C_i$. It follows that the curves $C_i$ are contained in the intersection $S \cap S'$.
This suffices to ensure the existence of a projective curve $C$ in $\mathbb P^n$ containing all the curves $C_i$. Furthermore,
the pull-back by $\pi$ of $\omega$ from $D$ to $C$  is a rational $1$-form satisfying $\omega_|{C_i} = \omega$ for every $i \in \underline k$.

Thus, to establish Theorem \ref{T:AI} it suffices to consider the two-dimensional case. This will be done starting
from the next section.

\subsection{Preliminaries }

To keep in mind that the ambient space has dimension two, the hyperplanes in the statement of
Theorem \ref{T:AI} will be denoted by  $\ell_0$ and $\ell$,  instead of $H_0$ and $H$.

\begin{figure}[ht]
\begin{center}
\psfrag{C1}[][]{$ C_1 $}
\psfrag{C2}[][]{$ C_2 $}
\psfrag{Cd1}[][]{$ C_{k-1} $}
\psfrag{Cd}[][]{$ C_k $}
\psfrag{P}[][]{$ \mathbb P^{2} $}
\psfrag{L}[][]{$  \ell_0 $}
\psfrag{Lp}[][]{$  \ell $}
\psfrag{p1}[][]{$\scriptstyle{p_1(\ell_0)} $}
\psfrag{p2}[][]{$\scriptstyle{p_2(\ell_0)} $}
\psfrag{pd1}[][]{$\scriptstyle{p_{k-1}(\ell_0)} $}
\psfrag{pd}[][]{$\scriptstyle{p_{k}(\ell_0)} $}
\psfrag{p1p}[][]{$\scriptstyle{p_1(\ell)} $}
\psfrag{p2p}[][]{$\scriptstyle{p_2(\ell)} $}
\psfrag{pd1p}[][]{$\scriptstyle{p_{k-1}(\ell)} $}
\psfrag{pdp}[][]{$\scriptstyle{p_{k}(\ell)} $}
\resizebox{2.6in}{2.6in}{\includegraphics{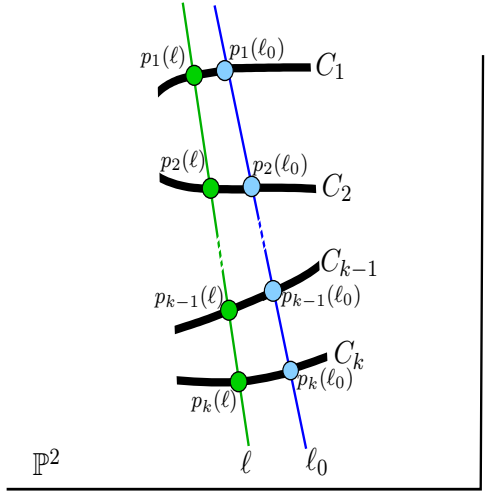} }
\end{center}
\caption{Geometry of the converse of Abel's  Theorem in $\mathbb P^2$. \label{F:AbelInverseTheorem}}
\end{figure}

Assume that the main hypothesis of the converse of Abel's  Theorem is satisfied: for $i \in \underline k$,
there are non-identically zero germs of   holomorphic  differentials $\omega_i\in \Omega^1_{C_i}$  such
that (\ref{E:RAlin}) holds.  \smallskip

Let $(x,y)$ be an affine system of coordinates on  an affine chart $\mathbb C^2\subset \mathbb P^2$ where
$\ell_0 \cap \mathbb C^2 = \{ x=0\}$, and none of the points $p_i(H_0)$ belong
to $\ell_\infty = \mathbb P^2 \setminus \mathbb C^2$.

Since a generic line in the projective plane admits
a unique  affine equation of the form $x=ay+b$,
the variables $a$ and $b$ can be considered as affine coordinates on
$(\check{\mathbb P}^2,\ell_0)$. If
$\ell_{a,b}$  denotes the line in ${\mathbb P}^2$ of affine equation
$x=ay+b$ then  $p_i(\ell_{a,b})$  can be written as $\big( x_i(a,b),y_i(a,b)\big)$,
where  $x_i,y_i: (\check{\mathbb P}^2, \ell_0) \rightarrow \mathbb C$ are
two germs of holomorphic functions satisfying $x_i(a,b)=a\,y_i(a,b)+b$ identically on $(\check{\mathbb P}^2, \ell_0)$.
It will be convenient to assume that for every  $i \in \underline k$, the  function $y_i$ is non constant. Of course, this holds true
for a generic choice of affine coordinates $(x,y)$  on  $\mathbb C^2 \subset \mathbb P^2$.

Let also $\eta_i \in \Omega^1(\check{\mathbb P}^2,\ell_0)$ be the pull-back of $\omega_i$ by $p_i$ ( $\eta_i= p_i^* \omega_i$ ), and
$u_i: (\check{\mathbb P}^2, \ell_0) \to (\mathbb C,0)$ be a primitive of $\eta_i$ with value at $\ell_0$ equal to zero, that is
\[
u_i ( a,b) = u_i(\ell_{a,b}) = \int_0^{(a,b)} \eta_i \, .
\]

\subsubsection{Differential identities}

One of the key ingredients of the proof of the converse of Abel's Theorem  here presented is the following observation. It was
 first made in this context by Darboux.

\begin{lemma}\label{L:shockW}
For every $i \in \underline k$,  the following differential equations are identically satisfied on $(\check{\mathbb P}^2,\ell_0)$:
\begin{equation}
\label{E:shockW}
 \frac{\partial y_i }{\partial a }= y_i
\frac{\partial y_i }{\partial b}\, , \qquad
\frac{\partial x_{i} }{\partial a }= y_i
\frac{\partial x_{i }}{\partial b}\,  \qquad \text{and} \qquad \frac{\partial u_{i} }{\partial a }= y_i
\frac{\partial u_{i }}{\partial b}.
\end{equation}
\end{lemma}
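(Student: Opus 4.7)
The plan is to exploit that the functions $x_i$, $y_i$, and $u_i$ are all constant along a very concrete family of curves inside $(\check{\mathbb P}^2, \ell_0)$: the pencil of lines through a fixed point of $C_i$. Geometrically, if $p = (x_0, y_0) \in C_i$ is fixed, sufficiently close to $p_i(\ell_0)$, then for any line $\ell$ in that pencil that is transverse to $C_i$ at $p$ and close enough to $\ell_0$, the intersection $\ell \cap C_i$ near $p$ is precisely the single point $p$; hence $p_i(\ell) = p$ for every such $\ell$. In the affine chart $(a,b)$, the pencil of lines through $p$ is cut out by the single equation $b = x_0 - a\,y_0$, whose tangent vector is $\partial_a - y_0\,\partial_b$.

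With this observation in hand, the plan is to read off the first two identities by evaluation. At a point $(a,b)$ with $p_i(\ell_{a,b}) = (x_0,y_0)$ one has $y_0 = y_i(a,b)$, so constancy of $x_i$ and $y_i$ along the pencil through $(x_0,y_0)$ translates directly into
\[
(\partial_a - y_i\,\partial_b)(x_i) = 0 \qquad \text{and} \qquad (\partial_a - y_i\,\partial_b)(y_i) = 0,
\]
which are the first two identities of \eqref{E:shockW}. The argument is valid on the dense open subset where $p_i(\ell_{a,b})$ is a smooth point of $C_i$ and $\ell_{a,b}$ is transverse to $C_i$ there, and both sides of each identity are holomorphic, so the identities extend by analytic continuation to the whole neighbourhood of $\ell_0$.

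For the third identity the same geometric mechanism works: along the pencil through $p$ the map $p_i$ is constant, so the pull-back $\eta_i = p_i^*\omega_i$ vanishes on the pencil, and any primitive $u_i$ of $\eta_i$ is constant on each pencil, giving $(\partial_a - y_i\,\partial_b)(u_i) = 0$. As a computational cross-check I would pick $y$ as a local coordinate on $C_i$ (valid thanks to the standing assumption that $y_i$ is non-constant) and write $\omega_i = f_i(y)\,dy$; the chain rule then yields $\partial_a u_i = f_i(y_i)\,\partial_a y_i$ and $\partial_b u_i = f_i(y_i)\,\partial_b y_i$, and the third identity follows directly from the first.

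There is no real obstacle here: the statement is essentially a packaging of the tautological fact that $p_i^{-1}(p)$ is an affine line in $(\check{\mathbb P}^2, \ell_0)$ whose tangent direction is determined by the $y$-coordinate of $p$. A purely algebraic alternative, which I would briefly record as a sanity check, is to differentiate the incidence relation $x_i = a\,y_i + b$ in $a$ and in $b$, and then combine with the implicit differentiation of a local equation $F_i(x,y)=0$ of $C_i$ to eliminate the partial derivatives of $F_i$; the ratio $\partial_a y_i / \partial_b y_i$ then comes out equal to $y_i$, from which the other two identities follow.
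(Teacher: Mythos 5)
Your proof is correct, but your primary argument takes a genuinely different route from the paper's. The paper works directly with a local equation of the curve: it writes $C_i$ as a graph $\{y=g(x)\}$, substitutes into the incidence relation to get $y_i(a,b)=g\big(a\,y_i(a,b)+b\big)$, differentiates in $a$ and in $b$, and eliminates $g'(x_i)$ after checking that $1-a\,g'(x_i)$ does not vanish identically; the identities for $x_i$ and $u_i$ are then deduced from the one for $y_i$ simply by observing that $dx_i$, $dy_i$ and $du_i$ define the same foliation and are therefore proportional. You instead identify the fibre of $p_i$ through $\ell_{a,b}$ with (a piece of) the pencil of lines through $p_i(\ell_{a,b})$, compute its tangent direction $\partial_a-y_0\,\partial_b$ in the $(a,b)$-chart, and read off the single identity $(\partial_a-y_i\,\partial_b)f=0$ uniformly for $f=x_i,y_i,u_i$. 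Your version makes explicit the geometric content already recorded in Section \ref{S:defalg} (the fibres of the $p_i$ are pieces of lines), at the cost of a transversality and analytic-continuation step that the paper's direct computation does not need; your closing ``algebraic alternative'' is essentially the paper's proof, with an implicit equation $F_i=0$ in place of the graph $y=g(x)$. Both arguments are complete.
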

\begin{proof}
First notice that, for a fixed $i \in \underline k$, the functions $x_i, y_i$, and $u_i$ define the very same
foliation on $(\check{\mathbb P}^2, \ell_0)$. Consequently, the $1$-forms $dx_i$, $dy_i$, and $du_i$ are
all proportional. Thus, it suffices to prove the identity
\begin{equation}\label{E:fffacil}
 \frac{\partial y_i }{\partial a }= y_i
\frac{\partial y_i }{\partial b}
\end{equation}
to obtain the other two.

Since $C_i$ intersects $\ell_0=\{x=0\}$ transversely at $(0,y_0)$,
there exists a germ $g \in \mathcal O_{\mathbb C,0}$ for which
 $C_i = \{ y=g(x)\}$. Therefore
$y_i(a,b)=g(x_i(a,b))=g(a\, y_i(a,b)+b)$
for all $(a,b)\in (\check{\mathbb P}^2,\ell_0)$. Differentiation of this identity implies
\begin{align*}
 \frac{\partial y_i}{\partial a }
\Big(1-a\,g'(x_i)\Big)=& \,y_i \, g'(x_i) \\
\mbox{ and } \quad  \frac{\partial y_i}{\partial b }
\Big(1-a\,g'(x_i)\Big)=& \, g'(x_i).  \\
\end{align*}
The function $(a,b) \mapsto 1-a\,g'(x_i(a,b))$ does not vanish identically, otherwise the holomorphic function $g(x_i(a,b))$
would be equal to $\log a$ at a neighborhood of $(a,b)=(0,0)$.
Therefore $y_i$ verifies the  differential equation  (\ref{E:fffacil}). The lemma follows.
\end{proof}

Notice that the relations
\begin{equation}
 \label{E:sumti}
\sum_{i=1}^k \frac{\partial u_i}{\partial a }=
\sum_{i=1}^k y_i \frac{\partial u_i}{\partial b }
\equiv 0 \quad \mbox{ and } \qquad
\sum_{i=1}^k \frac{\partial u_i}{\partial b }
\equiv 0 \,
\end{equation}
follow immediately from  the hypothesis $\sum_i p_i^* \omega_i = \sum_i u_i=0$ combined with Lemma \ref{L:shockW}.

\subsection{Lifting to the incidence variety}

Let $\mathcal I \subset \mathbb P^2 \times \check{\mathbb P}^2$ be the incidence variety, that is
\[
\mathcal I = \{ (p, \ell) \in  \mathbb P^2 \times \check{\mathbb P}^2 \, | \, p \in \ell \}  \, .
\]
As in Section \ref{S:awr} of Chapter \ref{Chapter:intro}, let  $\pi : \mathcal I \to \mathbb P^2$ and $\check \pi : \mathcal I \to \check{\mathbb P}^2$ be
the natural projections.

There is an open affine subset $V \subset \mathcal I$ isomorphic to the closed subvariety of $\mathbb C^2 \times \mathbb C^2$
defined by the equation $x = ay + b$, where $(x,y)$ and $(a,b)$ are, respectively, the  affine coordinates on  $\mathbb C^2 \subset \mathbb P^2$
and $(\check {\mathbb P}^2, \ell_0)  \subset \check{\mathbb P}^2$ used above.

\smallskip

Notice that $V$ is isomorphic to $\mathbb C^3$ and  $(y,a,b)$ is an affine coordinate system on it. Using these coordinates, define the
germ of meromorphic $2$-form $\check{\Psi}_0$ on $(\mathbb C^3, \{ ay + b =0\})$\begin{footnote}{Here and throughout in the proof
of the converse of Abel's Theorem,  the notation $(X,Y)$ means the germ of the variety $X$ at $Y$. One should think of  open subsets of $X$, arbitrarily small among
the ones  containing $Y$.
 }\end{footnote}
 \begin{equation}
\label{E:defPHI}
 \check{\Psi}_0(a,b,y)=\sum_{i=1}^d \frac{\eta_i(a,b)\wedge dy}{   y-y_i(a,b) } \, .
\end{equation}
Recall  that the $1$-forms $\eta_i\in \Omega^1(\check{\mathbb P}^n,\ell_0)$
were introduced in Section \ref{S:AIdual} as the ones  corresponding to the $1$-forms  $\omega_i$  via projective duality. Notice
that $\check{\Psi}_0$  is the restriction at $V$ of a germ of  meromorphic $2$-form $\Psi$ on $\check{\pi}^{-1}( \check{\mathbb P}^2, \ell_0)=\big(\mathcal I, \check{\pi}^{-1}(\ell_0)\big).$

\begin{remark}\rm
To understand the idea behind the definition of $\Psi$,  imagine that the local datum $\{ ( C_i, \omega_i) \}$ is indeed the germification
at $\ell_0$ of a global curve $C$ and an abelian differential $\omega \in H^0(C, \omega_C)$. In this case, there exists a meromorphic $2$-form $\Omega$
on $\mathbb P^2$ satisfying $Res_C \Omega = \omega$. Writing down the meromorphic $2$-form   $\pi^* \Omega$ in the coordinates
$(a,b,y)$, one ends up with an expression exactly like (\ref{E:defPHI}).
\end{remark}

\medskip

Recall that the $1$-form $\eta_i$ is equal to $du_i$, and that  $u_i$ verifies (\ref{E:shockW}).
Therefore
${ \eta_i=({\partial u_i}/{\partial b}) \big( y_ida+db\big) }$ for every $i \in \underline k$.
It is then easy to determine the expression ${\Psi}_0(x,y,a)$ of $\Psi$ in the coordinates $x,y,a$.
Using (\ref{E:sumti}) one obtains
\begin{equation*}
 {\Psi}_0(x,y,a)=\sum_{i=1}^k \frac{
\frac{\partial u_i}{\partial b}(a,x-ay)
}{   y-y_i(a,x-ay) }\, dx \wedge dy\, .
\end{equation*}

If $F$ is the germ of meromorphic function on $\check{\pi}^{-1} ( \check{\mathbb P}^2, \ell_0)$ which  in the
coordinates $(x,y,a)$ can be written as
\[
{F}_0(x,y,a)  = \sum_{i=1}^k
\frac{
\frac{\partial u_i}{\partial b}(a,x-ay)
}{   y-y_i(a,x-ay) }
\]
then
\begin{equation*}
 {\Psi}_0(x,y,a)=F_0(x,y,a)\,dx\wedge dy\, .
\end{equation*}

\subsection{Back to the projective plane}

The next step of the proof consists in showing that the $2$-form $\Psi$ defined above comes
from a $2$-form on the projective plane.

\begin{lemma} \label{L:desce}
There exists a germ of meromorphic function $f$ on $(\mathbb P^2, \ell_0)$  such that $F=\pi^*(f)=f\circ \pi$.
Consequently,  $\Psi = \pi^* \Omega$,  where $\Omega$ is the meromorphic $2$-form  $f(x,y) dx \wedge dy$ on $(\mathbb P^2, \ell_0)$.
\end{lemma}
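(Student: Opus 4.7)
The plan is to show that $F$ is fiberwise constant along the projection $\pi\colon \mathcal I \to \mathbb P^2$. On the affine chart $V \subset \mathcal I$ with coordinates $(x,y,a)$, the fiber of $\pi$ over a fixed point $(x,y) \in \mathbb C^2$ is the line $\{b = x - ay\}$ parametrized by $a$. So the lemma reduces to the identity
\[
\frac{\partial}{\partial a}\bigl[F_0(x,y,a)\bigr] \;=\; 0
\]
whenever $x$ and $y$ are held fixed. Once this is established, $F$ descends to a germ $f\in \mathcal M_{(\mathbb P^2,\ell_0)}$, and since $dx\wedge dy$ is the pull-back to $V$ of $dx\wedge dy$ on $\mathbb P^2$, the equality $\Psi = \pi^*\Omega$ follows at once.

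To carry out the computation, I would set $v_i(a,b) = \partial u_i/\partial b$ and differentiate each summand of $F_0$ while treating $x,y$ as constants, using $d/da|_{x,y} = \partial_a - y\,\partial_b$. Applying Lemma \ref{L:shockW} to the numerator and denominator yields two key substitutions. First, directly from $\partial y_i/\partial a = y_i\,\partial y_i/\partial b$:
\[
\left(\partial_a - y\,\partial_b\right) y_i \;=\; (y_i - y)\,\frac{\partial y_i}{\partial b}.
\]
Second, differentiating the third identity $\partial u_i/\partial a = y_i\,\partial u_i/\partial b$ in $b$ gives
\[
\frac{\partial v_i}{\partial a} \;=\; \frac{\partial y_i}{\partial b}\,v_i + y_i\,\frac{\partial v_i}{\partial b},
\]
so that $(\partial_a - y\,\partial_b)v_i = (\partial y_i/\partial b)\,v_i + (y_i-y)\,\partial v_i/\partial b$. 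Substituting both expressions into the derivative of $v_i/(y - y_i)$, a direct cancellation collapses the numerator: the two terms involving $\partial y_i/\partial b$ kill each other, and one is left with
\[
\frac{d}{da}\Big|_{x,y}\!\left[\frac{v_i}{y - y_i}\right] \;=\; -\,\frac{\partial v_i}{\partial b} \;=\; -\,\frac{\partial^2 u_i}{\partial b^2}.
\]

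Summing over $i \in \underline k$ and invoking the hypothesis $\sum_i u_i \equiv 0$ (equivalently the second identity of \eqref{E:sumti}), one gets $\sum_i \partial u_i/\partial b \equiv 0$, and differentiating in $b$ produces $\sum_i \partial^2 u_i/\partial b^2 \equiv 0$. Therefore $\partial F_0/\partial a \equiv 0$ at fixed $(x,y)$, which is precisely the assertion that $F$ is invariant along the fibers of $\pi$. Meromorphy of the resulting $f$ on $(\mathbb P^2,\ell_0)$ is immediate from the meromorphy of $F$ together with the fact that $\pi$ is a smooth proper fibration over the affine chart $\mathbb C^2\subset \mathbb P^2$.

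The main obstacle is the bookkeeping in the derivative computation: the cancellations only work once one has exploited \emph{both} identities of Lemma \ref{L:shockW} (the one for $y_i$ and the one for $u_i$) and has taken a $\partial_b$-derivative of the second to produce the needed formula for $\partial v_i/\partial a$. After that, the use of $\sum_i \partial u_i/\partial b \equiv 0$ is what makes the remaining sum collapse — this is exactly the place where the complete abelian relation $\sum_i p_i^*\omega_i = 0$ enters the argument, and it illustrates why a single vanishing trace is enough to force $F$ to descend.
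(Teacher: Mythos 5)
Your proof is correct and follows essentially the same route as the paper: both reduce the lemma to showing that $(\partial_a - y\,\partial_b)\check{F}_0 = 0$ (fiberwise constancy of $F$ along $\pi$), using Lemma \ref{L:shockW} together with the abelian relation $\sum_i u_i = 0$. The only difference is organizational --- the paper compares $\partial_a\check{F}_0$ with $y\,\partial_b\check{F}_0$ via the identity $y\,\check{F}_0=\sum_i(\partial u_i/\partial a)/(y-y_i)$ and equality of mixed partials, whereas you differentiate each summand directly and find it collapses to $-\partial^2 u_i/\partial b^2$, whose sum vanishes; both computations are valid.
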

\begin{proof}
It suffices to prove  that $\frac{\partial F_0}{\partial a}$ is identically zero. For that sake let
$\check{F}_0$ be the expression for $F$ in the coordinate system $(a,b,y)$, that is,
\begin{equation}\label{E:defF}
\check{F}_0(a,b,y) = F_0(ay+b,y,a) = \sum_{i=1}^k \frac{\frac{\partial u_i}{\partial b}(a,b)}{   y-y_i(a,b) } \, .
\end{equation}

Notice that
\begin{align*}
  y\,\check{F}_0= \sum_{i=1}^k \frac{y \frac{\partial u_i}{\partial b}}{   y-y_i}
=\sum_{i=1}^k
\frac{\partial u_i}{\partial b}+
\sum_{i=1}^k
\frac{y_i \frac{\partial u_i}{\partial b}}{   y-y_i} \, .
\end{align*}
Combining Equation (\ref{E:shockW}) with the hypothesis $\sum_{i=1}^k du_i=0$ yields
\begin{equation}
\label{E:yF}
y\,\check{F}_0=\sum_{i=1}^d \frac{\frac{\partial u_i}{\partial b}}{   y-y_i}\, .
\end{equation}

Differentiation of   (\ref{E:defF}) and (\ref{E:yF}) with respect to $a$ and $b$ respectively give, in their turn, the identities
\begin{align*}
 \frac{\partial \check{F}_0}{\partial a}= & \,  \sum_{i=1}^k
\frac{\frac{\partial^2 u_i}{\partial a\partial b}}{   y-y_i}
+\sum_{i=1}^k
\frac{y_i'\frac{\partial u_i}{\partial a} \frac{\partial u_i}{\partial b}  }{  ( y-y_i)^2}  \\
\mbox{ and } \quad
y\frac{\partial \check{F}_0}{\partial b}= & \,  \sum_{i=1}^k
\frac{\frac{\partial^2 u_i}{\partial b\partial a}}{   y-y_i}
+\sum_{i=1}^k
\frac{y_i'\frac{\partial u_i}{\partial a} \frac{\partial u_i}{\partial b}  }{  ( y-y_i)^2} \,
\end{align*}
where  $y_i'= dy_i/ du_i$.

Therefore $\check{F}_0$ satisfies the equation
\[
 \frac{\partial \check{F}_0}{\partial a}-y\frac{\partial \check{F}_0}{\partial b}=0 \, .
\]
To conclude notice that
\begin{align*}
 \frac{\partial {F}_0}{\partial a}(x,y,a)= & \,
\Big(\frac{\partial \check{F}_0}{\partial a}-y
\frac{\partial \check{F}_0}{\partial b}\Big)(a,x-ay,y) =  0\,.
\end{align*}
\end{proof}

\subsubsection{Recovering the curves}

Now, notice that the polar set of the  $2$-form $\Omega$ is nothing more than the union of the germs
  $C_i$ with $i \in \underline k$.

\begin{lemma}
If $\Omega$ is the $2$-form provided by Lemma \ref{L:desce} then
\[
( \Omega)_{\infty} = \bigcup_{i \in \underline k} C_i\, .
\]
\end{lemma}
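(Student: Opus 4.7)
The plan is to compare the local expressions for $\Psi$ in the two charts of the incidence variety $\mathcal{I}$ to read off its polar divisor, and then descend this information to $\mathbb{P}^2$ via the fact that $\pi : \mathcal{I} \to \mathbb{P}^2$ is a $\mathbb{P}^1$-bundle (its fiber over $p$ is the pencil of lines through $p$). Since $\pi$ is smooth and surjective and $\Psi = \pi^* \Omega$, the divisor of poles of $\Psi$ is exactly $\pi^{-1}((\Omega)_\infty)$. It therefore suffices to identify $(\Psi)_\infty$ on $\mathcal{I}$ and to verify that it has the form $\pi^{-1}(Z)$ with $Z = \bigcup_i C_i$.

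First, I would work on the affine chart $V \subset \mathcal{I}$ with coordinates $(a,b,y)$, where $\Psi$ is given by
\[
\check{\Psi}_0(a,b,y) = \sum_{i=1}^k \frac{\eta_i(a,b) \wedge dy}{y - y_i(a,b)}.
\]
This expression shows $(\Psi)_\infty \cap V$ is contained in the union of the germs of divisors $D_i := \{ y - y_i(a,b) = 0 \}$. To establish each $D_i$ is genuinely in the polar locus and no cancellation occurs, I would compute the Poincar\'e residue of $\check{\Psi}_0$ along $D_i$: it equals $\eta_i|_{D_i}$, which is non-zero since $\eta_i = p_i^* \omega_i$ is non-zero (recall $\omega_i \neq 0$ and $p_i$ is a submersion at $\ell_0$). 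Moreover, the $D_i$ are pairwise distinct germs near $\check{\pi}^{-1}(\ell_0)$, because the values $y_i(0,0)$ are the $y$-coordinates of the pairwise distinct points $p_i(\ell_0)$. Hence $(\Psi)_\infty \cap V = \bigsqcup_i D_i$ with multiplicity one.

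Second, I would identify each $D_i$ with $\pi^{-1}(C_i) \cap V$. A point $(y,a,b) \in V$ corresponds under the inclusion $V \hookrightarrow \mathcal{I}$ to the pair $(\ell_{a,b}, (ay+b, y))$. The condition $y = y_i(a,b)$ says the second coordinate is $(a y_i(a,b) + b, y_i(a,b)) = (x_i(a,b), y_i(a,b)) = p_i(\ell_{a,b}) \in C_i$, using the relation $x_i = a y_i + b$. Consequently $D_i = \pi^{-1}(C_i) \cap V$, and putting the two previous steps together,
\[
(\Psi)_\infty = \bigcup_{i \in \underline{k}} \pi^{-1}(C_i) = \pi^{-1}\Bigl(\,\bigcup_{i \in \underline{k}} C_i\Bigr)
\]
as divisors on a neighborhood of $\check{\pi}^{-1}(\ell_0)$ in $\mathcal{I}$. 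Descending through $\pi$ yields $(\Omega)_\infty = \bigcup_{i \in \underline{k}} C_i$, as claimed.

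The only delicate step I foresee is excluding a surprise cancellation among the terms of $\check{\Psi}_0$ that would erase one of the $D_i$'s from the polar locus or drop its multiplicity. The residue computation handles this cleanly, because the hypothesis $\omega_i \not\equiv 0$ guarantees a non-zero residue along each $D_i$, and the pairwise distinctness of the $D_i$ precludes interference between different components.
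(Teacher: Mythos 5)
Your analysis on the affine chart $V$ is sound as far as it goes: the Poincar\'e residue computation along each $D_i$ cleanly rules out cancellations, and the identification $D_i=\pi^{-1}(C_i)\cap V$ via $x_i=ay_i+b$ is correct. The gap is that $V$ does not cover the domain of $\Psi$. The germ $(\mathbb P^2,\ell_0)$ is a neighbourhood of the \emph{whole} projective line $\ell_0$, including the point $\ell_0\cap\ell_\infty$, and correspondingly $\check{\pi}^{-1}(\check{\mathbb P}^2,\ell_0)\setminus V$ is the divisor $\pi^{-1}(\ell_\infty)$, i.e.\ the locus $\{y=\infty\}$. Your argument therefore only establishes $(\Omega)_\infty\cap\mathbb C^2=\bigcup_i C_i$; it does not exclude a polar component of $\Omega$ along $\ell_\infty$ passing through $\ell_0\cap\ell_\infty$, which would falsify the stated equality (and would derail the globalization step that follows). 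This is not a formality: in the affine chart $\Omega=f\,dx\wedge dy$, and $dx\wedge dy$ by itself already has a pole of order three along $\ell_\infty$, so holomorphy of $\Omega$ there requires a genuine cancellation in $f$.

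The paper devotes half of its proof to exactly this point. Setting $z=1/y$ one expands
\[
\Psi(a,b,z)=-\sum_{r\ge 0}\sum_{i=1}^k\bigg[\Big(\frac{\partial u_i}{\partial b}\,y_i^{\,r+1}z^{r-1}\Big)da+\Big(\frac{\partial u_i}{\partial b}\,y_i^{\,r}z^{r-1}\Big)db\bigg]\wedge dz\,,
\]
and the potentially polar coefficients of $z^{-1}da$ and $z^{-1}db$ are $-\sum_i y_i\,\partial u_i/\partial b$ and $-\sum_i \partial u_i/\partial b$, which vanish by Lemma \ref{L:shockW} together with the identities (\ref{E:sumti}) coming from the hypothesis $\sum_i p_i^*\omega_i=0$. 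This is precisely where the abelian relation enters the ``at infinity'' part of the statement; your proof never invokes it for that purpose, so the argument cannot be completed without adding a verification of this kind along $\{y=\infty\}$.
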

\begin{proof}
Consider $\Psi= \pi^* \Omega$ in the coordinates $a,b,z=1/y$.
 Performing the change of variable $y=1/z$ in (\ref{E:defPHI}) gives
\begin{align*}
{\Psi}(a,b,z)=& \, -\sum_{i=1}^k \frac{
\frac{\partial u_i}{\partial b}
\big( y_ida+db\big)
}{  z\big( 1-z\,y_i\big) }
\wedge { dz}
\\
= & \, -   \sum_{r\geq 0} \sum_{i=1}^k \bigg[
\Big(\frac{\partial u_i}{\partial b}y_i^{r+1} z^{r-1}\Big) da+
\Big(\frac{\partial u_i}{\partial b}y_i^{r} z^{r-1}\Big) db
\bigg]\wedge dz \, .
\end{align*}

Lemma \ref{L:shockW} implies the vanishing of the coefficients of $z^{-1}da$ and $z^{-1}db$.
Thus ${\Psi}(a,b,z)$ is holomorphic at an open  neighborhood  of $\{z=0\}$.  Therefore
 $\Omega$ is holomorphic at a neighborhood of $ \ell_0 \cap \ell_\infty$.

For $(a,b) \in (\check {\mathbb P}^2, \ell_0)$, the restriction of $f$ at the line $\ell_{a,b} \subset (\mathbb P^2,\ell_0)$
 is $$f_{a,b}(y)=\check{F}_0(a,b,y)=\sum_{i=1}^k \frac{\frac{\partial u_i}{\partial b}(a,b)}{ y-y_i(a,b)} \, .$$

Recall that $du_i=\eta_i$. According to the hypotheses,
the partial derivative $\partial u_i/\partial b$  does not vanish identically on $C_i$ for  $i \in \underline k$.
Hence, for a generic $(a,b) \in (\check{\mathbb P}^2, \ell_0)$,
\begin{align*}
 \mathbb C^2 \cap (f_{a,b})_\infty =
 p_1(a,b)+\cdots + p_k(a,b) \, .
\end{align*}
This suffices to prove the lemma.
\end{proof}

\subsubsection{Recovering the $1$-forms}

It is also possible to extract from $\Omega$ the $1$-forms $\omega_1, \ldots, \omega_k$  with the help of Poincar\'{e}'s residue.

\begin{lemma}
For every $i \in \underline k$, one has
$ \mathrm{Res}_{C_i} \Omega = \omega_i .$
\end{lemma}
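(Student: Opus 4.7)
The plan is to compute the Poincaré residue $\mathrm{Res}_{C_i}\Omega$ directly at a generic smooth point of $C_i$ using the explicit formula for $\check F_0$, and match it against $\omega_i$. Work near a point $(x_0,y_0) \in C_i$ with $(x_0,y_0)$ close to $p_i(\ell_0)$. Since $C_i$ is a germ of smooth curve meeting $\ell_0=\{x=0\}$ transversely, one can write $C_i = \{y = g_i(x)\}$ locally with $g_i \in \mathcal O_{\mathbb C,0}$, so $\phi_i(x,y) = y-g_i(x)$ is a reduced equation for $C_i$ with $\partial_y \phi_i \equiv 1$. Writing $\omega_i = A_i(x)\,dx$ and using $y_i(a,b) = g_i(x_i(a,b))$ together with $\eta_i = du_i = p_i^*\omega_i$, one has $du_i(a,b) = A_i(x_i(a,b))\,dx_i(a,b)$, in particular
\[
\frac{\partial u_i}{\partial b}(a,b) = A_i\bigl(x_i(a,b)\bigr)\cdot \frac{\partial x_i}{\partial b}(a,b).
\]

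Next I would compute the residue. By definition of the Poincaré residue with respect to $\phi_i$, if one writes $f(x,y) = h_i(x,y)/\phi_i(x,y) + (\text{holomorphic at }C_i)$, then $\mathrm{Res}_{C_i}\Omega = h_i(x,g_i(x))\,dx$, and $h_i(x_0,y_0) = \lim\limits_{y\to y_0}\phi_i(x,y)\,f(x,y)$ for any approach to $(x_0,y_0)$. Choose the approach along the line $\ell_{a,b}$ through $(x_0,y_0)$, i.e.\ fix $(a,b)$ so that $p_i(\ell_{a,b}) = (x_0,y_0)$, and parametrize nearby points by $y$ via $x = ay+b$. Then $F_0(x,y,a) = f(x,y)$ gives $f(ay+b,y) = \check F_0(a,b,y)$, and by (\ref{E:defF})
\[
\bigl(y-g_i(ay+b)\bigr) f(ay+b,y) = \sum_{j=1}^{k}\frac{\partial u_j/\partial b\,(a,b)}{y-y_j(a,b)}\,\bigl(y-g_i(ay+b)\bigr).
\]
As $y\to y_0 = y_i(a,b)$, the terms with $j\neq i$ tend to $0$ because $y-y_j(a,b)$ stays bounded away from $0$ while $y-g_i(ay+b)\to 0$. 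For the $j=i$ term, both factors vanish and L'Hôpital gives
\[
\lim_{y\to y_0}\frac{y-g_i(ay+b)}{y-y_i(a,b)} = 1 - a\,g_i'(x_0).
\]
Hence $h_i(x_0,y_0) = \bigl(1-a\,g_i'(x_0)\bigr)\,\partial u_i/\partial b\,(a,b)$.

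To conclude I would reconcile the right-hand side with $A_i(x_0)$. Differentiating the identity $y_i = g_i(ay_i+b)$ in $b$ yields $\partial y_i/\partial b = g_i'(x_i)/(1-a\,g_i'(x_i))$, and therefore, from $x_i = ay_i+b$, $\partial x_i/\partial b = 1/(1-a\,g_i'(x_i))$. Substituting this into the formula for $\partial u_i/\partial b$ obtained in the first paragraph gives $\partial u_i/\partial b(a,b) = A_i(x_0)/(1-a\,g_i'(x_0))$. Plugging this into the previous display produces $h_i(x_0,y_0) = A_i(x_0)$, so $\mathrm{Res}_{C_i}\Omega = A_i(x)\,dx = \omega_i$ on $C_i$, as desired. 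The whole argument is essentially bookkeeping with the identities already established; the only delicate point, and thus the main obstacle to watch, is the correct sign/convention for the Poincaré residue and the verification that the limit of $\phi_i \cdot f$ is independent of the direction of approach, which is guaranteed here because $\Omega$ has at worst a simple pole along the smooth curve $C_i$, as already shown from the explicit form of $\check F_0$.
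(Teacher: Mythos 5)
Your proof is correct and follows essentially the same route as the paper: both extract the Poincar\'{e} residue directly from the explicit partial-fraction expression $\check F_0(a,b,y)=\sum_j \frac{\partial u_j/\partial b}{y-y_j(a,b)}$. The only difference is one of bookkeeping: the paper restricts $\Psi=\pi^*\Omega$ to the slice $\{a=0\}$ of the incidence variety, where $\pi$ is a biholomorphism and no Jacobian factor appears, whereas you approach $(x_0,y_0)$ along a general line $\ell_{a,b}$ and then cancel the resulting factor $1-a\,g_i'(x_0)$ against $\partial u_i/\partial b$ using the chain-rule identities already established for $x_i$ and $y_i$ --- both computations land on the same answer.
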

\begin{proof}
Fix $i \in \underline k $ and set  $p =  p_i(\ell_0)= (0,y_i(0,0)) \in C_i$.
 Let $q$ be the point in $\pi^{-1}( \ell_0)$  which in the  coordinate system $(a,b,y)$
is represented by $(0,0,y_i(0,0))$. If $$(D,q) = \{ (a,b,y) \in \pi^{-1}(({\mathbb P}^2, \ell_0))\, | \, a=0\}$$
then the restriction of $\pi$ to  $(D,q)$ is a germ of biholomorphism  $${\rho:(D,q) \to (\mathbb P^2, p)}.$$

In the coordinates $(b,y)$ on $D$,
\begin{itemize}
 \item[$a)$] the pull-back of $C_i$ to $D$ is $E_i=  \rho^{-1}(C_i) = \{ y-y_i(0,b)=0 \} \, ;$
\item[$b)$]  the $1$-form $\rho^*(\omega_i)$  coincides with   $\check{\pi}^*(\eta_i)|_{E_i}$; and
\item[$c)$]  the pull-back of $\Omega$ to $(D,q)$ by $\rho$ can be written as
$$
\rho^*(\Omega)=\sum_{i=1}^k \frac{\eta_i(0,b)\wedge dy}{   y-y_i(0,b) } \, .
$$
\end{itemize}

Item $a)$ implies that  $\frac{\eta_j\wedge dy}{   y-y_j }$ is holomorphic in a neighborhood of $E_i$ when $j\neq i$.
Item  $b)$ and $c)$, in their turn, imply
\begin{equation*}
 \mathrm{Res}_{E_i}\big( \rho^*\Omega \big)= \, \mathrm{Res}_{E_i}\Big(  \frac{\eta_i(0,b)}{  y- y_i(0,b) }\wedge dy\Big)
=  \eta_i (0,b) \Big|_{E_i}=
\rho^*\omega_i\, .
\end{equation*}
Since $\rho$ is an  isomorphism, it follows that
$$ \mathrm{Res}_{C_i}\big( \Omega \big)= \omega_i $$
for every $i \in \underline k$. The lemma is proved.
\end{proof}

\subsection{Globalizing to conclude}

At this point, to conclude the proof of Theorem \ref{T:AI}, it suffices to prove that the $2$-form $\Omega$ is the
restriction at $(\mathbb  P^2, \ell_0)$ of a rational $2$-form. Indeed, if this is the case then the  polar set
of $\Omega$ will be  a
projective curve $C$ containing the curves $C_i$,  and  its  residue along $C$ will be an abelian differential
$\omega \in H^0(C, \omega_C)$ according to Proposition \ref{P:bpv}.

The globalization of $\Omega$ follows from a particular case of a classical result of Remmert stated  below
as a lemma.

\begin{lemma}
Let $\ell \subset \mathbb P^2$ be a line. Any
germ of meromorphic function
  $g : (\mathbb P^2, \ell) \dashrightarrow \mathbb P^1$
extends to a rational function on $\mathbb P^2$.
\end{lemma}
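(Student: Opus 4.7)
The plan is to extend $g$ first meromorphically to all of $\mathbb{P}^2$ by a Hartogs-type argument, and then invoke the classical identification of meromorphic functions on projective space with rational ones. First I would fix affine coordinates $(x,y)$ on $\mathbb{C}^2 := \mathbb{P}^2 \setminus \ell$ and choose a representative $\tilde g$ of the germ $g$ meromorphic on an open neighborhood $U \supset \ell$ of the form $\{|X_0|^2 < \epsilon(|X_1|^2+|X_2|^2)\}$. The complement $K := \mathbb{P}^2 \setminus U$ is then a compact subset of $\mathbb{C}^2$, and $\tilde g|_{U \cap \mathbb{C}^2}$ is a meromorphic function on $\mathbb{C}^2 \setminus K$.

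Next I would invoke the Hartogs-type extension theorem for meromorphic functions: since $\mathbb{C}^2$ has complex dimension $\ge 2$ and $K \subset \mathbb{C}^2$ is compact, any meromorphic function on $\mathbb{C}^2 \setminus K$ extends uniquely to a meromorphic function on all of $\mathbb{C}^2$. Applying this to $\tilde g|_{U \cap \mathbb{C}^2}$ produces $h \in \mathcal{M}(\mathbb{C}^2)$ agreeing with $\tilde g$ on $U \cap \mathbb{C}^2$. Patching $h$ on $\mathbb{C}^2$ with $\tilde g$ on $U$ yields a global meromorphic function $G \in \mathcal{M}(\mathbb{P}^2)$. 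Finally, the classical identification $\mathcal{M}(\mathbb{P}^n) = \mathbb{C}(x_1,\dots,x_n)$ --- a consequence of GAGA, or elementarily of Siegel's bound on the transcendence degree of $\mathcal{M}(X)$ for $X$ compact combined with the fact that $\mathbb{C}(x,y)$ already achieves transcendence degree two inside $\mathcal{M}(\mathbb{P}^2)$ --- shows that $G$ is rational, and hence so is its germ $g$ along $\ell$.

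The hard part is the meromorphic extension step, which is the Remmert-type statement alluded to in the lemma. For holomorphic functions the extension across a compact subset of $\mathbb{C}^n$, $n\ge2$, is the classical Hartogs Kugelsatz. The meromorphic analogue is subtler: one must first extend the polar hypersurface of $g$ across $K$ via Remmert--Stein, then locally decompose $g=A/B$ and verify that a holomorphic numerator and denominator glue globally on $\mathbb{C}^2 \setminus K$ --- a Cousin--II problem controlled by the vanishing of the corresponding cohomology --- so that the Hartogs theorem may be applied factor by factor. Equivalently this is the content of Levi's Kontinuit\"atssatz, which may be quoted directly. This cohomological vanishing is the single point in the proof where the two-dimensionality of $\mathbb{P}^2$ (more generally, the ampleness of $\ell$) plays an essential role.
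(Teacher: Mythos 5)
Your argument is correct in outline, but it takes a genuinely different route from the one in the paper. You reduce the lemma to the Hartogs--Levi extension theorem (every meromorphic function on $\mathbb C^n\setminus K$, $K$ compact, $n\ge 2$, extends meromorphically across $K$) followed by the identification $\mathcal M(\mathbb P^2)=\mathbb C(x,y)$; both are genuine classical theorems, correctly invoked, so the proof stands if one is willing to quote them. The paper instead gives a self-contained elementary argument: it takes the Laurent expansion of $g$ on the complement of a polydisc, writes it as $\sum_i b_i(y)x^i$, observes that for $|t|\gg 0$ the restriction $g(\cdot,t)$ is a global meromorphic, hence rational, function of $x$ so that $\inf\{i: b_i(t)\neq 0\}>-\infty$, and then uses a countability argument to produce a uniform lower bound $i_0$ with $b_i\equiv 0$ for $i\le i_0$; swapping the roles of $x$ and $y$ confines the support of the Laurent series to a translate of $\mathbb N^2$ and yields the extension directly. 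Your approach is shorter and more conceptual but imports heavy machinery; the paper's buys self-containedness at the cost of a slightly fiddly series argument. Two small cautions about your write-up: the sketch you give of the Levi theorem is the shakiest part --- the global decomposition $g=A/B$ on $\mathbb C^2\setminus K$ via Cousin~II is not automatic and is not how the standard proofs go (one usually extends the closure of the graph of $g$ in $(\mathbb C^2\setminus K)\times\mathbb P^1$ by a Kontinuit\"atssatz/Rothstein-type argument); and your closing remark that the ``ampleness of $\ell$'' enters through a cohomological vanishing is off the mark --- what the argument really uses is that $\mathbb P^2\setminus\ell\cong\mathbb C^2$ is Stein of dimension $\ge 2$, so that $K$ is a compact subset of $\mathbb C^2$ and the Hartogs phenomenon applies. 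Since you ultimately propose to quote Levi's theorem as a black box, these defects do not invalidate the proof, but the sketch as written should not be presented as a proof of that theorem.
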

\begin{proof}
Let $(x,y)$ be affine coordinates on $\mathbb C^2 \subset \mathbb P^2$.
Suppose that $\ell$ is the line at infinity. Fix an arbitrary representative of the germ $g$ defined in a neighborhood
$U$ of the line at infinity. Still denoted it by $g$. Notice that the restriction of $U$ to $\mathbb C^2$ contains the complement
of  a polydisc $\Delta$. Consider the Laurent expansion of $g$,
\[
g(x,y) = \sum_{i,j \in \mathbb Z^2} a_{i \,  j} x^i y^j \, .
\]
Since it converges at a neighborhood of infinity to a meromorphic functions  it suffices to show that
\[
\Gamma = \{ (i,j) \in \mathbb Z^2 \, | \, a_{i\, j} \neq 0 \}
\]
is contained in $(i_0,j_0) + \mathbb N^2$ for some $i_0,j_0 \in \mathbb Z$ in order to prove the lemma.

To prove this, rewrite  the Laurent series of  $g(x,y)$ as
\[
g(x,y) = \sum_{i \in \mathbb Z} b_i(y) x^i \, ,
\]
and consider the function $I: \mathbb C \to \mathbb Z \cup \{ - \infty \}$ defined by
\[
I(t) = \inf \{ i \in \mathbb Z \, | \, b_i(t) \neq 0 \} \, .
\]

If $|t| \gg 0$ then the function $g(x,t)$ is a global meromorphic function of $x$. Therefore
$I(t) \in \mathbb Z$. Since $\mathbb Z \cup \{ -\infty \}$ is a countable set, there exists
an integer $i_0$, and an uncountable set $\Sigma \subset \mathbb C$ for which the restriction
of $b_i$ to $\Sigma$ is zero whenever $i \le i_0$. Therefore, for $i \le i_0$, the functions
$b_i$ are indeed zero all over $\mathbb C$. In other words, ${\Gamma \subset ( i_0 + \mathbb N) \times \mathbb Z}$.

To conclude it suffices to change  the roles of $x$ and $y$ in the above argument, and remind that a global meromorphic
function on $\mathbb P^2$ is rational.
\end{proof}

\section{Algebraization of smooth $2n$-webs}\label{S:AIdual*}

As   the title of this chapter indictes, Theorem \ref{T:ALGquasismoothW} is an ubiquitous tool
when the algebraization of webs comes to mind.
It does not hurt to repeat that most of the known algebraization results
use the abelian relations in order to linearize the web and then
apply the converse of Abel's Theorem in its dual formulation. As promised,
the simplest instance where this strategy applies --
the case of   smooth $2n$-webs $\mathcal W$ on $(\mathbb C^{2n},0)$
of maximal rank --  is treated below.

\subsection{The Poincar\'{e} map}
\index{Poincar\'{e}'s map|(}
Let $\mathcal W= \mathcal F_1 \boxtimes \cdots \boxtimes \mathcal F_{2n} = \mathcal W(\omega_1, \ldots, \omega_{2n})$
be a smooth $2n$-web on $(\mathbb C^n,0)$.  Assume that   $\mathcal W$ has  maximal rank.
 Since $\pi(n,2n) = n+1$,   the space $\mathcal A(\mathcal W)$ is a complex vector space of dimension $n+1$.
In particular, according to  Corollary \ref{C:bbb},
\[
 \dim F^1 \!\mathcal A(\mathcal W) = 1  \, .
\]

If  $F^\bullet_x \mathcal A(\mathcal W)$  denotes the corresponding filtration of $\mathcal A(\mathcal W)$ centered\begin{footnote}{Here and throughout, the convention about germs made in Section \ref{S:gsw} is in use. If one wants to be
more precise, then $\mathcal W$ has to be thought as a web defined on an open subset $U$ of $\mathbb C^n$ containing
the origin  and $F^1_x \mathcal A ( \mathcal W)$ is the filtration of the germ of $\mathcal W$ at $x$. }\end{footnote} at $x$
then one still has  $\dim F^1_x \mathcal A(\mathcal W) = 1$.  \defi[Poincar\'{e}'s map] of $\mathcal W$ is  defined as \index{Poincar\'{e}'s map}
\begin{align*}
 P_{\mathcal W} : \,  (\mathbb C^n,0) & \longrightarrow
\mathbb P\mathcal A(W) \\
x &\longmapsto \left[ F^1_x \mathcal A(\mathcal W) \right] \,.  \nonumber
\end{align*}
It is a covariant of  $\mathcal W$: if $\varphi \in \mathrm{Diff}(\mathbb C^n,0)$ then
$$
P_{\varphi^* \mathcal W}= \varphi^*\big( P_{\mathcal W} \big)=P_{\mathcal W} \circ \varphi\, .
$$

\subsection{Linearization}
For every $i \in \underline{2n}$ and each $x \in (\mathbb C^n,0)$  consider the linear map
\begin{align*}
ev_i(x) : \mathcal A(\mathcal W) & \longrightarrow \Omega^1(\mathbb C^n,x)  \, \\
 (\eta_1, \ldots, \eta_{2n})& \longmapsto \eta_i(x) \, .
\end{align*}
The kernel of $ev_i(x)$ corresponds to the abelian relations
of $\mathcal W$ with $i$-component vanishing at $x$.

\begin{lemma}\label{L:67}
For every $i \in \underline{2n}$, the linear map $ev_i(x)$ has  rank  equal to one.
In particular, $x \mapsto \ker ev_i(x)$ is
a sub-bundle of corank one of the trivial bundle over $(\mathbb C^n,0)$ with fiber $\mathcal A(\mathcal W)$.
Moreover, for every subset $I \subset \underline {2n}$ of cardinality $n$ the following identity holds true
\[
\bigcap_{i\in I} \ker ev_i(x) = F^1_x \mathcal A(\mathcal W).
\]
\end{lemma}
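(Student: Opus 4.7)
The plan is to exploit two basic facts: first, that the $i$-th component $\eta_i$ of an abelian relation must be proportional to $\omega_i$, so $ev_i(x)$ automatically has rank at most one; and second, that the maximal rank hypothesis forces the dimension count of $F^0/F^1$ to be tight enough to identify it with the full space of linear relations among the conormals.

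\textbf{Step 1 (Upper bound on rank).} Since the condition $\omega_i \wedge \eta_i = 0$ in the definition of an abelian relation implies $\eta_i = h_i \omega_i$ for some germ $h_i \in \mathcal O(\mathbb C^n,0)$, we have $\eta_i(x) \in \mathbb C\cdot \omega_i(x)$. Hence $\mathrm{Im}(ev_i(x)) \subset \mathbb C\cdot\omega_i(x)$ and $\mathrm{rank}(ev_i(x)) \le 1$.

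\textbf{Step 2 (Identifying $F^0_x/F^1_x$).} Since $\mathrm{rank}(\mathcal W) = \pi(n,2n) = n+1$, Corollary \ref{C:bbb} gives $\dim F^1_x\mathcal A(\mathcal W) = 1$, and hence $\dim(F^0_x\mathcal A(\mathcal W)/F^1_x\mathcal A(\mathcal W)) = n$. The map
\[
\varphi : \mathcal A(\mathcal W) \longrightarrow \mathbb C^{2n}, \qquad (\eta_1,\ldots,\eta_{2n}) \longmapsto (h_1(x),\ldots,h_{2n}(x))
\]
has kernel $F^1_x\mathcal A(\mathcal W)$ and image contained in the linear subspace
\[
R(x) = \Big\{ (c_1,\ldots,c_{2n}) \in \mathbb C^{2n} \;\Big|\; \sum_{j=1}^{2n} c_j\, \omega_j(x) = 0 \Big\}.
\]
Smoothness of $\mathcal W$ at $x$ means that any $n$ of the $\omega_j(x)$ form a basis of $\Omega^1_x$, so the evaluation map $\mathbb C^{2n} \to \Omega^1_x$ is surjective and $\dim R(x) = 2n - n = n$. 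Combined with $\dim \mathrm{Im}\,\varphi = n$, this forces $\mathrm{Im}\,\varphi = R(x)$.

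\textbf{Step 3 (Rank exactly one).} To show $ev_i(x)$ is nonzero, it suffices to find $(c_j) \in R(x)$ with $c_i \neq 0$. If every element of $R(x)$ had $c_i = 0$, then the $2n-1$ covectors $\{\omega_j(x)\}_{j\neq i}$ would admit an $n$-dimensional space of relations, contradicting the fact that any $n$ of them already form a basis. Hence $ev_i(x)$ has rank exactly $1$, and $\ker ev_i(x)$ has constant dimension $n$; the family $\{\ker ev_i(x)\}_{x}$ is therefore a corank-one sub-bundle of the trivial bundle with fiber $\mathcal A(\mathcal W)$.

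\textbf{Step 4 (The intersection formula).} Let $I \subset \underline{2n}$ with $|I| = n$. The inclusion $F^1_x\mathcal A(\mathcal W) \subset \bigcap_{i\in I}\ker ev_i(x)$ is immediate. Conversely, if $\eta = (\eta_j) \in \mathcal A(\mathcal W)$ satisfies $\eta_i(x) = 0$ for every $i \in I$, then writing $\eta_j = h_j\omega_j$ and using $\sum_j \eta_j(x) = 0$ gives
\[
\sum_{j \notin I} h_j(x)\, \omega_j(x) = 0.
\]
Since $|\,\underline{2n}\setminus I\,| = n$ and $\{\omega_j(x)\}_{j\notin I}$ form a basis of $\Omega^1_x$ by smoothness, all $h_j(x) = 0$ for $j \notin I$ as well, hence $\eta \in F^1_x\mathcal A(\mathcal W)$. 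This is the only slightly delicate point, but it is a direct consequence of the general-position assumption on the defining $1$-forms. The result follows.
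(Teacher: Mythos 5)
Your proof is correct and follows essentially the same route as the paper: rank $\le 1$ because each component is proportional to $\omega_i$, rank $=1$ via the dimension count $\dim F^0_x/F^1_x = n = \dim R(x)$ forced by maximality, and the intersection formula from the general position of the conormals indexed by the complement of $I$. The only cosmetic difference is that you phrase the middle step positively (surjectivity of $\varphi$ onto $R(x)$) where the paper argues by contradiction via semi-continuity, but the underlying count is identical.
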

\begin{proof}
Since $\eta_i$ defines the foliation $\mathcal F_i$, the rank of $ev_i(x)$ is at most one. By semi-continuity, if it is not
constant and equal to one then it must be equal to $0$ at $(\mathbb C^n,0)$. In other  words, the $i$-th component of every abelian relation
$\eta \in \mathcal A(\mathcal W)$ vanishes at the origin. Therefore
\[
\dim \frac{F^0 \mathcal A(\mathcal W) }{F^1 \mathcal A(\mathcal W)} \le (2n -1 ) - \ell^1(\mathcal W) = n-1
\]
according to the proof of Lemma \ref{L:besta}. But then, according to Corollary \ref{C:bbb},
$\mathrm{rank}(\mathcal W) \le (n-1) + 1 = n < \pi (n,2n)$  contradicting its  maximality.

To prove the  second part, notice that the smoothness of $\mathcal W$  ensures the linear independence of   $T_x \mathcal F_i$ with $i \in \underline{2n} \setminus I$.
\end{proof}

\begin{prop}\label{P:linear2n}
If $L$ is a leaf of $\mathcal W$ then $P_{\mathcal W}(L)$, its image  under  Poincar\'{e}'s map, is
contained in a hyperplane.
\end{prop}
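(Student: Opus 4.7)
The plan is to show that if $L$ is a leaf of one of the defining foliations, say $\mathcal{F}_i$, then the hyperplane $\ker ev_i(x) \subset \mathcal{A}(\mathcal{W})$ provided by Lemma \ref{L:67} is independent of the point $x \in L$; the common hyperplane will then manifestly contain $P_{\mathcal{W}}(L)$, since $P_{\mathcal{W}}(x) = [F^1_x\mathcal{A}(\mathcal{W})] \subset \ker ev_i(x)$ for every $x$.

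The first step is to describe the $i$-th component of an abelian relation intrinsically along $\mathcal{F}_i$. Let $u_i \colon (\mathbb{C}^n,0) \to (\mathbb{C},0)$ be a submersion defining $\mathcal{F}_i$ and write $\omega_i = h_i\,du_i$ for some unit $h_i$. For any abelian relation $\eta = (\eta_1,\ldots,\eta_{2n}) \in \mathcal{A}(\mathcal{W})$, the $1$-form $\eta_i$ is closed and satisfies $\eta_i \wedge \omega_i = 0$, hence $\eta_i = \varphi_i\,du_i$ with $d\varphi_i \wedge du_i = 0$. This last identity forces $\varphi_i$ to be a function of $u_i$ alone, so one can write $\eta_i = g_i'(u_i)\,du_i$ for a unique germ $g_i \in \mathcal{O}(\mathbb{C},0)$ with $g_i(0)=0$.

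The second step is the crucial constancy observation: for any $x \in (\mathbb{C}^n,0)$, the condition $\eta_i(x)=0$ is equivalent to $g_i'(u_i(x)) = 0$, which depends on $x$ only through the value $u_i(x)$. Consequently, if $L$ is the leaf of $\mathcal{F}_i$ through some point and $x,x' \in L$, one has $u_i(x)=u_i(x')$, whence $\ker ev_i(x) = \ker ev_i(x')$. Call this common subspace $H_L \subset \mathcal{A}(\mathcal{W})$; by Lemma \ref{L:67} it is a hyperplane, and by construction $F^1_x\mathcal{A}(\mathcal{W}) \subset \ker ev_i(x) = H_L$ for every $x \in L$. Projectivizing, $P_{\mathcal{W}}(x) \in [H_L]$ for every $x \in L$, which is the desired conclusion.

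The only substantive point in this argument is the reduction $\eta_i = g_i'(u_i)\,du_i$, and since $\mathcal{F}_i$ is smooth and of codimension one this follows at once from the Frobenius/closedness computation above; no real obstacle is anticipated. Note that the proof in fact produces, for every leaf $L$ of every $\mathcal{F}_i$, an explicit hyperplane $H_L$, a fact which will be useful later when one wishes to recover a linearization of $\mathcal{W}$ from Poincar\'{e}'s map.
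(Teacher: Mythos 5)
Your proof is correct and follows essentially the same route as the paper: both arguments reduce to showing that the $i$-th component of any abelian relation has the form $g(u_i)\,du_i$, so that $\ker ev_i(x)$ is constant along each leaf of $\mathcal F_i$ and contains $F^1_x\mathcal A(\mathcal W)$ by Lemma \ref{L:67}. The only difference is that you spell out the Frobenius/closedness step that the paper states without derivation.
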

\begin{proof}
At every point $x \in (\mathbb C^n,0)$, one has
\[
\bigcap_{i \in \underline{2n}} \ker ev_i(x) = F^1_x \mathcal A(\mathcal W) \, .
\]
In particular, $F^1_x \mathcal A(\mathcal W) \subset \ker ev_i(x)$ for every $i \in \underline {2n}$.
Notice that for every $x \in (\mathbb C^n,0)$,  $\ker ev_i(x) \subset \mathcal A(\mathcal W)$ is a hyperplane according to Lemma \ref{L:67}

Fix $i \in \underline{2n}$.
Suppose  $L$ is a leaf of $\mathcal F_i$ and let $u_i:(\mathbb C^n,0) \to \mathbb C$ be a submersion defining $\mathcal F_i$. Notice
that $L$ is a level hypersurface of $u_i$.  The $i$-component $\eta_i$ of an abelian relation $\eta \in\mathcal A(\mathcal W)$  is of the form $g(u_i) du_i$. Therefore if $x, y \in L $ are two distinct points of $L$, then $\eta_i$ vanishes at $x$ if and only it
vanishes at $y$. Thus $\ker ev_i(x) = \ker ev_i(y)$ for every $x,y \in L$. Hence, the  image of $L$ by $P_{\mathcal W}$ is contained
in the hyperplane $\mathbb P \ker ev_i(x) \subset \mathbb P \mathcal A(W)$  determined by any  $ x \in L$.
\end{proof}

\begin{prop}\label{P:bih2n}
Poincar\'{e}'s map $P_{\mathcal W} : (\mathbb C^n,0) \to \mathbb P \mathcal A (\mathcal W)$ is a germ of
biholomorphism.
\end{prop}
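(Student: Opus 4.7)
The plan is to establish that the differential $dP_{\mathcal W}(0)$ is injective, which, combined with the equality of dimensions $n = \dim(\mathbb C^n,0) = \dim \mathbb P\mathcal A(\mathcal W)$ (recall $\pi(n,2n)=n+1$), will imply that $P_{\mathcal W}$ is a germ of biholomorphism.

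To compute this differential explicitly, I would first pick first integrals $u_1, \ldots, u_{2n}$ with $u_i(0)=0$ for the foliations $\mathcal F_1,\ldots,\mathcal F_{2n}$, and a basis $\eta^{(0)}, \ldots, \eta^{(n)}$ of $\mathcal A(\mathcal W)$. Writing $\eta^{(k)}_i = f^{(k)}_i(u_i)\,du_i$, an element $\sum_k c_k \eta^{(k)}$ lies in $F^1_x \mathcal A(\mathcal W)$ precisely when $(c_0, \ldots, c_n)$ lies in the kernel of the $2n\times(n+1)$ matrix $M(x) = \bigl(f^{(k)}_i(u_i(x))\bigr)_{i,k}$, which has constant rank $n$. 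Thus $P_{\mathcal W}(x) = [\ker M(x)]$. Taking a smooth local lift $c(x)$ with $M(x)c(x)=0$ and differentiating at $x=0$, one sees that $dP_{\mathcal W}(0)(v) = 0$ is equivalent to $dM(0)(v)\cdot c(0) = 0$ in $\mathbb C^{2n}$. Setting $h_i := du_i(0)$ and $\mu_i := \sum_k c_k(0)\,\bigl(f^{(k)}_i\bigr)'(0)$, the vanishing of the $i$-th component of $dM(0)(v)\cdot c(0)$ is exactly $\mu_i\, h_i(v) = 0$, so $\ker dP_{\mathcal W}(0) = \{v : h_i(v) = 0 \text{ for every } i\text{ with }\mu_i \neq 0\}$.

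The crux of the proof is to show that every $\mu_i$ is nonzero. For this, observe that $\eta := \sum_k c_k(0)\,\eta^{(k)}$ is a nonzero generator of $F^1_0 \mathcal A(\mathcal W)$, with $i$-th component $\eta_i = \mu_i\,u_i\,du_i + O(u_i^2\,du_i)$. For $k=2n$, Corollary \ref{C:bbb} yields $\dim F^1/F^2 = 1$ and $\dim F^2 = 0$; hence the embedding of Lemma \ref{L:besta} identifies $\eta$ with a non-zero element $(\mu_i)_{i\in \underline{2n}}$ of the $1$-dimensional kernel of $\mathbb C^{2n} \to \mathcal L^2(\mathcal W)$, that is, $\sum_i \mu_i\, h_i^2 = 0$ in $\mathrm{Sym}^2 T^*_0(\mathbb C^n,0)$. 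The hardest point will be to rule out $\mu_{i_0} = 0$: if that were the case, $(\mu_i)_{i\neq i_0}$ would be a non-trivial linear dependence among the $2n-1$ squares $\{h_i^2\}_{i\neq i_0}$. But by Remark \ref{R:ind} (applied with $j=2$ in $\mathbb P^{n-1}$, since $2n-1 \le 2(n-1)+1$), those $2n-1$ points in general position impose independent conditions on quadrics, so through Lemma \ref{L:translate} the squares $h_i^2$, $i\neq i_0$, are linearly independent --- contradicting the non-triviality of $(\mu_i)$.

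With every $\mu_i$ nonzero, any $v \in \ker dP_{\mathcal W}(0)$ satisfies $h_i(v) = 0$ for all $i \in \underline{2n}$. Since $\mathcal W$ is smooth, any $n$ of the $h_i$ form a basis of $T^*_0(\mathbb C^n,0)$, forcing $v=0$. Thus $dP_{\mathcal W}(0)$ is injective, hence an isomorphism for dimension reasons, and $P_{\mathcal W}$ is a germ of biholomorphism.
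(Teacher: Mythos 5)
Your proof is correct. It reaches the same crux as the paper's argument but gets there by a different, more computational route. The paper proceeds geometrically: it introduces the $n$ curves $C_i=\bigcap_{j\in\underline n\setminus\{i\}}L_j$ (intersections of leaves through the origin), notes via Lemma \ref{L:67} that $P_{\mathcal W}$ maps each $C_i$ into a line $\ell_i\subset\mathbb P\mathcal A(\mathcal W)$ whose tangent directions at $P_{\mathcal W}(0)$ span the whole tangent space, and then reduces invertibility of $dP_{\mathcal W}(0)$ to the injectivity of each $P_{\mathcal W}\circ\gamma_i$ as a map of smooth curve germs; the failure of that injectivity would force the $i$-th component of the generator of $F^1_0\mathcal A(\mathcal W)$ to vanish to order two at the origin, contradicting $\ell^2(\mathcal W)=2n-1$. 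You instead write $P_{\mathcal W}(x)=[\ker M(x)]$ for the $2n\times(n+1)$ matrix $M(x)=\bigl(f^{(k)}_i(u_i(x))\bigr)$ and compute $\ker dP_{\mathcal W}(0)$ outright, reducing everything to the non-vanishing of the coefficients $\mu_i$ — which is exactly the statement the paper contradicts, and which you rule out by the same Castelnuovo-type input (Remark \ref{R:ind} with $j=2$: the $2n-1$ squares $h_i^2$, $i\neq i_0$, are linearly independent). The paper's version is shorter because it leans on the already-established Lemma \ref{L:67} and Proposition \ref{P:linear2n}; yours is more self-contained, makes the identification $\ker dP_{\mathcal W}(0)=\{v: h_i(v)=0 \text{ whenever }\mu_i\neq0\}$ explicit, and thereby isolates cleanly why the single hypothesis $\ell^2(\mathcal W)=2n-1$ is what drives the whole proposition. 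Both are complete proofs.
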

\begin{proof}
For $i \in \underline n$, let $L_i$ be the leaf of  $\mathcal F_i$ through zero. If $u_i:(\mathbb C^n,0) \to (\mathbb C,0)$ is a submersion
defining $\mathcal F_i$, then $L_i = u_i^{-1}(0)$.
Let also $C_i$ be the curve in $(\mathbb C^n,0)$ defined as
\[
C_i = \bigcap_{j \in \underline n \setminus \{ i \} } L_j \, .
\]
Because $\mathcal W$ is smooth, so is $C_i$.

If  $\gamma_i : (\mathbb C,0) \to C_i$ is a smooth parametrization of $C_i$ then
the image of $P_{\mathcal W} \circ \gamma_i$ is contained in the line $\ell_i$ of $\mathbb P \mathcal A(\mathcal W)$
described by the intersection
\[
\bigcap_{j \in \underline n \setminus \{ i \} } \mathbb P ( \ker ev_i(0) )\, ,
\]
according to Lemma \ref{L:67}.

Since the tangent spaces of the lines $\ell_1, \ldots, \ell_n$ at $P_{\mathcal W}(0)$ generate $T_{P_{\mathcal W}(0)} \mathbb P \mathcal A(\mathcal W)$,
it suffices to show that $\Gamma_i=P_{\mathcal W}\circ \gamma_i :(\mathbb C,0) \to \ell_i \subset \mathbb P^n $ has non-zero derivative at $0 \in \mathbb C$,
for every $i \in \underline n$. But, $\Gamma_i$ is a map between germs of  smooth curves, hence everything boils down to the injectivity of   $\Gamma_i$
for a fixed $i \in \underline n$.

If $\Gamma_i$ is not injective then there are pairs of distinct points $x,y \in C_i$ arbitrarily close to
zero such that $\ker ev_i(x)  = \ker ev_i(y)$. Hence, if the $i$-th component of an abelian relation of $\mathcal W$
vanishes  at $x$, then it also vanishes at $y$. It follows that the $i$-th component of the abelian relation
generating  $F^1_0 \mathcal A(W)$ vanishes
at the origin with multiplicity two. But this contradicts the equality  $\ell^2(\mathcal W) = 2n -1$ established in Proposition \ref{P:cast}.
Thus $\Gamma_i$ is injective for any $i \in \underline n$. Consequently, the differential of $P_{\mathcal W}$ at the origin
is invertible. The proposition follows.
\end{proof}

\subsection{Algebraization}

It is a simple matter to put the previous results together in order to
prove the following  algebraization result.

\begin{thm}
\label{T:alg2nWEB}
 A smooth $2n$-web on $(\mathbb C^n,0)$ of maximal rank is algebraizable. More precisely, its push-forward by its Poincar\'{e}'s map  is a web $\mathcal W_C$ where $C\subset  \mathbb P^n$  is a  $\mathcal W$-generic projective   of degree $2n$ and~genus~$n+1$.
\end{thm}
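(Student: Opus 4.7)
The plan is to combine the three main ingredients already established in the preceding sections: Proposition \ref{P:bih2n} (Poincar\'e's map is a biholomorphism), Proposition \ref{P:linear2n} (leaves are sent to hyperplanes) and the dual converse of Abel's Theorem in the form of Corollary \ref{C:ALGsmoothW}. First I would set $\mathcal{W}' = (P_{\mathcal{W}})_* \mathcal{W}$, a germ of $2n$-web on $(\mathbb P\mathcal{A}(\mathcal{W}), P_{\mathcal{W}}(0)) \cong (\mathbb{P}^n, P_{\mathcal W}(0))$. Since by Proposition \ref{P:bih2n} the map $P_{\mathcal{W}}$ is a germ of biholomorphism, $\mathcal{W}'$ is automatically smooth (the pushforward of a smooth web by a biholomorphism is smooth), and the isomorphism $P_{\mathcal{W}}^*:\mathcal{A}(\mathcal{W}') \to \mathcal{A}(\mathcal{W})$ shows that $\mathcal{W}'$ also has maximal rank $\pi(n,2n)=n+1$.

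Next I would invoke Proposition \ref{P:linear2n}: every leaf of $\mathcal{W}$ is sent by $P_{\mathcal{W}}$ into a hyperplane of $\mathbb{P}\mathcal{A}(\mathcal{W})$, so every leaf of $\mathcal{W}'$ is (an open piece of) a hyperplane. Hence $\mathcal{W}'$ is a smooth linear $2n$-web of maximal rank on $(\mathbb{P}^n, P_\mathcal{W}(0))$. Corollary \ref{C:ALGsmoothW} applies verbatim and yields an algebraic curve $C\subset \mathbb{P}^n$ of degree $2n$ together with a hyperplane $H_0=P_{\mathcal{W}}(0)$ intersecting $C$ transversely, such that $\mathcal{W}' = \mathcal{W}_C(H_0)$. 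In particular $\mathcal{W}=P_{\mathcal{W}}^{*}\mathcal{W}_C(H_0)$ is algebraizable via Poincar\'e's map.

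It remains to identify the numerical invariants of $C$. The degree is evidently $2n$, by construction. For the genus, I would apply Theorem \ref{T:ALGquasismoothW}: since $\mathcal{W}_C(H_0)$ admits complete abelian relations (it has rank $n+1>0$ and no strict subweb can attain the full rank, by the argument of Corollary \ref{C:ALGsmoothW}), the space of abelian relations of $\mathcal{W}_C(H_0)$ is canonically isomorphic to $H^0(C,\omega_C)$. Combining with the natural isomorphism $\mathcal{A}(\mathcal{W}_C(H_0))\simeq \mathcal{A}(\mathcal{W})$ one gets
\[
g_a(C) \;=\; h^0(C,\omega_C) \;=\; \dim\mathcal{A}(\mathcal{W}) \;=\; \pi(n,2n) \;=\; n+1.
\]
Finally, since $\mathcal{W}_C(H_0)=\mathcal{W}'$ is smooth at $H_0$, the dual web $\mathcal{W}_C$ is generically smooth, i.e.\ $C$ is $\mathcal{W}$-generic. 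This completes all the claimed assertions.

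The proof is essentially a bookkeeping exercise once Propositions \ref{P:linear2n} and \ref{P:bih2n} are in hand; the real work has already been done. The only small point one has to be mindful of is that Corollary \ref{C:ALGsmoothW} requires knowing that the rank is preserved under pushforward by a biholomorphism, which is immediate from the functoriality of the space of abelian relations, and that smoothness persists after pushforward, which is equally immediate. No additional analytic input is needed.
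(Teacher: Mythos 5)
Your proof is correct and follows essentially the same route as the paper: Proposition \ref{P:bih2n} to transport smoothness and maximal rank, Proposition \ref{P:linear2n} to get linearity, and the dual converse of Abel's Theorem (the paper cites Corollary \ref{C:ALGgeneralcase}, which rests on Corollary \ref{C:ALGsmoothW}) to conclude. Your additional bookkeeping for the degree, the genus via Theorem \ref{T:ALGquasismoothW}, and the $\mathcal W$-genericity is exactly what the paper leaves implicit.
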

\begin{proof}
According to Proposition \ref{P:bih2n},  $P_{\mathcal W}$ is a germ of biholomorphism. Hence
 $({P_{\mathcal W}})_*(\mathcal W)$ is a smooth $2n$-web equivalent to $\mathcal W$.
In particular,  its rank is also maximal.
Proposition \ref{P:linear2n} implies that  $({P_{\mathcal W}})_*(\mathcal W)$  is a linear web.
To conclude apply Corollary \ref{C:ALGgeneralcase}.
\end{proof}

\subsection{Poincar\'{e}  map for planar webs}\label{S:PMPW}
\index{Poincar\'{e}'s map!for planar webs|(}

It is   possible to  generalize Poincar\'{e}'s map for smooth $k$-webs on $(\mathbb C^n,0)$
for all integers  $n$ and $k$ such that  $2n\leq k$. The idea is to consider the last  non-trivial piece  $F^{l} \mathcal A(\mathcal W)\neq 0$
of the filtration $F^{\bullet} \mathcal A( \mathcal W)$. To be more precise set, as in
Remark \ref{R:closed2},
$$\rho = \left\lfloor\frac{k-n-1}{n-1}\right\rfloor \quad \mbox{and} \quad \epsilon = (k-n-1)-\rho(n-1)\, .$$
In this notation,
the last non-trivial piece of $F^{\bullet} \mathcal A( \mathcal W)$
is  $F^{\rho+1} \mathcal A(\mathcal W)$.  Then  set $e=\dim F^{\rho+1} \mathcal A(\mathcal W)=\epsilon+1>0$ and define
Poincar\'{e}'s map of $\mathcal W$  as
\begin{align*}
\label{E:defPWgg}
 P_{\mathcal W} : \,  (\mathbb C^n,0) & \longrightarrow
{\rm Grass}\big(\mathcal A(W), e \big)  \\
x \; &\longmapsto  F^{\rho+1}_x \mathcal A(\mathcal W)  \,.
\end{align*}

When $e  =1$, that is  $n=2$ or $k \equiv 2 \mod (n-1)$, then one still gets a map
to a projective space with remarkable properties as shown in the next result for $n=2$. Nevertheless, it does not linearize the web
as when $k=2n$.

\begin{prop}\label{P:PW2}
If $\mathcal W$ is a smooth $k$-web of maximal rank on $(\mathbb C^2,0)$ then $\mathcal P_{\mathcal W}$ is an
immersion. Moreover if $S$ is the image of $\mathcal P_{\mathcal W}$ and $L$ is a leaf of $\mathcal W$ then the following assertions hold:
\begin{enumerate}
\item[(a)]  the image of $L$ by  $\mathcal P_{\mathcal W}(L)$ is contained in a
projective space of codimension $k-3$;
\item[(b)]  the union of the projective tangent spaces of $S$ along the points of the  image of $L$, that is
\[
\bigcup_{x \in \mathcal P_{\mathcal W}(L)}  T_x S \, ,
\]
is contained in a  projective space of codimension $k-4$;
\item[(c)] if $s \le k-4$ then the union of the projective osculating spaces of $S$ of order $s$ along the points of the  image of $L$, that is
\[
\bigcup_{x \in \mathcal P_{\mathcal W}(L)}   T^{(s)}_x S \, ,
\]
is contained in a  projective space of codimension $k-(3+s)$.
\end{enumerate}
\end{prop}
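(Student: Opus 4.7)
My plan is to exhibit, for each leaf $L$, a family of $k-3-s$ linear functionals on $\mathcal A(\mathcal W)$ whose common zero locus cuts out the required projective subspace, and then to verify their linear independence using Chern's bound applied to an appropriate subweb. To set up, the maximality of the rank and Corollary~\ref{C:bbb} imply $\dim F^{k-3}_x \mathcal A(\mathcal W) = 1$ and $F^{k-2}_x \mathcal A(\mathcal W) = 0$ at every $x$. Choose a holomorphic local generator $\eta^{(x)} = \bigl( g_i^{(x)}(u_i)\, du_i \bigr)_{i\in \underline{k}}$ of $F^{k-3}_x \mathcal A(\mathcal W)$, where $u_i$ is a submersion defining $\mathcal F_i$; by construction $g_i^{(x)}$ vanishes to order $\geq k-3$ at $u_i(x)$, and $P_{\mathcal W}(x) = [\eta^{(x)}]$.

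For the immersion claim I would adapt the argument of Proposition~\ref{P:bih2n}. Suppose $dP_{\mathcal W}(v)=0$ for some nonzero $v \in T_x(\mathbb C^2,0)$; then $F^{k-3}_{x+\epsilon v}$ is stationary to first order in $\epsilon$, and expanding the defining conditions in $\epsilon$ translates this into the system $v(u_i)\cdot (g_i^{(x)})^{(k-3)}(u_i(x)) = 0$ for every $i \in \underline{k}$. The smoothness of $\mathcal W$ forces $v$ to be tangent to at most one $\mathcal F_i$, so $g_i^{(x)}$ vanishes to order $\geq k-2$ at $u_i(x)$ for at least $k-1$ indices; reading $\sum_i \eta^{(x)}_i = 0$ modulo $\mathfrak m_x^{k-2}$ propagates this vanishing to the remaining index, placing $\eta^{(x)}$ in $F^{k-2}_x = 0$, a contradiction.

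For (a), (b) and (c), fix a leaf $L$ of some $\mathcal F_i$, set $\tau_0 = u_i(L)$, and write $\delta = u_i - \tau_0$ for a local defining function of $L$. For $j \geq 0$ define the linear functional $\lambda_j : \mathcal A(\mathcal W) \to \mathbb C$ by $\lambda_j(\eta) = g_i^{(j)}(\tau_0)$. Taylor-expanding $g_i^{(x)}$ around $u_i(x)$ and evaluating at $\tau_0 = u_i(x)-\delta(x)$ yields
\[
\lambda_j(\eta^{(x)}) \;=\; \frac{(g_i^{(x)})^{(k-3)}(u_i(x))}{(k-3-j)!}\bigl(-\delta(x)\bigr)^{k-3-j} + O\bigl(\delta(x)^{k-2-j}\bigr),
\]
so $\lambda_j \circ P_{\mathcal W} \in I_L^{k-3-j}$. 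Hence $H_j = \ker \lambda_j \subset \mathbb P\mathcal A(\mathcal W)$ contains $T^{(s)}_{P_{\mathcal W}(x)} S$ for every $x \in L$ as soon as $j \leq k-4-s$, producing $k-3-s$ such hyperplanes.

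The crux is the linear independence of $\lambda_0, \ldots, \lambda_{k-4-s}$ on $\mathcal A(\mathcal W)$, which I would establish by showing surjectivity of $\Phi = (\lambda_0, \ldots, \lambda_{k-4-s}) : \mathcal A(\mathcal W) \to \mathbb C^{k-3-s}$. For $x$ with $\delta(x) \neq 0$, the vector $\Phi(\eta^{(x)})$ is, to leading order, a nonzero scalar multiple of $\bigl(\binom{k-3}{l}(-\delta(x))^{k-3-l}\bigr)_{l=0}^{k-4-s}$, and a Vandermonde computation shows that the span of these vectors as $\delta(x)$ varies is all of $\mathbb C^{k-3-s}$---provided some $A_\ell(x) := (g_i^{(x)})^{(\ell)}(u_i(x))$, $\ell \geq k-3$, is not identically zero. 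The remaining case $g_i^{(x)} \equiv 0$ would make $\eta^{(x)}$ a nonzero element of $F^{k-3}_x \mathcal A(\mathcal W \setminus \mathcal F_i)$; but Chern's bound (Theorem~\ref{T:cota}) applied to the smooth $(k-1)$-subweb $\mathcal W \setminus \mathcal F_i$ forces that space to vanish, a contradiction. Therefore the $\lambda_j$ are independent, and $\bigcup_{x \in L} T^{(s)}_{P_{\mathcal W}(x)} S \subset \bigcap_{j=0}^{k-4-s} \ker \lambda_j$ is contained in a projective subspace of codimension $k-3-s$, which proves (c) and specializes to (a) and (b) in the cases $s=0$ and $s=1$.
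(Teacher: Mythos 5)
Your proof is correct and follows essentially the same route as the paper's (sketched) argument: your functionals $\lambda_j$ are exactly the components of the higher-order evaluation morphisms $ev_i^{(j)}(x)$ used there, which depend only on $u_i(x)=\tau_0$ and hence are constant along $L$, and your computation that $\lambda_j\circ P_{\mathcal W}$ vanishes to order $k-3-j$ along $L$ is the explicit form of the paper's key observation that derivatives of order $\le s$ of sections of $\ker ev_i^{(k-4)}$ land in $\ker ev_i^{(k-4-s)}$. The only divergence is the linear-independence step, where the paper simply asserts that $ev_i^{(k-4)}(x)$ has maximal rank while you derive it from the non-degeneracy of the curve $\delta\mapsto \Phi(\eta^{(x)})$ along a transversal; both work, and your degenerate case is correctly excluded by Chern's bound applied to the $(k-1)$-subweb.
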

\begin{proof}
The proof is the natural generalization of the arguments used to prove Proposition \ref{P:linear2n} and Proposition \ref{P:bih2n}.
The reader is invited to fill in the details of the argument sketched below.

Instead of considering the evaluation morphism $ev_i^{(0)} (x)= ev_i(x):\mathcal A(\mathcal W) \to \mathbb C$ one
has to consider the higher order evaluation morphism  $ev_i^{(j)}(x):\mathcal A(\mathcal W) \to \mathbb C^{ j+1}$
which sends the $i$-th component of an abelian relation $\eta$ to its Taylor expansion truncated at order $j$. More precisely,
if $u_i$ is a submersion defining $\mathcal F_i$ and if the $i$-th component of $\eta$ is $\eta_i = f(u_i) du_i$
then
\[
ev_i^{(j)}(x) ( \eta)  = f(u_i(x)) + t f'(u_i(x)) + \cdots + t^j  f^{(j)} (u_i(x)) \,
\]
where $1,t, \ldots, t^j$ is thought as a basis of $\mathbb C^{j+1}$.

\smallskip

It is a simple matter to adapt the arguments used to prove Proposition \ref{P:bih2n}  in order to show that $P_{\mathcal W}$ is an immersion.

\smallskip

To prove item (a), notice that $ev_i^{(k-4)}(x)$ has maximal rank. Consequently $\dim \ker ev_i^{(k-4)}(x) = \pi(2,k) - (k-3)$.
Furthermore, if  $L$ is the leaf of $\mathcal F_i$  through $x$ then  its image $P_{\mathcal W}(L)$ is contained
in $\mathbb P \ker ev_i^{(k-4)}(x)$. Putted together, these two remarks imply item (a).

\smallskip

To prove items (b) and (c), the key point is
to notice the following. If $f:(\mathbb C^2,0) \to \mathcal A(\mathcal W)$  is such that
$f(x) \in \ker ev_i^{(k-4)}(x)$ for every $x \in (\mathbb C^2,0)$ then
the derivatives of $f$ at $x$ with order at most $s$ lie in $\ker ev_i^{(k-4-s)}(x)$.
\end{proof}

\medskip

The discussion of Poincar\'{e}'s map for planar webs of maximal rank just made leads naturally to the following
characterization of algebraizable planar webs.

\begin{thm}
If $k$ is a integer greater  than $4$ and $\mathcal W$ is a smooth planar $k$-web  of maximal rank then  the image
of Poincar\'{e}'s map $P_{\mathcal W}$ is contained in the $(k-3)$-th Veronese surface if and only if  $\mathcal W$ is algebraizable.
\end{thm}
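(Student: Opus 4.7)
The plan is to establish both implications by an explicit description of Poincar\'{e}'s map, exploiting in one case the identification $\mathcal{A}(\mathcal{W}_C)\simeq H^0(C,\omega_C)$ and in the other the osculating geometry of the Veronese variety encoded in Proposition~\ref{P:PW2}. First, suppose $\mathcal{W}$ is algebraizable; by the covariance identity $P_{\varphi^{*}\mathcal{W}}=P_{\mathcal{W}}\circ \varphi$ and by invariance of the Veronese surface under projective linear isomorphisms of $\mathbb{P}\mathcal{A}(\mathcal{W})$, I may assume $\mathcal{W}=\mathcal{W}_C(\ell_0)$ for a reduced plane curve $C\subset\mathbb{P}^2$ of degree $k$ meeting $\ell_0$ transversely at $k$ smooth points $p_1(\ell_0),\ldots,p_k(\ell_0)$. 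Theorem~\ref{T:curvesvswebs2} together with Corollary~\ref{C:omegaPlaneCurve} identifies $\mathcal{A}(\mathcal{W})\simeq H^0(C,\omega_C)\simeq \mathbb{C}_{k-3}[x,y,z]$, under which $P\in\mathbb{C}_{k-3}[x,y,z]$ corresponds to the abelian relation $(p_i^{*}\omega_P)_{i\in\underline k}$ with $\omega_P=\mathrm{Res}_C\bigl((P/f)\,dx\wedge dy\bigr)$. Since each $p_i$ is a submersion onto $C$, the form $p_i^{*}\omega_P$ vanishes at $\ell$ to order $\geq k-3$ exactly when $P|_C$ does so at $p_i(\ell)$. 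Fixing a linear form $L$ defining $\ell$, the restriction $P|_\ell$ is a degree-$(k-3)$ polynomial vanishing at the $k$ transverse intersection points $p_i(\ell)$, which forces $P|_\ell\equiv 0$ and $P=L\cdot Q$ with $\deg Q=k-4$; iterating $k-3$ times yields $P=c\,L^{k-3}$. Hence $P_{\mathcal{W}}(\ell)=[L^{k-3}]=\nu_{k-3}([L])$ and the image of $P_{\mathcal{W}}$ is contained in $\nu_{k-3}(\check{\mathbb{P}}^2)$.

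For the converse, assume the image of $P_{\mathcal{W}}$ is contained in $\nu_{k-3}(\mathbb{P}^2)$. Proposition~\ref{P:PW2} ensures that $P_{\mathcal{W}}$ is an immersion; combined with the smoothness of the Veronese embedding, this makes $\tilde P_{\mathcal{W}}:=\nu_{k-3}^{-1}\circ P_{\mathcal{W}}$ a germ of biholomorphism from $(\mathbb{C}^2,0)$ onto a neighbourhood of some point $\ell_0^{*}\in\check{\mathbb{P}}^2$. Setting $\mathcal{W}':=(\tilde P_{\mathcal{W}})_{*}\mathcal{W}$, covariance of Poincar\'{e}'s map gives $P_{\mathcal{W}'}=\nu_{k-3}$ near $\ell_0^{*}$. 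In view of Corollary~\ref{C:ALGsmoothW}, it then suffices to show that $\mathcal{W}'$ is linear, that is, that every leaf of $\mathcal{W}'$ is a germ of projective line in $\check{\mathbb{P}}^2$.

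To that end I fix a leaf $\bar L$ of $\mathcal{W}'$ and invoke Proposition~\ref{P:PW2}(c) with $s=k-4$: the union of the $(k-4)$-th osculating spaces of $\nu_{k-3}(\check{\mathbb{P}}^2)$ along $\nu_{k-3}(\bar L)$ must lie in a hyperplane of $\mathbb{P}^{g-1}$, where $g=\binom{k-1}{2}$. A standard computation shows that the $s$-th osculating space of a Veronese variety $\nu_n(\mathbb{P}V)$ at $[v^n]$ is $\mathbb{P}(v^{n-s}\cdot \mathrm{Sym}^s V)$; specializing to $V=\mathbb{C}^3$, $n=k-3$, $s=k-4$ and letting $v$ range over an affine lift $\hat{\bar L}\subset\mathbb{C}^3\setminus\{0\}$ of $\bar L$, the union becomes
\[
\bigcup_{v\in\hat{\bar L}}\mathbb{P}\bigl(v\cdot\mathrm{Sym}^{k-4}\mathbb{C}^3\bigr)=\mathbb{P}\bigl(W\cdot\mathrm{Sym}^{k-4}\mathbb{C}^3\bigr),\qquad W:=\mathrm{span}(\hat{\bar L})\subset\mathbb{C}^3.
\]
If $\dim W=3$ then $W\cdot\mathrm{Sym}^{k-4}\mathbb{C}^3=\mathrm{Sym}^{k-3}\mathbb{C}^3$, so the union fills $\mathbb{P}^{g-1}$ and cannot lie in any hyperplane; if $\dim W=2$, say $W=\langle e_1,e_2\rangle$, then $W\cdot\mathrm{Sym}^{k-4}\mathbb{C}^3$ has codimension exactly one in $\mathrm{Sym}^{k-3}\mathbb{C}^3$ (the missing direction being spanned by $e_3^{k-3}$), and the hyperplane condition holds. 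Hence $\dim W\leq 2$, meaning $\bar L$ is contained in a projective line; as this holds for every leaf, $\mathcal{W}'$ is linear, hence algebraic by Corollary~\ref{C:ALGsmoothW}, and therefore $\mathcal{W}$ is algebraizable.

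The main technical hurdle is the explicit identification of the $(k-4)$-th osculating spaces of the Veronese and of their union along a germ of curve lying on it. The computation itself is a clean piece of multilinear algebra, but its effectiveness hinges on using Proposition~\ref{P:PW2}(c) at the sharpest order $s=k-4$: only there does the hyperplane constraint precisely distinguish the two-dimensional from the three-dimensional span regime for $W$, and thereby separate the linearizable from the non-linearizable leaves.
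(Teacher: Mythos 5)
Your proof is correct and follows essentially the same route as the paper: for the converse you invoke Proposition~\ref{P:PW2}(c) at the sharpest order $s=k-4$ and exploit the osculating geometry of the Veronese, which is the linear-algebra dual of the paper's observation that the pullback $\nu^{*}H$ of the osculating hyperplane is a degree-$(k-3)$ plane curve all of whose points along the leaf have multiplicity $k-3$, hence equals $(k-3)\ell$. The one place you go beyond the paper is the direct implication, where the paper simply defers to \cite{PTese}; your residue-plus-B\'ezout argument identifying $F^{k-3}_{\ell}\mathcal A(\mathcal W_C)$ with $\mathbb C\cdot L^{k-3}$ is a clean and correct way to supply those details.
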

\begin{proof}
Let $ S$ be the image of $P_{\mathcal W}$.
Suppose first that  $S$  is contained in the $(k-3)$-th Veronese surface and let $\nu= \nu_{k-3}: \mathbb P^2 \to S \subset \mathbb P \mathcal A(\mathcal W)$ be
the corresponding Veronese embedding. According to Proposition  \ref{P:PW2}, for every $i \in \underline k$
and every $x \in (\mathbb C^2,0)$, the image of $L$ ( the leaf of $\mathcal F_i$ through $x$ ) is contained in a hyperplane $H$ that
osculates $S$ up to order $k-3$ along $P_{\mathcal W}(L)$. But this means that $\nu^*H$ is a curve in $\mathbb P^2$ with an irreducible component
$C$ for which  every point is a singularity with algebraic multiplicity $k-3$. Since $\nu^* H$ has degree $(k-3)$, it follows
that $\nu^* H = C = (k-3) \ell $  for some line $\ell \subset \mathbb P^2$. Thus the composition $ \nu^{ -1} \circ P_{\mathcal W} $ linearizes
$\mathcal W$. Corollary \ref{C:ALGsmoothW} implies that $(\nu^{ -1} \circ P_{\mathcal W})_*\mathcal W$ is an algebraic web.

Conversely, if $\mathcal W= \mathcal W_C(H_0)$ is algebraic then it is a simple matter to show that
$P_{\mathcal W} : (\check{\mathbb P}^2,H_0) \to \mathbb P \mathcal A(\mathcal W)$ is the
germ at $H_0$ of the $(k-3)$-th Veronese embedding. For details see for instance \cite{PTese}.
\end{proof}

For $k=5$ the statement appears in \cite[page 255]{BB},
and the  proof there presented involves the study of  Poincar\'{e}-Blashcke's map of the web, which will be introduced in Chapter \ref{Chapter:Trepreau}.
The result  for arbitrary $k$ as stated above was proved in \cite{Henaut1994} using  arguments very similar to the ones
in the book just cited. The proof just  presented    uses slightly different arguments.

\index{Poincar\'{e}'s map!for planar webs|)}
\index{Poincar\'{e}'s map|)}

\subsection{Canonical maps}\label{S:canmap}

\index{Canonical maps!of a web|(}
For a $k$-web on $(\mathbb C^n,0)$ of rank $r  >0$, it is possible to mimic the construction of
canonical maps for projective curves as follows.  Notice that the evaluation morphisms $ev_i(x)$ are
linear functionals on $\mathcal A(\mathcal W)$. As such they can be thought as
points of $\mathcal A(\mathcal W)^*$. For every $i \in \underline k$ consider the germ
of meromorphic map
\begin{align*}
 \kappa_{{\mathcal W},i}: \,   (\mathbb C^n,0) & \dashrightarrow
 \mathbb P \mathcal A(W)^*\simeq \mathbb P^{r-1} \\
x & \longmapsto [ev_{i}(x)]\,.  \nonumber
\end{align*}
By definition, $\kappa_{\mathcal W,i}$ is the \defi[$i$-th canonical map of $\mathcal W$].

\index{Canonical map!of a curve}
Lemma \ref{L:67}, or more precisely its proof, shows that when $\mathcal W$ is smooth and of maximal
rank, $\kappa_{\mathcal W, i}$ is regular at zero. Moreover, if $\mathcal W= \mathcal W_C(H_0)$ is an
algebraic web on $(\check{\mathbb P}^n, H_0)$ dual to a projective curve $C$ then, after  identifying $H^0(C,\omega_C)$
with $\mathcal A(\mathcal W)$, one can put together any one of  the canonical maps of $\mathcal W$ with the canonical
map\begin{footnote}{Recall that for projective curve, the canonical map is defined as
\begin{align*}
 \kappa_C: \,   C & \dashrightarrow
 \mathbb P H^0(C, \omega_C)^*\simeq \mathbb P^{g_a(C)-1} \\
x & \longmapsto \mathbb P ( \ker \{ \omega \mapsto \omega(x) \} ) \, .
\end{align*}}\end{footnote} of $C$ in the following commutative diagram: 
\[
\xymatrix{
  &C_i  \ar@{^{(}->}[dr]   \\
 (\check{\mathbb P}^n,H_0) \ar^{p_i}[ur]  \ar@{-->}^{\kappa_{\mathcal W,i}} [dr]   & & C \ar@{-->}_{\kappa_C}[dl]  \\
  &\mathbb P \mathcal A(\mathcal W)^*.
}
\]

For smooth $2n$-webs on $(\mathbb C^n,0)$ of maximal rank, the Poincar\'{e}'s map and the canonical maps of
$\mathcal W$ are related through the formula
\[
P_{\mathcal W} (x) = \bigcap_{i \in \underline{2n}} \kappa_{\mathcal W,i} (x), 
\]
where the points $\kappa_{\mathcal W,i} (x)$ are interpreted as
 hyperplanes in $\mathbb P \mathcal A(\mathcal W)$.

Notice that the canonical maps, for no matter which $k$ and no matter which rank, always take
values in $\mathbb P \mathcal A(\mathcal W)^*$.
\index{Canonical maps!of a web|)}

\section{Double-translation hypersurfaces}\label{S:doubletrans}

By definition, a germ $S$ of  smooth  hypersurface at $(\mathbb C^{n+1},0)$ is a \defi[translation hypersurface] \index{Translation hypersurface}
if  it is non-degenerate and  admits a parametrization of the form
\begin{align*}
 \Phi\, : \; (\mathbb C^n,0)\;  & \longrightarrow \quad \;  (\mathbb C^{n+1},0) \\
 \big(x_1, \ldots, x_n \big)  & \longmapsto
\phi_{1}(x_1)+\cdots+ \phi_{n}(x_n)
\nonumber
\end{align*}
where $\phi_1,\ldots,\phi_{n+1}: (\mathbb C,0)\rightarrow (\mathbb C^n,0)$ are germs  of holomorphic maps.
Notice that $\Phi$ induces   naturally a  $n$-web  ${\mathcal W_{\Phi}= \Phi_* \mathcal W(x_1,\ldots, x_n)}$ on $S$.

To understand the logic behind the terminology, notice that
a surface $S$ in $\mathbb C^3$ is a  translation surface
if it can be generated by translating a curve  along another one (see the picture below).
\begin{figure}[ht]
\begin{center}
\resizebox{2.9in}{1.5in}{\includegraphics{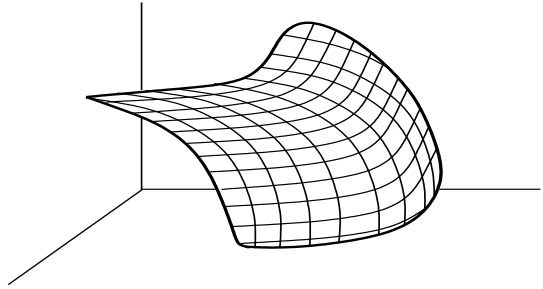} }
\end{center}
\caption{A translation surface in $\mathbb C^3$.}
\end{figure}

If
\begin{align*}
 \Psi\, : \; (\mathbb C^n,0)\;  & \longrightarrow \quad \;  (\mathbb C^{n+1},0) \\
\big(y_1, \ldots, y_n \big)  & \longmapsto
\psi_{1}(y_1)+\cdots+ \psi_{n}(y_n)
\nonumber
\end{align*}
 is another parametrization of $S$ as a translation hypersurface, then
 \index{Translation hypersurface!distinct parametrization}
 $\Psi$ is  \defi[distinct] from $\Phi$ if the superposition of the corresponding $n$-webs $\mathcal W_{\Psi}$ and
 $\mathcal W_{\Phi}$ is a quasi-smooth $2n$-web on $S$.

A  hypersurface $S$ is a \defi[double-translation hypersurface] \index{Double-translation hypersurface}
if it admits two distict parametrizations in the above sense. The quasi-smooth $2n$-web $\mathcal W_S=\mathcal W_{\Phi} \boxtimes \mathcal W_{\Psi}$
will be denoted by $\mathcal W_S$. Notice that there is a certain abuse of notation here since, a priori, a double-translation hypersurface
may admit more than two distinct parametrizations as a translation hypersurface. Implicit in the  notation $\mathcal W_S=\mathcal W_{\Phi} \boxtimes \mathcal W_{\Psi}$, is the fact that the
two distinct parametrizations of translation type are fixed in the definition of a double-translation hypersurface.      \smallskip

\begin{example}  \label{Ex:SC-algebraic}\rm
The   surface $S$ in $\mathbb C^3$ cut out by $4x+z^5-5zy^2=0$ is an example of an algebraic translation surface.
In fact,
\begin{equation*}
\label{E:doubletransSURFACE-example}
\Phi: (x_1,x_2)\longmapsto \Big( {x_1^{-5}}+ {x_2^{-5}},  x_1^{-2}+ {x_2^{-2}},  {x_1^{-1}}+ {x_2^{-1}} \Big).
\end{equation*}
and
\begin{equation*}
 \label{E:doubletransSURFACE-example2}
\Psi: (y_1,y_2)\longmapsto \Big(
 -{y_1^{-5}}- {y_2^{-5}},  -y_1^{-2}- {y_2^{-2}},  -{y_1^{-1}}- {y_2^{-1}}
 \Big)
\end{equation*}
are two  parametrizations of $S$.
Notice  that $\Phi(x_1,x_2)=\Psi(y_1,y_2)$ if and only if
$(x_1,x_2)=(-y_1,-y_2)$ or $y_1=x_1x_2\zeta_+$ and  $y_2=x_1x_2\zeta_-$ where $\zeta_\pm=\zeta_\pm(x_1,x_2)$
are the two complex roots of the polynomial $$(x_1^2+x_1x_2+x_2^2)\zeta^2+(x_1+x_2)\zeta+1=0\,. $$
If $p,q\in \mathbb C^2$ are points satisfying  $\Phi(p)=\Psi(q)$ but $p\neq -q$, then $\Phi: (\mathbb C^2,p)\rightarrow S$ and
$\Psi: (\mathbb C^2,q)\rightarrow S$ are   two distinct parametrizations of $S$  at $\Phi(p)=\Psi(q)$. Hence $S$ is a  double-translation
hypersurface with
$$
\mathcal W_S \simeq \mathcal W\Big(x_1,x_2,x_1x_2\zeta_{-}, x_1x_2\zeta_{+} \Big)\,.
$$
\end{example}

\subsection{Examples}
Example \ref{Ex:SC-algebraic} is a particular instance of the general construction presented below.

Let $C\subset \mathbb P^n$ be a reduced non-degenerate projective curve of degree $2n$.
Assume that $h^0(\omega_C) \geq n+1$. If $C$ is $\mathcal W$-generic then  $h^0(\omega_C)=n+1$,
 but otherwise $h^0(\omega_C)$ can be larger.

Let $H_0$ be a hyperplane intersecting $C$ transversely at $2n$  points. As usual, consider
 the maps ${p_1,\ldots,p_{2n}:(\check{\mathbb P}^n,H_0)\rightarrow C}$
  satisfying  $C\cdot H=\sum_{i=1}^{2n} p_i(H)$ for every ${H \in (\check{\mathbb P}^n,H_0)}$.
Since $C$ is non-degenerate, it is harmless to assume   that the maps  $p_i$ have been indexed in such a way that
$ p_1(H),\ldots,p_{n}(H)$ generate $H$ for every  $H  \in (\check{\mathbb P}^n,H_0)$.

If $\omega^1,\ldots\omega^{n+1}$ are linearly independent abelian differentials on $C$ then  for each $i \in \underline n$
consider the maps $\phi_i, \psi_i : (\check{\mathbb P}^n,H_0) \to \mathbb C^{n+1}$ defined as
\begin{align*}
\phi_i(H) = & \;\left( \int_{p_{i}(H_0)}^{ p_i(H)} \omega^1, \ldots, \int_{p_i(H_0)}^{ p_i(H)} \omega^{n+1} \right)  \\
  \text{ and } \quad \psi_i(H) =&   - \left( \int_{p_{i+n}(H_0)}^{ p_{i+n}(H)} \omega^1, \ldots, \int_{p_{i+n}(H_0)}^{ p_{i+n}(H)} \omega^{n+1} \right) \, .
\end{align*}
Notice that their sums, that is $\Phi = \sum_{i=1}^n \phi_i$ and $\Psi = \sum_{i=1}^n \psi_i$, satisfy
$\Phi(H) = \Psi(H)$  for  every $H \in (\check{\mathbb P}^n , H_0)$ according to Abel's addition Theorem. Furthermore,  they  parametrize a non-degenerate hypersurface  $S_C$, and $\mathcal W_{\Phi} \boxtimes \mathcal W_{\Psi}$
is a $2n$-web equivalent to $\mathcal W_C$. From all that have been said it is clear that $S_C$ is a double translation hypersurface.
By definition, it  is a \defi[double-translation hypersurface associated to $C$ at $H_0$]. The use of the indefinite article
{\it an} instead of the definite one {\it the} is due to the lack of uniqueness of the hypersurface for $C$ and $H_0$ fixed. It will
depend on the choice of the $1$-forms and of the points $p_i$ in general. When $C$ is irreducible, a monodromy  argument shows
it is possible to replace the {\it an} by a {\it the}.   \smallskip

\subsection{Abel-Jacobi map}

The double translation hypersurface associated to an irreducible
 non-degenerate projective curve $C \subset \mathbb P^n$ of degree $2n$ and  arithmetic genus $n+1$ admits
a more intrinsic description which has the advantage of being global. It is defined in terms of the   Abel-Jacobi map
of $C$. Although much of the discussion can be carried out in greater generality,  this will not be
done here. The interested reader can consult \cite{little}.

If $C_{sm}$ stands for the smooth part of $C$    then there is a linear map  from
$H_1(  C_{sm}, \mathbb Z)$ to $H^0(C,\omega_C)^*$ defined as
\[
\gamma \longmapsto \left( \omega \mapsto \int_\gamma \omega \right) \, .
\]
It can be shown that its image $\Gamma$, is a
discrete subgroup  of $H^0(C,\omega_C)^*$. Consequently, the  quotient
of  $H^0(C,\omega_C)^*$ by $\Gamma$ is a smooth  complex variety $J(C)$: the \defi[Jacobian] of
$C$. When $C$ is smooth then $J(C)$ is indeed  projective, as was shown by Riemann.

Once a point $p \in C_{sm}$ and a positive integer $k$ are fixed, one can consider the map
\begin{align*}
AJ_C^k : (C_{sm})^k & \longrightarrow J(C) \, \\
(x_1, \ldots, x_k ) & \longmapsto \left( \omega \mapsto \sum_{i=1}^k \int_{p}^{x_i} \omega \right) ,
\end{align*}
where the integrations are performed along paths included in  $C_{sm}$.
Notice that $AJ_C^k$ is well-defined since all  possible ambiguities disappear after  taking the quotient
by $\Gamma$. By definition, $AJ_C^k$ is the \defi[$k$-th Abel-Jacobi map] of $C$. \index{Abel-Jacobi map}

\medskip

 If $C$ is a smooth projective curve of genus $n+1$ then a classical theorem of Riemann (see \cite[Chap.I.\S5]{ACGH}) asserts that  the image
 of the  $n$-th Abel-Jacobi map is a translate of the  theta divisor $\Theta$  of $J(C)$, which  is, by definition,  the reduced divisor defined as the zero locus of {\it Riemann's theta function} $\theta$ of the jacobian $J(C)$\footnote{The Riemann's theta function $\theta_A$ of a polarized abelian variety $A=\mathbb C^g/\Delta$ with $\Delta=(I_g,Z)$ (where $Z \in M_g(\mathbb C) $ is such that $Z=\!{}^t\!Z$ and ${\rm Im}\,Z>0$) is defined by $\theta_A(z)=\sum_{m\in \mathbb Z^g} \exp\big( i\pi \langle m,Zm\rangle+2i\pi\langle m,z\rangle\big)$ for all $z\in \mathbb C^g$ (see \cite[Chap.I]{ACGH}).}.

\begin{prop}
If  $C \subset \mathbb P^n$ is a smooth non-degenerate curve of degree $2n$ and genus $n+1$
 then  the double-translation hypersurface $S_C$ associated to $C$ is nothing but the
lift to $\mathbb C^{n+1}$ of (a translate of) the theta divisor $\Theta\subset J(C)$.
\end{prop}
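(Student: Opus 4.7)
The plan is to identify $\mathbb C^{n+1}$ with the universal cover $H^0(C,\omega_C)^*$ of $J(C)$, and to show that $\Phi$ factors, up to a translation by a constant, as the $n$-th Abel--Jacobi map $AJ_C^n$ composed (via a local biholomorphism) with a branch of the quotient $H^0(C,\omega_C)^*\to J(C)$.

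First, using the basis $\omega^1,\ldots,\omega^{n+1}$ of $H^0(C,\omega_C)$ fixed in the construction of $S_C$, identify $\mathbb C^{n+1}$ with $H^0(C,\omega_C)^*$, and fix a basepoint $p\in C_{sm}$ so that $AJ_C^1:C\to J(C)$ is well-defined. For each $i\in\underline n$, choose a local holomorphic lift $\widetilde{AJ}_i:(C,p_i(H_0))\to H^0(C,\omega_C)^*$ of $AJ_C^1$. The germ of curve $F_i:(C,p_i(H_0))\to\mathbb C^{n+1}$ implicit in the formula defining $\phi_i$, namely $F_i(q)=\bigl(\int_{p_i(H_0)}^q\omega^1,\ldots,\int_{p_i(H_0)}^q\omega^{n+1}\bigr)$, is then nothing but $\widetilde{AJ}_i-\widetilde{AJ}_i(p_i(H_0))$. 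Summing over $i$ and using $\phi_i(H)=F_i(p_i(H))$, one obtains
\[
\Phi(H)\;=\;\widetilde\Sigma\bigl(p_1(H),\ldots,p_n(H)\bigr)-c,
\]
where $\widetilde\Sigma(x_1,\ldots,x_n)=\sum_{i=1}^n\widetilde{AJ}_i(x_i)$ is a local lift of $AJ_C^n$ to $H^0(C,\omega_C)^*$, and $c=\sum_i\widetilde{AJ}_i(p_i(H_0))$ is a constant.

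Second, since $C$ is non-degenerate and $C\cap H_0$ consists of $2n$ points in general position (so in particular $p_1(H_0),\ldots,p_n(H_0)$ span $H_0$), the map $H\mapsto(p_1(H),\ldots,p_n(H))$ is a local biholomorphism from $(\check{\mathbb P}^n,H_0)$ onto a neighborhood of $(p_1(H_0),\ldots,p_n(H_0))$ in $C^n$. Consequently the image $S_C$ of $\Phi$ agrees, as a germ at $\Phi(H_0)$, with the translate by $-c$ of the image of $\widetilde\Sigma$, hence with a lift to $\mathbb C^{n+1}=H^0(C,\omega_C)^*$ of the translate $AJ_C^n(C^n)-[c]\subset J(C)$. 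By Riemann's theorem quoted just before the statement, $AJ_C^n(C^n)$ is a translate of the theta divisor $\Theta$. Combining both translations, $S_C$ is the germ of a lift to $\mathbb C^{n+1}$ of a translate of $\Theta$, which is the claim.

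The only subtlety, which I do not expect to cause real trouble, is bookkeeping the ambiguity in the choice of local lifts $\widetilde{AJ}_i$: different choices differ by elements of the period lattice $\Gamma$, but these ambiguities only affect the constant $c$ and hence the particular translate of $\Theta$ one obtains, not the underlying identification with the theta divisor. One should also briefly note that $\Phi$ is a submersion onto $S_C$ near $H_0$ (so ``image" really makes sense as a germ of hypersurface), which follows from the same general-position argument used to make $(p_1,\ldots,p_n)$ a local biholomorphism together with the fact that $\widetilde\Sigma$ has rank $n$ at the relevant point, because $C$ is non-degenerate and the $\omega^i$ span $H^0(C,\omega_C)$.
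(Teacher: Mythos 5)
Your proof is correct and follows essentially the same route as the paper's: both identify $\Phi$ (up to a translation constant) with a lift of $AJ_C^n\circ P$ to $H^0(C,\omega_C)^*$, use the non-degeneracy of $C$ to see that $P=(p_1,\ldots,p_n)$ is a germ of biholomorphism, and then invoke Riemann's theorem. The paper states this in two lines; you have merely filled in the bookkeeping of lifts and translation constants.
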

\begin{proof}
From Riemann's result referred  above it suffices to show that $S_C$ can be identified with the image of the $n$-th Abel-Jacobi map.

Since $C$ is non-generated  $P = (p_1, \ldots, p_n) : (\check { \mathbb P}^n , H_0) \to C^n$ is a germ of biholomorphism. Moreover,
$\Phi$ is, up to a suitable choice of affine coordinates, equal to  $AJ_C^n \circ P$ (or rather, its natural lift to $H^0(C,\Omega^1_C)^* \cong \mathbb C^{n+1}$).
The proposition follows.
\end{proof}

When $C$ is smooth, it is known that the lift of $\Theta$ in $\mathbb C^{n+1}$ is a transcendental hypersurface. When $C$ is singular, the hypersurface $S_C$  is not necessarily transcendent. Loosely speaking, the more $C$ is singular, the less $S_C$ is transcendent.
For instance, we let the reader verify that the rational double-translation surface of
Example \ref{Ex:SC-algebraic} is nothing but the surface $S_C$ associated to the plane singular rational quartic parametrized by $\mathbb P^1 \ni [s:t]\mapsto [s^4 : st^3 :t^4]\in \mathbb P^2$ (see also Example \ref{Ex:unicursalSINGULARcurve} in Chapter III).

\subsection{Classification}

\begin{thm}
 Let $S\subset \mathbb C^{n+1}$ be a non-degenerate double translation hypersurface   such that $\mathcal W_S$ is smooth.
 Then $S$ is the double-translation hypersurface associated to a $\mathcal W$-generic projective  curve in $\mathbb P ^n$
 of degree $2n$ and  arithmetic genus $n+1$.
\end{thm}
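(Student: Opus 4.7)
\medskip

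\noindent\textbf{Proof plan.}

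The plan is to reduce the statement to Theorem~\ref{T:alg2nWEB} by first showing that $\mathcal W_S$ achieves maximal rank, and then to identify the hypersurface reconstructed from the algebraization with the one built from the curve by abelian integrals. The first step will be the easy extraction of abelian relations from the two parametrizations; the main obstacle will be to match the two ``halves'' of the algebraic web $\mathcal W_C(H_0)$ with $\mathcal W_\Phi$ and $\mathcal W_\Psi$ so that the coordinate functions of $\Phi$ and $\Psi$ become integrals of abelian differentials along the local branches of $C$.

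First I would identify $S$ with $(\mathbb C^n,0)$ via (say) the parametrization $\Phi$, and consider the induced smooth $2n$-web $\mathcal W_S=\mathcal F_1\boxtimes\cdots\boxtimes\mathcal F_{2n}$, where $\mathcal F_i$ is defined by the coordinate $x_i$ for $i\le n$ and by the coordinate $y_{i-n}$ for $i>n$. Writing the equality $\Phi(x)-\Psi(y)=0$ coordinate by coordinate in $\mathbb C^{n+1}$ and differentiating yields $n+1$ relations
\[
\sum_{i=1}^{n}(\phi_i^a)'(x_i)\,dx_i-\sum_{j=1}^{n}(\psi_j^a)'(y_j)\,dy_j=0,\qquad a=1,\ldots,n+1,
\]
each one being, by construction, an abelian relation of $\mathcal W_S$. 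The non-degeneracy hypothesis on $S$ means precisely that $\Phi$ is not affinely contained in a hyperplane, which is equivalent to the linear independence of these $n+1$ abelian relations. Combined with Chern's bound $\mathrm{rank}(\mathcal W_S)\le\pi(n,2n)=n+1$ (Theorem~\ref{T:cota}), this forces $\mathrm{rank}(\mathcal W_S)=\pi(n,2n)$, i.e.\ $\mathcal W_S$ is of maximal rank.

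Next I would apply Theorem~\ref{T:alg2nWEB}: Poincar\'e's map $P_{\mathcal W_S}$ is a germ of biholomorphism and sends $\mathcal W_S$ to an algebraic web $\mathcal W_C(H_0)$, where $C\subset\mathbb P^n$ is a $\mathcal W$-generic projective curve of degree $2n$ and arithmetic genus $n+1$. Under this biholomorphism the $2n$ foliations of $\mathcal W_S$ correspond to the $2n$ submersions $p_1,\ldots,p_{2n}:(\check{\mathbb P}^n,H_0)\to C$. After relabelling, I may assume that $\mathcal W_\Phi$ corresponds to $\{p_1,\ldots,p_n\}$ and $\mathcal W_\Psi$ to $\{p_{n+1},\ldots,p_{2n}\}$; that such a splitting exists is automatic from the decomposition $\mathcal W_S=\mathcal W_\Phi\boxtimes\mathcal W_\Psi$ into two $n$-subwebs. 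Via Theorem~\ref{T:ALGquasismoothW} the $n+1$ abelian relations produced in the first step correspond to $n+1$ linearly independent abelian differentials $\omega^1,\ldots,\omega^{n+1}$ of $C$; since $h^0(C,\omega_C)=g_a(C)=n+1$, they form a basis of $H^0(C,\omega_C)$.

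Translating the comparison of abelian relations back through the Poincar\'e isomorphism gives, for every $a\in\underline{n+1}$ and every $i\in\underline n$, the identities $d\phi_i^a=p_i^*\omega^a$ and $d\psi_i^a=-p_{i+n}^*\omega^a$ on $(\check{\mathbb P}^n,H_0)$. Integrating from the base hyperplane $H_0$ yields
\[
\phi_i^a(H)=\int_{p_i(H_0)}^{p_i(H)}\omega^a,\qquad \psi_i^a(H)=-\int_{p_{i+n}(H_0)}^{p_{i+n}(H)}\omega^a,
\]
up to additive constants that can be absorbed by a translation of $\mathbb C^{n+1}$. These formulas are exactly those defining the double-translation hypersurface $S_C$ associated to $C$ at $H_0$, constructed with the basis $\omega^1,\ldots,\omega^{n+1}$. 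Hence $S$, up to affine equivalence in $\mathbb C^{n+1}$, coincides with $S_C$; this is the desired conclusion.

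The delicate point, which I expect to be the real work, is the third paragraph: checking that the correspondence between abelian relations and abelian differentials of $C$ is compatible with the \emph{ordered} decomposition $\mathcal W_S=\mathcal W_\Phi\boxtimes\mathcal W_\Psi$, so that the two groups of $n$ components of each abelian relation integrate to the two parametrizations of $S$ rather than mixing them. Verifying this requires tracking the Poincar\'e isomorphism carefully on each leaf of each $\mathcal F_i$ and using that the $p_i$ are the Gauss maps of the linearized foliations, as in the proof of Theorem~\ref{T:ALGquasismoothW}.
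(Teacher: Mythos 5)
Your proof follows exactly the paper's route: the two translation parametrizations yield $n+1$ abelian relations of $\mathcal W_S$, non-degeneracy of $S$ gives their linear independence, Chern's bound $\pi(n,2n)=n+1$ then forces maximal rank, and Theorem \ref{T:alg2nWEB} concludes. The paper's own proof stops at invoking Theorem \ref{T:alg2nWEB}; your third and fourth paragraphs merely spell out the identification of $S$ with the hypersurface $S_C$ built from abelian integrals, a point the paper leaves implicit, and you do so correctly.
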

\begin{proof}
 Let $\Phi$ and $\Psi$ be   two distinct translation-type  parametrizations of a double-translation hypersurface $S\subset \mathbb C^{n+1}$.
 The coordinates $x_1,\ldots,x_n,y_1,\ldots,y_n$ in which $\Phi$ and $\Psi$ are expressed can be thought as  holomorphic functions on $S$ defining
 the web $\mathcal W_S$. Hence the identity
\[
 \sum_{i=1}^n \Phi(x_i(p)) +  \sum_{i=1}^n \Psi(y_i(p)) = 0 \in \mathbb C^{n+1}
\]
holds at a any point $p  \in S$.
Since $S$ is non-degenerate this equation provides $n+1= \pi(n,2n)$  linearly independent abelian relations
for $\mathcal W_S$. Theorem \ref{T:alg2nWEB} implies the result.
\end{proof}

When $\mathcal W_S$ is only assumed to be quasi-smooth, one has the following algebraization
result  which can be traced back to  Wirtinger \cite{wirtinger}.

\begin{thm}
 Let $S\subset (\mathbb C^{n+1},0)$ be a double-translation hypersurface. Assume  that its distiguished parametrizations $\Phi$ and $\Psi$ are  such that
\begin{quote}
\hspace{-0.5cm}$(\star)$ {\it  none of the  vectors
$\frac{d^2 \phi_i}{dx_i^2}(0), \frac{d^2 \psi_i}{dy_i^2}(0)$
is tangent to $S$ at the origin.}
\end{quote}
Then $S$  is the double-translation hypersurface associated to a non-degenerate curve $C\subset \mathbb P^n$ of degree $2n$ and such that $h^0(\omega_C)\geq n+1$.
\end{thm}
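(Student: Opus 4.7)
The plan is to mimic the proof of the smooth case (Theorem \ref{T:alg2nWEB}), replacing Poincar\'e's map by the projectivized Gauss map $G$ of $S$, and using condition $(\star)$ as the substitute for the general position hypothesis that made Proposition \ref{P:bih2n} work. Once $G$ is shown to linearize $\mathcal{W}_S$, the conclusion will again follow from Theorem \ref{T:ALGquasismoothW}.

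First I would differentiate the identity $\sum_{i=1}^n \phi_i(x_i)\equiv \sum_{j=1}^n \psi_j(y_j)$ on $S$ and pair with an arbitrary $\alpha \in (\mathbb{C}^{n+1})^*$ to obtain the abelian relation
\[
\sum_{i=1}^n \langle \alpha, \phi_i'(x_i)\rangle\, dx_i \;-\; \sum_{j=1}^n \langle \alpha, \psi_j'(y_j)\rangle\, dy_j \;=\; 0
\]
of $\mathcal{W}_S=\mathcal{W}_{\Phi}\boxtimes \mathcal{W}_{\Psi}$. Non-degeneracy of $S$ makes the assignment $\alpha\mapsto(\text{this relation})$ injective, so its image $V\subset \mathcal{A}(\mathcal{W}_S)$ has dimension $n+1$. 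I would then define $G:(S,0)\to \mathbb{P}(V)\simeq \check{\mathbb{P}}^n$ by sending $p$ to the projectivization of $V\cap F^1_p\mathcal{A}(\mathcal{W}_S)$; concretely, this is the set of $\alpha$ annihilating the $2n$ tangent vectors $\phi_i'(x_i(p)),\psi_j'(y_j(p))$, whose joint span is $T_pS$, so $G$ is the projectivized Gauss map of $S$. As in the proof of Proposition \ref{P:linear2n}, the $i$-th component $f(u_i)\,du_i$ of an abelian relation vanishes on all of a leaf of $\mathcal{F}_i$ as soon as it does at a single point; hence $G$ sends each leaf of $\mathcal{W}_S$ into a hyperplane of $\mathbb{P}(V)$, so $G_*\mathcal{W}_S$ is linear.

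The main obstacle is to see that $G$ is a local biholomorphism, and this is precisely where hypothesis $(\star)$ enters. Because $\partial_{x_i}\partial_{x_j}\Phi=\delta_{ij}\,\phi_i''(x_i)$, the second fundamental form of $S$ at $0$ in the $x$-coordinates is diagonal with entries $\langle \phi_i''(0), N(0)\rangle$, for $N$ a conormal direction to $S$; by $(\star)$ these are all nonzero, so the form is non-degenerate. Hence $G$ is an immersion at $0$, and thus a local biholomorphism by dimension count. (The $\psi_j$-part of $(\star)$ gives the analogous statement in the $y$-coordinates coming from $\Psi$.)

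A generic $\alpha\in(\mathbb{C}^{n+1})^*$ avoids each of the $2n$ linear hyperplanes $\{\alpha\perp\phi_i'(0)\}$ and $\{\alpha\perp\psi_j'(0)\}$, so $V$ contains complete abelian relations. Theorem \ref{T:ALGquasismoothW} then produces a projective curve $C\subset \mathbb{P}^n$ of degree $2n$ with $G_*\mathcal{W}_S=\mathcal{W}_C(G(0))$ and $V\hookrightarrow \mathcal{A}(G_*\mathcal{W}_S)\simeq H^0(C,\omega_C)$, yielding $h^0(\omega_C)\geq n+1$. The curve $C$ is non-degenerate: otherwise $C\subset H^*\subsetneq \mathbb{P}^n$ would force, near a generic $H_0\notin H^*$, every conormal of $\mathcal{W}_C$ to lie in a common hyperplane of $T^*_{H_0}\check{\mathbb{P}}^n$, and its pull-back under $G$ could not contain the linearly independent forms $dx_1,\dots,dx_n$. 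Finally, $S$ itself coincides with the double-translation hypersurface associated to $C$ by construction: the $\phi_i$ and $\psi_j$ are, up to an affine change of $\mathbb{C}^{n+1}$, the integrals of a basis of $V\simeq H^0(C,\omega_C)$ along the $2n$ branches of $C$ through $G(0)$.
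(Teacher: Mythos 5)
The paper offers no proof of this statement: it is attributed to Wirtinger \cite{wirtinger} and the reader is redirected to \cite{little}, so there is no in-paper argument to compare yours against. Your proof is essentially sound and supplies what the paper omits. In effect you extend the paper's proof of the preceding (smooth) classification theorem --- which reads off $n+1$ abelian relations from $\Phi\equiv\Psi$ and then invokes Theorem \ref{T:alg2nWEB} --- to the quasi-smooth setting, where Theorem \ref{T:alg2nWEB} is unavailable because Chern's bound, Corollary \ref{C:bbb} and Proposition \ref{P:bih2n} all presuppose general position of the conormals. Your substitutes are the right ones: the $(n+1)$-dimensional space $V$ of relations $\sum\langle\alpha,\phi_i'\rangle dx_i-\sum\langle\alpha,\psi_j'\rangle dy_j$ stands in for $\mathcal A(\mathcal W)$; the map $p\mapsto\mathbb P\bigl(V\cap F^1_p\bigr)$ is indeed the Gauss map of $S$ (its value is the annihilator of $T_pS$, which the $2n$ velocity vectors span); and $(\star)$, which makes the second fundamental form diagonal with entries $\langle N,\phi_i''(0)\rangle$, replaces the injectivity computation of Proposition \ref{P:bih2n}. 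Unwound, the curve $C$ you obtain is the union of the classical generating curves $t\mapsto[\phi_i'(t)]$, $t\mapsto[\psi_j'(t)]$, so your route is in substance the Lie--Poincar\'{e}--Wirtinger argument recast in the paper's web-theoretic language. Two steps deserve to be made explicit rather than left implicit: first, to invoke Theorem \ref{T:ALGquasismoothW} you need $G_*\mathcal W_S$ to be quasi-smooth as well as linear; this does hold --- quasi-smoothness of $\mathcal W_\Phi\boxtimes\mathcal W_\Psi$ is built into the definition of distinct parametrizations and is transported by the biholomorphism $G$ --- and it is precisely what forces the $2n$ points $[\phi_i'(0)],[\psi_j'(0)]$ to be pairwise distinct and hence $\deg C=2n$; second, since the second fundamental form is intrinsic to $S\subset\mathbb C^{n+1}$, the $\phi$-half of $(\star)$ already implies the $\psi$-half, so only half the hypothesis is genuinely used (a harmless observation, but it confirms that your parenthetical about the $y$-coordinates is not an independent requirement).
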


For a proof of the above result, the reader is redirected to  \cite{little} from where the formulation
above has been borrowed. There he will also find the following application to the Schottky Problem: characterize the
Jacobian of curves among the principally polarized abelian varieties. \index{Schottky problem}

\begin{thm} Let $(A,\Theta)$ be a principally polarized abelian variety of dimension $n$. Suppose that there exists a point
$p \in \Theta$ such that the germ of $\Theta$ at $p$ is a double-translation hypersurface satisfying $(\star)$. Then $(A,\Theta)$
is the canonically polarized Jacobian of a smooth nonhyperellitptic curve of genus $n$.
\end{thm}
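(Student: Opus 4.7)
The plan is to reduce the global statement about $(A,\Theta)$ to the preceding local Wirtinger-type theorem and then to globalize its conclusion using the Abel--Jacobi map. First, note that the germ of $\Theta$ at $p$ is a germ of hypersurface in the tangent space $(T_p A,0)\simeq (\mathbb{C}^n,0)$, which by hypothesis is a double-translation hypersurface satisfying $(\star)$. Applying the preceding theorem with ambient dimension $n=(n-1)+1$, one obtains a non-degenerate projective curve $C\subset \mathbb{P}^{n-1}$ of degree $2(n-1)$ with $h^0(\omega_C)\geq n$, together with two distinguished translation parametrizations $\Phi=\sum_{i=1}^{n-1}\phi_i$ and $\Psi=\sum_{i=1}^{n-1}\psi_i$ of $(\Theta,p)$, whose components are primitives of abelian differentials on branches of $C$. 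Since these parametrizations take values in $(\mathbb{C}^n,0)$ and the components $\phi_i'(0),\psi_i'(0)$ span a subspace of dimension $n$ by non-degeneracy of $\Theta$, one gets $h^0(\omega_C)=n$, hence $g_a(C)=n$, and a canonical linear identification $T_pA\simeq H^0(C,\omega_C)^*$.

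Next I would globalize. Under the identification $T_pA\simeq H^0(C,\omega_C)^*$, the map $\Phi$ is, up to a biholomorphism of the source, the germ at a point $(q_1,\dots,q_{n-1})\in (C_{sm})^{n-1}$ of the $(n-1)$-th Abel--Jacobi map $AJ_C^{n-1}:(C_{sm})^{n-1}\to J(C)=H^0(C,\omega_C)^*/\Gamma$, composed with the universal covering of $J(C)$. Composing $\Phi$ with the exponential map $\exp_A:T_pA\to A$ (the universal covering of the abelian variety $A$ translated so that $0\mapsto p$), one obtains two analytic maps $\exp_A\circ\Phi$ and $\pi\circ AJ_C^{n-1}$ with the same germ at the chosen point. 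Since both target tori are of the same dimension $n$ and the local germs of $\Theta$ and of $AJ_C^{n-1}(C_{sm}^{n-1})$ coincide, analytic continuation along $(C_{sm})^{n-1}$ produces a holomorphic map $f:(C_{sm})^{n-1}\to A$ whose image is contained in $\Theta$ and whose derivative at a generic point has rank $n-1$. This $f$ is translation-invariant in the sense of descending to a holomorphic map $J(C)\to A$; by comparing tangent maps at the base point, this map is an isomorphism of complex tori.

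Finally I would identify the polarizations and rule out pathologies of $C$. The isomorphism $A\simeq J(C)$ sends the irreducible divisor $\Theta$ onto an irreducible divisor containing $AJ_C^{n-1}((C_{sm})^{n-1})$, and by Riemann's theorem (cited after the definition of the Abel--Jacobi map in Section~\ref{S:doubletrans}) this forces $\Theta$ to be a translate of the canonical theta divisor $\Theta_C$ of $J(C)$, so $(A,\Theta)\simeq (J(C),\Theta_C)$ as principally polarized abelian varieties. Since $\deg C=2(n-1)=2g(C)-2$ and $C$ is non-degenerate in $\mathbb{P}^{n-1}=\mathbb{P}H^0(C,\omega_C)^*$, the embedding must be, up to projective equivalence, the canonical embedding given by the complete linear system $|\omega_C|$; such an embedding is an actual embedding precisely when $C$ is smooth and non-hyperelliptic. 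Smoothness (and in particular irreducibility) of $C$ can be extracted from the fact that the singular locus of $\Theta=\Theta_C$ has the codimension prescribed by Riemann's singularity theorem, which obstructs nodes and worse singularities on $C$; non-hyperellipticity is automatic from the previous sentence.

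The main obstacle is the globalization in the second paragraph: Wirtinger's theorem only delivers the curve $C$ and the two integration parametrizations as local analytic data, so one must promote the germ-level coincidence of $(\Theta,p)$ with the germ of $AJ_C^{n-1}(C_{sm}^{n-1})$ to a genuine isomorphism of the abelian varieties $A$ and $J(C)$ matching their theta divisors. This requires controlling both lattices, which amounts to showing that no unexpected period relation or rescaling intervenes when one continues $\Phi$ and $\Psi$ along loops in $(C_{sm})^{n-1}$; here one exploits crucially that $\Theta$ is a \emph{global} algebraic hypersurface in $A$, so that the multivalued primitive produced by integration descends to the single-valued quotient $A=T_pA/\Lambda_A$, forcing $\Lambda_A$ to contain (and in fact to equal) the period lattice $\Gamma$ of $C$.
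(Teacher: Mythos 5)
First, a caveat: the paper gives no proof of this statement --- it is quoted from \cite{little} and the reader is explicitly sent there --- so there is no in-text argument to compare yours against. Your overall route (apply the local Wirtinger-type theorem of Section \ref{S:doubletrans} to the germ $(\Theta,p)$, identify the resulting translation parametrization with a germ of the Abel--Jacobi map, globalize, and invoke Riemann's theorem) is the standard one and is essentially the strategy of \cite{little}. As written, however, the argument has genuine gaps at exactly the points where the real work lies.

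The most serious problem is smoothness of $C$. Wirtinger's theorem only produces a non-degenerate, possibly reducible and singular, curve $C\subset\mathbb P^{n-1}$ with $h^0(\omega_C)\ge n$. You postpone smoothness to the last step and deduce it from Riemann's singularity theorem applied to ``$\Theta=\Theta_C$''; but that identification is precisely what is being proved, and Riemann's theorem as quoted in the paper concerns \emph{smooth} curves. Worse, your intermediate steps already presuppose smoothness: for a singular $C$ the quotient $H^0(C,\omega_C)^*/\Gamma$ need not be compact (the paper only asserts $J(C)$ is projective when $C$ is smooth), so the map ``$J(C)\to A$'' is not a morphism of compact tori until singularities have been excluded. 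Excluding them is the heart of the matter and needs a separate argument (e.g.\ that $\Gamma\subset\Lambda_A$ forces $\Gamma$ to be discrete, which is incompatible with the extra periods coming from loops around the preimages of nodes or from cusps). Two further gaps: (i) the equality $h^0(\omega_C)=n$ does not follow from non-degeneracy of $\Theta$ --- that only shows the $n$ differentials actually occurring in $\Phi$ are independent, which is the lower bound Wirtinger already supplies; you need an upper bound, e.g.\ Castelnuovo's bound for a $\mathcal W$-generic curve, and $\mathcal W$-genericity is not automatic for a possibly reducible $C$; (ii) ``comparing tangent maps at the base point'' only shows that $J(C)\to A$ is a local biholomorphism, hence at best an isogeny; to get degree one you must compare polarizations (an isogeny of principally polarized abelian varieties matching the principal polarizations is an isomorphism), which you never do.
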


For more information about the Schottky Problem the reader is urged to  consult  \cite{BeauvilleSchottky}, \cite[Appendix, Lecture IV]{Mumford} and the references therein.


\chapter[Algebraization]{Algebraization of maximal rank webs}\label{Chapter:Trepreau}
\thispagestyle{empty}
This chapter is devoted to the  following result.

\begin{THM} \nonumber
Let $n\ge 3$ and  $k \ge 2n$ be integers.  If $\mathcal W$ is a  smooth $k$-web of maximal rank
on $(\mathbb C^n,0)$ then $\mathcal W$ is algebraizable.
\end{THM}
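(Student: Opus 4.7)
The plan is to reduce the theorem to the linearization problem, and then to linearize $\mathcal{W}$ using the Castelnuovo-type rigidity forced upon its conormals by the maximality of the rank. By Corollary \ref{C:ALGgeneralcase}, proving that $\mathcal{W}$ is biholomorphic to a linear web is enough: a smooth linear web of maximal rank is automatically algebraic by the dual formulation of the converse of Abel's theorem, Theorem \ref{T:ALGquasismoothW}. So from here on, the only task is to produce a germ $\varphi \in \mathrm{Diff}(\mathbb{C}^n,0)$ for which $\varphi^{*}\mathcal{W}$ is a linear web.

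The first step is to exploit Proposition \ref{P:normal}. Because $\mathrm{rank}(\mathcal{W}) = \pi(n,k)$, Corollary \ref{C:bbb} forces every graded piece $F^{j}\mathcal{A}(\mathcal{W})/F^{j+1}\mathcal{A}(\mathcal{W})$ to have maximal dimension, and Lemma \ref{L:constante} together with Proposition \ref{P:Cast} gives, for every $x\in(\mathbb{C}^n,0)$, a rational normal curve $\Gamma_x \subset \mathbb{P} T_x^*(\mathbb{C}^n,0)$ containing the $k$ conormal directions of $\mathcal{W}$. This provides a coframe $\varpi=(\varpi_0,\dots,\varpi_{n-1})$ and germs of holomorphic functions $\theta_1,\dots,\theta_k$ such that each $\mathcal{F}_i$ is defined by $\sum_{q=0}^{n-1}\theta_i^{q}\varpi_q$. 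Thus $\mathcal{W}$ is encoded, up to a controlled ambiguity, in the single family of rational normal curves $\{\Gamma_x\}$ and the $k$ \emph{sections} $x\mapsto[\theta_i(x)]$ of $\mathbb{P}^1$.

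The second step is to run the same game at higher order, using the entire filtration $F^{\bullet}\mathcal{A}(\mathcal{W})$. Maximality of $\dim F^{j-1}/F^{j}$ for all $j\le m:=\lfloor(k-1)/(n-1)\rfloor$ translates, via the proof of Proposition \ref{P:cast}, into the statement that the $j$-th powers of the conormals $h_1,\dots,h_k$ span a space of the minimum possible dimension $j(n-1)+1$, at every $x\in(\mathbb{C}^n,0)$. Iterating the Veronese-and-Castelnuovo dictionary of Lemma \ref{L:translate}, and invoking the generalized Castelnuovo Lemma, one obtains at each point $x$ a variety of minimal degree $V_x$ in a linear ambient whose dimension grows with $j$; by Theorem \ref{T:VMD}, $V_x$ must be a rational normal scroll (the Veronese exception is ruled out by $n\ge 3$ and $k\ge 2n$). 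The crucial analytic content of the proof is to integrate these pointwise structures into a single germ of analytic variety, which amounts to showing that the functions $\theta_i$ and the coframe $\varpi$ satisfy a system of first-order PDEs of projective type whose compatibility is automatic precisely when $\mathrm{rank}(\mathcal{W})=\pi(n,k)$.

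The third step is to define the \emph{Poincar\'{e}--Blaschke map} $\mathcal{PB}_{\mathcal{W}}:(\mathbb{C}^n,0)\to\mathbb{P}\mathcal{A}(\mathcal{W})^{*}$ in the spirit of Sections \ref{S:AIdual*} and \ref{S:canmap}, built out of the evaluation morphisms $ev_i(x)$ and the filtration $F^{\bullet}\mathcal{A}(\mathcal{W})$. Arguments modeled on those of Proposition \ref{P:linear2n} and Proposition \ref{P:bih2n} would show (a) $\mathcal{PB}_{\mathcal{W}}$ is a germ of immersion, so its image $S$ is a germ of smooth $n$-dimensional submanifold of $\mathbb{P}\mathcal{A}(\mathcal{W})^{*}$; (b) $S$ is contained in the rational normal scroll $V$ produced in the previous step; and, most importantly, (c) each leaf $L$ of each $\mathcal{F}_i$ is sent by $\mathcal{PB}_{\mathcal{W}}$ into a linear section of $S$, namely the intersection of $S$ with the hyperplane $\mathbb{P}\ker ev_i(x)$ for any $x\in L$. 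Property (c) says that in the coordinates provided by $\mathcal{PB}_{\mathcal{W}}$ the leaves of $\mathcal{W}$ are pieces of hyperplane sections of a scroll; combined with the linearity of the ruling of $V$, a projection from the vertex of the ruling turns these into pieces of hyperplanes of an honest projective space. This exhibits a linearization $\varphi$ of $\mathcal{W}$ and, by Theorem \ref{T:ALGquasismoothW}, identifies $\mathcal{W}$ with $\mathcal{W}_{C}(H_0)$ for a Castelnuovo curve $C$ of degree $k$ lying on $V\subset\mathbb{P}^n$.

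The main obstacle is the passage from the pointwise rational normal scroll structure $\{V_x\}$ to a single germ of scroll $V$ containing the image of $\mathcal{PB}_{\mathcal{W}}$: this is exactly the integrability step that fails outside the range $n\ge 3$, $k\ge 2n$ (as witnessed by Bol's exceptional $5$-web on $(\mathbb{C}^2,0)$). Handling it requires a careful differential analysis of the PDE system satisfied by $\varpi$ and the $\theta_i$, using the higher abelian relations as integrability conditions; this is where the proof essentially follows Bol's method as revisited by Tr\'{e}preau.
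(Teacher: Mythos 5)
Your overall skeleton --- reduce to linearization via Corollary \ref{C:ALGgeneralcase}, extract the rational normal curve of conormals from Proposition \ref{P:normal}, and then build a map into $\mathbb P\mathcal A(\mathcal W)^*$ whose geometry forces linearity --- matches the paper's strategy, but two of your central steps are not what the paper does and, as written, would not go through.

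First, your second step misapplies Castelnuovo theory. Maximality of the rank does give $\ell^j(\mathcal W)=j(n-1)+1$ for all $j$, but only the $j=2$ equality feeds into Castelnuovo's Lemma, and what it produces at each point is always a rational normal \emph{curve} in $\mathbb P T_x^*(\mathbb C^n,0)$ (Proposition \ref{P:Cast}), never a higher-dimensional scroll: the generalized Castelnuovo Lemma concerns points imposing $2n+m$ conditions on \emph{quadrics}, and the maximal-rank situation is exactly the minimal case $m=1$. The paper states explicitly that the generalized lemma plays no role in the proof. The scroll you are after is not a pointwise object in the cotangent spaces to be integrated; it is the projective surface $S_{\mathcal W}\subset\mathbb P\mathcal A(\mathcal W)^*$ containing the image of the Poincar\'{e}-Blaschke map, and it is produced globally by a completely different mechanism.

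Second, and more seriously, you assert that the Poincar\'{e}-Blaschke map is an immersion of $(\mathbb C^n,0)$ with smooth $n$-dimensional image. In the paper the map is defined on $(\mathbb C^n,0)\times\mathbb P^1$, interpolating the $k$ canonical points $\kappa_i(x)$ by a rational normal curve $\mathscr C(x)$ of degree $k-n-1$, and the entire technical core of Tr\'{e}preau's argument --- Proposition \ref{P:TECHmain}, Lemma \ref{L:Ip} and Proposition \ref{PBWrang} --- is devoted to proving that this map has rank exactly \emph{two}, so that the $n$-parameter family of curves $\mathscr C(x)$ sweeps out only a germ of surface $X_{\mathcal W}$. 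This collapse of dimension is what makes everything else work: $\mathscr C(x)\cdot\mathscr C(y)=n-1>0$ (Lemma \ref{L:intn-1}) allows one to invoke Andreotti/Hartshorne (Proposition \ref{P:bewildered}) to embed $X_{\mathcal W}$ in a projective \emph{rational} surface $S_{\mathcal W}$, on which all the $\mathscr C(x)$ become linearly equivalent; the linear system $|\mathscr C(0)|$ then has projective dimension $n$, and the linearizing map is $x\mapsto[\mathscr C(x)]\in|\mathscr C(0)|\simeq\mathbb P^n$, under which a leaf of $\mathcal F_i$ goes to the hyperplane of curves through $\kappa_i(x)$ --- not a projection from the vertex of a ruling. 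Note also that the leaves themselves are not ``sent into linear sections'': each $\kappa_i$ collapses every leaf of $\mathcal F_i$ to a single point of the canonical curve $C_i\subset X_{\mathcal W}$. Without the rank-two statement, which is where the hypothesis $n\ge 3$ and the second-order normal form are genuinely used, your construction produces no surface, no positive self-intersection, no globalization, and no $n$-dimensional linear system, so the linearization never materializes.
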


Its proof crowns the efforts spreaded  over at least three generations of mathematicians.  For $n=3$, the theorem
is due to Bol and is among the deepest results obtained by Blaschke's school.
For $n>3$, Chern and Griffiths provided a {\it proof }  in \cite{Jbr} which
later on \cite{Jbr2} revealed to be incomplete. The definitive version stated
above is due to Tr\'{e}preau \cite{Trepreau}. He not just
followed Chern-Griffiths's  general strategy, but also simplified it, to
prove the general case.

\medskip

While many of the concepts and ideas used in the proof --- Poincar\'{e}'s map and canonical maps ---
have already been  introduced in Chapter \ref{Chapter:4},
it  will be essential to consider also another map, here called Poincar\'{e}-Blaschke's map, canonically attached to
webs of maximal rank. This map was originally introduced by Blaschke in \cite{blaschke} to {\it prove} that
$5$-webs of maximal rank are algebraizable. Using  Blashcke's ideas introduced in this paper, Bol succeeded
to establish  the algebraization of smooth  $k$-webs of maximal rank on $(\mathbb C^3,0)$, for $k \ge 6$.
Ironically, not much latter \cite{bol36}, he  came up
with $6=\pi(2,5)$ linearly independent abelian relations for his $5$-web $\mathcal B_5$, showing in this
way that his source inspiration \cite{blaschke} was irremediably flawed.  Although wrong, Blaschke's paper
contained not just the germ of Bol's algebraization result for webs on $(\mathbb C^3,0)$, but also the germ
of Chern-Griffiths's   strategy.

 \medskip

The main novelty in Tr\'{e}preau's approach
is based on ingenious and involved, albeit elementary, computations. It is still considerably more technical than the other  results previously presented in this book.
Nevertheless the authors believe that the understanding  of    this praiseworthy theorem, as well as
of the ideas/techniques involved in its proof, will reward those who persevere  through this chapter.

\section{Tr\'{e}preau's Theorem}

Below, a more precise version of the theorem stated in the introduction of this chapter is formulated.
Then the heuristic behind its proof is explained.

Arguably, one could complain about the unfairness of the title of this  section. It would be perhaps more righteous
to call it Bol-Tr\'{e}preau's Theorem or even Blaschke-Bol-Chern-Grifitths-Tr\'{e}preau's Theorem. To justify the choice made
above, one could invoke the right to typographical beauty and/or the usual mathematical practice.

\subsection{Statement}

\begin{thm}\label{T:trepreau}
Let $\mathcal W$ be a smooth $k$-web on $(\mathbb C^n,0)$. If $n\ge 3$, $k \ge 2n$ and
\[
\dim \frac{\mathcal A(\mathcal W)}{F^2 \mathcal A(\mathcal W)} = 2k-3n + 1
\]
then $\mathcal W$  is  algebraizable.
\end{thm}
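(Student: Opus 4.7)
\medskip

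My plan is to follow the Chern-Griffiths-Trépreau strategy, whose heart is to linearize $\mathcal{W}$ by constructing a geometric map to some projective space, and then to invoke the dual converse of Abel's Theorem (Theorem 4.2) for linear webs admitting a complete abelian relation. The hypothesis $\dim(\mathcal{A}(\mathcal{W})/F^2\mathcal{A}(\mathcal{W})) = 2k-3n+1$ is precisely saying that the two partial bounds
\[
\dim F^0/F^1 \le k-n, \qquad \dim F^1/F^2 \le k-(2n-1)
\]
from Lemma~2.14 combined with Proposition~2.6 are both attained. The second equality is exactly the condition $\dim F^1/F^2 = k-2n+1$ needed to apply Proposition~2.19. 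So as a first step I would invoke Proposition~2.19 to obtain a coframe $\varpi = (\varpi_0, \ldots, \varpi_{n-1})$ on $(\mathbb{C}^n,0)$ and germs of holomorphic functions $\theta_1, \ldots, \theta_k$ such that $\mathcal{F}_i$ is defined by $\omega_i = \sum_{q=0}^{n-1}\theta_i^q\,\varpi_q$. Geometrically this means that at every point $x$ the $k$ conormal directions of $\mathcal{W}$ at $x$ lie on a rational normal curve $\Gamma_x$ of $\mathbb{P}T_x^*(\mathbb{C}^n,0)$, parametrized by the $\theta_i(x)$.

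Next I would construct the Poincaré-Blaschke map. Set $N = \dim \mathcal{A}(\mathcal{W})/F^2\mathcal{A}(\mathcal{W}) - 1 = 2k-3n$ and consider the map
\[
\mathrm{PB}_{\mathcal{W}}: (\mathbb{C}^n,0) \longrightarrow \mathbb{P}\bigl(\mathcal{A}(\mathcal{W})/F^2\mathcal{A}(\mathcal{W})\bigr)^{*}
\]
built from the canonical maps $\kappa_{\mathcal{W},i}$ of Section~4.3.3 together with their first-order jets along the conormals. Equivalently, at each $x$ one reads off the $k(n-1)$ conditions imposed on $\mathcal{A}(\mathcal{W})$ by the vanishing up to order one of the $i$-th components of abelian relations; using the normal form of Proposition~2.19 and working out the relations coming from $F^0/F^1$ and $F^1/F^2$, the rational normal curves $\Gamma_x$ should organize into a rational normal scroll $S_x$ of dimension $n$ in $\mathbb{P}(\mathcal{A}/F^2)^*$ whose generic ruling by $\Gamma_x$ identifies $(\mathbb{C}^n,0)$ with a neighborhood of a point of this scroll. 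The hard analytic and combinatorial core of the argument is to verify, via a careful computation in the coframe $\varpi$ with the functions $\theta_i$, that the image of $\mathrm{PB}_{\mathcal{W}}$ is indeed contained in a (germ of) variety of minimal degree --- this is what Trépreau establishes by an explicit, and admittedly delicate, analysis of the integrability conditions on the $1$-forms $\omega_i$ and the abelian relations in $F^1/F^2$.

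Once this scroll structure is in place, I would push the web $\mathcal{W}$ forward via $\mathrm{PB}_{\mathcal{W}}$. The rulings of the scroll combined with the tangency conditions furnished by the canonical maps force $(\mathrm{PB}_{\mathcal{W}})_{*}\mathcal{W}$ to be a linear web on the scroll: each leaf, being a level set of one of the first integrals of $\mathcal{F}_i$, maps into an intersection of hyperplane sections of the scroll, i.e.\ a linear subspace. Since the hypothesis gives $k-n \ge n+1 \ge 2$ linearly independent abelian relations in $F^0\setminus F^1$, among them there is automatically a \emph{complete} abelian relation of the linearized web (any incomplete relation would involve only a strict subweb of at most $k-1$ foliations, and a dimension count against $\pi(n,k-1) < \pi(n,k)$ shows this is compatible with the maximal-rank hypothesis only after taking suitable linear combinations). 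Applying the dual converse of Abel's Theorem (Theorem~4.2) to this linearized quasi-smooth web I conclude the existence of a reduced projective curve $C \subset \mathbb{P}^n$ such that $\mathcal{W}$ is equivalent to the germ of $\mathcal{W}_C$ at a transverse hyperplane, i.e.\ $\mathcal{W}$ is algebraizable.

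The main obstacle, and the one that required the greatest ingenuity in the original proof, is the middle step: showing that the Poincaré-Blaschke map truly lands in a variety of minimal degree --- a \emph{global} geometric property --- using only the \emph{local} information encoded in the two graded pieces $F^0/F^1$ and $F^1/F^2$. This is where one has to exploit simultaneously the generalized Castelnuovo Lemma of Section~2.3 (to get scroll behavior from pointwise rational-normal-curve information varying with $x$) and the compatibility conditions coming from the fact that the $\theta_i$ must be common roots of a one-parameter family of polynomial equations imposed by the abelian relations. Expect this verification to be the bulk of the work; all the remaining steps, by contrast, are conceptual applications of earlier results in the book.
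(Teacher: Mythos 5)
Your overall roadmap (normal form from Proposition \ref{P:normal}, a Poincar\'{e}--Blaschke-type map built from the canonical maps, algebraicity of its image, linearization, then the converse of Abel's theorem) matches the paper's strategy, but the step you defer as ``the bulk of the work'' is mis-stated in a way that would derail the argument. The crucial geometric fact is \emph{not} that the curves organize into an $n$-dimensional rational normal scroll in $\mathbb P(\mathcal A(\mathcal W)/F^2\mathcal A(\mathcal W))^*$: it is that the Poincar\'{e}--Blaschke map $PB_{\mathcal W}:(\mathbb C^n,0)\times\mathbb P^1\to\mathbb P^{\pi-1}$ has rank exactly \emph{two} (Proposition \ref{PBWrang}), so that the $n$-dimensional family of rational normal curves $\mathscr C(x)$ of degree $k-n-1$ sweeps out only a germ of smooth \emph{surface} $X_{\mathcal W}$. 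Your picture of an $n$-dimensional scroll ruled by the $\Gamma_x$, with $(\mathbb C^n,0)$ identified with a neighborhood of a point of it, is dimensionally inconsistent with this, and the generalized Castelnuovo Lemma you invoke is explicitly \emph{not} used in the proof (the paper remarks that its implications for web geometry ``remain to be unfolded''). The rank-two statement is proved by the delicate coframe computations of Proposition \ref{P:TECHmain} and Lemma \ref{L:Ip} (this is where $n\ge 3$ enters), and no appeal to varieties of minimal degree is needed: one only needs $\mathscr C(x)\cdot\mathscr C(y)=n-1>0$, Andreotti/Hartshorne to extend $X_{\mathcal W}$ to a projective surface $S_{\mathcal W}$, and $h^0(S_{\mathcal W},\Omega^1_{S_{\mathcal W}})=0$.

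The linearization step is also wrong as written. You cannot ``push $\mathcal W$ forward via $PB_{\mathcal W}$'': that map is defined on $(\mathbb C^n,0)\times\mathbb P^1$ and collapses the $n$-dimensional base onto a surface, so it does not carry the web anywhere. In the paper the linearizing map is $\mathfrak C: x\mapsto \mathscr C(x)\in|\mathscr C(0)|\simeq\mathbb P^n$, where the identification of the complete linear system with $\mathbb P^n$ uses $h^1(S_{\mathcal W},\mathcal O_{S_{\mathcal W}})=0$ and $\mathscr C(0)^2=n-1$; a leaf $L_i(x)$ is sent to the set of curves of the system through $\kappa_i(x)$, which is a hyperplane. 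Finally, your argument for the existence of a complete abelian relation appeals to a maximal-rank hypothesis that is not assumed here; what one actually uses is that $\dim \mathcal A_2(\mathcal W)/F^1\mathcal A_2(\mathcal W)=k-n$ forces, for each $i$, an abelian relation whose $i$-th component does not vanish, and a generic linear combination of these is complete.
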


To explain  the heuristic behind the proof of  Theorem \ref{T:trepreau} some of the geometry of  Castelnuovo curves will be recalled
in the Section \ref{S:CC5}. Then in Section \ref{S:MR5} it will be discussed how one could infer corresponding geometrical
properties for webs of maximal rank using their spaces of abelian relations.

\subsection{The geometry of Castelnuovo curves}\label{S:CC5}

Let $C\subset \mathbb P^n$ be a Castelnuovo curve of degree $k\geq 2n$.
To avoid technicalities assume  $C$ is smooth.
By definition  $K_C=\omega_C$ and  $h^0(K_C)=\pi(n,k)=\pi$.
Recall from Chapter \ref{Chapter:4} that the canonical map of $C$  is \index{Canonical map!of a curve}
\begin{align}
\label{E:canmapC}
\varphi= \varphi_{|K_C|} :  C & \longrightarrow {\mathbb P} H^0(C,K_C)^*=   \mathbb P^{\pi-1}
 \\ x  & \longmapsto  \left[ \eta \mapsto \eta(x)  \right] \, .
\nonumber
\end{align}
Since $C$ is smooth of genus strictly greater than one, the canonical linear system  $|K_C|$ has no base point ({\it cf.} \cite[IV.5]{hartshorne}) thus $\varphi= \varphi_{|K_C|}$ is a birational morphism from $C$ onto its \defi[canonical model]  \index{Castelnuovo curve!canonical model}
 $C_{can}=\varphi(C)$.\footnote{In fact, one can prove that $C$ is not  hyperelliptic. Thus $|K_C|$ induces an isomorphism $C\simeq C_{can}$, see \cite[IV.5]{hartshorne}.}

Recall from Section \ref{S:CC} that the linear system $| I_C(2)|$ of quadrics containing $C$ cut out a
non-degenerate surface $S\subset \mathbb P^n$ of minimal degree $n-1$. Thus,
$S$ is a rational normal scroll, or the Veronese surface $v_2(\mathbb P^2) \subset \mathbb P^5$.

Still aiming at simplicity,    assume $S$ is  smooth. Because $S$   is rational, it does not have holomorphic differentials.
More precisely,   $h^0(S, \Omega^1_S) = h^0(S, \Omega^2_S) = 0$.
Consequently,
\begin{align*}
h^1(S,K_S)
=&\;
h^{1}(S,\mathcal O_S) &&  \mbox{by Serre duality  \cite[III.7]{hartshorne} }  \\
=&\;
h^{0}(S, \Omega^1_S) =0
 &&   \mbox{by
Hodge theory .}
\end{align*}

From the exact sequence
\begin{equation*}
0 \rightarrow K_S \rightarrow K_S\otimes \mathcal O_S(C) \rightarrow K_C \rightarrow 0 \,
\end{equation*}
one deduces  an isomorphism
\begin{equation*}
H^0\big(S,K_S\otimes \mathcal O_S(C)\big) \simeq
H^0\big(C,K_C)\, .
\end{equation*}
Thus  the  canonical map (\ref{E:canmapC}) extends to $S$: there is a rational map $\Phi: S\dashrightarrow \mathbb P^{\pi-1}$
fitting into the commutative diagram below.
\begin{equation*}
  \xymatrix@R=0.9cm@C=1.3cm{  C  \;    \ar@{^{(}->}[d]  \ar@{->}[r]^{\varphi} & \;{\mathbb P}^{\pi-1} \ar@{=}[d] \\
S \;      \ar@{-->}[r]^{\Phi\; \, } & \; {\mathbb P}^{\pi-1}. }
\end{equation*}

  Moreover, as it is explained in \cite{harris},  $X_C$,  the image of $\Phi$, is  a non-degenerate algebraic surface in $\mathbb P^{\pi-1}$.

   \bigskip

If $H$ is  a generic hyperplane in $\mathbb P^n$, then the  hyperplane section $C_H=S\cap H$ is irreducible, non-degenerate in $H$, and of degree $\deg C_H=\deg S=n-1$.
Hence $ C_H$ is a curve of minimal degree in $H$, that is a rational normal curve of degree $n-1$.
Now let $p_1, \ldots, p_n$ be  $n$ generic points on $S$.
They span a  hyperplane $H_p$ and the generic hyperplane is obtained in this way.
Thus $C_H=S\cap H_p$ is a rational normal curve of degree $n-1$ that contains the points $p_1,\ldots,p_n$ and is contained in $S$. 
Therefore,  through $n$ general points of $S$ passes a rational normal curve of degree $n-1$.  Surfaces having this property are said to be \defi[$n$-covered by rational normal curves of degree $n-1$].
\index{Surfaces!$n$-covered}

It can be proved that the image under $\Phi$ of  $C_H$  is
a rational normal curve $\mathscr C_{H}$  in a projective subspace of $\mathbb PH^0(C,K_C)^* $
of dimension $k-n-1$. Consequently, the  surface $X_C={\rm Im}\,\Phi\subset \mathbb P^{\pi-1}$
is $n$-covered by rational normal curves of degree $k-n-1$.

The preceding facts will now be interpreted  in terms of the web $\mathcal W_C$ dual to the  curve $C$.
As usual, let $H_0\subset \mathbb P^n$ be a hyperplane transverse to $C$, and let $p_1,\ldots,p_{d}: (\check{\mathbb P}^n,H_0) \rightarrow C$
be the usual holomorphic maps describing the intersection of $H \in (\check{\mathbb P}^n,H_0)$ with $C$.
For $i \in \underline k$, set
\begin{equation*}
 \varphi_i= \varphi_{|K_C|}\circ p_i: (\check{\mathbb P}^n,H_0) \rightarrow \mathbb P H^0(C,K_C)^*\, .
\end{equation*}

If $H \in (\check{\mathbb P}^n,H_0)$ then the points $p_1(H), \ldots,p_d(H)$ span the hyperplane $H\subset \mathbb P^n$.
Thus they  belong to $C $, and hence to $S$.
Therefore each $p_i(H)$ belongs to the rational normal curve $C_H = C \cap H\subset S$.
Consequently the points $\varphi_i(H)$, $i \in \underline k$, belong to the
rational normal curve $\mathscr C_H \subset \mathbb P H^0(C,K_C)^*$.
Moreover, the curves $\mathscr  C_H$ for $H\in (\check{\mathbb P}^n,H_0)$ fill out a germ of surface along  $\mathscr C_{H_0}$.

\begin{figure}[H]
\begin{center}
\psfrag{U}[][]{$ \check{U}  $}
\psfrag{S}[][]{$ S $}
\psfrag{SS}[][]{$ X_C $}
\psfrag{C}[][]{$ {\textcolor{red}{C }}$}
\psfrag{CC}[][]{$ {\textcolor{red}{C_{can} }}$}
\psfrag{CH}[][]{$ \textcolor{blue}{C_H} $}
\psfrag{CCHH}[][]{$ \textcolor{blue}{\mathscr C_H} $}
\psfrag{PD}[][]{$ \check{\mathbb P}^{n} $}
\psfrag{P}[][]{$ {\mathbb P}^{n} $}
\psfrag{PP}[][]{$ \scriptstyle{{\mathbb P}H^0(C,K_C)^*} $}
\psfrag{H}[][]{$  H $}
\psfrag{HH}[][]{$  \langle \textcolor{blue}{\mathscr C_H}  \rangle $}
\psfrag{PHI}[][]{$  \textcolor{red}{\varphi_{|K_C|}} $}
\psfrag{PSI}[][]{$  \Phi $}
\psfrag{H}[][]{$  H $}
\psfrag{p1}[][]{\textcolor{red}{$\scriptscriptstyle{p_1(H)} $}}
\psfrag{p2}[][]{\textcolor{red}{$\scriptscriptstyle{p_2(H)} $}}
\psfrag{p3}[][]{\textcolor{red}{$\scriptscriptstyle{p_3(H)} $}}
\psfrag{p4}[][]{\textcolor{red}{$\scriptscriptstyle{p_4(H)} $}}
\psfrag{ph1}[][]{\textcolor{red}{$\scriptscriptstyle{\varphi_1(H)} $}}
\psfrag{ph2}[][]{\textcolor{red}{$\scriptscriptstyle{\varphi_2(H)} $}}
\psfrag{ph3}[][]{\textcolor{red}{$\scriptscriptstyle{\varphi_3(H)} $}}
\psfrag{ph4}[][]{\textcolor{red}{$\scriptscriptstyle{\varphi_4(H)} $}}
\resizebox{4.5in}{3.5in}{
\includegraphics{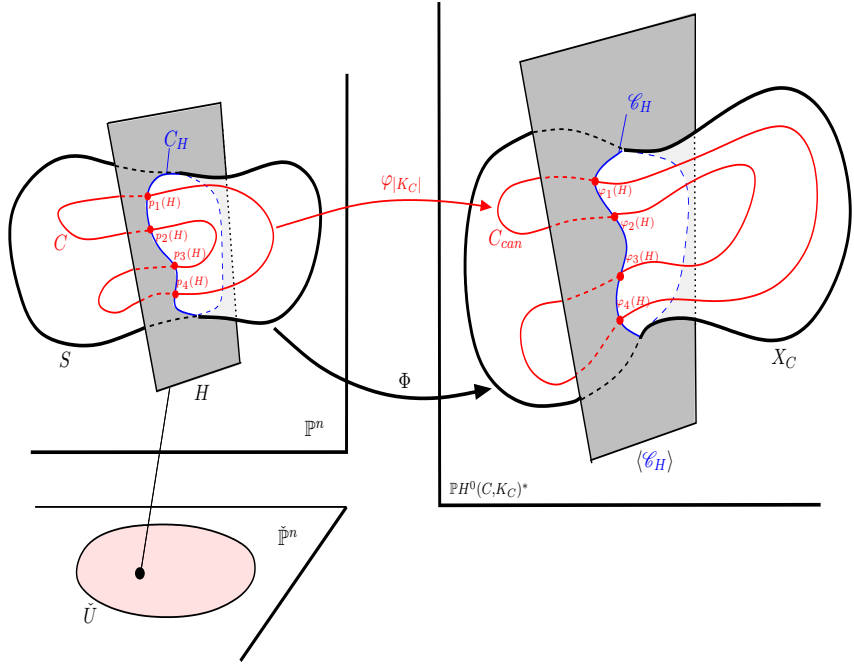} }
\end{center}
\caption[The curve $C_H$ and its image $\mathscr C_{H}$]{\!\! The curve $C_H$ and its image $\mathscr C_{H}$  under  $\Phi$. }
\end{figure}

\subsection{On the geometry of maximal rank webs}\label{S:MR5}
This section explains  how the constructions
presented in the preceding section extend to webs carrying sufficiently many abelian relations.
Since the case  $k=2n$ has  already been treated in Chapter \ref{Chapter:4}, it will be assumed that $k >  2n$.

\smallskip

Let $\mathcal W$ be a smooth $k$-web on $(\mathbb C^n,0)$.  To simplify the discussion, it will be assumed that  $\mathcal W$ has maximal rank,
even if the heuristic described  below will be implemented under the weaker hypothesis of Theorem \ref{T:trepreau}.

\smallskip

Using the hypothesis on the space of abelian relations of  $\mathcal W$, it can
be proved that the images $\kappa_{\mathcal W,1}(x), \ldots, \kappa_{\mathcal W,k}(x)$  of every $x \in (\mathbb C^n,0)$ by the canonical maps of
$\mathcal W$ generate a projective subspace $\mathbb P^{k-n-1}(x) \subset \mathbb P \mathcal A(\mathcal W)^*$ of dimension $k-n-1$, and lie on a unique
rational normal curve $\mathscr C(x) \subset \mathbb P^{k-n-1}(x)$.

The most delicate point, and the main novelty,  in Tr\'{e}preau's argument, is
his proof  that  the family of rational normal curves $\mathscr C_W=\{ \mathscr C(x) \, \}_{x\in (\mathbb C^n,0) }$ fills out a germ of non-degenerate, smooth surface  $X_{\mathcal W}\subset \mathbb P\mathcal A(W)^*$.

It is then  comparably simple  to show that any two curves $\mathscr C(x), \mathscr C(y)$ intersect in
exactly $n-1$ points. Therefore $\mathscr C(0)^2 > 0$,  and well known results about germs of surfaces
containing curves of positive self-intersection by Andreotti ( in the analytic category ) and Hartshorne
( in the algebraic category ) imply that $X_{\mathcal W}$ is contained in a projective surface  $S_{\mathcal W}$.
As the image under $\Phi$ of the surface of minimal degree in $\mathbb P^n$ containing  a Castelnuovo curve $C$ of degree $k$ (see previous section),
the surface $S_{\mathcal W}$ is a rational surface  $n$-covered by rational normal curves of degree $k-n-1$.

\begin{figure}[H]
\begin{center}
\psfrag{U}[][]{$ { }  $}
\psfrag{u}[][]{$ {x}  $}
\psfrag{SS}[][]{$ S_{\mathcal W} $}
\psfrag{CCHH}[][]{$ \textcolor{blue}{\mathscr C(x)} $}
\psfrag{PD}[][]{$ \check{\mathbb P}^{n} $}
\psfrag{Cn}[][]{$ {\mathbb C}^{n} $}
\psfrag{PP}[][]{$ {{\mathbb P}\mathcal A(W)^*} $}
\psfrag{H}[][]{$  H $}
\psfrag{HH}[][]{$  \langle \textcolor{blue}{ \mathscr C(x) }\rangle $}
\psfrag{PHI}[][]{$  \textcolor{red}{\varphi_{|K_C|}} $}
\psfrag{PSI}[][]{$  \Phi $}
\psfrag{H}[][]{$  H $}
\psfrag{c1}[][]{\textcolor{red}{$C_{1} $}}
\psfrag{c2}[][]{\textcolor{red}{$  C_{2}$}}
\psfrag{c3}[][]{\textcolor{red}{$C_{3} $}}
\psfrag{c4}[][]{\textcolor{red}{$C_{4} $}}

\psfrag{cw1}[][]{\textcolor{red}{\rotatebox{79}{$
\scriptstyle{{\kappa_{W,1}}}$}}}

\psfrag{cw2}[][]{\textcolor{red}{\rotatebox{79}{$ \scriptstyle{{\kappa_{\mathcal W,2}}}$}}}

\psfrag{cw3}[][]{\textcolor{red}{\rotatebox{76}{$ \scriptstyle{{\kappa_{\mathcal W,3}}}$}}}

\psfrag{cw4}[][]{\textcolor{red}{\rotatebox{65}{$ \scriptstyle{{\kappa_{\mathcal W,4}}}$}}}

\resizebox{4in}{4in}{
\includegraphics{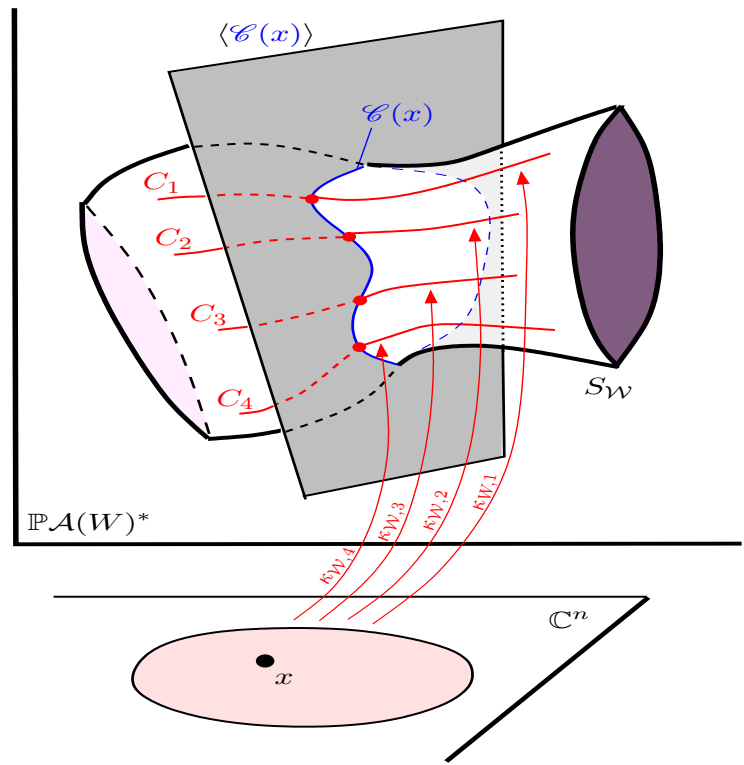} }
\end{center}
\caption[Geometry behind the proof]{Geometry behind the  proof of Tr\'{e}preau's Theorem.}
\end{figure}

At this point, one can apply an argument by Chern-Griffiths to  linearize $\mathcal W$.
Since $S_{\mathcal W}$ is rational, every in the family $\mathscr C_{\mathcal W}$ belongs to
one and only linear system $|\mathscr C|=\mathbb P H^0(S_{\mathcal W},\mathcal O_{S_{\mathcal W}}(\mathscr C))$, which turns out
to have dimension $n$. One then defines a map sending $x \in (\mathbb C^n,0)$ to the
point in $|\mathscr C|$ corresponding to the  rational normal curve $\mathscr C(x)$.
It is possible to prove that this map is a biholomorphism which linearizes $\mathcal W$.
Finally, the converse of Abel's Theorem presented in Chapter \ref{Chapter:4} allows to conclude that
$\mathcal W$ is algebraizable.

\section{Maps naturally attached to $\mathcal W$}

Until the end of this  chapter  $\mathcal W= \mathcal F_1 \boxtimes \cdots \boxtimes \mathcal F_k$ is a smooth $k$-web on $(\mathbb C^n,0)$ satisfying
$$ \dim \mathcal A(\mathcal W) / F^2\mathcal A(\mathcal W) = 2k-3n +1= \pi\, .$$
 According
to this hypothesis there exist $\pi$ linearly independent abelian relation  with classes in $\mathcal A(\mathcal W) / F^2\mathcal A(\mathcal W)$
generating the whole space. Fix, once and for all, $\pi$ abelian relations  $\eta^{(1)}, \ldots, \eta^{(\pi)}$
with this property, and let $\A2W \subset \mathcal A(\mathcal W)$ be the vector space generated by them.

\smallskip

Notice that the inclusion $\A2W \subset \mathcal A(\mathcal W)$  induces a linear projection
$\mathbb P \mathcal A(\mathcal W)^*  \dashrightarrow \mathbb P \A2W^*$. Notice also that the intersection of the
filtration $F^{\bullet} \mathcal A(\mathcal W)$ of $\mathcal A(\mathcal W)$
with $\A2W$ induces the  filtration
 $$F^{\bullet} \A2W = \A2W \cap F^{ \bullet } \mathcal A(\mathcal W). $$
From the choice of
$\A2W$ it is clear that
\[
 \quad \dim F^1 \!\A2W = k - 2n +1 \, \text{ and } \, \dim F^j\! \A2W = 0 \, \text{ for } j > 1 .
\]

\subsection{Canonical maps}

Fix $i \in \underline k$. If $\kappa_i = \kappa_{\mathcal W,i} : (\mathbb C^n,0) \to \mathbb P \mathcal A(\mathcal W)^*$ is the $i$-th canonical map
of $\mathcal W$, see Section \ref{S:canmap}, then its image does not intersect the center of the natural projection
 $\mathbb P \mathcal A(\mathcal W)^* \dashrightarrow\mathbb P \A2W^*$. Indeed,
as argued in  the proof of Lemma \ref{L:67}, the equality
${ \dim \A2W / F^1 \A2W = k -n}$ implies the existence of an abelian relation
in $\A2W$ with $i$-th component not vanishing at $0$. Thus
composing $\kappa_{i}$ with the linear projection $\mathbb P \mathcal A(\mathcal W)^*  \dashrightarrow \mathbb P \A2W^*$
defines a morphism  from $(\mathbb C^n,0)$ to $\mathbb P \A2W^*$. To keep the notation simple it will still
be denoted by $\kappa_{i}$,  and will  be called
the $i$-th canonical map of $\mathcal W$. More explicitly,  $\kappa_{i}$ is now the  map
\begin{align*}
 (\mathbb C^n,0) & \longrightarrow \; \mathbb P \A2W ^* \\
x & \longmapsto  \big[  ev_i(x) : \A2W \to \mathbb C\big].
\end{align*}

The  image of the $i$-th canonical map of $\mathcal W$
will be denoted by $C_i$ and will be called the $i$-th canonical curve of $\mathcal W$.

\subsection{Poincar\'{e}'s map}

Consider the natural   analogue of Poincar\'{e}'s map
\begin{align*}
 P_{\mathcal W} : \,  (\mathbb C^n,0) & \longrightarrow
 \mathrm{Grass}(\A2W, k-2n +1 )   \\
x &\longmapsto   F^1_x \A2W    \,.  \nonumber
\end{align*}
As in the case of canonical maps, it seems unjustifiable to change the terminology. Therefore the map $P_{\mathcal W}$
above will also be called the Poincar\'{e}'s map of $\mathcal W$.

For each $x \in (\mathbb C^n,0)$,
the projective subspace of dimension $k-n-1$ in $\mathbb P \A2W^*$   determined by $P_{\mathcal W}(x)$ through projective duality
will be denoted by $\mathbb P^{k-n-1}(x)$.

\subsection{Properties}
From the very definition of $F^1_x \A2W$ it follows  that
\[
 F^1_x \A2W = \bigcap_{i \in \underline k} \ker ev_i(x)
\]
where $ev_i(x)$ is considered as a linear form on $\A2W$.

This remark about subspaces of $\A2W$ translates into the following relation between the canonical maps and
Poincar\'{e}'s map: $\mathbb P^{k-n-1}(x)$ is the smallest projective space among the ones containing the set $\{ \kappa_i(x) \}_{i \in \underline k}$.
The lemma below shows that it is possible to replace $\underline k$, in the statement above, by any subset $B$ with at least $k-n$ elements.

\begin{lemma}\label{L:gera0}
For every $x \in (\mathbb C^n,0)$ and every subset $B \subset \underline k$ of cardinality $k-n$,
$\mathbb P^{k-n-1}(x)$ is the smallest linear subspace of $\mathbb P \A2W^*$ containing
$\{ \kappa_i(x)\}_{i \in B}$.
\end{lemma}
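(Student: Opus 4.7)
\medskip

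\noindent\textbf{Proof plan.} The plan is to translate the statement via projective duality into a linear-algebra claim about the evaluation maps $ev_i(x)$, and then to reduce everything to the smoothness of $\mathcal W$ at $x$.

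First, I would set up the duality dictionary. For any subset $J\subset \underline k$, the smallest projective subspace of $\mathbb P\A2W^*$ containing $\{\kappa_i(x)\}_{i\in J}$ corresponds, under the duality between $\mathbb P\A2W^*$ and $\mathbb P\A2W$, to the projectivization of $\bigcap_{i\in J}\ker ev_i(x)\subset \A2W$. Applied to $J=\underline k$, this recovers $F^1_x\A2W$ (by the identity stated just before the lemma), and hence $\mathbb P^{k-n-1}(x)$. Applied to $J=B$, it gives the span $\Sigma_B(x)$ of $\{\kappa_i(x)\}_{i\in B}$.

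Next, since $B\subset\underline k$, there is a trivial inclusion
\[
F^1_x\A2W \;=\; \bigcap_{i\in\underline k}\ker ev_i(x) \;\subseteq\; \bigcap_{i\in B}\ker ev_i(x),
\]
which dually gives $\Sigma_B(x)\subseteq \mathbb P^{k-n-1}(x)$. So the content of the lemma is the reverse inclusion, i.e.\ the equality of these two intersections of kernels. Equivalently, we must show that any abelian relation $\eta=(\eta_1,\ldots,\eta_k)\in\A2W$ whose components $\eta_i(x)$ vanish for all $i\in B$ automatically satisfies $\eta_j(x)=0$ for every $j\in\underline k$.

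The key step is the following. Let $C=\underline k\setminus B$, so $|C|=n$. Summing the abelian relation and evaluating at $x$ yields
\[
\sum_{j\in C}\eta_j(x)=-\sum_{i\in B}\eta_i(x)=0.
\]
Because $\eta_j$ defines the foliation $\mathcal F_j$ (condition $\omega_j\wedge\eta_j=0$), each $\eta_j(x)$ is a scalar multiple $a_j\,\omega_j(x)$ of the conormal of $\mathcal F_j$ at $x$. Hence $\sum_{j\in C}a_j\,\omega_j(x)=0$. Now the smoothness hypothesis on $\mathcal W$ means precisely that the $n$ conormals $\{\omega_j(x)\}_{j\in C}$ are linearly independent in $T^*_x(\mathbb C^n,0)$, so every $a_j$ vanishes and therefore $\eta_j(x)=0$ for all $j\in C$, as required.

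\medskip

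\noindent\textbf{Main obstacle.} There is essentially no obstacle: once the duality between $\mathbb P\A2W$ and $\mathbb P\A2W^*$ is correctly used to convert linear spans into intersections of kernels, the argument reduces to evaluating the defining relation $\sum\eta_i=0$ at $x$ and invoking the general-position hypothesis encoded in the smoothness of $\mathcal W$. The only point that requires a small amount of care is checking that $\Sigma_B(x)$ really is cut out by the forms $\{ev_i(x)\}_{i\in B}$ (rather than a proper subfamily), but this is immediate from the universal property of the linear span and the definition of $\kappa_i(x)$ as $[ev_i(x)]$.
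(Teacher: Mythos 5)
Your proof is correct and follows essentially the same route as the paper: dualize the span of $\{\kappa_i(x)\}_{i\in B}$ into the intersection $\bigcap_{i\in B}\ker ev_i(x)$, then observe that an abelian relation lying there has at most $n$ components not vanishing at $x$, whose values at $x$ must sum to zero and are proportional to $n$ linearly independent conormals by smoothness, hence all vanish. You merely spell out in more detail the step the paper compresses into one sentence.
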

\begin{proof}
Notice that the smallest linear subspace of $\mathbb P \A2W^*$ containing $\{ \kappa_i(x)\}_{i \in B}$
is the dual of the intersection
\[
 I_x =  \bigcap_{i \in B} [\ker ev_i(x)] \subset \mathbb P \A2W \, .
\]
If  a non-trivial abelian relation, or rather its projectivization, is in $I_x$ then it has at most $k - (k-n) =n$
components not vanishing  at $x$. But the constant term of these  components are linearly independent
because $\mathcal W$ is a  smooth web on $(\mathbb C^n,0)$. Thus
\[
 I_x = \bigcap_{i \in B} [\ker ev_i(x)] = [ F^1_x \A2W ] \subset \mathbb P \A2W \, . \qedhere
\]
\end{proof}

\medskip

In order to express intrinsically the differential of $\kappa_i$ at a point $x \in (\mathbb C^n,0)$,
 observe that the  tangent space of $\mathbb P \A2W^*$ at the point $\kappa_i(x) = [ev_i(x)]$ is naturally isomorphic
to the quotient of $\A2W^*$ by the the $1$-dimensional subspace $\mathbb C ev_i(x)$. This quotient in its turn is isomorphic
to the dual of $\ker ev_i(x)$. Thus the differential of  $\kappa_i$ at $x$ can be written as  follows
\begin{align*}
d \kappa_i(x) :  T_x( \mathbb C^n,0) & \longrightarrow  \ker ev_i(x)^* \\
v &\longmapsto  \left\lbrace
v  \mapsto \displaystyle{\lim_{t \to 0} \frac{ev_i(x + tv ) - ev_i(x)}{t}} \,
 \right\rbrace \, .
\end{align*}
Therefore, inasmuch $\kappa_i(x)$ can be identified with abelian relations in  $\A2W$
with $i$-component  vanishing at $x$, the image of  its differential at $x$ can be identified with the abelian
relations in $\A2W$ with $i$-th component vanishing at $x$ with multiplicity  two.
In other words, if $ev_i^{(1)}(x) : \A2W \to \mathbb C^2$ is the  evaluation morphism of order one and $V_i(x) \subset \A2W$ is its kernel,
then the image of $d\kappa_i(x)$  is the quotient  of $\ker ( \A2W^* \to V_i(x)^* )$ by $\mathbb C ev_i(x)$.

\smallskip

\begin{lemma}\label{L:gera}
Let  $x \in (\mathbb C^n,0)$ and  $B \subset \underline k$ be a subset of cardinality
smaller than or equal to $k-2n +1 $. If  $Y \subset \mathbb P \A2W^*$ is the set
\[
\mathbb P^{k-n-1}(x) \cup \left ( \bigcup_{i \in B}  T_{\kappa_i(x)} C_i \right) \,
\]
then the smallest projective subspace of $\mathbb P \A2W^*$ containing $Y$ has
codimension equal to  ${(k- 2n +1) - \mathrm{card}(B)}$. In particular, none of the
canonical curves $C_i$ is tangent to $\mathbb P^{k-n-1}(x)$ at $\kappa_i(x)$.
\end{lemma}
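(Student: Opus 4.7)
The plan is to prove the lemma by a duality argument that reduces the codimension statement to a question on the span of squared linear forms, which is then settled by Proposition~\ref{P:cast} applied to a sub-web.

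First I would set up the duality dictionary. A class $[\eta]\in\mathbb{P}\A2W$ annihilates the linear span of $Y$ if and only if $\eta$ annihilates the affine cone over each $\kappa_j(x)$, for every $j\in\underline k$, and over each tangent line $T_{\kappa_i(x)}C_i$, for $i\in B$. Writing the $i$-th component in canonical form $\eta_i=f_i(u_i)\,du_i$ and setting $t_i=u_i(x)$, the first family of conditions reads $f_j(t_j)=0$ for all $j\in\underline k$, which is exactly $\eta\in F^1_x\A2W$. Since $\kappa_i$ factors as $(\mathbb{C}^n,0)\xrightarrow{u_i}(\mathbb{C},0)\xrightarrow{\;t\,\mapsto\,[\eta\mapsto f_i(t)]\;}\mathbb{P}\A2W^*$, the affine cone over $T_{\kappa_i(x)}C_i$ is the 2-plane spanned by the two functionals $\eta\mapsto f_i(t_i)$ and $\eta\mapsto f_i'(t_i)$; so, granted $\eta\in F^1_x\A2W$, annihilating this cone amounts to the single extra condition $f_i'(t_i)=0$. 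Denoting by $G_{x,B}\subset F^1_x\A2W$ the subspace cut out by $f_i'(t_i)=0$ for $i\in B$, one obtains
\[
\mathrm{codim}_{\mathbb{P}\A2W^*}\langle Y\rangle=\dim G_{x,B},
\]
so that the lemma amounts to the equality $\dim G_{x,B}=(k-2n+1)-\mathrm{card}(B)$.

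Next I would exploit the standing hypothesis $\dim\A2W/F^2\A2W=2k-3n+1$, which forces the graded pieces $F^0/F^1$ and $F^1/F^2$ of $\A2W$ to attain the bounds given by Lemma~\ref{L:besta} combined with Proposition~\ref{P:cast}. This yields $\ell^2(\mathcal{W})_x=2n-1$ (constant in a neighbourhood of $0$ by Lemma~\ref{L:constante}) and, thanks to $\A2W\cap F^2\mathcal{A}(\mathcal{W})=0$, the symbol map $\eta\mapsto(c_i)_{i\in\underline k}$ with $c_i:=f_i'(t_i)$ is an isomorphism
\[
F^1_x\A2W\;\xrightarrow{\;\sim\;}\;\ker\bigl(\mathbb{C}^k\longrightarrow\mathcal{L}^2(\mathcal{W})_x\bigr),
\]
both sides having dimension $k-2n+1$, where as in Section~\ref{S:BCbound} the map on the right sends $(c_j)$ to $\sum c_j h_j^2$ and $h_j$ is the linear part of $u_j$ at $x$. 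Under this isomorphism $G_{x,B}$ corresponds to the subspace of the kernel on which the coordinates indexed by $B$ vanish, and the desired equality reduces to the surjectivity of the projection $\pi_B:\ker(\mathbb{C}^k\to\mathcal{L}^2(\mathcal{W})_x)\to\mathbb{C}^B$. By the standard dual formulation (no nonzero functional on $\mathbb{C}^B$ extended by zero vanishes on the kernel), this surjectivity is equivalent to the assertion that the $k-\mathrm{card}(B)$ squares $\{h_j^2:j\notin B\}$ still span the full $(2n-1)$-dimensional space $\mathcal{L}^2(\mathcal{W})_x$. The linear forms $\{h_j:j\notin B\}$ define a smooth sub-web of $\mathcal{W}$, and by hypothesis $k-\mathrm{card}(B)\ge 2n-1$; Proposition~\ref{P:cast} applied to this sub-web then gives $\dim\mathrm{span}\{h_j^2:j\notin B\}\ge\min(k-\mathrm{card}(B),2n-1)=2n-1$, the reverse inequality being automatic. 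Equality therefore holds and $\pi_B$ is surjective, which settles the main statement. The final clause of the lemma is now immediate from the case $B=\{i\}$: if $T_{\kappa_i(x)}C_i$ lay in $\mathbb{P}^{k-n-1}(x)$ then $\langle Y\rangle=\mathbb{P}^{k-n-1}(x)$ would have codimension $k-2n+1$, contradicting the codimension $k-2n$ just proved.

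The hard part of this plan is the duality dictionary of the first step: one has to verify carefully that $\kappa_i$ factors through $u_i$ as a genuine germ of curve in $\mathbb{P}\A2W^*$, and then that its tangent direction at $\kappa_i(x)$ is indeed controlled by the symbol $c_i=f_i'(t_i)$, i.e.\ that the osculation of $C_i$ corresponds precisely to increasing the vanishing order of the $i$-th component. Once this geometric identification and the constancy of $\ell^2(\mathcal{W})$ near $0$ are in place, the remainder is a clean dimension count built on the symbol isomorphism and Proposition~\ref{P:cast}.
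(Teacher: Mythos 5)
Your proof is correct and follows essentially the same route as the paper's: both dualize the statement to computing $\dim\bigl(F^1_x\A2W\cap\bigcap_{i\in B}\ker ev_i^{(1)}(x)\bigr)$, use $F^2_x\A2W=0$ to make the second-order symbol map injective, and invoke Proposition~\ref{P:cast} on the complementary subweb $\boxtimes_{j\notin B}\mathcal F_j$ to get $\ell^2\ge 2n-1$. The only cosmetic difference is that you package the count as an exact equality via the symbol isomorphism and surjectivity of the projection $\pi_B$, whereas the paper splits it into a trivial lower bound plus an upper bound; the ingredients are identical.
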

\begin{proof}
The proof is similar to the proof of the previous lemma. As in the discussion preceding the statement,
let $V_i$ be the kernel of the evaluation morphism of order one  $ev_i^{(1)}(x) : \A2W \to \mathbb C^2$.

Since $\bigcap_{i \in \underline k} \ker ev_i(x) = F^1_x \A2W$, to prove the lemma it suffices to show
that the dimension of
\[
 A_x=  F^1_x \A2W \cap \left(\bigcap_{i \in B} V_i \right)  \, .
\]
is equal to  $a= k-2n + 1 - \mathrm{card}(B)$.
Since $\dim F^1_x \A2W = k -2n +1$ and $\dim F^1_x \A2W / (F^1_x \A2W \cap V_i(x)) \ge 1$, the number  $a$ is greater than or equal to $k-2n + 1 - \mathrm{card}(B)$.

Notice that the elements of $A_x$ are abelian relations with $i$-th components, for every $i \in B$, having  constant and linear terms at $x$
equal to zero. Therefore,
\[
a \le ( k - \mathrm{card}(B)) - \ell^2 \left( {\boxtimes}_{i \in \underline k \setminus B} \mathcal F_i \right)  + \dim F^2 \A2W \, .
\]
But $ \dim F^2 \A2W =0 $  and   Proposition \ref{P:cast} implies
\[
\ell^2 \left( {\boxtimes}_{i \in \underline k \setminus B} \mathcal F_i \right) \ge 2(n-1) +1 \, .
\]
Thus
\[
a \le (k - \mathrm{card}(B)  ) - 2(n-1) -1 = k -  2n +1 -\mathrm{card}(B) \, ,
\]
as wanted.
\end{proof}

\smallskip

\begin{remark}\label{R:kisub}
The proof above shows slightly more than what is stated. Indeed, it was proved that not just $C_i$ is smooth and not tangent  to
$\mathbb P^{k-n-1}(x)$ at $\kappa_i(x)$ but also that $\kappa_i$ is a submersion onto $C_i$. This fact will be used in the proof
of the next proposition and later on.
\end{remark}

Lemma \ref{L:gera} for subsets  $B \subset \underline k$ of cardinality one is,
 together with Lemma \ref{L:gera0}, the main ingredient in the proof of the next proposition.

\begin{prop}\label{P:pmii}
Poincar\'{e}'s map $P_{\mathcal W}$  is an immersion.
\end{prop}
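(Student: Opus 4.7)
The plan is to show that the kernel of $dP_{\mathcal{W}}(x)$ is trivial by reducing the question to the canonical maps $\kappa_i$ and then invoking Lemma \ref{L:gera} together with the smoothness of $\mathcal{W}$. Throughout, I will work with the dual picture: a point in $\mathrm{Grass}(\A2W, k-2n+1)$ corresponds via projective duality to a $\mathbb{P}^{k-n-1}$ in $\mathbb{P}\A2W^*$, so $P_{\mathcal{W}}(x)$ is encoded by $\mathbb{P}^{k-n-1}(x)$. Recalling that the tangent space to the Grassmannian at a subspace $V\subset \A2W$ is naturally $\mathrm{Hom}(V, \A2W/V)$, or dually $\mathrm{Hom}(\A2W^*/V^{\perp}, V^{\perp})$, an infinitesimal deformation of $\mathbb{P}^{k-n-1}(x)$ is controlled by how sections of it move in $\mathbb{P}\A2W^*$ modulo $\mathbb{P}^{k-n-1}(x)$.

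First, fix $x \in (\mathbb{C}^n,0)$ and let $v \in T_x(\mathbb{C}^n,0)$ satisfy $dP_{\mathcal{W}}(x)\cdot v = 0$. By Lemma \ref{L:gera0}, for any $i\in\underline{k}$ the point $\kappa_i(y)$ lies in $\mathbb{P}^{k-n-1}(y)$ for every $y$ near $x$. Differentiating this incidence relation along $v$ and using that the deformation of $\mathbb{P}^{k-n-1}(x)$ vanishes to first order, I conclude that
\[
d\kappa_i(x)\cdot v \;\in\; T_{\kappa_i(x)}\mathbb{P}^{k-n-1}(x).
\]
On the other hand, $d\kappa_i(x)\cdot v$ is tangent to the canonical curve $C_i$ at $\kappa_i(x)$, so it belongs to $T_{\kappa_i(x)}C_i$.

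Second, I apply Lemma \ref{L:gera} with the singleton $B=\{i\}$: the span of $\mathbb{P}^{k-n-1}(x)$ and $T_{\kappa_i(x)} C_i$ has codimension $k-2n$, strictly less than the codimension $k-2n+1$ of $\mathbb{P}^{k-n-1}(x)$ alone. This means $T_{\kappa_i(x)}C_i$ is transverse to $\mathbb{P}^{k-n-1}(x)$ at $\kappa_i(x)$, so the intersection
\[
T_{\kappa_i(x)} C_i \cap T_{\kappa_i(x)}\mathbb{P}^{k-n-1}(x)
\]
is trivial. Combining with the previous paragraph, $d\kappa_i(x)\cdot v = 0$ for every $i\in\underline{k}$.

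Finally, I identify the kernel of $d\kappa_i(x)$ with $T_x\mathcal{F}_i$: since the $i$-th component $\eta_i$ of any abelian relation factors as $f_i(u_i)\,du_i$ through a submersion $u_i$ defining $\mathcal{F}_i$, the evaluation $ev_i(y)$ depends on $y$ only through $u_i(y)$, so $\kappa_i$ factors through $u_i$ and its fibers are the leaves of $\mathcal{F}_i$ (compare Remark \ref{R:kisub}, which asserts that $\kappa_i$ is a submersion onto $C_i$). Therefore the vanishing of $d\kappa_i(x)\cdot v$ for all $i$ yields
\[
v \;\in\; \bigcap_{i=1}^{k} T_x\mathcal{F}_i,
\]
which equals $\{0\}$ because $\mathcal{W}$ is smooth and $k\ge n$. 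Hence $v=0$ and $P_{\mathcal{W}}$ is an immersion at $x$.

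The main technical step—and the one I expect to require the most care—is the passage from $dP_{\mathcal{W}}(x)\cdot v = 0$ to the conclusion $d\kappa_i(x)\cdot v \in T_{\kappa_i(x)}\mathbb{P}^{k-n-1}(x)$. It hinges on making precise the identification of the tangent space to the Grassmannian with a Hom-space, and then tracking how the incidence condition $\kappa_i(y)\in \mathbb{P}^{k-n-1}(y)$ linearizes along $v$. Once this is in place, Lemma \ref{L:gera} and the identification of the kernels of $d\kappa_i$ with $T_x \mathcal{F}_i$ conclude the argument essentially for free.
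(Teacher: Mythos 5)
Your proof is correct, and it rests on the same two pillars as the paper's own argument --- Lemma \ref{L:gera0} (the points $\kappa_i(x)$ span $\mathbb P^{k-n-1}(x)$) and Lemma \ref{L:gera} (no canonical curve is tangent to $\mathbb P^{k-n-1}(x)$ at $\kappa_i(x)$) --- but the linear algebra is wired up differently. The paper restricts $P_{\mathcal W}$ to an arbitrary immersed curve $\gamma$, chooses a subset $B\subset\underline k$ of cardinality $k-n$ for which every $\kappa_i\circ\gamma$, $i\in B$, is an immersion, writes the Pl\"ucker lift of $P_{\mathcal W}\circ\gamma$ as the decomposable vector $\bigwedge_{i\in B}(\widehat{\kappa_i}\circ\gamma)$, and concludes from $(\widehat{\kappa_j}\circ\gamma)'\wedge(\widehat{P_{\mathcal W}}\circ\gamma)\neq 0$ that the derivative is nonzero. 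You instead take $v\in\ker dP_{\mathcal W}(x)$ and linearize the incidence $\kappa_i(y)\in\mathbb P^{k-n-1}(y)$: since the moving subspace is stationary to first order along $v$, each $d\kappa_i(x)\cdot v$ lies in $T_{\kappa_i(x)}\mathbb P^{k-n-1}(x)\cap T_{\kappa_i(x)}C_i$, which is zero by Lemma \ref{L:gera}, whence $v\in\bigcap_i\ker d\kappa_i(x)=\bigcap_i T_x\mathcal F_i=0$ by smoothness of $\mathcal W$. Your route buys a slightly stronger and cleaner statement ($d\kappa_i(x)\cdot v=0$ for \emph{every} $i$, with no need to discard the indices where $\kappa_i\circ\gamma$ would fail to be immersive) and makes explicit the identification $\ker d\kappa_i(x)=T_x\mathcal F_i$, which follows from the factorization of $\kappa_i$ through $u_i$ together with Remark \ref{R:kisub}; the paper's wedge computation buys the converse convenience of never having to discuss the tangent space of the Grassmannian. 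The step you flag as delicate is indeed the only one requiring care, and it goes through: writing $V(y)\subset\mathcal A_2(\mathcal W)^*$ for the $(k-n)$-plane with $\mathbb P(V(y))=\mathbb P^{k-n-1}(y)$ and expanding a lift $Z_i(y)$ of $\kappa_i(y)$ in a moving frame of $V(y)$, the vanishing of the class of the frame derivatives in $\mathrm{Hom}\bigl(V(x),\mathcal A_2(\mathcal W)^*/V(x)\bigr)$ forces $Z_i'(0)\in V(x)$, which is exactly the containment you need.
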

\begin{proof} Let $\gamma: (\mathbb C,0)  \to (\mathbb C^n,0)$ be a holomorphic immersion.
Since $\mathcal W$ is smooth, $\gamma$ is tangent to at most $n-1$ of the foliations $\mathcal F_i$. Thus there exists
a set $B \subset \underline k$ of cardinality $k-n \le k- (n-1)$, such that  the composition $\kappa_i \circ \gamma$ is an immersion  for every $i \in B$.
Moreover, Lemma \ref{L:gera0} implies that for every $t\in (\mathbb C,0)$, the points $\{ (\kappa_i \circ \gamma)(t)\}_{i\in B}$ generate the projective subspace $\mathbb P^{k-n-1}(\gamma(t)) \subset \mathbb P \A2W ^*$ determined by $P_{\mathcal W}(\gamma(t))$.

\smallskip

If   $\mathrm{Grass}(\A2W, k-2n +1 )$ is identified with its  Pl\"{u}cker's embedding\begin{footnote}{ The Grassmannian $\mathrm{Grass}(V,r)$ is isomorphic
to  the projectivization of the image of the multilinear   map
\begin{align*}
\varphi :   V ^r  & \dashrightarrow  \bigwedge^r V \, \\
(v_1, \ldots, v_r) &\mapsto v_1 \wedge \cdots \wedge v_r  \, .
\end{align*}
The isomorphism is given of course by associating to any
$W \in \mathrm{Grass}(V,r)$ the point $[\varphi(w_1, \ldots, w_r)] \in \mathbb P \bigwedge^r V$ where
$w_1, \ldots, w_r$ is an arbitrary basis of $W$. Clearly, $[\varphi(w_1, \ldots, w_r)]$ does not depend on
 the basis chosen. The induced map $ \mathrm{Grass}(V,r) \to \mathbb P \left( \wedge^r V \right)$ is the so called \defi[Pl\"{u}cker embedding] of $\mathrm{Grass}(V,r)$.
}\end{footnote} of  $\mathrm{Grass}(\A2W^*, k-n)$ then one can write
\[
(\widehat{P_{\mathcal W}} \circ \gamma)(t) = \bigwedge_{i \in B} (\widehat{\kappa_i} \circ \gamma)(t) \, ,
\]
where the hats indicate   liftings to  $\bigwedge^{k-n} \A2W^*$ and $\A2W^*$  respectively.
Consequently, the identity
\[
(\widehat{P_{\mathcal W}} \circ \gamma)'(t)  \wedge ( \widehat{\kappa_j} \circ \gamma) (t) = (\widehat{\kappa_j} \circ \gamma)'(t) \wedge (\widehat{P_{\mathcal W}} \circ \gamma)(t) \,
\]
holds true for every $j \in B$.
Lemma \ref{L:gera} ( see also Remark \ref{R:kisub}~) ensures the non-vanishing of this latter expression. Since $\gamma$ is an arbitrary immersion,
the differential of $P_{\mathcal W}$ is injective at the origin. The proposition follows.
\end{proof}

\medskip

\begin{cor}
 \label{C:int=n-2}
For every  distinct pair of points $x,y \in (\mathbb C^n,0)$, the intersection  $\mathbb P^{k-n-1}(x) \cap \mathbb P^{k-n-1}(y)$
is a projective subspace of  $\mathbb P \A2W^*$ of  dimension  $n-2$.
\end{cor}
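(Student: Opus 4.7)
The plan is to translate the statement into a condition on the subspaces $F^1_x\A2W$ and $F^1_y\A2W$ of $\A2W$, and then to establish that condition by combining the hypothesis $F^2\A2W = 0$ with a first-order deformation argument along the diagonal, in the spirit of the proof of Proposition \ref{P:pmii}.

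First, I would record the dimension computations. Since $\dim\A2W = 2k - 3n + 1$ and $\dim F^1\A2W = k-2n+1$, and since $\mathbb{P}^{k-n-1}(x)$ is by definition the projectivization of the annihilator $(F^1_x\A2W)^{\perp}\subset \A2W^*$, projective duality gives
\[
\mathbb{P}^{k-n-1}(x)\cap \mathbb{P}^{k-n-1}(y) \;=\; \mathbb{P}\bigl((F^1_x\A2W+F^1_y\A2W)^{\perp}\bigr).
\]
Its projective dimension is $\dim\A2W - \dim(F^1_x\A2W+F^1_y\A2W) - 1$, which equals $n-2$ precisely when the sum $F^1_x\A2W+F^1_y\A2W$ reaches its maximal possible dimension $2(k-2n+1)$, i.e.\ when $F^1_x\A2W \cap F^1_y\A2W = \{0\}$. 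Thus the corollary reduces to proving this triviality.

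Next, I would derive the triviality by a first-order expansion. Let $\eta = (\eta_1,\ldots,\eta_k) \in F^1_x\A2W \cap F^1_{x+tv}\A2W$ for some tangent vector $v$ and small $t \in \mathbb{C}$. Writing $\eta_i = f_i(u_i)\,du_i$ for a submersion $u_i$ defining $\mathcal{F}_i$, the conditions $\eta_i(x) = \eta_i(x+tv) = 0$ read $f_i(u_i(x)) = f_i(u_i(x+tv)) = 0$. Dividing by $t$ and letting $t \to 0$, a limit $\eta^{(0)} \in F^1_x\A2W$ of such elements satisfies $f_i'(u_i(x))\cdot L_v u_i(x) = 0$ for every $i$. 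For $v$ chosen outside the (finitely many) tangent hyperplanes $T_x\mathcal{F}_i$ — which is possible since $\mathcal{W}$ is smooth — one has $L_v u_i(x)\neq 0$ for every $i$, so $f_i'(u_i(x)) = 0$, placing $\eta^{(0)}$ in $F^2_x\A2W$. The hypothesis $F^2\A2W = 0$ together with the upper semicontinuity of the fibers of $F^{\bullet}\A2W$ gives $F^2_x\A2W = 0$ for $x$ close to $0$, whence $\eta^{(0)} = 0$. By the upper semicontinuity of the dimension of intersections of holomorphically varying subspaces, it follows that $F^1_x\A2W \cap F^1_{x+tv}\A2W = \{0\}$ for all $t \neq 0$ sufficiently small and for generic $v$.

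Finally, since every pair of distinct points $x,y$ sufficiently close to $0$ can be written as $(x, x+v)$ with $v$ small, the conclusion holds in a small neighborhood of $(0,0)$, in accordance with the standing convention on germs. The main obstacle is the passage from the generic first-order statement to every distinct pair: one must check that the analytic locus $B = \{(x,y) : \dim(F^1_x\A2W \cap F^1_y\A2W)\geq 1\}$ in $(\mathbb{C}^n,0)\times(\mathbb{C}^n,0)$, which automatically contains the diagonal $\Delta$, contains no other component through $(0,0)$. This follows from the first-order computation above: any irreducible component $B'\neq \Delta$ of $B$ would contain points $(x,y)$ arbitrarily close to $(0,0)$ with $y = x+tv$ for $v$ generic and $t$ small, contradicting the conclusion of the previous paragraph; hence, after shrinking to a sufficiently small neighborhood of $(0,0)$, the locus $B$ reduces to $\Delta$, which proves the corollary.
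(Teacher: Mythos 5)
Your first two paragraphs are essentially correct and, in substance, coincide with what the paper actually proves. The reduction to $F^1_x\A2W\cap F^1_y\A2W=0$ via the dimension count is exactly the paper's reformulation as a transversality statement, and your first-order limit argument is a hands-on version of the mechanism hidden in the paper's one-line appeal to the injectivity of $dP_{\mathcal W}$ (Proposition \ref{P:pmii}, itself proved by the same kind of analysis through Lemma \ref{L:gera}). Two small caveats there: you must normalize the elements $\eta^{(t)}$ (say to unit length) before passing to the limit, since otherwise $\eta^{(0)}=0$ yields no contradiction; and what you obtain is that for each fixed $x$ and each $v$ transverse to all the $\mathcal F_i$ at $x$ there is some $\epsilon(x,v)>0$ with $F^1_x\A2W\cap F^1_{x+tv}\A2W=0$ for $0<|t|<\epsilon(x,v)$. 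This already recovers the paper's proof, which likewise only treats ``$x=0$ and $y$ arbitrarily close to it''.

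The gap is in your last paragraph, where you upgrade this to every distinct pair. You assert that an irreducible component $B'\neq\Delta$ of $B$ through $(0,0)$ ``would contain points $(x,y)$ arbitrarily close to $(0,0)$ with $y=x+tv$ for $v$ generic and $t$ small''. Neither half is justified. First, nothing prevents $B'$ from being contained in a tangency hypersurface such as $\{(x,y): u_i(y)=u_i(x)\}$, in which case every point of $B'$ near the diagonal has $y-x$ asymptotically tangent to $\mathcal F_i$, i.e.\ non-generic; your computation then only gives $f_j'(u_j(x))=0$ for $j\neq i$, and an extra step (essentially the $\ell^2$ argument of Lemma \ref{L:gera}: squares of at most $n-1$ linearly independent linear forms are linearly independent, so such a relation still lies in $F^2_x\A2W=0$) is needed. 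Second, and more seriously, even if $(x_0,y_0)\in B'$ has $v_0=y_0-x_0$ in a generic direction, your previous paragraph only forbids $0<|t|<\epsilon(x_0,v_0)$, and $\epsilon(x_0,v_0)$ may shrink faster than $|y_0-x_0|$ as $(x_0,y_0)\to(0,0)$ along $B'$, so membership of $(x_0,y_0)$ in $B$ contradicts nothing you have proved. The repair is to run the limit with a varying base point: take $(x_j,y_j)\in B'\setminus\Delta$ tending to $(0,0)$, set $v_j=(y_j-x_j)/|y_j-x_j|$, extract convergent subsequences $v_j\to v_\infty$ and unit vectors $\eta^{(j)}\in F^1_{x_j}\A2W\cap F^1_{y_j}\A2W$ converging to $\eta^{(\infty)}\neq 0$; the same Taylor expansion places $\eta^{(\infty)}$ in the space of relations vanishing at $0$ to order two in every component transverse to $v_\infty$, and the $\ell^2$ argument kills it. As written, the passage from the radial, generic-direction statement to ``every distinct pair'' is not a proof.
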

\begin{proof}
It suffices to prove the claim for $x=0$ and $y$ arbitrarily close to it.
Since $2(k-n-1) - (2k - 3n  )   = n-2$,
the claim is equivalent to the transversality of $\mathbb P^{k-n-1}(0)$ and $\mathbb P^{k-n-1}(y)$.
The reader is invited to verify that the lack of transversality between $\mathbb P^{k-n-1}(0)$ and $\mathbb P^{k-n-1}(y)$,
for $y$ arbitrarly close to $0$, would imply that the differential of $P_{\mathcal W}$ at the origin
is not injective. This contradiction implies the corollary.
\end{proof}

\section{Poincar\'{e}-Blaschke's map}
\label{S:defPBW}

In this section the Poincar\'{e}-Blaschke's map for webs satisfying the assumptions of Theorem \ref{T:trepreau} are defined, and it is proved
that they have rank two when in dimension at least three. Its content is considerably more technical, although rather elementary, than
the remaining of the book. The arguments herein follow very closely \cite{Trepreau}.

\subsubsection{Settling the notation}
Let ${u_1, \ldots, u_k : (\mathbb C^n,0) \to (\mathbb C,0)}$  be submersions defining the foliations $\mathcal F_1, \ldots, \mathcal F_k$.
Recall that Proposition \ref{P:normal} settles  the existence
of a coframe $\varpi = (\varpi_0, \ldots, \varpi_{n-1})$ for $\Omega^1(\mathbb C^n,0)$, and $k$ holomorphic functions
$\theta_1, \ldots, \theta_k$, such that the foliation $\mathcal F_i$ is induced by the $1$-form
\[
\omega_i = \sum_{q=0}^{n-1} (\theta_i)^{q} \varpi_q \, .
\]
Notice also the existence of holomorphic functions $h_1, \ldots, h_k$ satisfying $du_i = h_i \omega_i$ for every $i \in \underline k$.

\medskip

 Until the end of Section \ref{S:defPBW} the submersions $u_i$; the coframe $\varpi$; the functions $\theta_i$ and $h_i$; and
the $1$-forms $\omega_i$  will have the same meaning as above.

\medskip

For an arbitrary $1$-form $\alpha \in \Omega^1(\mathbb C^n,0)$ its $q$-th component in the coframe $\varpi$ will be
written as $\{ \alpha \}^q$. More precisely, the holomorphic functions $\{ \alpha \}^0, \ldots, \{\alpha\}^{n-1}$ are
implicitly defined by  the identity
\[
 \alpha = \sum_{q=0}^{n-1} \{ \alpha\}^q \varpi_i \, .
\]

\medskip

To write down the canonical maps $\kappa_i$ explicitly, identify
the  point $ (a_1, \ldots, a_{\pi}) \in \mathbb P^{\pi -1}$ with the hyperplane
$\{ a_1 \eta^{(1)} + \cdots + a_{\pi} \eta^{(\pi)}=0 \} $ in\begin{footnote}{Here the abelian relations
$\eta^{(j)}$ are thought as coordinate functions on $\A2W$, which is the same as thinking of them as elements of $\A2W^*$.}\end{footnote} $\A2W$. The $i$-th evaluation morphism
at a point $x \in (\mathbb C^n,0)$ is nothing more than
\[
(a_1, \ldots, a_{\pi})  \longmapsto  a_1 \eta^{(1)}_i(x) + \cdots + a_{\pi} \eta^{(\pi)}_i(x) \, .
\]
Notice  that the $1$-forms $\eta^{(j)}_i$ for $j \in \underline{\pi}$, are all proportional
to $\omega_i$.   Hence there are holomorphic functions $z^{(j)}_i$ such that
\[
\eta^{(j)}_i = z^{(j)}_i \omega_i \,
\]
 for every $i \in \underline k$  and every $ j \in \underline \pi $\, .
Therefore, for a fixed $i \in \underline k$, the map
\begin{align*}
Z_i: (\mathbb C^n,0) & \longrightarrow \mathbb C^{\pi} \\
x &\longmapsto ( z^{(1)}_i(x), \ldots, z^{(\pi)}_i(x))
\end{align*}
is a lift of $\kappa_i$ to $\mathbb C^{\pi}$. More precisely, the diagram
\[
\xymatrix{
   & \mathbb C^{\pi} \simeq \A2W^* \ar@{-->}[d] \\
(\mathbb C^n,0)  \ar^{Z_i}[ur] \ar^{\kappa_i}[r] & \mathbb P \A2W^*
}
\]
commutes.

\medskip

For further use, the translation of the conditions $\sum \eta_i =0 $ and $d\eta_i=0$ to conditions on the functions
$z^{(j)}_i$ is stated below as a lemma. The proof is immediate.
\begin{lemma}
\label{L:ARsystmALT}
If $z_1,\ldots,z_k:(\mathbb C^n,0) \to \mathbb C$ are  holomorphic functions on  $(\mathbb C^n,0)$
then $\eta = ( z_1 \omega_1, \ldots, z_k \omega_k)$  is an abelian relation of $\mathcal W$  if and only if
\begin{equation}
 \label{E:AReqNORM}
d( z_i \,\omega_i )=0 \qquad \mbox{ and  } \qquad  \sum_{i=1}^k z_i \, (\theta_i)^\sigma=0
\end{equation}
for every $i \in \underline k$ and every  $\sigma \in \{ 0,\ldots,n-1 \}$.
\end{lemma}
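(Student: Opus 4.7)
The plan is to simply unravel the three defining conditions of an abelian relation, as stated at the beginning of Chapter~\ref{Chapter:AR}, for the specific $k$-tuple $\eta = (z_1\omega_1,\ldots,z_k\omega_k)$. Conditions (a) and (b), namely $d\eta_i=0$ and $\omega_i\wedge \eta_i=0$, reduce immediately: the incidence condition $\omega_i\wedge(z_i\omega_i)=0$ is automatic, so the only content of (a)+(b) is precisely $d(z_i\omega_i)=0$ for every $i\in\underline k$. This recovers the first set of equations in the statement.

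Next I would address condition (c), i.e.\ $\sum_{i=1}^k z_i\omega_i=0$. Using the normal form supplied by Proposition~\ref{P:normal}, namely $\omega_i=\sum_{q=0}^{n-1}(\theta_i)^q\varpi_q$, one computes
\[
\sum_{i=1}^k z_i\,\omega_i \;=\; \sum_{i=1}^k z_i\sum_{q=0}^{n-1}(\theta_i)^q\varpi_q \;=\; \sum_{q=0}^{n-1}\Big(\sum_{i=1}^k z_i\,(\theta_i)^q\Big)\varpi_q.
\]
Since $\varpi=(\varpi_0,\ldots,\varpi_{n-1})$ is a coframe, the $1$-forms $\varpi_q$ are pointwise linearly independent over $\mathcal{O}(\mathbb C^n,0)$, so the vanishing of the above sum is equivalent to the vanishing of each coefficient. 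This yields exactly $\sum_{i=1}^k z_i(\theta_i)^\sigma=0$ for every $\sigma\in\{0,\ldots,n-1\}$.

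Putting the two reductions together gives the announced equivalence. There is no genuine obstacle here: the entire argument is formal, and the only ingredient beyond definitions is the coframe property of $\varpi$ provided by Proposition~\ref{P:normal}, which allows one to read off the coefficient equations $\sum_i z_i(\theta_i)^\sigma=0$ from the single vector identity $\sum_i z_i\omega_i=0$. Accordingly, the write-up should be a short paragraph, essentially the display above together with the observation that (b) is trivially satisfied for the given Ansatz $\eta_i=z_i\omega_i$.
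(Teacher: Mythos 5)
Your proof is correct and is exactly the argument the paper has in mind (the paper simply declares the proof "immediate" and omits it): condition (b) is automatic for the Ansatz $\eta_i=z_i\omega_i$, condition (a) is the first set of equations, and condition (c) decomposes in the coframe $\varpi$ into the Vandermonde-type relations $\sum_i z_i(\theta_i)^\sigma=0$.
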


\subsection{Interpolation of the canonical maps}

For $i \in \underline k$ consider the polynomials $P_i \in \mathcal O{(\mathbb C^n,0)}[t]$ defined
through the formula
\[
 \quad P_i (t )  =\prod_{\substack{j=1\\ j \neq i }}^k  \big( t -  \theta_j \big)  \, .
\]

The canonical maps, or more precisely their lifts $Z_i$ defined above,  are interpolated
by the  map $Z_*$ defined below.
\begin{align*}
\label{E:Z(u,t)}
Z_*: (\mathbb C^n,0) \times \mathbb C &\longrightarrow \mathbb C^\pi \\
(x, t) & \longmapsto \sum_{i=1}^k   P_i(t) Z_i(x) \, .
\end{align*}
Indeed $Z_*(x,\theta_i(x))$ is proportional to $Z_i(x)$ since $${Z_*(x, \theta_i(x) ) = P_i(\theta_i(x)) Z_i(x)}.$$

Some properties of the map $Z_*$ are collected in the following lemma.
\begin{lemma}
\label{L:UtimesC}
The map $Z_*$ has the following properties:
\begin{itemize}
\item[(a)] for every $x\in (\mathbb C,0)$ and every $t \in \mathbb C$,  $Z_*(x,t)\neq 0$;
\item[(b)] the entries of $Z_*(x,t)$, seen as polynomials in $\mathcal O_{(\mathbb C^n,0)}[t]$, have
degree at most $k-n-1$;
\item[(c)]  the coefficient of $t^{k-n-1}$ in
$Z_*(x,t)$ is non-zero
and equal  to
$$
\sum_{i=1}^k\theta_i(x)^nZ_i(x)  \, .
$$
\end{itemize}
\end{lemma}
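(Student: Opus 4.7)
The plan is to start from the partial-fraction identity
$$P_i(t)=\frac{Q(t)}{t-\theta_i(x)}, \qquad Q(t):=\prod_{j=1}^{k}\bigl(t-\theta_j(x)\bigr),$$
so that
$$Z_*(x,t)=Q(t)\sum_{i=1}^{k}\frac{Z_i(x)}{t-\theta_i(x)}.$$
Treating the inner sum as a formal Laurent series at $t=\infty$ and expanding $1/(t-\theta_i(x))=\sum_{\sigma\geq 0}\theta_i(x)^{\sigma}/t^{\sigma+1}$, one obtains
$$\sum_{i=1}^{k}\frac{Z_i(x)}{t-\theta_i(x)}=\sum_{\sigma\geq 0}\frac{S_\sigma(x)}{t^{\sigma+1}}, \qquad S_\sigma(x):=\sum_{i=1}^{k}\theta_i(x)^{\sigma}Z_i(x)\in\mathbb C^{\pi}.$$
Each of the $\pi$ chosen abelian relations $\eta^{(j)}$ satisfies the algebraic constraints of Lemma~\ref{L:ARsystmALT}, so stacking them over $j$ gives $S_\sigma(x)=0$ for every $\sigma\in\{0,\dots,n-1\}$. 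Consequently
$$Z_*(x,t)=Q(t)\left(\frac{S_n(x)}{t^{n+1}}+\frac{S_{n+1}(x)}{t^{n+2}}+\cdots\right)$$
as formal Laurent series, and since the left-hand side is manifestly a polynomial in $t$, this proves~(b) (degree at most $k-n-1$ in each entry) and identifies the coefficient of $t^{k-n-1}$ as $S_n(x)=\sum_{i=1}^{k}\theta_i(x)^n Z_i(x)$, which is the formula in~(c).

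To establish the non-vanishing statements in (a) and~(c), I would exploit the structure of the null space of the $\pi\times k$ matrix $M(x)=[Z_1(x)\,|\,\cdots\,|\,Z_k(x)]$. Lemma~\ref{L:gera0} forces $\mathrm{rank}\,M(x)=k-n$, so $\ker M(x)$ has dimension $n$; on the other hand the Vandermonde-type vectors $\nu_\sigma(x):=(\theta_1(x)^{\sigma},\dots,\theta_k(x)^{\sigma})$ for $\sigma=0,\dots,n-1$ lie in $\ker M(x)$ by Lemma~\ref{L:ARsystmALT}, and they are linearly independent because the scalars $\theta_i(x)$ are pairwise distinct (smoothness of $\mathcal W$). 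Hence $\ker M(x)$ is exactly the linear span of $\nu_0(x),\dots,\nu_{n-1}(x)$. If $S_n(x)$ vanished, then $\nu_n(x)$ would belong to this span, forcing the $k>n$ distinct scalars $\theta_i(x)$ to be roots of a nonzero polynomial of degree $n$, a contradiction; this gives the non-vanishing claim in~(c). For~(a), the identity $Z_*(x,\theta_i(x))=P_i(\theta_i(x))Z_i(x)$ together with $P_i(\theta_i(x))=\prod_{j\neq i}(\theta_i(x)-\theta_j(x))\neq 0$ and $Z_i(x)\neq 0$ handles the values $t=\theta_i(x)$; for any other $t_0$, a vanishing $Z_*(x,t_0)$ would put the vector $(P_i(t_0))_{i=1}^{k}$ in $\ker M(x)$, hence $P_i(t_0)=\sum_{\sigma=0}^{n-1}\alpha_\sigma\theta_i(x)^\sigma$ for some scalars $\alpha_\sigma$. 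Multiplying by $t_0-\theta_i(x)$ yields that the polynomial $(t_0-y)\sum_{\sigma=0}^{n-1}\alpha_\sigma y^\sigma$, of degree at most $n$ in $y$, agrees with the constant $Q(t_0)$ at the $k>n$ distinct points $y=\theta_i(x)$; a straightforward descending recursion on coefficients then gives $\alpha_\sigma=0$ for every $\sigma$ and hence $Q(t_0)=0$, contradicting $t_0\notin\{\theta_1(x),\dots,\theta_k(x)\}$.

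The genuinely delicate input is the precise identification of $\ker M(x)$ with the span of $\nu_0(x),\dots,\nu_{n-1}(x)$: it rests on the rank bound furnished by Lemma~\ref{L:gera0}, which in turn depends on the normal form supplied by Proposition~\ref{P:normal} and thus on Castelnuovo's Lemma. Once this identification is secured, everything else reduces to elementary manipulations of Vandermonde-type dependencies and to the distinctness of the $\theta_i(x)$'s, so no further substantial obstacle arises.
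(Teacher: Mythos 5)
Your proof is correct and follows essentially the same route as the paper: both rest on the relations $\sum_i \theta_i^\sigma Z_i=0$ for $\sigma=0,\dots,n-1$ (your partial-fraction expansion of $Q(t)/(t-\theta_i)$ at infinity is just a repackaging of the paper's coefficient recursion $\sigma_j^{(i)}=\sum_s \theta_i^s\sigma_{j+s+1}$), and item (a) is handled in both by the same case split together with the identification of $\ker M(x)$ with the span of the Vandermonde vectors. The one place you go beyond the paper is the non-vanishing claim in (c), which the paper's proof leaves tacit after computing the coefficient to be $\sum_i\theta_i^n Z_i$; your argument that $\nu_n(x)\in\operatorname{span}(\nu_0(x),\dots,\nu_{n-1}(x))$ would force the $k>n$ distinct values $\theta_i(x)$ to be roots of a degree-$n$ polynomial correctly fills that gap.
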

\begin{proof}
To prove item (a), let $(x_0,t_0) \in (\mathbb C^n,0) \times \mathbb C$ be a point where $Z_*$ vanishes.
If  $t_0=\theta_i(x_0)$ for some $i\in \underline k$, then clearly $Z(x_0,t_0)=P_i(\theta_i(x_0)) \, Z_i(x_0)\neq 0$.

\smallskip

Assume now that   $t_0$ belongs to $\mathbb C \setminus \{\theta_1(x_0),\ldots,\theta_k(x_0)\}$.
Because  ${\ell^1(\mathcal W)=k-n}$,   Lemma \ref{L:ARsystmALT} implies that every relation of the form
 $\sum_i c_i Z_i(x_0)=0$ is a linear combination of the relations
 $\sum_i (\theta_i(x_0))^\sigma Z_i(x_0)$ for $\sigma=0,\ldots,n-1$.

\smallskip

 If  $\sum_{i} (t-\theta_i(x_0))^{-1} Z_i(x_0)=0$, then there exist   $\mu_1,\ldots,\mu_n\in \mathbb C$  which satisfy
$$ \frac{1}{t_0-\theta_i(x_0)}=\sum_{\sigma=1}^{n} \mu_\sigma\,\theta_i(x_0)^\sigma
$$
for every $i \in \underline k$. But this is not possible, since $\theta_i(x_0) \neq \theta_j(x_0)$ whenever $i \neq j$.
Hence (a) holds true.

\medskip

To prove item (b), the dependence in $x$ will be dropped from the  notation in order to keep it simple. Let $P(t)  =\prod_{j=1  }^k  ( t -  \theta_j )$ and write
\begin{equation}\label{E:defsigma}
P_i(t) = \sum_{j=1}^{k-1} \sigma_j^{(i)} t^j   \quad \text{ and } \quad P(t) = \sum_{j=1}^{k} \sigma_j t^j.
\end{equation}
Comparing coefficients in the identities $P(t) = (t - \theta_i) P_i(t)$, one deduces that
\[
\sigma_{j+1}=\sigma_j^{(i)} -\theta_i\,\sigma_{k+1}^{(i)} \, .
\]
Consequently,
\begin{equation}
\label{sigmaka}
 \sigma_j^{(i)} =\sum_{s=0}^{k-j-1} (\theta_i)^s\,\sigma_{j+s+1} \; .
\end{equation}

From Equation (\ref{sigmaka}), it follows that
\begin{align}
Z_*(t)=& \, \sum_{j=0}^{k-1} \Big( \sum_{i=1}^k \sigma_j^{(i)} \,Z_i \Big)\, t^j \nonumber
\\=& \,\sum_{j=0}^{k-1} \Big(\sum_{i=1}^k \sum_{s=0}^{k-j-1} (\theta_i)^s \,\sigma_{j+s+1} \,Z_i \Big)\, t^j \nonumber  \\\label{Zstar}
=& \,\sum_{j=0}^{k-1}\Big(\sum_{s=0}^{k-j-1} \big( \sum_{i=1}^k(\theta_i)^s \,Z_i \big)\, \sigma_{j+s+1}\Big)\,  t^j \; .
\end{align}
According to Lemma  \ref{L:ARsystmALT},  $\sum_i Z_i (\theta_i)^s$  is identically zero for  any  $s \in \{ 0,\ldots,n-1 \}$.
Thus the  coefficient of $t^j$ in  (\ref{Zstar})
is identically zero for every  $j\geq k-n$. Item (b) follows.

\medskip

The coefficient of   $t^{k-n-1}$ in   $Z_*(t)$  is
$$ \sum_{s=0}^{n}
\Big( \sum_{i=1}^k(\theta_i)^s \,Z_i\Big)\, \sigma_{k - n + s}  =  \sigma_{k}
\sum_{i=1}^k(\theta_i)^n \,Z_i \,
$$
with the equality being obtained through the use of  Lemma \ref{L:ARsystmALT} exactly as above.
To conclude the proof of Item (c) it suffices to notice that $\sigma_{k} =1$ according to (\ref{E:defsigma}).
\end{proof}

 \subsection{Definition of Poincar\'{e}-Blaschke's map}

The Poincar\'{e}-Blaschke's map\footnote{Such  map was first introduced by Blaschke    extrapolating ideas of Poincar\'{e} \cite{poincare}.  In \cite{blaschke}, Blaschke constructs and studies the Poincar\'{e}-Blaschke map of a maximal rank planar 5-web.
He mistakenly asserted that its image lie in a surface of $\mathbb P^{5}$.} of $\mathcal W$,
\[ PB_{\mathcal W} : (\mathbb C^n,0) \times \mathbb P^1  \longrightarrow \mathbb P^{\pi-1} \, ,
\]
is defined as
\[
 \label{E:defPBWUtimesinfty}
PB_{\mathcal W} (x,t)  = \left\{ \begin{array}{lcl}
 \big[ Z_* (x,t) \big] & \text{for} & t \in \mathbb C  \\
 \big[  \sum_{i=1}^k\theta_i(x)^nZ_i(x)  \big] & \text{for} & t= \infty.
  \end{array}
 \right.
\]

Observe that  the definition of $Z_*$  does depend on the choice of:
the subspace  $\A2W \subset \mathcal A(\mathcal W)$; on the basis of $\A2W$;
and on the adapted   coframe $\varpi_0, \ldots, \varpi_{n-1}$. Nevertheless,
 modulo projective changes of coordinates on the target $\mathbb P^{\pi -1}$
and on the $\mathbb P^1$ factor of the source, $PB_{\mathcal W}$
only depends on the choice of the subspace $\A2W \subset \mathcal A(\mathcal W)$ as
the reader can easily verify.

\bigskip

Notice that Poincar\'{e}-Blaschke's map restricted at $\{ x \} \times \mathbb P^1$  parametrizes
a rational curve which interpolates  the canonical {\it points} $\kappa_1(x), \ldots, \kappa_k(x)$.
More precisely,

\begin{lemma}
\label{L:CRN0}
For $x \in (\mathbb C^n,0)$ fixed, the map $${\varphi_x : t\in {\mathbb P}^1\mapsto P\!B_{ \mathcal W}(x,t)\in \mathbb P^{\pi-1}} $$
is an isomorphism onto  a rational normal curve  ${\mathscr C}(x) \subset \mathbb P^{k-n-1}(x)$ of degree $k-n-1$.
\end{lemma}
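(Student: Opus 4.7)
The plan is to recognize $\varphi_x$ as the morphism from $\mathbb{P}^1$ associated to a base-point-free linear subsystem $V \subset H^0(\mathbb{P}^1, \mathcal{O}_{\mathbb{P}^1}(k-n-1))$, and then to show that $V$ is actually the complete linear system, so that $\varphi_x$ is (after linear identification of targets) the $(k-n-1)$-Veronese embedding, whose image is by definition a rational normal curve of degree $k-n-1$.

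First I would let $V \subset \mathbb{C}_{\le k-n-1}[t]$ be the subspace spanned by the $\pi$ entries of $Z_*(x,t)$. By Lemma \ref{L:UtimesC}(b), each entry has degree at most $k-n-1$, so $V$ embeds into $H^0(\mathbb{P}^1, \mathcal{O}_{\mathbb{P}^1}(k-n-1))$. The base-point-freeness of $V$ is then a restatement of Lemma \ref{L:UtimesC}: part (a) says that the $\pi$ polynomials have no common zero at any finite $t \in \mathbb{C}$, while part (c) guarantees that their $t^{k-n-1}$-coefficients do not vanish simultaneously, i.e., there is no common zero at $t=\infty$ either. Consequently $\varphi_x$ factors through a linear inclusion $\mathbb{P}(V^*) \hookrightarrow \mathbb{P}^{\pi-1}$ whose image is precisely the projective span of $\varphi_x(\mathbb{P}^1)$.

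The second step is the dimension count $\dim V = k-n$. For each $i \in \underline k$, the identity $Z_*(x,\theta_i(x)) = P_i(\theta_i(x))\, Z_i(x)$ and the fact that $P_i(\theta_i(x)) = \prod_{j \ne i}(\theta_i(x)-\theta_j(x))$ is non-zero (since the $\theta_j(x)$ are pairwise distinct) yield $\varphi_x(\theta_i(x)) = \kappa_i(x)$. Hence the projective span of $\varphi_x(\mathbb{P}^1)$ contains the points $\{\kappa_i(x)\}_{i \in \underline k}$, whose span is $\mathbb{P}^{k-n-1}(x)$ by Lemma \ref{L:gera0}. Therefore $\dim V - 1 \ge k-n-1$, and since $V \subset \mathbb{C}_{\le k-n-1}[t]$ has dimension at most $k-n$, equality must hold. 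In particular, the image of the linear inclusion $\mathbb{P}(V^*) \hookrightarrow \mathbb{P}^{\pi-1}$ is exactly $\mathbb{P}^{k-n-1}(x)$.

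Finally, because $V$ coincides with the complete linear system $|\mathcal{O}_{\mathbb{P}^1}(k-n-1)|$, the morphism $\mathbb{P}^1 \to \mathbb{P}(V^*) \cong \mathbb{P}^{k-n-1}$ is (up to a linear change of coordinates) the $(k-n-1)$-th Veronese embedding; it is an isomorphism onto a rational normal curve of degree $k-n-1$. Composing with the identification $\mathbb{P}(V^*) \cong \mathbb{P}^{k-n-1}(x)$ gives the desired conclusion for $\varphi_x$. I do not anticipate a serious obstacle: Lemma \ref{L:UtimesC} handles base-point-freeness and the degree bound, and Lemma \ref{L:gera0} supplies the sole non-formal ingredient, namely that the canonical points $\kappa_i(x)$ already span a space of the maximal possible dimension $k-n-1$; everything else is linear-systems bookkeeping.
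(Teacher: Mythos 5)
Your proof is correct and follows essentially the same route as the paper: both arguments rest on Lemma \ref{L:UtimesC} for the bound $\deg_t Z_* \le k-n-1$ and on the fact that the image passes through the $k$ points $\kappa_i(x)$, whose span is $\mathbb P^{k-n-1}(x)$. The only difference is in the last step --- the paper invokes Proposition \ref{P:ratdeg} (a non-degenerate curve of degree $k-n-1$ in $\mathbb P^{k-n-1}$ is a rational normal curve), whereas you observe that the linear system spanned by the entries of $Z_*(x,\cdot)$ must be the complete system $|\mathcal O_{\mathbb P^1}(k-n-1)|$; these are two standard equivalent characterizations, and your version has the minor merit of making the base-point-freeness and the ``isomorphism onto its image'' claim fully explicit.
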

\begin{proof}
Clearly the map under scrutiny parametrizes a rational curve $\mathscr C(x)$. Moreover  $\varphi_x(\theta_i(x))=[Z_i(x)]= \kappa_i(x)$ for every $i \in \underline k$, and
\begin{equation*}
 { \dim}\, \big\langle {\mathscr C}(x) \big\rangle\geq
{\dim}\, \Big\langle Z_1(x) ,\ldots, Z_k(x) \Big\rangle - 1 \, \, .
\end{equation*}
But the span of $Z_1(x), \ldots, Z_k(x)$ has dimension $\ell^1(\mathcal W) = k-n$.

Lemma \ref{L:UtimesC} tells that the map  $Z_*$ has degree  $k-n-1$ in $t$. Hence $\varphi_x$ parametrizes a non-degenerate rational curve in $\mathbb P^{k-n-1}(x)$
of degree at most $k-n-1$. It follows from Proposition \ref{P:ratdeg}   that $\mathscr C(x)$ is a rational normal curve of degree $k-n-1$.
\end{proof}

\subsection{The rank of Poincar\'{e}-Blaschke's map}

This section is devoted to the proof of the following proposition.

\begin{prop}\label{PBWrang}
If $n > 2$
then $P\!B_{\mathcal W}$ has rank $2$  at every  $(x,t) \in  (\mathbb C^n,0) \times {\mathbb P}^1$.
\end{prop}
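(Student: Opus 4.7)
The plan is to prove, at every $(x_0,t_0)\in (\mathbb C^n,0)\times \mathbb P^1$, both a lower bound $\mathrm{rank}\, d(PB_{\mathcal W})_{(x_0,t_0)} \geq 2$ and an upper bound $\leq 2$.

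\textbf{Lower bound.} By Lemma \ref{L:CRN0} the restriction $\varphi_{x_0}$ embeds $\mathbb P^1$ as the rational normal curve $\mathscr C(x_0)$ of degree $k-n-1$, so $\partial_t Z_*(x_0,t_0)$ is not proportional to $Z_*(x_0,t_0)$ and the $t$-direction alone yields rank at least $1$. To push the rank up to $2$ I would invoke Corollary \ref{C:int=n-2}: for any $x$ close to but distinct from $x_0$, the intersection $\mathbb P^{k-n-1}(x)\cap \mathbb P^{k-n-1}(x_0)$ has dimension $n-2$, which is strictly less than $k-n-1$ because $k\geq 2n$ and $n\geq 3$. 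Hence the non-degenerate curve $\mathscr C(x)\subset\mathbb P^{k-n-1}(x)$ is not contained in $\mathbb P^{k-n-1}(x_0)$, and in particular some $\partial_\mu Z_*(x_0,t_0)$ is not tangent to $\mathscr C(x_0)$ at $PB_{\mathcal W}(x_0,t_0)$.

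\textbf{Upper bound.} The heart of the matter is to establish, for every tangent vector $\mu$ at $x$, an identity
\[
\partial_\mu Z_*(x,t) \;=\; A(\mu;x,t)\, Z_*(x,t) \;+\; B(\mu;x,t)\, \partial_t Z_*(x,t) \;+\; C(\mu;x,t)\, R(x,t),
\]
where $R(x,t)\in\mathbb C^\pi$ is a vector depending on $(x,t)$ but not on $\mu$ and $A,B,C$ are scalars depending linearly on $\mu$. Granting this, the image of $d(PB_{\mathcal W})_{(x_0,t_0)}$ inside $T_{[Z_*(x_0,t_0)]}\mathbb P^{\pi-1}\cong \mathbb C^\pi/\mathbb C\cdot Z_*(x_0,t_0)$ lies in the span of the classes of $\partial_t Z_*(x_0,t_0)$ and $R(x_0,t_0)$; its projective dimension is therefore at most $2$. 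Combined with the lower bound this forces equality.

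To produce the identity I would start from the factorization $Z_*=P\cdot W$ with $P(x,t)=\prod_j(t-\theta_j(x))$ and $W(x,t)=\sum_i Z_i(x)/(t-\theta_i(x))$, and expand
\[
\partial_\mu Z_*(x,t)\;=\;(\partial_\mu P)\,W\;+\;P\sum_i\left(\frac{\partial_\mu Z_i}{t-\theta_i}+\frac{(\partial_\mu\theta_i)\,Z_i}{(t-\theta_i)^2}\right).
\]
Two tools would then be deployed to collapse the right-hand side. First, the closedness relations $d(z_i^{(j)}\omega_i)=0$ of Lemma \ref{L:ARsystmALT} allow one to re-express each directional derivative $\partial_\mu z_i^{(j)}$, in the adapted coframe $\varpi=(\varpi_0,\ldots,\varpi_{n-1})$ supplied by Proposition \ref{P:normal}, as an algebraic combination of the $z_i^{(j)}$ themselves, weighted by structure functions of the coframe and by the $d\theta_i$. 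Second, the $n$ vanishing identities $\sum_i z_i^{(j)}\theta_i^\sigma=0$ for $\sigma=0,\ldots,n-1$ are used repeatedly to eliminate high-order polar parts in $t-\theta_i$ upon summation over $i$, exactly as in the derivation of (\ref{sigmaka}) and (\ref{Zstar}). A natural candidate for $R(x,t)$ is then $\partial_t^2 Z_*(x,t)$, or equivalently the next coefficient in the Laurent expansion of $W$ at $t=\infty$ beyond those already absorbed into $Z_*$ and $\partial_t Z_*$.

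\textbf{Main obstacle.} The genuine difficulty is the bookkeeping required to produce the identity above: one must show that after fully expanding $\partial_\mu Z_*$, all $\mu$-dependence concentrates in exactly three scalar coefficients multiplying three $\mu$-independent vectors. This hinges essentially on the strength of the normal form of Proposition \ref{P:normal} (available only because $n\geq 3$, via Castelnuovo's Lemma) and on having the full $n$ vanishing relations of Lemma \ref{L:ARsystmALT}. For $n=2$ neither mechanism provides enough rigidity, in accordance with the fact that for planar webs the image of $PB_{\mathcal W}$ is generically three-dimensional and the proposition must fail.
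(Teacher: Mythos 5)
Your overall architecture (lower bound plus upper bound via a three-term identity for $\partial_\mu Z_*$) is the paper's, but both halves as written have real gaps. For the lower bound, the step ``$\mathscr C(x)\not\subset\mathbb P^{k-n-1}(x_0)$, \emph{in particular} some $\partial_\mu Z_*(x_0,t_0)$ is not tangent to $\mathscr C(x_0)$ at $PB_{\mathcal W}(x_0,t_0)$'' is a non sequitur: Corollary \ref{C:int=n-2} controls the first-order motion of the whole linear span $\mathbb P^{k-n-1}(x)$, not the motion of the single point $Z_*(x,t_0)$ at the fixed parameter $t_0$, and nothing prevents that particular point from moving, to first order, inside the tangent line $\langle Z_*(x_0,t_0),\partial_t Z_*(x_0,t_0)\rangle$ even while the curve as a whole escapes. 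The paper's Lemma \ref{L:lower} gets rank $\geq 2$ differently: on the hypersurface $\{t=\theta_i(x)\}$ the map is a submersion onto the canonical curve $C_i$, whose tangent is \emph{not} contained in $\mathbb P^{k-n-1}(x)$ by Lemma \ref{L:gera}, and off these hypersurfaces a variant of the same lemma is invoked. You would need something of that kind; Corollary \ref{C:int=n-2} alone does not suffice.

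For the upper bound, you correctly identify the target identity and the two mechanisms (closedness of the $z_i^{(j)}\omega_i$ rewritten in the adapted coframe, and the Vandermonde-type relations $\sum_i z_i\theta_i^\sigma=0$), but the entire difficulty of the proposition lives in actually producing that identity, and your sketch stops exactly there. Concretely, what is needed is Proposition \ref{P:TECHmain} — especially relation (\ref{E:rel-theta1}), whose derivation is the \emph{only} place $n>2$ is used (one must be able to choose $q=2$, $q'=1$ in (\ref{E:baz})) — followed by Lemma \ref{L:Ip} and Proposition \ref{DIFFZ}. Moreover your candidate for the third spanning vector, $R=\partial_t^2Z_*$, is not the right one: that vector is determined by the restriction of $Z_*$ to $\{x\}\times\mathbb P^1$ alone, and there is no reason for the tangent plane of $X_{\mathcal W}$ to lie in the second osculating plane of $\mathscr C(x)$ at every point. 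The correct third vector is $\{dZ\}^0$, the $\varpi_0$-component of $dZ$, which genuinely involves the derivatives $dZ_i$ and $d\theta_i$; the identity then reads $\underline{d}Z=\{dZ\}^0\bigl(\sum_p t^p\varpi_p\bigr)+Z\,\Omega+(\partial Z/\partial t)(\Gamma+dt)$. So the proposal is a correct roadmap of the paper's strategy rather than a proof: the decisive computation is missing and one of its stated outputs is misidentified.
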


\subsubsection{Lower bound for the rank}

It is not hard to show that the rank of $P\!B_{\mathcal W}$ is at least two as the result below shows.

\begin{lemma}\label{L:lower}
For every $(x,t) \in (\mathbb C^n,0) \times \mathbb P^1$, the rank of $PB_{\mathcal W}$ at $(x,t)$
is at least two. Moreover, if  $t = \theta_i(x)$ for some $x \in (\mathbb C^n,0)$, then
$PB_{\mathcal W}(x,t)$ has rank exactly two at $(x,t)$.
\end{lemma}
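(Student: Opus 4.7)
The plan hinges on the identity $PB_{\mathcal W} \circ \phi_i = \kappa_i$, where $\phi_i : x \mapsto (x, \theta_i(x))$. This is immediate from the definition of $Z_*$: since the smoothness of $\mathcal W$ makes the functions $\theta_1, \ldots, \theta_k$ pairwise distinct at every point, substituting $t = \theta_i(x)$ kills every summand $P_j(\theta_i(x)) Z_j(x)$ with $j \neq i$, because the product defining $P_j(\theta_i(x))$ contains the factor $\theta_i(x) - \theta_i(x) = 0$; on the other hand $P_i(\theta_i(x)) = \prod_{l \neq i}(\theta_i(x) - \theta_l(x))$ is nowhere vanishing. Hence $Z_*(x, \theta_i(x)) = P_i(\theta_i(x)) Z_i(x)$, so that $PB_{\mathcal W}(x, \theta_i(x)) = \kappa_i(x)$ as points of $\mathbb P^{\pi-1}$.

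For the second assertion, I would fix $x_0$ and set $t_0 = \theta_i(x_0)$. The image of $dPB_{\mathcal W}(x_0, t_0)$ contains two distinguished tangent lines at $\kappa_i(x_0)$. First, by Lemma \ref{L:CRN0}, the restriction $\varphi_{x_0}\colon t \mapsto PB_{\mathcal W}(x_0, t)$ is an immersion, so $dPB_{\mathcal W}(x_0, t_0) \cdot (0, 1)$ spans $T_{\kappa_i(x_0)} \mathscr C(x_0)$, a line lying inside $\mathbb P^{k-n-1}(x_0)$. Second, by Remark \ref{R:kisub} the canonical map $\kappa_i$ is a submersion onto the smooth canonical curve $C_i$, so $d(PB_{\mathcal W} \circ \phi_i)(x_0) \cdot T_{x_0}(\mathbb C^n,0) = T_{\kappa_i(x_0)} C_i$. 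An application of Lemma \ref{L:gera} with $B = \{i\}$ (allowed since $1 \le k-2n+1$) shows that $T_{\kappa_i(x_0)} C_i$ is not contained in $\mathbb P^{k-n-1}(x_0)$; the two tangent lines are therefore linearly independent, so the rank is at least $2$. To match this with an upper bound of $2$, I would decompose any $(v, s) \in T_{x_0}(\mathbb C^n,0) \oplus T_{t_0}\mathbb P^1$ uniquely as $(v, d\theta_i(x_0) \cdot v) + (0, s - d\theta_i(x_0) \cdot v)$: the first summand is tangent to the graph of $\phi_i$ and hence sent into $T_{\kappa_i(x_0)} C_i$, while the second, being a scalar multiple of $(0,1)$, is sent into $T_{\kappa_i(x_0)} \mathscr C(x_0)$. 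Thus the image of $dPB_{\mathcal W}(x_0, t_0)$ equals exactly the $2$-plane spanned by these two lines, proving the rank there is exactly $2$.

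For the first assertion at an arbitrary $(x_0, t_0)$, the direction $\partial_t Z_*(x_0, t_0)$ is nonzero modulo $Z_*(x_0, t_0)$ by Lemma \ref{L:CRN0}, so one only needs to exhibit some $v \in T_{x_0}(\mathbb C^n,0)$ with $\partial_v Z_*(x_0, t_0) \notin \langle Z_*(x_0, t_0), \partial_t Z_*(x_0, t_0)\rangle$. I would approach this computationally: expanding
\[
\partial_{x_j} Z_*(x_0, t_0) = \sum_{l=1}^{k} \bigl[(\partial_{x_j} P_l)(x_0, t_0) Z_l(x_0) + P_l(x_0, t_0) (\partial_{x_j} Z_l)(x_0)\bigr]
\]
in the basis $\{Z_l(x_0)\}_{l \in \underline k}$, and using the abelian-relation system \eqref{E:AReqNORM} together with the non-vanishing in Lemma \ref{L:UtimesC}(c), one tries to extract, for at least one $j$, a component transverse to $\langle Z_*, \partial_t Z_*\rangle$. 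The hard part will be organizing this calculation cleanly; a possible alternative is to argue by lower semicontinuity of the rank that the locus $\{r \le 1\}$ is closed and avoids $\Theta := \bigcup_i \{t = \theta_i(x)\}$ (by the previous paragraph), and then derive a contradiction from Corollary \ref{C:int=n-2}, which under $k \ge 2n$ gives $\dim(\mathbb P^{k-n-1}(x) \cap \mathbb P^{k-n-1}(y)) = n-2 < k-n-1$ for $x \neq y$, so that the rational normal curves $\mathscr C(x)$ cannot all coincide in any neighborhood.
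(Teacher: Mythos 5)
Your treatment of the second assertion, and of the first assertion along the hypersurfaces $H_i=\{t=\theta_i(x)\}$, is correct and is essentially the paper's argument: the vertical direction is sent to the tangent of $\mathscr C(x)$, which lies inside $\mathbb P^{k-n-1}(x)$ by Lemma \ref{L:CRN0}; the directions tangent to the graph of $\phi_i$ are sent into $T_{\kappa_i(x)}C_i$ by Remark \ref{R:kisub}; Lemma \ref{L:gera} with $B=\{i\}$ separates these two lines; and your decomposition of $(v,s)$ into a graph-tangent part plus a vertical part gives the upper bound. Nothing to object to there.

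The gap is in the first assertion at a point $(x_0,t_0)$ with $t_0\neq\theta_i(x_0)$ for every $i$. Your primary route --- expand $\partial_{x_j}Z_*$ and ``extract a component transverse to $\langle Z_*,\partial_t Z_*\rangle$'' --- is the same route the paper takes (and the paper itself only sketches it, leaving details to the reader), but as written it is a plan rather than a proof. The missing observation is that, modulo $V=\langle Z_1(x_0),\dots,Z_k(x_0)\rangle$, one has $\partial_{v}Z_i(x_0)\equiv \omega_i(v)\,w_i$ for a non-zero class $w_i$ spanning $T_{\kappa_i(x_0)}C_i$ modulo $\mathbb P^{k-n-1}(x_0)$, whence $\partial_v Z_*(x_0,t_0)\equiv\sum_{i=1}^k P_i(x_0,t_0)\,\omega_i(v)\,w_i$; one must then rule out the identical vanishing of this expression in $v$ using that every $P_i(x_0,t_0)$ is non-zero, that any $k-2n+1$ of the $w_i$ are linearly independent (Lemma \ref{L:gera}), and the general position of the conormals $\omega_i$. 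Since $\langle Z_*,\partial_t Z_*\rangle\subset V$, any non-zero class modulo $V$ supplies the second independent direction. Your fallback argument does not close this hole: lower semicontinuity only tells you that $\{r\le 1\}$ is a closed analytic subset, and a closed analytic subset disjoint from $\Theta$ need not be empty --- it could be a single point or a proper subvariety --- so you cannot pass from ``rank $\le 1$ at $(x_0,t_0)$'' to ``rank $\le 1$ on an open set'', which is what the coincidence-of-curves argument via Corollary \ref{C:int=n-2} would require.
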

\begin{proof}
According to Lemma \ref{L:CRN0}, $P\!B_{\mathcal W}$ restricted to the line $\{x\} \times \mathbb P^1$ is
 an isomorphism onto the   rational normal  curve $\mathscr C(x) \subset \mathbb P^{k-n-1}(x)$.
In particular, the tangent line to $\mathscr C(x) $ at $PB_{\mathcal W}(x,t)$, namely
$$ \Big\langle PB_{\mathcal W}(x,t),\frac{\partial PB_{\mathcal W}}{\partial t}(x,t) \,
 \Big\rangle
$$
 is contained in $\mathbb P^{k-n-1}(x)$.

\medskip

The restriction of $P\!B_{\mathcal W}$ to the hypersurface $H_i=\{t= \theta_i(x)\}$ is a submersion
onto the $i$-th canonical curve of $C_i$.  Thus the image of the differential of ${PB_{\mathcal W}}|_{H_i}$
at $(x, \theta_i(x))$ is $T _{\kappa_i(x)} C_i$.  Lemma \ref{L:gera} implies
the rank of $PB_{\mathcal W}$ is exactly two  at any point of the hypersurface $H_i$.

\medskip

If $P(x,t) \neq 0$, that is if $(x,t) \notin \cup_{i\in \underline k} H_i$, then
one can deduce from Lemma \ref{L:gera} that the vectors
$$
\frac{ \partial}{ \partial x_j }\bigg(\sum_{i =1}^k P_i(x,t) Z_i(x)\bigg)
\, \qquad \text{ with } j\in \underline n,
$$
 span the whole space $\mathbb C^{\pi}$. Details are
left to the reader.
\end{proof}

To prove that the rank of $P\!B_{\mathcal W}$ is at most two is considerably more delicate  as
the next few pages testify.

\subsubsection{The main technical point}

The next result is essential in the proof of Proposition \ref{PBWrang}.

\begin{prop}
\label{P:TECHmain}
There are germs of  holomorphic functions $M^p_r$  determined by the  coframe $\varpi$   such that
for any abelian relation $(z_1\omega_1,\ldots,z_k\omega_k)\in \mathcal A(\mathcal W)$,
for every $p \in \{ 0,\ldots,n-2 \} $, and  every $i \in \underline k$, the following identity  holds true
\begin{equation}
\label{E:rel-z1}
 \{ dz_i\}^{p+1}-\theta_i \{ dz_i\}^p-z_i\{ d\theta_i\}^p  =z_i \sum_{r=0}^{n-1} (\theta_i)^r M^p_{r} .
\end{equation}

Moreover, if $n>2$ then
\begin{equation}
\label{E:rel-theta1}
 \{ d\theta_i\}^{p+1}-\theta_i \{ d\theta_i\}^p=\sum_{\rho=0}^{n} (\theta_i)^\rho N^p_{\rho} \, ,
\end{equation}
 where $N^p_r$ are holomorphic functions also determined by $\varpi$.
\end{prop}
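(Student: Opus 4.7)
The plan is to combine the two available constraints on an abelian relation in $\A2W$ --- the integrability condition $d(z_i\omega_i)=0$ and the algebraic relations $\sum_i z_i \theta_i^\sigma=0$ for $\sigma=0,\ldots,n-1$ from Lemma~\ref{L:ARsystmALT}, together with their exterior derivatives --- by expanding everything in the coframe $\varpi$. Writing $d\varpi_q=\sum_{a<b}\gamma^q_{ab}\,\varpi_a\wedge\varpi_b$ and using the normal form $\omega_i=\sum_q\theta_i^q\varpi_q$ from Proposition~\ref{P:normal}, the identity $d(z_i\omega_i)=0$ becomes a tractable system of $\binom{n}{2}$ scalar equations for each $i$. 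A direct computation of the coefficient of $\varpi_0\wedge\varpi_1$ yields
\[
\{dz_i\}^1-\theta_i\{dz_i\}^0-z_i\{d\theta_i\}^0=z_i\sum_{r=0}^{n-1}\gamma^r_{0,1}\,\theta_i^r,
\]
which is exactly (\ref{E:rel-z1}) at $p=0$, with $M^0_r=\gamma^r_{0,1}$ visibly depending only on $\varpi$.

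The key move is to pass from (\ref{E:rel-z1}) at level $p$ to (\ref{E:rel-theta1}) at the same level. I would multiply (\ref{E:rel-z1}) by $\theta_i$, sum over $i$, and use the $\varpi_q$-components of the differentiated algebraic identities $\sum_i[\theta_i^\sigma\{dz_i\}^q+\sigma z_i\theta_i^{\sigma-1}\{d\theta_i\}^q]=0$ to eliminate the $\{dz_i\}$-contributions; together with the vanishing $\sum_i z_i\theta_i^{r+1}=0$ for $r\leq n-2$, what survives is an identity of the form
\[
\sum_{i=1}^k z_i\Bigl[\{d\theta_i\}^{p+1}-\theta_i\{d\theta_i\}^p+M^p_{n-1}\,\theta_i^n\Bigr]=0,
\]
valid for every abelian relation in $\A2W$. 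The crucial dimensional input now enters: by the hypothesis $\dim\mathcal A(\mathcal W)/F^2\mathcal A(\mathcal W)=2k-3n+1$ and the filtration analysis of Section~\ref{S:BCbound}, the evaluation map $\A2W\to\mathbb C^k$, $\eta\mapsto(z_i(x))_i$, surjects onto the full $(k-n)$-dimensional subspace cut out by the Vandermonde equations. Its orthogonal complement in $\mathbb C^k$ is the $n$-dimensional span of the vectors $(\theta_i^\tau)_i$ for $\tau=0,\ldots,n-1$, so the bracket must equal $\sum_{\tau=0}^{n-1}c_\tau(x)\theta_i^\tau$ for holomorphic functions $c_\tau$ determined by $\varpi$. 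Rearranging gives (\ref{E:rel-theta1}) at level $p$ with $N^p_n=-M^p_{n-1}$ and $N^p_\tau=c_\tau$ for $\tau<n$.

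The induction closes by passing from (\ref{E:rel-theta1}) at level $p$ back to (\ref{E:rel-z1}) at level $p+1$. For this I would read off the coefficient of $\varpi_p\wedge\varpi_{p+1}$ in $d(z_i\omega_i)=0$, substitute the newly obtained $\{d\theta_i\}^{p+1}=\theta_i\{d\theta_i\}^p+\sum_\rho N^p_\rho\theta_i^\rho$, and divide through by $\theta_i^p$; the remaining coframe terms and the previously computed $N^p_\rho$'s combine into the new coefficients $M^{p+1}_r$. The main obstacle is the bookkeeping: at each inductive step one must verify that the polynomial-in-$\theta_i$ expressions carry the correct degree bounds (degree $\leq n-1$ in (\ref{E:rel-z1}) and $\leq n$ in (\ref{E:rel-theta1})) and that the negative powers of $\theta_i$ produced by the factorization of $\theta_i^p$ cancel automatically --- a cancellation forced by the consistency of the preceding orthogonality step, since the value of $N^p_n$ was pinned down precisely so that the top-degree $\gamma$-coefficient would cancel. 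The constraints $k\geq 2n$ and $n>2$ are exactly what make the orthogonality dimension count tight and the induction range $p\in\{0,\ldots,n-2\}$ non-trivial.
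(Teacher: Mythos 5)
Your opening computation is correct: writing $d\varpi_q=\sum_{a<b}\gamma^q_{ab}\,\varpi_a\wedge\varpi_b$, the coefficient of $\varpi_0\wedge\varpi_1$ in $d(z_i\omega_i)=0$ is exactly (\ref{E:rel-z1}) at $p=0$ with $M^0_r=\gamma^r_{01}$. Your passage from (\ref{E:rel-z1}) at level $p$ to (\ref{E:rel-theta1}) at level $p$ is also sound, and it is a genuinely different (and arguably cleaner) argument than the paper's: pairing against the differentiated Vandermonde identities uses $\sigma=2\le n-1$, which is precisely where $n>2$ enters, and the surjectivity of the evaluation map onto the kernel of the Vandermonde system places the bracket in the span of $(\theta_i^\tau)_{\tau\le n-1}$.

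The gap is the return step, from (\ref{E:rel-theta1}) at level $p$ to (\ref{E:rel-z1}) at level $p+1$, and it is not bookkeeping. The coefficient of $\varpi_p\wedge\varpi_{p+1}$ in $d(z_i\omega_i)=0$ reads
\[
\theta_i^{p+1}\{dz_i\}^{p}-\theta_i^{p}\{dz_i\}^{p+1}+z_i\left((p+1)\theta_i^{p}\{d\theta_i\}^{p}-p\,\theta_i^{p-1}\{d\theta_i\}^{p+1}\right)+z_i\sum_{q}\theta_i^q\gamma^q_{p,p+1}=0;
\]
after your substitution and division by $\theta_i^p$ (itself illegitimate where $\theta_i$ vanishes) this reproduces the \emph{level-$p$} combination $\{dz_i\}^{p+1}-\theta_i\{dz_i\}^p-z_i\{d\theta_i\}^p$, not the level-$(p+1)$ one, which involves $\{dz_i\}^{p+2}$ and $\{d\theta_i\}^{p+2}$ and hence data you have not yet controlled. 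Doing the elimination correctly (via the coefficients of $\varpi_0\wedge\varpi_{p+1}$ and $\varpi_0\wedge\varpi_{p+2}$) one gets
\[
\{dz_i\}^{p+2}-\theta_i\{dz_i\}^{p+1}-z_i\,\theta_i^{p+1}\{d\theta_i\}^{0}=z_i\cdot\bigl(\text{polynomial in }\theta_i\text{ of degree}\le n\bigr),
\]
and converting $\theta_i^{p+1}\{d\theta_i\}^0$ into $\{d\theta_i\}^{p+1}$ by iterating (\ref{E:rel-theta1}) costs an error polynomial of degree up to $n+p$. So your induction only yields the trivial estimate (\ref{E:rel-z0}) from the Remark following the Proposition; the whole content of (\ref{E:rel-z1}) is the vanishing of the $p+1$ coefficients of $\theta_i^{n},\dots,\theta_i^{n+p}$, and fixing the single function $N^p_n$ cannot cancel them all. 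This is exactly the point at which the paper injects a \emph{second-order} consequence of the rank hypothesis --- Lemma \ref{L:condORDRE2} on fields of quadratic forms annihilating every $\nabla u_i$ --- which yields (\ref{E:LCDisCOOL}) and hence (\ref{E:rel-z1}) for every $p$ at once, with no induction; your proposal only ever uses the first-order consequence (surjectivity of the evaluation map), which by itself does not give the sharp degree bound. The gap is repairable in your spirit: summing the level-$(p+1)$ identity over $i$ annihilates the left side, and the surjectivity of $\eta\mapsto(\sum_i z_i\theta_i^r)_{n\le r\le n+p}$ onto $\mathbb C^{p+1}$ then kills the excess coefficients --- but that argument has to be made, and it is not in your write-up.
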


\begin{remark}
Let $\theta$ be any function on $(\mathbb C^n,0)$ and set $\omega_\theta=\sum_{q} \theta^{q}\varpi_q$.
If $\omega_{\theta}$ is integrable  then, after writing down
the coefficients of  $\varpi_p \wedge\varpi_p \wedge \varpi_r$ in
$\omega_{\theta} \wedge d \omega_{\theta}$  and imposing their vanishing, one deduces  relations of the form
\begin{equation}
\label{E:rel-theta0}
 \{ d\theta\}^{p+1}-\theta \{ d\theta\}^p=\sum_{\rho=0}^{n+p+1} \theta^\rho \mathcal N^p_{\rho}
\end{equation}
for $p=0,\ldots,n-2$,
where  $\mathcal N^p_{\rho}$  are certain holomorphic functions  that do not depend on $\theta$ but only on the adapted coframe $\varpi$. \smallskip

Similarly, one can prove that there are holomorphic functions $\mathcal M^p_\rho$ depending only on $\varpi$
such that if  $\omega_\theta$ is integrable, then any $z$  such that  $d(z\omega_\theta)=0$ necessarily verifies  the relations (for any $p=0,\ldots,n-2$):
\begin{equation}
\label{E:rel-z0}
 \{ dz\}^{p+1}-\theta \{ dz\}^p-z\{ d\theta\}^p  =z \sum_{\rho=0}^{n+p} \theta^\rho \mathcal M^p_{\rho}\, .
\end{equation}

Relations (\ref{E:rel-theta0}) and (\ref{E:rel-z0}) are direct consequences of the integrability condition
and have nothing to do with webs and/or their abelian relations. Proposition \ref{P:TECHmain} improves
these relations by  lowing  down the upper limit of both summations.
\end{remark}

\subsubsection{Proof of the main technical point I -- Preliminaries }

For every  $x \in (\mathbb C^n,0)$ and every $i \in \underline k$, write the Taylor expansion of $u_i$ centered at the origin
as
$$  u_i(x)=\ell_i(x)+\frac{1}{2}q_i(x)+O_0(3)$$
where  $\ell_i$ (resp.   $q_i$) are linear  (resp. quadratic) forms.
Let  $\xi=(\xi_1 du_1, \ldots, \xi_k du_k)$ be an abelian relation of  $\mathcal W$.
 Since $d\xi_i\wedge du_i(0)=0$, the following identity holds
$$ \xi_i(x)=a_i+b_i \ell_i(x)+O_0(2) \qquad i \in \underline k
$$
for suitable complex numbers   $a_i,b_i$.
Looking at the order one jet at the origin of the relation  $\sum_i \xi_i du_i=0$, one deduces that
\begin{equation}
 \sum_{i=1}^k a_i \ell_i=0 \quad \mbox{ and } \quad
\sum_{i=1}^k a_i q_i+ \sum_{i=1}^k b_i (\ell_i)^2 = 0 .
\end{equation}

\medskip

Denote by  $\mathcal Q= \mathbb C_2[x_1, \ldots, x_n]$ the space
 of quadratic forms on  $\mathbb C^n$.  For   $Q=\sum_{i\leq j} Q^{ij}x_ix_j \in \mathcal Q$
 define the following differential operators
\begin{align*}
Q(\nabla F)=  \sum_{i\leq j} Q^{ij}\frac{\partial F}{\partial x_i}
\frac{\partial F}{\partial x_j} \quad \mbox{ and } \quad
Q_\partial(F)=   \sum_{i\leq j} Q^{ij}\frac{\partial^2 F}{\partial x_i\partial x_j }
\end{align*}
where  $F$ is a germ of holomorphic function.

By hypothesis $F^0\!\mathcal A(W)/F^1\!\mathcal A(W) $ has dimension $k-n$.  Therefore
for every  $a=(a_1,\ldots,a_d)\in \mathbb C^d$, the following implication holds true
\begin{equation}
\label{E:condORDRE2}
  \sum_{i=1}^k a_i \ell_i= 0 \Longrightarrow
\sum_{i=1}^k a_i q_i \in {\rm Span}_{\mathbb C}\,  \big\langle (\ell_1)^2,\ldots,(\ell_k)^2 \big\rangle \,.
\end{equation}

To better understand this relation, notice that the smoothness of $\mathcal W$ implies that  $\ell_1,\ldots,\ell_n$ is a basis of $\mathbb C_1[x_1, \ldots, x_n]$.
 Thus for every  $i \in \underline k$, there is a decomposition  $ \ell_i= \sum_{j} l_{i }^j \ell_j$ with constants  $l_{i}^j$ uniquely determined.
Thus  (\ref{E:condORDRE2}) translates into to the more precise statement
\begin{equation}
 \label{E:qalpha}
q_i- \sum_{j=1}^n l_{i}^j q_j  \in {\rm Span}_{\mathbb C}\,  \big\langle (\ell_1)^2,\ldots,(\ell_k)^2 \big\rangle
\end{equation}
for any  $i \in \underline k$.

If  $G \in \mathcal Q$ and $\ell$ is a linear form then  $G_\partial(\ell^2)=2 G(\nabla \ell)$.   Suppose that
$G(\nabla \ell_i)=0$  for every $i \in \underline k$. Thus  $G_\partial(q)=0$ for every $q\in {\rm Span}_{\mathbb C}\,  \big\langle (\ell_1)^2,\ldots,(\ell_k)^2 \big\rangle$.  Using the relations (\ref{E:qalpha})  one deduces
$$
G_\partial(q_i)=\sum_{j=1}^n l_i^j G_\partial(q_i)
$$
for every  $i \in \underline k$.

Set  $M_G$ as the vector field $\sum_i G_\partial(q_i)\frac{\partial}{\partial x_i}$.
Since  $G_\partial(\ell)=0 $ for every linear form $\ell$, one deduces that
$$
G_\partial(u_i)(0)=\langle du_i(0),M_{G}\rangle
$$
for every  $i \in \underline k$.

By hypothesis, the implication  (\ref{E:condORDRE2}) holds true for every $x \in (\mathbb C^n,0)$.
The discussion above implies the following result.

\begin{lemma}
\label{L:condORDRE2}
Let  $\mathcal G=\sum_{i\leq j} \mathcal G ^{ij}(x) x_i x_j$ be a field of quadratic forms.
If  $\mathcal G(\nabla u_i)$ vanishes identically for every $i \in \underline k$ then there exits a vector field $X_{\mathcal G}$
such that
$$
\mathcal G_\partial(u_i)=\langle du_i ,X_{\mathcal G}\rangle $$
for every $i \in \underline k$.
\end{lemma}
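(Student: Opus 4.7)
The plan is to promote the pointwise argument sketched in the discussion preceding the lemma into a holomorphic statement over the whole germ $(\mathbb C^n,0)$. That discussion established, at the origin, the implication ``$G(\nabla \ell_i)=0$ for every $i$ $\Rightarrow$ $G_\partial(u_i)(0)=\langle du_i(0),M_G\rangle$'' using only two inputs: (i) the smoothness of $\mathcal W$ at $0$ and (ii) the codimension count $\dim F^0_0\mathcal A(\mathcal W)/F^1_0\mathcal A(\mathcal W)=k-n$, which itself is a consequence of the standing hypothesis of Theorem \ref{T:trepreau} together with Proposition \ref{P:cast}. My first observation is that both of these inputs persist at every point $x_0$ of a sufficiently small neighborhood of the origin: smoothness of $\mathcal W$ is an open condition, and the equality $\dim F^0_x\mathcal A/F^1_x\mathcal A=k-n$ is forced on an open neighborhood of $0$ by the upper semi-continuity of the filtration dimensions combined with Chern's bound (which equals $k-n$ at every point by Proposition \ref{P:cast}).

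Next, I would replay verbatim the algebraic manipulation carried out above the lemma, but with Taylor expansions recentered at an arbitrary $x_0\in(\mathbb C^n,0)$. Writing
\[
u_i(x)=\ell_i^{(x_0)}(x)+\tfrac{1}{2}q_i^{(x_0)}(x)+O_{x_0}(3),
\]
expanding the components $\xi_i$ of an abelian relation analogously, and looking at the $2$-jet at $x_0$ of $\sum_i\xi_i\,du_i=0$, one obtains exactly the same pair of linear constraints used in the origin case. The hypothesis $\mathcal G(\nabla u_i)\equiv 0$ yields $\mathcal G(x_0)\big(\nabla\ell_i^{(x_0)}\big)=0$ for every $i$, so the very argument of the discussion produces a vector $X_\mathcal G(x_0)\in T_{x_0}\mathbb C^n$ satisfying
\[
\mathcal G(x_0)_\partial(u_i)(x_0)=\langle du_i(x_0),X_\mathcal G(x_0)\rangle
\]
for every $i\in\underline k$.

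Finally, I would upgrade the family $\{X_\mathcal G(x_0)\}_{x_0}$ into a holomorphic vector field. The identities sought for $X_\mathcal G$ form the linear system
\[
\mathcal G_\partial(u_i)=\langle du_i, X\rangle,\qquad i\in\underline k,
\]
which is overdetermined: $k\ge 2n$ scalar equations for an $n$-dimensional unknown. Smoothness of $\mathcal W$ guarantees that at every point of a neighborhood of $0$ any $n$ of the covectors $du_i$ are linearly independent; solving for $X$ using the first $n$ equations by Cramer's rule therefore produces a holomorphic vector field $X_\mathcal G$ on $(\mathbb C^n,0)$. By the pointwise existence obtained in the previous step, the remaining $k-n$ equations are automatically fulfilled, which delivers the required $X_\mathcal G$.

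The only substantive point in this program is the first one, namely checking that the dimensional input at $0$ really does propagate holomorphically to a neighborhood; once this is granted, the two subsequent steps are, respectively, formal bookkeeping of Taylor expansions and a standard Cramer-type solution of an overdetermined system whose leading $n\times n$ block is invertible thanks to the smoothness of $\mathcal W$.
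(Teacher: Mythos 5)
Your proposal is correct and follows essentially the same route as the paper, whose entire proof of this lemma is the one-line remark that the implication (\ref{E:condORDRE2}) holds at every point $x\in(\mathbb C^n,0)$ and that the preceding discussion then applies verbatim. Your two additions — the semicontinuity argument showing $\dim F^0_x/F^1_x=k-n$ persists near the origin, and the Cramer-rule observation that the pointwise vectors assemble into a holomorphic vector field (the paper instead reads holomorphy off the explicit formula $M_{\mathcal G}=\sum_i \mathcal G_\partial(q_i)\partial/\partial x_i$) — are exactly the points the paper leaves implicit, and both are handled correctly.
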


\subsubsection{Proof of the main technical point I -- Conclusion }

Since $\varpi= (\varpi_0, \ldots, \varpi_{n-1})$  is a coframe on $(\mathbb C^n,0)$,
there are  basis change formulas  (for $j=1,\ldots,n$ and $q=0,\ldots,n-1$)
\begin{align*}
 du_j= \sum_{q=0}^{n-1} B^q_j \,\varpi_q \qquad \mbox{ and } \qquad
\varpi_q = \sum_{j=1}^n C^j_q\, du_j .
\end{align*}

For   $l,m \in \underline n$, set
$$  \quad u_{i, l}=\partial u_i/\partial x_l  \qquad \mbox{ and } \qquad
  u_{i, lm }=\partial^2 u_i/\partial x_l\partial x_m. $$

Thus $$du_i= h_i \sum_{q=0}^{n-1} (\theta_i)^q\varpi_q=\sum_{j=1}^n  u_{i, j} \,dx_j$$
and consequently  (for $p=0,\ldots,n-1$ and  $j=1,\ldots,n$)
\begin{equation}
 \label{E:fuckLRU}
h_i (\theta_i)^{p}=\sum_{j=1}^n B_j^p \,u_{i, j}
\qquad \mbox{ and } \qquad
u_{i, j}=h_i \sum_{p=0}^{n-1} C_p^j (\theta_i)^p.
\end{equation}

Consider four integers $p,p',q,q'$ in the interval $[0, n-1]$  which satisfy  $p+q=p'+q'$.
Because  $(\theta_i)^p(\theta_i)^q=
(\theta_i)^{p'}(\theta_i)^{q'}$, the equations (\ref{E:fuckLRU}) imply the relation
\begin{equation*}
 \label{}
\sum_{j,j'=1}^n \Big( B_j^pB_{j'}^q-B_j^{p'}B_{j'}^{q'}
\Big) u_{i, j}u_{i, j'}=0
\end{equation*}
holds true for every $i \in \underline k$.

Lemma \ref{L:condORDRE2} implies the existence of functions  $X^1,\ldots,X^n$, for which
\begin{equation*}
 \sum_{j,j'=1}^n \Big( B_j^pB_{j'}^q-B_j^{p'}B_{j'}^{q'}
\Big)u_{\alpha, jj'}=\sum_{l=1}^n X^l u_{i,l}
\end{equation*}
for every $i \in \underline k$.
It is important to observe that the functions  $X^1,\ldots,X^n$
do not depend on the function $u_i$ but only on the integers   $p,q,p',q'$ and, of course, on the coframe $\varpi$. \smallskip

Notice that
$$d(h_i (\theta_i)^p)=\sum_{j=1 }^n \partial_{x_j}( h_i (\theta_i)^p) dx_j=\sum_{q=0}^{n-1}  \Big(\sum_{j=1 }^n B_j^q \partial_{x_j}( h_i (\theta_i)^p) \Big) \varpi_q $$
and consequently
$\{ d(h_i (\theta_i)^p)\}^q= \sum_{j} B_j^q
\partial_{u_j}( h_i (\theta_i)^p)$.

Combining this last equation with
(\ref{E:fuckLRU}) one obtains
 \begin{align*}
 \{ d(h_i (\theta_i)^p)\}^q=  \sum_{j,j'=1}^n B_j^q
\partial_{x_j}( B_{j'}^p u_{i,j'})    
=  \,  \sum_{j,j'=1}^n B_j^q
\partial_{x_j}( B_{j'}^p) u_{i,j'}+
\sum_{j,j'=1}^n B_j^q
 B_{j'}^p u_{i,jj'}.
\end{align*}

Thus, one can write
\begin{align}
\label{E:baz}
 \{ d(h_i (\theta_i)^q)\}^p
-\{ d(h_i (\theta_i)^{q'})\}^{p'}
=  \sum_{l=1}^n Y^l u_{i,l}
\end{align}
with
$$
Y^l=X^l+\sum_{j,j'=1}^n \big(  B_{j'}^p \partial_{x_j}( B_{l}^q)-
 B_{j'}^{p'} \partial_{x_j}( B_{l}^{q'}) \big) .$$

Once again one has to apply the relations (\ref{E:fuckLRU}). After setting
$M_r=\sum_l Y^l C_r^l $ for $r=0,\ldots,n-1$, it follows that
\begin{align*}
 \{ d(h_i (\theta_i)^q)\}^p
-\{ d(h_i (\theta_i)^{q'})\}^{p'}
= h_i \sum_{r=0}^{n-1} M_r \,(\theta_i)^r
\end{align*}
for no matter which  $i\in \underline k$.

Note that the functions  $M_r$  depend only on the integers  $p,q,p',q'$, but not on  $i$.
It suffices to take  $p'=p+1$, $q=1$ and  $q'=0$ to establish the  existence of functions  $M_r^p$
satisfying
\begin{align}
\label{E:LCDisCOOL}
 \{ d(h_i \theta_i)\}^p
-\{ dh_i\}^{p+1}
= h_i \sum_{r=0}^{n-1} M_r^p \,(\theta_i)^r.
\end{align}

Let $z=(z_1\omega_1,\ldots,z_k\omega_k)$ be an abelian relation of $\mathcal W$. The function $z_i$ is such that
$d(z_i \omega_i)=0$ then  $d( z_i/h_i)\wedge du_i=0$.
Therefore
$$
\sum_{p=0}^{n-1}  \Big(  {\{dz_i\}^p}-
 {z_i}\,{h_i}^{-1}\{dh_i\}^p
 \Big) \varpi_p \wedge \sum_{q=0}^{n-1} (\theta_i)^q \varpi_q=0 \, ,
$$
 which implies
\begin{equation}
\label{E:luc}
 \Big(\{dz_i\}^p-{z_i}{h_i}^{-1}\{dh_i\}^p
\Big) (\theta_i)^q-
\Big(\{dz_i\}^q-{z_i}{h_i}^{-1}\{dh_i\}^q
\Big) (\theta_i)^p=0
\end{equation}
for every  $p,q=0,\ldots,n-1$.  It suffices to set $q=p+1$
in (\ref{E:luc}) and combine it with (\ref{E:LCDisCOOL}) to obtain the relations  (\ref{E:rel-z1}) of  Proposition \ref{P:TECHmain}.
\smallskip

To obtain the relations (\ref{E:rel-theta1}), take $q=2$, $ q'=1$ and  $p'=p+1$ in  (\ref{E:baz}). Note
that this is possible only because $n$ is assumed to be at least $3$.  On the one hand, it follows from  (\ref{E:baz}) the existence of
holomorphic functions $L_0, \ldots, L_{n-1}$ which satisfy for every $i \in \underline k$ the following identity
\begin{align}
\label{E:subidon}
 \{ d(h_i (\theta_i)^2)\}^p
-\{ d(h_i \theta_i)\}^{p+1}
= h_i \sum_{r=0}^{n-1} L_r \,(\theta_i)^r.
\end{align}

On the other hand,
\begin{align*}
 \{ d(h_i (\theta_i)^2)\}^p
-\{ d(h_i \theta_i)\}^{p+1}= 
\theta_i  \Big(
\{ d(h_i \theta_i)\}^p-\{ dh_i\}^{p+1} \Big) 
+h_i \Big( \theta_i\{ d\theta_i\}^p
-\{ d\theta_i\}^{p+1}
\Big) .
\end{align*}
Plugging  these  formulae into  (\ref{E:subidon}) and using  (\ref{E:LCDisCOOL}),
one finally obtains  (\ref{E:rel-theta1}) and concludes in this way the proof of  Proposition \ref{P:TECHmain}.
\qed

 \begin{remark}
  Note that the condition $n\geq 3$ is only used at the very end of the proof to obtain
  the relations (\ref{E:subidon}) which imply rather straight-forwardly the relations (\ref{E:rel-theta1}).
  Except for these last lines, all the arguments above
  are valid in  dimension two.
 \end{remark}

\subsubsection{Upper bound for the rank I: technical lemmata}

To prove  that the rank of $PB_{\mathcal W}$ is two it suffices to show that
\[
\dim \mathrm{Span} \left\langle Z_*(x,t), \, \frac{\partial Z_*(x,t)}{\partial t}\, ,\frac{\partial Z_*(x,t)}{\partial x_1}, \, \ldots, \,
\frac{\partial Z_*(x,t)}{\partial x_n} \right\rangle \le 3 \, .
\]

Since the focus now, after Lemma \ref{L:lower}, is on points outside the hypersurfaces $\{ t = \theta_i(x) \}_{i \in \underline k}$, it is harmless
to replace $Z_*$ by
\begin{align*}
Z : ( \mathbb C^n,0) \times \mathbb C & \dashrightarrow \mathbb C^{\pi} \\
(x,t) & \longmapsto \sum_{i=1}^k \frac{Z_i(x)}{t - \theta_i(x)} \, .
\end{align*}
Notice that $Z_*(x,t) = P(x,t) \cdot Z(x,t)$. Hence $Z$ is also a lift of $PB_{\mathcal W}$.
Notice also that the map $Z$
has poles at the hypersurfaces $\{ t = \theta_i(x) \}_{i \in \underline k}$. That is why  $PB_{\mathcal W}$ was not defined as the projectivization of $Z$ from the beginning.

\smallskip
The following conventions will be used: the usual exterior derivative on $(\mathbb C^n,0) \times \mathbb P^1$
will be denoted by $\underline{d}$, while the exterior differential on $(\mathbb C^n,0)$ will denoted by $d$.
To clarify: $\underline{d}F=dF+(\frac{\partial F}{\partial t})dt$ for every germ of holomorphic function $F$  on  ${(\mathbb C^n,0) \times \mathbb P^1}$.

For further reference, observe that the $\mathbb C^\pi$-valued 1-form $dZ$ can written as
\begin{align}
\label{E:dZ}
 dZ=  \, \sum_{i=1}^k ({t-\theta_i})^{-1} dZ_i+
\sum_{i=1}^k ({t-\theta_i})^{-2} Z_i \,d\theta_i .
\end{align}

The following simple lemma will prove to be  useful later.

\begin{lemma}
\label{LEMclef1}
For $l=0,\ldots,n$ and $L=0,\ldots,n+1$,
respectively, the following identities hold true:
\begin{align}
 \sum_{i=1}^k \frac{ Z_i (\theta_i)^l }{t-\theta_i}  =   t^l\, Z 
\label{E:tlZ}
 \qquad \mbox{and }\qquad 
 \sum_{i=1}^k \frac{ Z_i\,(\theta_i)^L }{ {(t-\theta_i)}^{2} } =  L\,t^{L-1}\,Z-t^L\,
({\partial  Z}/{\partial t}).
\end{align}
\end{lemma}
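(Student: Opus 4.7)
The plan is to reduce both identities to elementary algebra, using only the definition $Z(x,t) = \sum_{i=1}^k Z_i(x)/(t-\theta_i(x))$ together with the vector-valued vanishing
\[
\sum_{i=1}^k Z_i (\theta_i)^\sigma = 0 \qquad (\sigma = 0, \ldots, n-1).
\]
This vanishing is the only nontrivial input: it holds componentwise, since every basis element $\eta^{(\nu)} = (z_i^{(\nu)} \omega_i)$ of $\A2W$ is an abelian relation, so Lemma \ref{L:ARsystmALT} gives $\sum_i z_i^{(\nu)} (\theta_i)^\sigma = 0$ for each $\nu \in \underline{\pi}$ and each $\sigma \in \{0, \ldots, n-1\}$; bundling these scalar equalities into the $\mathbb C^\pi$-valued vectors $Z_i = (z_i^{(1)}, \ldots, z_i^{(\pi)})$ yields the claim.

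For the first identity, I would invoke the factorization $t^l - (\theta_i)^l = (t - \theta_i) \sum_{j=0}^{l-1} t^{l-1-j} (\theta_i)^j$ to write
\[
\frac{(\theta_i)^l}{t - \theta_i} = \frac{t^l}{t-\theta_i} - \sum_{j=0}^{l-1} t^{l-1-j} (\theta_i)^j.
\]
Multiplying by $Z_i$, summing over $i$, and using the definition of $Z$ gives
\[
\sum_{i=1}^k \frac{Z_i (\theta_i)^l}{t-\theta_i} = t^l\, Z - \sum_{j=0}^{l-1} t^{l-1-j}\,\Big( \sum_{i=1}^k Z_i (\theta_i)^j \Big).
\]
Since $l \leq n$, every exponent $j$ occurring in the double sum satisfies $0 \leq j \leq n-1$, so the inner sums all vanish by the input above, giving $\sum_i Z_i(\theta_i)^l/(t-\theta_i) = t^l Z$.

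For the second identity, the natural move is to differentiate the first with respect to $t$: using $\partial_t (t - \theta_i)^{-1} = -(t-\theta_i)^{-2}$ and $\partial_t(t^l Z) = l\, t^{l-1} Z + t^l\, (\partial Z / \partial t)$, one deduces
\[
\sum_{i=1}^k \frac{Z_i (\theta_i)^l}{(t-\theta_i)^2} = l\, t^{l-1} Z - t^l\, (\partial Z / \partial t)
\]
for every $l \in \{0, \ldots, n\}$ (after matching signs). The only case not obtained directly by differentiation is $L = n+1$, and here I would handle it by hand via the identity
\[
\frac{(\theta_i)^{n+1}}{(t-\theta_i)^2} = t\,\frac{(\theta_i)^{n}}{(t-\theta_i)^2} - \frac{(\theta_i)^{n}}{t-\theta_i},
\]
substituting the already-established expressions from the first identity (with $l=n$) and the second (with $L=n$) and collecting terms. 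This boundary case is the only mildly delicate step; it works precisely because the first identity has been pushed all the way up to $l=n$ rather than only $l = n-1$, and conversely the second identity can be pushed to $L=n+1$ because the first is available at $l=n$.

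No serious obstacle arises, as everything is formal manipulation of rational functions in $t$ together with a single algebraic input coming from the abelian relations. The entire content of the lemma is the translation of the vanishing constraints $\sum_i Z_i (\theta_i)^\sigma = 0$ for $\sigma \le n-1$ into a clean formula for partial-fraction-type sums that will later be convenient when analyzing $dZ$ via \eqref{E:dZ}.
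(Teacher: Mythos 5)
Your route is genuinely different from the paper's. The paper proves both identities by induction on the exponent, using the one-step decomposition $\theta_i=(\theta_i-t)+t$: the inductive step for the first identity consumes one vanishing relation $\sum_i Z_i(\theta_i)^l=0$ at a time, and the inductive step for the second identity feeds in the first identity plus the induction hypothesis. Your closed-form telescoping for the first identity is simply the unrolled version of that induction and is correct as written (including the reduction of the vector-valued vanishing to Lemma \ref{L:ARsystmALT} applied to each basis element of $\A2W$). Your idea of obtaining the second identity by differentiating the first in $t$ is arguably cleaner than the paper's second induction, and your separate treatment of the boundary case $L=n+1$ via $\theta_i^{n+1}/(t-\theta_i)^2=t\,\theta_i^{n}/(t-\theta_i)^2-\theta_i^{n}/(t-\theta_i)$ is exactly the supplementary step that differentiation alone cannot supply.

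However, ``(after matching signs)'' conceals a real discrepancy that cannot be waved away. Differentiating $\sum_i Z_i(\theta_i)^L/(t-\theta_i)=t^LZ$ gives $-\sum_i Z_i(\theta_i)^L/(t-\theta_i)^2=L\,t^{L-1}Z+t^L(\partial Z/\partial t)$, that is
\[
\sum_{i=1}^k \frac{Z_i\,(\theta_i)^L}{(t-\theta_i)^2}\;=\;-L\,t^{L-1}Z-t^{L}\,\frac{\partial Z}{\partial t}\;=\;-\frac{\partial}{\partial t}\bigl(t^{L}Z\bigr),
\]
whose first term has the opposite sign to the one printed in the statement. This is not a defect of your method: a direct check at $L=1$, using only $\theta_i=t-(t-\theta_i)$ and no abelian relation, yields $\sum_i Z_i\theta_i/(t-\theta_i)^2=-Z-t(\partial Z/\partial t)$, so the printed formula cannot hold wherever $Z\neq 0$. (The slip is immaterial downstream: in the proof of Lemma \ref{L:Ip} only the fact that these sums are of the form $F\,Z+G\,(\partial Z/\partial t)$ with $F,G$ polynomial in $t$ is used.) But as submitted your argument proves the sign-corrected identity while asserting the printed one, and your $L=n+1$ step only closes up if the corrected sign is carried through: with the printed sign it produces $(n-1)t^{n}Z-t^{n+1}(\partial Z/\partial t)$ rather than $(n+1)t^{n}Z-t^{n+1}(\partial Z/\partial t)$. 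State explicitly which version you are proving, record the $L=1$ verification to pin the sign, and the proof is complete.
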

\begin{proof}
Both identities are proved by induction.
Notice that they both are  trivially true  for $l=0$ and $L=0$.

Assume the first identity holds true for  $l < n $, and write
\begin{align*}
\sum_{i=1}^k { \frac{ Z_i\,(\theta_i)^{l+1}}{(t-\theta_i)}}=
\sum_{i=1}^k {\frac{ Z_i\,(\theta_i)^l \,\big((\theta_i-t)+t\big)}{(t-\theta_i)} } 
= 
-\sum_{i=1}^k  Z_i\,(\theta_i)^l +t\sum_{i=1}^k \frac{ Z_i\,(\theta_i)^l}{ (t-\theta_i)}. 
\end{align*}
Observe that   $\sum_{i}  Z_i\,(\theta_i)^l=0$ according to Equation (\ref{E:AReqNORM}).
Using this observation together with  the induction hypothesis, it follows that
 \[
 \sum_{i=1}^k  {(t-\theta_i)}^{-1} Z_i\,\theta_i^{l+1} =t^{l+1}\,Z
 \]
  as wanted.
\smallskip

Now, assume the second identity holds true for  $L\leq n$. Using the same trick as above, one obtains
\begin{align*}
\sum_{i=1}^k  \frac{Z_i\,(\theta_i)^{L+1}}{{(t-\theta_i)}^{2} } =\sum_{i=1}^k  \frac{Z_i\,(\theta_i)^L}{(t-\theta_i)}   +
t\sum_{i=1}^k { \frac{Z_i\,(\theta_i)^L}{(t-\theta_i)^2}}  \; .
\end{align*}
The first identity implies that the first summand of the righthand side is $t^L\,Z$. The induction hypothesis
implies that the second summand is $t \Big(L\,t^{L-1}\,Z-t^L\,({\partial  Z}/{\partial t})\Big)$.
 The lemma follows.
\end{proof}

\begin{lemma}\label{L:Ip}
If the identities (\ref{E:rel-z1}) and  (\ref{E:rel-theta1}) of Proposition \ref{P:TECHmain} hold true ( in particular if $n> 2$ ) then, for $p= 0 , \ldots, n-2$,
there are functions $F_p,G_p\in \mathcal O(\mathbb C^n,0)[t]$ such that
\begin{equation*}
 \{dZ\}^{p+1}-t\{dZ\}^p= F_p Z+ G_p ({\partial Z}/{\partial t} ) \, .
\end{equation*}
\end{lemma}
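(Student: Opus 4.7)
\textbf{Proof proposal for Lemma \ref{L:Ip}.} The plan is to start from the formula (\ref{E:dZ}) for $dZ$, extract the $(p+1)$-th and $p$-th components in the coframe $\varpi$, and then use the identities of Proposition \ref{P:TECHmain} to kill the unwanted $d\theta_i$ and $dZ_i$ factors, converting everything to rational functions of $\theta_i$ to which Lemma \ref{LEMclef1} can be applied.

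More precisely, writing the $Z_i$-components entry by entry as abelian relations $(z_1\omega_1,\ldots,z_k\omega_k)$, and taking the coefficient of $\varpi_{p+1}-t\varpi_p$ in (\ref{E:dZ}), one obtains
\begin{equation*}
\{dZ\}^{p+1}-t\{dZ\}^p \;=\; \sum_{i=1}^k \frac{\{dZ_i\}^{p+1}-t\{dZ_i\}^p}{t-\theta_i} \;+\; \sum_{i=1}^k \frac{Z_i\bigl(\{d\theta_i\}^{p+1}-t\{d\theta_i\}^p\bigr)}{(t-\theta_i)^2}.
\end{equation*}
For each $i$, I would rewrite $\{dZ_i\}^{p+1}-t\{dZ_i\}^p$ as $\{dZ_i\}^{p+1}-\theta_i\{dZ_i\}^p+(\theta_i-t)\{dZ_i\}^p$, substitute (\ref{E:rel-z1}) componentwise (the functions $M^p_r$ depend only on $\varpi$, so they factor out of the vector $Z_i$), and analogously substitute (\ref{E:rel-theta1}) into $\{d\theta_i\}^{p+1}-t\{d\theta_i\}^p=\sum_\rho(\theta_i)^\rho N^p_\rho+(\theta_i-t)\{d\theta_i\}^p$. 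After substitution the contribution $(\theta_i-t)\{dZ_i\}^p/(t-\theta_i)=-\{dZ_i\}^p$ and the two occurrences of $Z_i\{d\theta_i\}^p/(t-\theta_i)$, one coming with sign $+$ from the first sum and one with sign $-$ from the second, cancel against each other, leaving
\begin{equation*}
\{dZ\}^{p+1}-t\{dZ\}^p \;=\; -\sum_{i=1}^k \{dZ_i\}^p \;+\; \sum_{r=0}^{n-1} M^p_r \sum_{i=1}^k \frac{(\theta_i)^r Z_i}{t-\theta_i} \;+\; \sum_{\rho=0}^{n} N^p_\rho \sum_{i=1}^k \frac{(\theta_i)^\rho Z_i}{(t-\theta_i)^2}.
\end{equation*}

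The next step is to dispose of the first summand. Since $\eta^{(j)}=\sum_i z_i^{(j)}\omega_i=0$ is an abelian relation and $\omega_i=\sum_q(\theta_i)^q\varpi_q$, one has in particular $\sum_i z_i^{(j)}=0$ (the coefficient of $\varpi_0$); differentiating in $x$ and taking the $p$-th component in the coframe yields $\sum_i\{dz_i^{(j)}\}^p=0$ for every entry $j$, hence $\sum_i\{dZ_i\}^p=0$. For the remaining two sums, the ranges $r\in\{0,\ldots,n-1\}$ and $\rho\in\{0,\ldots,n\}$ fall within the scope of Lemma \ref{LEMclef1}, which gives $\sum_i(\theta_i)^r Z_i/(t-\theta_i)=t^r Z$ and $\sum_i(\theta_i)^\rho Z_i/(t-\theta_i)^2=\rho\, t^{\rho-1} Z-t^\rho(\partial Z/\partial t)$. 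Collecting the coefficients of $Z$ and $\partial Z/\partial t$, one may take
\begin{equation*}
F_p(x,t)=\sum_{r=0}^{n-1} M^p_r(x)\, t^r+\sum_{\rho=0}^{n}\rho\, N^p_\rho(x)\, t^{\rho-1}\qquad\text{and}\qquad G_p(x,t)=-\sum_{\rho=0}^{n} N^p_\rho(x)\, t^\rho,
\end{equation*}
which are in $\mathcal O(\mathbb C^n,0)[t]$ and satisfy the required identity.

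The only genuinely delicate point is the bookkeeping that produces the cancellation of the $Z_i\{d\theta_i\}^p/(t-\theta_i)$ terms; without Proposition \ref{P:TECHmain} these would survive and introduce terms of the form $\sum_i Z_i\{d\theta_i\}^p/(t-\theta_i)$ that cannot be expressed through $Z$ and $\partial Z/\partial t$ alone. The upper bounds $r\leq n-1$ and $\rho\leq n$ provided by Proposition \ref{P:TECHmain} (as opposed to the weaker $n+p$ and $n+p+1$ obtained from the integrability condition alone) are precisely what makes Lemma \ref{LEMclef1} applicable, so the $n\geq 3$ hypothesis is used here only indirectly, through Proposition \ref{P:TECHmain}.
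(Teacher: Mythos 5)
Your proposal is correct and follows essentially the same route as the paper's proof: the same splitting of $\{dZ\}^{p+1}-t\{dZ\}^p$ into the $dZ_i$ and $Z_i\,d\theta_i$ parts, the same substitution $t=(t-\theta_i)+\theta_i$, the vanishing of $\sum_i\{dZ_i\}^p$ from $\sum_i Z_i=0$, the cancellation of the $\sum_i Z_i\{d\theta_i\}^p/(t-\theta_i)$ terms via (\ref{E:rel-z1}), and the final application of Lemma \ref{LEMclef1}. The only (cosmetic) difference is that you apply (\ref{E:rel-z1}) before recombining the two sums, and you write out $F_p$ and $G_p$ explicitly where the paper absorbs them into the relation $\equiv$.
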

\begin{proof}
Let $p$ be fixed and notice that equation  (\ref{E:dZ}) implies
\begin{equation*}
\{d Z\}^p =\sum_{i=1}^k
 \frac{ \{d  Z_{i}\}^p}{t-\theta_i}
+  \sum_{i=1}^k \frac{  Z_i \,\{d\theta_i\}^p}{  {(t-\theta_i)}^2 }\;.
\end{equation*}

Decompose  $I_p= \{dZ\}^{p+1}-t\{dZ\}^p$ as  $  K_p+L_p$, where
\begin{align*}
 K_p= \sum_{i=1}^k  \, \frac{  \{d Z_i\}^{p+1} -
t \,\{d Z_i\}^{p} }{ t- \theta_i  }   \quad 
\mbox{ and } \quad
L_p= \sum_ {i=1}^k \,\,  \frac{  \big(\{d \theta_i\}^{p+1} -
t \,\{d \theta_i\}^{p} \big) \, Z_i  }{ (t-\theta_i)^{2}}  . \qquad
\end{align*}
Replacing $t$ by $(t-\theta_\alpha)+\theta_\alpha$ in the numerator  of   $K^p$  gives
\begin{align*}
 K_p=& \, \sum_{i=1}^k (t-\theta_i)^{-1} \, \Big( \{d Z_i\}^{p+1} -
\theta_i \,\{d Z_i\}^{p} \Big) +
\sum_{i=1}^k\{d Z_i\}^{p}. 
\end{align*}
According to (\ref{E:AReqNORM})  $\sum_i {Z_i}=0$, consequently
\begin{align}
\label{E:Kp}
 K_p= \,\sum_{i=1}^k  \, \frac{ \{d Z_i\}^{p+1} -
\theta_i \,\{d Z_i\}^{p}}{t-\theta_i}.
\end{align}

In exactly the same way, one  proves that
\begin{align*}
 L_p=&\sum_{i=1}^k  \frac{
\Big( \{d \theta_i\}^{p+1} -
\theta_i \,\{d \theta_i\}^{p} \Big) Z_i }{ (t-\theta_i)^{2}} \nonumber +
\sum_{i=1}^k \frac{ \,Z_i \,\{d \theta_i\}^{p}}{t-\theta_i} \; .
\end{align*}

\medskip

In what follows,
 $A \equiv B$ if and only if \label{Page:equiv}
$A-B$ is equal to  $F Z+ G({\partial Z}/{\partial t}) $ for suitable $F, G \in \mathcal O(\mathbb C^n,0)[t]$.
Notice that the lemma is equivalent to  $I^p\equiv 0$.

The outcome of Proposition \ref{P:TECHmain}, more specifically equation (\ref{E:rel-theta1}), implies
\begin{align*}
\sum_{i=1}^k \frac{\Big( \{d \theta_i\}^{p+1} -
\theta_i \,\{d \theta_i\}^{p} \Big) Z_i}{(t-\theta_i)^{2}}
=  \,
\sum_{i=1}^k
\frac{Z_i}{
(t-\theta_i)^{2}}\, \Big( \sum_{\rho=0}^{n}
(\theta_i)^\rho
 N^p_{\rho}  \Big) \, .
\end{align*}
Lemma \ref{LEMclef1} in its turn, implies
\[
 \sum_{i=1}^k
\frac{Z_i}{
(t-\theta_i)^{2}}\, \Big( \sum_{\rho=0}^{n}
(\theta_i)^\rho
 N^p_{\rho}  \Big) =   \sum_{\rho=0}^{n}    N^p_{\rho}   \Big( \sum_{i=1}^k
\frac{Z_i(\theta_i)^\rho}{(t-\theta_i)^{2}}
 \Big) \equiv 0.
\]
Therefore
\begin{align}
\label{Lpequiv}
L_p \equiv   \sum_{i=1}^k   \frac { Z_i \,\{d \theta_i\}^{p} }{t-\theta_i} .
\end{align}

Combining (\ref{E:Kp}) and (\ref{Lpequiv}), one obtains
\begin{align*}
I_p \equiv
\sum_{i=1}^k  \frac{
\{dZ_i\}^{p+1}-\theta_i \{dZ_i\}^{p}-Z_i \{d\theta_i\}^{p}}{t-\theta_i} \, .
\end{align*}
Equation  (\ref{E:rel-z1}) from Proposition \ref{P:TECHmain} together with equation (\ref{E:tlZ}) from Lemma \ref{LEMclef1}, allow
to conclude:
\begin{align*}
I_p \equiv & \,
\sum_{i=1}^k \frac{ \Big(
 \sum_{r=0}^{n-1} (\theta_i)^r M^p_{r}
\Big)  Z_i }{t-\theta_i} \\
\equiv & \,
\sum_{r=0}^{n-1} M^p_{\rho}
\Big(\sum_{i=1}^k \frac{ Z_i(\theta_i)^r } { t-\theta_i}
\Big)\equiv
\Big( \sum_{r=0}^{n-1} M^p_{r} t^r\Big) Z \equiv 0. \qedhere
\end{align*}
\end{proof}

\subsubsection{Upper bound for the rank II: conclusion}

\begin{prop}
\label{DIFFZ}
 There are 1-forms $\Omega$ and $\Gamma$ on $(\mathbb C^n,0) \times \mathbb P^1$ such that
\begin{equation}
\label{E:diffdZ}
 \underline{d} Z = \{dZ \}^0\left(  \sum_{p=0}^{n-1} t^p \varpi_p \right) \ + Z\Omega +(\partial Z/\partial t)\,(\Gamma+dt).
\end{equation}
\end{prop}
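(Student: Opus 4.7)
\textbf{Proof plan for Proposition \ref{DIFFZ}.}

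The plan is to iterate Lemma \ref{L:Ip} to trade every $\{dZ\}^{p}$ for $t^{p}\{dZ\}^{0}$ modulo the relation $\equiv$ introduced on page \pageref{Page:equiv}, and then assemble the pieces using $\underline{d}Z = dZ + (\partial Z/\partial t)\,dt$.

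More concretely, I would first prove by induction on $p\in\{0,1,\ldots,n-1\}$ the identity
\[
\{dZ\}^{p} \;=\; t^{p}\,\{dZ\}^{0} \;+\; \widetilde F_{p}\,Z \;+\; \widetilde G_{p}\,(\partial Z/\partial t),
\]
where $\widetilde F_{p},\widetilde G_{p}\in \mathcal O_{(\mathbb C^{n},0)}[t]$. The case $p=0$ is tautological with $\widetilde F_{0}=\widetilde G_{0}=0$. For the inductive step, Lemma \ref{L:Ip} supplies $F_{p-1},G_{p-1}\in\mathcal O_{(\mathbb C^{n},0)}[t]$ with $\{dZ\}^{p}=t\,\{dZ\}^{p-1}+F_{p-1}Z+G_{p-1}(\partial Z/\partial t)$, so multiplying the inductive hypothesis for $p-1$ by $t$ and adding this identity yields the claim with $\widetilde F_{p}=t\widetilde F_{p-1}+F_{p-1}$ and $\widetilde G_{p}=t\widetilde G_{p-1}+G_{p-1}$.

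Next, I would plug these identities into the decomposition $dZ=\sum_{p=0}^{n-1}\{dZ\}^{p}\varpi_{p}$ to obtain
\[
dZ \;=\; \{dZ\}^{0}\left(\sum_{p=0}^{n-1} t^{p}\,\varpi_{p}\right) \;+\; Z\,\Omega \;+\; (\partial Z/\partial t)\,\Gamma,
\]
where the two 1-forms on $(\mathbb C^{n},0)\times \mathbb C$ are defined by
\[
\Omega \;=\; \sum_{p=0}^{n-1}\widetilde F_{p}\,\varpi_{p}, \qquad
\Gamma \;=\; \sum_{p=0}^{n-1}\widetilde G_{p}\,\varpi_{p}.
\]
Because the coefficients $\widetilde F_{p},\widetilde G_{p}$ are polynomial in $t$, both $\Omega$ and $\Gamma$ extend to meromorphic (in fact, at worst polynomial in the affine coordinate $t$) 1-forms on $(\mathbb C^{n},0)\times\mathbb P^{1}$. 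Adding the trivial identity $(\partial Z/\partial t)\,dt = (\partial Z/\partial t)\,dt$ and using $\underline{d}Z=dZ+(\partial Z/\partial t)\,dt$ yields the desired expression for $\underline{d}Z$.

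The only conceptual input is Lemma \ref{L:Ip}, which itself rests on Proposition \ref{P:TECHmain} and thus on the hypothesis $n\ge 3$; once that lemma is available, the remainder of the argument is purely formal, and the main thing to watch is that in the inductive step one preserves polynomial (rather than merely meromorphic) dependence in $t$ for the auxiliary coefficients, so that $\Omega$ and $\Gamma$ make sense globally in the $t$-variable.
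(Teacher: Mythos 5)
Your proposal is correct and follows essentially the same route as the paper: the paper likewise iterates Lemma \ref{L:Ip} to write $\{dZ\}^p = t^p\{dZ\}^0 + \sum_{q=0}^{p-1} I_q\, t^{p-1-q}$ (your induction just makes the telescoping explicit), substitutes into $dZ=\sum_p \{dZ\}^p\varpi_p$, absorbs the $I_q\equiv 0$ terms into $Z\Omega + (\partial Z/\partial t)\Gamma$ with coefficients polynomial in $t$, and finishes with $\underline{d}Z = dZ + (\partial Z/\partial t)\,dt$. No gaps.
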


\begin{proof}
Lemma \ref{L:Ip} implies, for every $p=0,\ldots,n-1$,
$$
\{ dZ\}^p=t^{p} \{ dZ\}^0+\sum_{q=0}^{p}   I_p \,t^{p-q-1} \, .
$$
If $\Pi = \sum_{p=0}^{n-1} t^p \varpi_p$ then
\begin{align*}
dZ=\sum_{p=0}^{n-1} \{ dZ\}^p \varpi_p=& \, \sum_{p=0}^{n-1}
\Big(
t^{p}\,\{dZ\}^0+\sum_{q=0}^{p-1} I^{q}\, t^{p-1-q}
\Big) \,
 \varpi_p
 \\
=& \, \{dZ\}^0 \Pi
+   \sum_{p=0}^{n-1}
 \sum_{q=0}^{p-1} I^q\, t^{p-1-q}\,
 \varpi_p .
\end{align*}

Lemma  \ref{L:Ip} says that  $ I^q\equiv 0$ (~see the definition of $\equiv$ in page \pageref{Page:equiv}~) for every $q=0,\ldots,n-1$.  The existence of   two 1-forms
$\Omega$ and  $\Gamma$  satisfying
$$ dZ= \{dZ\}^0 \Pi+ Z\,\Omega+ ({\partial   Z}/{\partial t})\,\Gamma  $$
follows.
Moreover, the coefficients of $1$-forms $\Omega$ and  $\Gamma$ in the basis
$(\varpi_0,\ldots,\varpi_{n-1})$ are polynomials in $t$
with  holomorphic functions on $(\mathbb C^n,0)$ as coefficients. Since   $\underline{d}Z=dZ+({\partial   Z}/{\partial t})
\,dt$, the proposition follows.  \end{proof}

Proposition \ref{DIFFZ} clearly implies that $PB_{\mathcal W}$ has rank at most two at every $(x,t) \in (\mathbb C^n,0)\times \mathbb P^1$. But  Lemma \ref{L:lower}
says  it must be at least two. Hence Proposition \ref{PBWrang} follows.

\section{Poincar\'{e}-Blaschke's surface}

The \defi[Poincar\'{e}-Blaschke's surface] $X_{\mathcal W}$ of $\mathcal W$ is the image of
 its Poincar\'{e}-Blaschke's map $PB_{\mathcal W}$. That is,  $$X_{\mathcal W}={\rm Im}\, P\!B_{\mathcal W} \subset \mathbb P^{\pi-1}\,. $$
According to Proposition \ref{PBWrang}  it is a germ of smooth complex surface on $(\mathbb P^{\pi-1},\mathscr C(0))$.
 It is clearly non-degenerate. The remarks laid down at the end of  Section  \ref{S:defPBW}
 imply that $X_{\mathcal W}$ is canonically attached to the pair $(\mathcal W, \A2W)$ modulo
 projective transformations.  \smallskip

\subsection{Rational normal curves everywhere}

Notice that  $X_{\mathcal W}$ contains all the canonical curves  $C_i$ of $\mathcal W$.
It also contains a lot of rational  curves according to Lemma \ref{L:CRN0}: the
curves $\mathscr C(x)$.
Exploiting  the geometry of this family  of rational curves, it will be possible
to prove that $X_{\mathcal W}$ is the germification at $\mathscr C(0)$ of  a rational surface.  \smallskip

\begin{lemma}
\label{L:CRN}
The following assertions are verified:
\begin{enumerate}
\item[(a)] for every  $i \in \underline k $, the curve  ${\mathscr C}(x)$ intersects the canonical curve  $C_i$ transversely at $\kappa_i(x)$;
\item[(b)] for every  $x,y \in (\mathbb C^n,0)$,  the curve $\mathscr C(x)$ coincides with $\mathscr C(y)$ if and only if $x=y$;
\item[(c)] for every subset $J \subset \underline k$ of cardinality $n$, and every set  ${\mathcal P =\{ p_j \in C_j \, | j \in J\}}$,  there exists a unique
$x \in (\mathbb C^n,0)$ such that $\mathscr C(x)$ contains $\mathcal P$.
\end{enumerate}
\end{lemma}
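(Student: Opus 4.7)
The plan is to treat the three parts in order, with the first two essentially direct consequences of results already established and the third requiring a short additional ingredient about how $\kappa_i$ factors through $u_i$.

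For \textbf{(a)}, I would observe that the tangent line to $\mathscr{C}(x)$ at $\kappa_i(x) = PB_{\mathcal W}(x,\theta_i(x))$ is spanned by $(\partial/\partial t)PB_{\mathcal W}(x,\theta_i(x))$. Since $\mathscr{C}(x)$ spans precisely $\mathbb{P}^{k-n-1}(x)$ (Lemma \ref{L:CRN0}), this tangent line lies in $\mathbb{P}^{k-n-1}(x)$. On the other hand, Lemma \ref{L:gera} applied with $B=\{i\}$ says that the smallest linear subspace containing $\mathbb{P}^{k-n-1}(x)\cup T_{\kappa_i(x)}C_i$ has codimension $k-2n$, which is strictly less than the codimension $k-2n+1$ of $\mathbb{P}^{k-n-1}(x)$. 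Hence $T_{\kappa_i(x)}C_i\not\subset \mathbb{P}^{k-n-1}(x)$, so the two tangent lines at $\kappa_i(x)$ are distinct, giving transversality.

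For \textbf{(b)}, the rational normal curve $\mathscr{C}(x)$ is non-degenerate in its linear span, which is therefore $\mathbb{P}^{k-n-1}(x)$; by projective duality this subspace encodes the point $P_{\mathcal W}(x)\in\mathrm{Grass}(\A2W,k-2n+1)$. Thus $\mathscr{C}(x)=\mathscr{C}(y)$ forces $P_{\mathcal W}(x)=P_{\mathcal W}(y)$. Proposition \ref{P:pmii} asserts that $P_{\mathcal W}$ is an immersion, and since it is a germ, it is injective on a sufficiently small representative, whence $x=y$.

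For \textbf{(c)}, consider the map
\[
\Phi : (\mathbb{C}^n,0)\longrightarrow \prod_{j\in J} C_j,\qquad x\longmapsto (\kappa_j(x))_{j\in J}.
\]
Both source and target have dimension $n=|J|$, so it suffices to show $d\Phi(0)$ is injective in order to conclude that $\Phi$ is a local biholomorphism. The key observation is that the $i$-th component of every abelian relation of $\mathcal W$ has the form $f(u_i)\,du_i$, so the hyperplane $\ker ev_i(x)\subset\A2W$ depends only on $u_i(x)$. Consequently $\kappa_i$ factors through $u_i$, which combined with the fact that $\kappa_i$ is a submersion onto $C_i$ (Remark \ref{R:kisub}) yields $\ker d\kappa_i(0)=T_0\mathcal F_i$. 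Therefore
\[
\ker d\Phi(0) \;=\; \bigcap_{j\in J}\ker d\kappa_j(0) \;=\; \bigcap_{j\in J} T_0\mathcal F_j \;=\; \{0\},
\]
by smoothness of $\mathcal W$ and $|J|=n$. Existence in (c) is then immediate: for $\mathcal P=(p_j)_{j\in J}$ sufficiently close to $(\kappa_j(0))_{j\in J}$, set $x=\Phi^{-1}(\mathcal P)$, so that $\kappa_j(x)=p_j$ and $\mathscr{C}(x)\ni p_j$ for every $j\in J$. Uniqueness follows from (a): if $\mathscr{C}(x')$ contains each $p_j$, then because $\mathscr{C}(x')$ meets $C_j$ transversely at $\kappa_j(x')$ the intersection is locally the single point $\kappa_j(x')$, forcing $\kappa_j(x')=p_j$ and hence $\Phi(x')=\mathcal P$; injectivity of $\Phi$ gives $x'=x$.

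The proofs are essentially algebraic consequences of the previously proven structural results, and I do not anticipate a serious obstacle. The only mildly delicate step is the identification $\ker d\kappa_i(0)=T_0\mathcal F_i$ in (c), which rests on the specific shape $f(u_i)\,du_i$ of the components of abelian relations; once this is in place, the smoothness hypothesis on $\mathcal W$ immediately delivers $d\Phi(0)$ is an isomorphism.
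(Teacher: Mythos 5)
Your proof is correct and in substance coincides with the paper's argument. Part (a) is exactly the content of the last assertion of Lemma \ref{L:gera} (equivalently, of the proof of Lemma \ref{L:lower}, which is what the paper cites): the tangent to $\mathscr C(x)$ at $\kappa_i(x)$ lies in $\mathbb P^{k-n-1}(x)$ while $T_{\kappa_i(x)}C_i$ does not, so the two tangent directions differ. Your treatment of (c) --- showing that $\Phi=(\kappa_j)_{j\in J}$ is a local biholomorphism because $\ker d\kappa_j(0)=T_0\mathcal F_j$ --- is the infinitesimal reformulation of the paper's one-line argument that $\kappa_j^{-1}(p_j)$ is a leaf of $\mathcal F_j$ and that $n$ leaves of a smooth $k$-web through distinct foliations meet in a single point; the identification of the fibers of $\kappa_j$ with the leaves rests on the same observation you make, namely that the $j$-th component of an abelian relation has the form $f(u_j)\,du_j$. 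The one genuine divergence is in (b): the paper deduces it immediately from (c), whereas you derive it from the injectivity of Poincar\'e's map (Proposition \ref{P:pmii}) via the remark that the rational normal curve $\mathscr C(x)$ spans $\mathbb P^{k-n-1}(x)$ and hence determines $P_{\mathcal W}(x)$. Both routes are equally short; yours makes (b) independent of (c), and your explicit justification of uniqueness in (c) --- that transversality and stability of transversal intersections force $\mathscr C(x')\cap C_j$ to reduce locally to $\kappa_j(x')$, whence $p_j=\kappa_j(x')$ --- fills in a step the paper leaves implicit.
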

\begin{proof}
The first assertion follows from the proof of Lemma \ref{L:lower}. It is also
 a direct consequence of the  expression  (\ref{E:diffdZ}) for  $\underline{d}Z$.

To prove the third assertion, notice that for every $j \in J$, $\kappa_j^{-1}(p_j)$ is
a leaf $L_j$ of $\mathcal F_j$. Since $\mathcal W$ is smooth $\cap_{j\in J} L_j$ must reduce to a point $x \in (\mathbb C^n,0)$
The assertion follows.  \smallskip

The second assertion follows immediately from the third.
\end{proof}

Lemma \ref{L:CRN} implies that the surface $X_{\mathcal W}$  is the union of rational normal curves
of degree $k-n-1$ belonging to the $n$-dimensional holomorphic family  $\{\mathscr C(x)\}_{x \in (\mathbb C^n,0)}$.

\subsection{Algebraization I : $X_{\mathcal W}$ is algebraic}

\begin{lemma}
\label{L:intn-1}
 For every  $x,y \in (\mathbb C^n,0)$, the intersection number   $\mathscr C(x) \cdot \mathscr C(y)$ is equal to $n-1$.
\end{lemma}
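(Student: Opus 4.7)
The plan is to establish matching upper and lower bounds for $\mathscr{C}(x)\cdot\mathscr{C}(y)$, both equal to $n-1$, by exploiting the rigidity of rational normal curves and the leaf structure of $\mathcal{W}$. First I would prove the upper bound. Since $\mathscr{C}(x)\subset \mathbb{P}^{k-n-1}(x)$ and $\mathscr{C}(y)\subset \mathbb{P}^{k-n-1}(y)$, the scheme-theoretic intersection $\mathscr{C}(x)\cap\mathscr{C}(y)$ is contained in $\Lambda:=\mathbb{P}^{k-n-1}(x)\cap\mathbb{P}^{k-n-1}(y)$, which is a $\mathbb{P}^{n-2}$ by Corollary~\ref{C:int=n-2}. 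The key input is the uniform-position property of rational normal curves, applied at the scheme-theoretic level: since $\mathscr{C}(x)$ is embedded in $\mathbb{P}^{k-n-1}(x)$ by the complete linear system $|\mathcal{O}_{\mathbb{P}^1}(k-n-1)|$, any zero-dimensional subscheme $Z\subset\mathscr{C}(x)$ of length $\ell\le k-n$ spans a linear subspace of dimension exactly $\ell-1$ (this is the standard argument that the restriction map $H^0(\mathcal{O}_{\mathbb{P}^{k-n-1}}(1))\to H^0(\mathcal{O}_Z)$ is surjective). Applying this to $Z=\mathscr{C}(x)\cap\Lambda$, whose span lies in $\Lambda$ of dimension $n-2$, yields $\mathrm{length}(\mathscr{C}(x)\cap\Lambda)\le n-1$. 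Since $\mathscr{C}(x)\cap\mathscr{C}(y)\subset \mathscr{C}(x)\cap\Lambda$ as schemes, the intersection number satisfies $\mathscr{C}(x)\cdot\mathscr{C}(y)\le n-1$ (here one uses that both curves are smooth and $X_{\mathcal{W}}$ is smooth, so the local intersection multiplicity on the surface coincides with the length of the scheme-theoretic intersection in the ambient projective space).

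Next I would establish the lower bound for a carefully chosen $y$. Let $L_i$ denote the leaf of $\mathcal{F}_i$ through $x$, and set $\gamma:=\bigcap_{i=1}^{n-1}L_i$. The smoothness of $\mathcal{W}$ guarantees that $\gamma$ is a smooth one-dimensional germ of curve through $x$. Because $\kappa_i$ is constant along the leaves of $\mathcal{F}_i$ (a fact already exploited in the proof of Lemma~\ref{L:CRN}(c)), for every $y\in\gamma$ one has $\kappa_i(y)=\kappa_i(x)=:p_i$ for all $i=1,\ldots,n-1$. Hence the rational normal curves $\mathscr{C}(x)$ and $\mathscr{C}(y)$ share the $n-1$ distinct points $p_1,\ldots,p_{n-1}$. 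By Lemma~\ref{L:CRN}(b), $\mathscr{C}(x)\ne\mathscr{C}(y)$ whenever $y\ne x$, so the intersection is zero-dimensional and its length is at least $n-1$. Combined with the upper bound from the previous paragraph, we conclude $\mathscr{C}(x)\cdot\mathscr{C}(y)=n-1$ for every $y\in\gamma\setminus\{x\}$.

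To extend the equality to arbitrary distinct pairs, I would appeal to the conservation-of-number principle: the family $\{\mathscr{C}(y)\}_{y\in(\mathbb{C}^n,0)}$ is a flat holomorphic family of curves on the smooth surface $X_{\mathcal{W}}$, flatness being a consequence of the constancy of the Hilbert polynomial (every member is a rational normal curve of degree $k-n-1$). For such a family, the Euler characteristic $\chi(\mathcal{O}_{\mathscr{C}(x)\cap\mathscr{C}(y)})$ is a locally constant function of $y$ on the open set where $\mathscr{C}(y)\ne\mathscr{C}(x)$, and it coincides with the length of the zero-dimensional intersection scheme. Since this open set is $(\mathbb{C}^n,0)\setminus\{x\}$, which is connected for $n\ge 2$, the intersection number is constant there, and the value $n-1$ computed on $\gamma\setminus\{x\}$ propagates to all $y\ne x$. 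The main obstacle is the last step: justifying flatness and the local constancy of intersection numbers in the germ setting, rather than on a compact projective surface. This is handled by noting that flatness is a local property, that the family in question is embedded in the trivial $\mathbb{P}^1$-bundle $(\mathbb{C}^n,0)\times\mathbb{P}^1$ via Poincar\'{e}-Blaschke's map, and that the specialization argument only requires a small analytic neighborhood of $x$ where the properness of the intersection is automatic.
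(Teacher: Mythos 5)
Your proof is correct and follows essentially the same route as the paper's: the lower bound comes from the $n-1$ points $\kappa_i(x)=\kappa_i(y)$ shared when $y$ lies on the intersection of $n-1$ leaves through $x$, the upper bound from the fact that a length-$n$ subscheme of the rational normal curve $\mathscr C(x)$ would span a $\mathbb P^{n-1}$ inside the $(n-2)$-dimensional space $\mathbb P^{k-n-1}(x)\cap\mathbb P^{k-n-1}(y)$ of Corollary~\ref{C:int=n-2}, and the extension to arbitrary pairs from a deformation-invariance argument. The only (harmless) differences are that you phrase the span bound scheme-theoretically, which subsumes the paper's ``extra point or tangency'' dichotomy, and you invoke conservation of number where the paper simply observes that any two curves of the family are homologous.
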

\begin{proof}
  Let   $B \subset \underline k$  be a subset of cardinality $n-1$. Suppose
  $y$ belongs to $\cap_{i\in B} L_i(x)$, $L_i(x)$ being the leaf of $\mathcal F_i$ through $x$, but    is not equal to $x$.
  After item (c) of Lemma \ref{L:CRN},
  the curves $\mathscr C(x)$ and $\mathscr C(y)$ are distinct. Anyway,  they share at least $n-1$ points
in common: $p_i=\kappa_i(x) =\kappa_i(y)$ for $i \in B$. Therefore $\mathscr C(x) \cdot \mathscr C(y) \ge n-1$.

If $\mathscr C(x) \cdot \mathscr C(y) \geq n$ then either  there exists $p\not \in \{p_i\}_{ i \in B}$ such that $p\in \mathscr C(x)\cap \mathscr C(y)$;
or there exists  $p \in \{p_i\}_{ i \in B}$ for which  $\mathscr C(x)$ and $\mathscr C({y})$ are tangent
at $p$.
But $n$ points, or $n-1$ points and one tangent, on a rational normal curve
span a projective subspace of dimension $n-1$. This contradicts Corollary
 \ref{C:int=n-2}.

 To conclude it suffices to observe that any two curves in the family $\{ \mathscr C(x)\}_{x \in (\mathbb C^n,0)}$ are
 homologous.
\end{proof}

\begin{prop}\label{P:bewildered}
Let  $(S_0,C)\subset \mathbb P^N$ be a germ of  smooth surface along a  connected projective  curve $C\subset S_0$. If $C^2>0$
then $S_0$ is contained in a projective surface $S\subset \mathbb P^N$.
\end{prop}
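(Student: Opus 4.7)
The plan is to deduce the result from a classical algebraization theorem in the spirit of Andreotti and Hartshorne. The input is a germ of smooth surface $S_0$ along a connected projective curve $C \subset \mathbb{P}^N$, with $C^2 > 0$ computed in $S_0$. This positivity condition is precisely the statement that the normal bundle $N_{C/S_0}$, being a line bundle on $C$ of degree $C^2$, is ample.

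First I would reduce the proposition to a purely dimensional statement. The Zariski closure $S := \overline{S_0}^{\mathrm{Zar}} \subset \mathbb{P}^N$ is automatically a projective variety containing $S_0$; since $S_0$ is a two-dimensional analytic germ, $\dim S \geq 2$, and one only has to show $\dim S \leq 2$. Equivalently, denoting by $\mathcal{M}(S_0,C)$ the field of germs of meromorphic functions on representatives of $(S_0,C)$, it suffices to show that $\mathcal{M}(S_0,C)$ has transcendence degree exactly $2$ over $\mathbb{C}$: any two algebraically independent meromorphic germs constructed from the $N+1$ restrictions of the homogeneous coordinates of $\mathbb{P}^N$ would then bound $\dim S$ from above by $2$.

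Second, I would invoke Andreotti's algebraization theorem (or, in the formal algebraic language, the Hironaka-Matsumura criterion for G3 formal schemes) to produce the required meromorphic functions. Ampleness of $N_{C/S_0}$ implies that for $m \gg 0$ the sheaves $\mathcal{O}_{S_0}(mC)$ admit sufficiently many germs of holomorphic sections along $C$: their dimensions grow quadratically in $m$ and they separate points on $C$. Equivalently, a suitably chosen analytic neighborhood of $C$ in $S_0$ is strongly $1$-concave in the sense of Andreotti, and hence its field of meromorphic functions has transcendence degree equal to $\dim S_0 = 2$. Combined with the ambient coordinates of $\mathbb{P}^N$, this exhibits the required two algebraically independent elements in $\mathcal{M}(S_0,C)$, forcing $\dim S = 2$ and producing the sought projective surface $S \subset \mathbb{P}^N$ containing $S_0$.

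The main obstacle is the algebraization step, which is the analytical heart of the proof. It is a classical but non-trivial result; in the context of this book it seems most natural to cite it as a black box, with references to Hartshorne's \emph{Ample Subvarieties of Algebraic Varieties} (especially Chapter VI) and to the original work of Andreotti and Grauert. A self-contained proof would require a detour into the theory of formal neighborhoods, pseudoconcave spaces, and Grauert-Riemenschneider vanishing, none of which fits the scope of the present exposition.
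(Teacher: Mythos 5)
Your proposal is correct and follows essentially the same route as the paper: reduce to showing that the field of meromorphic functions on the germ has transcendence degree two over $\mathbb C$ (the lower bound coming from restricting generic rational functions of $\mathbb P^N$, the upper bound from the ampleness of $N_{C/S_0}$), and then cite Andreotti's pseudoconcavity theorem or Hartshorne's \emph{Cohomological dimension of algebraic varieties} as the black box. The paper's proof is exactly this, with the same two alternative references.
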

\begin{proof}
Let $\mathbb C(S_0)$ be the field of meromorphic functions on $S_0$.
To prove that $S_0$ is contained in a projective surface, it suffices to
show that the transcendence degree of $\mathbb C(S_0)$ over $\mathbb C$ is two.
Clearly,  since the  restriction  at $S_0$ of any two generic rational functions
on $\mathbb P^N$ are algebraically independent,
it suffices to assume that
the polar set of both does not contain $S_0$ and that their level sets are
generically transverse to obtain that $
{\mathrm{tr}_{\deg}[\mathbb C(S_0) : \mathbb C] \geq  2}$
.

The hypothesis  $C^2 > 0$ is equivalent to the ampleness of the normal
bundle $N_{C/S_0}$. Thus   \cite[Theorem 6.7]{hartshorneCD} implies that
${\mathrm{tr}_{\deg}[\mathbb C(S_0) : \mathbb C] \le 2}$.  \smallskip

Alternatively, it is also possible to apply a Theorem of Andreotti: since any representative of $S_0$ contains a curve of positive
self-intersection, it  also  contains a  pseudo-concave open subset. Hence,
 \cite[Th\'{e}or\`{e}me 6]{andreotti} implies $\mathrm{tr}_{\deg}[\mathbb C(S_0) : \mathbb C] \le 2$.
\end{proof}

The preceding proposition together with Lemma \ref{L:intn-1}   have the following consequence.
\begin{cor}
\label{C:SW}
Poincar\'{e}-Blaschke's surface  $X_{\mathcal W}$ is contained
in an irreducible non-degenerate projective  surface  $S_{\mathcal W}\subset \mathbb P^{\pi-1}$.
\end{cor}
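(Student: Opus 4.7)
The plan is to apply Proposition \ref{P:bewildered} to the germ $(X_{\mathcal W},\mathscr C(0))$, and then to extract from the resulting projective surface an irreducible and non-degenerate subsurface containing $X_{\mathcal W}$. First, by Proposition \ref{PBWrang} the differential of $PB_{\mathcal W}$ has rank two everywhere, so $X_{\mathcal W}$ is indeed a germ of smooth surface along $\mathscr C(0)$; and by Lemma \ref{L:CRN0}, $\mathscr C(0)$ is a smooth, hence connected, projective curve inside $\mathbb P^{\pi-1}$. The only substantive hypothesis of Proposition \ref{P:bewildered} still to be verified is the strict positivity of the self-intersection $\mathscr C(0)^2$ computed on $X_{\mathcal W}$.

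To estimate $\mathscr C(0)^2$, I would use the holomorphic family $\{\mathscr C(x)\}_{x\in(\mathbb C^n,0)}$ of rational normal curves sweeping out $X_{\mathcal W}$. Since this is a family of effective divisors on the germ $X_{\mathcal W}$ parametrized by the connected set $(\mathbb C^n,0)$, any two of its members are numerically equivalent on $X_{\mathcal W}$. Combining this observation with Lemma \ref{L:intn-1}, for any $y\neq 0$ sufficiently close to the origin one obtains
\[
\mathscr C(0)^2 \;=\; \mathscr C(0)\cdot \mathscr C(y) \;=\; n-1,
\]
which is strictly positive under the standing hypothesis $n\ge 3$ of Theorem \ref{T:trepreau}. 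Proposition \ref{P:bewildered} then guarantees the existence of a projective surface in $\mathbb P^{\pi-1}$ containing the germ $X_{\mathcal W}$.

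To conclude, I would replace this projective surface by the unique irreducible component containing $X_{\mathcal W}$, which is well defined because the germ $X_{\mathcal W}$ is smooth and connected and therefore entirely contained in a single component. The resulting irreducible projective surface $S_{\mathcal W}$ is automatically non-degenerate: any hyperplane of $\mathbb P^{\pi-1}$ containing $S_{\mathcal W}$ would contain the open subset $X_{\mathcal W}$, contradicting the non-degeneracy of $X_{\mathcal W}$ noted at the beginning of this section. The only real content in the argument is the numerical computation $\mathscr C(0)^2=n-1>0$; once this is in hand, Proposition \ref{P:bewildered} does all the work, and irreducibility and non-degeneracy are purely formal.
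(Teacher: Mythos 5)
Your proof is correct and follows exactly the route the paper intends: the paper derives Corollary \ref{C:SW} as an immediate consequence of Proposition \ref{P:bewildered} and Lemma \ref{L:intn-1}, and you have simply filled in the details (smoothness of the germ via Proposition \ref{PBWrang}, $\mathscr C(0)^2=n-1>0$, and the formal extraction of irreducibility and non-degeneracy). No gaps.
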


\begin{remark}Notice, that nothing is said about the smoothness of  $S_{\mathcal W}$. A priori, it could even
happen that the germ of $S_{\mathcal W}$ along $\mathscr C(0)$ is singular. Of course, this would happen if  and only
if the germ of $S_{\mathcal W}$ along $\mathscr C(0)$ has other irreducible components besides $X_{\mathcal W}$.
The only thing clear
is that $S_{\mathcal W}$ contains the smooth surface $X_{\mathcal W}$.  \smallskip
\end{remark}

\begin{remark}Those bewildered with the use
of the results of Hartshorne or Andreotti in the
proof of Proposition \ref{P:bewildered}, might fell relieved
by knowing that Corollary \ref{C:SW} can be proved by rather elementary
means, which are sketched below.

Let $x_0$ be an arbitrary point of $\mathscr C(0)$ and consider the
subset $\mathfrak X$ of $\mathrm{Mor}_{k-n-1}(\mathbb P^1, \mathbb
P^{\pi -1})$\footnote{This is just the set of morphisms from $\mathbb
P^1$ to $\mathbb P^n$ of degree $k-n-1$ which can be naturally
identified with a Zariski open subset of $\mathbb P\left( \mathbb
C_{k-n-1}[s,t]^{\pi}\right)$. } consisting of morphisms $\phi$
which map $(\mathbb P^1,(0:1))$  to $(X_{\mathcal W},x_0)$.
Recall that the ring of formal power series in any number
variables is Noetherian.
If $\mathcal I \subset  \mathbb C [[x_1, \ldots, x_{\pi-1}]]$ is the ideal
defining $(X_{\mathcal W},x_0) $ then, expanding  formally $f(\phi(t:1))$ for
every defining equation $f \in \mathcal I$  one deduces that
$\mathfrak X$ is algebraic.

To conclude, one has just to prove that
 the natural projection from $\mathrm{Mor}_{k-n-1}(\mathbb P^1, \mathbb
P^{\pi -1})$   to $\mathbb P^{\pi -1}$ --- the evaluation morphism  --- sends
$\mathfrak X$ onto a surface $S_{\mathcal W}$ of $\mathbb P^{\pi -1}$
containing $X_{\mathcal W}$.
\end{remark}

\subsection{Algebraization II and conclusion}
Since the projective surface  $S_{\mathcal W}$ can be singular,
it will be replaced by one of its  desingularizations. It can  be assumed
that the chosen  desingularization contains an isomorphic copy of  $(X_{\mathcal W},\mathscr C(0))$.
Notice also that every desingularization of a singular projective surface is still projective.
To keep the notation simple, this desingularization will still be denoted
by $S_{\mathcal W}$.

\begin{prop}
There are no holomorphic $1$-forms on $S_{\mathcal W}$, that is  $$h^0(S_{\mathcal W},\Omega^1_{S_{\mathcal W}})=0.$$
\end{prop}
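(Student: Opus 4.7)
The plan is to prove $h^0(S_{\mathcal W}, \Omega^1_{S_{\mathcal W}}) = 0$ by showing that every holomorphic $1$-form $\omega$ on $S_{\mathcal W}$ pulls back to zero under Poincar\'e-Blaschke's map $PB_{\mathcal W}$. Since this map is an open map onto $X_{\mathcal W}$ (by Proposition \ref{PBWrang}) and $X_{\mathcal W}$ is dense in the irreducible surface $S_{\mathcal W}$, this will suffice.

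First I would use the structure of the family of rational curves. By Lemma \ref{L:CRN0}, for every $x \in (\mathbb C^n,0)$ the curve $\mathscr C(x)$ is a rational normal curve, hence isomorphic to $\mathbb P^1$, which carries no nonzero global holomorphic $1$-form. Therefore $\omega|_{\mathscr C(x)} \equiv 0$, and writing tangent vectors to $\mathscr C(x)$ at $PB_{\mathcal W}(x,t)$ as the classes $[\partial_t Z](x,t)$, this means
\[
\omega\bigl([\partial_t Z](x,t)\bigr) = 0 \qquad \text{for every } (x,t) \in V := (\mathbb C^n,0) \times \mathbb P^1.
\]

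Next I would exploit Proposition \ref{DIFFZ}. In projective terms, the differential $dPB_{\mathcal W}$ is obtained by reducing $\underline{d}Z$ modulo $Z$, so
\[
dPB_{\mathcal W}(v) \equiv \Pi(v)\,\{dZ\}^0 + (\Gamma+dt)(v)\,\partial_t Z \pmod{Z}.
\]
Applying $\omega$ and using the vanishing $\omega([\partial_t Z]) = 0$ established above, the second term drops out and one is left with the clean identity
\[
PB_{\mathcal W}^*\omega = h \cdot \Pi, \qquad h(x,t) := \omega\bigl([\{dZ\}^0](x,t)\bigr),
\]
where $\Pi = \sum_{p=0}^{n-1} t^p \varpi_p$.

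Now I would extract the contradiction from holomorphy. The pullback $PB_{\mathcal W}^*\omega$ is a holomorphic $1$-form on $V$, and in the coframe $(\varpi_0,\dots,\varpi_{n-1},dt)$ pulled back to $V$, it has no $dt$-component and $\varpi_p$-component equal to $h(x,t)\cdot t^p$. Hence each $h(x,t)\cdot t^p$ is holomorphic on $V=(\mathbb C^n,0)\times \mathbb P^1$. Taking $p=1$ and fixing $x$, the function $t \mapsto h(x,t)\cdot t$ is holomorphic on the compact Riemann surface $\mathbb P^1$, so it is constant; evaluated at $t=0$ this constant is zero, so $h(x,t)\cdot t \equiv 0$, and therefore $h \equiv 0$. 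Consequently $PB_{\mathcal W}^*\omega = 0$, which forces $\omega|_{X_{\mathcal W}} = 0$ and then $\omega = 0$ on all of the smooth connected surface $S_{\mathcal W}$.

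The main technical obstacle is rigorously justifying the passage from the affine formula of Proposition \ref{DIFFZ} (with $Z$, which has poles along the hypersurfaces $\{t=\theta_i(x)\}$) to the projective pullback $PB_{\mathcal W}^*\omega$ on all of $V$. This is overcome by observing that $Z_* = P(x,t)\,Z(x,t)$ is polynomial in $t$ and defines the same projective map $PB_{\mathcal W}$; the classes $[\{dZ\}^0]$ and $[\{dZ_*\}^0/P]$ coincide in $T_{PB_{\mathcal W}(x,t)}\mathbb P^{\pi-1}$, and so the function $h(x,t)$ extends holomorphically across the divisors $\{t = \theta_i(x)\}$, legitimizing the argument above.
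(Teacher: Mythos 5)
Your proof is correct, but it takes a genuinely different route from the paper's. Both arguments begin the same way: a rational normal curve carries no holomorphic $1$-forms, so $\omega$ restricts to zero on every $\mathscr C(x)$. From there the paper finishes in three lines with a purely qualitative argument: a non-zero holomorphic $1$-form would define a foliation $\mathcal F_{\omega}$ on $S_{\mathcal W}$ for which every $\mathscr C(x)$ is invariant, and a foliation on a surface cannot admit an $n$-dimensional family of pairwise distinct invariant curves (Lemma \ref{L:CRN}(b) guarantees the $\mathscr C(x)$ are pairwise distinct). You instead re-enter the machinery of Section \ref{S:defPBW}: you feed the vanishing $\omega([\partial_t Z])=0$ into the structure formula of Proposition \ref{DIFFZ} to get $PB_{\mathcal W}^*\omega = h\,\Pi$ with $\Pi=\sum_p t^p\varpi_p$, and then kill $h$ by comparing the $\varpi_0$- and $\varpi_1$-components on the compact fibers $\{x\}\times\mathbb P^1$. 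This is heavier, but it is sound and it makes visible exactly how the polynomial dependence on $t$ of the interpolating form $\Pi$ forces the vanishing; the paper's argument buys brevity and uses nothing beyond the existence of the $n$-parameter family of rational curves. One small repair to your last paragraph: the holomorphic extension of $h$ across the divisors $\{t=\theta_i(x)\}$ does not really need the comparison with $Z_*$ (dividing $[\{dZ_*\}^0]$ by $P$ could a priori reintroduce poles); it is immediate from the fact that $h t^p$ is, on a dense open set, the $\varpi_p$-component of the $1$-form $PB_{\mathcal W}^*\omega$, which is holomorphic on all of $(\mathbb C^n,0)\times\mathbb P^1$ because $PB_{\mathcal W}$ is, so each component extends holomorphically.
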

\begin{proof}
Let  $\xi$  be a holomorphic $1$-form on $S_{\mathcal W}$. If non-zero then it defines a foliation $\mathcal F_{\xi}$ on $S_{\mathcal W}$.
Since smooth rational curves have no holomorphic $1$-forms, the pull-back of $\xi$ to $\mathscr C(x)$ must vanish
for every $x \in (\mathbb C^n,0)$. Therefore these curves are invariant by $\mathcal F_{\xi}$. But a foliation
on a surface cannot have an $n$-dimensional family of pairwise distinct leaves. This contradiction shows that
$\xi$ is identically zero.
\end{proof}

Hodge theory implies $H^1(S_{\mathcal W},\mathcal O_{S_\mathcal W})$ is also trivial. Therefore from
the exponential sequence
\[
0 \to \mathbb Z \longrightarrow \mathcal O_S \longrightarrow \mathcal O_S^* \to 0
\]
one deduces that the Chern class morphism
\[
H^1(S_{\mathcal W}, \mathcal O_{S_{\mathcal W}}^*) \longrightarrow H^2(S_{\mathcal W}, \mathbb Z)
\]
is injective. Consequently, two projective curves in $S_{\mathcal W}$ are linearly equivalent
if and only if they are homologous.

\begin{remark}
The surface $S_{\mathcal W}$ is  rational. To see it,
take $n-1$ points in $\mathscr C(0)$, say $\kappa_1(0), \ldots, \kappa_{n-1}(0)$. If $L_i$ is a leaf of the foliation
$\mathcal F_i$  through the origin then $\cap_{i\in \underline{n-1}} L_i$ is a curve $C$ in $(\mathbb C^n,0)$. By construction, for every $x \in C$ the
curve $\mathscr C(x)$ intersects $\mathscr C(0)$ at $\kappa_1(0), \ldots, \kappa_{n-1}(0)$. Thus blowing up at these $n-1$ points,
 one obtains a surface $S$ containing a family parametrized by $(\mathbb C,0)$, of rational curves of self-intersection zero: the strict transforms of $\mathscr C(x)$ for $x \in C$.
 Any two of these curves are linearly equivalent, therefore there exists a non-constant holomorphic  map $F: S \dashrightarrow \mathbb P^1$ sending
 all of them to points. Thus $S$ is a rational fibration over $\mathbb P^1$,  hence a  rational surface.
 \end{remark}

\begin{thm}
 The web $\mathcal W$ is linearizable.
\end{thm}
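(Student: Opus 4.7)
The plan is to produce a linearizing biholomorphism by mapping each point $x\in(\mathbb C^n,0)$ to the class of $\mathscr C(x)$ in an appropriate complete linear system on $S_{\mathcal W}$, following the heuristic laid out in Section \ref{S:MR5}. Since $H^1(S_{\mathcal W},\mathcal O_{S_{\mathcal W}})=0$, linear and homological equivalence coincide on $S_{\mathcal W}$; in particular all curves in the family $\{\mathscr C(x)\}_{x\in(\mathbb C^n,0)}$ lie in the same complete linear system $|\mathscr C|$, where $\mathscr C$ denotes the class of $\mathscr C(0)$.

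First I would compute the dimension of $|\mathscr C|$. By Lemma \ref{L:intn-1}, $\mathscr C^{2}=n-1$, and the genus formula applied to the smooth rational curve $\mathscr C(0)$ yields $K_{S_{\mathcal W}}\cdot \mathscr C=-(n+1)$. Riemann--Roch on $S_{\mathcal W}$ together with $h^0(S_{\mathcal W},\mathcal O_{S_{\mathcal W}})=1$ and the vanishing of $h^1$ and $h^2$ (the former since $S_{\mathcal W}$ is rational, the latter because $K_{S_{\mathcal W}}\otimes \mathcal O_{S_{\mathcal W}}(-\mathscr C)$ has negative intersection with $\mathscr C$ and hence no sections) then gives $h^0(S_{\mathcal W},\mathcal O_{S_{\mathcal W}}(\mathscr C))=n+1$, so that $|\mathscr C|\simeq\mathbb P^n$.

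Next I would introduce the map
\[
L:(\mathbb C^n,0)\longrightarrow |\mathscr C|\simeq\mathbb P^n,\qquad x\longmapsto [\mathscr C(x)]
\]
and show it is a germ of biholomorphism. Injectivity follows at once from item (b) of Lemma \ref{L:CRN}; for the differential, fix $i\in\underline k$ and a leaf $L_{i}$ of $\mathcal F_i$ through $x$: since $\kappa_i$ is constant along $L_i$, the curves $\mathscr C(y)$ for $y\in L_i$ all pass through $\kappa_i(x)$, so $L(L_i)$ lies in the hyperplane $H_i(x)\subset|\mathscr C|$ of curves through $\kappa_i(x)$. Using $n$ foliations in general position, the intersection $\bigcap_{i=1}^{n}H_i(x)$ cuts out, by item (c) of Lemma \ref{L:CRN}, the single point $L(x)$; this forces $dL(x)$ to be injective and hence, by dimension count, an isomorphism.

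Finally, the previous observation that $L(L_i)\subset H_i(x)$ for every leaf $L_i$ of every foliation $\mathcal F_i$ means precisely that $L_{*}\mathcal W$ is a linear web on $(\mathbb P^n,L(0))$, so $\mathcal W$ is linearizable. The main obstacle in this whole plan is not the construction of $L$ itself, which is essentially formal once $S_{\mathcal W}$ has been produced, but rather the control of the cohomology of $\mathcal O_{S_{\mathcal W}}(\mathscr C)$ on the (possibly only mildly desingularized) rational surface $S_{\mathcal W}$: one must verify the vanishing of $h^1$ and $h^2$ rigorously in order to secure $\dim|\mathscr C|=n$, since a smaller linear system would leave room for $L$ to collapse directions and defeat the linearization; once $L$ is a biholomorphism, invoking Corollary \ref{C:ALGgeneralcase} concludes the algebraization theorem.
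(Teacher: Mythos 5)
Your overall strategy is the same as the paper's: send $x$ to the point $[\mathscr C(x)]$ of the complete linear system $|\mathscr C(0)|\simeq\mathbb P^n$, get injectivity from Lemma \ref{L:CRN}(b), and observe that leaves are sent into hyperplanes. But there is a genuine gap at the step where you claim the differential of $L$ is injective. You argue that $L(L_i)\subset H_i(x)$ and that, by Lemma \ref{L:CRN}(c), the intersection $\bigcap_{i=1}^n H_i(x)$ ``cuts out the single point $L(x)$'', and that ``this forces $dL(x)$ to be injective.'' Neither half of this survives scrutiny. First, Lemma \ref{L:CRN}(c) only says that $x$ is the unique point of $(\mathbb C^n,0)$ whose curve $\mathscr C(x)$ passes through the chosen $n$ points; it says nothing about members of $|\mathscr C(0)|$ that are not of the form $\mathscr C(y)$, so it does not show that the $n$ hyperplanes $H_1(x),\dots,H_n(x)$ are in general position in $\mathbb P^n$ --- indeed, their being in general position is essentially equivalent to the injectivity of $dL(x)$, so the argument is circular. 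Second, even a genuinely set-theoretic statement of the form $L^{-1}(\bigcap_i H_i(x))=\{x\}$ would not imply that $dL(x)$ is injective (compare $t\mapsto t^2$ at the origin). The paper closes this gap differently: it observes that Poincar\'e's map factors as $P_{\mathcal W}=\langle\;\rangle\circ\mathfrak C$, where $\langle\;\rangle$ sends a curve to its projective span in $\mathrm{Grass}(\A2W^*,k-n)$, and then invokes Proposition \ref{P:pmii} ($P_{\mathcal W}$ is an immersion) to conclude that $\mathfrak C$ is an immersion. You need this factorization, or some genuinely new argument, to make your map a biholomorphism.

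A secondary remark concerns the computation $\dim|\mathscr C(0)|=n$. You go through Riemann--Roch, which forces you to establish $h^1(S_{\mathcal W},\mathcal O_{S_{\mathcal W}}(\mathscr C))=h^2(S_{\mathcal W},\mathcal O_{S_{\mathcal W}}(\mathscr C))=0$; your justification of the $h^1$ vanishing (``since $S_{\mathcal W}$ is rational'') is not sufficient, as rationality kills $h^1(\mathcal O_{S_{\mathcal W}})$ but not a priori $h^1$ of an arbitrary line bundle, and your $h^2$ argument needs the extra observation that an effective divisor in $|K_{S_{\mathcal W}}-\mathscr C|$ would have to contain every curve of the $n$-dimensional family $\{\mathscr C(x)\}$ as a component. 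You correctly flag this as the delicate point of your plan, but the paper simply avoids it: twisting $0\to\mathcal O(-\mathscr C(0))\to\mathcal O\to\mathcal O_{\mathscr C(0)}\to 0$ by $\mathcal O(\mathscr C(0))$ and using only $H^1(S_{\mathcal W},\mathcal O_{S_{\mathcal W}})=0$ together with $\mathcal O_{\mathscr C(0)}(\mathscr C(0))\cong\mathcal O_{\mathbb P^1}(n-1)$ gives $h^0(\mathcal O(\mathscr C(0)))=n+1$ directly. I recommend adopting that route; it removes your ``main obstacle'' entirely.
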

\begin{proof}
Recall that any two curves in the family $\{\mathscr C(x)\}$ are homologous, and consequently  linearly equivalent. Thus
 they all
belong to the complete linear system $$|\mathscr C(0)|=\mathbb P H^0\big(S_{\mathcal W},\mathcal O_{S_{\mathcal W}}(\mathscr C(0) ) \big).$$

Tensoring the standard exact sequence
\[
0\to \mathcal O_{S_{\mathcal W}}(- \mathscr C(0)) \longrightarrow \mathcal O_{S_{\mathcal W}}
\longrightarrow \mathcal O_{\mathscr C(0)} \to 0
\]
by $\mathcal O_{S_{\mathcal W}}(\mathscr C(0))$, one obtains
\begin{equation}
\label{E:see}
0\to \mathcal O_{S_{\mathcal W}} \longrightarrow \mathcal O_{S_{\mathcal W}} ( \mathscr C(0))
\longrightarrow \mathcal O_{\mathscr C(0)}(\mathscr C(0)) \to 0 \, .
\end{equation}

Notice that
\begin{equation*}
 \deg \mathcal O_{\mathscr C(0)}(\mathscr C(0)) = \mathscr C(0)^2 = n-1\, .
\end{equation*}
Consequently $ \mathcal O_{\mathscr C(0)}(\mathscr C(0))) \cong \mathcal O_{\mathbb P^1}(n-1)$.

\medskip

Since   $H^1(S_{\mathcal W}, \mathcal O_{S_{\mathcal W}})\simeq H^0(S_{\mathcal W}, \Omega^1_{S_{\mathcal W}}) = 0$, it follows from
 (\ref{E:see}) that
\begin{align*}
\label{E:see2}
h^0(S_{\mathcal W}, \mathcal O_{\mathcal S_{\mathcal W}}(\mathscr C(0))  &= h^0(S_{\mathcal W}, \mathcal O_{\mathcal S_{\mathcal W}})
+ h^0(S_{\mathcal W}, \mathcal O_{\mathscr C(0)}(\mathscr C(0))) \\ &= 1 + n\, .
\end{align*}
Therefore  $|\mathscr C(0)|\simeq \mathbb P^{n}$.  According to Lemma \ref{L:CRN} item (b), the map
\begin{align*}
 \mathfrak C: \,(\mathbb C^n,0) & \longrightarrow |\mathscr C(0)|\simeq \mathbb P^{n} \\
x & \longmapsto \mathscr C(x)
\end{align*}
is  injective.  Moreover, there exists a factorization
$$  \xymatrix@R=0.9cm@C=0.9cm{
(\mathbb C^n,0)
 \ar[rr]^{\mathfrak C\qquad } \ar[rrd]_{P_{\mathcal W}} &&    |\mathscr C(0)| \simeq \mathbb P^{n}
\ar[d]^{\langle\; \rangle}
\\
 & & \mathrm{Grass}(\A2W^*, k-n)
} $$
where  $\langle\; \rangle$ is the map that associates to the curve $\mathscr C(x)$ its projective span $\langle \mathscr C(x) \rangle$.
Since $P_{\mathcal W}$ is an immersion  (Proposition \ref{P:pmii}), so is  $\mathfrak C$.
  The image by
$\mathfrak C$ of $L_i(x)$, the leaf of  $\mathcal F_i$ through $x \in (\mathbb C^n,0)$, is nothing more than the elements of  $|\mathscr C(0)| $ passing through  $\kappa_i(x) \in X_{\mathcal W} \subset S_{\mathcal W}$.
Because $|\mathscr C(0)| $ is a linear system, $\mathfrak C\big(L_i(x)\big)$ is a  hyperplane in $ \mathbb P^{n}$. Therefore
$\mathfrak C$ is a germ of biholomorphism which linearizes $\mathcal W$.
\end{proof}

It suffices to apply the algebraization  theorem for linear webs to conclude
that   $\mathcal W$ is algebraizable.

\chapter{Exceptional Webs}\label{Chapter:6}
\thispagestyle{empty}

Taking the risk of being anticlimactic, this last chapter is devoted to planar webs
of maximal rank. More specifically, a survey of the currently state of the art concerning
exceptional planar webs is presented.\smallskip 

In  Section \ref{S:lliinn}  a criterium
to  decide whether or not a given planar web is linearizable is presented. The criterium is phrased in terms of a projective connection adapted
to the web  under study. The lack of conciseness of the presentation carried out here is balanced by the geometric intuition
it may help to build. As corollaries, the existence of exceptional webs and an algebraization result for  planar webs, are obtained
in Section \ref{S:firstexceptional} and Section \ref{S:algdim2}  respectively.

\smallskip

Section \ref{S:iinnff} deals with planar webs with infinitesimal automorphisms following  \cite{MPP}.
Using the result on the structure of the space of abelian relations of a web $\mathcal W$ carrying
a transverse infinitesimal automorphism $v$ laid down earlier in Section \ref{S:infinitesimalggg} of Chapter \ref{Chapter:AR}, the rank
of the web obtained from  the superposition of $\mathcal W$ and the foliation $\mathcal F_v$ determined by $v$ , is
computed as a function of the rank of $\mathcal W$. As a corollary, the existence of
exceptional $k$-webs for arbitrary $k \ge 5$ is settled. Some place is taken to recall some
basic definitions from differential algebra, and make more precise a problem posed in \cite{MPP} that the authors
think has some interest.

\smallskip

Section \ref{S:Pantazi-Henaut} starts with  basic facts from Cartan-Goldsmith-Spencer theory on differential linear systems.
This theory is applied to the differential system which solutions are the abelian relations of a given planar web, in order to
obtain a computational criterium which decides whether or not  the web under study has maximal rank. The approach followed there is
a mix of the classical one  by Pantazi with the more recent by H\'{e}naut. The necessary criterium for the maximality of the rank
by  Mih\u{a}ileanu is briefly discussed without  proof.

\smallskip

In  Section \ref{S:CDQL} some recent classification results obtained by the authors are  stated, and the proof
of one of them is outlined.

\smallskip

Finally in Section \ref{S:Bestiarium} all the  exceptional webs known up-to-date, to the best of the authors
knowledge,  are collected.

\section{Criterium for linearization}\label{S:lliinn}

Throughout this section  $\mathcal W=\mathcal W(\omega_1, \ldots, \omega_k) $ is a germ of smooth
$k$-web on  $(\mathbb C^2,0)$. For $i\in \underline{k}$, let $v_i\in T(\mathbb C^2,0) $ be a germ of
 smooth vector field defining the same foliation as $\omega_i$. Thus $\omega_i(v_i)=0$ and $v_i(0)\neq 0$.

\subsection{Characterization of linear webs}

Recall that a web     $\mathcal W$ is \defi[linear]
\index{Web! linear} if all its leaves are contained in affine lines of  $\mathbb C^2$.
Recall also that  $\mathcal W$ is \defi[linearizable]  if it is equivalent to a linear web. \smallskip

It is rather simple to characterize the linear webs. It suffices to notice that a curve $C \subset (\mathbb C^2,0)$ is contained
in a line, if and only if for any parametrization $\gamma : (\mathbb C,0) \to C$  the following identity holds
true
\[
\gamma'(t) \wedge \gamma''(t) = 0 \, , \quad \text{ for every } t \in (\mathbb C,0) \, .
\]
Therefore, every orbit of a vector field $v$ will be linear, if and only if, the determinant
\[
\det \left( \begin{array}{cc}
              v(x)& v(y) \\
              v^2(x)&v^2(y)
            \end{array} \right)
\]
vanishes identically. Indeed,  if $\gamma(t)= (\gamma_1(t), \gamma_2(t)) $ satisfies $v(\gamma(t)) = \gamma'(t)$ then
\[
\det \left( \begin{array}{cc}
              v(x) & v(y) \\
              v^2(x)& v^2(y)
                          \end{array} \right) \big( \gamma(t) \big)  = \det \left( \begin{array}{cc}
              \gamma_1'(t) & \gamma_2'(t) \\
              \gamma_1''(t)& \gamma_2''(t)
            \end{array} \right) \, .
\]

Therefore   $\mathcal W$ is linear if and only if
\begin{equation*}
\det \left( \begin{array}{cc}
              v_i(x)& v_i(y) \\
              v_i^2(x)&v_i^2(y)
            \end{array} \right) = 0
\end{equation*}
for every  $i \in \underline k$.

\dd

To characterize linearizable webs one is naturally lead to less elementary considerations. Below, the approach laid down in Section \S 27 of Blaschke-Bol's book \cite{BB}
is presented in a modern language. For more recent references see \cite{Henaut1993}, \cite{AGL} and \cite{PIRIOLIN}. \smallskip

The inherent difficulty in obtaining an analytic criterium characterizing linearizable webs
comes from the fact that the usual method to treat linearization problems comes from differential geometry and
is well adapted to  deal with one-dimensional families of foliations on $(\mathbb C^2,0)$.
But a web is not builded from a continuous family of foliations but by a finite number of them.
It is the contrast between the finiteness and continuity that makes the linearization of webs a non-trivial question.
For instance, despite many efforts spreaded over time, Gronwall's conjecture is still unsettled:

\begin{conjecture}[Gronwall's conjecture]
If $\mathcal W$ and $\mathcal W'$ are  germs of linear $3$-webs on $(\mathbb C^2,0)$ with non-vanishing curvature and
$\varphi : (\mathbb C^2,0) \to (\mathbb C^2,0)$ is a germ of biholomorphism sending $\mathcal W$ to $\mathcal W'$ then
$\varphi$ is the germification of a projective automorphism of $\mathbb P^2$. In other words, a non-hexagonal $3$-web
admits at most one linearization.
\end{conjecture}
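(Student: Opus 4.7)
The plan is to reduce Gronwall's conjecture to a rigidity statement about triples of curves in the dual projective plane, and then attempt to extract rigidity from the Blaschke curvature.

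First, I would apply the projective duality developed in Section~\ref{S:defalg} and Section~\ref{subsection:Projective duality}. A linear quasi-smooth 3-web $\mathcal{W}$ on $(\check{\mathbb{P}}^2,\ell_0)$ corresponds to a triple of germs of curves $(C_1,C_2,C_3)$ in $\mathbb{P}^2$, each intersecting $\ell_0$ transversely. A germ of biholomorphism $\varphi$ realizing an equivalence $\mathcal{W}\to\mathcal{W}'$ must send, for each $i$, the leaves of $\mathcal{F}_i$ to those of $\mathcal{F}'_{\sigma(i)}$ for some $\sigma\in\mathfrak{S}_3$. Dualizing via the Gauss maps used in the proof of Theorem~\ref{T:ALGquasismoothW}, this produces germs of biholomorphisms $\alpha_i:(C_i,p_i)\to(C'_{\sigma(i)},p'_{\sigma(i)})$ for $i=1,2,3$. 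The key observation is that $\varphi$ comes from a projective transformation of $\mathbb{P}^2$ precisely when there exists a common $g\in\mathrm{PGL}(3,\mathbb{C})$ whose restriction to each $C_i$ agrees with $\alpha_i$. Gronwall's statement thus becomes: the triple $(\alpha_1,\alpha_2,\alpha_3)$ extracted from $\varphi$ always admits such a simultaneous projective lift when the webs are non-hexagonal.

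Second, I would recall the dual characterization of hexagonality. By the Graf--Sauer theorem (discussed in the historical notes and reproven for algebraic 3-webs in Proposition~\ref{P:cubic} via Chasles' theorem), a linear 3-web is hexagonal precisely when $C_1\cup C_2\cup C_3$ is contained in a reduced projective cubic of class~$3$ in $\mathbb{P}^2$. The non-hexagonality hypothesis therefore prevents the three dual curves from sharing such a distinguished cubic, which is the geometric reason one expects no extra flexibility in matching them.

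Third, I would attempt the rigidity itself by combining the two available covariants of the web. Since $\varphi^*K(\mathcal{W}')=K(\mathcal{W})$, the non-vanishing of $K(\mathcal{W})$ produces a privileged 2-form extracted from the web alone. Working in the normal form of Lemma~\ref{L:normal}, namely $\mathcal{W}(x,y,x+y+xy(x-y)(k+\hot))$, the germ $k$ encodes the full curvature data (Lemma~\ref{L:Wk2}), while the linearity of both $\mathcal{W}$ and $\mathcal{W}'$ translates into a pair of linear PDE systems for the three defining submersions. Writing $\varphi$ as an unknown formal series and imposing (i) that it conjugates the two normal forms and (ii) that each image foliation is linear, one obtains a highly overdetermined PDE system for the Taylor coefficients of $\varphi$. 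Standard prolongation should reduce the solution space order by order, and the goal is to show that when $k(0)\neq 0$ the reduced space has dimension exactly $\dim\mathrm{PGL}(3,\mathbb{C})=8$.

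The hard part, and the reason Gronwall's conjecture has resisted proof for more than a century, is controlling the higher-order integrability conditions of this system. A naive dimension count shows that linear 3-webs form a codimension-one family among smooth 3-webs and that $\mathrm{PGL}(3,\mathbb{C})$ acts with 8-dimensional orbits, matching the dimension predicted by Gronwall; but upgrading this heuristic to an honest proof requires excluding a parasitic local biholomorphism $\varphi$ that preserves linearity and the web structure without being globally projective. I expect the main technical obstacle to lie in showing that the Blaschke curvature alone, as opposed to some finer tensorial invariant built from $K(\mathcal{W})$ and its covariant derivatives, suffices as the obstruction at every order of jet. Previous attempts have foundered either by mishandling a degenerate stratum (as in Bol's original sketch) or by imposing auxiliary genericity assumptions that end up excluding precisely the configurations one wants to rigidify.
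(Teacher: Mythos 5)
The statement you are addressing is stated in the paper as a \emph{conjecture} (Gronwall's conjecture), and the paper explicitly says it is ``still unsettled''; there is no proof in the text to compare your argument against. So the only question is whether your proposal actually closes the problem, and it does not. Your first two steps are sound reformulations: the duality between linear quasi-smooth $3$-webs on $(\check{\mathbb P}^2,\ell_0)$ and triples of germs of curves in $\mathbb P^2$ is exactly the correspondence used in the paper's treatment of algebraic webs and of the converse of Abel's theorem, and the reduction of Gronwall's statement to the existence of a simultaneous projective lift of the three induced maps $\alpha_i:C_i\to C'_{\sigma(i)}$ is a correct and standard restatement. Likewise, the Graf--Sauer characterization of hexagonal linear $3$-webs as those dual to a curve of class three is consistent with the converse of Abel's theorem. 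But neither reformulation is then \emph{used}: the non-hexagonality hypothesis enters your argument only as a remark that one ``expects no extra flexibility,'' which is not a deduction.

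The genuine gap is your entire third step. The assertion that ``standard prolongation should reduce the solution space order by order'' to a space of dimension $\dim\mathrm{PGL}(3,\mathbb C)=8$ is precisely Gronwall's conjecture rewritten in jet-theoretic language; no prolongation is carried out, no integrability conditions are computed, and no mechanism is exhibited by which the non-vanishing of $K(\mathcal W)$ kills the extra jet of a putative non-projective linearizing map at each order. The dimension count you offer (linear $3$-webs form a codimension-one family, $\mathrm{PGL}(3,\mathbb C)$ has $8$-dimensional orbits) is a plausibility heuristic that you yourself flag as such, and it cannot distinguish the conjectural rigidity from the existence of a finite-dimensional but larger-than-$8$-dimensional family of linearizations. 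Your closing paragraph concedes that the main technical obstacle --- showing that the curvature and its covariant derivatives suffice as obstructions at every jet order --- is exactly the open point. What you have written is a reasonable research plan for attacking the conjecture, not a proof of it, and it should not be presented as one.
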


In sharp contrast, the equivalent statement for planar $k$-webs with $k\ge 4$, is true as will be explained below.
The  point is that it is possible  interpolate the defining foliations by a \emph{unique} second order differential equation cubic
in the first derivative when $k=  4$.
When $k=3$, although possible to interpolate as for $k = 4$, the lack of uniqueness adds an additional  layer of difficulty to the problem.

\subsection{Affine and projective connections}

A germ of holomorphic  \defi[affine connection]  \index{Affine connection} on $(\mathbb C^2,0)$ is a
 map
  $$\nabla: T (\mathbb C^2,0) \rightarrow \Omega^1(\mathbb C^2,0)  \otimes T(\mathbb C^2,0)   $$
   satisfying
   \begin{enumerate}
   \item[(1)]  $\nabla(\zeta+\zeta')=\nabla(\zeta)+\nabla(\zeta')$;
   \item[(2)] $\nabla(f\zeta)=f\nabla(\zeta)+df\otimes \zeta$;
    \end{enumerate}
for every $\zeta,\zeta'\in T(\mathbb C^2,0)$ and every   $f \in \mathcal O(\mathbb C^2,0) $.

\smallskip

Beware that the map $\nabla$ is not $\mathcal O(\mathbb C^2,0)$-linear. In particular, the image of  a vector field
$\zeta$ is a tensor which at a given point $p \in (\mathbb C^2,0)$ is   determined by the whole germ of $\zeta$ at $p$
and not only by its  value at $p$.

\medskip

If   $\chi=(\chi_0,\chi_1)$ is a holomorphic frame on  $(\mathbb C^2,0) $ and  $\omega=(\omega_0,\omega_1)$ is the dual coframe then
 $\nabla$ is completely determined by its  \defi[Christoffel's symbols] \index{Christoffel's symbols} $\Gamma^k_{ij}$ (relative to the frame $\chi$),
defined by the relations
$$ \qquad \qquad  \nabla(\chi_j)=\sum_{i,k=0}^1 \Gamma^k_{ij} \,\omega_i \otimes \chi_k  \qquad \mbox{ for }  j=0,1\,. $$

\smallskip

Although $\nabla$ when evaluated at a vector field $\zeta$  does depend on the germ of
$\zeta$ as explained above, if $C\subset (\mathbb C^2,0)$ is a curve and
$\zeta$ is a vector field  tangent to it then the pullback  of $\nabla(\zeta)$ at $C$ is completely determined by the restriction of $\zeta$ to $C$.
More precisely, $\nabla(\zeta)$ naturally determines a  section of $\Omega^1(C) \otimes T (\mathbb  C^2,0)$ which only depends
on the restriction of $\zeta$ to $C$.  Indeed, if
$\zeta,\zeta'$ and $\zeta''$ are vector fields such that
$\zeta - \zeta'= f \zeta''$ where   $f \in \mathcal O(\mathbb C^2,0)$
is a defining equation for $C$, then
\[
\nabla(\zeta) - \nabla(\zeta') = f\nabla(\zeta'')+df\otimes \zeta'' \,
\]
which clearly vanishes at $C$ when contracted with vector fields tangent to $C$.

\medskip

A smooth curve $C \subset (\mathbb C^2,0)$ is a \defi[geodesic] \index{Geodesic}of $\nabla$, if for every germ of vector field $\zeta \in T C \subset T(\mathbb C^2,0)$, the vector field
$\nabla_{\zeta} ( \zeta) :=\langle \nabla(\zeta),\zeta\rangle$ still belongs to $TC$. To wit, if  $\gamma:(\mathbb C,0) \rightarrow (\mathbb C^2,0)$
is a parametrization of $C$ then $C$ is a geodesic for $\nabla$ if and only if there exists
 a holomorphic 1-form   $\eta \in \Omega^1(\mathbb C,0)$ such that $\nabla(\gamma') =  \eta\otimes \gamma'$.
If  $\nabla(\gamma')$ vanishes identically  on  $(\mathbb C,0)$ then $\gamma$  is called a  \defi[geodesic parametrization] \index{Geodesic!parametrization} of
$C$.

\medskip

 Two affine connections on $(\mathbb C^2,0)$ are  \defi[projectively equivalent] \index{Affine connection!projectively equivalent}
 if they have exactly the same curves as geodesics. This defines a equivalence relation on the space of affine connections
 on $(\mathbb C^2,0)$. By  definition, a \defi[projective connection] \index{Projective connection} is an equivalence class of this equivalence relation. The
 class of an affine connection $\nabla$ will be denoted by $[\nabla]$.  A smooth curve  $C\subset (\mathbb C^2,0)$
 is a \defi[geodesic] of  $\Pi=[\nabla]$ if it is a geodesic of the affine connection  $\nabla$.  Of course, the definition does not depend on the
 representative $\nabla$ of $\Pi$.

\smallskip

Two projective connections are \defi[equivalent] if  there  exists a germ of biholomorphism  $\varphi$ sending the geodesics of
one into the geodesics of the other.
The  \defi[trivial  projective connection] $\Pi_0$ \index{Projective connection!trivial} is
the global projective connection on  $\mathbb P^2$ having as geodesics the projective lines. A projective connection   $\Pi$ is  \defi[flat] \index{Projective connection!flat} (or \defi[integrable]) \index{Projective connection!integrable} if it is equivalent to  $\Pi_0$.

\subsubsection{Projective connections and ordinary differential equations}

\begin{lemma}
\label{L:PI=coeffTHOMAS}
If  $\Pi$ is a projective  connection on $(\mathbb C^2,0)$ then there exists a unique affine connection  $\nabla$ on $(\mathbb C^2,0)$ such that
\begin{enumerate}
 \item the affine connection $\nabla$ is a representative of $\Pi$, that is  $[\nabla]=\Pi$;
\item the Christoffel's symbols $\Gamma_{ij}^k$ ($i,j,k=1,2$)  of  $\nabla$ relative to the  coframe  $(dx,dy)$ verify the relations
\begin{equation}
 \label{E:coeffTHOMAS}
\Gamma^k_{ij}=\Gamma^k_{ji} \qquad \mbox{ and  } \qquad  \Gamma^1_{1j}+\Gamma^2_{2j}=0
\end{equation}
for every $i,j,k=1,2$.
\end{enumerate}
\end{lemma}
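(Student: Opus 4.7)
The plan is to use the classical Thomas normalization procedure for projective connections. The starting point is that $\Pi$ certainly admits \emph{some} representative affine connection $\widetilde{\nabla}$, and our task is to modify $\widetilde{\nabla}$ within its projective class in order to force (\ref{E:coeffTHOMAS}). First I would reduce to the torsion-free case: given any representative with Christoffel symbols $\widetilde{\Gamma}^k_{ij}$ in the coframe $(dx,dy)$, the geodesic equation for a parametrized curve $\gamma(t)=(x(t),y(t))$ reads $\ddot{x}^k + \widetilde{\Gamma}^k_{ij}\dot{x}^i\dot{x}^j = \eta(\dot\gamma)\,\dot{x}^k$, which only sees the symmetric part $\tfrac{1}{2}(\widetilde{\Gamma}^k_{ij}+\widetilde{\Gamma}^k_{ji})$. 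Replacing $\widetilde{\nabla}$ by its symmetrization produces a torsion-free representative of $\Pi$ with identical geodesics, so from now on I may assume $\widetilde{\Gamma}^k_{ij} = \widetilde{\Gamma}^k_{ji}$.

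The key classical tool is the Weyl--Thomas characterization of projective equivalence among torsion-free connections: two such connections $\nabla, \widetilde{\nabla}$ share the same (unparametrized) geodesics if and only if there exists a germ of 1-form $\alpha = \alpha_1\,dx + \alpha_2\,dy$ such that
$$
\Gamma^k_{ij} \;=\; \widetilde{\Gamma}^k_{ij} + \delta^k_i\,\alpha_j + \delta^k_j\,\alpha_i \qquad (i,j,k\in\{1,2\}).
$$
I would prove this by comparing the geodesic ODEs for the two connections: after eliminating the parameter and writing $y$ as a function of $x$, both give rise to a second order ODE cubic in $y'$, and equality of these cubic ODEs forces exactly the displayed form of the difference tensor. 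This symmetric, trace-like transformation is the only freedom one has inside a projective class once torsion has been killed.

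Taking traces in the first index gives the contracted transformation rule
$$
\Gamma^1_{1j} + \Gamma^2_{2j} \;=\; \widetilde{\Gamma}^1_{1j} + \widetilde{\Gamma}^2_{2j} + 3\alpha_j
$$
for $j=1,2$. Therefore the condition $\Gamma^1_{1j} + \Gamma^2_{2j} = 0$ holds if and only if $\alpha_j$ is chosen to be $-\tfrac{1}{3}(\widetilde{\Gamma}^1_{1j} + \widetilde{\Gamma}^2_{2j})$. This determines $\alpha$ uniquely, hence the normalized $\nabla$ is unique as well. Symmetry of $\Gamma^k_{ij}$ in $(i,j)$ is preserved by the transformation since $\delta^k_i\alpha_j + \delta^k_j\alpha_i$ is manifestly symmetric in $(i,j)$; so the resulting $\nabla$ satisfies both conditions in (\ref{E:coeffTHOMAS}).

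The main obstacle, and the only non-cosmetic step, is the Weyl--Thomas formula for the difference of two projectively equivalent torsion-free connections. Everything else is linear algebra in the fibers of $T^*\otimes T^*\otimes T$. I expect to spend the bulk of the argument carefully deriving the cubic-in-$y'$ form of the geodesic ODE, observing that two torsion-free connections yield the same such ODE if and only if their difference tensor $S^k_{ij} := \Gamma^k_{ij} - \widetilde{\Gamma}^k_{ij}$ (which is symmetric in $(i,j)$) satisfies $S^k_{ij}\xi^i\xi^j \in \mathbb{C}\cdot\xi^k$ for every tangent vector $\xi$, and then using a pointwise argument to conclude that $S^k_{ij}$ must be of the form $\delta^k_i\alpha_j + \delta^k_j\alpha_i$. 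Once this is in hand, existence and uniqueness of the Thomas representative follow immediately from the trace computation above.
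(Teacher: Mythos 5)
Your proposal is correct, and it is precisely the classical Thomas normalization that the paper intends: the paper's own proof of this lemma is literally ``Exercise for the reader,'' so there is no written argument to compare against, but your route (symmetrize to kill torsion, invoke the Weyl--Thomas form $S^k_{ij}=\delta^k_i\alpha_j+\delta^k_j\alpha_i$ for the difference of two projectively equivalent torsion-free connections, then solve $3\alpha_j=-(\widetilde{\Gamma}^1_{1j}+\widetilde{\Gamma}^2_{2j})$ for existence and uniqueness) is the standard and expected one. The only step that genuinely needs care is the pointwise algebraic lemma that a symmetric tensor $S$ with $S(\xi,\xi)\in\mathbb{C}\cdot\xi$ for all $\xi$ must have the trace-type form above; you identify this correctly, and it follows by polarization together with the observation that the resulting $\lambda(\xi)$ with $S(\xi,\xi)=\lambda(\xi)\xi$ is forced to be linear in $\xi$.
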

\begin{proof}
Exercise for the reader.
\end{proof}

\smallskip

From now on, a projective connection $\Pi$, as well as its normalized representative $\nabla$
provided by the lemma above, will be fixed.

Let $\gamma: (\mathbb C,0)\rightarrow (\mathbb C^2,0)$ be a parametrization of a curve
 $C\subset (\mathbb C^2,0)$.  If one writes $\gamma(t)=(\gamma_1(t),\gamma_2(t))$ for  $t\in (\mathbb C,0)$ then
 it is a simple exercise to show the equivalence of the three assertions below:
\begin{enumerate}
 \item $C$ is a geodesic of  $\Pi$;
\item $C$ is a geodesic of  $\nabla$;
\item there exists a function $\varphi$ such that
\begin{align}
\label{E:secondORDER}
\frac{d^2 \gamma_k}{dt^2}
+\sum_{i,j=1}^2 \Gamma^k_{ij}\Big(\frac{d\gamma_i}{dt}\Big)\Big(\frac{d\gamma_j}{dt}\Big)
=\varphi \frac{d \gamma_k}{dt}\
\end{align}
for $k= 1, 2$.
\end{enumerate}

Under the additional hypothesis that  $C$ is tranverse to the vertical foliation $\{x=cst.\}$, the inequality  $\frac{d\gamma_1}{dt}(0)\neq 0$ holds true and,  modulo a change of variables, one can assume that  $\gamma_1(t)=t$. It is then
a simple matter to eliminate the function $\varphi$ in (\ref{E:secondORDER}) and deduce the following lemma which can be traced back to  Beltrami \cite{Beltrami}.

\begin{lemma}
The geodesics of  $\Pi$ transverse to  $\{x=cte.\}$ can be identified with the solutions of the  second order differential equation
\begin{equation}
   \label{E:equatABCD}
(\mathscr E_\Pi) \qquad\qquad  \frac{d^2y}{dx^2} = A \big(\frac{dy}{dx}\big)^3
+
B \big(\frac{dy}{dx}\big)^2
+
C\frac{dy}{dx}
+D\qquad\qquad
\end{equation}
where $A,B,C,D$ are expressed in function of the Christoffel's symbols  $\Gamma_{ij}^k$ of $\nabla$ as follows
\begin{equation}
 \label{E:ABCD0}
A=  \Gamma^1_{22} , \quad
B=   2\,\Gamma^1_{12}   -\Gamma^2_{22}, \quad
C= \Gamma^1_{11}   -2\,\Gamma^2_{12}, \quad
   D=-\Gamma^2_{11}.
\end{equation}
\end{lemma}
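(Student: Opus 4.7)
The plan is to derive the ODE $(\mathscr E_\Pi)$ directly from the system (\ref{E:secondORDER}) by using the transversality hypothesis to eliminate the auxiliary function $\varphi$. Since the three assertions preceding the lemma are assumed equivalent (the second being the genuinely geometric condition, the third being its expression in an arbitrary parametrization), I can work with assertion (3). The reparametrization step is immediate: if $C$ is transverse to the vertical foliation $\{x=\mathrm{cst}\}$ then $\tfrac{d\gamma_1}{dt}(0)\ne 0$, so after changing the parameter $t$ I may assume $\gamma_1(t)=t$ identically, and write $\gamma(t)=(t,y(t))$ with $y\in\mathcal O(\mathbb C,0)$. Note that this reparametrization is precisely what is allowed by the presence of the free function $\varphi$ in (\ref{E:secondORDER}), i.e.\ by the projective (as opposed to affine) nature of the condition.

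Next, I would plug $\gamma(t)=(t,y(t))$ into (\ref{E:secondORDER}) and read off the two scalar equations corresponding to $k=1$ and $k=2$. Using $\tfrac{d\gamma_1}{dt}=1$, $\tfrac{d^2\gamma_1}{dt^2}=0$ and the symmetry $\Gamma^k_{ij}=\Gamma^k_{ji}$ furnished by Lemma~\ref{L:PI=coeffTHOMAS}, the equation for $k=1$ reads
\[
\varphi \;=\; \Gamma^{1}_{11}+2\,\Gamma^{1}_{12}\,y'+\Gamma^{1}_{22}\,(y')^{2},
\]
so that $\varphi$ is uniquely determined along $C$ by $y'$ and the Christoffel symbols. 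This is the step where the transversality hypothesis is used: it is what makes the coefficient of $\varphi$ on the right-hand side (namely $\tfrac{d\gamma_1}{dt}=1$) non-vanishing and thus allows the elimination of $\varphi$.

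The remaining step is an elementary substitution: plug the expression for $\varphi$ above into the $k=2$ equation
\[
y''+\Gamma^{2}_{11}+2\,\Gamma^{2}_{12}\,y'+\Gamma^{2}_{22}\,(y')^{2}\;=\;\varphi\cdot y',
\]
and collect the resulting terms according to powers of $y'$. Reordering yields an expression of the form $y''=A(y')^{3}+B(y')^{2}+Cy'+D$ with
\[
A=\Gamma^{1}_{22},\quad B=2\Gamma^{1}_{12}-\Gamma^{2}_{22},\quad C=\Gamma^{1}_{11}-2\Gamma^{2}_{12},\quad D=-\Gamma^{2}_{11},
\]
which are exactly the formulas (\ref{E:ABCD0}). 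Conversely, any local solution $y(x)$ of $(\mathscr E_\Pi)$ produces, via the recipe $\gamma(t)=(t,y(t))$ and the above formula defining $\varphi$, a parametrized curve satisfying (\ref{E:secondORDER}), hence a geodesic of $\Pi$ transverse to the vertical foliation.

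There is no serious obstacle to this argument: it is bookkeeping with indices and an application of the transversality hypothesis. The only conceptual remark worth making is that although the Christoffel symbols of a representative $\nabla$ of $\Pi$ are not intrinsically attached to $\Pi$, the four functions $A,B,C,D$ are, since $(\mathscr E_\Pi)$ is characterized purely in terms of the geodesics of $\Pi$; Lemma~\ref{L:PI=coeffTHOMAS} simply provides a canonical representative in which the combinations (\ref{E:ABCD0}) acquire their clean form. Thus the lemma could alternatively be proved by verifying that the right-hand sides of (\ref{E:ABCD0}) are invariant under the projective gauge freedom preserved by the normalization (\ref{E:coeffTHOMAS}), but the direct computation sketched above seems the most efficient route.
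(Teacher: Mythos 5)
Your proposal is correct and is precisely the elimination of $\varphi$ that the paper itself invokes (it only sketches the step, asserting that "it is a simple matter to eliminate the function $\varphi$ in (\ref{E:secondORDER})"); your explicit computation of $\varphi$ from the $k=1$ equation and its substitution into the $k=2$ equation reproduces the coefficients (\ref{E:ABCD0}) exactly. No difference in approach, only in the level of detail.
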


Combining equations  (\ref{E:coeffTHOMAS}) and (\ref{E:ABCD0}), it follows that the Christoffell's symbols  $\Gamma_{ij}^k$
of the normalized affine connection $\nabla$ can be expressed in terms of the functions  $A,B,C,D$. Taking into account
Lemma \ref{L:PI=coeffTHOMAS}, one deduces the following proposition.

\begin{prop}
\label{P:connexP=Ypp}
Once a coordinate system   $x,y$ is fixed, a projective connection can be identified with a second order differential equation of the form (\ref{E:equatABCD}).
\end{prop}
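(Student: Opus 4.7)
The plan is to combine Lemma \ref{L:PI=coeffTHOMAS} with Beltrami's Lemma to construct inverse maps in both directions; the proposition then reduces to a dimension count.

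First I would set up the forward map. Given a projective connection $\Pi$ on $(\mathbb{C}^2,0)$, Lemma \ref{L:PI=coeffTHOMAS} produces a \emph{unique} affine connection $\nabla$ representing $\Pi$ whose Christoffel symbols $\Gamma^k_{ij}$ relative to the coframe $(dx,dy)$ satisfy the normalization conditions (\ref{E:coeffTHOMAS}). Feeding these symbols into (\ref{E:ABCD0}) produces four holomorphic functions $A,B,C,D \in \mathcal{O}(\mathbb{C}^2,0)$, hence a second order ODE of the form (\ref{E:equatABCD}). By Beltrami's Lemma, the geodesics of $\Pi$ transverse to $\{x = \mathrm{cst}\}$ are precisely the solutions of this ODE, so the assignment $\Pi \mapsto \mathscr{E}_\Pi$ is well defined and injective (two projective connections with the same transverse geodesics coincide, since the geodesics through a point fill an open set of directions).

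Next I would check surjectivity by inverting (\ref{E:ABCD0}). There are six symmetric Christoffel symbols, namely
\[
\Gamma^1_{11},\; \Gamma^1_{12},\; \Gamma^1_{22},\; \Gamma^2_{11},\; \Gamma^2_{12},\; \Gamma^2_{22}.
\]
The two trace conditions $\Gamma^1_{1j}+\Gamma^2_{2j}=0$ in (\ref{E:coeffTHOMAS}) read $\Gamma^2_{12} = -\Gamma^1_{11}$ and $\Gamma^2_{22} = -\Gamma^1_{12}$, cutting the degrees of freedom down to four. Plugging these into (\ref{E:ABCD0}) yields
\[
A = \Gamma^1_{22}, \qquad B = 3\,\Gamma^1_{12}, \qquad C = 3\,\Gamma^1_{11}, \qquad D = -\Gamma^2_{11},
\]
which is visibly a linear bijection between the four free normalized Christoffel symbols and the quadruple $(A,B,C,D)$. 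Hence to an arbitrary ODE of the form (\ref{E:equatABCD}) one associates a unique normalized affine connection $\nabla$, and thus a projective connection $\Pi = [\nabla]$ whose geodesic equation, by Beltrami's Lemma once more, is the given ODE.

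These two constructions are mutually inverse by the uniqueness clause of Lemma \ref{L:PI=coeffTHOMAS} and by the inversion formulas displayed above, giving the desired identification. Essentially no step is delicate: the only point requiring care is the bookkeeping around the normalization (\ref{E:coeffTHOMAS}), which ensures that the map $\nabla \mapsto (A,B,C,D)$ becomes a bijection rather than merely a surjection with four-dimensional fibres (the ambiguity in choosing a representative of the projective class is exactly absorbed by passing to the Thomas-normalized representative).
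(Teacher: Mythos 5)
Your proposal is correct and follows essentially the same route as the paper: the paper's (very terse) argument is precisely to combine the normalization (\ref{E:coeffTHOMAS}) with (\ref{E:ABCD0}) to express the Christoffel symbols of the Thomas-normalized representative in terms of $A,B,C,D$, and then invoke the existence-and-uniqueness statement of Lemma \ref{L:PI=coeffTHOMAS}. Your explicit inversion formulas $B=3\,\Gamma^1_{12}$, $C=3\,\Gamma^1_{11}$ (together with $A=\Gamma^1_{22}$, $D=-\Gamma^2_{11}$) are a correct and welcome spelling-out of what the paper leaves implicit.
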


It is  evident, in  no matter which affine coordinate system  $x,y$ on $\mathbb C^2$, that the second order  differential equation  $(\mathscr E_{\Pi_0})$
associated to the trivial  projective connection  $\Pi_0$  is nothing more than  ${d^2y}/{dx^2}=0$.  Therefore, a projective connection is integrable if
and only if  the second order differential equation $(\mathscr E_{\Pi})$ in transformed into the {\it trivial} one, ${d^2Y}/{dX^2}=0$, through
 a point transformation  $(x,y)\mapsto (X(x,y),Y(x,y)).$ \medskip

The characterization of the second order differential equations equivalent to the   trivial
one stated below is due to  Liouville \cite{Liouville} and Tresse \cite{Tresse}.

\begin{thm}\label{T:L1L2}
The second order differential equation  $y''=f(x,y,y')$ is equivalent to the trivial equation ${d^2y}/{dx^2}=0$
through a point transformation
if and only if the function  $F=f(x,y,p)$ verifies
 $\frac{\partial^4 F}{\partial p^4}=0$ and
\[
D^2(F_{pp})-4 D(F_{yp})-3 F_y F_{pp}+6 F_{yy}+F_p\Big( 4F_{yp}-D(F_{pp})   \Big)=0
\]
where $D =
\frac{\partial }{\partial x}
+p
\frac{\partial }{\partial y}
+F
\frac{\partial }{\partial p}$.
\end{thm}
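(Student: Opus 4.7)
My strategy is to reinterpret both conditions as invariants of the projective connection attached to the ODE, and then invoke the classical fact that a projective connection on a surface is flat iff its single curvature tensor vanishes.

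\emph{Step 1: The first condition is equivalent to $F$ defining a projective connection.} A point transformation $(x,y)\mapsto (X,Y)$ acts on $p=y'$ by a fractional linear substitution, and a direct chain rule computation shows that the class of ODEs $y''=f(x,y,y')$ in which $f$ is a polynomial of degree at most three in $p$ is invariant under this action; moreover no larger class is preserved. Hence the condition $\partial^4 F/\partial p^4=0$ is necessary, and when it holds one can write $F=Ap^3+Bp^2+Cp+D$ and, reading off (\ref{E:ABCD0}) in reverse together with Lemma~\ref{L:PI=coeffTHOMAS}, attach to the ODE a well-defined projective connection $\Pi_F$. The trivial ODE $y''=0$ corresponds to $\Pi_0$. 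Thus the ODE is point-equivalent to $y''=0$ iff $\Pi_F$ is flat, which reduces the theorem to the purely geometric question: \emph{when is a projective connection on $(\mathbb C^2,0)$ flat?}

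\emph{Step 2: Reformulation as vanishing of a projective curvature.} Working with the normalized affine representative $\nabla$ of $\Pi_F$ given by Lemma~\ref{L:PI=coeffTHOMAS}, I would form its curvature tensor $R^k_{ijl}$. In dimension two the full Riemann tensor of a torsion-free connection is encoded by the Ricci tensor, and under the trace-freeness normalization (\ref{E:coeffTHOMAS}) the projectively invariant part of the curvature reduces to a single Cotton-type tensor $L_{ij}=\nabla_i P_{jk}-\nabla_j P_{ik}$ built from the Schouten tensor $P_{ij}$. The key classical fact, due to Cartan, is that $\Pi_F$ is flat iff $L\equiv 0$. The plan is to compute $L$ explicitly in the coordinates $(x,y)$ in terms of the Christoffel symbols prescribed by (\ref{E:ABCD0}), thereby expressing it as a differential polynomial in $A,B,C,D$, i.e.\ in $F$ itself.

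\emph{Step 3: Matching with the stated invariant.} Substituting (\ref{E:ABCD0}) into the expression for $L$, and using the total derivative operator
\[
D=\partial_x+p\,\partial_y+F\,\partial_p,
\]
which is precisely the operator measuring how a function on the $1$-jet space varies along solutions of the ODE, a direct but somewhat lengthy computation should convert the two independent components of $L$ into the single scalar relation
\[
D^2(F_{pp})-4 D(F_{yp})-3 F_y F_{pp}+6 F_{yy}+F_p\bigl( 4F_{yp}-D(F_{pp})\bigr)=0,
\]
the other component being the same identity after applying $\partial_p$. This identifies the Liouville--Tresse expression with the projective curvature of $\Pi_F$ and so proves necessity of both conditions. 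I expect this computation to be the main obstacle: one must carefully track that the natural $4$-tensor $L$ collapses, modulo $\partial^4_p F=0$, to \emph{exactly} the stated scalar expression and not merely to something equivalent up to contractions.

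\emph{Step 4: Sufficiency via construction of straightening coordinates.} Conversely, assume both conditions hold, so that $\Pi_F$ is a flat projective connection. I would then build a local biholomorphism $\varphi=(X,Y):(\mathbb C^2,0)\to(\mathbb C^2,0)$ mapping geodesics of $\Pi_F$ to straight lines by the standard developing-map construction: choose two independent geodesics through the origin as coordinate axes, flow by geodesic parametrizations transversely, and verify using the vanishing of $L$ that the resulting map pulls back $\Pi_0$ to $\Pi_F$ (equivalently, that the overdetermined PDE system for $(X,Y)$ obtained from equating (\ref{E:secondORDER}) for $\Pi_F$ with the trivial equation for $\Pi_0$ is Frobenius-integrable, its integrability being precisely $L=0$). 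In the coordinates $(X,Y)$ the ODE becomes $d^2Y/dX^2=0$, completing the proof.
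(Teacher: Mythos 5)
The paper does not actually prove Theorem~\ref{T:L1L2}: it is stated as a classical result and attributed to Liouville and Tresse, with the only supporting remark being the one that follows the statement, namely that for $F=Ap^3+Bp^2+Cp+D$ Liouville's tensor $L=(L_1\,dx+L_2\,dy)\otimes(dx\wedge dy)$ is a relative invariant under point transformations whose vanishing characterizes triviality. Your outline is exactly the classical route behind that remark, so there is no divergence of approach to report; the question is only whether your plan closes the gaps the paper leaves open, and as written it does not yet. Step~1 is fine and matches Proposition~\ref{P:connexP=Ypp}. In Step~2 your ``Cotton-type tensor'' should carry three indices, $L_{ijk}=\nabla_iP_{jk}-\nabla_jP_{ik}$, antisymmetric in $i,j$; in dimension two this is why it collapses to the one-form-times-area-form object $L$ above, and Cartan's theorem that $L\equiv 0$ implies flatness is the genuinely nontrivial input you are importing.

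The two places where your plan still has real work to do are these. First, in Step~3 the identification of the displayed scalar with $(L_1,L_2)$ is not a formality: a priori $D^2(F_{pp})$ and $F_p\,D(F_{pp})$ each have degree $5$ in $p$ and $3F_yF_{pp}$ has degree $4$, and one must check that all terms of degree $\ge 2$ cancel (the leading cancellation comes from $D^2(F_{pp})\supset 6AFF_p$ against $-F_p D(F_{pp})\supset -6AFF_p$), leaving an expression affine in $p$ whose two coefficients are proportional to $L_1$ and $L_2$. Your remark that ``the other component is the same identity after applying $\partial_p$'' is only meaningful once this collapse to an affine function of $p$ has been established; until then the claim that a single scalar identity in $(x,y,p)$ encodes both components of $L$ is unjustified. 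Second, Step~4 asserts that the Frobenius integrability of the overdetermined system for the straightening map $(X,Y)$ ``is precisely $L=0$''; this is true but is essentially a restatement of Cartan's flatness theorem, and you should either carry out the integrability computation or cite it explicitly rather than fold it into the construction of the developing map. With those two computations supplied, your argument would be a complete proof of the statement the paper only quotes.
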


Once one restricts to equations of the form (\ref{E:equatABCD}), which are exactly the ones satisfying the first condition $\frac{\partial^4 F}{\partial p^4}=0$,
the second condition can be rephrased in more explicit terms.

 If
\begin{align*}
L_1= & \,
2\, C_{xy}-B_{xx}  -3\, D_{yy}-6\, DA_{x}-3\, AD_{x}    \\
&   + 3\, DB_{y} +3\,DB_{y} +CB_{x}-2\, CC_{y}\\
\mbox{ and } \quad  L_2= & \,
2\,\, B_{xy}-C_{yy}  -3\, A_{xx}+6\, AD_{y}+3\, DA_{y}    \\
&  - 3\, AC_{x} -3\,CA_{y} -BC_{y}+2\, BB_{x}
\end{align*}
then, according to \cite{bryant}, Liouville has shown that  the tensor
$$
L= \big(L_1 dx+L_2 dy \big)\otimes (dx\wedge dy)
$$
is invariant under point transformations and that the differential equation (\ref{E:equatABCD})
is equivalent to the trivial one if and only if $L$ is identically zero.

\subsection{Linearization of planar webs}
Let  $\mathcal W$ be a smooth $k$-web on $(\mathbb C^2,0)$. It is  \defi[compatible with the projective connection] \index{Web!compatible with projective connection} $\Pi$ if the
leaves of  $\mathcal W$ are geodesics of $\Pi$.  This is clearly a geometric property: if $\varphi$ is a biholomorphism then $\mathcal W$ is compatible with $\Pi=[\nabla]$ if and only if  $\varphi^*\mathcal W$ is compatible with  $\varphi^*\Pi=[\varphi^*\nabla]$.
\smallskip

\begin{lemma}
\label{L:LINtautology}
A web $\mathcal W$ is linear if and only if it is compatible with the trivial projective connection $\Pi_0$.
Consequently,  $\mathcal W$ is linearizable if and only if it is compatible with a flat projective connection.
\end{lemma}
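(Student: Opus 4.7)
The plan is to deduce the lemma directly from two facts already recorded in the excerpt: (a) the geodesics of the trivial projective connection $\Pi_0$ on $\mathbb P^2$ are exactly the projective lines, and (b) compatibility of a web with a projective connection is a geometric notion, i.e.\ it is preserved under the action of $\mathrm{Diff}(\mathbb C^2,0)$.

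First I would settle the equivalence \emph{$\mathcal W$ is linear $\iff$ $\mathcal W$ is compatible with $\Pi_0$}. By definition, $\mathcal W = \mathcal F_1\boxtimes\cdots\boxtimes\mathcal F_k$ is linear when each leaf of each $\mathcal F_i$ is contained in an affine line of $\mathbb C^2$. On the other hand, the defining second order ODE $(\mathscr E_{\Pi_0})$ is $d^2y/dx^2 = 0$, whose solutions, in every affine chart of $\mathbb P^2$, are precisely the affine lines; equivalently, the geodesics of $\Pi_0$ are the straight lines. Hence a smooth curve of $(\mathbb C^2,0)$ is a geodesic of $\Pi_0$ if and only if it is a piece of an affine line. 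Applying this leafwise to each $\mathcal F_i$ yields the claimed equivalence.

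Next I would derive the second equivalence from the first and from the invariance of compatibility under biholomorphisms, which is stated in the paragraph introducing the notion. Suppose $\mathcal W$ is linearizable; by definition there exists $\varphi\in\mathrm{Diff}(\mathbb C^2,0)$ such that $\varphi_*\mathcal W$ is linear. By the first equivalence, $\varphi_*\mathcal W$ is compatible with $\Pi_0$; by invariance, $\mathcal W = \varphi^*(\varphi_*\mathcal W)$ is compatible with $\varphi^*\Pi_0$, which is flat by the very definition of flatness. Conversely, if $\mathcal W$ is compatible with a flat projective connection $\Pi$, then $\Pi = \varphi^*\Pi_0$ for some biholomorphism $\varphi$; then $\varphi_*\mathcal W$ is compatible with $\Pi_0$, hence linear by the first equivalence, so $\mathcal W$ is linearizable.

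I do not expect any serious obstacle: the lemma really is a matter of unwinding the definitions just introduced, once one observes that the geodesics of $\Pi_0$ are the projective lines and that compatibility is a biholomorphic invariant. The only minor point worth writing out carefully is the identification, in some (hence in every) affine chart of $\mathbb P^2$, of the solutions of the ODE $d^2y/dx^2 = 0$ with the lines, to be completely explicit about why leaves of a $\Pi_0$-compatible web are pieces of affine lines.
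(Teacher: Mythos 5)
Your proof is correct and is exactly the definitional unwinding the paper has in mind: the paper gives no proof at all, dismissing the statement as a "tautology" immediately after stating it, and your two steps (geodesics of $\Pi_0$ are the lines; compatibility is invariant under biholomorphisms, combined with the definition of flatness as equivalence to $\Pi_0$) supply precisely the details being elided.
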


It is on this tautology that the linearization criterium presented below is based.

\begin{prop}\label{P:616}
If   $\mathcal W$ is a smooth $4$-web on  $(\mathbb C^2,0)$ then
\begin{enumerate}
 \item[(a)] there is a unique projective connection $\Pi_{\mathcal W}$ compatible with  $\mathcal W$;
\item[(b)] the web  $\mathcal W$ is linearizable if and only if  $\Pi_{\mathcal W}$ is flat.
\end{enumerate}
\end{prop}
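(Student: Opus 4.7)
My plan is to prove both parts simultaneously by working in coordinates and reducing the compatibility condition to a linear system of Vandermonde type.

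First I would fix an affine coordinate system $(x,y)$ on $(\mathbb{C}^2,0)$ such that none of the four tangent directions $T_0\mathcal{F}_i$ is vertical. Since $\mathcal{W}$ is smooth, these four tangent lines are pairwise distinct, so a generic linear change of coordinates achieves this. In such coordinates each foliation $\mathcal{F}_i$ is defined by a first-order ODE $y' = \phi_i(x,y)$, where $\phi_1,\ldots,\phi_4 \in \mathcal{O}(\mathbb{C}^2,0)$ are pairwise distinct at every point of a small neighborhood of $0$. By Proposition \ref{P:connexP=Ypp}, a projective connection $\Pi$ on $(\mathbb{C}^2,0)$ is encoded by a $4$-tuple $(A,B,C,D)$ of germs of holomorphic functions via the differential equation $(\mathscr{E}_\Pi)$ in \eqref{E:equatABCD}. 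Compatibility of $\Pi$ with $\mathcal{F}_i$ means that every local leaf of $\mathcal{F}_i$, written as $y = y_i(x)$ with $y_i' = \phi_i(x,y_i)$, is a solution of $(\mathscr{E}_\Pi)$. Differentiating $y_i' = \phi_i(x,y_i)$ gives $y_i'' = \partial_x\phi_i + \phi_i\,\partial_y\phi_i$ at $(x,y_i(x))$. Hence compatibility with $\mathcal{F}_i$ is equivalent to the identity
\begin{equation}\label{E:compi}
A\,\phi_i^{3} + B\,\phi_i^{2} + C\,\phi_i + D \;=\; \partial_x\phi_i + \phi_i\,\partial_y\phi_i
\end{equation}
holding identically on $(\mathbb{C}^2,0)$.

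Requiring \eqref{E:compi} for $i=1,2,3,4$ yields a linear system in the four unknown functions $A,B,C,D$ whose coefficient matrix at any point $p$ is the Vandermonde matrix
\[
V(p)\;=\;\begin{pmatrix} \phi_1(p)^3 & \phi_1(p)^2 & \phi_1(p) & 1 \\ \phi_2(p)^3 & \phi_2(p)^2 & \phi_2(p) & 1 \\ \phi_3(p)^3 & \phi_3(p)^2 & \phi_3(p) & 1 \\ \phi_4(p)^3 & \phi_4(p)^2 & \phi_4(p) & 1 \end{pmatrix}.
\]
Since $\mathcal{W}$ is smooth, the slopes $\phi_i(p)$ are pairwise distinct for every $p$ near $0$, so $\det V(p) = \prod_{i<j}(\phi_j(p)-\phi_i(p))$ is an invertible germ. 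Applying Cramer's rule produces unique germs $A,B,C,D \in \mathcal{O}(\mathbb{C}^2,0)$ satisfying \eqref{E:compi} for every $i$, and these coefficients determine a unique projective connection $\Pi_{\mathcal{W}}$ compatible with $\mathcal{W}$, establishing part~(a). The independence of the resulting projective connection from the particular choice of adapted affine chart follows from the invariance of the notion of compatibility under biholomorphisms together with the just-established uniqueness.

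For part~(b), Lemma \ref{L:LINtautology} asserts that $\mathcal{W}$ is linearizable if and only if it is compatible with a flat projective connection. If $\varphi \in \mathrm{Diff}(\mathbb{C}^2,0)$ linearizes $\mathcal{W}$, then $\varphi^{*}\mathcal{W}$ is compatible with the trivial connection $\Pi_0$, so $\mathcal{W}$ is compatible with the flat projective connection $\varphi_{*}\Pi_0$; by the uniqueness proved in part~(a) we must have $\Pi_{\mathcal{W}} = \varphi_{*}\Pi_0$, which is flat. Conversely, if $\Pi_{\mathcal{W}}$ is flat, there exists $\varphi \in \mathrm{Diff}(\mathbb{C}^2,0)$ with $\varphi_{*}\Pi_0 = \Pi_{\mathcal{W}}$; then $\varphi^{-1}$ sends the leaves of $\mathcal{W}$, which are geodesics of $\Pi_{\mathcal{W}}$, to geodesics of $\Pi_0$, i.e.\ to (pieces of) affine lines, so $\varphi^{-1}$ linearizes $\mathcal{W}$. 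The main technical point throughout is really just the non-vanishing of the Vandermonde determinant, which is why the argument works so cleanly for $k=4$ and why uniqueness fails for $k=3$: with only three rows the analogous system is underdetermined, leaving a one-parameter family of compatible connections, which is precisely the source of difficulty in Gronwall's conjecture.
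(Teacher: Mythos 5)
Your proof is correct and follows essentially the same route as the paper's: reduce compatibility with each $\mathcal F_i$ to the identity $A\phi_i^3+B\phi_i^2+C\phi_i+D=v_i(\phi_i)$ and solve the resulting linear system using the non-vanishing of the Vandermonde determinant guaranteed by smoothness, then conclude (b) from Lemma \ref{L:LINtautology} and uniqueness. The extra remarks on chart-independence and on the failure of uniqueness for $k=3$ are sound but not needed.
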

\begin{proof}
It is clear that  {(b)} follows from  {(a)} combined with  Lemma \ref{L:LINtautology}. \smallskip

To prove  {(a)}, it is harmless to assume the leaves of $\mathcal W$ transverse to the line $\{ x=0\}$. The foliation
defining  $\mathcal W$ are thus defined by vector fields  $v_i$ (for $i=1,2,3,4$) which can be written as $v_i=\frac{\partial}{\partial_x}+e_i\frac{\partial}{\partial_y}$ with $e_i\in \mathcal O(\mathbb C^2,0)$.  The orbits of the vector fields $v_i$ are solutions of the second order differential equation
\begin{equation}
   \label{E:equatABCDpp}
  y'' = A (y')^3+B (y')^2+Cy'+D
\end{equation}
if and only if
\begin{equation}\label{E:smlpo}
\qquad\qquad A (e_i)^3+B (e_i)^2+C\,e_i+D=v_i(e_i)
\end{equation}
holds true on $(\mathbb C^2,0)$ for $i=1,\ldots,4$.

Consider  (\ref{E:smlpo}) as a system of  linear equations in the variables $A,B,C,D$.
The determinant of the associated homogeneous linear system is the Vandermonde  determinant
\[
\det   \begin{bmatrix} 1 & e_1 & e_1^2 &  e_1^{3}\\ 1 & e_2 & e_2^2 &  e_2^{3}\\ 1 & e_3 & e_3^2 &  e_3^{3}\\  1 & e_4 & e_4^2 &  e_4^{3}\\ \end{bmatrix}
= \prod_{1\le i<j\le 4} (e_j-e_i).
\]
Since $\mathcal W$ is smooth,  $e_i(0)\neq e_j(0)$ when  $i\neq j$. Therefore the Vandermonde  determinant above is non-zero and consequently there exists a unique second order differential equation
 of the form (\ref{E:equatABCDpp}) admitting the orbits of  $v_i$ for   $i=1,\ldots,4$, as solutions.  Item (b) follows  from  Proposition \ref{P:connexP=Ypp}.
\end{proof}

\begin{cor}
If  $\mathcal W$ is a smooth  $k$-web $\mathcal W$  on  $(\mathbb C^2,0)$ with $k\geq 4$ then the following assertions are equivalent
\begin{enumerate}
\item   $\mathcal W$ is linearizable;
\item  there exists a flat projective connection $\Pi$ such that
$ \Pi=\Pi_{{\mathcal W}'}$  for every
 $4$-subweb $\mathcal W'$ of $\mathcal W$.
\end{enumerate}
\end{cor}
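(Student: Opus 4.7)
The plan is to prove the two implications separately, using Proposition \ref{P:616} and Lemma \ref{L:LINtautology} as the main tools. Both directions are essentially formal manipulations, so I expect no serious obstacle; the main subtle point is just verifying that the compatibility passes correctly between a web and its subwebs.

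For the implication $(1) \Rightarrow (2)$, I would argue as follows. Assume $\mathcal{W}$ is linearizable, so that there exists a germ of biholomorphism $\varphi \in \mathrm{Diff}(\mathbb{C}^2,0)$ such that $\varphi^*\mathcal{W}$ is linear. By Lemma \ref{L:LINtautology}, the web $\varphi^*\mathcal{W}$ is compatible with the trivial projective connection $\Pi_0$. Pushing forward by $\varphi$, one obtains a flat projective connection $\Pi = \varphi_* \Pi_0$ with which $\mathcal{W}$ is compatible: every leaf of $\mathcal{W}$ is a geodesic of $\Pi$. Since every $4$-subweb $\mathcal{W}'$ of $\mathcal{W}$ is composed of leaves of $\mathcal{W}$, it is also compatible with $\Pi$. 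The uniqueness clause of Proposition \ref{P:616}(a) then forces $\Pi = \Pi_{\mathcal{W}'}$ for every $4$-subweb $\mathcal{W}'$.

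For the reverse implication $(2) \Rightarrow (1)$, assume the existence of a flat projective connection $\Pi$ equal to $\Pi_{\mathcal{W}'}$ for every $4$-subweb $\mathcal{W}'$ of $\mathcal{W}$. By definition of $\Pi_{\mathcal{W}'}$, the leaves of every $4$-subweb are geodesics of $\Pi$. Since $k \geq 4$, each defining foliation $\mathcal{F}_i$ of $\mathcal{W}$ belongs to at least one $4$-subweb (for instance, $\mathcal{F}_i \boxtimes \mathcal{F}_{j_1} \boxtimes \mathcal{F}_{j_2} \boxtimes \mathcal{F}_{j_3}$ for any three indices $j_1,j_2,j_3$ distinct from $i$), so every leaf of $\mathcal{W}$ is a geodesic of $\Pi$. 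Hence $\mathcal{W}$ itself is compatible with $\Pi$. Since $\Pi$ is flat, Lemma \ref{L:LINtautology} applies and $\mathcal{W}$ is linearizable, as required.

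No hard step is anticipated; the proof is essentially a bookkeeping exercise combining compatibility, uniqueness for $4$-webs, and the tautology linking linearizability to compatibility with a flat connection. The only thing worth emphasizing is that one needs $k \geq 4$ precisely so that every individual foliation $\mathcal{F}_i$ sits inside at least one $4$-subweb, which is what allows the compatibility with $\Pi$ to propagate from the $4$-subwebs to the full web $\mathcal{W}$.
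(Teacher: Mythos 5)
Your proof is correct and follows exactly the route the paper intends: the corollary is stated without proof as an immediate consequence of Proposition~\ref{P:616} and Lemma~\ref{L:LINtautology}, and your two implications (uniqueness of $\Pi_{\mathcal W'}$ for the forward direction, propagation of compatibility from $4$-subwebs to $\mathcal W$ for the converse) are precisely the bookkeeping the authors leave to the reader.
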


\medskip

\begin{remark}If $v_i=\frac{\partial}{\partial x}+e_i\frac{\partial}{\partial y}$ (for  $i \in \underline{k}$)
are vector fields defining a smooth  $k$-web $\mathcal W$ then  mimicking the proof of Proposition \ref{P:616} it is possible
to prove the existence of a unique differential equation of the form $y'' = F(x,y,y')$ satisfying the following conditions:
\begin{enumerate}
\item the $p$-degree  of $F(x,y,p)$ is at most $k-1$; and
\item the leaves of $\mathcal W$ are solutions of $y'' = F(x,y,y')$.
\end{enumerate}
The previous corollary   can be rephrased as follows: $\mathcal W$ is linearizable if and only if the degree of $F$ is at most three, and its coefficients
satisfy the  conditions $L_1=L_2=0$ of Theorem \ref{T:L1L2}. Since the determination of the differential equation is purely algebraic and can be carried over
rather easily this provides a nice computational test to decide whether or not a given web is linearizable. The draw-back is that, in contrast with the case
 $k=4$, the differential equation  obtained does not behaves nicely under arbitrary change of coordinates $(x,y) \mapsto (X(x,y), Y(x,y))$ as a simple
computation shows. Indeed, it can be verified that in the resulting equation $Y'' = G(X,Y,Y')$ the function $G(x,y,p)$ may be no longer  polynomial, but
only rational, in the variable $p$.
\end{remark}

\medskip

\begin{cor}\label{C:linutil}
Let  $\mathcal W$ be a smooth linearizable  $k$-web  on $(\mathbb C^2,0)$ with  $k\geq 4$. Modulo projective transformations
it admits a unique linearization: if  $\varphi, \psi$ are germs of biholomorphisms such that  $\varphi^*\mathcal W$ and $  \psi^*\mathcal W$
are linear webs then there exists a projective transformation   $g\in PGL_3(\mathbb C)$ for which  $\psi=g\circ \varphi$.
\end{cor}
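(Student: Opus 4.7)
The plan is to leverage Proposition \ref{P:616}, which attaches to any smooth $4$-web a \emph{unique} compatible projective connection, and combine this with the classical fact that a local biholomorphism of $\mathbb P^2$ sending lines to lines must be the germification of a projective transformation.

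First I would pick, once and for all, a $4$-subweb $\mathcal W'\subset \mathcal W$; since $k\geq 4$ this is possible, and since $\mathcal W$ is smooth so is $\mathcal W'$. By Proposition \ref{P:616}(a), there exists a unique projective connection $\Pi_{\mathcal W'}$ on $(\mathbb C^2,0)$ which is compatible with $\mathcal W'$. The construction is manifestly natural under biholomorphisms: if $\phi \in \mathrm{Diff}(\mathbb C^2,0)$ and $\mathcal V$ is a smooth $4$-web, then $\Pi_{\phi^*\mathcal V} = \phi^*\Pi_{\mathcal V}$, because pulling back geodesics through $\phi$ turns a projective connection compatible with $\mathcal V$ into one compatible with $\phi^*\mathcal V$, and uniqueness does the rest.

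Now the hypothesis that $\varphi^*\mathcal W$ is linear implies that $\varphi^*\mathcal W'$ is linear as well, hence by Lemma \ref{L:LINtautology} compatible with the trivial projective connection $\Pi_0$. Uniqueness yields $\Pi_{\varphi^*\mathcal W'}=\Pi_0$, and by naturality
\[
\Pi_{\mathcal W'}=(\varphi^{-1})^*\Pi_0.
\]
The same argument applied to $\psi$ gives $\Pi_{\mathcal W'}=(\psi^{-1})^*\Pi_0$. Setting $h=\psi\circ\varphi^{-1}$, we conclude that $h^*\Pi_0=\Pi_0$; equivalently, $h$ is a germ of biholomorphism of $\mathbb P^2$ that sends geodesics of $\Pi_0$ to geodesics of $\Pi_0$, i.e.\ sends (pieces of) lines to (pieces of) lines.

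The remaining, and main, step is to show that any such $h$ is the germification of an element $g\in \mathrm{PGL}_3(\mathbb C)$; granting this gives $\psi=g\circ\varphi$ as required. This is the classical local version of the fundamental theorem of projective geometry (equivalently, a theorem of Beltrami on geodesic maps): a local biholomorphism of $\mathbb P^2$ preserving the family of projective lines extends to a projective automorphism. I would deduce it as follows. The image under $h$ of a generic pencil of lines through a point $p$ is a one-parameter family of line segments; as each line is mapped to a line, elementary incidence arguments show that all these image lines must pass through a common point $h_*(p)$, defining the action of $h$ on points. Choosing four lines in general position and tracking the cross-ratio of their intersections with a fifth line — which is preserved because $h$ sends the four intersection points to four points on the image of that fifth line — one checks that $h$ preserves cross-ratios along every line, hence restricts to a projective map on each line, and finally globalizes to an element of $\mathrm{PGL}_3(\mathbb C)$ by fixing its values on a projective frame. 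The hard part of the argument is precisely this last step: all previous reductions are essentially formal consequences of Proposition \ref{P:616}, but promoting a local line-preserving biholomorphism to a global projective transformation requires the incidence/cross-ratio argument just sketched.
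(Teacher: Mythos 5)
Your proposal is correct and follows essentially the same route as the paper: reduce to the statement that the transition map between the two linearizations preserves the trivial projective connection $\Pi_0$ (the paper does this implicitly via the uniqueness in Proposition \ref{P:616}, exactly as you make explicit through your $4$-subweb and naturality argument), and then invoke the fundamental theorem of projective geometry to conclude the line-preserving germ is projective. The only difference is that you additionally sketch a proof of that classical theorem, which the paper simply cites.
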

\begin{proof}
 The hypothesis implies that  $  \mu=\varphi \circ \psi^{-1} $ verifies  $\mu^*\Pi_0=\Pi_0$. In other words, if $U\subset \mathbb P^2$ is an open subset where  $  \mu$
 is defined then for every line $\ell \subset \mathbb P^2$ intersecting $U$, there exists a line $\ell_\mu$ such that  $\mu(U\cap \ell)=\mu(U) \cap \ell_\mu$.
 The fundamental theorem of projective geometry implies that $\mu$ is the restriction at $U$ of a projective transformation.
\end{proof}\medskip

\subsection{First examples of exceptional webs}\label{S:firstexceptional}
\medskip
 Corollary \ref{C:linutil} is rather useful to prove that a given web is not linearizable.  Most of the examples
of exceptional webs known up-to-date are the superposition of an algebraic, in particular linear,  $k$-web with one or more non-linear foliations.
For example, Bol's $5$ web is the superposition of the  algebraic  $4$-web dual to four lines in general position and of
a non-linear foliation: the pencil of conics through the four dual points.  It follows from Corollary \ref{C:linutil}
that $\mathcal B_5$ is non-algebraizable.

On the other hand, Bol realized that the rank of $\mathcal B_5$ is six. If $\mathcal B_5$ is presented as the web
$$
\mathcal B_5 = \mathcal W \left( x, y , \, \frac{x}{y}
 \, , \,\frac{1-y}{1-x}  \,,
\,\frac{x(1-y) }{y(1-x)} \right)
$$
then the abelian relations coming from its $3$-subwebs generate a subspace of $\mathcal A(\mathcal B_5)$ of dimension $5$.
Moreover, Bol found one extra abelian relation, which can be written in integral form using the logarithm and Euler's dilogarithm\begin{footnote}
{Euler's dilogarithm is the function $\l{2}(z)= \sum_{n=0}^{\infty} z^n/n^2$. The series converges for $|z| < 1$ and has analytic continuations
along  all paths contained in $\mathbb C \setminus \{ 0, 1\}$. }\end{footnote}, which
is essentially  equivalent to Abel's functional equation for the dilogarithm. Explicitly,
\[
D_2(x) - D_2(y) - D_2\left( \frac{x}{y} \right) - D_2\left( \frac{1-y}{1-x} \right)  + D_2\left( \frac{x(1-y)}{y(1-x)} \right)  =0 \,
\]
where $D_2(z) = \l{2}(z) + \frac{1}{2} \log(z) \log(1-z) - \frac{\pi^2}{6}$.

\smallskip

Although $\mathcal B_5$ was the first exceptional web to appear in the literature, there are  simpler examples. For instance
 the $5$-webs   presented in Example \ref{E:exabel} of  Chapter 2,  are all the superposition of
four pencils of lines and one non-linear foliation, thus non-algebraizable.  Since they all have rank six, they are exceptional webs.
\medskip

\subsection{Algebraization of planar webs }\label{S:algdim2}
\medskip

Let now  $\mathcal W= \mathcal F_1 \boxtimes \cdots \boxtimes \mathcal F_k$ be a smooth  $k$-web  on $(\mathbb C^2,0)$. In contrast with the higher dimensional case, no hypothesis on $\mathcal A(\mathcal W)$ is needed to assume that  the foliation $\mathcal F_i$ is induced by  $\omega_i=\varpi_0+\theta_i\varpi_1$,  where  $\varpi=(\varpi_0,\varpi_1)$ is a coframe and  $\theta_1,\ldots,\theta_k$ are functions on  $(\mathbb C^2,0)$.

\begin{prop}
\label{L:trou}
The assertions below are equivalent:
\begin{enumerate}
 \item  $\mathcal W$ is compatible with a projective connection $\Pi$;
\item there exist functions  $N_{0},N_{1},N_{2},N_3$ such that
\begin{equation}
\label{E:troty}
 \{ d\theta_i\}^{1}-\theta_i \{ d\theta_i\}^0=\sum_{\rho=0}^{3} (\theta_i)^\rho N_{\rho}
\end{equation}
for every  $i \in \underline k$.
\end{enumerate}
\end{prop}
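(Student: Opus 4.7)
The plan is to interpret the left-hand side of (\ref{E:troty}) as a directional derivative along a distinguished vector field tangent to $\mathcal F_i$, and then reduce the proof to the calculation performed in Proposition \ref{P:616} combined with Proposition \ref{P:connexP=Ypp}. Writing $(\chi_0,\chi_1)$ for the frame dual to $\varpi = (\varpi_0,\varpi_1)$, the vector field $v_i := -\theta_i\chi_0 + \chi_1$ annihilates $\omega_i = \varpi_0 + \theta_i\varpi_1$ and a direct pairing yields
\[
v_i(\theta_i) \;=\; \{d\theta_i\}^1 - \theta_i\{d\theta_i\}^0.
\]
Hence (2) is exactly the assertion that $v_i(\theta_i)$ is a polynomial of degree at most three in $\theta_i$ with coefficients $N_0,N_1,N_2,N_3$ that do not depend on $i \in \underline k$.

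Next, to establish the equivalence in a model situation, I would work in local coordinates $(x,y)$ in which the foliations of $\mathcal W$ are transverse to $\{x = \mathrm{const.}\}$ and fix the adapted coframe $\varpi_0 = dy$, $\varpi_1 = -dx$, whose dual frame is $\chi_0 = \partial_y$, $\chi_1 = -\partial_x$. A short computation gives $\theta_i = e_i$ (with $e_i$ as in the proof of Proposition \ref{P:616}) and
\[
v_i(\theta_i) \;=\; -(\partial_x e_i + e_i\,\partial_y e_i),
\]
so that (2) becomes
\[
\partial_x e_i + e_i\,\partial_y e_i \;=\; -N_3\, e_i^3 - N_2\, e_i^2 - N_1\, e_i - N_0, \qquad i \in \underline k.
\]
By the identification (\ref{E:smlpo})--(\ref{E:equatABCDpp}), this is exactly the compatibility of $\mathcal W$ with the projective connection $\Pi$ associated via Proposition \ref{P:connexP=Ypp} to the Beltrami equation $y'' = -N_3(y')^3 - N_2(y')^2 - N_1\, y' - N_0$. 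Both implications of the equivalence are thereby established in this distinguished coframe.

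The main obstacle is to verify that the existence of the $N_\rho$ in (2) does not depend on the choice of adapted coframe $\varpi$. Any two adapted coframes are related by a matrix $M = \left(\begin{smallmatrix}a & b \\ c & d\end{smallmatrix}\right) \in \mathrm{GL}_2(\mathcal O_{(\mathbb C^2,0)})$ whose entries are functions on $(\mathbb C^2,0)$ independent of $i$; under such a change, the slopes transform by the M\"obius formula $\theta_i = (b + d\,\theta'_i)/(a + c\,\theta'_i)$. Expanding $v_i(\theta_i)$ in terms of $v'_i(\theta'_i)$ (where $v'_i$ is the distinguished vector field attached to the new coframe), one finds that
\[
(a + c\,\theta'_i)^{3}\Bigl[\,v_i(\theta_i) - \sum_{\rho=0}^{3}\theta_i^\rho\, N_\rho\Bigr]
\]
is again a polynomial of degree at most three in $\theta'_i$ with coefficients $N'_\rho$ depending only on $M$, its first derivatives, and the $N_\rho$, but not on $i$. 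In essence this is the $\mathrm{PGL}_2$-equivariance of the space of binary cubic forms. Consequently (2) is a coframe-independent condition, and the special case already treated yields the equivalence $(1) \Longleftrightarrow (2)$ for an arbitrary adapted coframe.
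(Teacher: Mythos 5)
Your proof is correct, but it takes a genuinely different route from the paper's. The paper works directly in the given adapted coframe: it writes the geodesic condition $\nabla(v_i)=\zeta_i\otimes v_i$ for the normalized representative $\nabla$ of $\Pi$, decomposes it into the four scalar equations (\ref{E:trugudu}) in the basis $\chi_p\otimes\omega_q$, eliminates the auxiliary $1$-form $\zeta_i$, and reads off (\ref{E:troty}) with the $N_\rho$ expressed explicitly in terms of the Christoffel symbols via (\ref{E:ABCD}); the converse is obtained by reconstructing $\nabla$ from the $N_\rho$ through those same formulas. You instead specialize to the coframe $(dy,-dx)$, where (\ref{E:troty}) collapses to the interpolation identity (\ref{E:smlpo}) already established in the proof of Proposition \ref{P:616}, invoke Proposition \ref{P:connexP=Ypp}, and then transport the statement to an arbitrary adapted coframe by the M\"{o}bius covariance of the cubic condition. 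The trade-off: the paper's computation is longer but coframe-free from the outset and produces the explicit dictionary between the $N_\rho$ and the $\Gamma^k_{ij}$, whereas your argument recycles the ODE picture at the cost of the covariance step --- which is not optional here, since the proposition is later applied to the special adapted coframe produced by Proposition \ref{P:normal} rather than to a coordinate coframe. Your covariance argument does go through: $v_i$ rescales by $\det M/(a+c\theta'_i)$ under the change of coframe, and one gets $(a+c\,\theta'_i)^3\bigl[v_i(\theta_i)-\sum_{\rho}N_\rho\theta_i^\rho\bigr]=(\det M)^2\bigl[v'_i(\theta'_i)-\sum_{\rho}N'_\rho(\theta'_i)^\rho\bigr]$ with $i$-independent $N'_\rho$; note that this identity, rather than the assertion that your displayed expression ``is a polynomial in $\theta'_i$'' (it is identically zero once (2) is assumed), is what you should state.
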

\begin{proof}
Let  $ \nabla$ be an affine connection representing $\Pi$, and let  $\Gamma_{ij}^k$ (with $i,j,k=0,1$)
be its  Christofel's symbols relative to the coframe  $\chi=(\chi_0,\chi_1)$ dual to the coframe $\varpi$.
For  $i \in \underline k $, set $v_i=\theta_i \chi_0-\chi_1$: it is a nowhere vanishing section of  $T\mathcal F_i$.
If  $i\in \underline{k}$ is fixed then the leaves of  $\mathcal F_i$ are geodesics of
 $\Pi$ if and only if there exists   $\zeta_i\in \Omega^1{(\mathbb C^2,0)}$ satisfying  $\nabla(v_i)=\zeta_i\otimes v_i $. More explicitly
$$d\theta_i\otimes \chi_0 +\theta_i\nabla(\chi_0)-
\nabla(\chi_1)
 =\zeta_i\otimes \big(  \theta_i \chi_0-\chi_1  \big) . $$
After decomposing this relation in the basis $\chi_p\otimes \omega_q$ (with $p,q=0,1$), it is a simple matter to deduce
the following four scalar equations:
\begin{align}
\label{E:trugudu}
\{d\theta_\alpha\}^0+\theta_i\Gamma_{00}^0-\Gamma_{01}^0= & \, \theta_i \{ \zeta_i\}^0   \nonumber \\
\{d\theta_i\}^1+\theta_i\Gamma_{10}^0-\Gamma_{11}^0= & \, \theta_i \{ \zeta_i\}^1   \\
\theta_i\Gamma_{00}^1-\Gamma_{01}^1= & \, - \{ \zeta_i\}^0 \nonumber \\
\theta_i\Gamma_{10}^1-\Gamma_{11}^1= & \, - \{ \zeta_i\}^1.\nonumber
\end{align}

Notice that the  last two equations determine  $\{ \zeta_i\}^0$ and  $\{ \zeta_i\}^1$. Plugging them into the
first two equations to deduce the following. If the leaves of  $\mathcal F_i$ are geodesics for  $\Pi$ then $\theta_i$ verifies
\begin{equation}
 \label{E:secondorder}
\{d\theta_i\}^1-\theta_i\{d\theta_i\}^0=
A + \theta_i B + (\theta_i)^2 C + (\theta_i)^3 D
\end{equation}
where
\begin{align}
\label{E:ABCD}
 A= & \, \Gamma^0_{11}         &&    C=  \Gamma^0_{00}   -\Gamma^1_{10} - \Gamma^1_{01} \\
B= &   \Gamma^1_{11}   -\Gamma^0_{10} - \Gamma^0_{01}     &&   D=\Gamma^1_{00}.
\nonumber     \end{align}

Hence the first assertion does imply the second.  \smallskip

Reciprocally, if the second assertion holds true, --  what  is clearly equivalent to the validity of (\ref{E:secondorder}) --
let $\nabla$ be the affine connection with Christofel's symbols  (in the coframe  $\varpi$) determined by
   (\ref{E:ABCD}) and the last two equations of (\ref{E:trugudu}). It is a simple matter to verify that
   the result is indeed an affine connection which represents a projective connection having the leaves
   of the web as geodesics.
\end{proof}

The next result, when $k=5$, is discussed in Section \S 30 of the book \cite{BB}.
In its most  general form, it has been obtained by H\'{e}naut in \cite{Henaut1994}.
He phrased it in a slightly different form and under the stronger assumption that  the rank of  $\mathcal W$ is  maximal, but his proof
still works under   the weaker assumption stated below.

\begin{thm}
Let $\mathcal W$ be a smooth $k$-web on $(\mathbb C^2,0)$. If
\begin{enumerate}
 \item
$\dim \,\mathcal A(\mathcal W)/
F^2 \mathcal A(\mathcal W)=2k-5$
; and
\item   $\mathcal W$ is compatible with a projective  connection
\end{enumerate}
then $\mathcal W$ is algebraizable.
\end{thm}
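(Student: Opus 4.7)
The plan is to transpose the proof of Tr\'{e}preau's Theorem (Chapter \ref{Chapter:Trepreau}) to dimension $n=2$, letting the compatibility with a projective connection stand in for the integrability-based arguments that forced $n \geq 3$ in the higher-dimensional case. First, I choose a coframe $\varpi = (\varpi_0, \varpi_1)$ on $(\mathbb C^2, 0)$ and functions $\theta_1, \ldots, \theta_k$ so that $\mathcal F_i$ is defined by $\omega_i = \varpi_0 + \theta_i \varpi_1$. By Proposition \ref{L:trou}, the compatibility hypothesis yields
\begin{equation*}
\{d\theta_i\}^1 - \theta_i \{d\theta_i\}^0 = \sum_{\rho=0}^3 \theta_i^\rho N_\rho
\end{equation*}
for some $N_0, \ldots, N_3 \in \mathcal O_{(\mathbb C^2, 0)}$ independent of $i$. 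This is the planar avatar of relation \eqref{E:rel-theta1} in Proposition \ref{P:TECHmain}, whose original derivation relied on integrability combined with $n \geq 3$; notice that its degree in $\theta_i$ reaches exactly $n + 1 = 3$, which is precisely what Lemma \ref{LEMclef1} can absorb. The hypothesis $\dim \mathcal A(\mathcal W)/F^2\mathcal A(\mathcal W) = 2k-5$ is, for its part, the $n=2$ instance of the Chapter \ref{Chapter:Trepreau} hypothesis $\dim = 2k - 3n + 1$; it will yield the planar analogue of relation \eqref{E:rel-z1} via the same derivation as in Proposition \ref{P:TECHmain}, which uses only the maximality of $\ell^1 = 2$ and $\ell^2 = 3$ (through the planar version of Lemma \ref{L:condORDRE2}) together with the closedness $d(z_i \omega_i) = 0$.

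Next, I will mimic Section \ref{S:defPBW}: choose a subspace $\A2W \subset \mathcal A(\mathcal W)$ of dimension $\pi := 2k-5$ projecting isomorphically onto $\mathcal A(\mathcal W)/F^2\mathcal A(\mathcal W)$, pick a basis, and let the canonical maps be the lifts $Z_i : (\mathbb C^2, 0) \to \mathbb C^\pi$. The Poincar\'{e}-Blaschke map $PB_\mathcal W : (\mathbb C^2, 0) \times \mathbb P^1 \to \mathbb P^{\pi - 1}$ is then defined by $PB_\mathcal W(x, t) = [Z_*(x,t)]$ with $Z_*(x, t) = \sum_{i=1}^k P_i(t) Z_i(x)$ and $P_i(t) = \prod_{j \neq i}(t - \theta_j(x))$. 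Lemma \ref{L:UtimesC}, Lemma \ref{L:Ip} and Proposition \ref{DIFFZ} transpose verbatim, being pure consequences of Lemma \ref{L:ARsystmALT} and the two key relations just established, the only adjustment being that the index $p$ ranges over the single value $p = 0$. The conclusion will be that $PB_\mathcal W$ has rank $2$ everywhere and its image $X_\mathcal W$ is a germ of smooth non-degenerate surface in $\mathbb P^{\pi-1}$, ruled by the rational normal curves $\mathscr C(x) = PB_\mathcal W(\{x\} \times \mathbb P^1)$ of degree $k - 3$.

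Finally, I will replicate the end of Chapter \ref{Chapter:Trepreau}. Two generic curves $\mathscr C(x)$ and $\mathscr C(y)$ intersect in exactly $n - 1 = 1$ point --- the common image $\kappa_i(x) = \kappa_i(y)$ for the unique $i$ with $x$ and $y$ on the same leaf of $\mathcal F_i$ --- so $\mathscr C(0)^2 = 1 > 0$, and Proposition \ref{P:bewildered} produces a projective surface $S_\mathcal W$ containing $X_\mathcal W$. The vanishing $h^0(S_\mathcal W, \Omega^1_{S_\mathcal W}) = 0$, the exact sequence
\begin{equation*}
0 \to \mathcal O_{S_\mathcal W} \to \mathcal O_{S_\mathcal W}(\mathscr C(0)) \to \mathcal O_{\mathscr C(0)}(\mathscr C(0)) \to 0,
\end{equation*}
and $\deg \mathcal O_{\mathscr C(0)}(\mathscr C(0)) = 1$ together yield $\dim |\mathscr C(0)| = n = 2$, so the map $\mathfrak C: (\mathbb C^2, 0) \to |\mathscr C(0)| \simeq \mathbb P^2$, $x \mapsto \mathscr C(x)$, is an immersion; as it sends every leaf of each $\mathcal F_i$ into a hyperplane of $|\mathscr C(0)|$, it linearizes $\mathcal W$. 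A generic element of $\mathcal A(\mathcal W)$ is then complete, so Theorem \ref{T:ALGquasismoothW} applied to the linearized web delivers the algebraization. The hard part will be to double-check that no step in the Chapter \ref{Chapter:Trepreau} argument hides a further use of $n \geq 3$ beyond the derivation of \eqref{E:rel-theta1}, and that all the polynomial degrees in $\theta_i$ appearing in Lemmas \ref{L:UtimesC} and \ref{L:Ip} remain compatible with the range produced by Proposition \ref{L:trou}.
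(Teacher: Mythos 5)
Your proposal is correct and follows essentially the same route as the paper: the paper's own proof likewise transposes Tr\'{e}preau's argument to $n=2$, observing that relation (\ref{E:rel-z1}) is dimension-independent while the needed analogue of (\ref{E:rel-theta1}) (with summation up to $n+1=3$, exactly what Lemma \ref{LEMclef1} tolerates) is supplied by Proposition \ref{L:trou} via the compatibility with a projective connection, after which Lemma \ref{L:Ip}, the rank-two bound on $PB_{\mathcal W}$, and the linearization at the end of Chapter \ref{Chapter:Trepreau} go through unchanged.
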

\begin{proof}
If  $k\geq 4$ and  $\dim \mathcal A(\mathcal W)/
F^2 \mathcal A(\mathcal W)=2k-5$ then it is possible to construct a  Poincar\'e-Blaschke map
 $ PB_{\mathcal W}: (\mathbb C^2,0)\times \mathbb P^1\rightarrow \mathbb P^{2k-6}$. If the rank of this map is two then
 the argument used at the end of the previous Chapter allows to conclude that  $\mathcal W$ is
 linearizable and, consequently, of maximal rank.

The key result to establish the bound on the rank of $PB_{\mathcal W}$ is  Lemma \ref{L:Ip}.
Its proof is based on the  relations (\ref{E:rel-z1}) and (\ref{E:rel-theta1}) of Proposition \ref{P:TECHmain}.
Recall that relations (\ref{E:rel-z1}) does hold true, in no matter  which dimension.
A careful reading of the proof of  Lemma \ref{L:Ip} reveals that to prove
 it, one just needs  to have  relations similar to
(\ref{E:rel-theta1}) but with the summation in the  right hand-side being allowed to range
from $0$ to $n+1$ instead of from $0$ to $n$.

But, according to Proposition \ref{L:trou}, the existence of such  relations
is equivalent to the compatibility of $\mathcal W$ with a projective connection.
Thus the proof of Tr\'{e}preau's Theorem presented in the previous Chapter works as well in dimension two when
$\mathcal W$ is compatible with a projective connection.
\end{proof}

\section{Infinitesimal automorphisms}\label{S:iinnff}

As already mentioned, exceptional planar webs and their abelian relations are still mysterious up-to-date.
For instance, even for a web  defined by rational submersions it is not known what kind of transcendency
its  abelian relations can have.  To formulate this question more precisely it is useful
to recall first some basic definitions of differential algebra.

\subsection{Basics on differential algebra}

Recall that a  \index{Differential field}\defi[differential field]\begin{footnote}{Here only differential fields over
$\mathbb C$ will be considered. Of course, it is possible to deal with more
general fields.}\end{footnote} is a pair $(\mathbb K, \Delta)$, where  $\mathbb K$ is field
containing $\mathbb C$, and $\Delta$ is a finite collection of $\mathbb C$-derivations
of $\mathbb K$ subject to the conditions
\begin{enumerate}
\item[(a).] any two derivations in $\Delta$ commute;
\item[(b).] the \defi[field of constants] of $\Delta$, \index{Field of constants} that is the intersection of the
kernels of the derivations in $\Delta$, is equal to $\mathbb C$.
\end{enumerate}

A \defi[differential extension] \index{Differential extension} of $(\mathbb K, \Delta)$ is a differential field
$(\mathbb K_0, \Delta_0)$ such that $\mathbb K_0$ is a field extension of $\mathbb K$, and
for every $\partial_0 \in \Delta_0$ there exists a unique $\partial \in \Delta$  satisfying
 $\partial = {\partial_0}|_{\mathbb K}$. A differential extension $(\mathbb K_0, \Delta_0)$  of $(\mathbb K, \Delta)$
is said to be \defi[primitive] \index{Differential extension!primitive} if there exists an element $h \in \mathbb K_0$ such that
$\mathbb K_0 = \mathbb K (h)$.

The simplest kind of  differential extension is when $\mathbb K_0$ is an algebraic field  extension of $\mathbb K$. These
are called  \defi[algebraic extensions].  \index{Differential extension!algebraic}

Another particularly simple kind of differential extension are the so called \defi[Liouvillian extensions]. \index{Differential extension!Liouvillian}
A differential field $(\mathbb K', \Delta')$ is a Liouvillian extension of $(\mathbb K, \Delta)$, if there exists
a finite sequence of differential extensions $$(\mathbb K , \Delta) = (\mathbb K_1, \Delta_1) \subset \cdots \subset (\mathbb K_r, \Delta_r) \subset (\mathbb K_{r+1}, \Delta_{r+1})=  (\mathbb K', \Delta')$$
such that for each $i \in \underline r$, $\mathbb K_{i+1} = \mathbb K_i(h_{i+1})$ for some  $h_{i+1} \in \mathbb K_{i+1}$
satisfying one of the following conditions
\begin{itemize}
\item[(a)] $h_{i+1}$ is algebraic over $\mathbb K_i$, or ;
\item[(b)] for every $\partial \in \Delta_{i+1}$, $\partial h_{i+1}$ belongs to $\mathbb K_i$, or;
\item[(c)] for every $\partial \in \Delta_{i+1}$, $\frac{\partial h_{i+1}}{h_{i+1}}$ belongs to $\mathbb K_i$.
\end{itemize}

If $(\mathbb K , \Delta )=  (\mathbb C((x,y)), \left\{ \partial_x, \partial_y \right\})$
is the differential field of germs of meromorphic functions at the origin of $\mathbb C^2$ endowed with
the natural derivations $\partial_x, \partial_y$, then a primitive Liouvillian extension over it is obtained
by taking  (a) a primitive  algebraic extension;  or (b) the integral of a   closed meromorphic $1$-forms; or (c) the
exponential of the integral of a closed meromorphic $1$-form.

If  $(\mathbb K, \left\{ \partial_x, \partial_y \right\})$ is
a differential subfield  of $(\mathbb C (( x,y )) , \left\{ \partial_x, \partial_y \right\})$ then, by definition,
a web $\mathcal W$ on $(\mathbb  C^2,0)$ is \defi[defined over $\mathbb K$] \index{Web!defined over $\mathbb K$} if there exists  a $k$-symmetric $1$-form
with coefficients in $\mathbb K$ defining $\mathcal W$. More explicitly, there exists
\[
\omega = \sum_{i+j = k} a_{ij}(x,y) dx^i dy^j  \in \mathrm{Sym}^k\Omega^1 (\mathbb C^2,0)
\]
such that $\mathcal W = \mathcal W(\omega)$ and $a_{ij} \in \mathbb K$ for every
pair $(i,j) \in \mathbb N^2$ satisfying $i+j=k$. Similarly, an abelian relation of a given smooth web on $(\mathbb C^2,0)$ is defined
over $\mathbb K$, if its components are $1$-forms with coefficients in $\mathbb K$. If $\mathcal W$ is a web defined over
$\mathbb K$ then the \defi[field of definition] \index{Abelian relation!field of definition} of its abelian relations is the differential extension
 of $\mathbb K$ generated
by  all  coefficients of all  components of all  abelian relations of $\mathcal W$.

\begin{problem}\label{Prob:1}
Let $\mathcal W$ be a germ of smooth $k$-web  on $(\mathbb C^2,0)$. If $\mathcal W$
is defined over $\mathbb K$, what can be said about the field of definition of
its abelian relations ? Is it a Liouvillian extension of $\mathbb K$ ?
\end{problem}

If $\mathcal W$ is a hexagonal $3$-web  on $(\mathbb C^2,0)$ then its unique abelian relation is
defined over a Liouvillian extension of its field of definition. An argument has already
been given in the proof of the implication (b) $\implies$ (c) of Theorem \ref{T:hexagonal}.

\subsubsection{An answer, but in a very particular case}

Due to the
apparent difficulty of Problem \ref{Prob:1} one could propose to low down the dimension of the ambient space
in order to obtain some progress. At first sight this seems to be pure non-sense, since it doesn't appear
to be reasonable to talk about webs on $(\mathbb C,0)$. A way out, is to
interpret less strictly the {\it lowing down} of the dimension. In the study of systems of  differential equations, the usual setup
where one is allowed to low down dimensions is when the system posses  infinitesimal symmetries.
Having this vague discourse in mind, it suffices to  look back at Chapter \ref{Chapter:AR} to
realize that it has been formally implemented in the proof of Proposition \ref{description}.
In particular, one obtains as a corollary the following

\begin{prop}\label{P:liouville1}
Let $ (\mathbb K, \left\{ \partial_x, \partial_y \right\})$ be a differential subfield  of $(\mathbb C (( x,y )) , \left\{ \partial_x, \partial_y \right\})$,
and let $\mathcal W$ be a germ of smooth $k$-web defined over $\mathbb K$.
If $\mathcal W$ admits a transverse infinitesimal automorphism, also defined over $\mathbb K$,  then there exists a
 Liouvillian extension of  $\mathbb K$ over which all the abelian relations of $\mathcal W$ are defined.
\end{prop}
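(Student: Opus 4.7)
The plan is to extract the conclusion directly from the structural description of $\mathcal A(\mathcal W)$ provided by Proposition \ref{description}. That proposition guarantees that every abelian relation of $\mathcal W$ is a $\mathbb C$-linear combination of $k$-uples of $1$-forms whose $i$-th entry has the form $P_i(u_i)\,e^{\lambda u_i}\,du_i$, where $u_i$ is the canonical first integral of $\mathcal F_i$ with respect to $v$, $\lambda$ belongs to the finite set of eigenvalues of the map $L_v\colon \mathcal A(\mathcal W)\to \mathcal A(\mathcal W)$, and the $P_i$ are polynomials of bounded degree with complex coefficients. Consequently, it suffices to exhibit a Liouvillian tower over $\mathbb K$ that contains every $u_i$ and every $e^{\lambda u_i}$; once this is done, every component of every abelian relation will lie in that tower since the coefficients of the $P_i$ are elements of $\mathbb C\subset\mathbb K$.

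First, I would produce the canonical first integrals. Because $\mathcal W$ and $v$ are defined over $\mathbb K$, for each $i$ one can choose a defining form $\omega_i$ of $\mathcal F_i$ with coefficients in $\mathbb K$, so that the $1$-form $\alpha_i=\omega_i/i_v\omega_i$ also has coefficients in $\mathbb K$. Since $v$ is an infinitesimal automorphism of $\mathcal F_i$ transverse to it, the standard computation recalled in Section~\ref{S:infinitesimalggg} shows that $\alpha_i$ is closed. Setting $u_i=\int\alpha_i$ thus produces a primitive of a closed $\mathbb K$-rational $1$-form, and the extension $\mathbb K_1=\mathbb K(u_1,\ldots,u_k)$ is Liouvillian over $\mathbb K$ by iterating $k$ extensions of type (b). Second, I would adjoin the exponentials: since $\mathcal A(\mathcal W)$ is finite dimensional by Chern's bound (Theorem~\ref{T:cota}), the spectrum $\{\lambda_1,\ldots,\lambda_\tau\}$ of $L_v$ is finite, and for each such $\lambda$ and each $i$ one has $d(e^{\lambda u_i})/e^{\lambda u_i}=\lambda\,du_i \in \Omega^1(\mathbb K_1)$. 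Adjoining the finitely many $e^{\lambda_j u_i}$ one after the other produces a Liouvillian tower $\mathbb K_1\subset\mathbb K_2$ via extensions of type (c), and $\mathbb K_2$ remains Liouvillian over $\mathbb K$.

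Finally, I would assemble everything: by Proposition~\ref{description}, each component $\eta_i$ of an arbitrary abelian relation of $\mathcal W$ is a finite $\mathbb C$-linear combination of expressions of the form $u_i^m e^{\lambda_j u_i}\,du_i$, all of which belong to $\Omega^1(\mathbb K_2)$. Hence the field of definition of all abelian relations of $\mathcal W$ is contained in $\mathbb K_2$, which is Liouvillian over $\mathbb K$. The only step that requires genuine care, rather than bookkeeping, is the verification that the spectrum of $L_v$ is finite and that the degrees of the polynomials $P_i$ are bounded; both facts follow from $\dim\mathcal A(\mathcal W)<\infty$, which is the main content of Theorem~\ref{T:cota} and is assumed available here.
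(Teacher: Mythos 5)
Your proposal is correct and follows exactly the route the paper takes: the paper's proof is the single line ``It follows immediately from Proposition \ref{description},'' and your argument is just the careful unpacking of that deduction (adjoining the canonical first integrals $u_i=\int \omega_i/i_v\omega_i$ as type-(b) extensions, then the finitely many exponentials $e^{\lambda_j u_i}$ as type-(c) extensions, with finiteness coming from $\dim\mathcal A(\mathcal W)<\infty$). Nothing is missing; you have simply written out what the authors left implicit.
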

\begin{proof}
It follows immediately from Proposition \ref{description}.
\end{proof}

\subsection{Variation of the rank}\label{S:rank}
Proposition \ref{description} also allows to compare the rank of a web $\mathcal W$ admitting a transverse infinitesimal
automorphism $v$, with the rank of the web $\mathcal W \boxtimes \mathcal F_v$ obtained by the superposition of
$\mathcal W$ and the foliation induced by $v$.

\begin{thm}\label{T:1}
Let ${\mathcal W}$ be a smooth $k$--web which admits a transverse infinitesimal automorphism $v$. Then
\[
\mathrm{rank}(\mathcal W \boxtimes \mathcal F_v) =\mathrm{rank}(\mathcal W) + (k -1)\, .
\]
In particular,  ${\mathcal W}$ is of maximal rank if and only if
$\mathcal W \boxtimes \mathcal F_{v}$ is of maximal rank.
\end{thm}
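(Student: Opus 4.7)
The natural strategy is to compare $\mathcal A(\mathcal W \boxtimes \mathcal F_v)$ with $\mathcal A(\mathcal W)$ via the Lie derivative $L_v$, extending the action studied in Section \ref{S:infinitesimalggg}. Although $v$ is tangent (not transverse) to $\mathcal F_v$, the identities used in Proposition \ref{description} still produce a $\mathbb C$--linear endomorphism of $\mathcal A(\mathcal W\boxtimes \mathcal F_v)$ whose image lies in $\mathcal A(\mathcal W)$, viewed as the subspace of relations with vanishing $(k+1)$--th entry. The theorem will then follow from rank--nullity once the kernel of $L_v$ is computed and surjectivity onto $\mathcal A(\mathcal W)$ is established.

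The setup goes as follows. For any $\eta=(\eta_1,\ldots,\eta_k,\eta_{k+1})\in\mathcal A(\mathcal W\boxtimes \mathcal F_v)$, write $\eta_i=g_i(u_i)\,du_i$ for $i\leq k$, where $u_i$ is the canonical first integral of $\mathcal F_i$ with respect to $v$, so that $L_v\eta_i=g_i'(u_i)\,du_i$. The last component satisfies $\iota_v\eta_{k+1}=0$ (because $\eta_{k+1}$ defines $\mathcal F_v$), and closedness gives $L_v\eta_{k+1}=d(\iota_v\eta_{k+1})=0$. Hence $L_v$ acts on $\mathcal A(\mathcal W\boxtimes \mathcal F_v)$ with image contained in the copy of $\mathcal A(\mathcal W)$ obtained by padding with a zero $(k+1)$--th entry. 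The kernel is immediate: $L_v\eta=0$ forces each $g_i$ to be constant, so $\eta_i=c_i\,du_i$ with $c_i\in\mathbb C$, and the remaining requirement is that $\eta_{k+1}=-\sum c_i\,du_i$ defines $\mathcal F_v$ or vanishes. In dimension two this amounts to $\eta_{k+1}(v)=0$, and since $du_i(v)=L_v u_i=1$ this reads $\sum_{i=1}^k c_i=0$. Therefore $\dim\ker L_v=k-1$.

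The crux is surjectivity, and it hinges on one clean observation. Given $\alpha=(g_1(u_1)du_1,\ldots,g_k(u_k)du_k)\in\mathcal A(\mathcal W)$, pick any antiderivatives $G_i$ of $g_i$ and set $\eta_i=G_i(u_i)\,du_i$. Then
\[
d\Bigl(\sum_{i=1}^k G_i(u_i)\Bigr)=\sum_{i=1}^k g_i(u_i)\,du_i=\sum_{i=1}^k\alpha_i=0,
\]
so $\sum G_i(u_i)$ is a \emph{constant}; absorbing it into $G_1$ one may assume $\sum G_i(u_i)\equiv 0$. Define $\eta_{k+1}:=-\sum_{i=1}^k \eta_i$. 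This form is closed (being a sum of exact forms), and $\eta_{k+1}(v)=-\sum G_i(u_i)=0$, so it either vanishes or defines $\mathcal F_v$. By construction $\eta=(\eta_1,\ldots,\eta_{k+1})\in\mathcal A(\mathcal W\boxtimes \mathcal F_v)$ and $L_v\eta=\alpha$. This integration step, converting the relation $\sum g_i(u_i)du_i=0$ into the pointwise identity $\sum G_i(u_i)=0$ via one constant of integration, is the only non-formal point of the argument; everything else is bookkeeping.

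Combining the two computations yields
\[
\mathrm{rank}(\mathcal W\boxtimes\mathcal F_v)=\dim\ker L_v+\dim\mathrm{Im}\,L_v=(k-1)+\mathrm{rank}(\mathcal W),
\]
which is the claimed equality. The ``maximal rank'' consequence then follows from the identity $\pi(2,k+1)-\pi(2,k)=k-1$ recorded in Chapter \ref{Chapter:AR}. I expect the main conceptual obstacle, should one wish to state the result in ambient dimension $>2$, to be precisely the step where ``$\eta_{k+1}(v)=0$'' was identified with ``$\eta_{k+1}$ defines $\mathcal F_v$''; in higher dimension $\mathcal F_v$ is no longer a codimension one foliation, and the argument would have to be rephrased accordingly.
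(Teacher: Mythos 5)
Your proof is correct, and it takes a leaner route than the paper's. The paper also studies the operator $L_v$ on $\mathcal A(\mathcal W\boxtimes\mathcal F_v)$, but it first splits this space into the generalized $0$-eigenspace $\mathcal A_0$ and the sum $\mathcal A_*$ of the other generalized eigenspaces, treats the two pieces separately (showing $\mathcal A_*(\mathcal W\boxtimes\mathcal F_v)=\mathcal A_*(\mathcal W)$, and on $\mathcal A_0$ building an explicit section $\Phi$ of $L_v$ term by term on the spanning set $\sum_i c_i u_i^r\,du_i$ supplied by Proposition \ref{description}, together with a nilpotency argument to identify $\mathcal A_0(\mathcal W)/L_v\mathcal A_0(\mathcal W)$ with $\ker L_v$). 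You bypass all of that with a single rank--nullity count: your kernel elements $\bigl(c_1du_1,\ldots,c_kdu_k,-\sum c_i du_i\bigr)$ with $\sum c_i=0$ are exactly the paper's relations $du_1-du_j-g_j(u_{k+1})du_{k+1}=0$, and your surjectivity step is the paper's integration trick applied directly to an arbitrary $\alpha\in\mathcal A(\mathcal W)$ rather than to eigen-monomials --- the only care needed being the normalization of the constant $\sum G_i(u_i)$, which you handle explicitly (the paper gets it for free from $u_i(0)=0$). Your observation that closedness plus $\iota_v\eta_{k+1}=0$ already forces $\eta_{k+1}$ to define $\mathcal F_v$ in dimension two is the correct justification, and your closing caveat about higher ambient dimension is apt. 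What the paper's heavier machinery buys is not the theorem itself but the by-products: the eigenspace description of $\mathcal A(\mathcal W\boxtimes\mathcal F_v)$ is what feeds the Liouvillianity statement of Proposition \ref{P:liouville2} and the explicit computation of abelian relations in the examples of Section \ref{S:infinitesimalggg}.
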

\begin{proof}
Let  $\mathcal W= \mathcal W(\omega_1, \ldots, \omega_k)=  \mathcal F_1 \boxtimes \cdots \boxtimes \mathcal F_k$.
Recall from Chapter \ref{Chapter:AR} that the canonical first integral
of $\mathcal  F_i$ relative to $v$  is
\[
u_i = \int \frac{\omega_i}{\omega_i(v)} \, .
\]
In particular, its differential is    $\eta_i =\frac{\omega_i}{\omega_i(v)}= du_i$.

Notice that when $j$ varies from $2$ to $k$, the following identities hold
\[
i_v(\eta_1 - \eta_j) =0 \qquad \text{ and } \qquad  L_v(\eta_1 - \eta_j) = 0 \, .
\]

Consequently there exists $g_j \in \mathbb C\{t\}$ for which
\begin{equation}\label{E:xxx}
du_1 - du_j - g_j(u_{k+1})du_{k+1} = 0 \, ,
\end{equation}
where $u_{k+1}$ is a first integral of $\mathcal F_v$.

Clearly these are abelian relations for the web $\mathcal W \boxtimes \mathcal F_v$.
If  $\mathcal A_0(\mathcal W \boxtimes \mathcal F_v)$ stands for the
maximal eigenspace of $L_v$ associated to the eigenvalue zero, then the abelian relations above
 span a vector subspace of it  which
will be denoted by $\mathcal V$.   Notice that  $\dim \mathcal V= k-1$.

Observe that $\mathcal V$ fits into  the   sequence
\[
0 \to {\mathcal V} \stackrel{i}{\longrightarrow}
{\mathcal A_0(\mathcal W \boxtimes \mathcal F_v)}\stackrel{L_v}{\longrightarrow}
 {\mathcal A_0(\mathcal W)}   .
\]
Notice that this sequence is exact. Indeed,   $ K= \ker \{ L_v : \mathcal A_0(\mathcal W \boxtimes \mathcal F_v) \to
 {\mathcal A_0(\mathcal W)} \}$ is generated by abelian relations of the form
$\sum_{i=1}^k c_i du_i + h(u_{k+1})du_{k+1} = 0$, where $c_i \in \mathbb C$
and $h \in \mathbb C\{t\}$.   Since $i_v du_i=1$ for each $i\in \underline k$,
 it follows that the constants
$c_i$   satisfy $\sum_{i=1}^k c_i =0$. This implies that
 the abelian relations in the kernel
of $L_v$ can be written as linear combinations of abelian relations
of the form (\ref{E:xxx}).  Therefore
\begin{equation}\label{E:yyy}
K =  \mathcal V
\end{equation}
and consequently $\ker L_v \subset \mathrm{Im } \, \, i$. The exactness
of the above sequence~follows~easily.

From general principles one can  deduce that the sequence
\[
0 \to \frac{\mathcal V}{\mathcal A_0(\mathcal W)\cap \mathcal V} \stackrel{i}{\longrightarrow}
\frac{\mathcal A_0(\mathcal W \boxtimes \mathcal F_v)}{\mathcal A_0(\mathcal W)} \stackrel{L_v}{\longrightarrow}
 \frac{\mathcal A_0(\mathcal W)}{L_v \mathcal A_0(\mathcal W)} \, ,
\]
is also exact. Thus to  prove the theorem it suffices to
verify that
\begin{enumerate}
\item[(a)] $\mathcal V$ is isomorphic to $
\frac{\mathcal V}{\mathcal A_0(\mathcal W)\cap \mathcal V} \oplus
  \frac{\mathcal A_0(\mathcal W)}{L_v \mathcal A_0(\mathcal W)} \, ;
$
\item[(b)] the morphism $L_v:\mathcal A_0(\mathcal W \boxtimes \mathcal F_v) \to
 \mathcal A_0(\mathcal W)$
 is surjective;
 \item[(c)] the vector spaces
 $\frac{\mathcal A_0(\mathcal W \boxtimes \mathcal F_v)}{\mathcal A_0(\mathcal W)}$ and
 $\frac{\mathcal A(\mathcal W \boxtimes \mathcal F_v)}{\mathcal A(\mathcal W)}$ are isomorphic.
\end{enumerate}

The key to  verify  (a) is the nilpotency of $L_v$  on $\mathcal A_0(\mathcal W)$. It
implies that $\frac{\mathcal A_0(\mathcal W)}{L_v \mathcal A_0(\mathcal W)}$ is isomorphic
to $\mathcal A_0(\mathcal W) \cap K$. Combining this
with (\ref{E:yyy}) assertion (a) follows.

To prove assertion (b) it suffices to construct a map $\Phi:\mathcal A_0(\mathcal W)
\to \mathcal A_0(\mathcal W \boxtimes \mathcal F_v)$ such that $ L_v \circ \Phi = \mathrm{Id}$.
Proposition \ref{description}  implies that $\mathcal A_0(\mathcal W)$ is spanned by abelian
relations of the form
$ \sum_{i=1}^k c_i u_i^r du_i =0  ,$
where $c_1, \ldots, c_k$ are complex numbers and $r$ is a non-negative integer.
Since
\[
   \sum_{i=1}^k c_i u_i^r du_i = \frac{1}{r+1}L_v \left( \sum_{i=1}^k c_i u_i^{r+1} du_i \right) =0
\]
there exists a unique function $h\in \mathbb C \{ t\}$ satisfying
$$  \sum_{i=1}^k c_i u_i^{r+1} du_i + h(u_{k+1}) du_{k+1} =0 \, .
$$
If one sets
\[
\Phi\left(\sum_{i=1}^k c_i u_i^r du_i \right)
= \frac{1}{r+1} \left(  \sum_{i=1}^k c_i u_i^{r+1} du_i + h(u_{k+1}) du_{k+1}  \right) \,
\]
then  $L_v \circ \Phi= \mathrm{Id}$ and assertion (b) follows.

To prove assertion (c), first notice that
\[
\mathcal A(\mathcal W \boxtimes \mathcal F_v)=\mathcal A_0(\mathcal W \boxtimes \mathcal F_v)  \oplus
  \mathcal A_*(\mathcal W \boxtimes \mathcal F_v) \,
\]
where $\mathcal A_*(\mathcal W \boxtimes \mathcal F_v)$ is the sum of eigenspaces corresponding to non-zero eigenvalues.
Of course $\mathcal A_*(\mathcal W \boxtimes \mathcal F_v)$ is invariant by $L_v$. Moreover  the equality
\[
L_v\big(\mathcal A_*(\mathcal W \boxtimes \mathcal F_v)\big) = \mathcal A_*(\mathcal W \boxtimes \mathcal F_v) \, ,
\]
holds true.
On the other hand $L_v$ {\it kills} the component of an abelian relation corresponding to the foliation $\mathcal F_v$.
In particular
\[
L_v\big(\mathcal A_*(\mathcal W \boxtimes \mathcal F_v)\big) \subset  \mathcal A_*(\mathcal W ).
\]
This is sufficient to show that $\mathcal A_*(\mathcal W \boxtimes \mathcal F_v) = \mathcal A_*(\mathcal W )$ and
deduce assertion (c).

Putting all together, it follows that
\[
\mathrm{rank}(\mathcal W \boxtimes \mathcal F_v) =\mathrm{rank}(\mathcal W) + (k -1)\, .
\]
To prove  the last claim of the theorem,   just  remark that the $(k+1)$-web $\mathcal W \boxtimes \mathcal F_v$ is of maximal rank if and only if
\[
 \mathrm{rank}({\mathcal W}\boxtimes \mathcal F_v)=\frac{k(k-1)}{2}=\frac{(k-1)(k-2)}{2}+(k-1)\, . \qedhere
\]
\end{proof}

This result was obtained in \cite{MPP} by David Mar\'{\i}n  together with the authors of this book.
As a corollary, it was then obtained  the existence of  exceptional planar $k$-webs for every $k \ge 5$, as explained
in the next section.

Of course, one can also deduce from Theorem \ref{T:1} the following analogue of Proposition \ref{P:liouville1}.

\begin{prop}\label{P:liouville2}
Let $ (\mathbb K, \left\{ \partial_x, \partial_y \right\})$ be a differential subfield  of $(\mathbb C (( x,y )) , \left\{ \partial_x, \partial_y \right\})$,
and let $\mathcal W$ be a germ of smooth $k$-web defined over $\mathbb K$.
If $\mathcal W$ admits a transverse infinitesimal automorphism, also defined over $\mathbb K$,  then there exists a
 Liouvillian extension of  $\mathbb K$ over which all the abelian relations of $\mathcal W\boxtimes \mathcal F_v$  are defined.
\end{prop}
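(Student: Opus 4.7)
The plan is to combine Proposition~\ref{P:liouville1} applied to $\mathcal W$ itself with the structural description of $\mathcal A(\mathcal W\boxtimes \mathcal F_v)$ obtained inside the proof of Theorem~\ref{T:1}. First, Proposition~\ref{P:liouville1} furnishes a Liouvillian extension $\mathbb K_1/\mathbb K$ over which every component of every abelian relation in $\mathcal A(\mathcal W)$ is defined; in particular both $\mathcal A_0(\mathcal W)$ and $\mathcal A_*(\mathcal W)$ are defined over $\mathbb K_1$.

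Second, I would enlarge $\mathbb K_1$ to a Liouvillian extension $\mathbb K_2$ containing the canonical first integrals $u_i=\int \omega_i/\omega_i(v)$ for $i\in\underline k$, together with $u_{k+1}=u_1-u_2$, which is a first integral of $\mathcal F_v$: indeed $L_v(u_1-u_2)=0$ and $d(u_1-u_2)=du_1-du_2\neq 0$, so $u_{k+1}$ is a genuine submersion defining $\mathcal F_v$. Each such $u_i$ can be added Liouvillianly, because $\omega_i/\omega_i(v)$ is a closed $1$-form with coefficients in $\mathbb K$ (its closedness being precisely the computation recalled in Section~\ref{S:infinitesimalggg}), so the primitive $u_i$ satisfies $\partial_x u_i,\partial_y u_i\in\mathbb K$, which is exactly case (b) of the definition of a Liouvillian extension.

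Third, I would invoke the explicit decomposition of $\mathcal A(\mathcal W\boxtimes \mathcal F_v)$ worked out inside the proof of Theorem~\ref{T:1}. It expresses this space as the direct sum of $\mathcal A_*(\mathcal W)$, of the image of the section $\Phi:\mathcal A_0(\mathcal W)\to \mathcal A_0(\mathcal W\boxtimes \mathcal F_v)$ sending a generator $\sum_i c_i u_i^r\,du_i$ (with $c_i\in\mathbb C$, by Proposition~\ref{description}) to $\tfrac{1}{r+1}\bigl(\sum_i c_i u_i^{r+1}du_i+h(u_{k+1})du_{k+1}\bigr)$, and of the $(k-1)$-dimensional subspace $\mathcal V$ generated by the relations $du_1-du_j=g_j(u_{k+1})du_{k+1}$ for $j=2,\ldots,k$. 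The generators of $\mathcal A_0(\mathcal W)$ already live over $\mathbb K_2$ by the first two steps, and all the $u_i$ themselves lie in $\mathbb K_2$ by construction; hence the residual components $h(u_{k+1})du_{k+1}=-\sum_i c_i u_i^{r+1}du_i$ and $g_j(u_{k+1})du_{k+1}=du_1-du_j$ are automatically defined over $\mathbb K_2$. Concluding, every component of every abelian relation of $\mathcal W\boxtimes \mathcal F_v$ has coefficients in the Liouvillian extension $\mathbb K_2$ of $\mathbb K$.

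The main (and rather mild) obstacle is purely bookkeeping: one has to check that the Liouvillian tower produced by Proposition~\ref{P:liouville1} can be enlarged simultaneously to accommodate the $u_i$ and the residual one-variable functions $g_j,h$ determined by the formulas in Theorem~\ref{T:1}'s proof. No genuinely new transcendental construction is needed, because each $u_i$ enters through a single Liouvillian primitive step and the remaining components are forced to lie in $\mathbb K_2$ by the explicit algebraic identities they satisfy.
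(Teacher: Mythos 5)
Your argument is correct and is precisely the deduction the paper has in mind: the paper states this proposition without proof, remarking only that it follows from Theorem~\ref{T:1}, and your use of the decomposition $\mathcal A(\mathcal W\boxtimes\mathcal F_v)=\mathcal A_*(\mathcal W)\oplus\mathrm{Im}\,\Phi\oplus\mathcal V$ from that theorem's proof, together with Proposition~\ref{P:liouville1} and the Liouvillian adjunction of the canonical first integrals, is exactly the intended route. The only cosmetic point is that the individual $1$-forms $\omega_i$ may only be defined over an algebraic extension of $\mathbb K$ (the $k$-symmetric form is what lives over $\mathbb K$), but algebraic extensions are Liouvillian by case (a), so nothing is lost.
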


It should not be very hard to drop the hypothesis, in Proposition \ref{P:liouville1} as well as in  Proposition \ref{P:liouville2}, on the field of definition of the infinitesimal automorphism of $\mathcal W$. More precisely, it should be true that the infinitesimal automorphisms of a web defined over $\mathbb K$ should
also be defined over $\mathbb K$, or at least over a Liouvillian extension of $\mathbb K$.

\subsection{Infinitely many families of   exceptional webs}

Let   $C$ be a degree $k$ curve in $\mathbb P^2$  invariant by a $\mathbb C^*$-action
$\varphi: \mathbb C^* \times \mathbb P^2 \to \mathbb P^2.$
Notice that  $\varphi$ induces a dual action
 $\check{\varphi}:\mathbb C^* \times
\check{\mathbb P}^2 \to \check{\mathbb P}^2$
which is a one-parameter group of  automorphisms of the dual $k$-web $\mathcal W_C$.
Consequently, the web $\mathcal W_C(\ell_0)$, the germification of $\mathcal W_C$ at
a generic point $\ell_0 \in \check{\mathbb P}^2$  admits an infinitesimal automorphism.

It is a simple matter to show that in a suitable projective coordinate system $[x:y:z]$, a plane curve $C$
invariant by a ${\mathbb C}^*$-action is cut out by an equation of the form
\begin{equation}\label{E:aluffi}
x^{\epsilon_1}\cdot y^{\epsilon_2}\cdot z^{\epsilon_3} \cdot \prod_{i=1}^n \big(  x^a + \lambda_i y^b z^{a-b}  \big) \,
\end{equation}
where $\epsilon_1, \epsilon_2, \epsilon_3 \in \{ 0,1 \}$,  $n,a,b \in \mathbb N$ are such that $n\ge1$, $a\ge 2$, $1\le b\le a/2$,
$\gcd(a,b)=1$
and the $\lambda_i$ are distinct non zero complex numbers. For a curve of this form  the $\mathbb C^*$-action in question is
\begin{align*}
\varphi :  \qquad  \mathbb C^* \times \mathbb P^2 \:  &\longrightarrow \:  \mathbb P^2 \\
 ( t, [x:y:z] ) &\longmapsto \big[t^{b(a-b)}x:t^{a(a-b)} y: t^{ab}z \big] \, . \nonumber
\end{align*}

Moreover once  $\epsilon_1,\epsilon_2,\epsilon_3,n,a,b$ are fixed, one can always choose $\lambda_1=1$ and in this
case the set of $n-1$ complex numbers $\{\lambda_2,\ldots, \lambda_n\}$ projectively characterizes the
curve $C$. In particular,
 there exists a $(d-1)$-dimensional family  of degree
 $2d$ (or $2d +1$)
 reduced plane curves all projectively
distinct and invariant by the same  $\mathbb C^*$-action: for a given $2d+ \delta$
with $\delta\in \{0,1\}$ set $a=2$, $b=1$, $\epsilon_1=\delta$ and $\epsilon_2=\epsilon_3=0$.

If $C$ is a reduced curve of the form (\ref{E:aluffi}) then $\mathcal W_C$ is  invariant
by the $\mathbb C^*$-action $\check \varphi$ dual to $\varphi$. Denote by $v$ the infinitesimal
generator of $\check \varphi$ and by $\mathcal F_v$ the corresponding foliation.

\begin{thm}\label{T:geral}
If $\deg C\ge 4$ then $\mathcal W_C \boxtimes \mathcal F_v$ is exceptional. Moreover if $C'$
is another curve invariant by $\varphi$ then $\mathcal W_C \boxtimes \mathcal F_v$  is analytically
equivalent to $\mathcal W_{C'} \boxtimes \mathcal F_X$ if and only if the curve $C$ is projectively equivalent to
$C'$.
\end{thm}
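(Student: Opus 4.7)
The plan is to establish both claims using the tools developed in the preceding sections, in particular Theorem \ref{T:1}, the maximality of the arithmetic genus for reduced plane curves, and the essential uniqueness of linearizations (Corollary \ref{C:linutil}).

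First, I would prove the maximality of the rank. Since $C$ is a reduced plane curve of degree $k=\deg C$, Corollary \ref{C:omegaPlaneCurve} gives $h^0(C,\omega_C)=g_a(C)=\frac{(k-1)(k-2)}{2}=\pi(2,k)$. By Theorem \ref{T:curvesvswebs2} these abelian differentials inject into the space of abelian relations of the dual web $\mathcal{W}_C$ at a generic point, so $\mathrm{rank}(\mathcal{W}_C)=\pi(2,k)$, i.e.\ $\mathcal{W}_C$ has maximal rank. Next I would verify that $v$, the infinitesimal generator of the dual action $\check{\varphi}$, is a transverse infinitesimal automorphism of $\mathcal{W}_C$ at a generic point of $\check{\mathbb{P}}^2$. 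Invariance is automatic from the $\varphi$-invariance of $C$. Transversality amounts to the orbit of $v$ not being tangent to any canonical foliation $\mathcal{F}_i$ at the generic germ point $\ell_0$; this can be checked directly from the normal form (\ref{E:aluffi}) by noting that at $\ell_0$ the tangents to the $\mathcal{F}_i$ correspond to the intersections $C\cap \ell_0$, which are moved non-trivially by $\varphi$. Once transversality is established, Theorem \ref{T:1} gives $\mathrm{rank}(\mathcal{W}_C\boxtimes\mathcal{F}_v)=\pi(2,k)+k=\pi(2,k+1)$, i.e.\ $\mathcal{W}_C\boxtimes\mathcal{F}_v$ is of maximal rank.

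To show non-algebraizability, I would combine Corollary \ref{C:ALGgeneralcase} (an algebraizable maximal rank web is linearizable) with Corollary \ref{C:linutil} (for $k+1\geq 5\geq 4$, a linearization of $\mathcal{W}_C\boxtimes\mathcal{F}_v$ is unique modulo $\mathrm{PGL}_3(\mathbb{C})$). Since $\mathcal{W}_C$ is already linear (its leaves are pieces of lines in $\check{\mathbb{P}}^2$ corresponding to points of $C$), the identity map is a linearization of the $k$-subweb $\mathcal{W}_C$, so any linearization of the full $(k+1)$-web must be the germ of a projective transformation of $\check{\mathbb{P}}^2$. But then the leaves of $\mathcal{F}_v$ — which are orbits of a one-parameter subgroup of $\mathrm{PGL}_3(\mathbb{C})$ with three distinct weights $b(a-b)$, $a(a-b)$, $ab$ — would have to be straight lines. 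Parametrizing an orbit as $t\mapsto [t^{w_1}p_1:t^{w_2}p_2:t^{w_3}p_3]$ and checking that, with three pairwise distinct weights, generic orbits are not projective lines, yields the required contradiction. Hence $\mathcal{W}_C\boxtimes\mathcal{F}_v$ is not linearizable, therefore not algebraizable, and being of maximal rank it is exceptional.

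For the moduli statement, suppose $\psi:(\check{\mathbb{P}}^2,\ell_0)\to(\check{\mathbb{P}}^2,\ell'_0)$ is a germ of biholomorphism with $\psi^*(\mathcal{W}_{C'}\boxtimes\mathcal{F}_v)=\mathcal{W}_C\boxtimes\mathcal{F}_v$. The key observation is that $\mathcal{W}_C$ (resp.\ $\mathcal{W}_{C'}$) is intrinsically characterized inside the $(k+1)$-web as the unique $k$-subweb all of whose leaves are pieces of lines, since the remaining foliation $\mathcal{F}_v$ has non-linear leaves by the previous paragraph. Thus $\psi$ necessarily sends $\mathcal{W}_C$ to $\mathcal{W}_{C'}$. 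Corollary \ref{C:linutil} applied to these two linear $k$-webs then forces $\psi$ to be induced by an element $g\in\mathrm{PGL}_3(\mathbb{C})$, which by projective duality corresponds to a projective transformation sending $C$ to $C'$; conversely, any such projective equivalence induces an equivalence of the associated $(k+1)$-webs.

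The main technical obstacle is the transversality check for $v$ at a generic point and, closely related, the verification that generic orbits of the dual $\mathbb{C}^*$-action are not projective lines. Both follow from a careful analysis of the weights arising in (\ref{E:aluffi}), using that $a\geq 2$, $1\leq b\leq a/2$, and $\gcd(a,b)=1$, so the dual action has three pairwise distinct weights; everything else is a direct combination of results already established in the text.
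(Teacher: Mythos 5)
Your proof is correct and follows essentially the same route as the paper's: Theorem \ref{T:1} for the maximality of the rank, and Corollary \ref{C:linutil} together with the non-linearity of the leaves of $\mathcal F_v$ for the non-algebraizability; the moduli claim, which the paper leaves implicit, is handled by the same uniqueness-of-linearization argument, and your extra checks (transversality of $v$, orbits not being lines) only make explicit what the paper takes for granted. One small slip: Theorem \ref{T:1} gives $\mathrm{rank}(\mathcal W_C\boxtimes\mathcal F_v)=\pi(2,k)+(k-1)=\pi(2,k+1)$, not $\pi(2,k)+k$; the conclusion is unaffected, since the last clause of Theorem \ref{T:1} already yields maximality directly.
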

\begin{proof} Since $\mathcal W_C$ has maximal rank it follows from Theorem \ref{T:1}
that $\mathcal W_C \boxtimes \mathcal F_v$ is also of maximal rank. Suppose that its
localization  at a point $\ell_0 \in \check{\mathbb P}^2$ is algebraizable and
let $\psi:(\check{\mathbb P}^2,\ell_0) \to (\mathbb C^2,0)$ be a holomorphic algebraization. Since
both $\mathcal W_C$ and $\psi_*(\mathcal W_C)$ are linear webs of maximal rank
it follows from Corollary \ref{C:linutil}  that $\psi$ is the localization of
an automorphism of $\mathbb P^2$.  But the generic leaf of $\mathcal F_v$
is not contained in any line of $\mathbb P^2$ and consequently $\psi_*(\mathcal W \boxtimes
\mathcal F_v)$ is not linear. This concludes the proof of the theorem.
\end{proof}

\begin{remark}\rm
It is  not known if the families of examples above are {\it irreducible components of the space of exceptional
webs} in the sense
that the generic element does not admit a deformation as an exceptional
web that is not of the form $\mathcal W_C \boxtimes \mathcal F_v$. Due to the presence of infinitesimal automorphisms
one could imagine that they are indeed degenerations of some other
exceptional webs.
\end{remark}

\section{Pantazi-H\'{e}naut criterium}\label{S:Pantazi-Henaut}

If $\mathcal W$ is a $3$-web on $(\mathbb C^2,0)$ then $\mathcal W$ has maximal rank
if and only if its curvature $K(\mathcal W)$ vanishes identically. In this section, a generalization of this
 result to arbitrary planar webs will be presented.
The strategy   sketched below  can be traced back to
Pantazi \cite{Pantazi}. Recently, unaware of Pantazi's result,
H\'{e}naut \cite{Henaut2004} proved an essentially
equivalent result but formulated in more intrinsic terms.

Even more recently, Cavalier and Lehmann  proved that it
is possible to extend Pantazi-H\'{e}naut construction to {\it ordinary}
codimension one webs on $(\mathbb C^n,0)$ for arbitrary $n$. This text
will not deal with this generalization. For details see \cite{CL1}.

Below, after presenting the basics of the theory of linear differential systems,  the arguments of \cite{Pantazi} are
presented using  the modern formalism introduced in this context by  \cite{Henaut2004}.

\subsection{Linear differential  systems}

For a more detailed exposition the reader can consult \cite{Spencer} and  references therein.

\subsubsection{Jet spaces }\index{Jet spaces}

Let  $E$ be a rank $r$ vector  bundle over $(\mathbb C^n,0)$. Since the setup is local, $E$ is of course trivial.
Thus $x,p$ with $x=(x_1,\ldots,x_n)\in (\mathbb C^n,0)$ and  $p=(p^1,\ldots,p^r)\in \mathbb C^r$
constitute a coordinate system on the total space of  $E$.
A section $\xi$ of  $E$ will be identified with a map
 $(\xi^1,\ldots,\xi^r): (\mathbb C^n,0) \rightarrow \mathbb C^r$. \smallskip

The space  $J^{\ell}(E)$ of  $\ell$-jets of sections of  $E$  is the vector bundle over $(\mathbb C^n,0)$ with fiber
$J^{\ell}_x(E)$ over a point $x \in (\mathbb C^n,0)$
equal to the quotient of the space of  germs of sections of $E$ at $x$ by the subspace of germs vanishing at $x$ up to order $\ell+1$.
Given a section $\xi$ of $E$ then its $\ell$-jet at $x$ will be denoted by $j^{\ell}_x(\xi)$. In order to make sense of the following map
\[
j^{\ell} : E \longrightarrow J^{\ell}(E)
\]
one has to think of it not as a map of vector bundles ( derivations  are not $\mathcal O(\mathbb C^n,0)$-linear ) but as
a morphism of sheaves of $\mathbb C$-modules. On $J^{\ell}_x(E)$ there is natural system of  linear coordinates:
  $$p^s_\sigma(j^\ell_x(\xi))=\partial_\sigma (\xi^s)(x)=\frac{\partial^{|\sigma|} \xi^s}{\partial x^{\sigma}}(x)
$$ for  $s \in \underline r$ and  $\sigma=(\sigma_a)_{a=1}^n\in \mathbb N^n$ with  $|\sigma|=\sum_a \sigma_a\leq \ell$. \smallskip

For every  $\ell,\ell'$ with $\ell\geq \ell'$, there is a  natural morphism of vector bundles ( unlike $j^{\ell}$ this morphisms is
 $\mathcal O(\mathbb C^n,0)$-linear )
\begin{align*}
 \pi_{\ell}^{\ell'}\, : \;  {J}^\ell(E) & \longrightarrow
 {J}^{\ell'}(E)
\\
(x,j^{\ell}_x(\xi)) & \longmapsto
(x,j^{\ell'}_x(\xi))
\,. \end{align*}

By convention,  ${J}^q(E)$ is the $0$ vector bundle when  $q<0$.

\subsubsection{Linear differential systems}

A \defi[ linear differential system of order $q$ in  $r$ indeterminates] \index{Linear differential system} is defined as the kernel
$\boldsymbol{S} ={\rm Ker}\,\Phi \subset J^q(E)$ of a
\defi[ linear differential operator  of order $q$ in $r$ indeterminates], \index{Linear differential operator}
that is  a morphism of $\mathcal O(\mathbb C^n,0)$-modules
$$ {\Phi}\, : \, {J}^q(E) \longrightarrow F \, . $$
Explicitly, if $F=(\mathbb C^n,0)\times \mathbb C^m$ and
$\Phi=(\Phi^1,\ldots,\Phi^m)\, :  \,   {J}^q(E)
\longrightarrow F$ with
$$ \Phi^\kappa(x,p)= \sum_{s, |\sigma|\leq q}
A^{\kappa}_{s,\sigma}(x)\,p^s_{\sigma} \, , \qquad \kappa \in \underline{m}$$
then the sections of $\boldsymbol{S} ={\rm Ker}\,\Phi$
which are  also images of sections of $E$ through   $j^q: E \to J^q(E)$ correspond to solutions of the system of differential equations
$$ (\boldsymbol{S}) \qquad \qquad \sum_{s\, ,\, |\sigma|\leq q}
A^{\kappa}_{s,\sigma}\,\partial_\sigma (\varphi^s)= 0 \quad  \quad \kappa \in \underline m  $$
in the unknowns $\varphi^1,\ldots, \varphi^r: (\mathbb C^n,0) \rightarrow \mathbb C$.
Beware that in general $
\boldsymbol{S} \subset J^q(E)$ is not a vector subbundle of $ {J}^q(E)$: the rank of the fibers of $\boldsymbol{S}$ may vary from point to point.

If the $q$-jet of  $(\varphi_1,\ldots,\varphi_r) $ is in  $\boldsymbol{S}$ then its $(q+1)$-jet will be in
$\boldsymbol{S}^{(1)} \subset J^{q+1}(E)$, the homogeneous linear partial differential equation
of order $(q+1)$ deduced  from $\boldsymbol{S}$ through derivation with respect to  the free variables  $x_a$:
$$
(\boldsymbol{S}^{(1)}) \quad \begin{cases}
\sum_{s\, ,\, |\sigma|\leq q}
A^{\kappa}_{s,\sigma}\,\frac{ \partial^{|\sigma|} \varphi^s}{\partial
  x_\sigma}= 0 \hspace{3.3cm}\kappa \in \underline m;
\vspace{0.15cm}\\
\sum_{s\, ,\, |\sigma|\leq q}
\frac{\partial A^{\kappa}_{s,\sigma}}{\partial x_a} \,\frac{ \partial^{|\sigma|} \varphi^s}{\partial
  x_\sigma}+
A^{\kappa}_{s,\sigma}\,
\,\frac{ \partial^{|\sigma|+1} \varphi^s}{\partial
  x_{\sigma(a)}}=0
\hspace{0.6cm}
a \in \underline n
 \end{cases}  $$
with $\sigma(a)=(\overline{\sigma}_b)_{b=1}^n$
defined as follows:  $\overline{\sigma}_a=\sigma_a+1$ and   $\overline{\sigma}_b=\sigma_b$ if  $b\neq a$.

Of course, this operation can be iterated. Setting
$\boldsymbol{S}=\boldsymbol{S}^{(0)}$ and
$$
\boldsymbol{S}^{(\ell+1)}= \big(\boldsymbol{S}^{(\ell)}\big)^{(1)}
$$
for every  $\ell\geq 0$, one derives from $\boldsymbol{S}$   a family of linear systems of  partial differential equations  $
\boldsymbol{S}^{(\ell)}\subset J^{q+\ell}(E)$. By definition, $
\boldsymbol{S}^{(\ell)}$ is the \defi[$\ell$-th  prolongation] of $\boldsymbol{S}$.  It is not hard to construct
a morphism of vector bundles
$$ {\Phi}^{(\ell)}\, : \, {J}^{q+\ell}(E) \longrightarrow {J}^{\ell}(F) \, $$
such that
$\boldsymbol{S}^{(\ell)}= {\rm Ker}\, {\Phi}^{(\ell)}$. The morphism   $ {\Phi}^{(\ell)}$ is the  \defi[$\ell$-th prolongation]
of $ {\Phi}$.\medskip \index{Linear differential system!prolongation}

\subsubsection{Formal integrability}

A differential system $\boldsymbol{S}$ is  \defi[regular] \index{Linear differential system!regular} if
$\boldsymbol{S}^{(\ell)}$ is a vector subbundle of $J^{q+\ell}(E)$ for no matter which  $\ell\geq 0$.

Let  $\ell\geq 0$ be fixed. The restriction of the natural projection  $J^{q+\ell+1}(E) \to J^{q+\ell}(E)$
to $\boldsymbol{S}^{(\ell+1)}$ induces  a  natural morphism of \mbox{$\mathcal O(\mathbb C^n,0)$-modules}
$$  \xymatrix@R=0.3cm{
   \boldsymbol{S}^{(\ell+1)}    \ar[rr]^{\overline{\pi}_{q+\ell+1}^{q+\ell}}  &&  \;
  \boldsymbol{S}^{(\ell)}   \, .  }\vspace{0.1cm} $$

By definition,  $\boldsymbol{S}$ is \defi[formally  integrable] \index{Linear differential system!formally integrable}
if for every $\ell \ge 0$ the  morphism $\overline{\pi}_{q+\ell+1}^{q+\ell}$ is surjective. In less precise  terms, a system is formally integrable if given a $q$-jet in $\boldsymbol{S}$
then there exists a $(q+\ell)$-jet in $\boldsymbol{S^{(\ell)}} \subset J^{q+ \ell}(E)$ coinciding with the original one up to order $q$  for no matter which $\ell\ge 0$.
\smallskip

A  \defi[solution] \index{Linear differential system!solution} of  $\boldsymbol{S}$ is a section $\sigma$ of  $\sigma$  of $E$  such that $j^q(\sigma)$ is a section  of $\boldsymbol{S}$. Consequently,
$j^{q+\ell}(\sigma)$ is a section of $\boldsymbol{S}^{(\ell)}$ for every  $\ell\geq 0$. If  $Sol(\boldsymbol{S})$ denotes the space of solutions of
 $\boldsymbol{S}$, then the surjectivity of  $j^q: Sol(\boldsymbol{S}) \rightarrow \boldsymbol{S}$ on the fibers is a sufficient condition
 for the formal integrability of $\boldsymbol{S}$.
\smallskip

In the analytic category the formal integrability of a differential system is particularly
meaningful:  Cartan-K\"{a}hler theorem ensures the convergence of formal solutions.  In the case of linear differential systems,
 the proof of this result boils down to a simple application of the  method of majorants for formal power   series.  \medskip

For  $\ell\geq 0$, let  $f_1,\ldots,f_\ell\in \mathcal O{(\mathbb C^n,0)}$\begin{footnote}{Recall the convention about germs used through out. Here $(\mathbb C^n,0)$
must be thought as a small open subset containing the origin. }\end{footnote} be functions in the maximal ideal $\mathfrak{M}_x$ of $x$, that is  $f_1(x)=\cdots=f_\ell(x)=0$.
Set  $f=f_1 \cdots f_{\ell}$.
Since $ {\rm Sym}^\ell(T^*_x (\mathbb C^n,0) ) $ is generated, as a  $\mathbb C$-vector space, by the
\mbox{$\ell$-symmetric} forms  $(df_1\cdots df_{\ell})(x)$,
 one can define a linear map  $$ \varepsilon^{\ell}_x: {\rm Sym}^\ell(T^*_x (\mathbb C^n,0) )\otimes E \rightarrow
{J}^{\ell}_x(E)$$
 through the formula  $$
\varepsilon^{\ell}_x(df_1\cdot\cdots\cdot df_{\ell}\otimes e)=j^{\ell}( f_1\cdots f_\ell e)(x).$$ Because  $f\in (\mathfrak M_x)^\ell$,
 $\pi_{\ell}^{\ell-1}( j^{\ell}( f e)(x))=0$ for every section $e$ of $E$.  Varying  $x \in (\mathbb C^n,0)$, one deduces an
injective morphism of vector bundles
$$  \xymatrix@C=0.4cm@R=0.2cm{
   {\rm Sym}^\ell(T^*U)\otimes E \ar[rr]^{\quad \varepsilon^{\ell}} &&
 {J}^{\ell}(E) \\
\big(
df_1\cdot\cdots\cdot df_{\ell}\big)\otimes e  \;
\ar@{|->}[rr] &&\; j^{\ell}\big( f_1 \cdots
f_{\ell}\,e\big) \, }$$
which fits into the exact sequence
\begin{equation}
\label{E:ssuuiittee}
0 \rightarrow {\rm Sym}^{\ell+1}(T^*U)\otimes E
\stackrel{\varepsilon^{\ell}}{\longrightarrow}  {J}^{\ell+1}(E)
\stackrel{{{\pi}_{\ell+1}^{\ell}}}{\longrightarrow}
 J^{\ell}(E)
\rightarrow 0 \, .
\end{equation}
 \medskip

For  $\ell\geq 0$, the  \defi[$\ell$-th symbol map] of  $\Phi: J^q(E)\rightarrow F$ is defined as the  composition $$\sigma^{(\ell)}(\Phi)= \Phi^{(\ell)}\circ \varepsilon ^\ell: {\rm Sym}^{q+\ell}(T^*U)\otimes E   \longrightarrow
 {J}^{\ell}(F)\,.$$
The $0$-th symbol map is, by convention,   $\sigma(\Phi)=
\sigma^{(0)}(\Phi)$.
By definition, $\mathfrak S^{(\ell)}={\rm Ker}\,
\sigma^{(\ell)}(\Phi)$  is the \defi[$\ell$-th symbol]  of the system
$\boldsymbol{S}$. It is completely determined by $\boldsymbol{S}$, that is it does not depend  on the presentation of $\boldsymbol{S}$ as the kernel of  $\Phi$.
From the exact sequence (\ref{E:ssuuiittee}), it follows that
$\mathfrak S^{(\ell)}$ fits into  the exact sequence of $\mathbb C$-sheaves
\begin{equation}
\label{E:suitexxx}
0 \rightarrow \mathfrak S^{(\ell)}
\stackrel{\varepsilon}{\longrightarrow}
 \boldsymbol{S}^{(\ell)}
\stackrel{{\pi}}{\longrightarrow}
\boldsymbol{S}^{(\ell-1)}
\end{equation}
for every $\ell \ge 0$, with the convention that $\boldsymbol{S}^{(-1)} = J^{q-1}(E)$.

Notice that  $\mathfrak S^{(\ell)}$ is not a vector bundle a priori: it can be naively thought as a family of vector spaces
 $ \{ \,
\mathfrak S^{(\ell)}(x) \,
\}_{ x \in (\mathbb C^n,0)}$   but the  dimension may vary with  $x$.\smallskip

It can be verified that   $\mathfrak S^{(\ell)}$ is completely determined by  $\mathfrak S$. In particular, if $\mathfrak S=0$ then
$\mathfrak S^{(\ell)}=0$ for every  $\ell \geq 0$.

\begin{thm}
\label{T:formalintegrability}
Let  $\boldsymbol{S}\subset J^q(E)$
be a linear differential system with $\mathfrak S_{\boldsymbol{S}}=0$. If the natural morphism  $
\boldsymbol{S}^{(1)}\rightarrow \boldsymbol{S}$ is surjective then  $\boldsymbol{S} $ is  regular and formally integrable.
\end{thm}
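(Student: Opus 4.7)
The plan is to reduce both regularity and formal integrability of $\boldsymbol{S}$ to the single uniform statement that each natural projection $\pi \colon \boldsymbol{S}^{(\ell+1)} \to \boldsymbol{S}^{(\ell)}$ is a bijection of sheaves of $\mathcal{O}(\mathbb{C}^n,0)$-modules. The hypothesis $\mathfrak{S} = 0$ forces $\mathfrak{S}^{(\ell)} = 0$ for every $\ell \ge 0$ (as already recalled after the exact sequence (\ref{E:suitexxx})), and feeding this vanishing into (\ref{E:suitexxx}) yields for free the injectivity of every projection $\boldsymbol{S}^{(\ell)} \to \boldsymbol{S}^{(\ell-1)}$. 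Thus only surjectivity is at stake, and the assumption of the theorem supplies it in the base case $\ell = 0$; the real work consists in propagating the surjectivity up the tower of prolongations.

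I would then proceed by induction on $\ell$. Composing the previously established bijections, the inductive hypothesis identifies $\boldsymbol{S}^{(\ell)}$ with $\boldsymbol{S}$ via the successive projections, so any element $s \in \boldsymbol{S}^{(\ell)}_x$ may be reinterpreted as a $q$-jet in $\boldsymbol{S}_x$. Moreover, the base-case isomorphism $\boldsymbol{S}^{(1)} \simeq \boldsymbol{S}$ provides a canonical \emph{differentiation rule}, a morphism $D \colon \boldsymbol{S} \to \mathrm{Sym}^{q+1} T^*(\mathbb{C}^n,0) \otimes E$ prescribing the top-order Taylor coefficients of any formal solution starting from its $q$-jet; in this precise sense the system is of \emph{finite type}. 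To produce a lift of $s$ to $\boldsymbol{S}^{(\ell+1)}_x$ one must specify a top-order increment $\tau \in \mathrm{Sym}^{q+\ell+1} T^*_x(\mathbb{C}^n,0) \otimes E$ so that the resulting $(q+\ell+1)$-jet satisfies the $(\ell+1)$-th prolongation of $\Phi$. The constraint on $\tau$ splits into a top-order symbolic part governed by $\sigma^{(\ell+1)}(\Phi)$ and a lower-order part computable from $s$; since $\mathfrak{S}^{(\ell+1)} = \ker \sigma^{(\ell+1)}(\Phi) = 0$, any candidate $\tau$ is uniquely determined, while existence is produced by iterating $D$ through the compatibility $(\boldsymbol{S}^{(a)})^{(b)} = \boldsymbol{S}^{(a+b)}$.

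The main obstacle lies precisely in making this inductive step rigorous without circularity. The tempting shortcut would be to apply the theorem itself to the prolonged system $\boldsymbol{S}^{(\ell)}$, viewed as a system of order $q+\ell$ with vanishing symbol $\mathfrak{S}^{(\ell)} = 0$; but this would require the surjectivity of $(\boldsymbol{S}^{(\ell)})^{(1)} = \boldsymbol{S}^{(\ell+1)} \to \boldsymbol{S}^{(\ell)}$, which is exactly what one is trying to prove. The way around is to work with the whole tower at once, tracking how the differentiation rule $D$ interacts with the successive derivatives of the defining operators $\Phi^\kappa$; this is where the delicate bookkeeping will concentrate. Once the induction closes, every $\boldsymbol{S}^{(\ell)}$ is sheaf-isomorphic to $\boldsymbol{S}^{(1)} \subset J^{q+1}(E)$, inheriting its locally constant rank and hence qualifying as a vector subbundle of $J^{q+\ell}(E)$: this delivers the regularity of $\boldsymbol{S}$, and the accompanying chain of surjections is precisely formal integrability.
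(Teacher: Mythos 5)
You should first be aware that the paper does not actually prove this theorem: its ``proof'' is the single sentence that this is a particular case of a theorem of Goldschmidt, with a reference to Spencer's survey, and the proof is explicitly omitted. So there is no argument in the text to compare yours against; what you have written is the standard Goldschmidt strategy specialized to the finite-type case, and its skeleton is correct. The vanishing $\mathfrak S_{\boldsymbol{S}}=0$ forces $\mathfrak S^{(\ell)}=0$ for all $\ell\ge 0$, the exact sequence (\ref{E:suitexxx}) then makes every projection $\boldsymbol{S}^{(\ell+1)}\to\boldsymbol{S}^{(\ell)}$ injective, and the whole statement reduces to propagating the hypothesized surjectivity of $\boldsymbol{S}^{(1)}\to\boldsymbol{S}$ up the tower of prolongations. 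You are also right that naively applying the theorem to $\boldsymbol{S}^{(\ell)}$ would be circular.

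The genuine gap is the existence half of the inductive lifting step, which you flag honestly (``this is where the delicate bookkeeping will concentrate'') but do not carry out --- and that bookkeeping \emph{is} the theorem. Concretely: given $s\in\boldsymbol{S}^{(\ell)}_x$ with $\ell\ge 1$, applying your differentiation rule $D$ to the order-$(q+\ell)$ data of $s$ produces, for each coordinate direction, a candidate for the order-$(q+\ell+1)$ Taylor coefficients of a lift. One must then verify (i) that these candidates are compatible, i.e.\ that the cross-derivatives commute so that they assemble into an element of $\mathrm{Sym}^{q+\ell+1}T^*_x\otimes E$ rather than merely of $T^*_x\otimes\mathrm{Sym}^{q+\ell}T^*_x\otimes E$, and (ii) that the assembled jet satisfies \emph{all} equations of $\boldsymbol{S}^{(\ell+1)}$, not only those obtained by differentiating once in a single direction. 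The obstruction to (i)--(ii) is a class in the Spencer $\delta$-cohomology group $H^{q+\ell-1,2}$, a subquotient of $\Lambda^2T^*_x\otimes\mathfrak S^{(\ell-1)}$; since $\mathfrak S^{(\ell-1)}=0$ for every $\ell\ge 1$, this obstruction vanishes and the induction closes, with the single nontrivial lift being the one supplied by the hypothesis at $\ell=0$. Until this vanishing is either invoked from Goldschmidt's criterion or checked by hand on the prolonged equations, your proposal is a correct plan rather than a proof. A secondary, smaller point: for the regularity claim you assert that the $\boldsymbol{S}^{(\ell)}$ inherit locally constant rank from $\boldsymbol{S}^{(1)}$, but a sheaf bijection alone does not produce constancy of rank; you should say where the subbundle property of $\boldsymbol{S}^{(1)}$ (equivalently of $\boldsymbol{S}$) comes from in the situation at hand.
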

\begin{proof}
This is a particular case of a theorem by  Goldschmidt, see \cite[Theorem 1.5.1]{Spencer}. The proof is omitted.
\end{proof}

\subsubsection{Spencer operator and connections}\label{S:spencer}

The \defi[Spencer operator] \index{Spencer operator} (for $s,q\geq 0$)
$$
D: \Omega^s \otimes J^{q}(E) \rightarrow \Omega^{s+1} \otimes J^{q-1}(E)
$$
is characterized by the following properties: 
\begin{itemize}
\item[(1)] for section $\sigma$ of $E$,  one has $D(j^q(\sigma))=0$; 
\item[(2)] for every $\omega \in \Omega^j$ and  $\eta\in \Omega^* \otimes J^{q}(E)$, $D(\omega\wedge \eta)=d\omega\wedge \pi_{q}^{q-1}(\eta)+(-1)^j\omega\wedge D(\eta)$.
\end{itemize}

It can be verified that  $D\circ D=0$. Consequently, there is following complex of  $\mathbb C$-sheaves
\begin{equation}
\label{E:spencerC}
0 \rightarrow E
\stackrel{j^q}{\longrightarrow}
 J^q(E)
\stackrel{D}{\longrightarrow}
\Omega^1 \otimes J^{q-1}(E)\stackrel{D}{\longrightarrow} \cdots
\stackrel{D}{\longrightarrow}
\Omega^n \otimes J^{q-n}(E)
\stackrel{}{\rightarrow} 0
\end{equation}
from which one can extract the exact sequence
\begin{equation*}
\label{E:spencerCse}
0 \rightarrow E
\stackrel{j^q}{\longrightarrow}
 J^q(E)
\stackrel{D}{\longrightarrow}
\Omega^1 \otimes J^{q-1}(E).
\end{equation*}

To simplify, suppose the dimension is two. If
$\boldsymbol{S}\subset J^q(E)$
is  a linear differential system over $(\mathbb C^2,0)$ then the
 restriction of the  complex (\ref{E:spencerC}) to $\boldsymbol{S}$ and its prolongations   yields the \defi[first Spencer complex] \index{First Spencer complex} of $\boldsymbol{S}^{(\ell)}$:
\begin{equation*}
0 \to
Sol(\boldsymbol{S})
\stackrel{j^{r}}{\longrightarrow}
  \boldsymbol{S}^{(r)}
\stackrel{D}{\longrightarrow}
\Omega^1 \otimes
\boldsymbol{S}^{(r-1)}
\stackrel{D}{\longrightarrow}
\Omega^2 \otimes
\boldsymbol{S}^{(r-2)}
\to 0\,
\end{equation*}
where  $r = q+ \ell$.
Note that this is not a complex of $\mathcal O$-sheaves but only of $\mathbb C$-sheaves. It is clarifying
to notice the resemblance with the usual de Rham complex
$
0  \rightarrow  \mathbb C \stackrel{d}{\longrightarrow} \Omega^0 \stackrel{d}{\longrightarrow} \Omega_1 \stackrel{d}{\longrightarrow}
\Omega^2 \rightarrow  0 $.\medskip

There is the following commutative diagram
$$ \xymatrix
{
&& &  & 0 \ar[d] \\
& & &
& \Omega^2\otimes
\mathfrak S_{\boldsymbol{S}}
 \ar[d] \\
0 \ar[r] & Sol(\boldsymbol{S}) \ar@{=}[d]\ar[r]^{j^{q+2}} &
\boldsymbol{S}^{(2)} \ar[r]^{D} \ar[d] & \Omega^1
\otimes\boldsymbol{S}^{(1)}
\ar[r]^{D} \ar[d]
 & \Omega^2 \otimes \boldsymbol{S}\ar[d]   \\
0 \ar[r] & Sol(\boldsymbol{S}) \ar[r]^{j^{q+1}} &
\boldsymbol{S}^{(1)}
 \ar[r]^{D} & \Omega^1 \otimes\boldsymbol{S} \ar[r]^{D}
 & \Omega^2 \otimes J^{q-1}(E)
 } $$
with columns and lines being complexes.
\smallskip

Suppose now that   $\mathfrak S_{\boldsymbol{S}}^{(1)}=0$ and that the natural morphism  ${\boldsymbol{S}^{(1)}\rightarrow \boldsymbol{S}}$ is  surjective.
 Together, these two conditions imply the isomorphism $
\boldsymbol{S}^{(1)}\simeq \boldsymbol{S}$.
Then one can define two operators  $\nabla$ et $\nabla'$ of $\mathbb C$-sheaves such that the diagram below commutes.
\begin{equation}
\label{D:grosdiag}
 \xymatrix
 {
&& &  & 0 \ar[d] \\
& & &
& \Omega^2\otimes
\mathfrak S_{\boldsymbol{S}}
 \ar[d] \\
0 \ar[r] & Sol (\boldsymbol{S}) \ar@{=}[d]\ar[r]^{j^{q+2}} &
\boldsymbol{S}^{(2)} \ar[r]^{D} \ar[d] & \Omega^1
\otimes\boldsymbol{S}^{(1)} \ar@{.>}[ru]^{\nabla'}
\ar[r]^{D} \eq[d]
 & \Omega^2 \otimes \boldsymbol{S} \ar[d]  \\
0 \ar[r] & Sol(\boldsymbol{S}) \ar@{=}[d] \ar[r]^{j^{q+1}} &
\boldsymbol{S}^{(1)}\eq[d]\ar@{.>}[ru]^{\nabla}
 \ar[r]^{D} & \Omega^1 \otimes\boldsymbol{S}  \ar[r]^D
 &
\Omega^2\otimes J^{q-1}(E)
 \\
0 \ar[r] & Sol(\boldsymbol{S}) \ar[r]^{j^q} &
\boldsymbol{S}\, .
 &  &
 }
\end{equation}

It is possible to associate to this diagram  a  connection
\begin{align*}
 \nabla: \boldsymbol{S}^{(1)} \rightarrow \Omega^1 \otimes
\boldsymbol{S}^{(1)}
\end{align*}
with  corresponding curvature given by the   $\mathcal O$-linear operator
\begin{align*}
 K_{\boldsymbol{S}}=\nabla'\circ \nabla: \boldsymbol{S}^{(1)} \rightarrow \Omega^2 \otimes \mathfrak S_{\boldsymbol{S}}.
\end{align*}

Since the first line of (\ref{D:grosdiag}) is a complex, it is clear that the surjectivity of  $
\boldsymbol{S}^{(2)} \rightarrow
\boldsymbol{S}^{(1)}$ implies the vanishing of the curvature  $ K_{\boldsymbol{S}}$. From
 Theorem \ref{T:formalintegrability}
 applied to $\boldsymbol{S}^{(1)}$, one deduces that $ K_{\boldsymbol{S}}\equiv 0$ if  $\boldsymbol{S}\simeq \boldsymbol{S}^{(1)}$ is formally integrable. \medskip

Conversely, one can show that  $ K_{\boldsymbol{S}}\equiv 0$ implies that  $\boldsymbol{S}$ is regular and integrable. This non-trivial fact will
not be shown here. Those interested might consult \cite{Spencer}.

\begin{cor}
\label{C:K=0=integrability}
If $\mathfrak S_{\boldsymbol{S}}^{(1)}=0$ and  the natural morphism $
\boldsymbol{S}^{(1)}\rightarrow \boldsymbol{S}$ is  surjective then $\boldsymbol{S}$ is integrable if and only if $K_{\boldsymbol{S}}\equiv 0$.
\end{cor}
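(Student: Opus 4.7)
The plan is to combine the commutative diagram (\ref{D:grosdiag}) — which intertwines the first Spencer complexes of $\boldsymbol{S}$ and $\boldsymbol{S}^{(1)}$ — with Goldschmidt's formal integrability theorem (Theorem \ref{T:formalintegrability}), applied not to $\boldsymbol{S}$ itself but to its first prolongation $\boldsymbol{S}^{(1)}$. The hypothesis $\mathfrak{S}_{\boldsymbol{S}}^{(1)}=0$ is precisely the vanishing of the symbol of $\boldsymbol{S}^{(1)}$, which is exactly what is needed to invoke that theorem.

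The direct implication is essentially the argument already sketched just before the statement. Assume $\boldsymbol{S}$ is formally integrable, so that $\boldsymbol{S}^{(2)}\to\boldsymbol{S}^{(1)}$ is surjective; then any local section $s$ of $\boldsymbol{S}^{(1)}$ admits a lift $\tilde{s}\in\boldsymbol{S}^{(2)}$. Using the identification $\boldsymbol{S}^{(1)}\simeq\boldsymbol{S}$ provided by the hypothesis, one has $\nabla(s)=D(\tilde{s})$ and therefore $\nabla'\circ\nabla(s)=D\circ D(\tilde{s})=0$, since the top row of (\ref{D:grosdiag}) is a complex. Hence $K_{\boldsymbol{S}}\equiv 0$.

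For the converse, the first step is to prove that $K_{\boldsymbol{S}}\equiv 0$ forces the projection $\boldsymbol{S}^{(2)}\to\boldsymbol{S}^{(1)}$ to be surjective. Given $s\in\boldsymbol{S}^{(1)}$, one chooses any local lift $\tilde{s}\in J^{q+2}(E)$ of $s$; the obstruction to modifying $\tilde{s}$ so that it belongs to $\boldsymbol{S}^{(2)}$ lives in a cokernel which, via a diagram chase through (\ref{D:grosdiag}) combined with the exact sequence (\ref{E:suitexxx}), is identified with (the fiber of) $\Omega^2\otimes\mathfrak{S}_{\boldsymbol{S}}$. One then verifies that this obstruction class is exactly $K_{\boldsymbol{S}}(s)$, so the vanishing hypothesis yields the desired lift. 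Once surjectivity of $\boldsymbol{S}^{(2)}\to\boldsymbol{S}^{(1)}$ is established, Theorem \ref{T:formalintegrability} is applied to $\boldsymbol{S}^{(1)}$: its symbol $\mathfrak{S}_{\boldsymbol{S}^{(1)}}$ coincides with $\mathfrak{S}_{\boldsymbol{S}}^{(1)}$, which vanishes by hypothesis, and $(\boldsymbol{S}^{(1)})^{(1)}=\boldsymbol{S}^{(2)}\to\boldsymbol{S}^{(1)}$ is surjective by the previous step. The theorem thus grants regularity and formal integrability of $\boldsymbol{S}^{(1)}$, which in turn promotes every $\boldsymbol{S}^{(\ell+2)}\to\boldsymbol{S}^{(\ell+1)}$ to a surjection for $\ell\geq 0$; combined with the given surjectivity $\boldsymbol{S}^{(1)}\to\boldsymbol{S}$, this is precisely the formal integrability of $\boldsymbol{S}$.

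The main obstacle, and exactly the point that the text defers to \cite{Spencer}, is the identification of $K_{\boldsymbol{S}}(s)$ with the obstruction class for lifting $s$ from $\boldsymbol{S}^{(1)}$ to $\boldsymbol{S}^{(2)}$. Verifying this requires unpacking the definitions of $\nabla$ and $\nabla'$ in diagram (\ref{D:grosdiag}), in particular checking that $\nabla'\circ\nabla$ actually lands in $\Omega^2\otimes\mathfrak{S}_{\boldsymbol{S}}$ rather than merely in $\Omega^2\otimes J^{q-1}(E)$, and reconciling the various non-canonical splittings implicitly used to define $\nabla'$. This is where all the homological content of Spencer–Goldschmidt theory is concentrated, and any complete proof must either reproduce that argument or cite it explicitly.
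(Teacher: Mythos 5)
Your proof is correct and follows essentially the same route as the text: the forward implication is exactly the argument sketched before the statement (formal integrability gives surjectivity of $\boldsymbol{S}^{(2)}\to\boldsymbol{S}^{(1)}$, and since the top row of (\ref{D:grosdiag}) is a complex this forces $K_{\boldsymbol{S}}\equiv 0$), while for the converse the text itself proves nothing and simply cites \cite{Spencer}. Your outline of that converse --- identifying $K_{\boldsymbol{S}}$ with the obstruction to lifting sections of $\boldsymbol{S}^{(1)}$ to $\boldsymbol{S}^{(2)}$ and then applying Theorem \ref{T:formalintegrability} to $\boldsymbol{S}^{(1)}$, whose symbol is $\mathfrak S_{\boldsymbol{S}}^{(1)}=0$ --- is the standard Spencer--Goldschmidt argument, and you correctly isolate the one homological identification that must be taken from the literature.
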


\subsection{The differential system  $\mathcal S_{\mathcal W}$}
The theory sketched above will now be applied to
the differential system which has as  solutions  the integral forms of  abelian relations of a smooth  planar web. \medskip

Let  $\mathcal W=\mathcal W(\omega_1, \ldots,  \omega_k) $ be a smooth $k$-web on $(\mathbb C^2,0)$.
It is harmless to assume that
$$
\omega_i=dx+\theta_i dy
$$
for  $i\in \underline{k}$ and suitable germs $\theta_1, \ldots, \theta_k$.

Let  $v_i=\partial_y-\theta_i \partial_x$  and consider the following differential system
$$ \boldsymbol S_\mathcal W \quad:\qquad \begin{cases}
 \;  \sum_{i=1}^k\varphi_i=0 \, , \\
\;
v_i\big( \varphi_i \big) =0 \quad  i\in \underline{k}
 \;. \end{cases} $$

The space of holomorphic solutions of $\boldsymbol S_{\mathcal W}$ will be denoted by  ${S}ol(\mathcal W)$.
Notice that $Sol(\mathcal W)$ fits into the exact sequence of vector spaces
\[
0 \rightarrow \mathcal A(\mathcal W) \stackrel{\int_0}{\longrightarrow}  Sol(\mathcal W) \longrightarrow \mathbb C^k \stackrel{\sum}{\longrightarrow} \mathbb C \to 0 \, ,
\]
where the arrow from $Sol(\mathcal W)$
to $\mathbb C^k$ is given by evaluation at the origin. In particular
the rank of $\mathcal W$ is maximal if and only if
 $ {S}ol(\mathcal W)$ has dimension
 dimension $k(k-1)/2$. \smallskip

Notice that $\boldsymbol S_{\mathcal W}$ is a differential linear system of first order and
as such can be defined as follows.  If $E$ (resp. $F$) is the trivial bundle of rank
 $k$ (resp. $k+3$) over  $(\mathbb C^2,0)$ then ${S}ol(\mathcal W)$
 can be identified with the sections of  $E$ with first jet belonging to the kernel of the
 map
$$ \Phi=(F_{00},F_{10},F_{01},G_1,\ldots,G_k)\, : \,
 {J}^1(E) \longrightarrow F$$
where
$$ \quad F_\sigma= \sum_{i=1}^k p^i_\sigma \qquad\quad \mbox{ and }
\qquad \quad  G_i=p^i_{01}-\theta_i\, p^i_{10}
 \;  $$
 for $\sigma$ such that $|\sigma|\leq 1$ and $i\in \underline{k}$.

The smoothness of $\mathcal W$ promptly implies that $\Phi$ has constant rank.
The subbundle  $ {\rm Ker}(\Phi) \subset {J}^1(E)$ will be identified with  $ {\boldsymbol S}_{\mathcal W}$ .

\subsubsection{Prolongations of $\boldsymbol S_{\mathcal W}$}
\label{S:diffprolongAW}

For every $\ell \ge 0$, denote by
$ {\boldsymbol S}_{\mathcal W}^{(\ell)} \subset  J^{1+\ell}(E)$ the  $\ell$-th  prolongation of  $ {\boldsymbol S}_{\mathcal W}$.
To write down  $ {\boldsymbol S}_{\mathcal W}^{(\ell)}$,
explicitly, set   $D^\tau=D^{\tau_1}_x \circ
D^{\tau_2}_y$ for  $\tau=(\tau_1,\tau_2) \in \mathbb N^2$, where  $D_x$
and $D_y$ are the total derivatives
$$ D_x=\frac{\partial}{\partial x} + \sum_{i \,, \,
    \sigma  } \, p^i_{\sigma(1)} \,\frac{\partial}{\partial
    p^i_\sigma}\,, \qquad
 D_y=\frac{\partial}{\partial y} + \sum_{i \,, \,
    \sigma  } \, p^i_{\sigma(2)} \,\frac{\partial}{\partial
    p^i_\sigma}\,.
 $$

Using the notation just introduced it is not hard to check
that $ {\boldsymbol S}_{\mathcal W}^{(\ell)}$ is defined by the linear differential equations
$$  D^\tau(F_\sigma)=0 \quad |\sigma|=0,1\qquad \mbox{ and  } \qquad
D^\tau(G_i)=0 \quad i\in\underline{k}\, $$
with $\tau $ satisfying  $|\tau|\leq \ell$. \smallskip

Notice that for every  $\tau , \sigma\in \mathbb  N^2$ and  $i\in\underline{k}$, the identities
$$ D^{\tau}(F_\sigma)=\sum_{i=1}^d p^i_{\sigma+\tau}
\qquad\mbox{and} \qquad
D^{\tau}\big( G_i\big)=p^i_{\tau(2)}-
D^{\tau}\big(
\theta_i\,p^i_{10}\big) \,
$$
hold true.
Since $D_x$ and  $D_y$ are derivations it follows that
\begin{align*}
D^{\tau}\big( G_i\big)=&\, p^i_{\tau(2)}- \sum_{ \alpha+\beta = \tau}
D^{\alpha}\big(\theta_i\big)D^{\beta}\big(p^i_{10}\big) \\
=&\, p^i_{\tau(2)}- \sum_{ \alpha+\beta = \tau}
D^{\alpha}\big(\theta_i\big)\, p^i_{\beta(1)}\\
=&\, p^i_{\tau(2)}-\theta_i\, p^i_{\tau(1)}+
\sum_{\kappa=1}^{ |\tau|-1}
L_\kappa^\tau(i)
\end{align*}
where
$$ L_\kappa^\tau(i) = \sum_{\substack{  |\beta|= \kappa   \\ \alpha+\beta = \tau}}D^{\alpha}\big(\theta_i\big)\, p^i_{\beta(1)}\,.$$

If $\ell\geq 0$ and  $(e_1,\ldots,e_k)$ is a basis of  $E$, then $ \mathfrak S^{(\ell)}$, the $\ell$-th symbol of ${\boldsymbol S}_{\mathcal W}$, is generated by the elements
$$\sum_{\substack{i=1,\ldots,k\\ |\sigma|=1+\ell}} \xi^i_{\sigma}\,\big(dx^{\sigma_1}\cdot dy^{\sigma_2}  \big)\otimes e_i \in
{\rm Sym}^{1+\ell}(T^* (\mathbb C^2,0) )\otimes E$$
subject  to the conditions
$$ (1)_{ab} \qquad \sum_{k=1}^d \xi^k_{ab}=0\quad  \qquad \mbox{ and  } \qquad\quad
(2)_{ab}^i \qquad \xi^i_{a,b+1}=\theta_i\,\xi^i_{a+1,b} $$
for every $i \in \underline k$  and every  $(a,b)\in \mathbb N^2$ satisfying  $a+b=1+\ell$.

The equations $(2)_{ab}^i$ are equivalent to the equations below.
$$\qquad (2')_{ab}^i \qquad \qquad \xi^i_{ab}=(\theta_i)^b\,\xi^i_{\ell+1,0}  . \qquad\qquad
$$
Consequently  $  \mathfrak S^{(\ell)} $ is generated by elements of the form
$$ \sum_{i=1}^k    z^i \,\Big(
\sum_{a+b=1+\ell}(\theta_i)^b \big(dx^a\cdot dy^b \big) \Big) \otimes e_i  $$
where the $z_i$ are subject  to the conditions
$$ \qquad (1')_s\qquad    \sum_{i=1}^k z^i\, (\theta_i)^s=0 \;  $$
for  $ s \in \{ 0,\ldots,\ell+1\}$. These equations can be written is matrix  form:
$$ \big(V_\theta^{(\ell)}\big) \qquad \quad  \begin{pmatrix} 1 & \ldots & 1 \\
\theta_1 & \ldots&  \theta_k  \\
\vdots &  & \vdots \\
(\theta_1)^{\ell+1} & \dots & (\theta_k)^{\ell+1}
\end{pmatrix}\!\!
 \begin{pmatrix}
z^1 \\ \vdots  \\ z^k
\end{pmatrix}= 0. \qquad \quad
$$

Because $\mathcal W$ is smooth, the functions  $\theta_i$ have pairwise distinct values at
every point of $(\mathbb C^2,0)$. Thus $\big(V_\theta^{(\ell)}\big)$ is a linear system
of Van der Monde type, and there are only two possibilities.
\begin{enumerate}
 \item[--] If $\ell\geq k-2$ then  $\big(V_\theta^{(\ell)}\big)$ has no non-trivial solution;  In other terms $\mathfrak S^{(\ell)}=(0)$.
\item[--]
If  $\ell \in \{ 0,\ldots, k-3\}$ then  $\big(V_\theta^{(\ell)}\big)$ is a linear system of rank  $\ell+2$,
therefore  $\mathfrak S^{(\ell)}$ has dimension  $k-\ell-2$, and can be parametrized
explicitly  via the
   VanderMonde  matrix associated to functions $\theta_i$. If
$ {\mathcal
     M}_\theta=(\alpha_{ij})$ is the inverse of  ${\mathcal V}_\theta=((\theta_i)^{j-1})_{i,j=1}^k$ then
$$\mathfrak S^{(\ell)}=
\Big\langle
\sum_{i=1}^k    \alpha_{ik} \,\Big(\!\!\!
\sum_{a+b=1+\ell}\!\!\!
 (\theta_i)^b\,dx^ady^b \Big)\otimes \,e_i
\, \Big| \, k=\ell+3,\ldots,k
\Big\rangle
\, .  $$
\end{enumerate}

\subsection{Pantazi-H\'{e}naut criterium }

Set $\mathscr S_{\mathcal W}$ as the differential system ${\boldsymbol S}_{\mathcal W}^{(k-3)}$.
According to Section  \ref{S:diffprolongAW} it has  the following properties:
\begin{enumerate}
 \item It is a subbundle of $ J^{k-2}(E)$ of rank $k(k-2)/2$;
\item Its symbol $\sigma_{\mathcal W}$ is a sub-line bundle of ${\rm Sym}^{k-2}(T^*)\otimes E$;
\item The map $
\mathscr S_{\mathcal W}^{(1)} \to \mathscr S_{\mathcal W}$ is an isomorphism ( thus $\sigma_{\mathcal W}^{(1)}=0$ ).
\end{enumerate}

The discussion laid down in  Section \ref{S:spencer}
imply the existence of \defi[H\'enaut's connection] \index{H\'{e}naut's connection} of  $\mathcal W$
\begin{align*}
 \nabla_{\mathcal W}:
\mathscr S_{\mathcal W}^{(1)}
\rightarrow \Omega^1 \otimes
\mathscr S_{\mathcal W}^{(1)} \, .
\end{align*}
Its curvature is a  $\mathcal O$-linear operator
\begin{align*}
 \Theta_{\mathcal W} : \mathscr S_{\mathcal W}
^{(1)} \rightarrow \Omega^2 \otimes
\sigma_{\mathcal W}.
\end{align*}
called the \defi[Pantazi-H\'enaut curvature] \index{Pantazi-H\'{e}naut curvature} of the web  $\mathcal W$.

\begin{thm}
\label{T:PantaziHenaut}
The following assertions are equivalent.
\begin{enumerate}
 \item  $\mathcal W$ has maximal rank;
\item Pantazi-H\'{e}naut curvature
$\Theta_{\mathcal W}$ vanishes identically.
\end{enumerate}
\end{thm}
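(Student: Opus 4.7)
The plan is to prove the two implications separately, using the Spencer-type machinery that has just been set up. Both rely on the fact, established right before the statement, that $\mathscr S_{\mathcal W}^{(1)}\to \mathscr S_{\mathcal W}$ is an isomorphism and that the symbol $\sigma_{\mathcal W}^{(1)}$ vanishes. These two conditions are exactly what is needed to invoke Corollary \ref{C:K=0=integrability}, so $\Theta_{\mathcal W}\equiv 0$ is equivalent to the formal (hence, in the analytic category, actual) integrability of $\mathscr S_{\mathcal W}$.

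For the implication $(2)\Rightarrow(1)$, I would argue as follows. If $\Theta_{\mathcal W}\equiv 0$, then $\mathscr S_{\mathcal W}$ is integrable, so the $(k-2)$-jet map
\[
j^{k-2}\colon Sol(\mathcal W)\longrightarrow \mathscr S_{\mathcal W,0}
\]
is surjective on the fibre above the origin, and injective because a holomorphic solution is determined by its $(k-2)$-jet (the system $\boldsymbol S_{\mathcal W}^{(\ell)}$ has zero symbol for $\ell\geq k-2$). Consequently $\dim_{\mathbb C} Sol(\mathcal W)$ equals the rank of $\mathscr S_{\mathcal W}$, and combining this with the exact sequence
\[
0\to \mathcal A(\mathcal W)\to Sol(\mathcal W)\to \mathbb C^{k-1}\to 0
\]
recalled in the text, together with a direct count analogous to the one performed in Section \ref{S:diffprolongAW} using the Van der Monde parametrization of the symbols $\mathfrak S^{(\ell)}$, gives $\dim\mathcal A(\mathcal W)=\pi(2,k)$.

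For the converse $(1)\Rightarrow(2)$, I would start from the general injection $Sol(\mathcal W)\hookrightarrow \mathscr S_{\mathcal W,x}$ valid at every $x\in(\mathbb C^2,0)$. When $\mathrm{rank}(\mathcal W)=\pi(2,k)$, the exact sequence above forces $\dim Sol(\mathcal W)$ to equal the generic rank of $\mathscr S_{\mathcal W}$, so the evaluation maps $Sol(\mathcal W)\otimes \mathcal O\to \mathscr S_{\mathcal W}$ are fibrewise isomorphisms and provide a holomorphic frame $(j^{k-2}\sigma_1,\ldots,j^{k-2}\sigma_N)$ of $\mathscr S_{\mathcal W}$. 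By the defining property of the Spencer operator one has $D(j^{k-2}\sigma_i)=0$ for each $i$; tracing this through the diagram (\ref{D:grosdiag}) that defines $\nabla_{\mathcal W}$, one finds $\nabla_{\mathcal W}(j^{k-2}\sigma_i)=0$. Thus $\mathscr S_{\mathcal W}^{(1)}\cong \mathscr S_{\mathcal W}$ admits a global flat frame, which forces the curvature $\Theta_{\mathcal W}$ to vanish identically.

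The main obstacle, and the only step that is not purely formal, is the comparison between $\dim Sol(\mathcal W)$ and $\mathrm{rank}(\mathscr S_{\mathcal W})$: one has to verify that the rank of the prolonged system $\boldsymbol S_{\mathcal W}^{(k-3)}$, computed by iteratively imposing the Van der Monde conditions $(V_\theta^{(\ell)})$ from Section \ref{S:diffprolongAW}, indeed matches the $\frac{k(k-1)}{2}+\text{(constants)}$ needed for maximal rank. This requires keeping careful track of how much is lost at each prolongation step, and is precisely the computation Pantazi carried out in \cite{Pantazi}; once it is in place, both implications follow cleanly from Corollary \ref{C:K=0=integrability}.
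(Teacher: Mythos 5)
Your proposal is correct and is exactly the argument the paper's machinery is built for: the text states the theorem without writing out a proof, but the intended route is precisely yours --- Corollary \ref{C:K=0=integrability} applied to $\mathscr S_{\mathcal W}$, injectivity of $j^{k-2}$ on $Sol(\mathcal W)$ coming from the vanishing of the symbols $\mathfrak S^{(\ell)}$ for $\ell \ge k-2$, and the identification of $\dim Sol(\mathcal W)$ with $\mathrm{rank}\, \mathscr S_{\mathcal W}$ via the exact sequence through $\mathbb C^{k-1}$. On the one point you rightly single out as non-formal: summing $(k-1)+\sum_{\ell=0}^{k-3}\dim \mathfrak S^{(\ell)} = (k-1)+\tfrac{(k-1)(k-2)}{2}$ gives $\mathrm{rank}\,\mathscr S_{\mathcal W}=\tfrac{k(k-1)}{2}$, which is exactly $\dim\mathcal A(\mathcal W)+(k-1)$ at maximal rank, so your comparison closes up (and the figure $k(k-2)/2$ printed in the text for the rank of $\mathscr S_{\mathcal W}$ is a misprint, not an obstacle to your argument).
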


The above theorem can be understood as a wide generalization from $3$-webs to  arbitrary  $k$-webs of the equivalence between the items  {\bf (b)}  and  {\bf (c)}  of  Theorem \ref{T:hexagonal}.
\medskip

If the  functions $\theta_1,\ldots,\theta_k$ are explicit, one can easily construct an effective algorithm  ( see \cite[Appendice]{PTese} for an implementation
in \verb"MAPLE" ) which computes the curvature  $\Theta_{\mathcal W}$ as defined above. If instead of the functions $\theta_i$ one only knows an explicit expression
of a $k$-symmetric $1$-form defining $\mathcal W$, then the implicit approach implemented by H\'{e}naut  shows  the existence, but without exhibiting, of an algorithm
to compute $\Theta_{\mathcal W}$. Such algorithm is not as easy to derive as in the approach followed here. Albeit,
Ripoll spelled  out and  implemented  in \verb"MAPLE" the corresponding algorithm for $3$, $4$ and
$5$-webs presented in implicit form.

\medskip

An extensive study of H\'{e}naut's  connection $\nabla_{\mathcal W}$
remains to be done. The authors believe that a careful investigation
of its properties may lead to an answer of Problem \ref{Prob:1}. Casale's results \cite{Guy}
on the Galois-Malgrange groupoid of codimension one foliations seems to be
useful in this context.

\subsection{Mih\u{a}ileanu criterium}

Despite the relative easiness to implement the computation of ${\Theta}_{\mathcal W}$, it  seems
very difficult to interpret the corresponding matrix of $2$-forms. Nevertheless, in \cite{Mihaileanu}  Mih\u{a}ileanu,
based on  Pantazi's result and after ingenious  computations, shows that
$$
 K(\mathcal W) = \sum_{\mathcal W_3 < \mathcal W}
K(\mathcal W_3) \, ,
$$
the sum of curvatures of all $3$-subwebs of $\mathcal W$, appears as a linear combination of the coefficients of
Pantazi-H\'{e}naut curvature. From this result he obtained from
Theorem \ref{T:PantaziHenaut} the following necessary condition for rank maximality.

\begin{thm}[Mih\u{a}ileanu criterium]
If $\mathcal W$ is a planar $k$-web of maximal rank then $K(\mathcal W)$ vanishes identically.
\end{thm}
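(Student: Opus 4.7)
The plan is to reduce Mih\u{a}ileanu's criterium to the Pantazi--H\'enaut criterium (Theorem \ref{T:PantaziHenaut}). By that theorem, the maximality of the rank of $\mathcal W$ is equivalent to the identical vanishing of the Pantazi--H\'enaut curvature $\Theta_{\mathcal W}: \mathscr S_{\mathcal W}^{(1)} \to \Omega^2 \otimes \sigma_{\mathcal W}$. Consequently, it would suffice to exhibit $K(\mathcal W) = \sum_{\mathcal W_3 < \mathcal W} K(\mathcal W_3)$ as a specific linear combination of the coefficients of $\Theta_{\mathcal W}$. This is exactly Mih\u{a}ileanu's key observation, and once established the theorem follows at once.

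To carry this out, I would first write $\mathcal W$ in the normalized form $\omega_i = dx + \theta_i\,dy$, as in Section \ref{S:diffprolongAW}, and set up $\nabla_{\mathcal W}$ explicitly in the adapted frame of $\mathscr S_{\mathcal W}^{(1)}$ described there. Using the VanderMonde parametrization of the symbols $\mathfrak S^{(\ell)}$ given in that section, the matrix of $\nabla_{\mathcal W}$ has entries that are rational expressions in the $\theta_i$ and their first derivatives, and the curvature $\Theta_{\mathcal W}$ is then computed as $\nabla_{\mathcal W}\circ\nabla_{\mathcal W}$ modulo the relevant identifications in diagram (\ref{D:grosdiag}). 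In parallel, I would compute each 3-web curvature $K(\mathcal W_{ijl})$ directly from Lemma \ref{L:Wk2} and its coordinate-free counterpart in Section \ref{S:3webs}, producing a universal rational expression in the triples $(\theta_i,\theta_j,\theta_l)$.

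The heart of the argument -- and the main obstacle -- is the explicit matching between these two expressions, namely showing that summing $K(\mathcal W_{ijl})$ over all $\binom{k}{3}$ triples produces a combination of partial derivatives of the $\theta_i$ that is precisely realized by a fixed linear functional applied to the coefficients of $\Theta_{\mathcal W}$. This is where Mih\u{a}ileanu's ingenuity lies: many spurious higher-order terms must cancel after summation over all 3-subwebs, leaving only the terms that actually occur in $\Theta_{\mathcal W}$. Concretely, I would compute the coefficient of each independent monomial $D^\alpha(\theta_i)D^\beta(\theta_j)\,dx\wedge dy$ appearing in $\sum_{i<j<l} K(\mathcal W_{ijl})$ and show, using the VanderMonde identities
\[
\sum_{l \notin\{i,j\}} \frac{1}{(\theta_l - \theta_i)(\theta_l - \theta_j)} = \text{(symmetric function of the }\theta_m\text{)},
\]
that the result is a trace-type functional of the curvature matrix of $\nabla_{\mathcal W}$.

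Once the identity $K(\mathcal W) = L(\Theta_{\mathcal W})$ is established for some $\mathcal O$-linear functional $L$, the conclusion is immediate: if $\mathcal W$ has maximal rank, then $\Theta_{\mathcal W} \equiv 0$ by Theorem \ref{T:PantaziHenaut}, hence $K(\mathcal W) = L(\Theta_{\mathcal W}) \equiv 0$. I expect that the cleanest route to controlling the combinatorics of step two is to work prolongation by prolongation, handling first the case $k=4$ (where $\mathscr S_{\mathcal W}^{(1)}$ has small rank and both sides can be written explicitly in two or three pages of computation) and then inducing on $k$ by restricting $\Theta_{\mathcal W}$ to subwebs, since the Pantazi--H\'enaut curvature is functorial under passage to subwebs in the obvious sense.
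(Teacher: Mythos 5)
Your overall strategy coincides with the paper's: deduce Mih\u{a}ileanu's criterium from the Pantazi--H\'enaut criterium (Theorem \ref{T:PantaziHenaut}) by exhibiting $K(\mathcal W)=\sum_{\mathcal W_3<\mathcal W}K(\mathcal W_3)$ as a linear combination of the coefficients of $\Theta_{\mathcal W}$. Be aware, however, that the paper itself offers no proof of this theorem: it announces that the criterium is discussed \emph{without} proof, attributes the required identity to Mih\u{a}ileanu's ``ingenious computations'' \cite{Mihaileanu}, and refers to \cite{RipollT}, \cite{Henaut2} and \cite{Ripoll} for the sharper statement that $K(\mathcal W)$ is the trace of $\Theta_{\mathcal W}$.

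Measured as a standalone argument, your proposal therefore still has a genuine gap, and it sits exactly where you locate it: the identity $K(\mathcal W)=L(\Theta_{\mathcal W})$ \emph{is} the theorem, and everything you write about it describes how one would try to verify it rather than a verification. You do not produce the functional $L$, do not show that the anticipated cancellations among the monomials $D^\alpha(\theta_i)D^\beta(\theta_j)$ actually occur, and do not identify $L$ with the trace. Two specific warnings about the plan. First, the displayed ``VanderMonde identity'' $\sum_{l\notin\{i,j\}}(\theta_l-\theta_i)^{-1}(\theta_l-\theta_j)^{-1}$ is not a symmetric function of the $\theta_m$ (a partial-fraction decomposition shows it retains an explicit dependence on the pair $i,j$); the genuine bookkeeping goes through the inverse VanderMonde matrix $\mathcal M_\theta=(\alpha_{ij})$ of Section \ref{S:diffprolongAW} and is considerably less clean. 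Second, the proposed induction on $k$ by ``restricting $\Theta_{\mathcal W}$ to subwebs'' is not obviously licit: the Pantazi--H\'enaut connection of a $k$-web lives on the $(k-3)$-th prolongation $\mathscr S_{\mathcal W}=\boldsymbol{S}_{\mathcal W}^{(k-3)}$, whose rank and order both change when one passes to a $(k-1)$-subweb, so there is no evident functoriality to induct with. Until the identity is actually computed, what you have is a correct plan --- the same one the paper gestures at --- rather than a proof.
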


By definition, the $2$-form $K(\mathcal W)$ is the  \defi[curvature of $\mathcal W$]. \index{Web!curvature for $k$-web}
An intrinsic  interpretation of it has been recently
 provided in  \cite[Th\'{e}or\`{e}me 5.2]{RipollT} when $k\le 6$,
and  stated for arbitrary $k$ in \cite[p. 281]{Henaut2}, \cite{Ripoll}. According to them  the curvature $K(\mathcal W)$ is  nothing more than
the trace of Pantazi-H\'{e}naut curvature $\Theta_{\mathcal W}$.

\section{Classification of CDQL webs}\label{S:CDQL}

Although Pantazi-H\'{e}naut criterium together with the linearization criterium presented in Section \ref{S:lliinn}
  provide an algorithmic way to decide if a given planar web is
exceptional or not, the classification problem for these objects is wide open up-to-date.
The only classification results available so far consider the classification of ( very ) particular classes of webs.
Even worse, only two classes have been studied so far.  The first class are the germs of $5$-webs on
 $(\mathbb C^2,0)$ of the form $\mathcal W(x,y,x+y,x-y,u(x) + v(y))$  mentioned in Example \ref{E:exabel}
of Chapter \ref{Chapter:AR}. The second class of exceptional webs so far  classified are the CDQL\begin{footnote}
{
The acronym CDQL stands for {\bf C}ompletely {\bf D}ecomposable {\bf Q}uasi-{\bf{L}}inear.
}
\end{footnote} webs on
compact complex surfaces.

\subsection{Definition}

Linear webs are classically defined as the ones for which all the leaves  are open subsets of lines. Here we will adopt the following global definition.
A web $\mathcal W$ on a compact complex surface $S$ is \defi[linear] \index{Web!linear (global)}  if (a) the universal covering of $S$ is an open subset $\widetilde S$
of $\mathbb P^2$; (b) the group of deck transformations acts on $\widetilde S$ by automorphisms of $\mathbb P^2$; and (c) the pull-back of
$\mathcal W$ to  $\widetilde S$ is linear in the classical sense.

A \defi[CDQL  $(k+1)$-web] \index{Web!CDQL} on a compact complex surface $S$ is,
by definition, the superposition of $k$ linear foliations and one non-linear foliation.
It can be verified ( see \cite{CDQL} )  that the only compact complex surfaces carrying  CDQL $(k+1)$-webs when
$k \ge 2$ are  are the projective plane, the complex tori and the Hopf surfaces. Moreover
the only Hopf surfaces admitting  four distinct linear foliations  are the primary Hopf surfaces  $H_{\alpha}$,  $|\alpha| > 1$. Here $
  H_{\alpha}$ is the quotient of $\mathbb C^2 \setminus \{ 0\}$ by the map $(x,y) \mapsto (\alpha x, \alpha y)$.

The linear foliations on  complex tori are pencils of parallel lines on theirs universal coverings. The ones
on Hopf surfaces are either pencils of parallels lines or the pencil of lines through the origin of $\mathbb C^2$. In particular
a completely decomposable linear web on compact complex surface is algebraic\begin{footnote}{Beware that algebraic here means that they are locally
dual to plane curves. In the cases under scrutiny  they are dual to  products of lines.   }\end{footnote} on its universal covering.

\subsection{On the projective plane}\label{S:CDQLP2}
The classification of exceptional CDQL webs on the projective plane is summarized in the following result.

\begin{thm}\label{T:CDQL2} Up to projective
automorphisms,  there are exactly four infinite families  and thirteen  sporadic  exceptional CDQL webs on  ${\mathbb P}^2$.
\end{thm}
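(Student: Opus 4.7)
\medskip

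The plan is to reduce the classification problem to a finite combinatorial/analytic search and then to carry out that search explicitly. Fix a CDQL $(k+1)$-web
\[
\mathcal W \;=\; \mathcal L_1 \boxtimes \cdots \boxtimes \mathcal L_k \boxtimes \mathcal F
\]
on $\mathbb P^2$, where each $\mathcal L_i$ is the pencil of lines through a point $p_i \in \mathbb P^2$ and $\mathcal F$ is a non\-linear foliation. The first observation, which I would record as a preliminary lemma, is that as soon as $k\ge 4$ a CDQL web of maximal rank is automatically exceptional. Indeed, if some biholomorphism linearized $\mathcal W$ then the linear $k$-subweb $\mathcal L_1\boxtimes\cdots\boxtimes\mathcal L_k$ would have two linearizations, and Corollary~\ref{C:linutil} would force the biholomorphism to be the germ of a projective automorphism; but a projective automorphism cannot linearize the non\-linear foliation $\mathcal F$. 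Hence the whole problem reduces to the classification, up to $\mathrm{PGL}_3(\mathbb C)$, of CDQL webs of maximal rank with $k\ge 4$.

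\medskip

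The next step is to extract strong structural constraints on the pair $(\{p_i\},\mathcal F)$ from the vanishing of the Pantazi--H\'enaut curvature $\Theta_{\mathcal W}$. Rather than attack $\Theta_{\mathcal W}$ directly, I would first apply Mih\u{a}ileanu's criterium to every $3$-subweb $\mathcal L_i\boxtimes\mathcal L_j\boxtimes\mathcal F$: each such subweb must be hexagonal. Translating hexagonality (Theorem~\ref{T:hexagonal}) in the presence of two pencils of lines gives, for each pair $(i,j)$, an explicit second-order PDE on a first integral of $\mathcal F$. Combining these PDEs as $(i,j)$ varies, together with the known classification of hexagonal $3$-webs whose two leaves are pencils of lines, shows that $\mathcal F$ must be one of a short list of ``classical'' foliations: an algebraic foliation of a very restricted type (pencils of conics through the $p_i$, pencils of cubics, \ldots), or a transcendental one admitting an infinitesimal automorphism of the type studied in Section~\ref{S:iinnff}. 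This is where Theorem~\ref{T:1} enters essentially: exceptional CDQL webs with symmetries arise from superposing $\mathcal F_v$ with a linear web invariant under a $\mathbb C^*$-action, exactly as in Theorem~\ref{T:geral}, and this gives the four infinite families.

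\medskip

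The third and most delicate step is an \emph{a priori} bound on $k$ and on the configurations of the $p_i$. The rank of the $k$-web $\mathcal L_1\boxtimes\cdots\boxtimes\mathcal L_k$ is exactly the number of abelian relations coming from plane algebraic curves of degree $\le k-2$ passing through the $p_i$ with prescribed multiplicities, and the hexagonality conditions above force the points $p_i$ to lie on a curve of low degree (typically a conic or a cubic). Using Chern's bound $\mathrm{rank}(\mathcal W)\le \pi(2,k+1)=k(k-1)/2$ together with Castelnuovo's estimate applied to that auxiliary curve, one obtains that the deficiency between $\pi(2,k+1)$ and $\pi(2,k)$ --- which equals $k-1$ and must be realized by abelian relations involving $\mathcal F$ --- cannot be achieved once $k$ is sufficiently large unless the configuration degenerates into one already on the list. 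This bounds $k$ effectively (the maximum turns out to be small, of the order of a dozen), leaving only finitely many combinatorial types of configurations to examine.

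\medskip

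The final step is a finite enumeration: for each admissible $k$ and each admissible combinatorial type of configuration $\{p_1,\ldots,p_k\}$ up to $\mathrm{PGL}_3(\mathbb C)$, solve the PDE system coming from Mih\u{a}ileanu's criterium for $\mathcal F$, and then check the full Pantazi--H\'enaut condition $\Theta_{\mathcal W}\equiv 0$ using Theorem~\ref{T:PantaziHenaut}. The infinite families correspond to one-parameter (or higher) configurations preserved by a $\mathbb C^*$-action (as in Section~\ref{S:iinnff}), while the thirteen sporadic examples appear as isolated solutions of the overdetermined system. The hard part, by far, is step three: proving the \emph{a priori} bound on $k$ and showing that no exotic non\-linear foliation $\mathcal F$ sneaks in outside the list already produced by the symmetry analysis. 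Everything else --- the reduction to maximal rank, the extraction of the PDE system, and the final verification --- is either conceptual (relying on Theorems~\ref{T:hexagonal}, \ref{T:1}, \ref{T:PantaziHenaut} and Corollary~\ref{C:linutil}) or a guided, finite computation.
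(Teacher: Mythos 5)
Your overall strategy---reduce to maximal rank via Corollary~\ref{C:linutil}, then exploit Mih\u{a}ileanu's criterium and the theory of webs with infinitesimal automorphisms---is in the right spirit, and your first step (a maximal rank CDQL web with $k\ge 4$ is automatically non-linearizable, hence exceptional) is correct and is exactly how the paper handles that point. But there is a genuine error at the heart of your second step. Mih\u{a}ileanu's criterium says that the \emph{sum} $K(\mathcal W)=\sum_{\mathcal W_3<\mathcal W}K(\mathcal W_3)$ over all $3$-subwebs vanishes; it does \emph{not} say that each $3$-subweb is hexagonal. Your deduction ``each such subweb must be hexagonal'' is false and would wrongly exclude most of the webs on the final list: for instance $\mathcal A_{III}^2=\mathcal W(x,y,x+y,x-y,xy)$ is exceptional, yet its $3$-subweb $\mathcal W(x,x+y,xy)$ has non-vanishing curvature (compute $K$ via Lemma~\ref{L:Wk2} in the coordinates $(x,w)=(x,x+y)$, where the third first integral becomes $xw-x^2$). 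So the PDE system you propose to extract is the wrong one, and the whole subsequent reduction collapses.

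The mechanism the paper actually uses is different and cannot be bypassed by your sketch. The only robust consequence of $K(\mathcal W)\equiv 0$ that one can localize is that $K(\mathcal W)$ has no poles, and the paper's Theorem~\ref{TT:curvatura} converts this into the statement that each component of $\mathrm{tang}(\mathcal F,\mathcal L_i)$ not in the discriminant must be invariant either by $\mathcal F$ or by the barycenter foliation $\beta_{\mathcal F}$ of the residual web. This forces a detailed study of the barycenter foliations of completely decomposable linear webs (they are Riccati with respect to the pencil of lines through the base point, with prescribed singularities and monodromy), and the effective bound comes from the $\ell$-polar map of $\mathcal F$ along a line $\ell$ containing at least three base points, to which Riemann--Hurwitz applies and yields $\deg\mathcal F\le 4$ and $k_\ell\le 7-\deg\mathcal F$. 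Your proposed alternative bound---Castelnuovo's estimate applied to an auxiliary low-degree curve through the $p_i$---has no justification: nothing in the hypotheses forces the base points onto such a curve, and the rank-deficiency count you sketch does not close without the barycenter/polar-map analysis. As written, the proposal does not yield the classification.
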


To describe the exceptional webs mentioned in Theorem \ref{T:CDQL2}, the notation of \cite{CDQL} will be adopted. If $\omega$ is a rational $k$-symmetric differential $1$-form then
$[\omega]$ denotes the $k$-web on $\mathbb P^2$  induced by it.

In  suitable affine coordinates $(x,y)
\in \mathbb C^2 \subset \mathbb P^2$, the four infinite families are
\begin{align*}
\mathcal A_I^k =& \;  \big[ (dx^k - dy^k)\big] \boxtimes \big[ d(xy)  \big] && \mbox{where }\,  k\ge 4 \,; \\
\mathcal A_{II}^k = &\;  \big[(dx^k - dy^k)\, (xdy - ydx) \big] \boxtimes \big[d(xy) \big] &&   \mbox{where }\, k\ge 3 \,;\\
 \mathcal A_{III}^k = &\;  \big[(dx^k - dy^k)\, dx \, dy\big] \boxtimes \big[ d(xy)  \big] && \mbox{where }\, k\ge 2\, ;\\
\mathcal A_{IV}^k  = & \; \big[ (dx^k - dy^k)\,dx \, dy \, (xdy - ydx) \big] \boxtimes \big[ d(xy) \big]     && \mbox{where }\,  k\ge 1.
\end{align*}

The diagram below shows how these webs relate to each other in terms of inclusions for a fixed $k$. Moreover if $k$ divides $k'$ then
$\mathcal A_I^k, \mathcal A_{II}^k,\mathcal A_{III}^k,\mathcal A_{IV}^k$ are subwebs of  $\mathcal A_I^{k'}, \mathcal A_{II}^{k'},
\mathcal A_{III}^{k'},\mathcal A_{IV}^{k'}$ respectively.
\begin{center}
 \begin{tikzpicture}[shorten >=1pt,->]
  \tikzstyle{vertex}=[circle,fill=gray!25,minimum size=18pt,inner sep=0pt]
  \foreach \name/\angle/\text in {P-1/180/I, P-2/90/II , P-3/270/III , P-4/0/IV    }
    \node[vertex,xshift=5cm,yshift=.5cm] (\name) at (\angle:1.3cm) {\footnotesize{$\mathcal A_{\text}^k$}};
\foreach \from/\to in {1/2,1/3,1/4,2/4,3/4}
    \draw (P-\from) -- (P-\to);
\end{tikzpicture}
\end{center}

All the webs above are invariant by the action $t\cdot(x,y) = (tx,ty)$ of $\mathbb
C^*$ on $\mathbb P^2$. Among the
thirteen sporadic examples of exceptional CDQL webs on the projective plane, seven (four $5$-webs, two
$6$-webs and one $7$-web) are also invariant by the same  $\mathbb
C^*$-action. They are:
\[
\begin{array}{lclcl}
\mathcal A_5^a &=& \big[dx\, dy\, (dx+dy)\, (xdy - ydx)  \big] &\boxtimes& \big[d\big(xy(x+y)\big) \big]\, ;
\vspace{0.10cm}
 \\
\mathcal A_5^b &=& \big[
dx\, dy\, (dx+dy)\, (xdy - ydx) \big] &\boxtimes&
\left[d\big(\frac{xy}{x+y}\big)  \right] ;\vspace{0.10cm}
\\
\mathcal A_5^c &=& \big[
dx\, dy\, (dx+dy)\, (xdy - ydx)\big] &\boxtimes&
\left[d\big(\frac{x^2 + xy + y^2}{xy\,(x+y)}\big)  \right] \, ; \vspace{0.10cm}
\\
\mathcal A_5^d &=& \big[  dx\, (dx^3 + dy^3) \big] &\boxtimes&
\left[d\big(x(x^3+y^3) \big)  \right] ;
\vspace{0.10cm} \\
\mathcal A_6^a &=& \big[
 dx\, (dx^3 + dy^3)\, (xdy - ydx) \big]
&\boxtimes& \left[d\big(x(x^3+y^3) \big)  \right] ; \vspace{0.10cm}\\
\mathcal A_6^b &=& \big[ dx\, dy\, (dx^3 + dy^3) \big] &\boxtimes& \left[d(x^3+y^3)\right] ; \vspace{0.10cm}\\
\mathcal A_7 &=& \big[  dx\, dy\, (dx^3 + dy^3)\, (xdy - ydx) \big] &\boxtimes&\left[d(x^3+y^3)  \right] .
\end{array}
\]

Four of the remaining six sporadic exceptional CDQL webs {(one $k$-web for each $k\in \{5,6,7,8\}$)}  share the same non-linear foliation
$\mathcal F$: the pencil of conics through four points in general
position.  They all have been previously known (see  \cite{Robert}).
\[
\begin{array}{lclcl}
\mathcal B_5 &=& \left[ dx \, dy  \, d\big(\frac{x}{1-y}
\big) \,  d\big(\frac{y}{1-x} \big) \right] &\boxtimes&
\left[ d
\big(\frac{xy }{(1-x)(1-y)}\big) \right];
\vspace{0.10cm}\\
\mathcal B_6 &=&
\mathcal B_5
 &\boxtimes& \big[ d \left(x + y  \right) \big] ; \vspace{0.10cm} \\
\mathcal B_7 &=&
\mathcal B_6
&\boxtimes&
\left[ d \big(\frac{x}{y} \big) \right];  \vspace{0.10cm}   \\
\mathcal B_8 &=&
\mathcal B_7
&\boxtimes&
\left[ d \big(\frac{1-x}{1-y} \big) \right] .
\end{array}
\]

The last two sporadic CDQL exceptional webs (the $5$-web $\mathcal H_5$ and the  $10$-web $\mathcal H_{10}$) of Theorem \ref{T:1} also
share the same non-linear foliation: the \defi[Hesse pencil of cubics]. \index{Hesse pencil} Recall that
this pencil is the one generated by a smooth cubic and its
Hessian and that it is unique up to automorphisms of $\mathbb P^2$. Explicitly (with   $\xi_3=\exp(2i\pi/3)$):
{\small{\begin{align*}
\mathcal H_5 =\; & \Big[ (dx^3 + dy^3 ) \, d\Big(\frac{x}{y}
\Big)  \Big] \boxtimes \Big[ d \Big(\frac{x^3 + y^3 + 1
}{xy}\Big) \Big] \vspace{0.15cm} \\
  \mathcal H_{10} = \; & \Big[  (dx^3 + dy^3 ) \prod_{k=0}^2
\left( d\Big(\frac{y-\xi_3^k }{x } \Big) \cdot
d\Big(\frac{x-\xi^k_3 }{y } \Big) \right)\Big] \boxtimes \Big[
d \Big(\frac{x^3 + y^3 + 1 }{xy}\Big) \Big]\, .
\end{align*}}}
The $10$-web $\mathcal H_{10}$ is better described synthetically: it is  the superposition of Hesse pencil of cubics and of the
nine pencil of lines with base points at the base points of Hesse pencil.
It shares a number of features with Bol's web $\mathcal B_5$. For instance, they
both have a huge group of birational automorphisms (the symmetric group ${\mathfrak S}_5$ for
$\mathcal B_5$ and Hesse's group $G_{216}$ for ${\mathcal H}_{10}$),  and their abelian relations
can be  expressed in terms of  logarithms and dilogarithms.

\medskip

Because they have parallel $4$-subwebs whose slopes have non real cross-ratio  the webs $\mathcal A_{III}^k$ for $k \ge 3$, $\mathcal A_{IV}^k$ for
$k \ge 3$, $\mathcal A_5^d, \mathcal A_6^a, \mathcal A_6^b$ and $\mathcal A_7$
do not admit  real models. The web $\mathcal H_{10}$ also does not admit a real model. There are number of ways to verify this fact.
One possibility  is to observe  that  the lines passing through two
of the nine  base points always contains a third and notice that this contradicts  Sylvester-Gallai Theorem \cite{Coxeter}:  for every finite set
of non collinear points in $\mathbb P^2_{\mathbb R}$ there exists a line containing exactly two points of the set.
All the other exceptional CDQL webs on the projective plane admit real models. For a sample see  Figure \ref{F:cdqlfig}.

\begin{figure}[ht]
\begin{center}
\begin{tabular}{ccc}
\includegraphics[width=2.9cm,height=2.9cm]{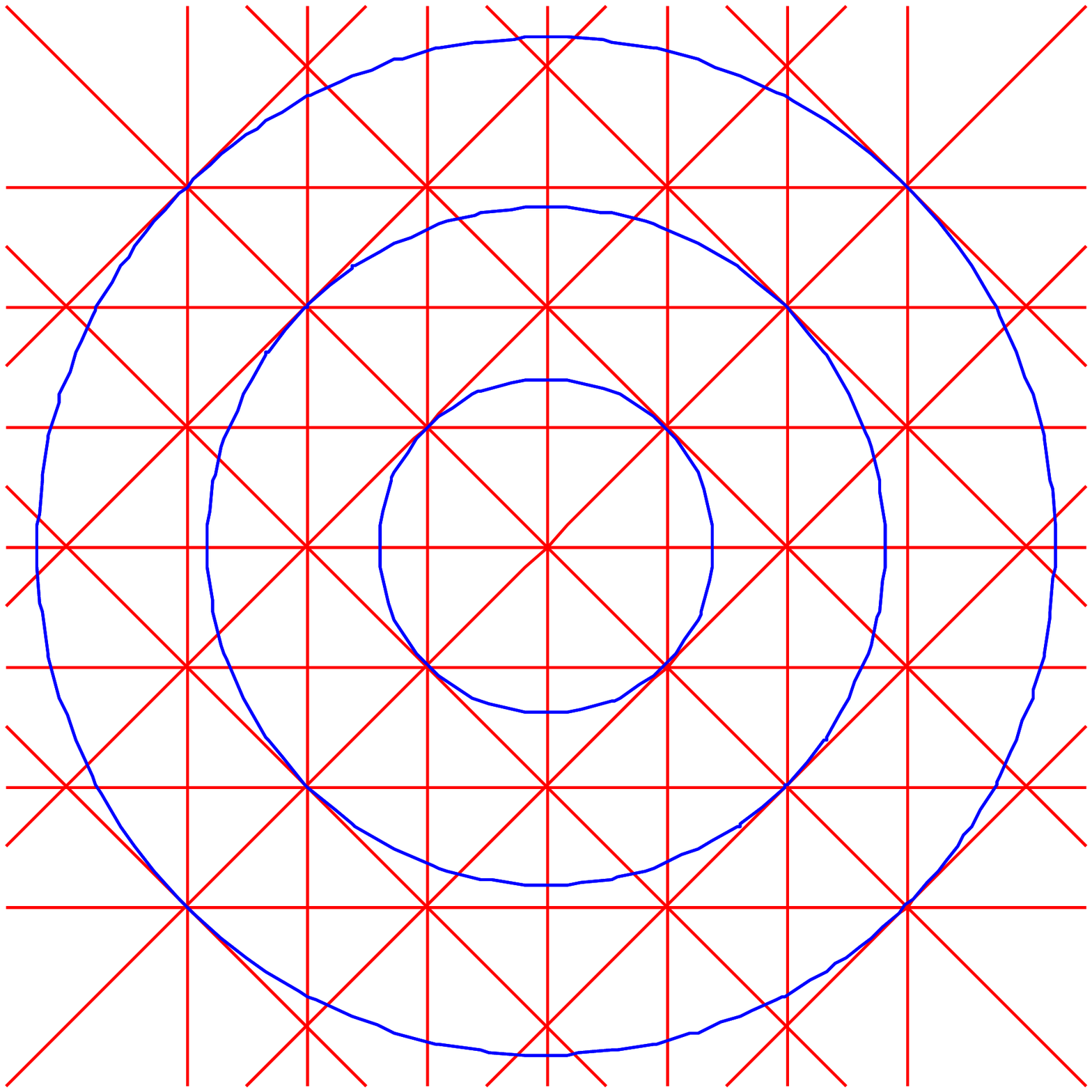} &  \includegraphics[width=2.9cm,height=2.9cm]{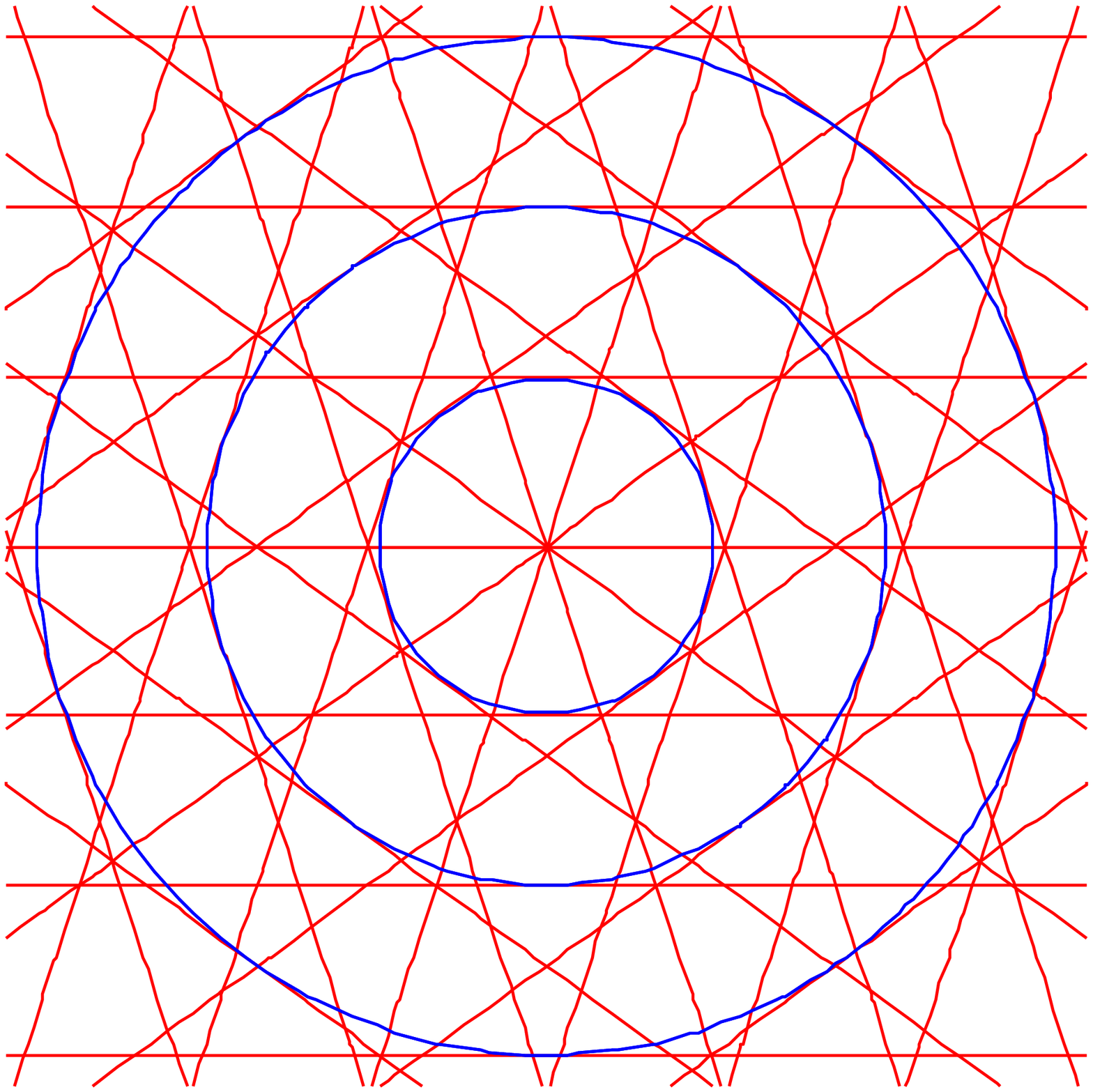} & \includegraphics[width=2.9cm,height=2.9cm]{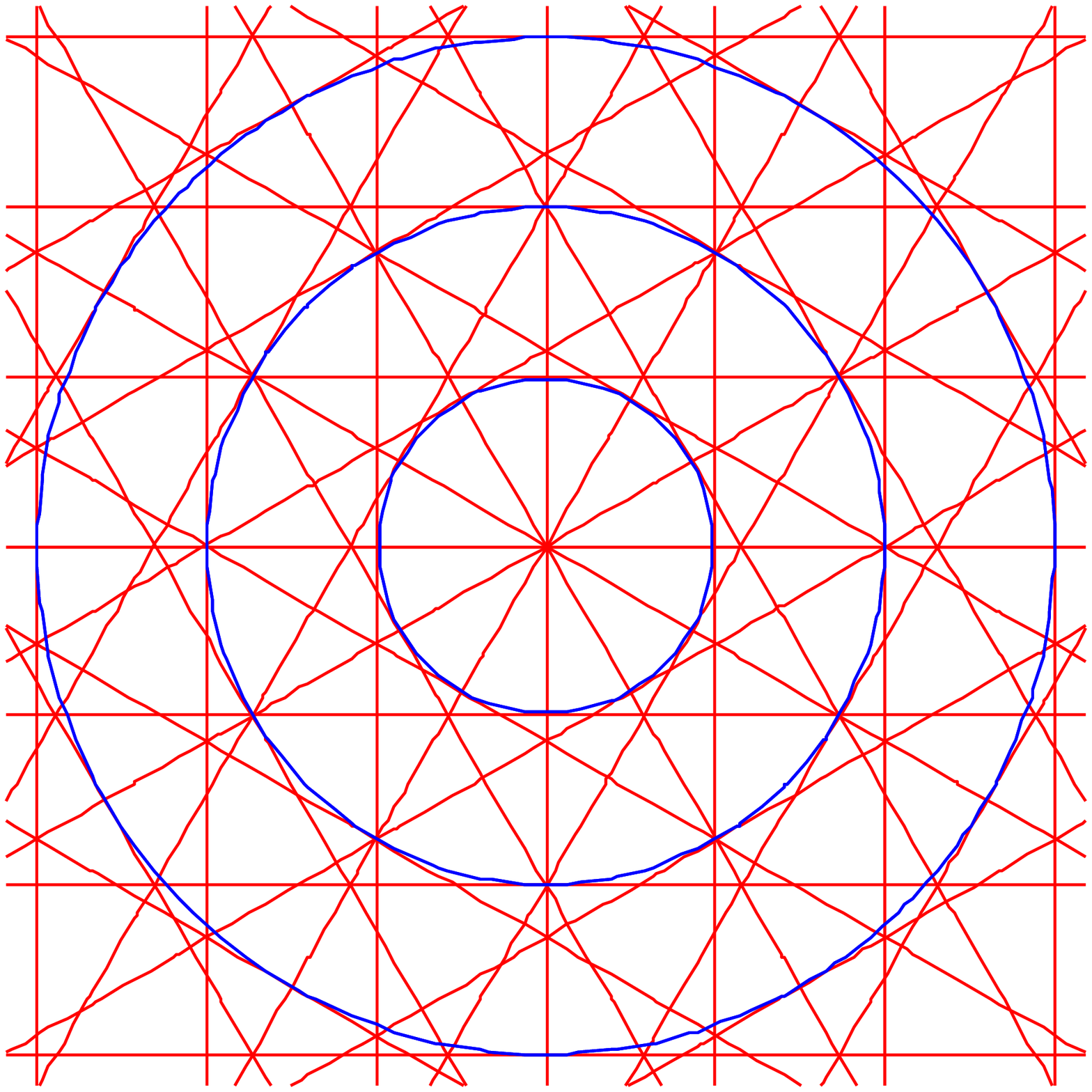} \\ 
 \includegraphics[width=2.9cm,height=2.9cm]{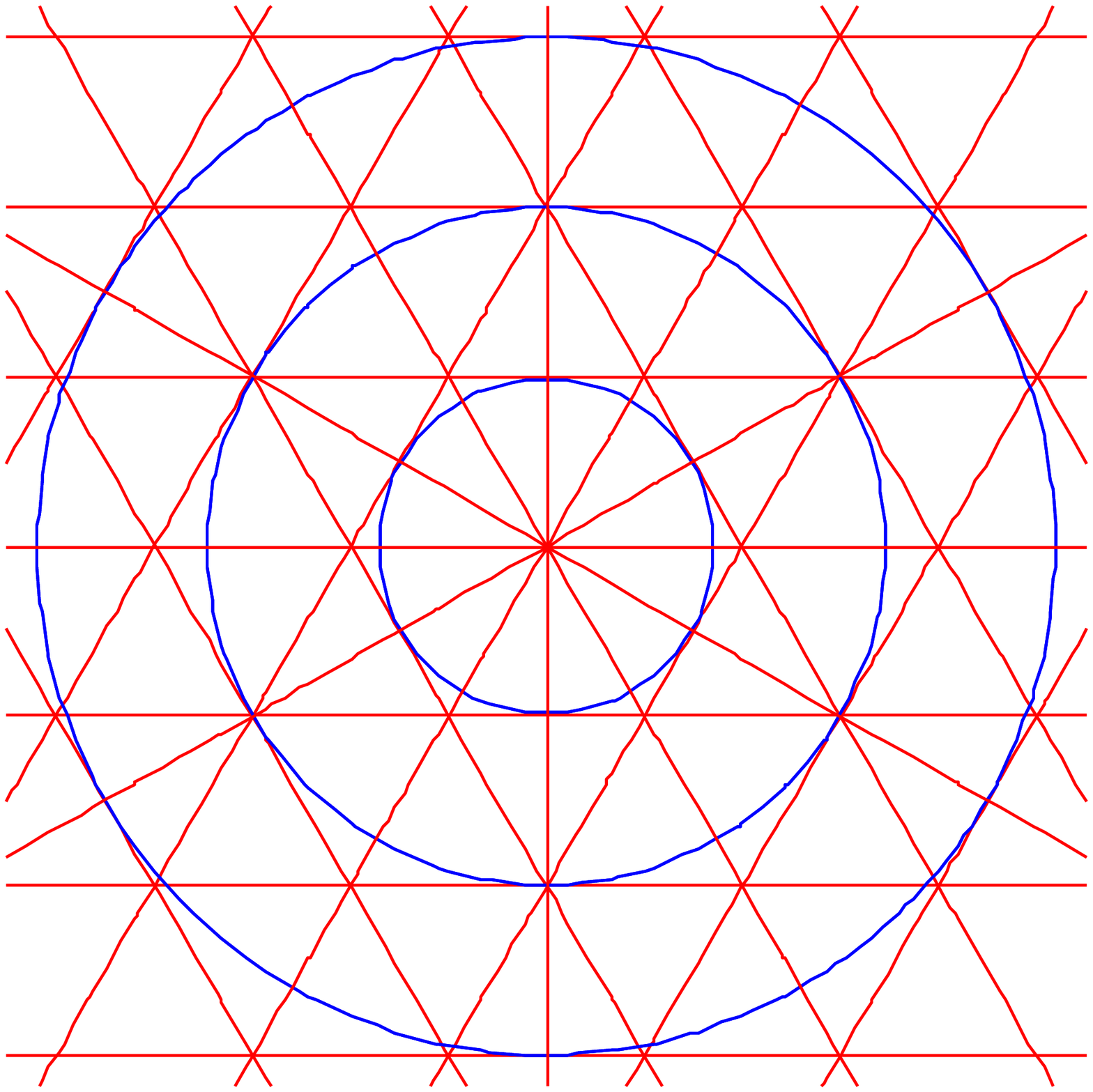} &   \includegraphics[width=2.9cm,height=2.9cm]{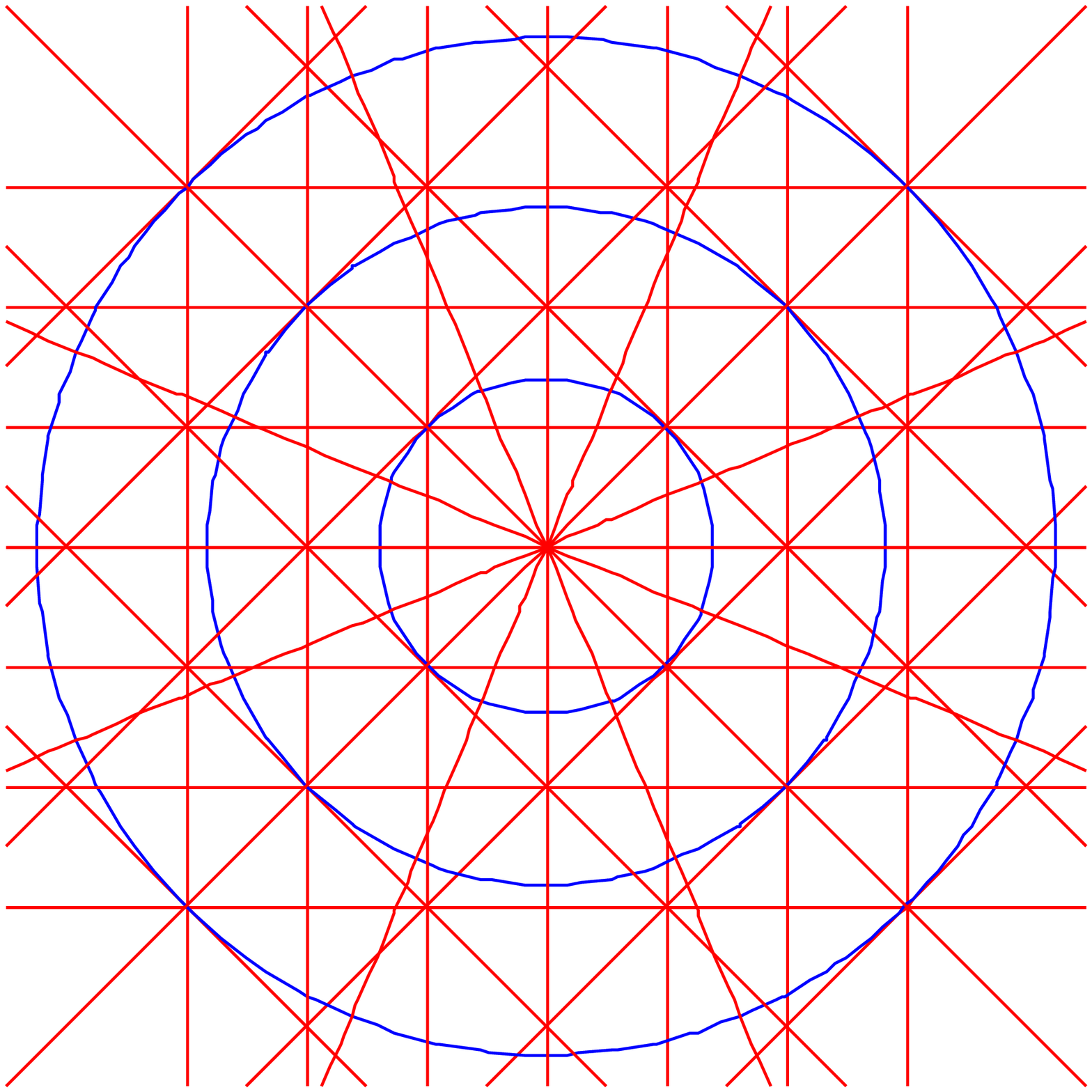} & \includegraphics[width=2.9cm,height=2.9cm]{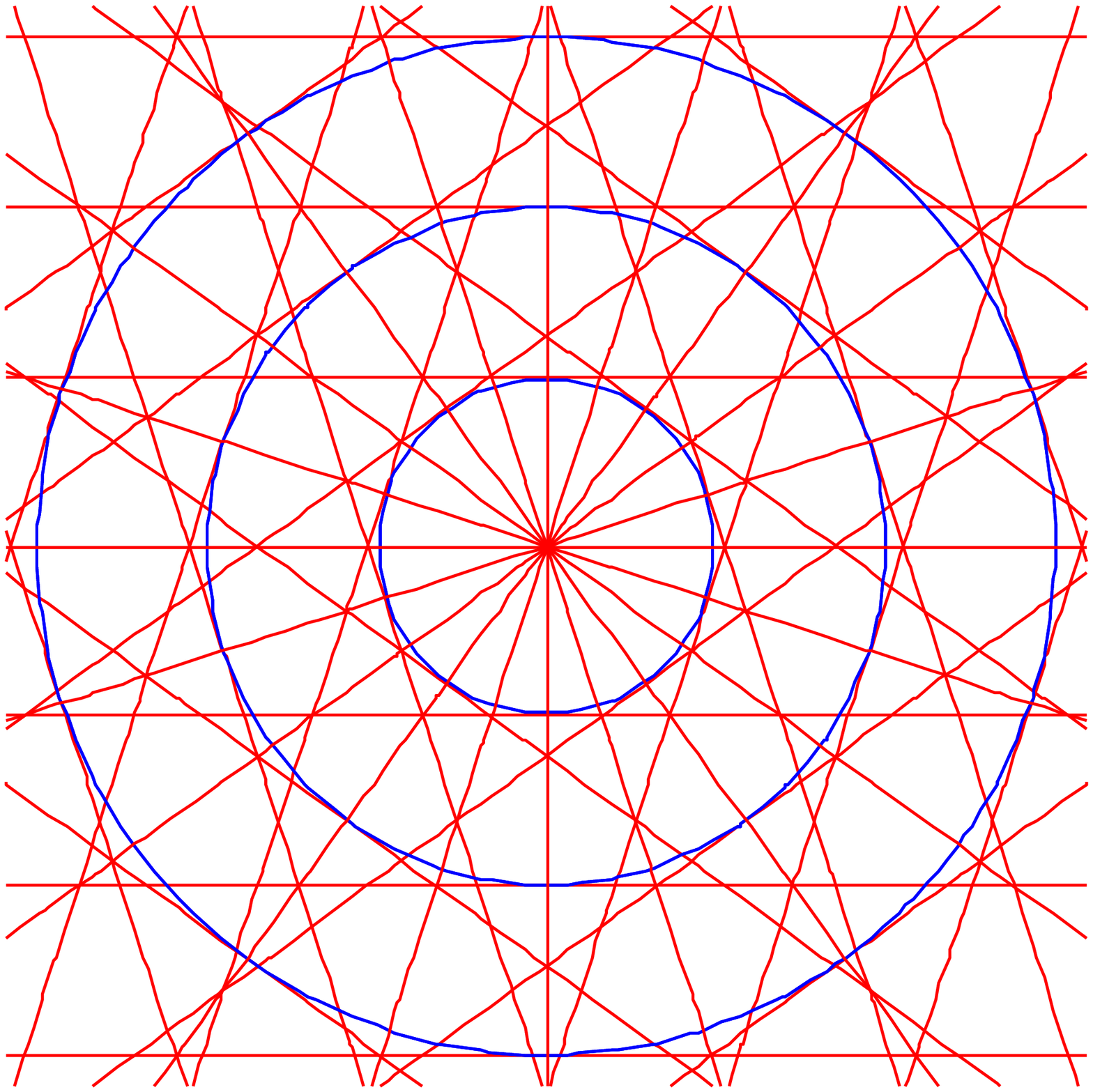} \\ 
 \includegraphics[width=2.9cm,height=2.9cm]{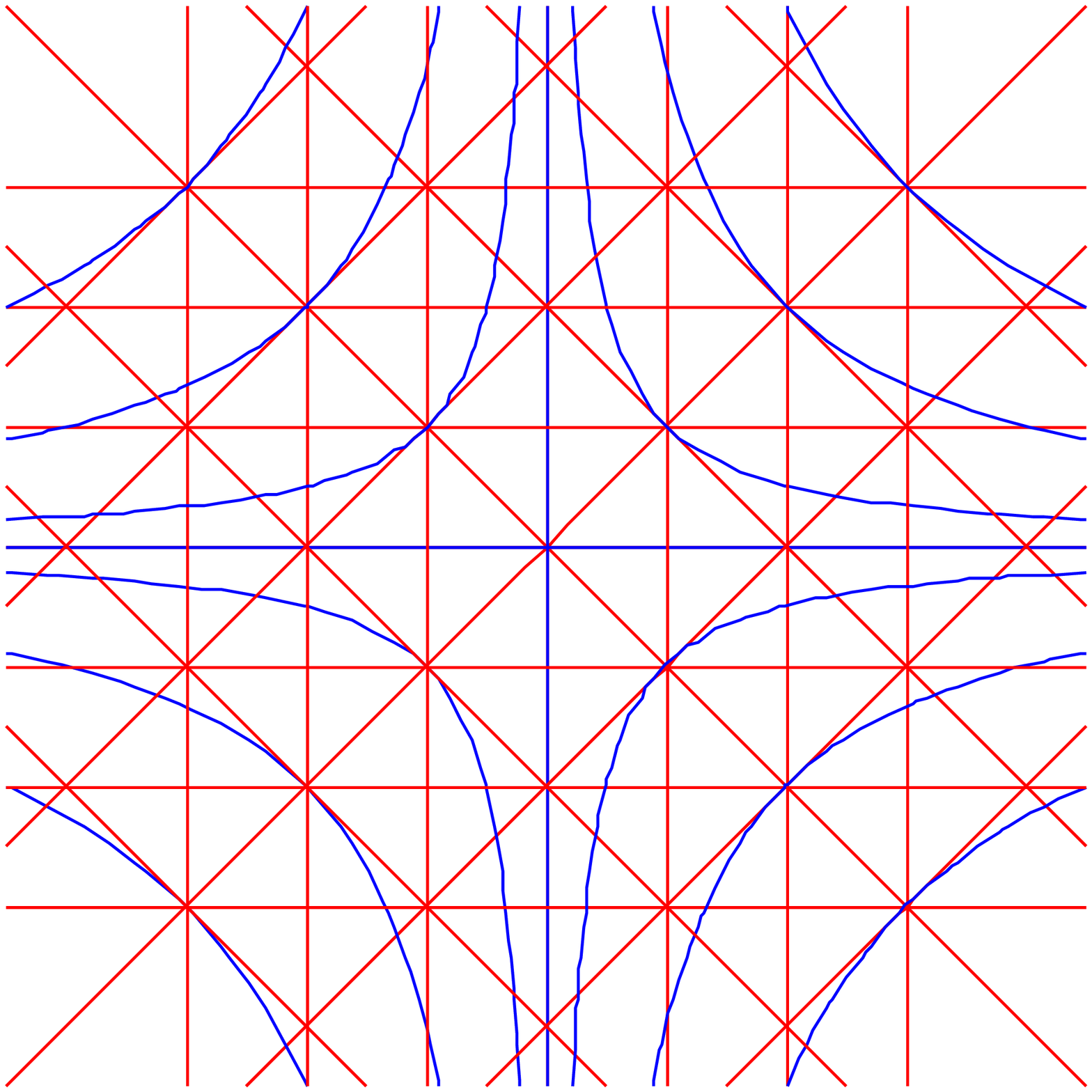}&\includegraphics[width=2.9cm,height=2.9cm]{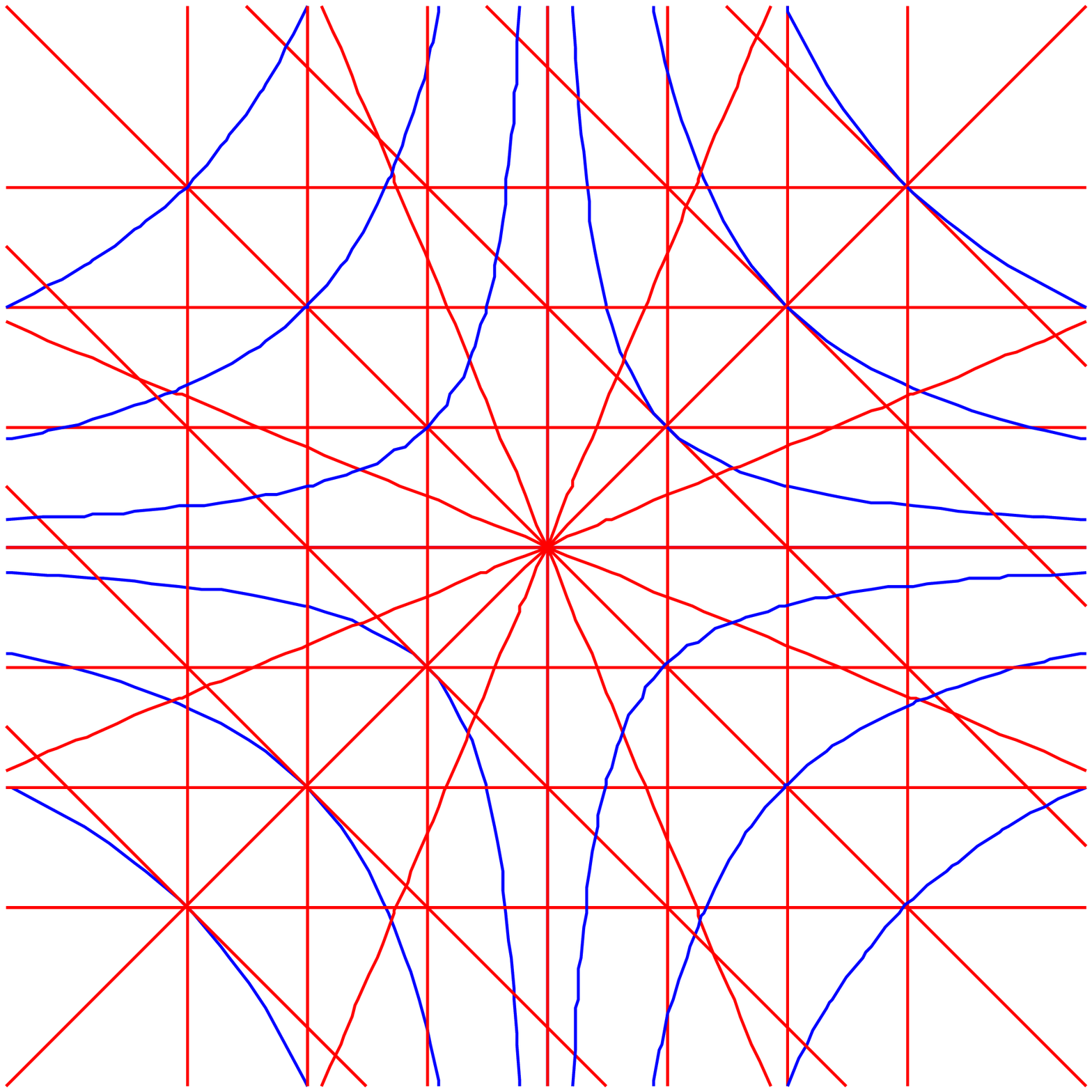}&
\includegraphics[width=2.9cm,height=2.9cm]{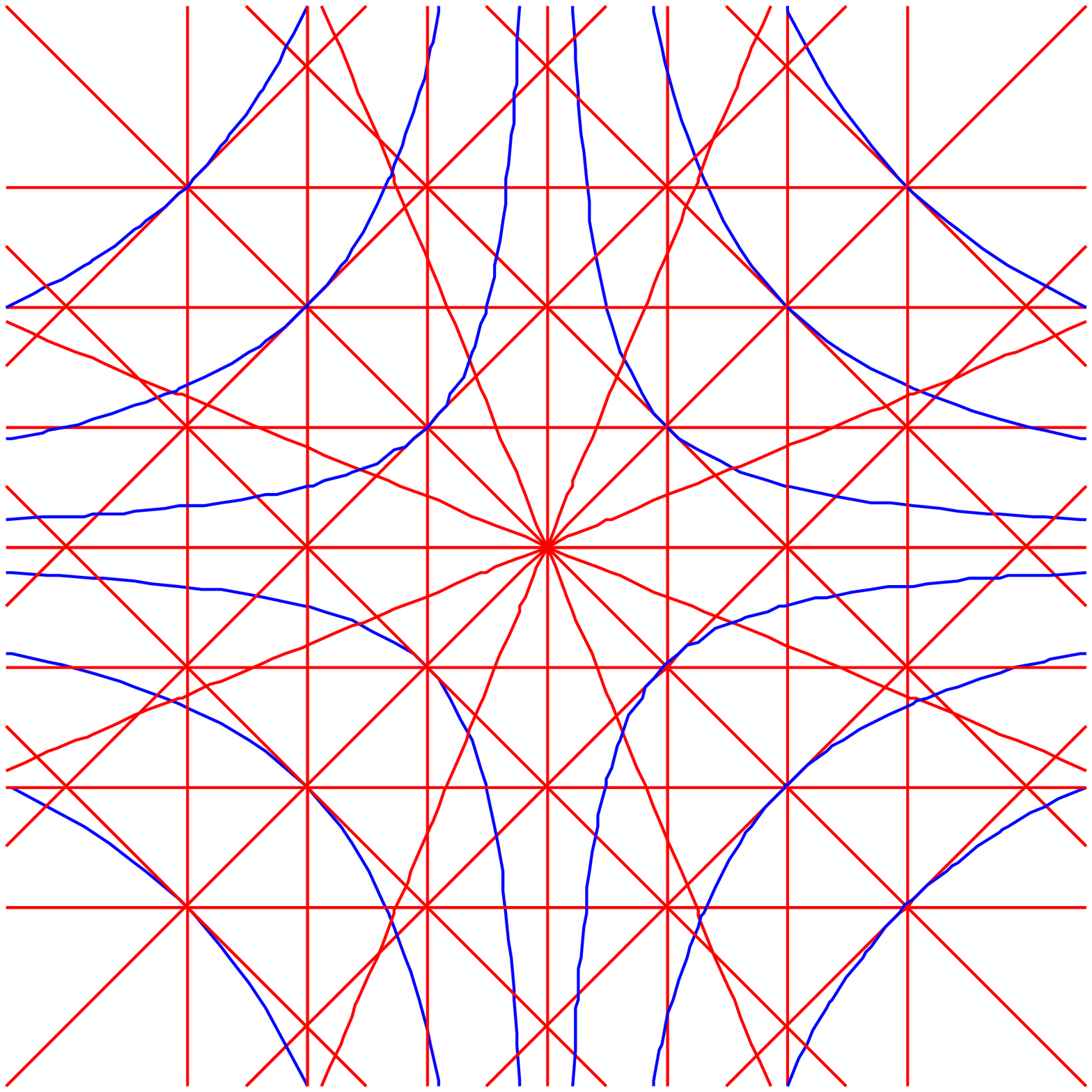} \\
\includegraphics[width=2.9cm,height=2.9cm]{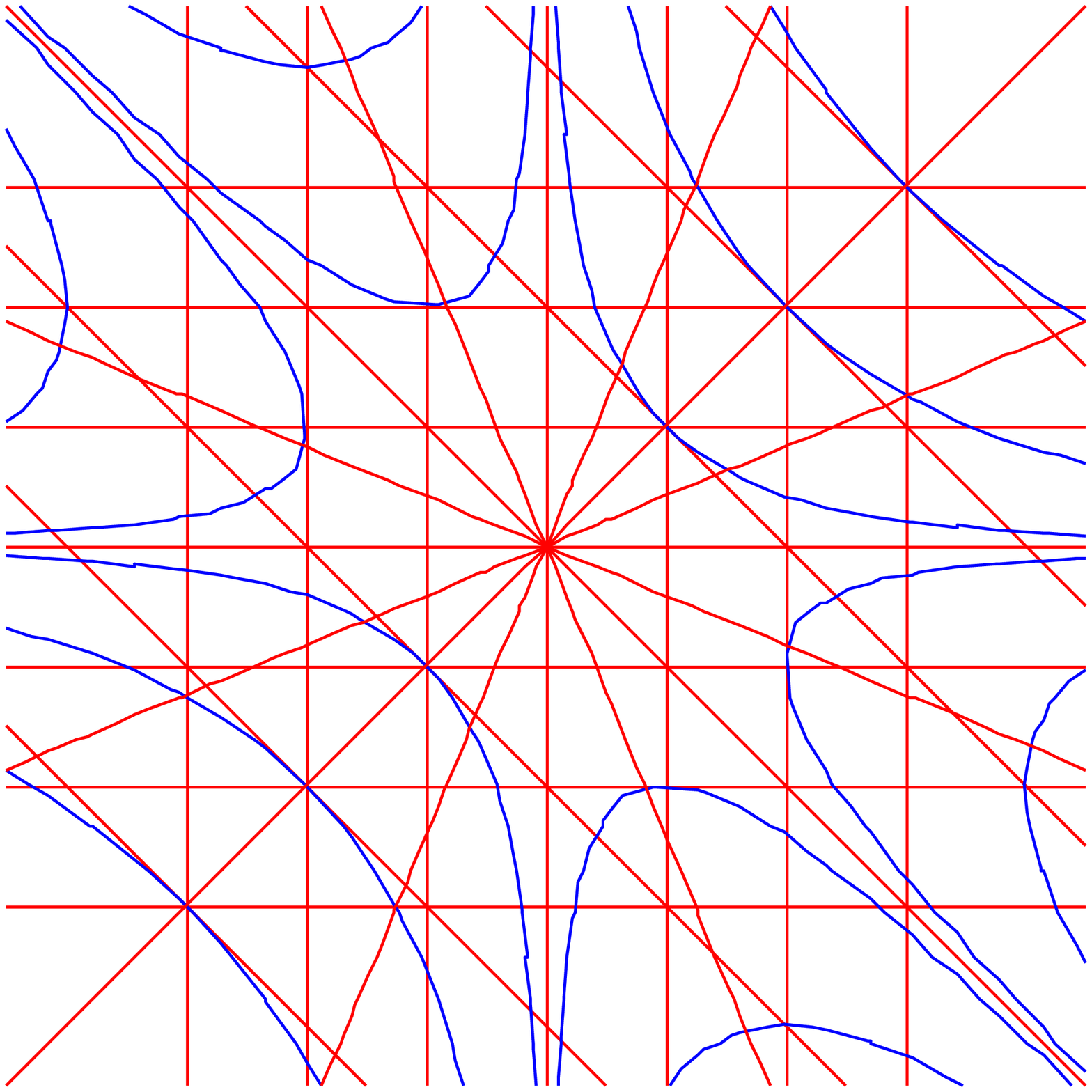}&\includegraphics[width=2.9cm,height=2.9cm]{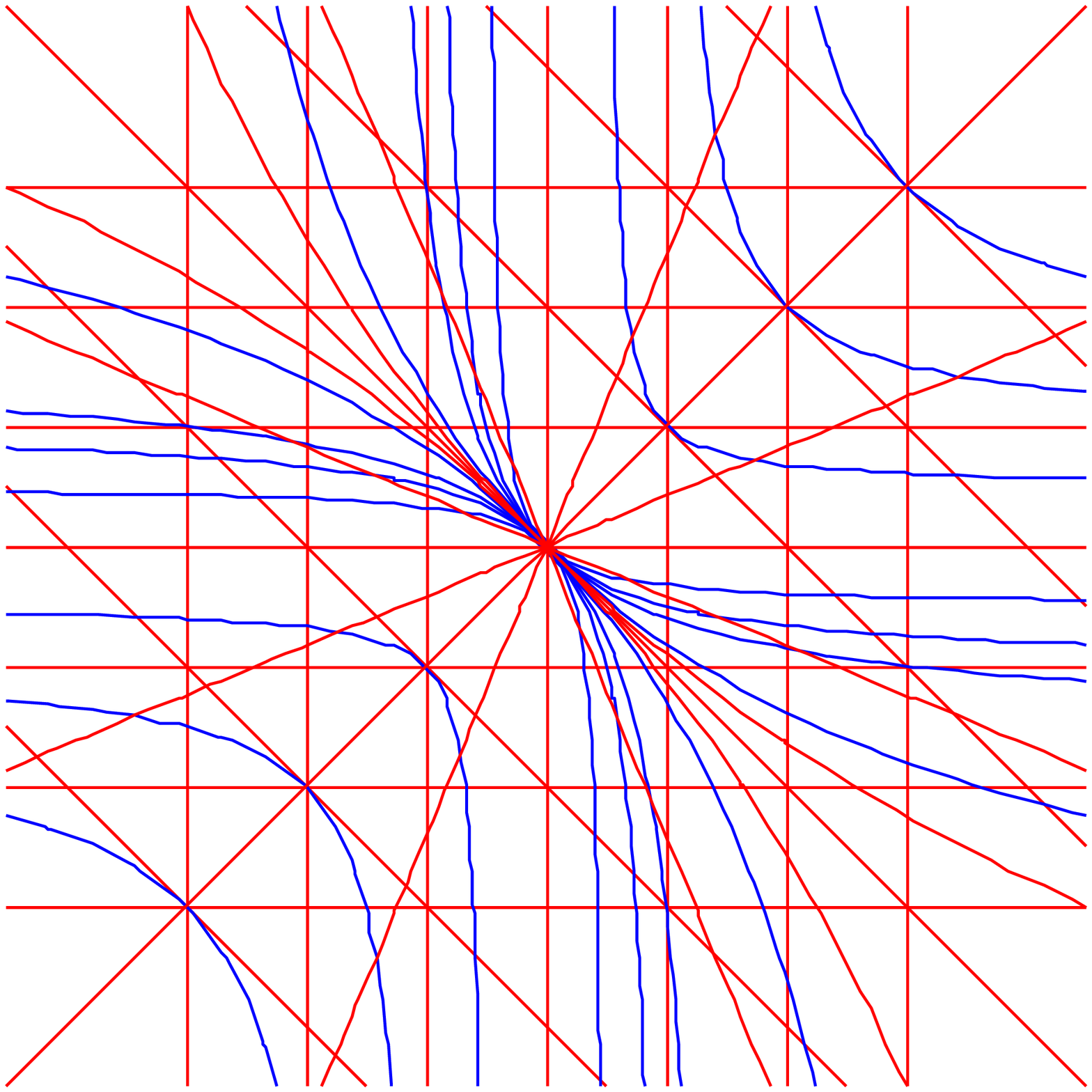}&\includegraphics[width=2.9cm,height=2.9cm]{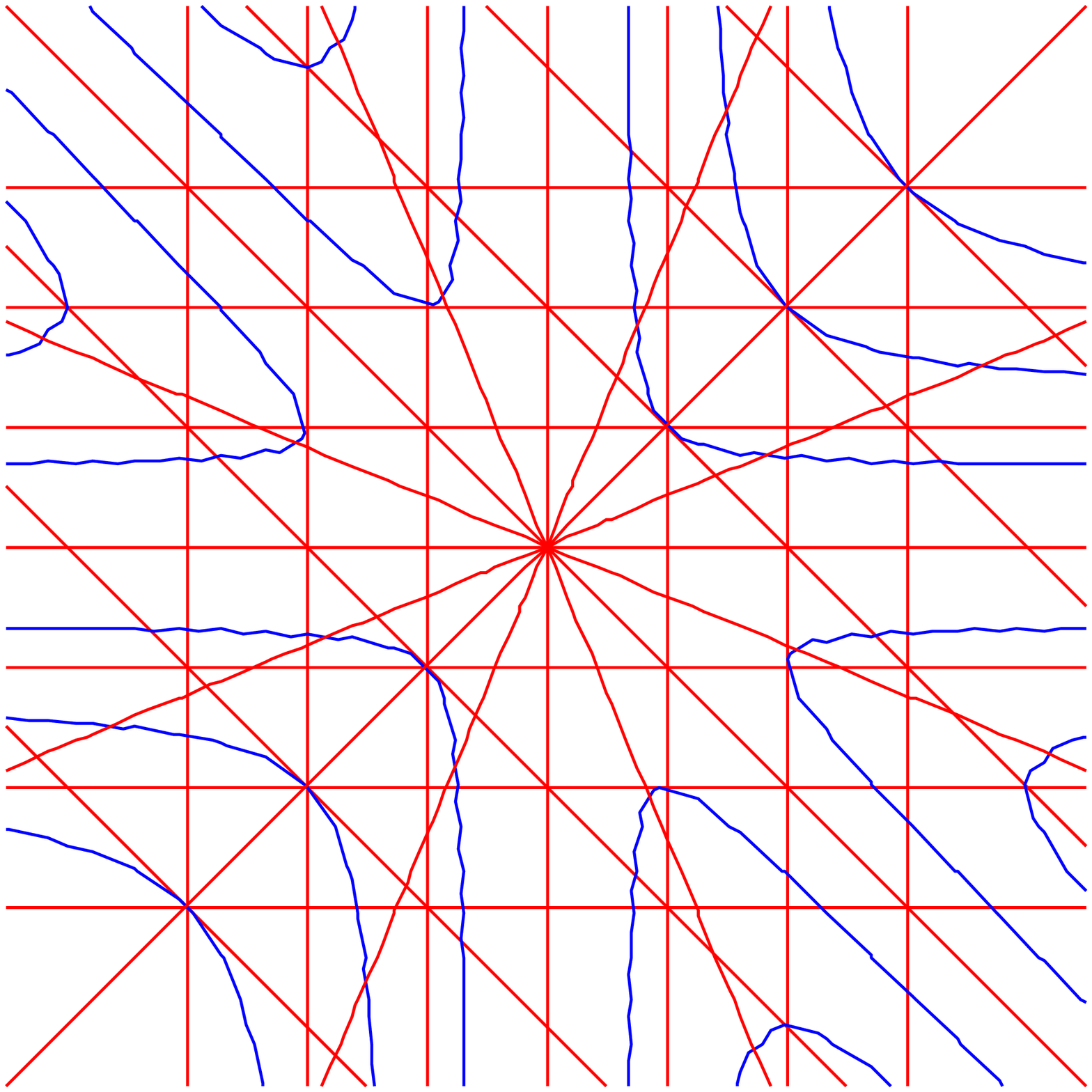}
\end{tabular} 
\caption[A sample of exceptional webs] {\label{F:cdqlfig}  A sample of the real models for exceptional CDQL on $\mathbb P^2$. In the first
and second rows the first three members of the infinite family $\mathcal A_I^k$ and  $\mathcal A_{II}^k$ respectively.
In the third row,  from left to right, $\mathcal A^2_{III}$, $\mathcal A^1_{IV}$ and $\mathcal A^2_{IV}$. In the  fourth row:
$\mathcal A_5^a,\mathcal A_5^b$ and $\mathcal A_5^c$.   }
\end{center}
\end{figure}

\subsection{On Hopf surfaces}

The classification of exceptional CDQL webs on Hopf surfaces reduces to the one on $\mathbb P^2$.
The result is far from interesting. One has just to remark that a foliation on a Hopf surface of type $H_{\alpha}$ when lifted to $\mathbb C^2 \setminus \{ 0 \}$
gives rise to an algebraic  foliation on $\mathbb C^2$ invariant by the $\mathbb C^*$-action $t\cdot(x,y) = (tx,ty)$, and then use the classification
of exceptional CDQL webs on $\mathbb P^2$ to obtain the following result.

\begin{cor}
Up to automorphisms, the only exceptional CDQL webs on Hopf surfaces are quotients   of  the restrictions of
the webs $\mathcal A^*_*$ to $\mathbb C^2 \setminus \{ 0 \}$ by the group of deck transformations.
\end{cor}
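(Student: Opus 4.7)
The plan is to reduce the classification on Hopf surfaces to the already established classification on $\mathbb P^2$ (Theorem \ref{T:CDQL2}) by lifting webs from $H_\alpha$ to $\mathbb C^2\setminus\{0\}$ and then extending them to $\mathbb P^2$. Recall that the only Hopf surfaces supporting four distinct linear foliations are the primary Hopf surfaces $H_\alpha = (\mathbb C^2\setminus\{0\})/\langle\varphi_\alpha\rangle$ with $\varphi_\alpha(x,y)=(\alpha x,\alpha y)$, $|\alpha|>1$, and that by the global definition of linearity adopted at the start of Section \ref{S:CDQL}, a linear foliation on $H_\alpha$ is by definition the quotient of a $\varphi_\alpha$-invariant linear foliation on $\mathbb C^2\setminus\{0\}$ in the classical sense.

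First I would take an exceptional CDQL $(k+1)$-web $\mathcal W$ on $H_\alpha$ and pull it back to $\mathbb C^2\setminus\{0\}$, obtaining a $\varphi_\alpha$-invariant CDQL web $\widetilde{\mathcal W}$. Since $\widetilde{\mathcal W}$ is defined by a meromorphic $(k+1)$-symmetric $1$-form on $\mathbb C^2\setminus\{0\}$ and the origin has codimension two, Hartogs' extension theorem allows one to extend it through the origin, and then the classical description of the linear foliations (pencils of parallel lines or the pencil through $0$) together with the algebraicity of the non-linear component yields a well-defined extension to $\mathbb P^2$, still denoted $\widetilde{\mathcal W}$; this extended web remains CDQL. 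The key observation is now that the discrete group generated by $\varphi_\alpha$ embeds into the one-parameter group $t\cdot(x,y)=(tx,ty)$, and since each linear foliation and the non-linear foliation composing $\widetilde{\mathcal W}$ is $\varphi_\alpha$-invariant and algebraic on $\mathbb P^2$, invariance under $\varphi_\alpha$ for any $|\alpha|>1$ forces invariance under the full $\mathbb C^*$-action: indeed, the locus in $\mathrm{Aut}(\mathbb P^2)$ preserving $\widetilde{\mathcal W}$ is Zariski closed and contains $\{\varphi_\alpha^n\}_{n\in\mathbb Z}$, whose Zariski closure is precisely $\mathbb C^*$.

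Next I would argue that $\widetilde{\mathcal W}$ is exceptional on $\mathbb P^2$. Since $\mathbb C^2\setminus\{0\}\to H_\alpha$ is an étale covering, the space of abelian relations pulls back isomorphically at each point, so $\widetilde{\mathcal W}$ has maximal rank on $\mathbb C^2\setminus\{0\}$; by the local system property of abelian relations recalled at the end of the introduction of Chapter \ref{Chapter:AR}, the rank is preserved through the Hartogs extension, hence $\widetilde{\mathcal W}$ has maximal rank on $\mathbb P^2$. Non-algebraizability also descends from $H_\alpha$: a germ linearization of $\widetilde{\mathcal W}$ at a generic point would, after projection, yield a linearization of $\mathcal W$, contradicting the non-algebraizability of $\mathcal W$. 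Thus $\widetilde{\mathcal W}$ is an exceptional CDQL web on $\mathbb P^2$ invariant by the $\mathbb C^*$-action $t\cdot(x,y)=(tx,ty)$.

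Finally, I would invoke Theorem \ref{T:CDQL2} and the explicit list given in Section \ref{S:CDQLP2}: inspection of the seventeen models shows that the exceptional CDQL webs on $\mathbb P^2$ invariant under $t\cdot(x,y)=(tx,ty)$ are exactly the four infinite families $\mathcal A_I^k,\mathcal A_{II}^k,\mathcal A_{III}^k,\mathcal A_{IV}^k$ together with the seven sporadic webs $\mathcal A_5^a,\mathcal A_5^b,\mathcal A_5^c,\mathcal A_5^d,\mathcal A_6^a,\mathcal A_6^b,\mathcal A_7$ (the $\mathcal B$-family and the Hesse webs are ruled out since their non-linear component is not $\mathbb C^*$-invariant). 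Restricting these models to $\mathbb C^2\setminus\{0\}$ and taking the quotient by $\langle\varphi_\alpha\rangle$ yields all exceptional CDQL webs on $H_\alpha$ up to automorphisms. The main obstacle I expect is the passage from invariance under the discrete group $\langle\varphi_\alpha\rangle$ to invariance under the full $\mathbb C^*$-action and the verification that the Hartogs extension preserves both the CDQL structure and maximality of rank; once these technical points are secured, the classification reduces verbatim to checking which items on the list of Theorem \ref{T:CDQL2} are $\mathbb C^*$-invariant.
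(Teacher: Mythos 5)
Your proposal is correct and follows essentially the same route as the paper, which reduces the Hopf-surface case to the $\mathbb P^2$ classification by remarking that a foliation on $H_\alpha$ lifts to an algebraic, $\mathbb C^*$-invariant foliation on $\mathbb C^2$ and then reading off the $\mathbb C^*$-invariant entries ($\mathcal A^*_*$) from the list of Theorem \ref{T:CDQL2}. Your Zariski-density argument (the stabilizer of the extended web is Zariski closed and contains the infinite group $\langle\varphi_\alpha\rangle$, hence all of $\mathbb C^*$) is a clean way of justifying the $\mathbb C^*$-invariance that the paper merely asserts.
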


\subsection{On abelian surfaces}
The  CDQL webs on tori are  the superposition  of a
non-linear foliation with a product of foliations induced by global holomorphic $1$-forms.
Since \'{e}tale coverings between complex tori abound and  {because} the pull-back
of exceptional CDQL webs under these are still exceptional CDQL  webs, it is natural to
 extend the notion of isogenies between complex tori to isogenies between webs on tori.
Two webs $\mathcal W_1, \mathcal W_2$ on  complex tori $T_1,T_2$ are \defi[isogeneous] \index{Web!isogeneous}  if there exist  a  complex torus $T$
and \'etale morphisms $\pi_i: T \to T_i$ for $i=1,2$,
such that $\pi_1^* (\mathcal W_1) = \pi_2^* (\mathcal W_2)$.

{\begin{thm}\label{T:CDQLontori}
Up to isogenies, there are exactly
three sporadic exceptional CDQL $k$-webs (one for each $k \in \{ 5,6,7\}$) and one continuous family  of exceptional CDQL $5$-webs  on complex tori.
\end{thm}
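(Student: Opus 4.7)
The plan is to reduce the classification to an analysis of the non-linear foliation constrained by the rank-maximality condition, using Mih\u{a}ileanu and Pantazi-H\'enaut criteria as the main tools, together with the very restrictive structure of foliations on complex tori.

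First, I would set up the problem on the universal cover. A CDQL $(k+1)$-web on a complex torus $T=\mathbb{C}^2/\Lambda$ lifts to a configuration on $\mathbb{C}^2$ consisting of $k$ pencils of parallel lines (linear foliations) defined by $k$ constant $1$-forms $\alpha_i=\lambda_i\,dx-\mu_i\,dy$ with pairwise distinct slopes $[\mu_i:\lambda_i]\in\mathbb{P}^1$, together with a non-linear foliation $\widetilde{\mathcal{F}}$, the whole configuration being $\Lambda$-invariant. Up to isogeny (allowing change of lattice and linear change of coordinates on $\mathbb{C}^2$) one can normalize three of the directions, so the moduli one has to understand consist of the remaining $k-3$ slopes together with the analytic datum defining $\widetilde{\mathcal{F}}$. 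The non-linear foliation $\mathcal{F}$ on $T$ is a codimension one foliation on a surface of Kodaira dimension zero; by the Brunella--McQuillan--Mendes classification such $\mathcal{F}$ is either a turbulent foliation, a foliation tangent to a fibration, or a foliation with trivial canonical bundle, and in every case admits an explicit description (via theta functions or elliptic pencils) that will be crucial in the last step.

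Second, I would exploit Mih\u{a}ileanu's criterium. If the $(k+1)$-web has maximal rank then $K(\mathcal{W})=\sum_{\mathcal{W}_3<\mathcal{W}}K(\mathcal{W}_3)=0$. Every $3$-subweb involving only the linear foliations is algebraic hence hexagonal (Proposition \ref{P:cubic}), so $K(\mathcal{L}_i\boxtimes\mathcal{L}_j\boxtimes\mathcal{L}_\ell)=0$. Consequently the condition becomes
\[
\sum_{1\le i<j\le k}K(\mathcal{L}_i\boxtimes\mathcal{L}_j\boxtimes\mathcal{F})=0,
\]
which, after writing $\mathcal{F}$ via a local first integral $f$ and using the computation of Lemma~\ref{L:Wk2}, translates into a single explicit PDE of order three on $f$ whose coefficients depend rationally on the slopes $\mu_i/\lambda_i$. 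Together with the translation-invariance of $\mathcal{F}$, this already forces $df$ to be, up to a multiplicative factor, a closed meromorphic $1$-form on $T$ with very constrained divisor; in particular $\mathcal{F}$ must be defined either by a global closed meromorphic $1$-form or by a meromorphic first integral, which in the torus case means an elliptic pencil. The full Pantazi-H\'enaut condition $\Theta_{\mathcal{W}}\equiv 0$ then imposes further PDEs, and the resulting overdetermined system is what one has to integrate.

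Third, I would run the case analysis on $k$. For each $k$, the compatibility between the PDEs coming from $\Theta_{\mathcal{W}}=0$ and the explicit form of $\mathcal{F}$ (either a closed $1$-form $d\log\theta$ built from theta functions, or the ratio of two such) yields algebraic equations on the slopes and on the parameters of $\mathcal{F}$. For $k=4$ one expects both a continuous one-parameter family (parametrized by the modulus of an auxiliary elliptic curve appearing through theta functions, analogous to the elliptic family in Example~\ref{E:exabel}) and finitely many degenerations picking up extra abelian relations, of which exactly one survives as a genuinely new sporadic example; for $k=5$ and $k=6$ the rigidity increases and only the sporadic examples $(k+1)\in\{6,7\}$ persist; for $k\ge 7$ the system becomes inconsistent. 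Finally, for the surviving examples one must verify (a) that the rank is indeed maximal, by exhibiting the required number of abelian relations explicitly (they will involve the logarithms and the elliptic integrals of the theta functions used to define $\mathcal{F}$), and (b) that the web is not algebraizable, which follows from Corollary~\ref{C:linutil}: any linearization would be projective, but the linear foliations are already linear in the translation-invariant affine structure, and a projective map preserving them would force $\mathcal{F}$ to become linear too, contradicting its construction from theta functions.

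The main obstacle will be step three: solving the overdetermined system of PDEs coming from $\Theta_{\mathcal{W}}\equiv 0$ on a torus, especially disentangling which configurations yield the continuous family (where theta-function identities produce the extra abelian relations) from those that are merely sporadic coincidences. The combinatorial bookkeeping of which slope configurations to consider up to the isogeny equivalence, together with the explicit verification of the abelian relations of the surviving webs via theta-function identities in the spirit of Abel's addition theorem for elliptic integrals, is where most of the technical work will lie.
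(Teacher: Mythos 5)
Your plan starts and ends in the right places --- Mih\u{a}ileanu's necessary condition $K(\mathcal W)=0$ as the entry point, and non-linearizability via the uniqueness of linearizations (Corollary \ref{C:linutil}, exactly as in the proof of Theorem \ref{T:geral}) as the exit --- but the middle, which is where the entire content of the classification lives, is asserted rather than argued. The critical unjustified step is the claim that the vanishing of $\sum_{i<j}K(\mathcal L_i\boxtimes\mathcal L_j\boxtimes\mathcal F)$, together with translation-invariance of the slopes, ``already forces'' $\mathcal F$ to be defined by a closed meromorphic $1$-form or a meromorphic first integral. Nothing in your second step delivers this: the curvature condition is a single scalar PDE on a local first integral of $\mathcal F$, and by itself it does not produce a global closed $1$-form on $T$. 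The mechanism that actually makes the problem tractable is the one the book isolates as Theorem \ref{TT:curvatura}: the curvature of $\mathcal F\boxtimes\mathcal W$ is holomorphic over a component $C$ of $\mathrm{tang}(\mathcal F,\mathcal L_i)$ only if $C$ is invariant by $\mathcal F$ or by the barycenter foliation $\beta_{\mathcal F}(\mathcal W')$. On a torus this converts the analytic condition into the statement that the tangency curves between $\mathcal F$ and the $k$ linear foliations are invariant algebraic curves, and it is the geometry of these curves (together with the explicit description of barycenters of completely decomposable linear webs as Riccati foliations) that pins down both the admissible slope configurations and the theta-function expressions for $\mathcal F$. Your proposal never touches the tangency divisors, so the ``overdetermined system of PDEs'' you propose to integrate in step three has no handle; writing ``one expects'' a continuous family for $k=4$ and inconsistency for $k\ge 7$ is a description of the answer, not a derivation of it. The appeal to the Brunella-type classification of foliations on tori is a reasonable substitute idea, but you do not explain how the turbulent/fibration/linear trichotomy interacts with the rank condition, so it does not close the gap.

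Two smaller points. First, be aware that this book does not prove Theorem \ref{T:CDQLontori}: it states it and refers to the authors' paper, and the proof outline given in Section \ref{S:CDQL} concerns the projective plane case; so the benchmark is that outline, whose engine (curvature regularity along tangency loci, barycenters, degree bounds via the polar map) is precisely what your plan omits. Second, on a compact torus $K(\mathcal W)$ holomorphic does not immediately give $K(\mathcal W)=0$, since $h^0(T,\Omega^2_T)=1$; one needs an extra argument (e.g.\ exactness of $K(\mathcal W)$ away from its polar set, or evaluation at a point) to kill the constant, and this should be stated rather than absorbed into the computation.
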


 The elements of the continuous family are
\[
\mathcal E_{\tau} = \big[ \, dx\,  dy \, (dx^2 - dy^2) \, \big] \boxtimes \left[ d\left(\frac{\vartheta_1(x,\tau)  \vartheta_1(y,\tau) }{\vartheta_4(x,\tau)  \vartheta_4(y,\tau) }\right)^{\!\!2}\, \right] .
\]
respectively defined   on the square $(E_{\tau})^2$ of the elliptic curve
 $E_{\tau} = \mathbb C /  (\mathbb Z \oplus \mathbb Z \tau)$ for  arbitrary $\tau \in \mathbb H{=\{ z \in \mathbb C \, | \, \Im{\rm m}(z)>0\, \}}$.
The functions $\vartheta_i$ involved in the definition are the classical \defi[Jacobi theta functions], \index{Jacobi theta functions} whose definition
is now recalled.

For  $(\mu , \nu) \in \{ 0,1\} ^2$ and  $\tau \in \mathbb H$ fixed, let $\vartheta_{\mu,\mu}(\cdot, \tau)$ be the  entire function  on $\mathbb C$
\[
\vartheta_{\mu,\nu}(x,\tau) =  \sum_{n=-\infty}^{+\infty}  (-1)^{n\nu} \exp \left(  i\, \pi  \big( n + \frac{\mu}{2} \big)^2 \tau + 2\, i\, \pi  \big( n + \frac{\mu}{2} \big)x  \right)\, .
\]
These are usually called the \defi[theta functions with  characteristic]. \index{Theta function!with characteristic}
The Jacobi theta functions $\vartheta_i$  are nothing more than
\[
\vartheta_1 = -i\, \vartheta_{1,1} ,\quad \vartheta_2=\vartheta_{1,0}, \quad   \vartheta_3=\vartheta_{0,0} \quad \text{and} \quad   \vartheta_4=\vartheta_{{0},1}.
\]

The    webs $\mathcal E_{\tau}$ first appeared in  Buzano's work \cite{Buzano}   but their rank was not determined at that time. They were later
rediscovered\footnote{These are the $5$-webs mentioned in Example \ref{E:exabel}.}  in \cite{PT} where it is proved that they are all exceptional and  that
$\mathcal E_{\tau}$ is isogeneous to $\mathcal E_{\tau'}$ if and only if $\tau$ and $\tau'$
belong to the same orbit under the natural action  of the $\mathbb Z/2\mathbb Z$ extension of $\Gamma_0(2) \subset \mathrm{PSL}(2, \mathbb Z)$
generated by $\tau \mapsto -2 \tau^{-1}$. Thus the continuous  family of exceptional CDQL webs on tori is parameterized by a $\mathbb Z/2\mathbb Z$-quotient of the modular curve $X_0(2)$.

\smallskip

The sporadic  CDQL $7$-web $\mathcal E_7$ is strictly related to a particular element of the previous family. Indeed $\mathcal E_7$ is the $7$-web on  $(E_{1+i})^2$
\[
\mathcal E_7 = \big [dx^2 + dy^2\big]  \boxtimes {\mathcal E}_{1+i}\,.
\]

\smallskip

The sporadic  CDQL $5$-web $\mathcal E_5$ lives naturally on $(E_{\xi_3})^2$ and can be described as
the superposition of the linear web
\[
 \big[ dx\, dy\, (dx - dy) \, (dx + \xi_3^2\,  dy)  \big]
\]
and of the non-linear foliation
\[
 \left[ d\Big( \frac{\vartheta_1(x, \xi_3)\vartheta_1(y, \xi_3 )\vartheta_1(x-  y , \xi_3)\vartheta_1(x +  \xi_3^2 \, y, \xi_3) }{\vartheta_2(x, \xi_3)\vartheta_3(y, \xi_3 )\vartheta_4(x- y ,  \xi_3)\vartheta_3(x +  \xi_3^2 \,y, \xi_3)}\Big) \right] .
\]

\smallskip

The sporadic CDQL $6$-web $\mathcal E_6$ also lives in $(E_{\xi_3})^2$ and  is best described in terms of Weierstrass $\wp$-function.
\[
\mathcal E_6 = \big[\,  dx\,  dy \, (dx^3 + dy^3) \big] \boxtimes
\left[
{\wp(x,\xi_3)}^{-1}
 {dx} + {\wp(y,\xi_3)}^{-1}{dy}\right] .
\]

For a  more geometric description of these exceptional {\it elliptic webs} the reader is invited to  consult \cite[Section 4]{CDQL}.

\subsection{Outline of the proof}

In the remaining of this section, the proof of Theorem \ref{T:CDQL2} will be sketched. It makes use of Mih\u{a}ileanu's criterium
in an essential way. Its starting point  is the following trivial observation:  if    $K(\mathcal W)$, the curvature of $\mathcal W$,  is zero
then it must be, in particular,  a holomorphic $2$-form.

\subsubsection{Regularity of the curvature}

The tautology just spelled out, makes clear the relevance of obtaining criterium
to ensure the absence of poles of $K(\mathcal W)$.
The result obtained for that in \cite{CDQL}     is best stated in term of $\beta_{\mathcal F}( \mathcal W)$
--- the \defi[$\mathcal F$-barycenter] \index{Web!$\mathcal F$-barycenter} of a web $\mathcal W$.
If   $\mathcal W$ is a $k$-web and $\mathcal F$ is a foliation not contained in $\mathcal W$, both defined on a surface $S$ then
at a generic point $p \in S$ the tangents of $\mathcal F$ and $\mathcal W$ determine $k+1$ points in $\mathbb P (T_pS)$. The complement
of the point $[ T_p \mathcal F]$ in $\mathbb P (T_pS)$ is clearly isomorphic to $\mathbb C$, and any two distinct isomorphisms
differ by an affine map. The $\mathcal F$-barycenter of $\mathcal W$ is then defined as the  foliation on $S$ with tangent
at a generic point $p$ of $S$ determined by the barycenter of the directions determined by $\mathcal W$ at $p$ in the affine
structure on $\mathbb C \simeq \mathbb P (T_pS) \setminus [T_p \mathcal F]$ determined by $\mathcal F$.

\begin{thm}\label{TT:curvatura}
Let $\mathcal F$ be a foliation and  $\mathcal W= \mathcal F_1
\boxtimes \cdots \boxtimes \mathcal F_k$ be a
$k$-web, $k \ge 2$, both defined on the same domain $U \subset
\mathbb C^2$. Suppose that $C$ is an irreducible component of
$\mathrm{tang}(\mathcal F, \mathcal F_1)$ that
  is not contained in $\Delta(\mathcal W)$. The curvature
 $K(\mathcal F \boxtimes \mathcal W)$ is holomorphic over a generic point of $C$ if and only if the curve
  $C$ is $\mathcal F$-invariant or $\beta_{\mathcal F}(
\mathcal W')$-invariant, where $\mathcal W'=\mathcal F_2 \boxtimes
\cdots \boxtimes \mathcal F_k$.
\end{thm}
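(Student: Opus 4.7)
The strategy is to split $K(\mathcal{F}\boxtimes\mathcal{W})$ via Mih\u{a}ileanu's identity
\[
K(\mathcal{F}\boxtimes\mathcal{W})=\sum_{\mathcal{W}_3<\mathcal{F}\boxtimes\mathcal{W}} K(\mathcal{W}_3)
\]
and isolate those $3$-subwebs whose curvatures can have poles along $C$. Since $\mathcal{W}$ is quasi-smooth at a generic $p\in C$ (from $C\not\subset\Delta(\mathcal{W})$) and $\mathcal{F}$ is generically transverse to every $\mathcal{F}_i$ with $i\ge 2$ at $C$ (otherwise two of the $\mathcal{F}_i$ would share a tangent line at $p$, contradicting quasi-smoothness of $\mathcal{W}$), the only $3$-subwebs that fail to be smooth at $p$ are $\mathcal{T}_j:=\mathcal{F}\boxtimes\mathcal{F}_1\boxtimes\mathcal{F}_j$ for $j=2,\ldots,k$. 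The theorem thus reduces to deciding when $\sum_{j=2}^{k}K(\mathcal{T}_j)$ is holomorphic at~$p$.

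Now I would split into two exclusive geometric cases. Because $\mathcal{F}$ and $\mathcal{F}_1$ share the same tangent line along $C$, the curve $C$ is $\mathcal{F}$-invariant if and only if it is $\mathcal{F}_1$-invariant. In \emph{Case I} ($C$ is $\mathcal{F}$-invariant), pick coordinates with $C=\{x=0\}$, $\omega_1=dx$, $\omega=dx+xu\,dy$ ($u$ a unit on $C$), and $\omega_j=dx+B_j\,dy$ with $B_j|_C\ne 0$. The normalized triple $(B_j\,\omega,\,(xu-B_j)\,\omega_1,\,-xu\,\omega_j)$ sums to zero, and Lemma~\ref{L:eta} produces $\eta_j=P_j\,dx+Q_j\,dy$ with $P_j=1/x+(\text{holom})$ and $Q_j$ holomorphic at $C$. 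Since the $1/x$ pole of $P_j$ is $y$-independent, $\partial_y P_j$ and $\partial_x Q_j$ are both holomorphic, so each $K(\mathcal{T}_j)=(Q_{j,x}-P_{j,y})\,dx\wedge dy$ is \emph{individually} holomorphic at $p$---no cancellation is required.

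In \emph{Case II} ($C$ not $\mathcal{F}$-invariant), I would choose coordinates with $C=\{x=0\}$, $\omega_1=dy$, $\omega=dy+A\,dx$ where generically $A=xB$ with $B|_C\ne 0$, and $\omega_j=dy+A_j\,dx$, $A_j|_C\ne 0$. The normalized triple $(-A_j\omega,\,(A_j-A)\omega_1,\,A\,\omega_j)$ now yields $P_j=(A_{j,x}-A_x)/(A_j-A)$ holomorphic at $C$ and $Q_j=-1/(xA_j)+(\text{holom})$. Taking $\partial_x$ and Taylor-expanding $A_j=a_j(y)+xa_j'(y)+O(x^2)$, the apparent $1/x$-contributions from $1/(x^2A_j)$ and from $A_{j,x}/(xA_j^2)$ cancel exactly, leaving
\[
K(\mathcal{T}_j)\equiv \frac{1}{x^2\,A_j(0,y)}\,dx\wedge dy \pmod{\text{holomorphic at }C}.
\]
Summing gives that $K(\mathcal{F}\boxtimes\mathcal{W})$ is holomorphic at $p$ iff $\sum_{j=2}^{k}1/A_j(0,y)\equiv 0$ along $C$.

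Finally I would identify this vanishing with the $\beta_{\mathcal{F}}(\mathcal{W}')$-invariance of $C$. On the affine line $\mathbb{P}(T_pS)\setminus[T_p\mathcal{F}]$ at $p=(0,y)$, parametrize the direction of slope $m$ by $t=1/m$: then $\mathcal{F}$ (slope $0$) lies at $t=\infty$, the vertical direction of $C$ lies at $t=0$, and $[T_p\mathcal{F}_j]$ corresponds to $t_j=-1/A_j(0,y)$. Hence the barycenter $\bar t=\tfrac{1}{k-1}\sum_{j=2}^{k}t_j$ vanishes iff $\sum_{j=2}^{k}1/A_j(0,y)\equiv 0$, i.e.\ iff $\beta_{\mathcal{F}}(\mathcal{W}')$ is tangent to $C$ at every generic point of $C$, which is precisely the invariance of $C$ under $\beta_{\mathcal{F}}(\mathcal{W}')$. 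The main technical obstacle is the Case II residue bookkeeping: one must follow the Taylor expansion of $1/A_j$ precisely enough to see that the two potential $1/x$ contributions in $\partial_x Q_j$ cancel, so that the coefficient of the surviving $1/x^2$ pole matches the barycenter condition exactly; everything else is essentially normalization.
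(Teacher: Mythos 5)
A preliminary remark: this text states Theorem \ref{TT:curvatura} without proof, deferring to \cite{CDQL}, so there is no in-paper argument to compare yours against; I am therefore judging your proposal on its own merits, and it is essentially correct. The reduction is sound: since $C\not\subset\Delta(\mathcal W)$, every $3$-subweb of $\mathcal F\boxtimes\mathcal W$ other than the webs $\mathcal T_j=\mathcal F\boxtimes\mathcal F_1\boxtimes\mathcal F_j$, $j\ge 2$, is smooth at a generic point of $C$, so only $\sum_{j\ge 2}K(\mathcal T_j)$ can contribute poles. I have checked both local computations, including the crucial cancellation of the $1/x$ terms in Case II: one does get $K(\mathcal T_j)\equiv x^{-2}A_j(0,y)^{-1}\,dx\wedge dy$ modulo forms holomorphic along $C$, and the identification of the condition $\sum_{j\ge 2} A_j(0,y)^{-1}\equiv 0$ with the $\beta_{\mathcal F}(\mathcal W')$-invariance of $C$ is the correct affine-chart bookkeeping (the condition is affine-invariant, so the choice of chart is immaterial).

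Two loose ends should be tied up. First, you assume the tangency of $\mathcal F$ with $\mathcal F_1$ along $C$ is simple, writing $A=xB$ with $B$ a unit; in general $A=x^{m}B$ with $m\ge 1$, and you should note that the computation goes through verbatim, producing $m/x$ (resp.\ $-m/(xA_j)$) in place of $1/x$ (resp.\ $-1/(xA_j)$) and a residual polar part $m\,x^{-2}\sum_{j}A_j(0,y)^{-1}$, so the conclusion is unchanged. Second, in Case II the normalization $\omega_j=dy+A_j\,dx$ with $A_j$ finite silently excludes the possibility that some $\mathcal F_j$ with $j\ge 2$ is tangent to $C$ along $C$ (i.e.\ $C$ is $\mathcal F_j$-invariant); this is not ruled out by $C\not\subset\Delta(\mathcal W)$, which only forbids tangencies among the $\mathcal F_i$ themselves. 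For such a $j$ one has $\omega_j\propto dx$, a direct computation shows $K(\mathcal T_j)$ is holomorphic along $C$, and the corresponding point of the barycenter is $t_j=0$; so your formula degenerates consistently (the term $1/A_j(0,y)$ is simply absent), but the case must be mentioned for the argument to be complete.
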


This result is the key tool used in \cite{CDQL} to achieve the classification. Having it at hand, the next step is to
the describe the $\mathcal L$-barycenters of completely decomposable linear (\defi[CDL]) \index{Web!CDL}  webs with respect to a linear foliation $\mathcal L$.

\subsubsection{$\mathcal L$-barycenters of CDL webs}

A linear foliation $\mathcal L$  on $\mathbb P^2$ is nothing more than a pencil of lines. Thus,  it is determined
by its unique singular point. So, let $p_0 \in \mathbb P^2$ be the
point determining  $\mathcal L$  and $\{ p_1, \ldots, p_k \} \subset \mathbb P^2$ be the
set of points determining a CDL $k$-web $\mathcal W$.
In order to  describe the  $\mathcal L$-barycenter of $\mathcal W$,
let  $\Pi: S \to \mathbb {\mathbb P}^2$ be the blow-up of $\mathbb P^2$ at  $p_0$; $E$ its exceptional divisor;  $\pi: S \to \mathbb P^1$
be the fibration on $S$ induced by the lines through $p_0$;  $\mathcal G$ be the foliation
$\Pi^* \beta_{\mathcal L}(\mathcal W)$; and  $\ell_i$ be the strict transform of the
line  $\overline{p_0p_i}$ under $\Pi$  for $i=1,\ldots,k$.

\begin{figure}[H]
\begin{center}
\resizebox{2.0in}{1.5in}{
\includegraphics{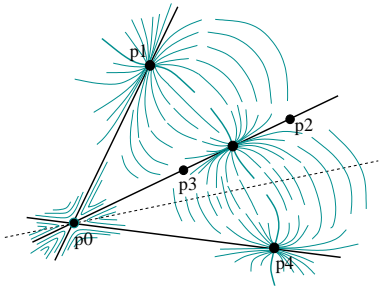}}
\end{center}
\caption{The $\mathcal L$-barycenter of a CDL web $\mathcal W$.}
\end{figure}

If  the points $\{p_0, \ldots, p_k\}$ are aligned then
$\beta_{\mathcal F}(\mathcal W)$ is also a pencil of lines with base point
at the $p_0$-barycenter of $\{ p_1, \ldots, p_k \}$. If instead the points $\{p_0, \ldots, p_k\}$
are not aligned then a simple computation shows that
the foliation $\mathcal G$ is a Riccati foliation with respect to $\pi$, that is, $\mathcal G$ has no tangencies
with the generic fiber of $\pi$. In fact, a much more precise description of  $\mathcal G$ can be obtained. In
\cite[Lemma 6.1]{CDQL} it is shown that $\mathcal G$ has the  following properties:
\begin{enumerate}
\item the exceptional divisor $E$ of $\Pi$ is $\mathcal G$-invariant;
\item the only $\mathcal G$-invariant fibers of $\pi$   are the lines $\ell_i$, for
$i \in \underline k$;
\item the singular set of $\mathcal G$ is contained in the union $ \bigcup_{i\in \underline k}\ell_i$;
\item over each line $\ell_i$, the foliation $\mathcal G$ has two singularities. One is a complex saddle
at  the intersection of $\ell_i$ with $E$, the other is a complex node at the $p_0$-barycenter
of  $\mathcal P_i=\{p_1, \ldots, p_k\} \cap \ell_i$. Moreover,  if $r_i$ is the  cardinality of $\mathcal P_i$ then
the quotient of eigenvalues of the saddle (resp. node) over $\ell_i$ is $-r_i/k$ (resp.  $r_i/k$);
\item the monodromy of $\mathcal G$ around $\ell_i$ is finite  of order $k/\gcd(k,r_i)$;
\item the  separatrices of $\beta_{\mathcal F}(\mathcal W)$ through $p_0$ are
 the lines $\overline{p_0p_i}$, $i\in \underline k$.
\end{enumerate}

It is interesting to notice that the generic leaf of $\beta_{\mathcal F}(\mathcal W)$ is
transcendental in general. Indeed, the cases when there are more algebraic
leaves than the obvious ones (the lines $\overline{p_0p_i}$)  are conveniently characterized by \cite[Proposition 6.1]{CDQL},
which says that
the foliation $\beta_{\mathcal F}(\mathcal W)$ has an algebraic leaf
distinct from the lines $\overline{p_0 p_i}$ if and only if all
its singularities distinct from $p_0$ are aligned. Moreover if this is the
case all its leaves are algebraic.

\subsubsection{The $\ell$-polar map and bounds for the degree of $\mathcal F$}
For a set $\mathcal P$  of $k$ distinct points in $\mathbb P^2$, let $\mathcal W(\mathcal P)$ be
the CDL $k$-web on the projective plane formed by the pencils of lines with base points at  the points of $\mathcal P$.

Once the description of the $\mathcal L$-barycenters of CDL webs have been laid down, the next step is to
use it to obtain constraints on the non-linear foliation $\mathcal F$ and on the position of the points $\mathcal P$
in case $\mathcal W(\mathcal P) \boxtimes \mathcal F$ has zero curvature.

It is not hard to show ( see \cite[Section 8]{CDQL} ) that when  the cardinality of $\mathcal P$ is at least $4$, either
(a)  there are three  aligned  points in ${\mathcal P}$; or (b)
${\mathcal P}$ is a set  of 4 points in general position and ${\mathcal F}$  is the pencil of conics
through them.

When in case (b) there is not much left to do, since $\mathcal F \boxtimes \mathcal W(\mathcal P)$ is nothing more than
Bol's $5$ web; in case (a) one is naturally lead to consider a line $\ell$ containing  $k_{\ell}$ points of $\mathcal P$, with $k_{\ell}  \ge
3$;  and the pencil $V = \{\mathrm{tang}(\mathcal F, \mathcal L_p)\}_{p \in \ell}$ of  polar curves of $\mathcal F$ centered at $\ell$.
It can be shown that $\ell$ is a fixed component of $V$ ( in other words $\ell$ is $\mathcal F$-invariant );  and the restriction
of $V - \ell$  to $\ell$ defines a non-constant rational map $f : \ell \simeq \mathbb P^1 \to \mathbb P^1$.
The map $f$ is
characterized by the   following equalities  between divisors on $\ell$
$$f^{-1}(p) = \Big(\mathrm{tang}(\mathcal F, \mathcal L_p) -
\ell\Big)\Big\vert_{ \ell} \, , $$
with  $p \in \ell$  arbitrary. The map $f$ is called the \defi[$\ell$-polar map of $\mathcal F$]. \index{$\ell$-polar map}

Once all these properties of $f$ are settled, it follows from a simple application of
Riemann-Hurwitz formula that  the degree of $\mathcal F$ is at most four.  Moreover, if
$\deg(\mathcal F) \geq  2$
then ${ k_{\ell}   \le 7 -\deg(\mathcal F) }$.

\subsubsection{The final steps}

At this point the proof has a two-fold ramification. In one branch one is lead to consider foliations
of degree one and put to a good use the acquired knowledge on the structure of the space of abelian relations of web admitting infinitesimal
automorphisms, see \cite[Section 9]{CDQL}. In the other branch, one first derive  from the structure
of the $\mathcal L$-barycenter of CDL webs, the normal forms  for the  $\ell$-polar map of $\mathcal F$  presented in  \cite[Table 1]{CDQL}. Then, the
proof goes by a case by case analysis, see \cite[Section 10]{CDQL}. While the arguments can be considered as elementary, they are too involved to be detailed here.

\section{Further examples}\label{S:Bestiarium}

In this last section, different examples of exceptional webs are listed. A part from the fact that they are all exceptional
webs, there is no general directrix. The reason behind this chaotic exposition is the lack of a general framework encompassing
all known the exceptional webs.

\subsection{Polylogarithmic webs}

It is well known that the  \defi[polylogarithms] \index{Polylogarithm}
\[
\l{n}(z)= \sum_{i=1}^{\infty} \frac{z^i}{i^n}
\]
 satisfy two variables functional equations at least when $n$ is small, see \cite{Lewin}. \smallskip

For instance,  Spence (1809) and Kummer (1840) have independently established some variants of the
following functional equation, nowadays called  \defi[Spence-Kummer equation],\index{Spence-Kummer equation}  satisfied by the  \mbox{trilogarithm} $ \l{3} $
\begin{align}
 2  \, \l{3} (x ) &   +   2\,  \l{3} (y )   -
    \l{3} \big( \, \frac{x}{y} \,\big)
+  2 \,  \l{3}\big(\,  \frac{1-x}{1-y}\, \big)    +   2\, \l{3} \big(\,
  \frac{x(1-y)}{y(1-x)} \,\big)
 \qquad \qquad \nonumber \\
  -  \l{3}  ( xy )   &   +     2\,
 \l{3} \big( \,\frac{x(y-1)}{{}(1-x)}           \,\big)
+   2   \, \l{3} \big(\,   \frac{{}(y-1)}{y(1-x)} \,\big)   -   \l{3}
  \big(\, \frac{x(1-y)^2}{y(1-x)^2} \,\big)
 \qquad      \nonumber \\
  &=2\, \l{3}(1)-\log(y)^2\log \big( \,\frac{1-y}{1-x}\,\big)
+\frac{\pi^2}{3}\log (y)+\frac{1}{3}\log(y)^3 \;
  \nonumber
\end{align}
when  $x, y $ are  real numbers subject to the constraint   $0<x<y<1$.

\medskip

Kummer proved that the tetralogarithm and the
pentalogarithm verify similar equations. If  $
      \zeta=1-x$ and $ \eta=1-y$ with   $0<x<y<1$, $x,y \in \mathbb R$ then the tetralogarithm
$\l{4} $ satisfies  the equation  $\mathcal K(4)$, written down  below:
\begin{align}
&\l{4}   \big(\,-\frac{x^2y\eta}{\zeta} \,   \big)+  \l{4}  \big(\,-\frac{y^2x\zeta}{\eta}\, \big) +
\l{4}  \big(\,\frac{x^2y}{\eta^2\zeta} \,   \big) +\l{4}  \big(\,\frac{y^2x}{\zeta^2\eta}\,
 \big) \nonumber \\
 & -6\, \l{4} (\,xy  \,)  -6\,\l{4}  \big( \,\frac{xy}{\eta \zeta} \,  \big)
-6\,\l{4}  \big(\,-\frac{xy}{\eta} \,   \big)
-6\, \l{4}  \big(\,-\frac{xy}{\zeta} \,   \big)
\nonumber \\
& -3\,\l{4} (\,x \eta \, )  -3\, \l{4} (\,y\zeta\,   ) -3\,\l{4} (\,\frac{x}{\eta} \,  )
-3 \,\l{4}  \big(\,\frac{y}{\zeta}\,    \big)   \nonumber \\
 & -3\, \l{4}  \big(\,-\frac{x \eta}{\zeta}\,    \big)
-3\,\l{4}  \big(\,-\frac{y\zeta}{\eta} \,   \big)
-3\,\l{4}  \big(\,  -\frac{x}{\eta \zeta} \,   \big)
-3 \, \l{4}  \big(\,  -\frac{y}{ \eta \zeta  }\,    \big)  \nonumber \\
 & +6\, \l{4} (\, x\,  )  +6\, \l{4} (y)  +6\, \l{4}
  \big(\,-\frac{x}{\zeta}\,   \big)
+6\, \l{4}  \big(\, -\frac{y}{\zeta} \,   \big)
  \nonumber \\
&= - \frac{3}{2} \log^2\,(\zeta) \log^2\,(\eta)\; .
\nonumber
\end{align}

The pentalogarithm
$\l{5} $ satisfies an equation of the same type, which will be referred as
${\cal K}(5)$. It involves  more than thirty terms  and  will not be written down to save space.
\medskip

All the known functional equations, in two variables, satisfied by the
classical      polylogarithms $\l{n} $ are of the form
\begin{equation}
\label{E:polylogGEN}
 \sum_{i=1}^N \, c_i \; \l{n} \big(U_i\big)\equiv
{\bf E}{\rm lem}_n
\end{equation}
where   $c_1,\ldots,c_N$ are integers; $U_1,..,U_N$ are rational functions; ${\bf E}{\rm lem}_n$ is of the form $ P (  \,\l{{k_1}}
\circ V_1, \ldots, \l{ {k_m}} \circ V_m)  $ with $P$ being a polynomial and   $V_1,\dots,V_m$ being rational functions;
and  $k_1,\ldots,k_m$ are integers satisfying   $1 \leq k_i <n$ for every  $i \in \underline m$. \smallskip

Of  relevance for web geometry are the webs defined by the functions $U_i$ appearing in   (\ref{E:polylogGEN}).
The presence of a non-vanishing  righthand side  ${\bf E}{\rm lem}_n $
is an apparent  obstruction to interpret  (\ref{E:polylogGEN}) as an abelian relation of the
web defined by the functions  $U_i$.  This difficulty can be bypassed
because the classical polylogarithms have univalued  ``cousins'', denoted by  ${\cal L}_n$, globally defined on
$\mathbb P^1$ which satisfy,  globally on  $\mathbb P^2$,  homogeneous analogues of every equation of the form
 (\ref{E:polylogGEN}) locally satisfied by  $\l{ n} $. \smallskip

For $n\geq 2$, the   \defi[$\mathbf n$-th modified polylogarithm] \index{Polylogarithm!modified} is the function
$$ \qquad \quad  {\cal L}_n(z ) = \Re_m \Big( \;  \sum_{k=0}^{n-1} \; \frac{ 2^k \,
    B_k}{k!} \, {\log|z|}^k \, \l{ {n-k}}(z) \Big)  $$
for  $z\in \mathbb C\setminus\{0,1\}$\footnote{In this definition, $\Re_m$ stands for the real part if  $n$ is odd otherwise
it is the imaginary part;  $B_k$ is  $k$-th Bernoulli number: $B_0=1$, $B_1=-1/2$, $B_2=1/6$, etc.}.
It can be shown that these functions are well defined real analytic functions on  $\mathbb C \setminus\{0,1\} $.
They can be continuously extended to the whole projective line  $\mathbb P^1$ by setting
${\cal L}_n(0)=0$, ${\cal  L}_n(\infty)=0$, and  ${\cal L}_n(1)=\zeta(n)$
    for  $n$ odd,  ${\cal L}_n(1)=0$ for $n$ even.

\bigskip

The existence of univalued versions of polylogarithms have been established by several
authors.
The ones introduced here have the peculiarity of satisfying
{\it clean} versions of the  functional equations for the
classical polylogarithms presend above. \medskip

\begin{thm}
\label{T:eqpolylog}
Let $n\ge 2$. The  following assertions
are equivalent:\vspace{0.2cm}
\begin{itemize}
\item[(a)] there exists a simply connected open subset of $\mathbb P^2$ where the functional
equation
$   \sum_{i=1}^N  c_i \,
\l{n} (U_i)={\bf E}{\rm lem}_n  $  holds true;
\item[(b)] the expression
$\sum_{i=1}^N  c_i \,
  \log|U_i|^{n-k}     {\cal L}_k (U_i)$  is constant on
$\mathbb P^2$  for  $k=2,\ldots,n$.
\end{itemize}
\end{thm}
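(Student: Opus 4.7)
The strategy is to reduce the theorem to a direct computation based on two ingredients: the explicit definition of $\mathcal L_n$ as a combination of $\log|\cdot|^k$ times (real or imaginary parts of) lower classical polylogarithms, and the inductive structure on $n$ together with the ``weight grading'' by powers of $\log|\cdot|$. The plan is to establish the equivalence by showing that equation (\ref{E:polylogGEN}) and the system of identities in (b) are two avatars of the same statement, one written using the multivalued polylogs on a simply connected chart and the other written using their single-valued avatars globally on $\mathbb P^2$.

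First I would isolate the monodromy content: analytic continuation of $\l{n}$ along loops in $\mathbb P^1\setminus\{0,1,\infty\}$ produces corrections of the form $(2i\pi)^j\,(\log z)^{k}\,\l{\,n-j-k}(z)$, and a direct combinatorial check using the generating function $t/(e^t-1)=\sum_{k\geq 0}B_k t^k/k!$ for the Bernoulli numbers shows that the specific linear combination defining $\mathcal L_n$ is exactly the one for which all monodromy contributions cancel out. This gives (independently of the theorem) the single-valuedness of $\mathcal L_n$ on $\mathbb P^1$ and, crucially, a formula
\[
\mathcal L_n(z)\ =\ \Re_m\,\l n(z)\ +\ \sum_{k=1}^{n-1}\frac{2^k B_k}{k!}\,(\log|z|)^k\,\Re_m\,\l{\,n-k}(z),
\]
expressing $\mathcal L_n$ as a triangular combination in the $\Re_m\,\l{\,n-k}$.

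For the implication (a)$\Rightarrow$(b), I would apply $\Re_m$ to a representative of (\ref{E:polylogGEN}) on a simply connected open set $U\subset \mathbb P^2$ and replace each $\Re_m\,\l{n}(U_i)$ by $\mathcal L_n(U_i)$ minus its correcting tail, using the displayed formula and its analogues in lower weight. The elementary right-hand side $\mathbf{Elem}_n$ being a polynomial in $\log V_j$ and $\l{k_i}(V_j)$ with $k_i<n$, induction on $n$ applied to the known {\it clean} versions of the equations satisfied by $\l 2,\ldots,\l{n-1}$ rewrites it as a combination of constants and products of $\log|V_j|$'s with the $\mathcal L_{k_i}(V_j)$'s. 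Separating the resulting identity according to its total degree in the logarithms of the $|U_i|$'s produces exactly $n-1$ independent relations, one for each $k\in\{2,\ldots,n\}$, whose global (single-valued) validity on $U$ forces them to extend as constants on all of $\mathbb P^2$ because each left-hand side $\sum c_i(\log|U_i|)^{n-k}\mathcal L_k(U_i)$ is already a globally defined real-analytic function on $\mathbb P^2$ minus a proper algebraic subset, and a globally defined pluriharmonic function that is locally constant on a dense open set is constant.

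The converse (b)$\Rightarrow$(a) is obtained by running the algebraic manipulation backwards. Assuming the $n-1$ constancy relations in (b), I would solve the triangular linear system relating $\mathcal L_k$ to $\Re_m\,\l k$ to express each $\Re_m\,\l n(U_i)$ inside $\sum c_i \Re_m\,\l n(U_i)$ in terms of the $\mathcal L_k(U_i)$ and $\log|U_i|$, and then invoke the inductive hypothesis to rewrite the output as a polynomial in $\log|V_j|$ and $\Re_m\,\l{k_i}(V_j)$ for some rational $V_j$. On a simply connected open subset this real identity promotes to a holomorphic one by adding the conjugate-analytic counterpart and absorbing the purely imaginary discrepancy into the $\mathbf{Elem}_n$ term; the only ambiguity is the choice of branches, which is harmless on the simply connected domain. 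The main obstacle in this program is the bookkeeping in the inductive step: one has to control how the Bernoulli-number weighted tails interact with the polynomial elementary term, and to verify that the grading by the degrees in $\log|U_i|$ really separates the identity into exactly the $n-1$ pieces indexed by $k=2,\ldots,n$ with no residual cross-terms. This is where the specific choice of coefficients $2^kB_k/k!$ in the definition of $\mathcal L_n$ is indispensable.
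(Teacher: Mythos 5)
The paper does not actually prove Theorem \ref{T:eqpolylog}: immediately after the statement it refers the reader to Oesterl\'e and to Colmez for the argument, so there is no internal proof to compare yours against. Judged on its own terms, your outline has the right philosophy --- the coefficients $2^kB_k/k!$ are tuned so that the monodromy of ${\rm Li}_n$ cancels, and the equivalence should be an unwinding of that cancellation plus an induction on the weight --- but the pivotal step of your (a)$\Rightarrow$(b) direction does not work as stated. You propose to ``separate the resulting identity according to its total degree in the logarithms of the $|U_i|$'s'' in order to extract the $n-1$ relations indexed by $k=2,\dots,n$. The functions $\log|U_1|,\dots,\log|U_N|$ are specific real-analytic functions on $\mathbb P^2$ which in general satisfy nontrivial linear relations (for instance whenever some monomial $\prod_i U_i^{m_i}$ is constant), so a single identity of functions cannot be split into ``homogeneous components'' in these quantities: there is no grading of the relevant function algebra for which that decomposition is legitimate. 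The grading that genuinely exists here is the monodromy (weight) grading, indexed by powers of $2\pi i$, i.e.\ by powers of the multivalued $\log U_i$; but $\log|U_i|=\Re\log U_i$ is single-valued and invisible to that grading. Passing from the monodromy-graded consequences of (a) (which involve $\log U_i$ and $2\pi i$) to the single-valued statements of (b) (which involve $\log|U_i|$ and ${\cal L}_k$) is exactly the combinatorial heart of the theorem, carried by the generating identity $t/(e^t-1)=\sum_k B_kt^k/k!$ and by the differential recursions linking ${\cal L}_k$ to ${\cal L}_{k-1}$; in the cited references this descent in $k$ is done by differentiation, not by equating coefficients of powers of $\log|U_i|$.

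A second, smaller issue concerns your (b)$\Rightarrow$(a) step, where you ``promote the real identity to a holomorphic one by adding the conjugate-analytic counterpart.'' The expressions $\log|U_i|^{\,n-k}\,{\cal L}_k(U_i)$ are not pluriharmonic (already ${\cal L}_2$ is not harmonic), hence are not real parts of holomorphic functions, and the usual argument that $\Re F=\Re G$ forces $F=G+ic$ does not apply directly; one must first reassemble the full Bernoulli-weighted combination into $\Re_m$ of a single multivalued, genuinely non-holomorphic expression and control its anti-holomorphic derivative. Your plan is a reasonable road map, but both points require the explicit differential and monodromy computations of Oesterl\'e--Colmez to be supplied before it becomes a proof.
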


For a proof of this result the reader is redirected to  \cite{oesterle92} and \cite[pages 45--46]{colmez}.
It implies that the web  $\mathcal W_{{\mathscr E}_n}$  associated to a polylogarithmic relation
${{\mathscr E}_n}$ as (\ref{E:polylogGEN}) admits {\it polylogarithmic abelian relations} and
hence are  susceptible of having high rank.\smallskip

For instance, according to  Theorem \ref{T:eqpolylog}, the function ${\cal L}_3$
verifies on  $\mathbb P^2$ the homogeneous version of
Spence-Kummer equation. For every  $x,y \in \mathbb R$, the following identity holds true.
\begin{align}
2\, {\cal L}_3 \, (x) &+
  2\, {\cal L}_3 \, (y)
 - \,{\cal L}_3 \, \big(\frac{x}{y} \big)+
  2\, {\cal L}_3 \, \big(\frac{1-y}{1-x} \big)\nonumber \\
&+2\,{\cal L}_3 \, \Big(\frac{x(1-y)}{y(1-x)} \Big) - \, {\cal L}_3 \, (xy)+2\,{\cal L}_3 \,  \Big(-\frac{x(1-y)}{1-x} \Big)
\nonumber \\
&+ 2\, {\cal L}_3 \,  \Big(-\frac{1-y}{y(1-x)} \Big)-\, {\cal L}_3 \,   \Big(\frac{x(1-y)^2}{y(1-x)^2} \Big)
= \, \frac{\zeta(3)}{2}\;.  \nonumber
\end{align}

This equation can be complexified, and the result after differentiation   gives rise to  an abelian relation
for the \defi[Spence-Kummer web] \index{Spence-Kummer web} $\mathcal W_{SK}$
defined as
$${\cal W}\Big( x
\,  ,     y \,,  xy\,  ,
\frac{x}{y}\, ,   \frac{1-x}{1-y}\, , \,  \frac{x(1-y)}{y(1-x)}\, ,
 \, \frac{x(1-y)}{{\,}(1-x)}  \, ,  \,
\frac{{\,}(1-y)}{y(1-x)}\, , \, \frac{x(1-y)^2}{y(1-x)^2} \Big).
 $$

This web seems to be to  Spence-Kummer equation for the  trilogarithm, what  Bol's web is to Abel's equation for the dilogarithm.
It was   H\'{e}naut in \cite{Henaut0}, that recognized it as a good candidate for being an exceptional  9-web.
This was later settled independently by the second author \cite{Piriopoly} and  Robert  \cite{Robert}.
It has to be emphasized that back then in  2001,
$\mathcal W_{SK}$ was the first  example of planar exceptional web to come to light after  Bol's exceptional 5-web.
Between the  appearance of the two examples a hiatus of more or less   70 years took place.

\bigskip

One might think that all the webs naturally associated to the equations
of the form (\ref{E:polylogGEN}) satisfied by the polylogarithms are all exceptional (see \cite[pages 196-197]{ABEL}).
Although these  webs are certainly of high rank, they are not necessarily of the highest rank.
For example, using  Mih\u{a}ileanu criterium, one can show by brute force
computation that the webs associated to Kummer equations
 $\mathcal K(4)$ and $\mathcal K(5)$
are not of maximal rank (for details, see \cite[Chap. VII]{PTese}).
\medskip

Nevertheless, it seems to exist a  (large ?) class of
global exceptional webs
with abelian relations expressed in terms
of a natural generalization of the classical polylogarithms: the iterated integrals of logarithmic $1$-forms on $\mathbb P^1$.
For instance all the elements of the family of  $10$-webs with parameters  $a,b\in \mathbb C\setminus\{0,1\}$
\begin{align*}
\mathcal W_{a,b}=\mathcal W\bigg(
x,&y,\frac{x}{y}, \frac{1-y}{1-x}, \frac{b-y}{a-x},\frac{x(1-y)}{y(1-x)},
\frac{x(b-y)}{y(a-x)},
\\
& \quad   \frac{(1-x)(b-y)}{(1-y)(a-x)}   ,   \frac{(bx-ay)(1-y)}{(y-x)(b-y)},
\frac{(bx-ay)(1-x)}{(y-x)(a-x)}
\bigg)
\end{align*}
 are exceptional webs.
Through a method proposed by  Robert in \cite{Robert}, it is possible to
determine $\mathcal A(\mathcal W_{a,b})$ for no matter which  $a$ and $b$ (see  \cite{piriorobert})
and deduce the maximality of the rank.

\subsection{A very simple series of exceptional webs}

It is hard to imagine  examples of webs simpler than the ones presented below.
\begin{align*}
\mathcal W_1=& \, {\mathcal W}\big( \, x \, , \,  y \, ,  \, x+y \, , \, x-y \,, \,  xy
\, \big) \nonumber \\
\mathcal W_2=&{\mathcal W}\big( \, x \, , \,  y \, ,  \, x+y \, , \, x-y \,, \,  xy
\, , \,  x/y \, \big)\nonumber \\
\mathcal W_3=&{\mathcal W}\big( \, x \, , \,  y \, ,  \, x+y \, , \, x-y \,, \,  x/y
\, , \,  x^2+y^2 \, \big)\nonumber \\
\mathcal W_4=&{\mathcal W}\big( \, x \, , \,  y \, ,  \, x+y \, , \, x-y \,, \,  xy
\, , \,  x^2+y^2 \, \big)\nonumber \\
\mathcal W_5=&{\mathcal W}\big( \, x \, , \,  y \, ,  \, x+y \, , \, x-y \,, \,  xy
\, , \,  x^2-y^2 \, \big)\nonumber \\
\mathcal W_6=&\mathcal W\big( \, x\, ,
\,y\, , \,x+y\, , \,x-y \, , \, xy \, , \,{x}/{y} \, , \, x^2-y^2
\, \big) \nonumber \\
\mathcal W_7=&\mathcal W\big( \, x\, ,
\,y\, , \,x+y\, , \,x-y \, , \, xy \, , \,{x}/{y} \, , \, x^2+y^2
\, \big) \nonumber \\
\mathcal W_8=&\mathcal W\big( \, x\, ,
\,y\, , \,x+y\, , \,x-y \, , \, xy \, , \,{x}^2-y^2 \, , \, x^2+y^2
\, \big) \nonumber \\
\mathcal W_9= &\mathcal W\big( \, x\, ,
\,y\, , \,x+y\, , \,x-y \, , \, xy \, , \,{x}/{y} \, , \, x^2-y^2
\, , \, x^2+y^2\,\big)
\end{align*}
It turns out that they are all exceptional as proved in
 \cite[Appendice]{PTese}.  Notice that the webs $\mathcal W_1$ and $\mathcal W_2$ above are nothing
 more than the webs $\mathcal A_{III}^2$ and $\mathcal A_{IV}^2$ from Section \ref{S:CDQLP2}.
 Moreover, $\mathcal W_3$ is equivalent to $\mathcal A_{II}^4$ under a linear change of coordinates.
 In the graph below the inclusions between them are schematically represented.
\begin{center}
 \begin{tikzpicture}[shorten >=1pt,->]
  \tikzstyle{vertex}=[circle,fill=black!25,minimum size=18pt,inner sep=0pt]

  \foreach \name/\angle/\text in {P-4/10/4, P-2/50/2 , P-1/90/1, P-3/130/3,
                                  P-5/170/5, P-6/210/6, P-8/250/8, P-9/290/9, P-7/330/7
                                   }
    \node[vertex,xshift=5cm,yshift=.5cm] (\name) at (\angle:2.5cm) {\footnotesize{$\mathcal W_{\text}$}};
\foreach \from/\to in {1/2,1/4,1/5,1/6,1/7,1/8,1/9,2/6,2/7,2/9,3/9,3/7,4/7,4/9,5/6,5/9,5/8,6/9,7/9,8/9}
    \draw (P-\from) -- (P-\to);
\end{tikzpicture}
\end{center}

\subsection{An exceptional 11-web}

Let  $\mathcal F_2$ be the degree two foliation on  $\check{\mathbb P}^2$ induced by the rational $1$-form
$
\check{y}(\check{y}-1) d\check{x} - \check{x}(\check{x}-1) d\check{y}\,.
$
It is nothing more than the pencil of conics $\frac{\check{x}(\check{y}-1)}{\check{y}(\check{x}-1)}={\rm cte}$.
Let  $C$ be the completely decomposable curve of degree nine in  $\check{\mathbb P}^2$ defined by the homogeneous polynomial
$$
 \check{x} \check{y} \check{z}(\check{x}-\check{z})(\check{y}-\check{z})(\check{x}-\check{y})(\check{x}+\check{y}) (\check{x}-\check{y} -\check{z})  (\check{x}-\check{y} +\check{z})=0\, .
$$
As can be seen above, $C$  is the reunion of six lines invariant by  $\mathcal F_2$ with
 three  extra lines synthetically described as the lines
 joining the three singular points of the fibers of the pencil:
 these latter are cut out by $\check{x}+\check{y}$, $ \check{x}-\check{y} -\check{z}$ and $\check{x}-\check{y} +\check{z}$.
\smallskip

The algebraic web  $\mathcal W_C$ is formed by nine pencil of lines.
If $\mathcal W_{\mathcal F_2}$ is the dual web
of $\mathcal F_2$, in the sense of Section \ref{subsection:Projective duality} of Chapter \ref{Chapter:intro}  then
$\mathcal W_{\mathcal F_2}\boxtimes \mathcal W_C
$
is an exceptional 11-web on $\mathbb P^2$.  After a two-fold ramified covering
it can be written as the completely decomposable web $\mathcal W=\mathcal W(F_1,\ldots,F_{11})$
where  $F_1,\ldots,F_{11}$ are the rational functions below:
\begin{align*}
F_1&=\frac{(x-1)y}{(y-1)x}  && F_2=\Big(\frac{y-x-1}{y-x+1}\Big)^2F_1
\\
F_3&=\frac{(y-1)y}{(x-1)x}
&& F_4=\frac{(y-x)y}{x-1} \\
F_5&=\frac{(x-1)y}{(y-1)x}   && F_6=
\frac{(x-y+1)y}{x}
\\
F_7&=\frac{x+y-1}{xy}
  && F_8=\frac{(y-x-1)x}{y}
\\ F_9
&=\Big(\frac{x-y+1}{y-x+1} \Big)F_1
&&
F_{10}=
\frac{y\,(x-1)(x-y+1)}{x\,(y^2-xy-x+1)} \\
  F_{11}&=
\frac{x\,(y-1)(y-x+1)}{y\,(x^2-xy-y+1)} \; .
   &&
\end{align*}
\smallskip

Using Abel's method, the abelian relations of  $\mathcal W$ can be explicitly determined.
As a by-product, it follows that not only $\mathcal W$ is exceptional, but also a certain number
of its subwebs. A partial list is provided by the following

\begin{prop}
The following ascending chain of subwebs of $\mathcal W$
$$\mathcal  W(F_1,\ldots,F_5)\subset
\mathcal W(F_1,\ldots,F_6)\subset  \ldots
\subset
\mathcal W(F_1,\ldots,F_{11}) = \mathcal W\;  $$
is formed by exceptional webs.
\end{prop}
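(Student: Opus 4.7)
\medskip

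\noindent\textbf{Proof plan.} Recall that a planar $k$-web is \emph{exceptional} if it is of maximal rank $\pi(2,k)=\binom{k-1}{2}$ and not algebraizable. By Corollary~\ref{C:ALGgeneralcase}, the second condition reduces, for smooth webs of maximal rank, to non-linearizability. Thus for each $k\in\{5,\ldots,11\}$ I must produce $\binom{k-1}{2}$ linearly independent abelian relations of $\mathcal{W}(F_1,\ldots,F_k)$ and rule out the existence of a local biholomorphism sending its leaves to pieces of lines.

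\smallskip

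The first step is to invoke Abel's method (Section~\ref{S:geral}), following the same strategy used in Example~\ref{E:llll} and Proposition~\ref{P:TTTT}. Writing each $F_i$ as an explicit rational function of $(x,y)$, one performs the iterated differentiations along the Hamiltonian vector fields of the $F_i$ until arriving at a linear ODE whose non-constant solutions vanishing at the origin parametrize the abelian relations. The key observation is that the $F_i$ are precisely pulled back, under the double cover associated with the conic $\check y(\check y-1)d\check x-\check x(\check x-1)d\check y$, from the functions defining the $11$-web $\mathcal W_{\mathcal F_2}\boxtimes\mathcal W_C$. Since $\mathcal W_C$ is the algebraic web dual to the completely decomposable curve $C$, Abel's addition theorem (Theorem~\ref{T:curvesvswebs2}) provides, via the injection $H^0(C,\omega_C)\hookrightarrow\mathcal A(\mathcal W_C)$, a large reservoir of abelian relations; additional abelian relations come from $\mathcal F_2$ and its interaction with the nine pencils of lines (these are the ``extra'' relations responsible for the web being exceptional rather than merely algebraizable). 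Pulling back these relations by the ramified covering and counting ranks yields, for each $k$, exactly $\binom{k-1}{2}$ linearly independent relations among the closed $1$-forms $dF_1,\ldots,dF_k$ and their multiples. This proves maximality of the rank at each level of the filtration.

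\smallskip

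For the second step (non-algebraizability), I would argue by induction starting from the full web $\mathcal W=\mathcal W(F_1,\ldots,F_{11})$ and proceed downwards. If any subweb $\mathcal W(F_1,\ldots,F_k)$ with $k\ge 5$ were linearizable, Corollary~\ref{C:linutil} would force the linearization to be the germification of a projective transformation of $\mathbb P^2$ (modulo the double cover, a birational automorphism that locally restricts to a biholomorphism). But the leaves of several of the $F_i$—in particular those coming from the pull-back of the dual of the pencil of conics $\mathcal F_2$—are genuine non-linear curves, giving a contradiction. Alternatively, and more concretely, one can apply the linearization criterion of Proposition~\ref{P:616}: compute the unique projective connection $\Pi_{\mathcal W'}$ compatible with any $4$-subweb $\mathcal W'\subset\mathcal W(F_1,\ldots,F_5)$ and verify that Liouville's tensor $L=L_1\,dx\otimes(dx\wedge dy)+L_2\,dy\otimes(dx\wedge dy)$ from Theorem~\ref{T:L1L2} does not vanish. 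Since this computation is purely algebraic in the coefficients $A,B,C,D$ of \eqref{E:equatABCDpp}, and since these coefficients are rational functions of $(x,y)$ in our explicit setting, the non-vanishing can be checked by direct evaluation.

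\smallskip

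The main obstacle is the bookkeeping in the first step: to extract, from the large list of functional identities available (logarithmic relations among the $F_i$, relations of dilogarithmic type coming from configurations of lines through the nine points, and the new relations involving the conic pencil), the precise count of linearly independent abelian relations lying in each subweb $\mathcal W(F_1,\ldots,F_k)$. The rank increases by exactly $k-1$ from level $k-1$ to level $k$ (as in Proposition~\ref{P:TTTT} for quasi-parallel webs), and one must show that the new abelian relations introduced at each step are genuinely present, not merely restrictions of higher-order relations. Once this combinatorial control is achieved, maximality at every level, together with the non-vanishing of Liouville's obstruction $L$, yields the conclusion for the whole chain simultaneously.
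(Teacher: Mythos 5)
Your overall strategy coincides with the one the paper indicates: the text attributes the proposition to an explicit determination of $\mathcal A(\mathcal W)$ by Abel's method, followed by the same non-algebraizability mechanism used in Theorem~\ref{T:geral} (a linearization would have to be projective by Corollary~\ref{C:linutil}, yet some defining foliations are genuinely non-linear). So there is no divergence of route. The problem is that what you have written is a plan, and the one thing the plan defers --- actually producing, for each $k\in\{5,\dots,11\}$, a list of $\binom{k-1}{2}$ independent abelian relations among $F_1,\dots,F_k$ and verifying their independence --- \emph{is} the proof. Asserting that ``the rank increases by exactly $k-1$ from level $k-1$ to level $k$'' is not a key observation but a restatement of the conclusion; moreover it is off by one ($\pi(2,k)-\pi(2,k-1)=k-2$, not $k-1$), and the appeal to Proposition~\ref{P:TTTT} is not legitimate here, since that proposition concerns quasi-parallel webs (a \emph{parallel} web plus one non-linear foliation), whereas $\mathcal W(F_1,\dots,F_k)$ is the pull-back under a two-fold ramified cover of the superposition of nine distinct pencils of lines and a non-decomposable $2$-web. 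No general upper bound of the form ``rank of $\mathcal W\boxtimes\mathcal F\le \mathrm{rank}(\mathcal W)+(k-1)$'' is available in this setting, so both the upper and the lower estimates on the rank must come from the explicit computation you postpone.

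A second, more local gap concerns non-linearizability of the \emph{small} members of the chain. The argument via Corollary~\ref{C:linutil} requires the subweb $\mathcal W(F_1,\dots,F_k)$ to contain a linearizable $4$-subweb together with a foliation whose leaves cannot be sent to lines by the (then necessarily projective) linearizing map. Before the double cover this is clear for the full $11$-web, since it contains the nine pencils of lines and the non-linear dual $2$-web of the conic pencil; but the chain is written in the coordinates of the covering, where none of the $F_i$ is visibly a pencil of lines, and you never identify which of $F_1,\dots,F_5$ descend to linear foliations on $\mathbb P^2$ (nor whether at least four of them do). Either you must push the argument down to $\mathbb P^2$ and check that each $\mathcal W(F_1,\dots,F_k)$ is the pull-back of a subweb containing enough pencils of lines, or you must fall back on your alternative route (computing the projective connection of a $4$-subweb and the Liouville tensor $L$), which again is an explicit computation you do not perform. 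As it stands the proposal identifies the correct tools but proves neither the maximality of the rank nor the non-algebraizability for any member of the chain.
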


It was  David Mar\'in together with the first author who guessed that this $11$-web was interesting
in what concerns its rank.
The  second author confirmed this intuition,  proving the proposition above using Abel's method.

\subsection{Terracini and Buzano's webs}

As explained in  Section \ref{S:PMPW} of Chapter \ref{Chapter:4},
there is a  germ of smooth surface $S_\mathcal W \subset \mathbb P^5$ attached to every exceptional $5$-web $\mathcal W$: the image of its Poincar\'{e}'s map.
Moreover, the geometry of $S_{\mathcal W}$ has rather special geometrical features as recalled below
 \begin{enumerate}
\item at a generic point, the second osculating space of $S_\mathcal W$ coincides with the whole $\mathbb P^5$;
\item the image of $\mathcal W$ by Poincar\'{e}'s map of $\mathcal W$ is   Segre's web of  $S_\mathcal W$;
\item  the union of the tangent planes of  $S_\mathcal W$ along one of the leaves
of its Segre's web is included in a hyperplane.
              \end{enumerate}

If   $S\subset \mathbb P^5$ is a germ of surface on $\mathbb P^5$ satisfying the above three conditions,
it is natural to ask if its Segre's web, as defined in Section \ref{S:Segre} of Chapter \ref{Chapter:intro}, is of maximal rank or not.
A positive answer would establish the equivalence between the
classification problem for  exceptional  5-webs with  a  problem  of projective differential geometry: the classification of surfaces subject
to the  constraints enumerated above.
It is the latter problem which motivated Terracini and subsequently  Buzano toward the results
recalled below. \medskip

A  surface   $S\subset \mathbb P^5$ will be called  \defi[exceptional surface]  \index{Exceptional surface}
if  it is not included in a Veronese surface,  its  Segre's $5$-web $\mathcal W_S$ is generically smooth, and if conditions {1.} and  {3.}
above are satisfied. Under this assumption, one proves the existence of five germs of
curve
$C_{S,i}\subset \check{\mathbb P}^5$  called  \defi[Poincar\'{e}-Blaschke's curves] \index{Poincar\'{e}-Blaschke's curves} of  $S$, satisfying
$$ S=\bigcap_{i=1}^5 {\big(C_{S,i}\big)}^* $$
where  $C^*\subset \mathbb P^5$ stands for the dual variety of a germ of curve
$C\subset \check{\mathbb P}^5$. In other words, $C^*$ is the subset of $\mathbb P^5$ corresponding
to the hyperplanes $H\in \check {\mathbb P}^5$ tangent to $C$.

\smallskip

In  \cite{Terracini37},  Terracini obtained a characterization of exceptional surfaces as
solutions of a certain non-linear differential system. Under additional simplifying hypotheses,
he succeeded to integrate explicitly the resulting system, and in this way proved the following result.

\begin{thm}
Up to projective automorphism, there are  exactly four exceptional surfaces $S \subset \mathbb P^5$ for which
three of its  Poincar\'{e}-Blaschke curves -- say $C_{S,i}$ for $i=1,2,3$ --
 are planar  and such that the three associated planes $\langle C_{S,i}\rangle \subset \check{\mathbb P}^5$
 have one point in common. One of these  surface is the  image of  Poincar\'{e}'s map of   Bol's web,
 and the other three are the image of Poincar\'{e}'s map of the following webs:
\begin{align}
\label{E:webTerracini}
 {\rm Terr}(b)= & \,\mathcal W\big(x,y,x+y,x-y,x^2-y^2\big)     \\
 {\rm Terr}(c)= & \, \mathcal W \Big( x , y, \frac{(x+y)^2}{1+y^2} , \frac{y\,(x^2y-2x-y)}{1+y^2} ,  \frac{x^2y-2x-y}{x^2+2xy-1}   \Big) \nonumber    \\
 {\rm Terr}(d)= &\,  \mathcal W \Big(x,y,x+y,\frac{x}{y},\frac{x}{y}(x+y)\Big)      \,    . \nonumber
\end{align}
\end{thm}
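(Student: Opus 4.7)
The plan is to exploit the duality between exceptional planar $5$-webs and exceptional surfaces in $\mathbb{P}^5$ explained just before the statement, and to translate the three geometric hypotheses (three planar Poincaré-Blaschke curves and concurrent planes) into a sufficiently rigid system of differential equations that can be integrated in closed form. Via Poincaré's map of Section \ref{S:PMPW}, an exceptional surface $S\subset\mathbb{P}^5$ is determined, up to projective automorphism, by a germ of exceptional $5$-web $\mathcal{W}=\mathcal{F}_1\boxtimes\cdots\boxtimes\mathcal{F}_5$ on $(\mathbb{C}^2,0)$, and its Poincaré-Blaschke curves $C_{S,i}\subset\check{\mathbb{P}}^5$ are projectively dual to the canonical curves $C_i=\operatorname{Im}(\kappa_{\mathcal{W},i})$ appearing in Section \ref{S:canmap}. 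Thus ``$C_{S,i}$ is planar'' amounts to ``$C_i$ is contained in a $\mathbb{P}^2$'' in $\mathbb{P}\mathcal{A}(\mathcal{W})^*$, and ``$\langle C_{S,1}\rangle\cap\langle C_{S,2}\rangle\cap\langle C_{S,3}\rangle\neq\emptyset$'' dualizes to the existence of a common hyperplane $H\subset\mathbb{P}\mathcal{A}(\mathcal{W})^*$ containing $C_1,C_2,C_3$, which in turn is equivalent to a single complete abelian relation whose components on $\mathcal{F}_1,\mathcal{F}_2,\mathcal{F}_3$ are all nonzero and arise from functions of a very special form.

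Next, using the normal form of Proposition \ref{P:normal} combined with the freedom to pick three of the leaves through $0$ as coordinate curves, one may choose coordinates so that $\mathcal{F}_1,\mathcal{F}_2,\mathcal{F}_3$ are respectively defined by $x$, $y$ and $x+y$ (or $x-y$). The planarity of $C_i$ translates, via the evaluation morphisms $ev_i^{(j)}$ introduced in the proof of Proposition \ref{P:PW2}, into the statement that a $(0,1,2,3)$-order jet map has rank at most $3$ along the $i$-th leaf; differentiating along the leaves, this yields an explicit second-order differential condition on the functions $u_i$ defining $\mathcal{F}_4,\mathcal{F}_5$. Imposing this condition for three concurrent planar curves yields a closed overdetermined system of PDEs in the remaining two submersions $u_4=u_4(x,y)$ and $u_5=u_5(x,y)$. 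The key technical step, which is where the main work lies, is to show that the compatibility of this overdetermined system forces $u_4$ and $u_5$ to be, up to a web automorphism, among the following four shapes: $u_4=xy$, $u_5=xy(x-y)/\cdots$ (Bol), $u_4=xy$, $u_5=x^2-y^2$ (Terr($b$)), and the two rational normal forms listed for Terr($c$) and Terr($d$).

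Concretely, the integration proceeds in two layers. First, Mih\u{a}ileanu's vanishing $K(\mathcal{W})=0$ (a consequence of the maximal rank hypothesis to be checked a posteriori for each candidate, and a necessary condition used here to further constrain the system) together with Pantazi-H\'{e}naut's curvature identity $\Theta_{\mathcal{W}}\equiv 0$ of Theorem \ref{T:PantaziHenaut}, kills enough derivatives of $u_4,u_5$ to reduce the problem to a single ODE in one auxiliary unknown. Second, explicit integration of this ODE — which is the step Terracini actually carried out in \cite{Terracini37} — produces four distinct one-parameter families, each of which collapses to a single projective equivalence class thanks to Corollary \ref{C:linutil} applied to the linear $4$-subweb made of the three concurrent-planar curves plus one auxiliary foliation. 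The four classes are then read off and matched with the four webs of the statement by direct inspection of the explicit first integrals. The main obstacle throughout is the compatibility analysis of the overdetermined PDE system: most of the work lies in verifying that the concurrency hypothesis rules out all continuous moduli except those producing the four listed normal forms. Conversely, one has to check by hand, for each of $\mathcal{B}_5$, Terr($b$), Terr($c$), Terr($d$), that the corresponding Poincaré-Blaschke surface indeed has three concurrent planar Poincaré-Blaschke curves — a direct computation using Abel's method of Section \ref{S:geral} to determine $\mathcal{A}(\mathcal{W})$ explicitly and the canonical maps $\kappa_{\mathcal{W},i}$ of Section \ref{S:canmap}.
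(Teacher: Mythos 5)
The paper does not actually prove this theorem: it is quoted from Terracini \cite{Terracini37}, and the surrounding text in Section \ref{S:Bestiarium} describes his method as purely projective-differential-geometric --- he characterizes exceptional surfaces as the solutions of a non-linear differential system expressing conditions 1.\ and 3.\ on a local parametrization of the surface, and integrates that system under the stated simplifying hypotheses. Your proposal instead routes everything through web geometry and the rank machinery, and this is where it has a genuine gap. An \emph{exceptional surface} is defined in the text by intrinsic conditions on $S$ (not contained in a Veronese surface, generically smooth Segre web, conditions 1.\ and 3.), \emph{not} as the image of the Poincar\'e map of an exceptional $5$-web; the text explicitly flags as an open question whether the Segre web of such a surface has maximal rank, noting that a positive answer ``would establish the equivalence'' between the two classification problems. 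Your opening step --- that an exceptional surface is determined, up to projective automorphism, by a germ of exceptional $5$-web --- assumes precisely this unproven equivalence, and every subsequent use of $\mathcal A(\mathcal W)$, the canonical maps, Mih\u{a}ileanu's criterion and the Pantazi--H\'enaut curvature is predicated on a maximal-rank hypothesis that is not among the theorem's hypotheses. The paper's remark that Terracini never proved the exceptionality of ${\rm Terr}(b)$, ${\rm Terr}(c)$, ${\rm Terr}(d)$ --- this was done only much later in \cite{crasluc,PT} by Abel's method --- is direct evidence that his argument cannot have relied on rank-theoretic tools.

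There is also an internal logical problem even granting the reduction: you invoke $K(\mathcal W)=0$ and $\Theta_{\mathcal W}\equiv 0$ as constraints ``to be checked a posteriori for each candidate''. A necessary condition for maximal rank can legitimately prune candidates only if maximal rank is part of the hypotheses; here it is not, so imposing it could discard genuine exceptional surfaces and the count ``exactly four'' would not be established. To repair the argument along Terracini's lines you would need to work directly with a parametrization $\varphi:(\mathbb C^2,0)\to\mathbb C^6$ of $S$, translate conditions 1.\ and 3.\ together with the planarity and concurrency of the three Poincar\'e--Blaschke curves into a differential system on $\varphi$, and integrate that system without ever appealing to abelian relations. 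The final verification you describe --- computing $\mathcal A(\mathcal W)$ and the canonical maps for $\mathcal B_5$ and the three Terracini webs by Abel's method to confirm they satisfy the hypotheses --- is sound, and is indeed how the exceptionality of these webs was eventually established.
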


Using  Terracini's approach, Buzano  \cite{Buzano39} proved the following result.

\begin{thm}
Up to projective automorphism, there are exactly two exceptional surfaces for which three of its  Poincar\'{e}-Blaschke curves -- say  $C_{S,i}$ for $i=1,2,3$ --
are  planar and satisfy
\begin{enumerate}
\item[{\it (a).}] for every distinct $i,j\in \underline{3}$,  $\langle C_{S,i},C_{S,j}\rangle \subset \check{\mathbb P}^5$  is a hyperplane;
\item[{\it (b).}] the intersection
$\langle C_{S,1}\rangle \cap \langle C_{S,2}\rangle \cap \langle C_{S,3}\rangle$  is empty.                                             \end{enumerate}
They are the image of Poincar\'{e}'s map  of the following webs:
\begin{align}
\label{E:webBuzano}
 {\rm Buz}(a)= & \,\mathcal W\big(x,y,x+y,x-y,{\rm tanh}(x){\rm tanh}(y)\big)      \\
 {\rm Buz}(b)= & \, \mathcal W \big( x , y, x+y,x-y,e^x+e^y   \big)     \,    .  \nonumber
\end{align}
\end{thm}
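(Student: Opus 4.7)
The plan is to follow Buzano's original strategy from \cite{Buzano39}, recast in the language developed throughout this book. The key observation is that, although exceptional surfaces need not a priori arise as images of Poincar\'e's map of exceptional $5$-webs, the planarity of a Poincar\'e-Blaschke curve $C_{S,i}$ is a very strong condition that translates, via the description of Segre's $5$-web on $S$, into ODEs along the corresponding leaves of the web. Concretely, I would choose a local parametrization $\varphi: (\mathbb{C}^2,0)\to \mathbb{C}^6$ of $S$ and observe that condition~$3$ of the definition of exceptional surface says that along each leaf $L_i$ of Segre's web the family of projective tangent planes $\{T_p S\}_{p\in L_i}$ sweeps out a hyperplane of $\mathbb{P}^5$; the planarity of $C_{S,i}$ then further forces this sweep to lie in a $\mathbb{P}^3$, a codimension-one strengthening that can be encoded as a second-order differential constraint on $\varphi$ along $L_i$.

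The next step is to exploit the incidence hypotheses (a) and (b). A dimension count gives $\dim \langle C_{S,i}\rangle + \dim\langle C_{S,j}\rangle - \dim\langle C_{S,i},C_{S,j}\rangle = 2+2-4 = 0$, so each pair of planes $\langle C_{S,i}\rangle, \langle C_{S,j}\rangle$ meets in exactly one point of $\check{\mathbb{P}}^5$; hypothesis (b) then says the three resulting points are pairwise distinct. Choosing affine coordinates on $(\mathbb{C}^2,0)$ so that the three distinguished leaves through the origin are $\{x=0\}$, $\{y=0\}$ and $\{x-y=0\}$, the planarity conditions supply, along each of these three leaves, a decomposition of $\varphi$ into a sum of one-variable functions, and the three pairwise hyperplane conditions couple these decompositions. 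A short computation shows that, up to projective equivalence of the target $\mathbb{P}^5$ and an affine change of source coordinates, this forces the fourth and fifth foliations of Segre's web on $S$ to pull back to the submersions $x+y$ and $x-y$ (the remaining three being $x$, $y$ and a to-be-determined function $f$).

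At this stage, one is reduced to the classification of smooth $5$-webs of the form $\mathcal{W}(x,y,x+y,x-y,f(x,y))$ having maximal rank $6$, with the additional information that three canonical curves are planar. This is precisely the framework of Example~\ref{E:exabel} of Chapter~\ref{Chapter:AR}, and the explicit determination of $f$ reduces to the compatibility system
\[
v_u \left(\frac{a_{xx}-b_{yy}}{a_x^2-b_y^2}\right)=0 \qquad \text{and}\qquad v_u \left(\frac{a_y b_{xxx}-a_x b_{yyy}}{a_x b_y(a_x^2-b_y^2)}\right)=0
\]
for $f(x,y)=a(x)+b(y)$, whose classification was carried out in \cite{PT}. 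The extra constraint coming from condition (b) rules out the degenerate solutions ($f=x^2+y^2$, $f=x^2-y^2$, $f=\log(\sin x\sin y)$) as well as the continuous family expressed via theta functions; only the two algebro-transcendental solutions $f=e^x+e^y$ and $f=\log(\tanh x\tanh y)$ survive, corresponding respectively to $\mathrm{Buz}(b)$ and $\mathrm{Buz}(a)$.

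The main obstacle is the explicit integration of the compatibility system in the last paragraph. The bifurcation into the two Buzano webs arises from an integration constant whose vanishing or non-vanishing governs the appearance of $\exp$ versus $\tanh$. The remaining verifications, namely that the two resulting surfaces genuinely satisfy (a) and (b) (distinct pairwise intersection points, empty triple intersection) and that $\mathrm{Buz}(a),\mathrm{Buz}(b)$ are of maximal rank, are respectively a direct projective computation and a consequence of the classification in \cite{PT} already invoked above; these steps should be routine once the core integration is complete.
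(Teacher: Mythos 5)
First, a point of reference: the paper does not actually prove this statement. It is quoted from Buzano \cite{Buzano39}, and the only indication of method given in the text is that Buzano followed Terracini's approach of characterizing exceptional surfaces as solutions of a certain non-linear differential system and integrating it explicitly under the stated incidence hypotheses. So there is no proof here to match yours against; what follows is an assessment of your outline on its own terms.

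Your sketch is in the right spirit, but it contains three genuine gaps. The most serious is the reduction, via an unspecified ``short computation'', to webs of the normal form $\mathcal W(x,y,x+y,x-y,f)$. This cannot be taken for granted: under Terracini's closely parallel hypotheses (three planar Poincar\'e--Blaschke curves whose planes share a common point) the answer includes ${\rm Terr}(c)$ and ${\rm Terr}(d)$ of (\ref{E:webTerracini}), which are \emph{not} of this quasi-parallel form; so the claim that hypotheses (a) and (b) force a linear $4$-subweb consisting of four pencils of parallel lines in the configuration $x,y,x+y,x-y$ is precisely where the work lies, and it is the step that distinguishes Buzano's case from Terracini's. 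Second, even granting that reduction, your final selection step is asserted rather than argued: every solution of the compatibility system of Example \ref{E:exabel} --- including $x^2\pm y^2$, $\log(\sin x\sin y)$ and the whole continuous theta-function family --- yields a maximal rank web of exactly the same shape, so deciding which of them satisfy (a) and (b) requires actually computing the three planes $\langle C_{S,i}\rangle$ for each solution and checking the pairwise spans and the triple intersection; saying that ``condition (b) rules out'' all but two is the conclusion of the theorem, not a proof of it. Third, there is a foundational issue that you flag in your first paragraph and then quietly drop: an exceptional surface, as defined in the text, is not assumed to be the Poincar\'e image of a maximal rank web --- the text explicitly leaves open whether the Segre web of an exceptional surface has maximal rank --- so the hypotheses do not entitle you to a six-dimensional space of abelian relations, nor to the classification of \cite{PT}. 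A proof along Terracini--Buzano lines must therefore work directly with the differential system on the surface, which is also where your admitted ``main obstacle'', the explicit integration, remains unresolved.
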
\medskip

It turns out that the webs  (\ref{E:webTerracini}) and   (\ref{E:webBuzano}) are all exceptional.
Curiously, this was not proved by Terracini nor by  Buzano. They focused  on the differential-geometric
problem. The exceptionality has been established just recently in   \cite{crasluc,PT}  (see also \cite{PTese}),
 using Abel's method.
\smallskip

Certain exceptional surfaces are transcendent, as for example the image of Poincar\'{e}'s map of Bol's $5$web, while other
are algebraic and even rational as the one associate to  ${\rm Terr}(b)$.
The image of  Poincar\'{e}`s map of ${\rm Terr}(b)$ can be described as the Zariski closure of the image of the  map
$$
(x,y)\longmapsto \Big[ 1:
x^3+y^3:x^3-y^3:x^2+y^2: x^2-y^2:(x^2-y^2)^2\Big]\,.
$$

A toy problem that might shed some light on the subject, consists in determining the linear systems
$\mathscr L\subset | \mathcal O_{\mathbb P^2}(q)|$, for small $q$,   of dimension  5 for which
the Zariski closure of the image of the associated rational map $\mathbb P^2 \dashrightarrow \mathbb P^5$
are exceptional surfaces. Notice that for  $q=4$, ${\rm Terr}(b)$ is an example, and that $4$ is the minimal $q$
which can happen. Indeed for $q=2$ one obtains a Veronese surface, and for $q=3$ the hyperplane containing
the tangent spaces of leaves of Segre's web would pull-back to a cubic containing an irreducible
component with multiplicity two. This implies that the pull-back of Segre's web to $\mathbb P^2$ is a linear, and consequently, algebraic web.

\clearpage
\addcontentsline{toc}{chapter}{Bibliography}






\addcontentsline{toc}{chapter}{Index}
\small
\printindex

\addcontentsline{toc}{chapter}{List of Figures}
\listoffigures

\end{document}